\tikzset{
    partial ellipse/.style args={#1:#2:#3}{
        insert path={+ (#1:#3) arc (#1:#2:#3)}
    }
}
\newtheorem{theorem}{Theorem}[section]
\newtheorem{lemma}[theorem]{Lemma}
\newtheorem{corollary}[theorem]{Corollary}
\newtheorem{proposition}[theorem]{Proposition}
\newtheorem{claim}[theorem]{Claim}
\theoremstyle{definition}
\newtheorem{definition}[theorem]{Definition}
\theoremstyle{remark}
\newcommand{\cC}{\mathcal{C}}
\newcommand{\cD}{\mathcal{D}}
\newcommand{\cP}{\mathcal{P}}
\newcommand{\cT}{\mathcal{T}}
\newcommand{\cH}{\mathcal{H}}
\newcommand{\cN}{\mathcal{N}}
\newcommand{\fH}{\mathfrak{H}}
\newcommand{\fp}{\mathfrak{p}}
\newcommand{\fL}{\mathfrak{L}}
\newcommand{\eps}{\varepsilon}
\newcommand{\abs}[1]{\left|{#1}\right|}
\title[{Noncollapsed translators in $\mathbb{R}^4$}]{Classification of noncollapsed  translators in $\mathbb{R}^4$}
\author{Kyeongsu Choi, Robert Haslhofer, Or Hershkovits}
\begin{document}

\begin{abstract}
In this paper, we  classify all noncollapsed singularity models for the mean curvature flow of 3-dimensional hypersurfaces in $\mathbb{R}^4$ or more generally in $4$-manifolds. Specifically, we prove that every noncollapsed translating hypersurface in $\mathbb{R}^4$ is either $\mathbb{R}\times$2d-bowl, or a 3d round bowl, or belongs to the one-parameter family of 3d oval bowls constructed by Hoffman-Ilmanen-Martin-White.
\end{abstract}

\maketitle

\tableofcontents

\section{Introduction}

A hypersurface $M^n\subset \mathbb{R}^{n+1}$ is called a \emph{translator} if its mean curvature vector satisfies
\begin{equation}\label{eq_translator}
\mathbf{H}=v^\perp
\end{equation}
for some $0\neq v\in\mathbb{R}^{n+1}$. Solutions of \eqref{eq_translator} correspond to selfsimilarly translating solutions $\{M_t=M+tv\}_{t\in\mathbb{R}}$ of the mean curvature flow,
\begin{equation}
(\partial_t x)^\perp = \mathbf{H}(x).
\end{equation}
Translators model the formation of type II singularities under mean curvature flow, see e.g. \cite{Hamilton_Harnack,HuiskenSinestrari_convexity,White_nature}. We recall that Huisken and Hamilton grouped singularities of the mean curvature flow at some time $T$ into type I and II, depending on whether $(T-t)|A|^2$ stays bounded or not \cite{Huisken_monotonicity,Hamilton_Harnack}. Type I singularities are modelled on \emph{shrinkers}, and are easier to analyze than type II singularities. For example it is known in any dimension that the round cylinders $\mathbb{R}^{k}\times S^{n-k}$ are the only mean-convex shrinkers \cite{Huisken_shrinker,White_nature}, and also the only stable shrinkers \cite{CM_generic}. In an attempt to get a grasp on type II singularities, translators have received a lot of attention over the last 25 years, but despite these efforts no general classification result has been obtained for $n\geq 3$, not even for convex graphs.\\

For $n=2$, there is by now a very precise understanding of translators. Altschuler-Wu \cite{AltschulerWu} constructed a translator that is the graph of an entire rotationally invariant function, called the \emph{bowl}.
In \cite{Wang_convex}, Wang proved that the bowl is the unique (up to rigid motions and scaling) convex translator in $\mathbb{R}^3$ that is an entire graph. More recently, a complete classification of graphical translators in $\mathbb{R}^3$ has been obtained by Hoffman-Ilmanen-Martin-White \cite{HIMW}, building on important prior work of Spruck-Xiao \cite{SpruckXiao}. Namely, they proved that any such translator is either a bowl, or a grim reaper surface, or belongs to the one-parameter family of $\Delta$-wings discovered by Ilmanen. See \cite{Nguyen_trident}, \cite{HMW_tridents} and \cite{HMW} for other examples of translators, and \cite{HIMW_translators_survey} for a survey article about translators in $\mathbb{R}^3$.  See also \cite{CCK} for a recent classification of translators of the $\alpha$-Gauss curvature flow in $\mathbb{R}^3$.\\

For $n\geq 3$, in his pioneering work \cite{Wang_convex}, Wang constructed graphical convex translators that are not rotationally symmetric, addressing a conjecture of White \cite{White_nature}. The only instances for $n\geq 3$ where some classification has been obtained are the uniformly $2$-convex case \cite{Haslhofer_bowl,BourniLangford,SpruckSun} and the case of solutions contained in strip regions \cite{BLT}, which both  very much behave like the $2$-dimensional case.\\

\subsection{Main results}

In the present paper, we address the classification problem for translators in $\mathbb{R}^4$. We focus on the situation most relevant for singularity analysis, namely the noncollapsed case. We recall that a hypersurface $M$ is called \emph{noncollapsed} if it has positive mean curvature and there is some $\alpha>0$ such that at every point $p\in M$ the inscribed radius and exterior radius is at least $\alpha/H(p)$, see \cite{ShengWang,Andrews_noncollapsing,HaslhoferKleiner_meanconvex}. It is known since the work of White \cite{White_size,White_nature} that all blowup limits of any mean-convex mean curvature flow are noncollapsed. In fact, one can take $\alpha=1$, see \cite{Brendle_inscribed,HK_inscribed}. More generally, by Ilmanen's mean-convex neighborhood conjecture \cite{Ilmanen_problems}, which has been proved recently in the case of neck-singularities in \cite{CHH,CHHW}, it is expected even without mean-convexity assumption that all  blowup limits near any cylindrical singularity are ancient noncollapsed flows.\\

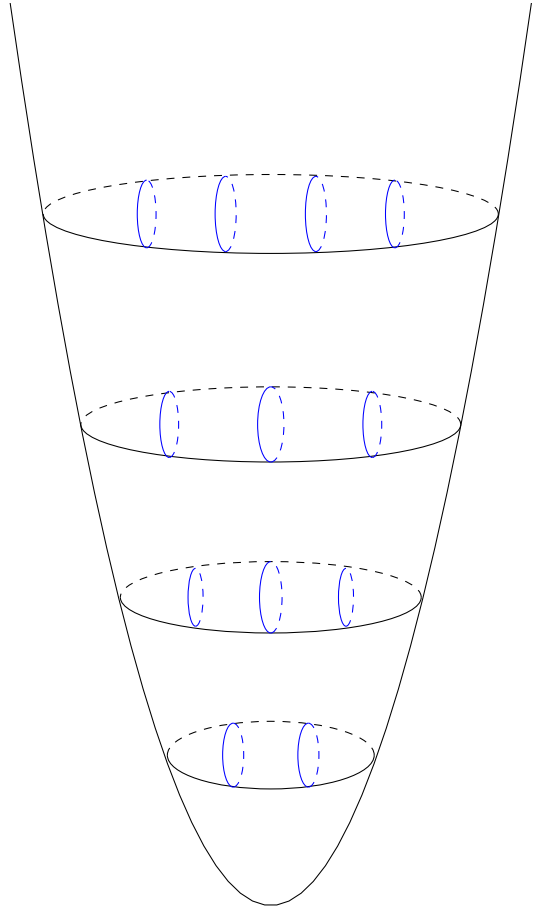
\begin{figure}
\begin{tikzpicture}[x=1cm,y=1cm] \clip(-4,12) rectangle (4,-1);
\draw [samples=100,rotate around={0:(0,0)},xshift=0cm,yshift=0cm,domain=-8:8)] plot (\x,{(\x)^2});
\draw [dashed] (0,2) [partial ellipse=0:180:2.75*0.5cm and 0.9*0.5cm];
\draw (0,2) [partial ellipse=180:360:2.75*0.5cm and 0.9*0.5cm];
\draw [dashed] (0,4.1) [partial ellipse=0:180:4*0.5cm and 0.95*0.5cm];
\draw (0,4.1) [partial ellipse=180:360:4*0.5cm and 0.95*0.5cm];
\draw [dashed] (0,6.4) [partial ellipse=0:180:5.05*0.5cm and 1*0.5cm];
\draw (0,6.4) [partial ellipse=180:360:5.05*0.5cm and 1*0.5cm];
\draw [dashed] (0,9.2) [partial ellipse=0:180:6.05*0.5cm and 1.05*0.5cm];
\draw (0,9.2) [partial ellipse=180:360:6.05*0.5cm and 1.05*0.5cm];
\draw [color=blue] (0,4.1) [partial ellipse=90:270:0.3*0.5cm and 0.95*0.5cm];
\draw [color=blue,dashed] (0,4.1) [partial ellipse=-90:90:0.3*0.5cm and 0.95*0.5cm];
\draw [color=blue] (-2*0.5,4.1) [partial ellipse=90:270:0.2*0.5cm and 0.77*0.5cm];
\draw [color=blue,dashed] (-2*0.5,4.1) [partial ellipse=-90:90:0.2*0.5cm and 0.77*0.5cm];
\draw [color=blue] (2*0.5,4.1) [partial ellipse=90:270:0.2*0.5cm and 0.77*0.5cm];
\draw [color=blue,dashed] (2*0.5,4.1) [partial ellipse=-90:90:0.2*0.5cm and 0.77*0.5cm];
\draw [color=blue] (0,6.4) [partial ellipse=90:270:0.35*0.5cm and 1*0.5cm];
\draw [color=blue,dashed] (0,6.4) [partial ellipse=-90:90:0.35*0.5cm and 1*0.5cm];
\draw [color=blue] (-2.7*0.5,6.4) [partial ellipse=90:270:0.25*0.5cm and 0.88*0.5cm];
\draw [color=blue,dashed] (-2.7*0.5,6.4) [partial ellipse=-90:90:0.25*0.5cm and 0.88*0.5cm];
\draw [color=blue] (2.7*0.5,6.4) [partial ellipse=90:270:0.25*0.5cm and 0.88*0.5cm];
\draw [color=blue,dashed] (2.7*0.5,6.4) [partial ellipse=-90:90:0.25*0.5cm and 0.88*0.5cm];
\draw [color=blue] (-1*0.5,2) [partial ellipse=90:270:0.28*0.5cm and 0.85*0.5cm];
\draw [color=blue,dashed] (-1*0.5,2) [partial ellipse=-90:90:0.28*0.5cm and 0.85*0.5cm];
\draw [color=blue] (1*0.5,2) [partial ellipse=90:270:0.28*0.5cm and 0.85*0.5cm];
\draw [color=blue,dashed] (1*0.5,2) [partial ellipse=-90:90:0.28*0.5cm and 0.85*0.5cm];
\draw [color=blue] (-1.2*0.5,9.2) [partial ellipse=90:270:0.28*0.5cm and 1*0.5cm];
\draw [color=blue,dashed] (-1.2*0.5,9.2) [partial ellipse=-90:90:0.28*0.5cm and 1*0.5cm];
\draw [color=blue] (1.2*0.5,9.2) [partial ellipse=90:270:0.28*0.5cm and 1*0.5cm];
\draw [color=blue,dashed] (1.2*0.5,9.2) [partial ellipse=-90:90:0.28*0.5cm and 1*0.5cm];
\draw [color=blue] (-3.3*0.5,9.2) [partial ellipse=90:270:0.25*0.5cm and 0.9*0.5cm];
\draw [color=blue,dashed] (-3.3*0.5,9.2) [partial ellipse=-90:90:0.25*0.5cm and 0.9*0.5cm];
\draw [color=blue] (3.3*0.5,9.2) [partial ellipse=90:270:0.25*0.5cm and 0.9*0.5cm];
\draw [color=blue,dashed] (3.3*0.5,9.2) [partial ellipse=-90:90:0.25*0.5cm and 0.9*0.5cm];
\end{tikzpicture}
\caption{The oval bowls $M_k$ are $3$-dimensional translating hypersurfaces in $\mathbb{R}^4$, whose level sets look like $2$-dimensional ovals in $\mathbb{R}^3$. It is a one-parameter family of translators, whose principal curvatures at the tip are $(k,\tfrac{1-k}{2},\tfrac{1-k}{2})$. For $k=1/3$ it is the round bowl (with round spherical level sets), while for $k\to 0$ one has convergence to $\mathbb{R}\times$2d-bowl.}\label{figure_oval_bowls}
\end{figure}

Let us first review the known examples of noncollapsed translators in $\mathbb{R}^4$:
Two examples that have been known for quite a while are $\mathbb{R}\times \mathrm{Bowl}_2$ -  the product of the line with the 2-dimensional bowl from  from Altschuler-Wu \cite{AltschulerWu}, and $\mathrm{Bowl}_3$ - the 3d \emph{round bowl} constructed by Clutterbuck-Schn\"urer-Schulze \cite{CSS}. More recently, Hoffman-Ilmanen-Martin-White \cite{HIMW} constructed examples that are not rotationally symmetric. Specifically, for every triple $(k_1,k_2,k_3)$ of nonnegative numbers with $k_1+k_2+k_3 =1$ they proved that there exists at least one unit-speed graphical translator with tip principal curvatures $(k_1,k_2,k_3)$. Moreover, they showed that when one takes $k_1\leq k_2=k_3$ then one always gets a translator that is an entire graph and has circular symmetry in the last two variables. It is not hard to show that these entire graphical translators are in fact noncollapsed (see Theorem \ref{thm_non_collapsed}). Hence, for every $k\in (0,\tfrac{1}{3})$ there exists at least one noncollapsed translators $M_k\subset\mathbb{R}^4$ that is noncollapsed and circular symmetric and whose principal curvatures at the tip are $(k,\tfrac{1-k}{2},\tfrac{1-k}{2})$.  The HIMW-translators $\{M_k\}_{k\in (0,1/3)}$ interpolate between $M_0=\mathbb{R}\times \mathrm{Bowl}_2$ and $M_{1/3}=\mathrm{Bowl}_3$. Furthermore, as we will see later, the HIMW-translators have oval level sets, as illustrated in Figure \ref{figure_oval_bowls}, and we thus refer to them as the \emph{oval bowls}. \\

Our main classification theorem shows that any noncollapsed translators in $\mathbb{R}^4$ in fact must be equal, up to rigid motion and scaling, to one of the examples from the literature that we reviewed above:

\begin{theorem}[classification of noncollapsed translators]\label{thm_main}
Every noncollapsed translator in $\mathbb{R}^4$ is, up to rigid motion and scaling,
\begin{itemize}
\item either $\mathbb{R}\times \mathrm{Bowl}_2$,
\item or the 3d round bowl $\mathrm{Bowl}_3$,
\item or belongs to the one-parameter family of 3d oval bowls $\{M_k\}_{k\in (0,1/3)}$ constructed by Hoffman-Ilmanen-Martin-White.
\end{itemize}
\end{theorem}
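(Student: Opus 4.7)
The plan is to classify noncollapsed translators in $\mathbb{R}^4$ via asymptotic analysis of the associated ancient flow $\{M_t=M+tv\}_{t\le 0}$. First I would observe that, by the convexity estimate for noncollapsed mean curvature flow of Haslhofer-Kleiner, any noncollapsed translator is weakly convex, and then by the strict maximum principle strictly convex; combined with $H=\langle v,\nu\rangle>0$, this means $M$ is a smooth, strictly convex, entire graph in the direction of $v$. A standard blow-down then produces a noncollapsed self-shrinker in $\mathbb{R}^4$, which by the Colding-Minicozzi/Huisken-type classification of mean-convex shrinkers is either $S^3$, $\mathbb{R}\times S^2$, or $\mathbb{R}^2\times S^1$. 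The compact case $S^3$ is incompatible with $M$ being a translator, leaving two cases.

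In the \emph{two-convex case}, where the tangent shrinker at $-\infty$ is $\mathbb{R}\times S^2$, the statement follows immediately from the classification of uniformly $2$-convex translators of Haslhofer/Bourni-Langford/Spruck-Sun, giving $M=\mathrm{Bowl}_3$.

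The essential new content is the \emph{bubble-sheet case}, where the tangent flow is $\mathbb{R}^2\times S^1$. Here I would set up a renormalized flow around the shrinking cylinder and carry out an Ornstein-Uhlenbeck spectral analysis on $\mathbb{R}^2\times S^1$, in the spirit of Angenent-Daskalopoulos-Sesum and Choi-Haslhofer-Hershkovits in the neck case. The convexity and noncollapsedness of $M$ should rule out the unstable (positive-eigenvalue) mode and force the long-time asymptotics to be governed by the zero-eigenspace, which is spanned by quadratic polynomials on the $\mathbb{R}^2$ factor. This produces a normalized symmetric $2\times 2$ matrix $Q$ encoding the leading asymptotics, diagonalizable as $Q=\mathrm{diag}(q_1,q_2)$ after a rotation. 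The key structural claim is $q_1=q_2$: the asymptotic profile is invariant under rotations in the associated $\mathbb{R}^2$-plane. I would try to prove this via a combination of a Merle-Zaag-type ODE for the spectral coefficients together with a convexity/noncollapsed obstruction: an anisotropic profile $q_1\neq q_2$ should cause the inscribed radius to degrade along the weak direction, contradicting the uniform $\alpha/H$ lower bound. The degenerate case $q_1=0$ corresponds to a line splitting and yields $M=\mathbb{R}\times \mathrm{Bowl}_2$.

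Once $SO(2)$ rotational symmetry in two of the coordinates is established, the translator equation reduces to a $2$-dimensional convex problem indexed by the tip-curvature triple $(k,\tfrac{1-k}{2},\tfrac{1-k}{2})$ with $k\in[0,1/3]$. Existence for each such $k$ is supplied by HIMW, and the remaining task is uniqueness within the $SO(2)$-symmetric, convex, noncollapsed, entire class. I would obtain this via an ODE shooting argument for the reduced profile, using the prescribed tip curvatures together with the asymptotic condition at infinity to pin down a unique solution. This produces $M=M_k$ and completes the three possibilities in the theorem. The main obstacle is clearly the rotational symmetry step in the bubble-sheet case; everything else is either known or a fairly standard reduction, whereas turning asymptotic quadratic data into a genuine $SO(2)$-symmetry of $M$ itself requires new geometric input.
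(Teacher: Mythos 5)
Your reduction to the neck case (giving $\mathrm{Bowl}_3$) and the line-splitting case (giving $\mathbb{R}\times\mathrm{Bowl}_2$) matches the paper, but the heart of your bubble-sheet argument contains a claim that is actually false. In the bubble-sheet case the neutral-mode asymptotics of a translator are \emph{necessarily anisotropic}: since the translator equation forces $|\partial_{y_1}u|\lesssim e^{\tau/2}$ along the translation direction inside the $\mathbb{R}^2$-factor, the dominant neutral eigenfunction is $y_2^2-2$ alone, i.e.\ the quadratic form is rank one, $Q\sim\mathrm{diag}(0,\tfrac{1}{2\sqrt{2}|\tau|})$, not $q_1=q_2$. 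The oval bowls themselves (and the translators you are trying to classify) all have exactly this anisotropic inward bending and are noncollapsed, so your proposed obstruction (``$q_1\neq q_2$ degrades the inscribed radius and contradicts noncollapsedness'') would rule out the genuine solutions; an isotropic profile $q_1=q_2>0$ would instead correspond to a compact ancient oval, not a translator. The rotational symmetry that is actually available and needed is $\mathrm{SO}(2)$-symmetry in the $x_3x_4$-plane (circular symmetry of the level sets around the $S^1$-factor), and it is not a consequence of the spectral asymptotics: in the paper it rests on first knowing that the blowdown of $M$ is a halfline (ruling out wing-like translators, the content of the prior fine bubble-sheet paper), which gives a tip point, and then on a Brendle--Choi/Zhu-type bubble-sheet improvement or a weighted maximum-principle argument \`a la the bowl uniqueness proof. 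Your proposal supplies no mechanism for this step.

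The closing step is also not viable as stated. After imposing the circular symmetry the reduced problem is still a degenerate elliptic PDE in two variables $(x_1,x_2)$ for the profile of the level sets (the translators are $\mathrm{SO}(2)$- but not $\mathrm{SO}(3)$-symmetric), so there is no ODE shooting argument ``pinned down by the tip curvatures''; this is precisely why the paper develops uniform sharp asymptotics that are uniform over all $\kappa$-quadratic solutions, a spectral uniqueness theorem (two solutions with equal spectral center and eccentricity at one time coincide, proved via weighted energy estimates in the cylindrical and tip regions combined with forward-in-time comparison and interior estimates), an intermediate-value/continuity argument showing the HIMW construction realizes all spectral eccentricities, and a Rado-type monotonicity plus analyticity of the tip-curvature map to convert spectral uniqueness into the parametrization by $k$. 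In particular, since the HIMW translators form a genuinely distinct one-parameter family, one cannot kill the neutral mode by rigid motions and scalings as in the ancient-ovals classification, and uniqueness ``given the tip curvature'' is not directly accessible; your proposal does not engage with either of these difficulties, which are the main content of the theorem.
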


In particular, our main theorem provides a complete classification of singularity models for the mean curvature flow of embedded mean-convex hypersurfaces in $\mathbb{R}^4$ or more generally also in 4-manifolds (observe that even for general ambient 4-manifolds the blowup limits always live in Euclidean space). To discuss this, recall that for mean-convex flows all blowup limits are noncollapsed and convex \cite{White_size,White_nature,HuiskenSinestrari_convexity,HaslhoferKleiner_meanconvex}. In particular, for type I singularities one can always pass to a type I blowup limit that is a shrinker by Huisken's monotonicity formula \cite{Huisken_monotonicity}, while for type II singularities one can always pass to a type II  blowup limit that is a translator by Hamilton's Harnack inequality \cite{Hamilton_Harnack}.

\begin{corollary}[{classification of singularity models}]\label{cor_sing}
For the mean curvature flow of closed embedded mean-convex hypersurfaces in $\mathbb{R}^4$ (or more generally in a 4-manifold), every type I blowup limit (ala Huisken) is
\begin{itemize}
\item either a round shrinking $S^3$,
\item or a round shrinking $\mathbb{R}\times S^2$,
\item or a round shrinking $\mathbb{R}^2\times S^1$,
\end{itemize}
and every type II blowup limit (ala Hamilton) is
\begin{itemize}
\item either $\mathbb{R}\times \mathrm{Bowl}_2$,
\item or the 3d round bowl $\mathrm{Bowl}_3$,
\item or belongs to the one-parameter family of 3d oval bowls $\{M_k\}_{k\in (0,1/3)}$ constructed by Hoffman-Ilmanen-Martin-White.
\end{itemize}
\end{corollary}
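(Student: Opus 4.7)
The plan is to deduce the corollary from Theorem \ref{thm_main} by the standard blowup machinery for mean-convex flows. First I would reduce the $4$-manifold case to the Euclidean case: under parabolic rescalings with factor tending to infinity around a singular spacetime point, the ambient metric converges in the smooth topology on compact sets to the flat metric on $\mathbb{R}^4$, so every subsequential blowup limit is a smooth mean curvature flow in $\mathbb{R}^4$. Moreover, by the noncollapsing theorem of White \cite{White_size,White_nature}, sharpened to the optimal constant $\alpha=1$ by Brendle \cite{Brendle_inscribed} and Haslhofer-Kleiner \cite{HK_inscribed}, together with the convexity estimates of Huisken-Sinestrari \cite{HuiskenSinestrari_convexity} and Haslhofer-Kleiner \cite{HaslhoferKleiner_meanconvex}, every such blowup limit of a closed embedded mean-convex flow is weakly convex and noncollapsed.

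For the type I case, I would choose parabolic rescalings with $(T-t)|A|^2$ bounded. Huisken's monotonicity formula \cite{Huisken_monotonicity} extracts a subsequential limit that is a self-similar shrinker, and by the previous paragraph this shrinker is noncollapsed and in particular mean-convex. The classification of mean-convex shrinkers due to Huisken \cite{Huisken_shrinker} and White \cite{White_nature} then forces it to be a generalized round cylinder $\mathbb{R}^{k}\times S^{3-k}$ for some $k\in\{0,1,2\}$, which gives exactly the three listed possibilities.

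For the type II case, I would invoke Hamilton's point-picking procedure together with the Harnack inequality \cite{Hamilton_Harnack}: because $(T-t)|A|^2$ is unbounded, one selects a sequence of nearly maximal-curvature spacetime points and rescales so that $|A|=1$ there. Standard compactness, combined with the noncollapsing and convexity estimates above, then produces a smooth convex eternal limit solution whose mean curvature attains its spacetime supremum; the equality case of Hamilton's Harnack inequality forces such an eternal solution to be a translator. This translator inherits the noncollapsing property from the rescaling sequence, so Theorem \ref{thm_main} identifies it as either $\mathbb{R}\times\mathrm{Bowl}_2$, or $\mathrm{Bowl}_3$, or one of the oval bowls $M_k$ with $k\in(0,1/3)$. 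Since the substantive analytic input is entirely absorbed in Theorem \ref{thm_main} and in the Huisken-White classification of mean-convex shrinkers, there is no essential obstacle in this deduction beyond verifying that the blowup limits produced by the standard procedures actually satisfy the hypotheses of those classification theorems, which is precisely what the noncollapsing and convexity estimates guarantee.
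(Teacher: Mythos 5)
Your proposal is correct and follows essentially the same route as the paper, which deduces the corollary from Theorem \ref{thm_main} together with the standard facts that blowup limits of mean-convex flows are noncollapsed and convex \cite{White_size,White_nature,HuiskenSinestrari_convexity,HaslhoferKleiner_meanconvex}, that type I blowups are shrinkers via Huisken's monotonicity formula (hence round cylinders by the Huisken--White classification), and that type II blowups are translators via Hamilton's Harnack inequality. The only difference is that you spell out the routine reductions (flat-limit of the ambient metric, point-picking, Harnack rigidity) in more detail than the paper does.
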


In particular, our corollary seems to be the first general classification result of singularity models in higher dimensions.  Recall that while singularities for mean curvature flow in $\mathbb{R}^3$ and for three-dimensional Ricci flow are by now well understood, the classification of singularities in higher dimensions, without special assumptions such as two-convexity or positive isotropic curvature, is widely open.\\

Our main classification result is related to a recent breakthrough by Angenent-Daskalopoulos-Sesum \cite{ADS1,ADS2}, who proved that every compact ancient noncollapsed flow in $\mathbb{R}^3$ (or more generally in $\mathbb{R}^{n+1}$ assuming uniform $2$-convexity) is either a round shrinking sphere or an ancient oval. The ancient ovals, whose existence has been proved in \cite{White_nature,HaslhoferHershkovits_ancient}, are compact ancient solutions that for $t\to 0$ converge to a round point, but for $t\to -\infty$ look very oval, namely like a cylinder with two bowl-like caps.\\

Let us now discuss some major challenges that arise in establishing our main classification result:\\

First, the round bowl has a neck-tangent flow at $-\infty$, i.e.
\begin{equation}\label{neck_tangent}
\lim_{\lambda \rightarrow 0} \lambda M_{\lambda^{-2}t}=\mathbb{R}\times S^2(\sqrt{-4t}),
\end{equation}
but oval bowls have a bubble-sheet tangent flow at $-\infty$, i.e.
\begin{equation}\label{bubble_tangent_intro}
\lim_{\lambda \rightarrow 0} \lambda M_{\lambda^{-2}t}=\mathbb{R}^2\times S^1(\sqrt{-2t}).
\end{equation}
While the case of neck-singularities has been analyzed extensively over the last 20 years culminating in the recent classification from  \cite{ADS1,ADS2,BC,BC2,CHH,CHHW} (see also \cite{Brendle_Bryant,ABDS,BDS,LiZhang,BN,BDNS} for corresponding classification results for the Ricci flow), the classification of bubble-sheet singularities up to now seemed to be a problem out of reach.\\

Second, the classification of ancient ovals from the recent breakthrough by Angenent-Daskalopoulos-Sesum \cite{ADS1,ADS2} crucially relies on the property that eventually all such ovals agree up to rigid motion and scaling. In contrast, the examples from \cite{HIMW} for different values of $k$ are genuinely distinct, and furthermore it is not known a-priori whether or not the HIMW-family is unique and depends continuously on $k$. Even though this may sound like a more technical point, this actually causes the following fundamental issue: In the spectral analysis one cannot kill the neutral and unstable modes in any straightforward way.\\

Third, the classification of round bowl and ancient ovals crucially relies on the fact that they are rotationally symmetric. In contrast, the oval bowls from \cite{HIMW} are only $\mathrm{SO}(2)$-symmetric but not $\mathrm{SO}(3)$-symmetric. In particular, this increases the number of independent variables, and thus precludes the direct use of ODE techniques or techniques from 1+1 dimensional parabolic equations.\\

\bigskip

\subsection{Key results and outline of the proofs} Let us now outline the main steps of our argument. 
Roughly speaking, our main classification result will follow by combining the following five key results:
\begin{itemize}
\item Theorem \ref{thm_blowdown} (blowdown and circular symmetry),
\item Theorem \ref{thm_unique_asympt_intro} (uniform sharp asymptotics),
\item Theorem \ref{thm:uniqueness_eccentricity_intro} (spectral uniqueness),
\item Theorem \ref{all_achived_thm_intro} (existence with prescribed eccentricity),
\item Theorem \ref{thm_mon_analyt} (monotonicity and analyticity).
\end{itemize}
We will now discuss these five key results in turn. For the rest of this outline, we denote by $M=\partial K$ any noncollapsed translator in $\mathbb{R}^4$, where we normalize without loss of generality such that it translates with unit speed in positive $x_1$-direction. Assuming that $M$ is neither $\mathbb{R}\times$2d-bowl nor a 3d round bowl, the ultimate goal is to show that it is an oval bowl $M_k$, and is uniquely determined by the tip curvature $k$.\\

In Section \ref{sec_coarse_asymptotics}, we discuss coarse asymptotics and circular symmetry. The key to get started is:

\begin{theorem}[{blowdown and circular symmetry, c.f. \cite{CHH_blowdown,Zhu}}]\label{thm_blowdown}
The blowdown of $M=\partial K$ is always a halfline, more precisely
\begin{equation}
\lim_{\lambda\to 0}\lambda K = \{ \lambda e_1\, | \, \lambda\geq 0\}.
\end{equation}
In particular, $M$ has a unique tip point and is $\mathrm{SO}$(2)-symmetric.
\end{theorem}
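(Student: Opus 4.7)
My plan is to combine the blowdown classification of \cite{CHH_blowdown} with the SO(2) symmetry improvement of \cite{Zhu}. I would first observe that noncollapsed mean-convex MCF solutions are convex, so $K$ is a closed convex region in $\mathbb{R}^4$; since $M_t = M + te_1$ is MCF and strictly mean-convex convex solutions shrink in time, one has $K + se_1 \subseteq K$ for all $s \geq 0$. Consequently the blowdown $C := \lim_{\lambda \to 0} \lambda K$ is well-defined as a closed convex cone (the recession cone of $K$) and contains the halfline $\{\lambda e_1 : \lambda \geq 0\}$. My task then splits into showing $C$ equals this halfline, and upgrading the resulting asymptotic rotational symmetry to an exact SO(2) symmetry of $M$.

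For the blowdown step, I would first rule out lineality. If $C$ contained a full line $\mathbb{R} w$, then since a closed convex set is invariant under translations by any line in its recession cone, $K$ would split as $\mathbb{R} w \times K'$, giving $M = \mathbb{R} \times M'$ for a 2-dimensional noncollapsed translator $M' \subset \mathbb{R}^3$. By the 2d classification \cite{Wang_convex,HIMW}, $M' = \mathrm{Bowl}_2$, so $M = \mathbb{R} \times \mathrm{Bowl}_2$, contradicting our standing assumption. To then exclude the remaining lineality-free options of dimension $\geq 2$, I would apply the blowdown analysis of \cite{CHH_blowdown}: any additional extreme ray of $C$, upon parabolic rescaling $\lambda M_{\lambda^{-2}t}$, must produce a tangent flow at $-\infty$ with an extra flat factor, but by Huisken's monotonicity combined with the classification of noncollapsed shrinkers in $\mathbb{R}^4$ as $S^3$, $\mathbb{R} \times S^2$, or $\mathbb{R}^2 \times S^1$, any such extra flat direction must come paired with its negative—i.e., as a line in $C$—contradicting trivial lineality. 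Hence $C = \{\lambda e_1 : \lambda \geq 0\}$, and the uniqueness of the tip follows because the set where $x_1$ attains its minimum on $K$ is a compact face with recession directions in $\{e_1\}^\perp \cap C = \{0\}$.

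For the SO(2) symmetry, I would adapt Zhu's moving-plane argument \cite{Zhu}. For each 2-plane $\Pi$ containing the $e_1$-axis, I would start a reflection hyperplane orthogonal to $\Pi$ far outside $K$ and slide it inwards, applying the strong maximum principle to the difference of $M$ with its reflection and using strict convexity together with the noncollapsed bubble-sheet asymptotics to ensure one can slide all the way until reflective symmetry across $\Pi$ is achieved; varying $\Pi$ through all 2-planes containing the axis then produces the desired SO(2) action. I expect this last step to be the main obstacle: in the bubble-sheet regime (tangent flow $\mathbb{R}^2 \times S^1$, as in \eqref{bubble_tangent_intro}) one must simultaneously control two cylindrical directions and prevent the reflection plane from getting stuck short of the axis, which requires the refined asymptotic estimates from \cite{Zhu} and cannot be handled by the 1+1-dimensional ODE techniques that suffice for the neck/round-bowl case.
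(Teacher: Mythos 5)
Your proposal has two genuine gaps, one in each half. For the blowdown, the argument you sketch does not work: you claim that an extra extreme ray of the recession cone $C$ would force an ``extra flat factor'' in the tangent flow at $-\infty$, and that flat directions of the shrinker must pair up into lines of $C$. Both claims are false. The tangent flow of any solution in the bubble-sheet case is already $\mathbb{R}^2\times S^1(\sqrt{-2t})$, with two flat directions, even when the blowdown of $K$ is just a halfline (the oval bowls $M_k$ are exactly such examples), so flat directions of the tangent flow are not recession directions of $K$ and carry no information about extreme rays of $C$. In particular, the hard scenario that actually has to be excluded --- a ``wing-like'' translator whose blowdown is a $2$-dimensional, line-free convex cone in the $x_1x_2$-halfplane --- is perfectly compatible with the tangent flow $\mathbb{R}^2\times S^1$ and with the classification of noncollapsed shrinkers; no soft recession-cone/shrinker argument rules it out. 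This is precisely why the cited paper \cite{CHH_blowdown} needed a fine bubble-sheet (neutral mode) analysis, and the present paper simply quotes that result rather than rederiving it. (Also, for uniqueness of the tip your argument only shows the minimizing face of $x_1$ is compact; you still need strict convexity, available after the line-splitting reduction, to conclude it is a single point.)

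For the $\mathrm{SO}(2)$-symmetry, you mischaracterize Zhu's theorem: his proof is not a moving-plane argument but a quantitative bubble-sheet symmetry improvement in the spirit of Brendle--Choi, and the paper's own shorter proof likewise avoids moving planes, working instead with the rotation functions $f_a=\langle R_a,\nu\rangle$, the maximum principle for $f_a/H$, the coarse diameter bound $\mathrm{diam}(\Sigma^h)=o(h^{1/2+\delta})$, and a blowup argument reducing to a Liouville property on $\mathbb{R}\times$(2d bowl or shrinking cylinder). An Alexandrov reflection scheme as you describe faces exactly the obstruction you flag and more: the center $a\in\{0\}\times\mathbb{R}^2$ of the putative circular symmetry is not known a priori, the asymptotics at the bubble-sheet end are not a priori reflection-symmetric, and there is no mechanism to prevent the reflecting hyperplane from stopping short of the correct position; asserting that ``the refined asymptotic estimates from \cite{Zhu}'' fix this is circular, since those estimates are the output of a different proof scheme, not an input to a moving-plane argument. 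If your intent was merely to invoke \cite{CHH_blowdown} for the blowdown and \cite{Zhu} for the symmetry, that matches the paper's logical structure; but the substitute arguments you actually sketch for both steps would fail as written.
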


The result about the blowdown has already been established in our previous paper \cite{CHH_blowdown}. To prove this we had to rule out the potential scenario of noncollapsed wing-like translators, which we did via fine-bubble sheet analysis. In particular, the blowdown directly yields the existence of a unique tip point where $x_1$ is minimized. It then follows from a recent result by Zhu \cite{Zhu} that $M$ is $\mathrm{SO}$(2)-symmetric. Zhu's proof was based on a bubble-sheet version of the Brendle-Choi neck improvement theorem \cite{BC,BC2}. Exploiting the fact that the blowdown is a halfline, we found a shorter proof of Zhu's result, which is based instead on methods from \cite{Brendle_steady,Haslhofer_bowl} and which we include for convenience of the reader.\\

Theorem \ref{thm_blowdown} also yields further important information about the coarse asymptotics of the level sets
\begin{equation}
\Sigma^h:= M\cap \{x_1=h\}.
\end{equation}
Exploiting the more quantitate information from the proof we show that for every $\delta>0$ we have
\begin{equation}\label{eq_diam_intro}
 \lim_{h\to \infty} \frac{\textrm{diam}(\Sigma^h)}{h^{1/2+\delta}}=0.
\end{equation}
Moreover, using the vanishing asymptotic slope property, which follows again from Theorem \ref{thm_blowdown}, we show that the level sets move almost like a mean curvature flow of surfaces in $\mathbb{R}^3$. Namely, we show that 
\begin{equation}\label{mean_curv_exp}
|H-H^h| \leq CH^3,
\end{equation}
where $H$ is the mean curvature of $M$, and $H^h$ is the mean curvature of $\Sigma^h\subset\{x_1=h\}$.\\

In Section \ref{sec_sharp}, we establish uniform sharp asymptotics. Loosely speaking, our result shows that the level sets $M\cap \{x_1=-t\}$ have the same sharp asymptotics as the ones from Angenent-Daskalopoulos-Sesum \cite{ADS1} for the 2-dimensional ancient ovals in $\mathbb{R}^3$, and moreover these sharp asymptotics hold uniformly for certain families of translators. In more detail, we establish uniform sharp asymptotics for the profile function of the level sets. Specifically, assuming without loss of generality that the $\textrm{SO}(2)$-symmetry from above is in the $x_3x_4$-plane centered at the origin, we can express the level sets as
\begin{equation}
\Sigma^{-t}=\left\{ (-t,x,x_3,x_4)\in\mathbb{R}^4 : -d^-(t)\leq x \leq d^+(t),\, (x_3^2+x_4^2)^{1/2} = V(x,t) \right\}.
\end{equation}
The profile function $V(x,t)$ is defined for all $t\ll 0$ and all $x$ on a maximal interval $[-d^-(t),d^+(t)]$. We also consider the renormalized profile function $v$ defined by
\begin{equation}
v(y,\tau)=e^{\tau/2}V(e^{-\tau/2}y,-e^{-\tau}).
\end{equation}
Moreover, in the tip regions we define $Y(\cdot,\tau)$ as the inverse function of $v(\cdot,\tau)$, and let
\begin{equation}
Z(\rho,\tau)= |\tau|^{1/2}\left(Y(|\tau|^{-1/2}\rho,\tau)-Y(0,\tau)\right).
\end{equation}
As we will see, in the central region there is an inwards quadratic bending of the form
\begin{equation}\label{inwards_bending_intro}
v(y, \tau)=\sqrt{2}-\frac{y^2-2}{\sqrt{8} |\tau|} + o(|\tau|^{-1}).
\end{equation}
It will be crucial that our uniform sharp asymptotics in all regions hold for all times where the function $v$ behaves approximately like \eqref{inwards_bending_intro} in an Gaussian $L^2$-sense. To describe this, let us discuss some background and notation. The evolution of $v$ is governed by the one-dimensional Ornstein-Uhlenbeck operator
\begin{equation}
\fL=\partial_y^2-\tfrac{y}{2}\partial_y+1.
\end{equation}
Recall that $\fL$ is a self-adjoint operator on the Hilbert space $\fH:=L^2(\mathbb{R},e^{-y^2/4} dy)$, 
and that
\begin{equation}
\fH = \fH_+\oplus \fH_0 \oplus \fH_-,
\end{equation}
where $\fH_+$ is spanned by the unstable eigenfunctions $\psi_1=1$ and $\psi_2=y$, and $\fH_0$ is spanned by the neutral eigenfunction $\psi_0=y^2-2$. We write $\fp_\pm$ and $\fp_0$ for the orthogonal projections on $\fH_\pm$ and $\fH_0$. Moreover, we fix a small constant $\theta>0$, and consider the cylindrical profile function
\begin{equation}\label{localization}
v_{\cC}=\varphi_{\cC}(v)v,
\end{equation}
where $\varphi_{\cC}$ is a suitable cutoff function that localizes in the cylindrical region $\mathcal{C}=\{ v \geq \tfrac58 \theta\}$. Finally, given any $\tau_0\ll 0$ after a suitable shift in the $x_1x_2$-plane we can assume that
\begin{equation}\label{eq_centering_intro}
\fp_{+}(v_{\cC}(\tau_0)-\sqrt{2})=0.
\end{equation}

\begin{definition}[$\kappa$-quadratic]\label{def_mu_qudratic_intro}
We say that $M$ (normalized as above and centered as in \eqref{eq_centering_intro}) is \emph{$\kappa$-quadratic at time $\tau_{0}$} if its cylindrical profile function $v_{\cC}$ satisfies
\begin{equation}\label{mu_quad_back}
\left\|v_{\cC}(y,\tau_{0})-\sqrt{2}+\frac{y^2-2}{\sqrt{8}|\tau_{0}|}\right\|_{\fH}\leq \frac{\kappa}{|\tau_{0}|},
\end{equation}
and for every $\tau\in[2\tau_0,\tau_0]$ the renormalized hypersurface $\bar{M}_{\tau}=e^{-\tau/2}M_{-e^{-\tau}}$ can be expressed locally as a graph of a function $u(y_1,y_2,\tau)$ over the cylinder $\mathbb{R}^2\times S^1(\sqrt{2})$ with the estimate
\begin{equation}\label{techn_graph_rad}
\sup_{\tau\in [2\tau_0,\tau_0]}\|u(\cdot,\tau)\|_{C^4(B(0,2|\tau_0|^{1/100}))} \leq |\tau_0|^{-1/50}.
\end{equation}
\end{definition}

Here, the small parameter $\kappa>0$ measures the deviation from \eqref{inwards_bending_intro} in the Gaussian $L^2$-norm. The condition involving the bubble-sheet function $u$ is more technical and can be ignored at first reading.\\

Using these notions, we can now precisely state our uniform sharp asymptotics:

\begin{theorem}[uniform sharp asymptotics]\label{thm_unique_asympt_intro}
For every $\eps>0$ there exists $\kappa>0$ and $\tau_{\ast}>-\infty$, such that if $M$ is $\kappa$-quadratic at time $\tau_{0}$ for some $\tau_0 \leq \tau_{\ast}$, then for every $\tau \leq \tau_{0}$ the following holds:
\begin{enumerate}
\item Parabolic region: The renormalized profile function satisfies
\begin{equation}
\left| v(y, \tau)-\sqrt{2}\left(1-\frac{y^2-2}{4 |\tau|}\right) \right| \leq\frac{\eps}{|\tau|} \qquad (|y |\leq \eps^{-1}).
\end{equation}
\item Intermediate region: The function $\bar{v}(z,\tau):=v(|\tau|^{1/2}z,\tau)$ satisfies
\begin{equation}
|\bar{v}(z,\tau)-\sqrt{2-z^2}|\leq \eps,
\end{equation}
on $[-\sqrt{2}+\eps,\sqrt{2}-\eps]$.
\item Tip regions: We have the estimate
\begin{equation}
\| Z(\cdot,\tau)-Z_0(\cdot)\|_{C^{100}(B(0,\eps^{-1}))}\leq \eps,
\end{equation}
where $Z_0(\rho)$ is the profile function of the $2d$-bowl with speed $1/\sqrt{2}$.
\end{enumerate}
Moreover, for every $\tau\leq \tau_0$ the renormalized hypersurface $\bar{M}_{\tau}=e^{-\tau/2}M_{-e^{-\tau}}$ can be expressed locally as a graph of a function $u(y_1,y_2,\tau)$ over the cylinder $\mathbb{R}^2\times S^1(\sqrt{2})$ with the estimate
\begin{equation}
\| u \|_{{C^4}(B(0,2|\tau|^{1/10})}\leq |\tau|^{-1/5}.
\end{equation}
Finally, given any $\kappa>0$, after suitable recentering every $M$ is $\kappa$-quadratic at time $\tau_{0}$, provided $\tau_0=\tau_0(M,\kappa)>-\infty$ is sufficiently negative.
\end{theorem}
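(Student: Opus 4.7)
The plan is to port the Angenent-Daskalopoulos-Sesum spectral framework for ancient ovals \cite{ADS1,ADS2} to this $\mathbb{R}^4$ translator setting. The $\mathrm{SO}(2)$-symmetry from Theorem \ref{thm_blowdown} makes each level set $\Sigma^{-t}$ a surface of revolution parameterized by the scalar profile $V(x,t)$; combined with the diameter bound \eqref{eq_diam_intro} and the quasi-MCF relation \eqref{mean_curv_exp}, this reduces the renormalized dynamics to an essentially $(1{+}1)$-dimensional equation for $v(y,\tau)$ whose linearization around $v\equiv\sqrt{2}$ is $\partial_\tau w=\fL w$. The theorem splits into a \emph{propagation} part (deriving the three regional asymptotics for all $\tau\leq\tau_{0}$ from $\kappa$-quadraticity at $\tau_0$) and an \emph{existence} part (producing a $\kappa$-quadratic time after recentering).

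\textbf{Propagation in the cylindrical region.} Writing $w:=v_{\cC}-\sqrt{2}$ and decomposing $w=w_{+}+w_{0}+w_{-}$ along $\fH=\fH_{+}\oplus\fH_{0}\oplus\fH_{-}$, the evolution takes the form
\begin{equation}
\partial_\tau w=\fL w+N(w)+E,
\end{equation}
where $N$ is the quadratic nonlinearity from the cylinder equation (the $\mathrm{SO}(2)$-reduction contributes an additional $v^{-1}(\partial_y v)^2$ term which is harmless for $v$ near $\sqrt{2}$) and $E$ is the cutoff error supported in $\{v\sim\tfrac{5}{8}\theta\}$, controlled by the outer estimates from Section \ref{sec_coarse_asymptotics}. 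By \eqref{eq_centering_intro} and \eqref{mu_quad_back} one has $w_{+}(\tau_{0})=0$ and $\|w(\tau_{0})\|_{\fH}\leq\kappa/|\tau_{0}|$. A Merle-Zaag ODE comparison for $\|w_{\pm}(\tau)\|_{\fH},\|w_{0}(\tau)\|_{\fH}$ then forces, for $\kappa$ small and $|\tau_{0}|$ large, the neutral mode to dominate: $\|w_{\pm}(\tau)\|_{\fH}=o(\|w_{0}(\tau)\|_{\fH})$ for all $\tau\leq\tau_{0}$. Projecting onto $\psi_{0}=y^{2}-2$ and computing the diagonal Hermite contribution of $N(\alpha\psi_{0})$ yields a scalar ODE $\alpha'(\tau)=c\,\alpha(\tau)^{2}(1+o(1))$ for $\alpha(\tau):=\langle w,\psi_{0}\rangle_{\fH}/\|\psi_{0}\|_{\fH}^{2}$, with $c$ the explicit negative constant for which $\alpha_{0}(\tau):=-1/(\sqrt{8}|\tau|)$ is the unique attractor. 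Integrating backward from $\alpha(\tau_{0})\approx\alpha_{0}(\tau_{0})$ gives $\alpha(\tau)=\alpha_{0}(\tau)(1+o(1))$, and parabolic $L^{\infty}$-regularization on $\{|y|\leq\eps^{-1}\}$ upgrades this to (i).

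\textbf{Intermediate and tip regions.} Estimate (ii) follows from (i) by rescaling: setting $\bar v(z,\tau)=v(|\tau|^{1/2}z,\tau)$, the equation collapses in the singular limit $\tau\to-\infty$ to a first-order ODE whose solutions compatible with the Taylor expansion from (i) at $z\to 0$ and vanishing at the edge of the support are the semicircle $\sqrt{2-z^{2}}$; an ADS-style barrier comparison against rescaled $2d$ ancient ovals from \cite{ADS1} pins $\bar v$ to this profile on $[-\sqrt{2}+\eps,\sqrt{2}-\eps]$. For (iii), rescale around the tip of $\Sigma^{-e^{-\tau}}$ to unit inscribed radius. Noncollapsedness plus convexity ensure uniform control, and the edge-matching condition from (ii) identifies any subsequential $\tau_{j}\to-\infty$ limit of $Z(\cdot,\tau_{j})$ as a noncollapsed convex translator in $\mathbb{R}^{3}$ whose asymptotic profile is $\sqrt{2-z^{2}}$. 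By the classification in \cite{Wang_convex,HIMW} this limit is the $2d$-bowl with speed $1/\sqrt{2}$, and interior Schauder upgrades convergence to $C^{100}$. The same machinery upgrades the $\fH$-bound on $w$ to the $C^{4}$ graphical bound for $u$.

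\textbf{Existence of a $\kappa$-quadratic time, and the main difficulty.} Under the standing assumption that $M$ is neither $\mathbb{R}\times\mathrm{Bowl}_{2}$ nor $\mathrm{Bowl}_{3}$, Theorem \ref{thm_blowdown} forces the tangent flow of $M$ at $-\infty$ to be the bubble-sheet $\mathbb{R}^{2}\times S^{1}(\sqrt{-2t})$, so $\|v_{\cC}(\tau)-\sqrt{2}\|_{\fH}\to 0$. Recentering in the $x_{1}x_{2}$-plane kills $\fp_{+}w(\tau_{0})$ at any fixed $\tau_{0}$, and a Merle-Zaag dichotomy produces either a neutral-dominated regime (in which the ODE analysis from the propagation step locks the leading coefficient to $-1/(\sqrt{8}|\tau_{0}|)$, giving \eqref{mu_quad_back}) or a stable-dominated regime (in which $\|w(\tau_{0})\|_{\fH}=o(1/|\tau_{0}|)$ and \eqref{mu_quad_back} holds trivially). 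The bound \eqref{techn_graph_rad} then comes from smooth bubble-sheet convergence on compact subsets together with interior Schauder. The principal difficulty throughout will be the Merle-Zaag contraction and ODE matching in the cylindrical region: one must couple the spectral analysis tightly with the intermediate semicircle matching and the tip bowl compactness to pin the exact attractor $-1/(\sqrt{8}|\tau|)$ rather than merely $O(|\tau|^{-1})$, uniformly across the $\kappa$-quadratic class --- this is where the bubble-sheet (as opposed to neck) geometry forces genuinely new analysis beyond \cite{ADS1,ADS2}.
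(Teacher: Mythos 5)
There is a genuine gap at the heart of your propagation step. You run the Merle--Zaag/spectral analysis directly on the level-set profile, treating its evolution as a parabolic equation $\partial_\tau w=\fL w+N(w)+E$ with a ``harmless'' nonlinearity. But the level sets of a translator do not evolve parabolically: as in \eqref{evVintro}, the renormalized profile satisfies $v_\tau=\tfrac{v_{yy}}{1+v_y^2}-\tfrac{y}{2}v_y+\tfrac{v}{2}-\tfrac1v+e^{\tau}\cN[v]$, where $\cN[v]$ contains $v_{\tau\tau}$ and $v_{\tau y}$, so the equation is degenerate elliptic and the error can only be controlled with a priori space--time $C^2$ bounds that are not available at this stage (the paper confronts these terms only in Section \ref{sec_profile}, with exponentially weighted norms and interior estimates). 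The paper's proof of this theorem instead performs the Merle--Zaag and ODE analysis on the honest renormalized mean curvature flow, i.e.\ on the bubble-sheet graph $u(y_1,y_2,\tau)$ over $\mathbb{R}^2\times S^1(\sqrt2)$ in the two-variable Gaussian space $\cH$, where the evolution is genuinely parabolic, and transfers between $u$ and $v_{\cC}$ by exploiting that $|\partial_{y_1}u|\leq 2e^{\tau/2}$ is exponentially small thanks to the translator equation (Lemma \ref{prop_cut_compatibility}); in $\cH$ one must in addition single out $y_2^2-2$ among the three neutral modes. Your plan also has no mechanism for producing an admissible graphical radius on all of $(-\infty,\tau_0]$ from data essentially at the single time $\tau_0$: the hypothesis \eqref{techn_graph_rad} only controls $[2\tau_0,\tau_0]$, and the paper needs the Colding--Minicozzi Lojasiewicz--Simon inequality together with quantitative differentiation (Lemma \ref{poly_graph}) to get quantitative graphicality backwards in time; this is exactly what makes the estimates uniform over the $\kappa$-quadratic class, which is the main content of the statement. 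You name this uniformity as ``the principal difficulty'' but leave it unresolved.

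Two further points. First, the dichotomy in your existence step is incorrect: in your ``stable-dominated regime'' with $\|w(\tau_0)\|_{\fH}=o(1/|\tau_0|)$, condition \eqref{mu_quad_back} does not hold trivially --- it fails for small $\kappa$, because the target profile $\sqrt2-\tfrac{y^2-2}{\sqrt8|\tau_0|}$ differs from $\sqrt2$ by a definite multiple of $|\tau_0|^{-1}$ in $\fH$. That case must be excluded, not absorbed; the paper excludes it by invoking neutral-mode dominance for strictly convex noncollapsed translators from \cite{CHH_blowdown} and then integrating the resulting ODE to obtain the exact inward quadratic bending (Theorem \ref{thm_L2.convergence}), from which $\kappa$-quadraticity at sufficiently negative times follows. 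Second, in the tip region you classify the rescaled limit via \cite{Wang_convex,HIMW}, but a priori that limit is only an ancient noncollapsed flow splitting off a line --- not known to be a translator or an entire graph --- so one needs the Brendle--Choi classification \cite{BC}, together with Hamilton's Harnack inequality to pin down the tip speed $1/\sqrt2$, as in the paper's argument.
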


In particular, the uniform sharp asymptotics imply that the level sets $\Sigma^h$ satisfy the estimate
\begin{equation}
\left| \frac{d^\pm(h)}{\sqrt{2h\log h}}-1 \right| \leq \eps.
\end{equation}
Also, while we initially only assumed that we have the graphical radius $|\tau|^{1/100}$ for $2\tau_0\leq\tau\leq \tau_0$, the theorem shows that we actually get the improved graphical radius $|\tau|^{1/10}$ for all $\tau\leq \tau_0$.\\

To prove the uniform sharp asymptotics, we carry out a fine bubble-sheet analysis, which generalizes the fine neck analysis from \cite{ADS1}. Roughly speaking, this can be done by   carefully analyzing the evolution of  $u(y_1,y_2,\tau)$, which is governed by the two-dimensional Ornstein-Uhlenbeck operator 
\begin{equation}
\mathcal L=\partial_{y_1}^2+\partial_{y_2}^2-\tfrac{y_1}{2}\partial_{y_1}-\tfrac{y_2}{2}\partial_{y_2} +1.
\end{equation}
The most challenging part is to establish that the estimates are in fact uniform for all $M$ that are $\kappa$-quadratic at time $\tau_0$. To this end, remembering Definition \ref{def_mu_qudratic_intro} ($\kappa$-quadratic) we have to (i) upgrade information at the single time $\tau_0$ to information for all $\tau\leq \tau_0$, and (ii) upgrade information about profile function $v(y,\tau)$ in the Hilbert space $\fH$ to information about the bubble-sheet graph function $u(y_1,y_2,\tau)$ in the larger Hilbert space $\mathcal{H}\cong \fH\otimes \fH$. To accomplish (i) we use Merle-Zaag type arguments. To accomplish (ii) we exploit the fact that $|\partial_{y_1}u |$ is exponentially small on our bubble-sheet thanks to the translator equation.

\bigskip

Our next key result says that noncollapsed translators in $\mathbb{R}^4$ are uniquely characterized by the spectral projection of their cylindrical profile function to the unstable and neutral space:

\begin{theorem}[spectral uniqueness]\label{thm:uniqueness_eccentricity_intro} There exist $\kappa>0$ and $\tau_{\ast}>-\infty$ with the following significance:
If $M^1$ and $M^2$ are noncollapsed translators in $\mathbb{R}^4$ (normalized and centered as before) that are $\kappa$-quadratic at time $\tau_0$, where $\tau_0 \leq \tau_{\ast}$, and if their cylindrical profile functions $v^1_{\cC}$ and $v^2_{\cC}$ satisfy
\begin{equation}\label{spec_cent_intro}
\fp_{+}(v^1_{\cC}(\tau_0)-v^2_{\cC}(\tau_0))=0\;\;\;\;\textrm{(equal spectral center)},
\end{equation}
and
\begin{equation}\label{spec_ecc_intro}
\fp_{0}(v^1_{\cC}(\tau_0)-v^2_{\cC}(\tau_0))=0\;\;\;\; \textrm{(equal spectral eccentricity)},
\end{equation}
then
\begin{equation}
M^1=M^2.
\end{equation} 
\end{theorem}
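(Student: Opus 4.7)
The plan is to adapt the spectral uniqueness strategy of Angenent-Daskalopoulos-Sesum to the bubble-sheet translator setting. I would consider the difference $w(y,\tau):=v^1_{\cC}(y,\tau)-v^2_{\cC}(y,\tau)$ on $(-\infty,\tau_{0}]$. By Theorem \ref{thm_unique_asympt_intro}, both $v^i-\sqrt{2}$ are of order $|\tau|^{-1}$ in $\fH$, with identical leading term $-\tfrac{y^2-2}{\sqrt{8}|\tau|}$; in particular $\|w(\tau)\|_{\fH}\leq C|\tau|^{-1}\to 0$. Subtracting the graph equations for $v^i$ coming from the translator equation, $w$ satisfies
\begin{equation*}
\partial_{\tau}w=\fL w+E(\tau)w,
\end{equation*}
where the linear operator $E(\tau)$ has operator norm $O(|\tau|^{-1})$ on $\fH$, reflecting its quadratic dependence on the small quantities $v^i-\sqrt{2}$.

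Next, I would decompose $w=w_{+}+w_{0}+w_{-}$ along $\fH_+\oplus\fH_0\oplus\fH_-$ and introduce $X(\tau)=\|w_+\|_{\fH}$, $Y(\tau)=\|w_0\|_{\fH}$, $Z(\tau)=\|w_-\|_{\fH}$. The spectral gap of $\fL$ (positive eigenvalues $\geq\tfrac12$, negative eigenvalues $\leq-\tfrac12$) then produces the standard system of differential inequalities
\begin{align*}
\dot X &\geq \tfrac{1}{2}X-\delta(\tau)(X+Y+Z),\\
|\dot Y| &\leq \delta(\tau)(X+Y+Z),\\
\dot Z &\leq -\tfrac{1}{2}Z+\delta(\tau)(X+Y+Z),
\end{align*}
with $\delta(\tau)=O(|\tau|^{-1})\to 0$. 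The Merle-Zaag ODE lemma applied on $(-\infty,\tau_{0}]$ then says that, unless $w\equiv 0$, exactly one of $X$, $Y$, $Z$ asymptotically dominates the other two.

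In the $Z$-dominant scenario, the ODI reduces to $\dot Z\leq-\tfrac{1}{3}Z$, so backward integration forces $Z(\tau)\geq e^{(\tau_{0}-\tau)/3}Z(\tau_{0})$, contradicting the uniform bound $Z\leq\|w\|_{\fH}\leq C|\tau|^{-1}$. In the $X$-dominant scenario, the hypothesis $\fp_{+}w(\tau_{0})=X(\tau_{0})=0$ combined with the coercive ODI for $X$ propagates $X(\tau_{0})=0$ backward in time: iterated Gronwall-type estimates on $\dot X\geq\tfrac{1}{3}X-O(\delta/|\tau|)$ force $X\equiv 0$ on $(-\infty,\tau_{0}]$, contradicting dominance. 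The most delicate case is $Y$-dominant: here the hypothesis $\fp_{0}w(\tau_{0})=Y(\tau_{0})=0$ must be combined with a refined ADS-type analysis of the effective second-order ODE for $Y$ (governed by the quadratic bending \eqref{inwards_bending_intro} that is shared by $v^1$ and $v^2$) to conclude $Y\equiv 0$, which again contradicts dominance.

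Having ruled out all three scenarios, $w\equiv 0$, so $v^1_{\cC}\equiv v^2_{\cC}$. Using the $\mathrm{SO}(2)$-symmetry from Theorem \ref{thm_blowdown}, the cylindrical parts of the level sets of $M^1$ and $M^2$ coincide; the sharp tip asymptotics of Theorem \ref{thm_unique_asympt_intro}(iii) then force the tip regions to agree with the same rescaled 2d-bowl, and the resulting equality extends to $M^1=M^2$ (using analyticity of translators to pass from equality on a large region to global equality, if needed). The main obstacle is the $Y$-dominant case: the absence of a spectral gap at $\lambda=0$ means that the quadratic nonlinear corrections (which encode precisely the oval-bowl eccentricity) contribute at leading order, so the Merle-Zaag framework must be sharpened to track these second-order dynamics, generalizing the ADS analysis from the 1-dimensional neck to the 2-dimensional bubble-sheet setting.
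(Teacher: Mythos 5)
There are genuine gaps, and they sit exactly where the real work of the theorem lies. First, your linearization is not available in the form you assume: the difference of the two translator equations does not give $\partial_\tau w=\fL w+E(\tau)w$ with $E(\tau)$ a bounded operator of norm $O(|\tau|^{-1})$ on $\fH$. Because translators satisfy an elliptic (not parabolic) equation in the renormalized variables, the error term is $e^\tau\mathcal{F}[w]$ and involves $w_{\tau\tau}$, $w_{y\tau}$, $w_{yy}$ of the difference itself; these cannot be controlled by $\|w\|_{\fH}$, which is why the paper introduces exponentially weighted $C^2$-norms and must close the loop with a separate forward-in-time estimate (comparison principle for translators applied to the Hausdorff distance of level sets, plus interior $L^\infty$ and Schauder estimates) before the backward "decay estimate" can be used. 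Second, $w_\cC$ is a truncated quantity: its evolution equation contains cutoff terms supported in the transition region $\{\tfrac58\theta\le v\le\tfrac78\theta\}$ which are not controlled by $\|w_\cC\|_{\fH}$ alone. Handling them forces a genuine tip-region analysis (weighted norm $\|\cdot\|_{2,\infty}$ with the Gaussian-type weight $\mu$, a weighted Poincar\'e inequality built on the almost quadratic concavity estimate $(v^2)_{yy}\le e^\tau v^{-2}$, and equivalence of norms in the transition region), which your proposal omits entirely. For the same reason your final step fails: knowing $v^1_\cC\equiv v^2_\cC$ only identifies the solutions where $v\ge\tfrac58\theta$; the sharp tip asymptotics give $\eps$-closeness to a bowl, not equality, so one cannot conclude $M^1=M^2$ without the tip-region estimates and a final comparison-principle argument.

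Third, the spectral hypotheses at the single time $\tau_0$ do not enter the way you use them. A Merle--Zaag trichotomy on $(-\infty,\tau_0]$ cannot be run here in the soft form you describe, and in particular the neutral-mode case --- which is the actual crux, since there is no spectral gap and the quadratic dynamics encode the one-parameter family of oval bowls --- is left as a placeholder ("refined ADS-type analysis"). In the paper this is done by deriving the ODE $\tfrac{d}{d\tau}a=\tfrac{2a}{|\tau|}+F$ for $a(\tau)=\langle w_\cC,\psi_0\rangle_{\fH}$, integrating it from the datum $a(\tau_0)=0$, and estimating the forcing $F$ through the full coercivity estimate (which itself uses $\fp_+w_\cC(\tau_0)=0$ via the general energy inequality for $\fL$); none of this is a consequence of the three differential inequalities you write down. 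So while your outline correctly identifies the ADS philosophy, the proposal is missing the mechanisms that make the argument work for translators: the coupled cylindrical/tip energy scheme with intrinsic localization, the control of the non-parabolic error via weighted $C^2$-norms and the forward comparison estimate, and a genuine treatment of the neutral projection.
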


The statement of Theorem \ref{thm:uniqueness_eccentricity_intro} (spectral uniqueness) is similar to the main technical result of \cite{ADS2}. Some important technical differences are that Theorem \ref{thm:uniqueness_eccentricity_intro} is uniform across all $\kappa$-quadratic solutions and that instead of simply truncating the difference of profile functions, we use the intrinsic localization \eqref{localization}. This is crucial to ensure that having equal spectrum is manifestly an equivalence relation.\\

The biggest difference, however, is how these theorems can be applied. For the ancient ovals it was shown in \cite[Section 4]{ADS2} that by a suitable rigid motion and scaling one can always arrange that the truncated difference of the profile functions satisfies the conditions $\fp_{+}( w_{\cC}(\tau_0))=0$  and $\fp_{0} (w_{\cC}(\tau_0))=0$. This of course was only possible since the ancient ovals are -- at the end of the day -- unique up to rigid motion and scaling. In contrast, the HIMW translators are a genuinely distinct one-parameter family of solutions. While an easy shift in the $x_1x_2$-plane still allows us to impose our usual centering condition \eqref{eq_centering_intro}, which in particular implies \eqref{spec_cent_intro}, dealing with the spectral eccentricity is far more subtle. In particular, since it is not known a priori whether or not the HIMW family is unique and continuous, while all tip curvatures $k$ are realized, it is highly nonobvious whether or not all spectral eccentricities are realized.\\

In Section \ref{sec_HIMW}, we overcome the above difficulties and complete the proof of the main classification theorem, modulo the proof of the spectral uniqueness theorem, which will be proven in the last section. A key point is to show that the Hoffman-Ilmanen-Martin-White construction in fact realizes all eccentricities. To describe this, recall from \cite{HIMW}  that for every ellipsoidal parameter $a\in [0,\frac{1}{3}]$ and every height $h<\infty$, there exists an $\mathrm{SO}(2)$-symmetric translator-with-boundary $M^{a,h}$, with tip at the origin  and whose boundary lies at height $x_1=h$ and is an ellipse of the form
$a^2 x_2^2 + (\tfrac{1-a}{2})^2 x_3^2 + (\tfrac{1-a}{2})^2 x_4^2= R^2$,
where $R=R(a,h)$. We then define the \emph{HIMW class} $\mathcal{A}$ as the collection of all possible limits, namely\footnote{A priori this slightly generalizes the construction from Hoffman-Ilmanen-Martin-White, but a posteriori it will be equivalent.}
\begin{equation}
\mathcal{A}:= \left\{ \lim_{i\to \infty} M^{a_i,h_i} \, |\,  a_i \in [0,1/3] \textrm{ and } h_i\to \infty \right\}.
\end{equation}
We first establish some basic properties of this class and show that every member of $\mathcal{A}$ is noncollpased. Hence, the above results apply to the class $\mathcal{A}$. Also, given any $\tau_0$, it is easy to see that there is a unique shift in $x_1$-direction such that our centering condition \eqref{eq_centering_intro} holds. We denote this shifted class by $\mathcal{A}'$.\\
We then consider the eccentricity map
\begin{equation}
\mathcal{E}:\mathcal{A}' \rightarrow \mathbb{R}, \quad M \mapsto  \langle v^M_{\mathcal{C}}(\tau_0),2-y^2 \rangle_{\fH} .
\end{equation}
Observe that the expected value of $\mathcal{E}$ for translators satisfying the sharp asymptotics at time $\tau_0$ is
\begin{equation}
e_0 =\frac{4\sqrt{2\pi}}{|\tau_0|}.
\end{equation}
Our next theorem shows that in fact all values in a neighborhood of definite size are realized:

\begin{theorem}[existence with prescribed eccentricity]\label{all_achived_thm_intro} 
There exist a constants $\kappa>0$ and $\tau_\ast>-\infty$ with the following significance. For every $\tau_0\leq \tau_\ast$ and every $x\in \mathbb{R}$ with $|x-e_0|\leq \tfrac{\kappa}{10|\tau_0|}$ there exists a shifted HIMW translator $M\in  \mathcal{A}'$ that is $\kappa$-quadratic at time $\tau_0$ and satisfies
\begin{equation}
\mathcal{E}(M)=x.
\end{equation}
\end{theorem}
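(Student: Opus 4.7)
The strategy is to apply the intermediate value theorem to the continuous eccentricity map along a path in $\mathcal{A}'$ interpolating between the shifted copies of $\mathbb{R}\times\mathrm{Bowl}_2$ and $\mathrm{Bowl}_3$, both of which lie in $\mathcal{A}$ (as the limits $\lim M^{a_i,h_i}$ with $a_i\to 0$ and with $a_i\equiv 1/3$, respectively). A direct computation with the renormalized profile functions yields
\[
\mathcal{E}(\mathbb{R}\times\mathrm{Bowl}_2^{\mathrm{sh}})=O(e^{\tau_0}|\tau_0|)
\]
(since the profile $v$ is constant in $y$ and tends exponentially fast to $\sqrt{2}$, while $\langle 1,2-y^2\rangle_\fH=0$), whereas
\[
\mathcal{E}(\mathrm{Bowl}_3^{\mathrm{sh}})\to c:=\int_{-2}^{2}\sqrt{4-y^2}\,(2-y^2)\,e^{-y^2/4}\,dy>0
\]
as $\tau_0\to-\infty$, with $c$ independent of $\tau_0$. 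Since $e_0=4\sqrt{2\pi}/|\tau_0|\to 0$, for $\tau_0\leq\tau_\ast$ sufficiently negative the target interval $[e_0-\kappa/(10|\tau_0|),\,e_0+\kappa/(10|\tau_0|)]$ lies strictly between these endpoint eccentricities.

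To execute the IVT, I would use the translators-with-boundary $\{M^{a,h}\}_{a\in[0,1/3]}$ at each fixed large height $h$, which depend continuously on $a$ in the smooth topology by standard regularity of the HIMW variational construction. A unique small shift in the $x_1$-direction (the $x_2$-centering being automatic from reflection symmetry) pins $\fp_+(v_\mathcal{C}(\tau_0)-\sqrt{2})=0$, yielding a continuous family $a\mapsto M^{a,h,\mathrm{sh}}$. Continuity of $\mathcal{E}$ together with IVT then produces $a(x,h)\in[0,1/3]$ realizing $\mathcal{E}(M^{a(x,h),h,\mathrm{sh}})=x$ for every $x$ in the target interval. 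Letting $h_i\to\infty$ and extracting a smoothly convergent subsequence by standard compactness for noncollapsed translators, we obtain $M=\lim M^{a(x,h_i),h_i,\mathrm{sh}}\in\mathcal{A}'$ with $\mathcal{E}(M)=x$; the continuity of $\mathcal{E}$ passes to the limit because the cutoff $\varphi_\mathcal{C}$ confines $v_\mathcal{C}$ to a bounded $y$-region where smooth convergence implies $\fH$-convergence.

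The main obstacle is verifying that $M$ is $\kappa$-quadratic at $\tau_0$. The centering kills $\fp_+(v_\mathcal{C}-\sqrt{2})$, and the identity
\[
\fp_0\!\left(v_\mathcal{C}-\sqrt{2}+\tfrac{y^2-2}{\sqrt{8}|\tau_0|}\right)=\frac{e_0-\mathcal{E}(M)}{\|\psi_0\|_\fH^{2}}\,\psi_0
\]
(from $\langle v_\mathcal{C},\psi_0\rangle=-\mathcal{E}(M)$ and $\langle 1,\psi_0\rangle=0$) bounds the $\fH_0$-projection by $\kappa/(40\pi^{1/4}|\tau_0|)\ll\kappa/|\tau_0|$. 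What remains is the stable projection $\fp_-(v_\mathcal{C}-\sqrt{2})$ and the graphical bound \eqref{techn_graph_rad}. For these I would invoke the final clause of Theorem \ref{thm_unique_asympt_intro}, which for each $M\in\mathcal{A}'$ produces a threshold $T(M,\kappa)$ such that $M$ is $\kappa$-quadratic at every $\tau\leq T(M,\kappa)$. The delicate step---the crux of the argument---is to show that $T(M,\kappa)$ can be chosen uniformly over the branch of HIMW translators realizing the target eccentricities; I plan to handle this via compactness of $\{M^{a,h}\}$ combined with continuity of $\mathcal{E}$, exploiting that the constructed $M$ are necessarily \emph{interior} members of the family, smoothly bounded away from the degenerate endpoints $\mathbb{R}\times\mathrm{Bowl}_2$ and $\mathrm{Bowl}_3$ (where $T$ would blow up to $-\infty$), since their eccentricity is pinned near $e_0$ rather than near $0$ or $c$.
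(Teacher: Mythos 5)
Your scheme founders at exactly the point you flag as the crux, and the patch you propose does not work. Knowing that $\mathcal{E}(M)=x$ with $|x-e_0|\leq \tfrac{\kappa}{10|\tau_0|}$ controls only the projection of $v_{\cC}(\tau_0)-\sqrt{2}$ onto the single neutral eigenfunction $y^2-2$ (your identity for $\fp_0$ is fine, and the centering handles $\fp_+$), but $\kappa$-quadraticity at the \emph{prescribed} time $\tau_0$ also requires smallness of the stable part $\fp_-\big(v_{\cC}(\tau_0)-\sqrt2+\tfrac{y^2-2}{\sqrt8|\tau_0|}\big)$ and the graphical bound over $[2\tau_0,\tau_0]$. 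Nothing in "eccentricity pinned near $e_0$ at one time" gives this: a priori the global IVT at height $h_i$ may produce parameters $a(x,h_i)$ at which the translator-with-boundary, and hence the limit $M\in\mathcal{A}'$, has the right neutral-mode coefficient at $\tau_0$ while being far from the cylinder in the stable directions or failing the graphical-radius condition at that time (the map $a\mapsto\mathcal{E}$ need not be monotone, and the crossing need not occur in the "asymptotic regime"). Your appeal to the final clause of the uniform-asymptotics theorem only yields a threshold $\tau_0(M,\kappa)$ \emph{after} the translator is given, whereas here $\tau_0$ is fixed in advance; and "interior member of the family, bounded away from the degenerate endpoints" is not a compactness statement that converts into a uniform threshold --- the set of translators with $\mathcal{E}(\cdot)(\tau_0)\in I$ is exactly the set whose structure the theorem is trying to establish, so invoking it is circular.

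For comparison, the paper does not run the IVT across the whole family $a\in[0,1/3]$. It fixes one reference translator $M_0\in\mathcal{A}'$ that is $\tfrac{\kappa}{100}$-quadratic at $\tau_0$ (this exists for every $\tau_0\leq\tau_\ast$ by Corollary \ref{cor_strong_kappa} and Proposition \ref{shift_to_zero_plus} applied to a single fixed translator), writes $M_0$ as a limit of shifted translators-with-boundary $M^{c_i,\xi_i}$, and applies the IVT only on the \emph{maximal} parameter interval $[a_i,b_i]\ni c_i$ on which both $\kappa$-quadraticity at $\tau_0$ and $\mathcal{E}\in I$ persist. The real work, which replaces your missing step, is Claim \ref{claim_endpoints}: at the endpoints it is the eccentricity constraint, not the quadraticity constraint, that saturates. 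This is proved by combining Theorem \ref{thm_precise_from_one_time} ($\kappa$-quadraticity at one time implies strong quadraticity, hence strict inequality in the graphical-radius condition) with Corollary \ref{projection_estimate} (the stable projection of a $\kappa$-quadratic solution is automatically $O(\kappa/(100|\tau_0|))$), so that $\mathcal{E}\in I$ plus the centering forces the quadraticity inequality to be strict. A further case, $\mathcal{E}(M_i^{a_i})=\mathcal{E}(M_i^{b_i})$, is excluded using spectral uniqueness together with the Rado-type monotonicity of the tip-curvature map and three reference translators with distinct tip curvatures --- a subtlety absent from your outline. Without an argument of this type (anchored at a solution already known to be quadratic at $\tau_0$, and propagating quadraticity along the parameter interval rather than deducing it from the eccentricity value), your construction produces a limit with the right eccentricity but no verification that it is $\kappa$-quadratic at $\tau_0$, so the theorem as stated is not established.
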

The theorem, applied in combination with the other key results from above, has the following two fundamental consequences:
\begin{enumerate}[(A)] 
\item Every noncollapsed translator in $\mathbb{R}^4$ is, up to rigid motion and scaling, a member of the HIMW class $\mathcal{A}$.\label{fund_a}
\item The space $\mathcal{A}$ is homeomorphic to an interval.\label{fund_b}
\end{enumerate}
Let us sketch how these two fundamental facts follow: Given any noncollapsed translator $M\subset \mathbb{R}^4$ that is neither a 3d bowl nor splits off a line, by Theorem \ref{thm_unique_asympt_intro} (uniform sharp asymptotics), choosing $\tau_0\ll 0$, normalizing and shifting, we can arrange that the centering condition \eqref{eq_centering_intro} holds and that $M$ is $\tfrac{\kappa}{100}$-quadratic at time $\tau_0$. Then, by Theorem \ref{all_achived_thm_intro} (existence with prescribed eccentricity) we can find a $\kappa$-quadratic  shifted HIMW translator $M'\in  \mathcal{A}'$ with $\mathcal{E}(M')=\mathcal{E}(M)$. Finally, Theorem \ref{thm:uniqueness_eccentricity_intro} (spectral uniqueness) implies that $M=M'$, which yields \ref{fund_a}. Moreover, a similar argument, now also using the fact that our sharp asymptotics are uniform, in fact shows that every point in $\mathcal{A}$ has a neighborhood that is homeomorphic to an interval, which yields \ref{fund_b}.\\

Let us now explain our strategy to prove Theorem \ref{all_achived_thm_intro}.  We fix $\tau_0\ll 0$ and denote by $\mathcal{B}_\kappa$ the set of all translators $M\in\mathcal{A}'$ that are $\kappa$-quadratic at time $\tau_0$. By Theorem \ref{thm:uniqueness_eccentricity_intro} (spectral uniqueness) the restricted eccentricity map $\mathcal{E}|_{\mathcal{B}_\kappa}:\mathcal{B}_\kappa\to \mathbb{R}$ is injective. Our goal is to show that the image of $\mathcal{E}|_{\mathcal{B}_\kappa}$ contains the interval
\begin{equation}
I:=\left[e_0-\frac{\kappa}{10|\tau_0|},e_0+\frac{\kappa}{10|\tau_0|}\right]\, .
\end{equation}
We choose a reference translator $M_0$ that is $\tfrac{\kappa}{100}$-quadratic at time $\tau_0$. 
Observe that  $\mathcal{E}(M_0)$ is contained in the interior of $I$. Also recall that we can express $M_0$ as a limit of a sequence $M_i$ of shifted HIMW translators-with-boundary with ellipsoidal parameters $c_i$ and height $h_i$.\\
We then run a continuity argument as follows:
For each $i$, we choose the maximal  interval $[a_i,b_i]$ containing $c_i$ such that for every $a\in [a_i,b_i]$ the  shifted  HIMW translators-with-boundary $M^a_i$ with ellipsoidal parameters $a$ and height $h_i$ satisfies, roughly speaking, the following two properties:
\begin{enumerate}[(i)]
\item $M^a_i$ is $\kappa$-quadratic at time $\tau_0$, and\label{kappa_quadr_boarderline}
\item $\mathcal{E}(M^a_i)\in I$.
\end{enumerate}
Since the HIMW construction at any finite height $h_i$ depends continuously on the ellipsoidal parameter, it is not hard to see that $0<a_i<c_i<b_i<\tfrac{1}{3}$. We then argue that for all large $i$ the endpoint elements are mapped to the endpoints of the interval, i.e.
\begin{equation}\label{cont_claim_eq_intro}
\mathcal{E}(M^{a_i}_i)\in  \partial I\quad\textrm{ and }\quad  \mathcal{E}(M^{b_i}_i)\in  \partial I\, .
\end{equation}
To show this, we have to exclude the possibility that \ref{kappa_quadr_boarderline} gets saturated at the endpoint elements, which we do using Theorem \ref{thm_unique_asympt_intro} (uniform sharp asymptotics) together with the fact that $\mathcal{E}(M^a_i)\in I$ . For this step, it is crucial that our notion of $\kappa$-quadraticity only depends on the behaviour of the cylindrical profile function at the single time $\tau_0$, and that our sharp asymptotics are uniform among such $\kappa$-quadratic solutions. Furthermore, invoking in addition a Rado-type argument that will be discussed further below, we show that
\begin{equation}\label{bound_diff}
\mathcal{E}(M^{a_i}_i)\neq \mathcal{E}(M^{b_i}_i).
\end{equation}
Hence, by the intermediate value theorem for each $x\in  I$  there exists some $d_i\in [a_i,b_i]$ with $\mathcal{E}(M^{d_i}_i)=x$. Finally, passing to a subsequential limit, we get the desired translator $M\in \mathcal{B}_\kappa$ satisfying $\mathcal{E}(M)=x$.\\

Having established the two fundamental facts \ref{fund_a} and \ref{fund_b}, our final key step is:

\begin{theorem}[{monotonicity and analyticity, c.f. \cite{CHH_tip}}]\label{thm_mon_analyt}
The tip curvature map $k:\mathcal{A}\to [0,1/3]$ is monotone and analytic.
\end{theorem}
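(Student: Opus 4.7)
By the fundamental facts (A), (B) established in Section~\ref{sec_HIMW} together with Theorem~\ref{all_achived_thm_intro}, the class $\mathcal{A}$ is homeomorphic to a closed interval, with endpoints $\mathbb{R}\times\mathrm{Bowl}_2$ (where $k = 0$) and $\mathrm{Bowl}_3$ (where $k = 1/3$). The plan is to show that $k$ is real-analytic along $\mathcal{A}$ with nowhere-vanishing derivative; since $k$ is then a continuous function on a connected interval with nonvanishing derivative and distinct endpoint values, monotonicity follows.

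\textbf{Analyticity.} Fix any $M_\ast \in \mathcal{A}$. The translator equation, written for a graph $x_1 = f(x_2, x_3, x_4)$, is quasilinear elliptic with coefficients polynomial in $\nabla f$, so its solutions depend real-analytically on parameters. Combining the continuity argument from the proof of Theorem~\ref{all_achived_thm_intro} with the implicit function theorem in the space of translators --- where the pinning is provided by the uniform sharp asymptotics of Theorem~\ref{thm_unique_asympt_intro} and nondegeneracy is provided by Theorem~\ref{thm:uniqueness_eccentricity_intro} --- we obtain a real-analytic local parameterization $\{M_s\}_{s\in(-\delta,\delta)}$ of a neighborhood of $M_\ast$ in $\mathcal{A}$. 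Since the tip is the nondegenerate minimum of $x_1|_{M_s}$, both its location and its principal curvatures vary real-analytically in $s$, and hence so does $s \mapsto k(M_s)$.

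\textbf{Monotonicity.} It suffices to show $dk/ds \neq 0$ along each such family. Let $\xi = \partial_s M_s|_{s=0}$ denote the normal Jacobi field, which satisfies the linearized translator equation $L\xi = 0$. After renormalizing so that the tip is pinned at the origin, a short computation gives
\begin{equation}
\frac{dk}{ds}\bigg|_{s=0} = (\partial_{x_2}^2 \xi)(\mathrm{tip}).
\end{equation}
Since $\{M_s\}$ genuinely parameterizes $\mathcal{A}$ and not an orbit of rigid motion or scaling (all of which yield $dk/ds = 0$ trivially), the Jacobi field $\xi$ is non-trivial. Moreover, the first variation of the eccentricity, $d\mathcal{E}/ds$, must be non-zero --- otherwise the spectral uniqueness theorem (Theorem~\ref{thm:uniqueness_eccentricity_intro}), applied at the linear level, would force the family to be trivial. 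Consequently, in the spectral decomposition at the cylindrical end, $\xi$ has a non-zero projection onto the neutral eigenspace $\fH_0$ spanned by $\psi_0 = y^2 - 2$. A matched asymptotic analysis across the cylindrical, intermediate, and tip regions --- the linearized counterpart of Theorem~\ref{thm_unique_asympt_intro} --- then propagates this non-vanishing neutral mode to a nonzero second-order jet of $\xi$ at the tip, yielding $dk/ds \neq 0$.

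\textbf{Main obstacle.} The central difficulty is the linearized three-region matched asymptotic analysis: ruling out the degenerate scenario where a Jacobi field has nontrivial cylindrical asymptotic behavior yet vanishes to high order at the tip. This requires a careful reprise of the fine bubble-sheet analysis for the Jacobi equation, and in particular the explicit identification of the $\psi_0$-mode with a specific profile in the tip region, namely the linearized variation of the $2$d-bowl profile $Z_0$ appearing in Theorem~\ref{thm_unique_asympt_intro}. This is the technical heart of the argument and is carried out in the companion paper~\cite{CHH_tip}.
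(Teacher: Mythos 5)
Your route differs from the paper's, and it has a genuine gap at its core. For monotonicity the paper does not argue infinitesimally at all: it proves Theorem \ref{rado_thm} by a Rado-type nodal-set argument applied to the \emph{finite-height} translators-with-boundary $M^{a,\xi}$ — if two ellipsoidal parameters gave the same tip curvature, the difference $w=u_{a_1,\xi}-u_{a_2,\xi}$ would vanish to order $d\geq 3$ at the origin, its leading Taylor polynomial is a rotationally invariant harmonic polynomial (a Legendre polynomial in disguise) whose zero set emanates in $2d>4$ rays, while the two boundary ellipses meet in only $4$ points; a Jordan-curve/pigeonhole argument then produces a nodal domain not touching the boundary, contradicting the maximum principle. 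Weak monotonicity of $k$ on $\mathcal{A}$ is then inherited in the limit (Corollary \ref{uniq_2_thm_unshift}), and only the \emph{combination} ``weakly monotone $+$ analytic $\Rightarrow$ strictly monotone'' is used; the nonvanishing of a derivative of $k$ is never proved directly. Your proposal instead tries to prove $dk/ds\neq 0$ pointwise via a Jacobi field, and both pivotal steps of that argument are unjustified. First, ``spectral uniqueness applied at the linear level'' is a non sequitur: Theorem \ref{thm:uniqueness_eccentricity_intro} is injectivity of a nonlinear classification map, and injectivity of a map does not imply injectivity (or nondegeneracy) of its differential, so you cannot conclude $d\mathcal{E}/ds\neq 0$ this way. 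Second, even granting a nonzero projection of $\xi$ onto $\fH_0$, the implication that this forces a nonzero second-order jet of $\xi$ at the tip (hence $dk/ds\neq 0$) is exactly the hard statement you would need to prove; it is asserted and then delegated to \cite{CHH_tip}, but the companion paper (as described here) supplies analyticity via Lyapunov--Schmidt reduction and linearized weighted estimates, not a nonvanishing-derivative statement, so nothing in the available references closes this step.

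The analyticity half of your argument has a related soft spot: you invoke an implicit function theorem ``with nondegeneracy provided by Theorem \ref{thm:uniqueness_eccentricity_intro},'' which again conflates injectivity of the nonlinear map with invertibility of a linearization; moreover $\mathcal{A}$ is a priori only a compactness-defined moduli class, and producing a real-analytic parameterization is precisely the content deferred to \cite{CHH_tip}, so deferring it is acceptable but the stated mechanism is not. The net effect is that your proposal replaces the paper's elementary, self-contained Rado argument with a linearized matched-asymptotics program whose key nondegeneracy claims are unproven; as written, neither monotonicity nor the nonvanishing of $dk/ds$ is established.
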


Since every monotone analytic function is strictly monotone, this is indeed sufficient to conclude our main classification theorem (Theorem \ref{thm_main}) and its corollary (Corollary \ref{cor_sing}).\\

To establish monotonicity we use a Rado-type argument. This method, going back to \cite{Rado}, is traditionally used in the study of 2-dimensional surfaces see e.g. \cite{Gulliver,Cheng,MeeksYau,Ros,Brendle_sphere,HIMW}. Here, we observe that the method can be adapted to our setting of 3-dimensional hypersurfaces with circular symmetry. Finally, analyticity follows from Lyapunov-Schmidt reduction and a linearized version of the estimates from Section \ref{sec_profile}. This proof of analyticity is rather standard but also rather lengthy, and will thus be given in a separate technical paper \cite{CHH_tip}.\footnote{Analyticity is only needed to relate the spectral eccentricity and the tip curvature. Readers who are happy with a classification of noncollapsed translators in terms of their spectral eccentricity can of course simply skip the paper about analyticity.}
\\

Finally, in Section \ref{sec_profile}, we prove Theorem \ref{thm:uniqueness_eccentricity_intro} (spectral uniqueness), by adapting the argument from \cite{ADS2}  -- with some important differences and additional steps -- to our setting. To explain the underlying mechanism, recall that by equation \eqref{mean_curv_exp} the level sets almost evolve by mean curvature flow. More precisely, the profile function $V$ of the level sets of our translator satisfies the equation
\begin{equation}\label{evVintro}
V_t=\frac{(1+V_t^2)V_{xx}+(1+V_x^2)V_{tt}-2V_xV_tV_{xt}}{1+V_x^2+V_t^2}-\frac{1}{V}.
\end{equation}
For comparison, the profile function $U$ of the ancient ovals in $\mathbb{R}^3$ would satisfy
\begin{equation}\label{evUintro}
U_t = \frac{U_{xx}}{1+U_x^2}-\frac{1}{U}.
\end{equation}
Heuristically, thanks to the  vanishing asymptotic slope one hopes that the functions $U$ and $V$ behave quite similarly. However, while \eqref{evUintro} is an uniformly parabolic PDE, equation \eqref{evVintro} is an elliptic PDE with degenerating coefficients, so some careful arguments are needed to make these heuristics precise.\\

In terms of the renormalized profile function our evolution equation takes the form
\begin{align}
v_\tau=&\frac{v_{yy}}{1+v_y^2}-\frac{y}{2}v_y+\frac{v}{2}-\frac{1}{v}+ e^{\tau}\cN[v],
\end{align}
where $\cN$ is a certain nonlinear error term, involving second derivatives with respect to both $y$ and $\tau$. Our inverse profile function satisfies
\begin{equation}
Y_\tau=\frac{Y_{vv}}{1+Y_v^2}+\frac{1}{v}Y_v +\frac{1}{2}(Y-vY_v)+e^{\tau}\mathcal{M}[Y],
\end{equation}
for another nonlinear term $\mathcal{M}$, which we also view as error term.\\

We first prove that our profile function is almost quadratically concave, namely
\begin{equation}
(v^2)_{yy}\leq \frac{e^\tau}{v^2}.
\end{equation}
This is based on the maximum principle, and thus some care is needed to handle the error term as opposed to the analysis of the ovals in \cite{ADS2}, where the profile function was exactly quadratically concave. The almost quadratic concavity estimate has the important corollary that $Y\sim Ce^{-v^2/4}$ near the tips.\\

We then consider the difference of the profile functions $w:=v_1-v_2$, as well as its truncated version
\begin{equation}
w_{\cC}:=v_1\varphi_{\cC}(v_1)-v_2\varphi_{\cC}(v_2),
\end{equation}
where, as before, $\varphi_{\cC}$ localizes in the cylindrical regions $\mathcal{C}_i=\{ v_i\geq \tfrac{5}{8}\theta\}$.
The difference function $w$ satisfies an evolution equation of the schematic form
\begin{equation}
w_\tau=\mathfrak{L}w+\mathcal{E}[w]+e^{\tau}\mathcal{F}[w],
\end{equation}
where $\mathfrak{L}$ is the one-dimensional Ornstein-Uhlenbeck operator. The function $w_{\cC}$ satisfies a related equation with additional terms coming from the cutoff function. We also work with the difference of inverse profile functions $W:=Y_1-Y_2$, as well as its truncated version
\begin{equation}
W_{\cT}:=\varphi_{\cT}\cdot(Y_1-Y_2),
\end{equation}
where $\varphi_{\cT}$ is a suitable cutoff function that localizes in the tip region $\mathcal{T} = \{ v\leq 2 \theta\}$. The function $W_{\cT}$ also satisfies a related degenerate elliptic PDE, which we again view as parabolic PDE with error terms.\\

Our energy estimates require certain weighted integral norms, similarly as in \cite{ADS2}. In addition to the Gaussian $L^2$-norm $\| \,\, \|_{\fH}$, one also needs the Gaussian $H^1$-norm
\begin{equation}
\|f\|_{\mathcal{D}} := \left( \int (f^2 +f_y^2) e^{-y^2/4} dy \right)^{1/2},
\end{equation}
and its dual norm $\| \,\, \|_{\mathcal{D}^\ast}$. Moreover, for time-dependent functions this induces the parabolic norms 
\begin{equation}
\|f \|_{\mathcal{X},\infty}:=\sup_{\tau\leq \tau_0 }\left( \int_{\tau-1}^\tau \| f(\cdot,\sigma)\|^2_{\mathcal{X}} \, d\sigma \right)^{1/2},
\end{equation}
where $\mathcal{X}=\fH,\mathcal{D}$ or $\mathcal{D}^\ast$. Furthermore, in the tip region one works with the norm
\begin{equation}
\| F\|_{2,\infty}:= \sup_{\tau\leq \tau_0} \frac{1}{|\tau|^{1/4}} \left( \int_{\tau-1}^\tau \int_0^{2\theta} F(v,\sigma)^2 e^{\mu(v,\sigma)}\, dv\, d\sigma \right)^{1/2},
\end{equation}
where $\mu$ is a carefully chosen weight satisfying $\mu(v,\tau)=-\tfrac{1}{4}Y_1(v,\tau)^2$ for $v\geq \theta/2$.\\

In contrast to \cite{ADS2}, we also need exponentially weighted $C^2$-norms to control the higher derivative terms coming from the nonlinearities $e^\tau\cN$ and $e^\tau\mathcal{M}$. Specifically, in the cylindrical region $\mathcal{C}=\mathcal{C}_1\cup \mathcal{C}_2$ we work with\footnote{For technical reasons, in the cylindrical region we use the weight $|\tau|e^{\tau}$. Alternatively, one could use $e^{\frac{99}{100}\tau}$.} 
\begin{align}
\| f\|_{C^2_{\exp}(\mathcal{C})}:=\sup_{\tau\leq \tau_0} \left( |\tau|e^{\tau}\sup_{y:(y,\tau)\in \mathcal{C} } \big(|f|+|f_y|+|f_\tau|+ |f_{yy}|+|f_{y\tau}|+|f_{\tau\tau}|\big)\right),
\end{align}
and in the tip region we work with
\begin{align}
\| F\|_{C^2_{\exp}(\mathcal{T})}:=\sup_{\tau\leq \tau_0} \left( e^{\tau}\sup_{v\leq 2\theta} \big(|F|+|F_v|+|F_\tau|+ |F_{vv}|+|F_{v\tau}|+|F_{\tau\tau}|\big)\right).
\end{align}
\bigskip

In the cylindrical region we prove the energy estimate
\begin{equation}
\| w_\cC-\fp_0w_\cC \|_{\mathcal{D},\infty}\leq  \varepsilon\left( \|w_{\cC}\|_{\mathcal{D},\infty}+\|w\, 1_{\{\theta/2\leq v_1\leq\theta\}}\|_{\mathfrak{H},\infty}\right)+C\| w\|_{C^2_{\exp}(\mathcal{C})}.
\end{equation}
In the tip region we prove the energy estimate
\begin{equation}
\|W_{\mathcal{T}}\|_{2,\infty}\leq \frac{C}{|\tau_0|}\left(\|W 1_{[\theta,2\theta]}\|_{2,\infty}+\| W\|_{C^2_{\exp}(\mathcal{T})}\right).
\end{equation}
The proofs of these energy estimates are along the lines of \cite{ADS2}, but with various additional steps and technical tweaks necessitated by our intrinsic localization and the nonlinear terms.

We then combine our two energy estimates, taking also into account the equivalence of norms in the transition region similarly as in \cite{ADS2}, to derive the decay estimate
\begin{equation}\label{est_decay_intro}
\| w_{\cC}\|_{\mathcal{D},\infty} + \|W_{\mathcal{T}}\|_{2,\infty} \leq C\left(\| w\|_{C^2_{\exp}(\mathcal{C})} + \| W\|_{C^2_{\exp}(\mathcal{T})}\right)\, .
\end{equation}
For comparison, in the corresponding estimate in \cite[Section 8]{ADS2} the right hand side would simply vanish and one could conclude directly that $w$ and $W$ vanish identically. In our case, however, the estimate \eqref{est_decay_intro} is only half of the story, since the right hand side contains the exponentially weighted error terms coming from our nonlinearities. While \eqref{est_decay_intro} gives control backwards in $\tau$, we also need an estimate that gives control forwards in $\tau$. To this end, we consider the Hausdorff distance of the level sets, namely
\begin{equation}
D(h):=d_{\textrm{Hausdorff}}\left(M^1 \cap \{x_1 = h\},M^2 \cap \{x_1 = h\}\right).
\end{equation}
Note that $D(h)$ is essentially equivalent to the sum of the $L^\infty$ norms of $w$ and $W$ at time $\tau=-\log h$. We then consider the level $h'=e^{-\tau'+1}$, where 
$\tau'\in (-\infty,\tau_0]$ is such that
\begin{equation}\label{eq_C2_maximum_time_intro}
\|w\|_{C^2_{\exp}(\cC)}+\|W\|_{C^2_{\exp}(\cT)}\leq 2e^{\tau'}\left(|\tau'|\|w\|_{C^2|C_{\tau'}} +\|W\|_{C^2|T_{\tau'}} \right)\, .
\end{equation}
Using the comparison principle for translators we show that we have the weighted $L^\infty$-estimate
\begin{equation}
\sup_{h \in [h'/e^2,h']}D(h)\leq 10(\log h')^{1/2} D(h')\, .
\end{equation}
Using this weighted $L^\infty$-estimate control, we can then estimate the weighted $C^2$-norms in terms of the weighted $L^2$-norms via interior estimates. Specifically,  taking also into account that thanks to our sharp asymptotics the ellipticity of \eqref{evVintro} only degenerates polynomially in $\log h$, we derive the estimate
\begin{equation}\label{eq_forward_intro}
\| w\|_{C^2_{\exp}(\mathcal{C})} + \| W\|_{C^2_{\exp}(\mathcal{T})}\leq \eps \big( \| w_{\cC}\|_{\mathcal{D},\infty} + \|W_{\mathcal{T}}\|_{2,\infty}\big).
\end{equation}
Finally, combining \eqref{est_decay_intro} and \eqref{eq_forward_intro}  we infer that $w$ and $W$ vanish identically, i.e. that $M^1=M^2$. This concludes the outline of the proof.\\

\bigskip

\noindent\textbf{Acknowledgments.}
KC has been supported by KIAS Individual Grant MG078901 and a POSCO Science Fellowship. RH has been supported by an NSERC Discovery Grant and a Sloan Research Fellowship. OH has been supported by the Koret Foundation early career award and ISF grant 437/20. We thank Beomjun Choi, Wenkui Du and the anonymous referee for very helpful comments.\\

\bigskip

\section{Coarse asymptotics and circular symmetry}\label{sec_coarse_asymptotics}

Let $M\subset\mathbb{R}^4$ be a noncollapsed translator. Without loss of generality we can assume that it translates with unit speed in positive $x_1$-direction, namely
\begin{equation}
{\bf{H}}=e_1^\perp\, .
\end{equation}
By the convexity estimate \cite[Theorem 1.10]{HaslhoferKleiner_meanconvex}, our translator is convex. If $M$ splits off a line, then it must be $\mathbb{R}\times$2d-bowl by \cite{Haslhofer_bowl}. We can thus assume from now on that $M$ is strictly convex.\\

\subsection{Coarse asymptotics}

Let $K$ be the closed domain bounded by $M$. Consider the blowdown
\begin{equation}
\check{K}:=\lim_{\lambda\rightarrow 0}\lambda K.
\end{equation}
By the main theorem of our prior paper \cite{CHH_blowdown} the blowdown is a halfline, namely
\begin{equation}\label{eq_blowdown}
\check{K} =\{ \lambda e_1 \, | \, \lambda\geq 0\}.
\end{equation}
In the following, we write $\nu$ for the outwards unit normal.

\begin{proposition}[asymptotic slope and tip point]\label{lemma_asympt_slope}
We have $\langle e_1, \nu \rangle\rightarrow 0$ as $x_1\rightarrow \infty$. Moreover, there exists a unique tip point  $p_0\in M$  such that $x_1(p_0)=\inf_{p\in M}x_1(p)$.   
\end{proposition}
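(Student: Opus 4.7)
The plan is to derive both statements from the blowdown formula \eqref{eq_blowdown} together with the convexity of $K$. First, I would establish the vanishing asymptotic slope by a limit argument. The translator equation $\mathbf{H}=e_1^\perp$ together with the outward normal convention gives $\langle e_1,\nu\rangle=-H\leq 0$ everywhere on $M$. Given any sequence $p_i\in M$ with $x_1(p_i)\to\infty$, I would rescale by $\lambda_i:=1/|p_i|$, noting that the blowdown \eqref{eq_blowdown} forces $\lambda_ip_i\to e_1$, and pass to a subsequence so that $\nu(p_i)\to\nu_\infty$ with $\langle e_1,\nu_\infty\rangle\leq 0$. Applying the tangent halfspace property $K\subset\{y:\langle y-p_i,\nu(p_i)\rangle\leq 0\}$, rescaling, and passing to the limit would yield
\[
\{\lambda e_1:\lambda\geq 0\}=\check K\subset\{y:\langle y-e_1,\nu_\infty\rangle\leq 0\}.
\]
Requiring this inclusion to hold for all $\lambda\geq 0$ forces $(\lambda-1)\langle e_1,\nu_\infty\rangle\leq 0$ for every $\lambda\geq 0$, hence $\langle e_1,\nu_\infty\rangle=0$. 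Since every subsequential limit gives $0$, this proves $\langle e_1,\nu\rangle\to 0$ as $x_1\to\infty$.

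For the existence of a tip point, I would next show that $K\cap\{x_1\leq h\}$ is compact for every finite $h$. Otherwise there would exist $q_i\in K$ with $x_1(q_i)\leq h$ and $|q_i|\to\infty$, and any subsequential limit $v=\lim q_i/|q_i|$ would be a unit vector with $\langle v,e_1\rangle\leq 0$; on the other hand, the blowdown forces $v\in\check K=\{\lambda e_1:\lambda\geq 0\}$, a contradiction. Consequently $x_1|_M$ attains its infimum at some tip point $p_0\in M$.

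For uniqueness, suppose $p_0,p_1\in M$ both minimize $x_1$. Then $\nu(p_0)=\nu(p_1)=-e_1$, and the segment $[p_0,p_1]$ lies in $K$ by convexity and in the supporting hyperplane $\{x_1=x_1(p_0)\}$, hence entirely in $M=\partial K$. This would yield a line segment on $M$, which is ruled out by strict convexity: Hamilton's strong maximum principle applied to the translator equation implies that a convex translator that does not split off a line has positive definite second fundamental form everywhere, so $M$ contains no straight segment. Therefore $p_0=p_1$.

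The main obstacle is the first part, namely controlling the two limits $\lambda_ip_i\to e_1$ from the blowdown and $\nu(p_i)\to\nu_\infty$ from convexity simultaneously, and extracting the correct constraint from the inclusion of the limit halfline in the limit halfspace. Once this vanishing slope is in hand, the existence and uniqueness of the tip essentially fall out as corollaries, the latter using positive definiteness of the second fundamental form coming from strict convexity.
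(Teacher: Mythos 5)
Your proposal is correct and follows essentially the same route as the paper: the paper declares the vanishing slope ``clear'' from the blowdown and convexity (your supporting-halfspace rescaling is exactly the natural unpacking of that), establishes existence by showing a minimizing sequence for $x_1$ cannot escape to infinity since the blowdown would force its direction to be $e_1$, and gets uniqueness from strict convexity, which in that section is already in force after the splitting case $\mathbb{R}\times\mathrm{Bowl}_2$ has been excluded. Your extra justification of strict convexity via the strong maximum principle is consistent with that standing assumption.
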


\begin{proof}
 By \eqref{eq_blowdown} and convexity, the fact that $\langle e_1, \nu \rangle\rightarrow 0$ as $x_1\to\infty$ is clear.

To find a tip point, assume without loss of generality that $0\in M$, and consider the infimum
\begin{equation}
m:=\inf_{p\in M}x_1(p).
\end{equation}
Let $p_i\in M$ be a minimizing sequence. Suppose towards a contradiction that $|p_i|\rightarrow \infty$.  Then, up to a  subsequence $p_i/||p_i||\rightarrow w\in S^3$, and the ray $\ell_w:=\{\lambda w\, | \, \lambda\geq 0\}$ is contained in $K$, and thus in $\check{K}$. By \eqref{eq_blowdown} this implies $w=e_1$, which contradicts the assumption that $p_i$ is a minimizing sequence. 
Thus, there exists a point $p_0$ with 
\begin{equation}
x_1(p_0)=\inf_{p\in M}x_1(p).
\end{equation}
By strict convexity this tip point is unique.
This completes the proof of the proposition.
 \end{proof}
 \bigskip

Next, by \cite[Theorem 1.14]{HaslhoferKleiner_meanconvex} and \cite{CM_uniqueness} the tangent flow to $M_t=M+te_1$ at time $-\infty$ is either a neck or a bubble-sheet, namely either
\begin{equation}\label{neck_tangent}
\lim_{\lambda\to 0} \lambda M_{\lambda^{-2}t} = \mathbb{R}\times S^2(\sqrt{-4t}),
\end{equation}
or
\begin{equation}\label{bubble_sheet_tangent}
\lim_{\lambda\to 0} \lambda M_{\lambda^{-2}t} = \mathbb{R}^2\times S^1(\sqrt{-2t}).
\end{equation}
If \eqref{neck_tangent} holds, then $M$ is the round bowl by \cite{Haslhofer_bowl}. We can thus assume from now on that \eqref{bubble_sheet_tangent} holds.\\

Let us now consider the level sets
\begin{equation}
\Sigma^h:= M\cap \{x_1= h\}.
\end{equation}
By strict convexity,  the level sets $\Sigma^h$ are compact and diffeomorphic to the two-sphere.

\begin{proposition}[diameter growth]\label{prop_diameter_lev}
The level sets satisfy
\begin{equation}
\lim_{h\to \infty} \frac{\textrm{diam}(\Sigma^h)}{h^{1/2}}=\infty\qquad\textrm{and} \qquad \lim_{h\to \infty} \frac{\textrm{diam}(\Sigma^h)}{h^{1/2+\delta}}=0
\end{equation}
for every $\delta>0$.
\end{proposition}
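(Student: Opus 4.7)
The plan is to treat the two limits separately. The lower bound follows from a tangent-flow contradiction using the bubble-sheet limit \eqref{bubble_sheet_tangent} together with the $\mathrm{SO}(2)$-symmetry from Theorem \ref{thm_blowdown}, while the upper bound will require extracting a quantitative rate from the halfline blowdown proof in \cite{CHH_blowdown}.

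For the lower bound, I would argue by contradiction. Suppose there exist $C<\infty$ and $h_i \to \infty$ with $\textrm{diam}(\Sigma^{h_i}) \leq C\sqrt{h_i}$. Consider the rescaled hypersurface $\tilde{M}^i := h_i^{-1/2}(M - h_i e_1)$, which is precisely the renormalized hypersurface $\bar{M}_{\tau} = e^{\tau/2} M_{-e^{-\tau}}$ evaluated at $\tau_i = -\log h_i$. By the bubble-sheet tangent flow assumption \eqref{bubble_sheet_tangent}, the $\tilde{M}^i$ converge locally smoothly to $\mathbb{R}^2 \times S^1(\sqrt{2})$; the $\mathrm{SO}(2)$-symmetry in the $x_3 x_4$-plane forces the flat factor to lie in the $x_1 x_2$-plane. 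Slicing at $x_1 = 0$, the sets $\tilde{M}^i \cap \{x_1 = 0\}$, which up to a shift coincide with $h_i^{-1/2} \Sigma^{h_i}$, converge locally smoothly to the infinite cylinder $\mathbb{R} \times S^1(\sqrt{2})$. Since the limit is unbounded in the $x_2$-direction, the rescaled diameters must diverge, contradicting the assumed bound. This proves $\textrm{diam}(\Sigma^h)/\sqrt{h} \to \infty$.

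For the upper bound, I would appeal to a quantitative refinement of the halfline blowdown. The proof in \cite{CHH_blowdown} rules out wing-like translators via fine bubble-sheet analysis, and that spectral analysis provides more than just Hausdorff convergence: for every $\delta > 0$ it yields a rate function $\omega(\lambda) = o(\lambda^{1/2 - \delta})$ as $\lambda \to 0$ such that
\begin{equation}
\lambda K \cap B_1 \subset B_{\omega(\lambda)}(\check{K} \cap B_1).
\end{equation}
Choosing $\lambda = 1/h$ and noting $\Sigma^h \subset K \cap B_{2h}$ (since $\textrm{diam}(\Sigma^h) = o(h)$ follows from the plain blowdown), we obtain $\textrm{diam}(\Sigma^h) \leq 2 h\, \omega(1/h) = o(h^{1/2 + \delta})$, as required.

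The main obstacle is the upper bound. The qualitative statement $\lambda K \to \check{K}$ only yields $\omega(\lambda) = o(1)$, which gives the much weaker conclusion $\textrm{diam}(\Sigma^h) = o(h)$. Obtaining the polynomial rate $\omega(\lambda) = o(\lambda^{1/2 - \delta})$ requires tracking the decay of the unstable and neutral eigenmodes in the bubble-sheet expansion at $-\infty$. This quantitative spectral content is implicit in the proof of the halfline blowdown in \cite{CHH_blowdown} and is conceptually a precursor to the uniform sharp asymptotics of Theorem \ref{thm_unique_asympt_intro}; the task here is essentially to record this rate in a form directly applicable to the cross-sections.
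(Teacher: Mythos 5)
Your strategy coincides with the paper's: the lower bound is read off from the bubble-sheet tangent flow \eqref{bubble_sheet_tangent}, and the upper bound comes from the quantitative fine bubble-sheet analysis of \cite{CHH_blowdown}. The ``main obstacle'' you flag does not actually require extracting anything implicit: since $M$ is strictly convex, \cite[Theorem 1.10]{CHH_blowdown} guarantees that the neutral mode dominates the fine bubble-sheet expansion, and \cite[Corollary 1.8]{CHH_blowdown} then gives, for any $\delta>0$, the bound $\bar{M}_\tau\cap\{x_1=0\}\subseteq B(0,e^{\delta|\tau|})$ for $\tau\ll 0$; unrescaling via $\Sigma^h=e^{-\tau/2}\big(\bar{M}_\tau\cap\{x_1=0\}\big)$ with $\tau=-\log h$ yields $\mathrm{diam}(\Sigma^h)\leq 2h^{1/2+\delta}$ directly, with no need to pass through a blowdown rate $\omega(\lambda)$ for $\lambda K$. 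One small caution on the lower bound: in the body of the paper the $\mathrm{SO}(2)$-symmetry (Theorem \ref{thm_symmetry}) is proved after, and using, this proposition, so you should not invoke it here; it is also not needed, since every hyperplane slice of the limiting cylinder $\mathbb{R}^2\times S^1(\sqrt{2})$ is unbounded (alternatively, cite Zhu's theorem \cite{Zhu} as an independent input).
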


\begin{proof} 
The first estimate follows from the assumption that we are in case  \eqref{bubble_sheet_tangent}. To prove the second estimate, note that since $M$ is strictly convex, \cite[Theorem 1.10]{CHH_blowdown} implies that in the fine bubble-sheet expansion of the renormalized flow $\bar{M}_\tau=e^{\tau/2}M_{-e^{-\tau}}$ the neutral mode is dominant. Hence, we can apply \cite[Corollary 1.8]{CHH_blowdown}, which says that given any $\delta>0$ for $\tau\ll 0$ we have the estimate
\begin{equation}
\bar{M}_\tau \cap \{x_1=0\}\subseteq B(0,{e^{\delta|\tau|}}).
\end{equation}
On the other hand, using the translator equation and remembering the renormalization we see that
\begin{equation}\label{ren_and_unren}
\Sigma^h = e^{-\tau/2}\left(\bar{M}_{\tau} \cap \{x_1=0 \}\right),
\end{equation}
where $\tau = - \log h$. Combining these facts yields the assertion.
\end{proof}

As a corollary of the proof we also obtain:

\begin{corollary}[inscribed radius]\label{prop_inscribed_lev}
The maximal inscribed radius of the level sets satisfies
\begin{equation}
\lim_{h\to \infty} \frac{r_{\textrm{in}}(\Sigma^h)}{(2h)^{1/2}}=1.
\end{equation}
\end{corollary}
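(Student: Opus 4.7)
Setting $\tau=-\log h$ and $\bar D_\tau:=e^{\tau/2}(D^h-he_1)\subset\{x_1=0\}\cong\mathbb{R}^3$ for the renormalized convex region bounded by $\bar\Sigma^h$, the corollary is equivalent to $r_{\mathrm{in}}(\bar D_\tau)\to\sqrt 2$ as $\tau\to-\infty$. I would establish the two inequalities separately, relying on the bubble-sheet tangent flow \eqref{bubble_sheet_tangent} together with the extent bound $\bar M_\tau\cap\{x_1=0\}\subseteq B(0,e^{\delta|\tau|})$ underlying the proof of Proposition~\ref{prop_diameter_lev}.

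\noindent\textbf{Lower bound.} The $C^\infty_{\mathrm{loc}}$ convergence of $\bar M_\tau$ to $\mathbb{R}^2\times S^1(\sqrt 2)$ yields Hausdorff convergence of $\bar D_\tau$ to $\mathbb{R}\times\bar D^2(\sqrt 2)$ on every compact subset of $\{x_1=0\}$. In particular, for every $\epsilon>0$ the ball $\bar B(0,\sqrt 2-\epsilon)\subset\{x_1=0\}$ lies strictly interior to this limit, and is therefore contained in $\bar D_\tau$ for $\tau$ sufficiently negative. Scaling back gives $r_{\mathrm{in}}(\Sigma^h)\geq \sqrt{h}\,(\sqrt 2-\epsilon)$, which upon sending $\epsilon\to 0$ yields $\liminf_{h\to\infty} r_{\mathrm{in}}(\Sigma^h)/\sqrt{2h}\geq 1$.

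\noindent\textbf{Upper bound.} By $\mathrm{SO}(2)$-symmetry in the $x_3x_4$-plane, $\bar D_\tau$ is a body of revolution about the $x_2$-axis with cross-sectional radius $R(y,\tau)$, and $R(\cdot,\tau)$ is concave on $[-d^-(\tau),d^+(\tau)]$ with vanishing boundary values. Concavity and $\mathrm{SO}(2)$-invariance of the distance-to-boundary function force the maximal inscribed ball to be centered on the $x_2$-axis (at any point off the axis, averaging $f$ over an $\mathrm{SO}(2)$-orbit produces an axis point of at least as large $f$-value), so $r_{\mathrm{in}}(\bar D_\tau)\leq \max_y R(y,\tau)$. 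The $C^\infty_{\mathrm{loc}}$ convergence gives $R(y,\tau)\to\sqrt 2$ uniformly on every fixed compact window $|y|\leq R_0$, so the cross-sectional radius at any bounded $y$ is $\sqrt 2+o_{\tau\to-\infty}(1)$.

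\noindent\textbf{Main obstacle.} The real work is ruling out a drift of the location of the maximum of $R(\cdot,\tau)$ to infinity. I would do this by pointed compactness: given a sequence $\tau_i\to-\infty$ and any $y_i$ with $|y_i|\leq d^+(\tau_i)\leq e^{\delta|\tau_i|}$, the translated renormalized hypersurfaces $\bar M_{\tau_i}-(0,y_i,0,0)$ are uniformly noncollapsed and convex, and the quantitative bubble-sheet control from \cite{CHH_blowdown} that propagates the cylindrical model over balls of radius large compared with the drift $|y_i|$ ensures that a subsequence converges smoothly to the (translation-invariant) cylinder $\mathbb{R}^2\times S^1(\sqrt 2)$. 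The cross-sectional radius of this limit is exactly $\sqrt 2$, hence $R(y_i,\tau_i)\to\sqrt 2$; applied to any maximizing sequence this gives $\max_y R(y,\tau)\leq\sqrt 2+o(1)$ and thus $\limsup_{h\to\infty} r_{\mathrm{in}}(\Sigma^h)/\sqrt{2h}\leq 1$. The hard part is this last step, which requires the quantitative bubble-sheet convergence of \cite{CHH_blowdown} rather than merely the pointwise tangent-flow statement \eqref{bubble_sheet_tangent}.
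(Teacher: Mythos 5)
Your lower bound and the reduction $r_{\mathrm{in}}(\bar D_\tau)\leq\max_y R(y,\tau)$ are fine (the latter is in fact immediate for a body of revolution, without the averaging argument). The gap is in the step you yourself flag as the hard one. The statement you invoke there --- that for \emph{any} $y_i$ with $|y_i|\leq d^+(\tau_i)\leq e^{\delta|\tau_i|}$ the translates $\bar M_{\tau_i}-(0,y_i,0,0)$ converge smoothly to $\mathbb{R}^2\times S^1(\sqrt 2)$ because \cite{CHH_blowdown} "propagates the cylindrical model over balls of radius large compared with the drift $|y_i|$" --- does not exist and is false as stated. The graphical radius furnished by the fine bubble-sheet analysis grows only polynomially in $|\tau|$ (of order $\beta(\tau)^{-1/5}$, later $|\tau|^{1/10}$), whereas $|y_i|$ can a priori be as large as $e^{\delta|\tau|}$; and already at $|y_i|$ comparable to $d^\pm(\tau_i)$ the translated surfaces close off near the tip and look nothing like a cylinder of radius $\sqrt 2$. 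Even restricted to maximizing points, a soft pointed-compactness limit would only produce \emph{some} convex hypersurface; identifying it as the cylinder with renormalized radius exactly $\sqrt2$ (rather than, say, a slice of a shrinking cylinder with a shifted extinction time, whose renormalized radius could exceed $\sqrt2$ by a definite amount) is precisely the normalization that has to be proved, so the argument is circular where it matters.

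The paper closes this exact gap differently: it uses the inwards quadratic bending of \cite[Theorem~1.7]{CHH_blowdown} (restated as Theorem~\ref{thm_L2.convergence}). That estimate, upgraded to pointwise bounds on a fixed compact set by standard interior estimates, shows that for $\tau\ll 0$ the renormalized profile satisfies $v(y_\ast,\tau)-v(0,\tau)\leq -c/|\tau|<0$ at some fixed $y_\ast>\sqrt2$. Concavity of $v(\cdot,\tau)$ (i.e.\ convexity of the translator, plus the circular symmetry) then forces $v_y\leq -c/(y_\ast|\tau|)$ for all $y\geq y_\ast$, so $v$ decreases beyond $y_\ast$ and $\max_y v(\cdot,\tau)=\max_{|y|\leq y_\ast}v(\cdot,\tau)\to\sqrt2$; no compactness at drifting base points is needed. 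If you want to keep your structure, replace your "main obstacle" step by this bending-plus-concavity argument; without the quadratic bending (or an equivalent fine estimate), the coarse inputs you allow yourself --- the tangent flow \eqref{bubble_sheet_tangent}, convexity, and the bound $d^\pm\leq e^{\delta|\tau|}$ --- do not suffice to cap $\max_y R(y,\tau)$ by $\sqrt2+o(1)$.
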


\begin{proof} 
By \eqref{bubble_sheet_tangent} the renormalized flow $\bar{M}_\tau$ for $\tau\to -\infty$ converges to $\Gamma=\mathbb{R}^2\times S^{1}(\sqrt{2})$. Hence,
using the inwards quadratic bending from \cite[Theorem 1.7]{CHH_blowdown} we see that the maximal inscribed radius of $\bar{M}_\tau$ for $\tau\to -\infty$ converges to $\sqrt{2}$. Together with \eqref{ren_and_unren}, where $\tau = - \log h$, this implies the assertion.
\end{proof}

The following estimate shows that the mean curvature of the level set is, up to a cubic error term, the same as the mean curvature $H$ of the translator, when $x_1$ is high:

\begin{proposition}[mean curvature of level sets]\label{prop_mean_curv_levelsets}
There exists a uniform constant $C<\infty$ such that
\begin{equation}
|H-H^h| \leq CH^3,
\end{equation}
where $H^h$ is the mean curvature of $\Sigma^h=M\cap \{ x_1=h \}$ in $P^h=\{x_1=h\}$. 
\end{proposition}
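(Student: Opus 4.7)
The plan is to work in the global graph parametrization $x_1 = u(x')$ of the strictly convex translator $M$. The translator equation reads $\Delta u - u_iu_ju_{ij}/w^2 = 1$ with $w = \sqrt{1+|Du|^2}$ and $H = 1/w$; the mean curvature of the level set $\Sigma^h = \{u=h\} \subset P^h \cong \mathbb{R}^3$ is
\[
H^h = \text{div}\!\Bigl(\frac{Du}{|Du|}\Bigr) = \frac{1}{|Du|}\!\left(\Delta u - \frac{u_iu_ju_{ij}}{|Du|^2}\right).
\]
Eliminating $\Delta u$ via the translator PDE and using $w^2 - |Du|^2 = 1$, I would derive the clean identity
\[
H - H^h = -\frac{1}{w|Du|(w+|Du|)} + \frac{u_{\hat u\hat u}}{w^2|Du|},\qquad \hat u := \frac{Du}{|Du|},
\]
where $u_{\hat u\hat u} := u_iu_ju_{ij}/|Du|^2$. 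By the vanishing asymptotic slope (Proposition~\ref{lemma_asympt_slope}), $|Du|$ and $w$ are both comparable to $1/H$ once $H$ is sufficiently small, so the first term is immediately bounded by $CH^3$.

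For the second term, a direct calculation identifies $u_{\hat u\hat u}/(w^2|Du|)$ with $A(v,v)/(1-H^2)$, where $v = (e_1 - H\nu)/\sqrt{1-H^2}$ is the uphill unit tangent to $M$; equivalently, one has the geometric identity $H = \sqrt{1-H^2}\,H^h + A(v,v)$, obtained by splitting the trace $H = \sum_{i=1}^3 A^M(e_i, e_i)$ along an orthonormal frame $\{f_1, f_2, v\}$ of $T_p M$ with $f_1, f_2 \in T_p\Sigma^h$ and using $A^{\Sigma^h}(X,Y) = A^M(X,Y)/\sqrt{1-H^2}$ for $X, Y \in T_p\Sigma^h$ (which follows from $\nu = He_1 + \sqrt{1-H^2}\,\nu^h$ together with the fact that $T_p\Sigma^h \perp e_1$). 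The proposition is therefore equivalent to the sharp cubic bound $A(v,v) \leq CH^3$ on the principal component of the second fundamental form in the uphill direction.

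This cubic bound is the main obstacle. The a priori convexity estimate $A(v,v) \leq |A| \leq H$ is too weak by two powers of $H$. The key heuristic is that $v$ approaches $e_1$ as $H \to 0$, and $e_1$ is one of the two flat directions of the bubble-sheet tangent flow $\mathbb{R}^2 \times S^1(\sqrt{2})$, so $A(v,v)$ should decay much faster than $H$. I would try to make this quantitative by combining the elliptic equation $\Delta H + |A|^2 H - (1-H^2) A(v,v) = 0$ on $M$ (which follows from $H = \langle e_1, \nu\rangle$ and Codazzi), the Huisken--Sinestrari convexity estimate $|A|^2 \leq CH^2$, and interior elliptic regularity for $H$ rescaled by the inscribed radius $r_{\text{in}}(\Sigma^h) \sim \sqrt{2h}$ of the level set (Corollary~\ref{prop_inscribed_lev}), together with an iteration using the halfline blow-down (Theorem~\ref{thm_blowdown}) to successively improve the decay of $A(v,v)$ from linear to cubic in $H$.
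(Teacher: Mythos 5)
Your reduction is sound and is in fact the same decomposition the paper uses: writing $e_1^\top=e_1-H\nu$, $v=e_1^\top/|e_1^\top|$, and splitting the trace of $A$ along a frame adapted to $T_p\Sigma^h$, together with $A^{\Sigma^h}=A|_{T\Sigma^h}/\sqrt{1-H^2}$, gives $H-H^h=A(v,v)(1+O(H^2))+O(H^3)$, so everything hinges on the cubic bound $|A(e_1^\top,e_1^\top)|\leq CH^3$. You also have the right identity for attacking it: on the translator $\nabla H=A(e_1^\top,\cdot)$ and $(\Delta+e_1^\top\cdot\nabla+|A|^2)H=0$, so (up to sign conventions) $(1-H^2)A(v,v)=A(e_1^\top,e_1^\top)=\pm\big(\Delta H+|A|^2H\big)$, and $|A|^2H\leq H^3$ by convexity. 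At this point the paper finishes in one line: the local curvature estimate for noncollapsed flows \cite[Theorem 1.8]{HaslhoferKleiner_meanconvex} gives pointwise interior derivative estimates at scale $H(p)^{-1}$, i.e. $|\nabla^2A|\leq CH^3$, hence $|\Delta H|\leq CH^3$, and the cubic bound follows.

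The gap in your proposal is exactly this last ingredient. What you need is the \emph{pointwise} estimate $|\Delta H|\leq C\,H(p)^3$, and your proposed substitutes do not deliver it. Interior regularity after rescaling by the inscribed radius $r_{\mathrm{in}}(\Sigma^h)\sim\sqrt{2h}$ is at the wrong scale: near the edge of the level set one has $H\sim\sqrt{\log h/h}\gg h^{-1/2}$, so after rescaling by $h^{-1/2}$ the curvature is unbounded (albeit slowly), the geometry is not uniformly controlled at unit scale, and no estimate of the form $|\Delta H|\lesssim H(p)^3$ comes out there; the estimate must be run at the pointwise scale $H(p)^{-1}$, which is precisely what noncollapsedness plus the local curvature estimate provide. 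Likewise, the proposed ``iteration using the halfline blow-down to improve $A(v,v)$ from linear to cubic in $H$'' is not substantiated: the blow-down/bubble-sheet structure only tells you $A(v,v)=o(H)$ qualitatively, and the elliptic identity converts control of $A(v,v)$ into control of $\Delta H$ and back without any self-improving mechanism; gaining two full powers of $H$ requires genuine derivative estimates, not soft blow-down information. So the argument as written is incomplete, although it is one citation away from being closed — replace the rescaling-plus-iteration step by the derivative estimates of \cite[Theorem 1.8]{HaslhoferKleiner_meanconvex} and your proof coincides with the paper's.
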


\begin{proof}
On a translator we have $-\nabla H=A(e_1^{\top},-)$ and $-\langle \nabla H,e_1 \rangle=\Delta H+|A|^2H$.
Thus, 
\begin{equation}
|A(e_1^{\top},e_1^{\top})|\leq |\Delta H|+|A|^2H.
\end{equation}
From the local curvature estimate \cite[Theorem 1.8]{HaslhoferKleiner_meanconvex}, we know that $|\Delta H|\leq CH^3$,  and so
\begin{equation}
|A(e_1^{\top},e_1^{\top})|\leq CH^3.
\end{equation}
Now, given $p\in \Sigma ^h$, let $\{U,V\}$ be and orthonormal basis to $T_p\Sigma^h$ and let $W:=e_1^{\top}/||e_1^{\top}||$. Then  $\{U,V,W\}$ is an orthonormal basis to $T_pM$ and 
\begin{equation}
H=A(U,U)+A(V,V)+A(W,W) = A(U,U)+A(V,V)+O(H^3).
\end{equation}
Now, let $\gamma_U$ and $\gamma_V$ be unit speed curves in $\Sigma^h$ such that $\gamma_{U}(0)=p$ and $\gamma'_U(0)=U$ respectively $\gamma_{V}(0)=p$ and $\gamma'_V(0)=V$. Then
\begin{equation}
H=\langle \gamma_U''+\gamma_V'',\nu \rangle+O(H^3). 
\end{equation}
On the other hand, the normal to $\Sigma^{h}$ in $P^h$ is 
\begin{equation}
\nu^h=\frac{\nu+He_1}{\sqrt{1-H^2}}.
\end{equation}
As $\langle U,e_1 \rangle=0$ and $\langle V,e_1 \rangle=0$, we conclude that
\begin{equation}
H^h=\langle \gamma_U''+\gamma_V'',\nu^h \rangle= \langle \gamma_U''+\gamma_V'',\nu \rangle(1+O(H^2))=(H+O(H^3))(1+O(H^2)).  
\end{equation}
This proves the proposition.
\end{proof}

\bigskip

\subsection{Circular symmetry}\label{sec_symmetry}
Let $M\subset \mathbb{R}^4$ be a strictly convex noncollapsed translator, normalized such that it translates with unit speed in positive $x_1$-direction, and that the tip is at the origin. Further, suppose that $M$ is not the round bowl. By Colding-Minicozzi \cite{CM_uniqueness} the asymptotic cylinder $\mathbb{R}^2\times S^1$ is unique. We can assume without loss of generality that the $\mathbb{R}^2$-factor is in the $x_1x_2$-plane.  Let $R$ be the rotation vector field corresponding to the circular symmetry of the asymptotic cylinder, namely
\begin{equation}
R:=x_3\partial_{x_4}-x_4\partial_{x_3}\, .
\end{equation}
The goal of this subsection is to give a short proof of Zhu's theorem:

\begin{theorem}[{circular symmetry}]\label{thm_symmetry}
$M$ is $\mathrm{SO}(2)$-symmetric. More precisely, there exists some $a\in \{0\}\times\mathbb{R}^2$, such that the recentered translator $M-a$ is invariant under rotations generated by the vector field $R$.
\end{theorem}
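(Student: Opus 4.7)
The plan is to exhibit a two-parameter family of candidate rotation vector fields and to single out the correct symmetry axis via a Jacobi-type equation and the strong maximum principle. For $(a,b) \in \mathbb{R}^2$, set
\[
R_{a,b} := (x_3 - a)\,\partial_{x_4} - (x_4 - b)\,\partial_{x_3} = R + b\,\partial_{x_3} - a\,\partial_{x_4},
\]
the generator of rotation in $\mathbb{R}^4$ about the 2-plane $\{x_3 = a, \ x_4 = b\}$. Since $R_{a,b}$ is a Killing field commuting with translation in the $e_1$-direction, the standard identities on a mean-curvature translator give that both $H = \langle e_1, \nu \rangle$ and $u_{a,b} := \langle R_{a,b}, \nu \rangle$ satisfy the same drift-Jacobi equation
\[
\mathcal{L} w = -|A|^2 w, \qquad \mathcal{L} := \Delta_M - \nabla_{e_1^{\top}}.
\]
Strict convexity yields $H > 0$, so $f_{a,b} := u_{a,b}/H$ is smooth and satisfies the homogeneous linear elliptic equation
\[
\mathcal{L} f_{a,b} + 2 \langle \nabla \log H, \nabla f_{a,b} \rangle = 0,
\]
which has no zeroth-order term. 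This is the Killing-field setup used in \cite{Brendle_steady, Haslhofer_bowl}.

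Next, I would use the halfline blowdown together with the uniqueness of the $\mathbb{R}^2 \times S^1$ tangent flow at $-\infty$ to single out an asymptotic axis. The diameter estimate $\mathrm{diam}(\Sigma^h) = o(h^{1/2 + \delta})$ of Proposition 2.2 and the inscribed-radius asymptotics $r_{\mathrm{in}}(\Sigma^h)/\sqrt{2h} \to 1$ of Corollary 2.4 show that the center of the near-circular $S^1$-factor of $\Sigma^h$ remains in a bounded region of the $(x_3, x_4)$-plane. Because the blowdown degenerates to a single ray rather than a higher-dimensional cone (which also excluded the wing-like scenario in \cite{CHH_blowdown}), this center must converge to a unique limit $(a_*, b_*) \in \mathbb{R}^2$ as $h \to \infty$. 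Upgrading this convergence, via compactness and the bubble-sheet tangent flow, to uniform graphical convergence of $\Sigma^h$ (after the natural rescaling) to a round circle centered at $(a_*, b_*)$, I would deduce that $u_{a_*, b_*} = o(H)$ as $x_1 \to \infty$, hence $f_{a_*, b_*} \to 0$ at infinity on $M$.

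With this decay in hand, the conclusion follows from the strong maximum principle. The function $f_{a_*, b_*}$ is smooth and bounded on $M$ (near the tip, $H \geq 1 - \varepsilon$ and $R_{a_*, b_*}$ is bounded on compact sets), satisfies the above homogeneous linear elliptic PDE with no zeroth-order term, and tends to zero at infinity. Hence its supremum and infimum on $M$ are attained at infinity and both equal zero, forcing $f_{a_*, b_*} \equiv 0$. Thus $u_{a_*, b_*} \equiv 0$, meaning $R_{a_*, b_*}$ is everywhere tangent to $M$, so $M - (0, 0, a_*, b_*)$ is invariant under the $\mathrm{SO}(2)$-action generated by $R$.

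The main technical obstacle is the second step: extracting uniform decay of $u_{a_*, b_*}/H$ on all of $M$, simultaneously in the cylindrical portion (where $H \sim 1/\sqrt{2h}$ and $|R_{a_*, b_*}|$ is comparable to $\sqrt{2h}$, so one must pull definite decay of the angular component of $\nu$ out of the bubble-sheet asymptotics) and in the elongated tip-like regions (where $|x_2|$ is large and one must combine the vanishing asymptotic slope of Proposition 2.1 with the cross-sectional $\mathrm{SO}(2)$-structure to prevent the quotient from blowing up). Here the halfline character of the blowdown is indispensable, as it excludes any drifting asymptotic axis, and the proof knits together the coarse asymptotics of Section 2 with the finer bubble-sheet information imported from \cite{CHH_blowdown}.
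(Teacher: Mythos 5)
Your setup (the Jacobi equations for $H$ and $\langle R_{a,b},\nu\rangle$, the quotient $f_{a,b}=\langle R_{a,b},\nu\rangle/H$ satisfying a drift equation with no zeroth-order term, and the intent to kill it by a maximum principle) matches the paper's starting point, which is exactly Proposition \ref{prop_ell_est}. But the heart of your argument — step 2 — is a genuine gap. The coarse information you invoke does not come close to what you claim. The halfline blowdown only controls the solution at scale $\lambda^{-1}$ and is compatible with the circle-center of $\Sigma^h$ drifting in the $x_3x_4$-plane at any rate $o(h)$; the bubble-sheet tangent flow and the inscribed-radius asymptotics are statements after rescaling by $\sqrt{h}$, hence compatible with a drift of order $o(\sqrt{h})$. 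So the assertion that the center stays bounded and converges to a unique $(a_*,b_*)$ is unjustified — indeed, pinning down a fixed asymptotic axis is essentially the content of the theorem, not a consequence of Section 2's coarse asymptotics. Even granting a convergent center, your conclusion $\langle R_{a_*,b_*},\nu\rangle=o(H)$ is a quantitative rate you have no access to: on the cylindrical part $|R_{a_*,b_*}|\sim\sqrt{2h}$ while $H\sim h^{-1/2}$ (up to logarithms), so qualitative convergence of the rescaled cross-sections to round circles only yields $f_{a_*,b_*}=o(h)$, not $f_{a_*,b_*}\to 0$, and not even boundedness of $f_{a_*,b_*}$, which your final maximum-principle step also presupposes.

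The paper's proof is designed precisely to avoid choosing a fixed axis a priori. It sets $B(h):=\min_{a}\max_{\{x_1=h\}}|f_a|$, allows the optimal center $a_i$ to depend on the level $h_i$, and splits into two cases: if $B(h_i)=0$ along a sequence, Proposition \ref{prop_ell_est} propagates exact symmetry downward and then everywhere; otherwise it chooses levels $h_i$ with a pinching condition \eqref{seq_decay}, normalizes $\widetilde f_i=B(h_i)^{-1}f_{a_i}$, parabolically rescales at points of maximal $|f_{a_i}|$, and uses the global convergence theorem plus the Brendle--Choi classification to identify the limit as $\mathbb{R}\times$(shrinking cylinder) or $\mathbb{R}\times$(2d bowl); the contradiction then comes from the quantitative cylindrical improvement estimate of \cite[Proposition 4.1]{Haslhofer_bowl} in the first case and from the Liouville property on the 2d bowl (Proposition \ref{prop_liouville_bowl}) in the second. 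None of this machinery appears in your proposal, and the "main technical obstacle" you flag at the end is in fact the entire proof; as written, the argument does not go through.
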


Note that rotations with center $a\in\{0\}\times\mathbb{R}^2$ are generated by the vector field
\begin{equation}
R_a:=(x_3-a_3)\partial_{x_4}-(x_4-a_4)\partial_{x_3}\, .
\end{equation}

Consider the rotation function $f_{a}:=\langle R_a,\nu\rangle$, where $\nu$ is the outwards unit normal of $M$. Our goal is to find some $a\in \{0\}\times\mathbb{R}^2$ such that $f_{a}$ vanishes identically on $M$.

\begin{proposition}[{weighted estimate, c.f. \cite[Proposition 3.1]{Haslhofer_bowl}}]\label{prop_ell_est}
For all $h>0$ we have
\begin{equation}
\sup_{\{x_1\leq h\}}\left|{\frac{f_a}{H}}\right|\leq \sup_{\{x_1= h\}}\left|{\frac{f_a}{H}}\right|\, .
\end{equation}
\end{proposition}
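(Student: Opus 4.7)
The plan is to show that the ratio $u := f_a/H$ satisfies a linear elliptic equation on $M$ with no zeroth order term, so that the weak maximum principle on the compact domain $\Omega_h := M \cap \{x_1\leq h\}$ yields the conclusion. This is the classical strategy used, e.g., in \cite{Brendle_steady,Haslhofer_bowl} for related uniqueness problems for translators.

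First I would observe that both $H$ and $f_a$ solve the same linear equation on $M$. Indeed, any Killing field $X$ on $\mathbb{R}^4$ whose flow commutes with translation by $e_1$ (equivalently, $[X,e_1]=0$) generates a one-parameter family of ambient isometries that send translators with direction $e_1$ to translators with direction $e_1$. Linearizing the translator equation $\mathbf{H}=e_1^{\perp}$ in the normal direction $\langle X,\nu\rangle$ then gives
\[
L\langle X,\nu\rangle := \Delta \langle X,\nu\rangle + |A|^2\langle X,\nu\rangle + \langle e_1^{\top},\nabla \langle X,\nu\rangle\rangle = 0,
\]
where $\Delta$ is the Laplace--Beltrami operator on $M$. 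Taking $X=e_1$ shows $LH=0$, and taking $X=R_a$ --- which involves only the $x_3,x_4$ coordinates and hence commutes with $\partial_{x_1}$ --- shows $Lf_a=0$.

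Because $M$ is strictly convex and noncollapsed, $H>0$ everywhere, so $u=f_a/H$ is smooth. Writing $f_a=uH$ and expanding $L(uH)=0$ using $LH=0$, the zeroth order terms $|A|^2 uH$ cancel and one is left with
\[
\Delta u + 2\langle \nabla \log H,\nabla u\rangle + \langle e_1^{\top},\nabla u\rangle = 0,
\]
a linear second-order elliptic equation on $M$ with no zeroth order term and smooth, bounded coefficients on any compact subset of $M$.

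Finally, Proposition \ref{lemma_asympt_slope} ensures that the tip is the unique minimum of $x_1$ on $M$, and together with strict convexity this makes $\Omega_h$ a compact topological $3$-ball with boundary $\Sigma^h$. Applying the weak maximum principle to $u$ and $-u$ on $\Omega_h$ yields
\[
\sup_{\Omega_h} |u| \;\leq\; \sup_{\partial \Omega_h} |u| \;=\; \sup_{\Sigma^h}|u|,
\]
which is precisely the asserted inequality. I do not expect a serious obstacle here: the one point worth verifying carefully is the commutator identity $[R_a,e_1]=0$, which is what allows us to conclude $Lf_a=0$ and hence the vanishing of the zeroth order term in the equation for $u$; the rest is a standard application of the maximum principle on a compact domain.
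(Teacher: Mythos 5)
Your proposal is correct and follows essentially the same route as the paper: the paper's proof likewise records that $f_a$ and $H$ both satisfy $(\Delta+e_1^\top\cdot\nabla+|A|^2)f=0$ and then invokes the maximum principle (for the quotient $f_a/H$, as in \cite[Proposition 3.1]{Haslhofer_bowl}), which is exactly the computation you spell out. Your justification of compactness of $M\cap\{x_1\leq h\}$ via Proposition \ref{lemma_asympt_slope} is fine, since the blowdown argument underlying that proposition is precisely what rules out boundary points escaping to infinity in the sublevel set.
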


\begin{proof} On our translator, the rotation functions and the mean curvature satisfy
\begin{align}
\left(\Delta + e_1^\top\cdot\nabla +|A|^2\right) f_a &=0,\label{ell_eq_frot}\\
\left(\Delta + e_1^\top\cdot\nabla +|A|^2\right) H &=0.
\end{align}
Hence, the assertion follows from the maximum principle.
\end{proof}

\begin{proof}[{Proof of Theorem \ref{thm_symmetry}}]
Consider the function
\begin{equation}
B(h) : = \min_{a\in\{0\}\times\mathbb{R}^2} \max_{\{x_1=h\}}|f_{a}| 
\end{equation}

\noindent\underline{Case 1:} Suppose there is a sequence $h_{i}\to \infty$ with $B(h_{i}) = 0$. For each $i$, choose $a_i$ such that
\begin{equation}
 \max_{\{x_1=h_i\}}|f_{a_i}|=0.
\end{equation}
Proposition \ref{prop_ell_est} (weighted estimate) implies that $f_{a_i}=0$ in the region $\{x_1\leq h_i\}$. Observe that
\begin{equation}
f_{a_i} = f_0 - \langle T_i ,\nu\rangle,
\end{equation}
where $T_i = (0,0,-a^i_4,a^i_3)$.
Thus, $ \langle T_i ,\nu\rangle=f_0$ in the region $\{x_1\leq h_i\}$. Hence, $a_i$ is constant and $f_{{a_i}}=0$ everywhere, and we have proven rotational symmetry.\\

\noindent\underline{Case 2:} Suppose now that $B(h) > 0$ for $h$ large. Fix $\tau \in (0,1/4)$ such that
$\tau^{-\tfrac{1}{2}+\delta}>2D$, where  $D<\infty$ is the constant from  \cite[Proposition 4.1]{Haslhofer_bowl}.
By Proposition \ref{prop_diameter_lev} (diameter asymptotics) we have
\begin{equation}
B(h) \leq O(h^{1/2+\delta}).
\end{equation}
Hence, we can then find $h_{i}\to\infty$ such that
\begin{equation}\label{seq_decay}
\inf_{h\in [\tau h_i, h_i]}B(h) \geq \frac{1}{2}\tau^{1/2+\delta}  B( h_{i}).
\end{equation}
Choose $a_{i}$ such that 
\begin{equation}\label{sup_bm}
\max_{\{x_1=h_i\}}|f_{{a_i}}| = B(h_{i}).
\end{equation}
Let $p_i\in M\cap\{x_1=h_i\}$ be a point where the maximum in \eqref{sup_bm} is attained, and consider the renormalized function
\begin{equation}
\widetilde f_{i} : ={B(h_{i})}^{-1}f_{{a_i}}.
\end{equation}
Recall that the family
$\{M_t=M+te_1\}_{t\in\mathbb{R}}$ moves by mean curvature flow.
If we view $\tilde{f}_i$ as a one parameter family of functions on $M_t$, then equation \eqref{ell_eq_frot} takes the form
\begin{equation}
 \partial_t \widetilde{f}_i=(\Delta+\abs{A}^2)\widetilde{f}_i.
\end{equation}
Set $\lambda_i:=H(p_i)$, and consider the parabolic rescalings
\begin{equation}
\widehat{M}_t^i:=\lambda_i (M_{\lambda_i^{-2}t}-p_i),
\end{equation}
and
\begin{equation}
\widehat{f}_i(x,t):=\widetilde{f}_i(\lambda_i^{-1}x+p_i,\lambda_i^{-2}t),
\end{equation}
where $x\in \widehat{M}_t^i$.
Note that $\widehat{M}_t^i$ moves by mean curvature flow and that $\widehat{f}_i$ satisfies the parabolic equation
\begin{equation}
\partial_t \widehat{f}_i= (\Delta+\abs{A}^2)\widehat{f}_i.
\end{equation}
Observe that $\lambda_i\to 0$ by Proposition \ref{lemma_asympt_slope} (asymptotic slope) and the translator equation. On the other hand, using Corollary \ref{prop_inscribed_lev} (inscribed radius) and the sharp noncollapsing estimate from \cite{HK_inscribed} we get
\begin{equation}\label{eq_from_inscr}
\liminf_{i\to \infty} (2h_i)^{1/2}\lambda_i \geq 1.
\end{equation}
Thus, by the global convergence theorem \cite[Thm. 1.12]{HaslhoferKleiner_meanconvex}, for $i\to \infty$ the mean curvature flows $\widehat{M}_t^i$ converge (subsequentially) to an ancient noncollapsed mean curvature flow $\widehat{M}^\infty_t$ that splits off a line in $x_1$-direction. Write $\widehat{M}^\infty_t=N_t\times \mathbb{R}$. Observe that $N_t$ is noncompact by \eqref{bubble_sheet_tangent}. Hence, by the classification from Brendle-Choi \cite{BC} the 2d-flow $N_t$ must be either (a) a round shrinking cylinder $\{C_t\}_{t< 1/2}$, or (b) a translating bowl soliton $B$.\\

Using equation \eqref{seq_decay}, Proposition \ref{prop_ell_est} (weighted estimate), and the knowledge of the mean curvature of the limiting flow, we see that $\hat{f}_m$ converges (subsequentially) to a limit $f=\{f(t)\}$, which after splitting of the $\mathbb{R}$-factor in $x_1$-direction can be viewed as a function on $N_t$, solving
\begin{equation}
 \partial_t f=(\Delta_{N_t}+\abs{A_{N_t}}^2) f,
\end{equation}
that in case (a) satisfies $\abs{f(z,\theta,t)}\leq 4$ for $t\in(0,\tfrac14)$, and in case (b) satisfies $|f(z,\theta,t)|\leq C(1+z)^{-1/2}$, where $z$ and $\theta$ denote the height and angle on $N_t$. Moreover, since $\textrm{div}_{\mathbb{R}^{4}} R=0$ and $\langle R,\partial_{x_1}\rangle=\langle R,\partial_{x_2}\rangle=0$, the divergence theorem yields, after splitting off an $\mathbb{R}$-factor in $x_1$-direction, that for every $z$ we have
\begin{equation}
\int f(z,\theta,t)\, d\theta = 0.
\end{equation}

Let us first consider case (a). Note that $f$ is independent of $z$. Hence,  \cite[Proposition 4.1]{Haslhofer_bowl} gives
\begin{equation}\label{eq_contr1}
 \inf_{T\in \mathbb{R}^2} \sup_{C_t}|f(t)-f_{T}|\leq D\left(\tfrac{1}{2}-t\right)
\end{equation}
for all $t\in [1/4,1/2)$, where $f_T=\langle T,\nu \rangle$ for translations $T\in \mathbb{R}^2$.
On the other hand, we have
\begin{align}
 \inf_{T} \sup_{\{x_1=0\}} | \hat{f}_i(\tfrac12-\tau)-f_T|
& =\inf_{T} \sup_{\{x_1 = h_{i}\}} | \tilde{f}_i(\lambda_i^{-2}(\tfrac12-\tau))-f_T| 
 =\inf_{T} \sup_{\{x_1 = h_{i}-\lambda_i^{-2}(\frac12-\tau)\}} | \tilde{f}_i(0)-f_T| \nonumber\\
& = \frac{1}{B(h_{i})} \inf_{T} \sup_{\{x_1 =   h_{i}- \lambda_i^{-2}(\frac12-\tau)\}} |f_{R_{a_i}}-f_T| 
 = \frac{B(h_{i}-\lambda_i^{-2}(\tfrac12-\tau))}{B(h_{i})}  \geq \frac 12 \tau^{1/2+\delta}
\end{align}
for $i$ large enough, where we used \eqref{seq_decay} and \eqref{eq_from_inscr} in the last step.
Taking the limit as $i\to\infty$ gives
\begin{equation}\label{eq_contr2}
 \inf_{T} \sup_{C_{\frac12-\tau}}|f(\tfrac12-\tau)-f_{T}|\geq \tfrac12 \tau^{1/2+\delta}.
\end{equation}
Since $\tau^{-\frac{1}{2}+\delta}>2D$, this contradicts \eqref{eq_contr1}. This completes the analysis in case (a).\\

Finally, in case (b) Proposition \ref{prop_liouville_bowl} (Liouville property) from below gives a contradiction. This finishes the proof of the theorem.
\end{proof}

In the above proof we have used the following proposition:

\begin{proposition}[Liouville property]\label{prop_liouville_bowl}
Suppose $f$ is a solution on the 2d-bowl $B$ of
\begin{equation}\label{eq_ell_trans}
(\Delta+ e_z^\top\cdot\nabla + |A|^2)f=0,
\end{equation}
such that for every $z$ we have
\begin{equation}\label{eq_trans_center}
\int f(z,\theta)\, d\theta = 0.
\end{equation}
If
\begin{equation}\label{trans_growth}
|f|\leq C(1+z)^{-1/2}
\end{equation}
for some $C<\infty$, then $f=0$.
\end{proposition}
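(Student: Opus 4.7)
The approach is to exploit the rotational symmetry of $B$ around its axis $e_z$ via Fourier decomposition in the angular variable. Parametrize $B$ by $(s,\theta)$, where $s\in[0,\infty)$ is arc length along the meridian from the tip and $\theta\in S^1$ is the angular coordinate; the generating curve $(r(s),z(s))$ satisfies $r'(s)^2+z'(s)^2=1$, so $z$ is strictly increasing in $s$. Writing $f(s,\theta)=\sum_{k\in\mathbb{Z}}f_k(s)e^{ik\theta}$, the hypothesis $\int f(z,\theta)\,d\theta=0$ forces $f_0\equiv 0$, and separation of variables converts the Jacobi equation into $L_k f_k = 0$ for each $k\neq 0$, where
\[
L_k := \partial_s^2+\left(\frac{r'(s)}{r(s)}+z'(s)\right)\partial_s+\left(|A|^2(s)-\frac{k^2}{r(s)^2}\right).
\]

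The crux is to use the explicit Jacobi fields coming from horizontal translation of $\mathbb{R}^3$, namely $\nu_r(s)\cos\theta$ and $\nu_r(s)\sin\theta$ with $\nu_r(s):=z'(s)>0$ for $s>0$. These solve $L_1 \nu_r=0$, which by elementary algebra yields the identity $L_k\nu_r=\frac{1-k^2}{r^2}\nu_r$ for every $k$. Substituting $f_k=\psi_k\nu_r$ then transforms the ODE into
\[
\psi_k''+B(s)\psi_k'+\frac{1-k^2}{r(s)^2}\psi_k=0,\qquad B(s):=\frac{r'(s)}{r(s)}+z'(s)+2\frac{z''(s)}{z'(s)},
\]
whose zeroth-order coefficient is strictly negative whenever $|k|\geq 2$.

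Boundary analysis of $\psi_k$: smoothness of $f_k$ on $B$ at the tip forces $f_k(s)=O(s^{|k|})$ as $s\to 0^+$, and since $\nu_r(s)\sim s$ there, $\psi_k(0^+)=0$ for $|k|\geq 2$, while $\psi_k(0^+)$ is finite for $|k|=1$; at infinity $\nu_r(s)\to 1$ while the decay hypothesis gives $|f_k(s)|\leq C(1+z(s))^{-1/2}\to 0$, so $\psi_k(s)\to 0$. For $|k|\geq 2$ the maximum principle then applies: at any positive interior local maximum $s_0$ of $\psi_k$ one has $\psi_k'(s_0)=0$ and $\psi_k''(s_0)\leq 0$, but the transformed ODE gives $\psi_k''(s_0)=\frac{k^2-1}{r(s_0)^2}\psi_k(s_0)>0$, a contradiction; ruling out negative interior minima by the same argument and using the vanishing boundary values, we conclude $\psi_k\equiv 0$, hence $f_k\equiv 0$.

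The remaining case $|k|=1$ is handled by direct integration of $\psi_1''+B\psi_1'=0$: a short calculation gives $B(s)\sim 3/s$ near the tip, so non-constant solutions $\psi_1'(s)=Ce^{-\int B}\sim Cs^{-3}$ blow up as $s\to 0$; the regularity of $f_1$ therefore forces $\psi_1$ to be constant, i.e.\ $f_1$ a multiple of $\nu_r$, and the decay condition $\psi_1(\infty)=0$ pins this multiple to zero. Summing over Fourier modes yields $f\equiv 0$. The main step is the substitution $f_k=\psi_k\nu_r$ which, via the identity $L_k\nu_r=(1-k^2)\nu_r/r^2$, flips the sign of the zeroth-order term and enables a clean maximum principle argument for $|k|\geq 2$; the key structural input is the explicit Jacobi field $\nu_r$ arising from the horizontal translation symmetry.
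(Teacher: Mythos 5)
Your argument is correct, but it takes a genuinely different route from the paper. The paper disposes of this proposition in two lines: it invokes the Liouville-type argument from \cite{Haslhofer_bowl} (written there for the rotation function $f_R$, but applying verbatim to any solution of \eqref{eq_ell_trans} with the decay \eqref{trans_growth}) to conclude that $f=\langle T,\nu\rangle$ for some horizontal vector $T\in\mathbb{R}^2$, and then the zero angular average \eqref{eq_trans_center} forces $T=0$. You instead give a self-contained proof: Fourier decomposition in $\theta$ (the hypothesis killing the $k=0$ mode), the substitution $f_k=\psi_k\,\nu_r$ with the translation Jacobi field $\nu_r=z'(s)>0$, which via $L_k\nu_r=\tfrac{1-k^2}{r^2}\nu_r$ turns the modes $|k|\geq 2$ into an ODE with strictly negative zeroth-order coefficient amenable to a clean maximum principle (using $\psi_k\to 0$ at the tip and at infinity), and an explicit integration for $|k|=1$ where regularity at the tip kills the second solution and the decay kills the multiple of $\nu_r$. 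What the paper's route buys is brevity and the fact that the hard analytic work is already archived in \cite{Haslhofer_bowl}; what your route buys is a transparent, elementary argument that never leaves the statement, at the cost of several routine verifications you assert rather than prove and should record if this were written out in full: that $\langle e_1,\nu\rangle=z'(s)\cos\theta$ is indeed annihilated by $\Delta+e_z^\top\cdot\nabla+|A|^2$ (the standard translator Jacobi field fact, also used elsewhere in the paper), the tip asymptotics $r(s)\sim s$, $z'(s)\sim cs$ giving $B(s)\sim 3/s$, the asymptotics $z'\to 1$ and $z\to\infty$ along the end, and the Taylor-expansion fact that smoothness of $f$ at the tip forces $f_k(s)=O(s^{|k|})$ (in particular $o(s)$ for $|k|\geq 2$, which is what makes $\psi_k(0^+)=0$ and hence the attainment of an interior extremum legitimate). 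None of these is a gap — they are standard properties of the $2$d bowl and of smooth functions in polar-type coordinates — so your proof stands as a valid alternative.
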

\begin{proof}
The argument from \cite{Haslhofer_bowl}, which has been written for $f=f_R$ but also applies for other solutions $f$ of \eqref{eq_ell_trans} satisfying \eqref{trans_growth}, shows that $f=\langle T,\nu\rangle$ for some $T\in\mathbb{R}^2$. By \eqref{eq_trans_center} we must have $T=0$.
\end{proof}

\bigskip

\section{Uniform sharp asymptotics}\label{sec_sharp}

Throughout this section, $M\subset \mathbb{R}^4$ denotes any noncollapsed translator that is neither the 3d round bowl nor $\mathbb{R}\times$2d-bowl. As before, we normalize such that the translation is in $x_1$-direction with unit speed.\\

To establish the sharp asymptotics we need suitable inner barriers for the renormalized mean curvature flow near the cylinder $\Gamma=\mathbb{R}^2\times S^1(\sqrt{2})$. To begin with, recall from Angenent-Daskalopoulos-Sesum \cite[Figure 1 and Section 8]{ADS1} that there is some $L_0>1$ such that  for every $a\geq L_0$ there are shrinkers 
\begin{align}\label{fol_ads}
{\Sigma}_a &= \{ \textrm{surface of revolution with profile } r=u_a(y_1), 0\leq y_1 \leq a\}\subset\mathbb{R}^3.
\end{align}
The parameter $a$ captures where the  concave functions $u_a$ meet the $y_1$-axis.  In our previous paper \cite[Section 3]{CHH_blowdown} we constructed a bubble-sheet foliation $\Gamma_a\subset \mathbb{R}^4$  by shifting and rotating the ADS-shrinker foliation $\Sigma_a\subset\mathbb{R}^3$. For the present paper, we need the somewhat more general inner barriers
\begin{equation}\label{fol_bubble}
\Gamma_a^\eta:=\big\{(r\cos\theta ,r\sin\theta,y_3,y_4) \,: \, \theta\in [0,2\pi), (r-\eta,y_3,y_4) \in {\Sigma}_a \big\}\subset\mathbb{R}^4,
\end{equation}
where we now shift by $\eta>0$ instead of by $1$.

\begin{proposition}[barriers]\label{prop_inner.barrier} The hypersurfaces $\Gamma_a^\eta$ act as an inner barriers for the renormalized mean curvature flow in the region $|(y_1,y_2)|\geq 3\eta^{-1}$.
\end{proposition}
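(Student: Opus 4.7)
The plan is to verify, at every point of $\Gamma_a^\eta \cap \{|(y_1,y_2)|\geq 3\eta^{-1}\}$, the pointwise shrinker inequality
\[
-H_{\Gamma_a^\eta}+\tfrac12\langle X,\nu_{\Gamma_a^\eta}\rangle\;\geq\;0,
\]
and then obtain the inner-barrier property by the standard first-touching-point argument for the renormalized mean curvature flow. To set up the computation I would parametrize $\Gamma_a^\eta$ over $\Sigma_a$ via the correspondence $p=(r\cos\theta,r\sin\theta,y_3,y_4)\leftrightarrow q=(r-\eta,y_3,y_4)\in\Sigma_a$, so that the outward unit normal at $p$ is
\[
\nu_{\Gamma}(p)=\bigl((\nu_\Sigma)_1\cos\theta,\,(\nu_\Sigma)_1\sin\theta,\,(\nu_\Sigma)_3,\,(\nu_\Sigma)_4\bigr).
\]
Because $\Gamma_a^\eta$ is the $\mathrm{SO}(2)$-revolution of the shifted shrinker in the $y_1y_2$-plane, its three principal curvatures at $p$ split as the two principal curvatures of $\Sigma_a$ at $q$ plus a rotational principal curvature equal to $(\nu_\Sigma)_1/r$ (the normal curvature of the revolution circle of radius $r$ against $\nu_\Gamma$).

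The two identities I would use are then
\[
H_{\Gamma_a^\eta}(p)=H_{\Sigma_a}(q)+\frac{(\nu_\Sigma)_1}{r},\qquad \langle X_\Gamma,\nu_\Gamma\rangle=\langle X_\Sigma,\nu_\Sigma\rangle+\eta\,(\nu_\Sigma)_1.
\]
Feeding in the self-shrinker equation $H_{\Sigma_a}=\tfrac12\langle X_\Sigma,\nu_\Sigma\rangle$ that each ADS profile satisfies in $\mathbb{R}^3$ (see \cite[Section 8]{ADS1}), I get the clean expression
\[
H_{\Gamma_a^\eta}-\tfrac12\langle X_\Gamma,\nu_\Gamma\rangle\;=\;(\nu_\Sigma)_1\!\left(\frac{1}{r}-\frac{\eta}{2}\right).
\]
In the region $r=|(y_1,y_2)|\geq 3\eta^{-1}$ the bracket is bounded by $-\eta/6$, and concavity of the ADS profile $u_a$ combined with the outward orientation of $\Sigma_a$ pins down the sign of $(\nu_\Sigma)_1$ on the whole relevant part of the profile. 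Hence the shrinker quantity on $\Gamma_a^\eta$ has a fixed one-sided sign in this region, which is exactly the subsolution condition for the renormalized flow.

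The main technical point I expect to have to be careful about is the sign bookkeeping: verifying that the rotational principal curvature really enters with a $+(\nu_\Sigma)_1/r$ (rather than its negative), that $(\nu_\Sigma)_1$ has a definite sign on the entire portion of $\Sigma_a$ used in the foliation, and that $3\eta^{-1}$ is precisely the threshold at which the factor $1/r-\eta/2$ becomes uniformly of the correct sign. Once the pointwise inequality is established, the dynamical barrier statement follows from the parabolic maximum principle in the usual way: at any hypothetical first touching of the renormalized flow with the static hypersurface $\Gamma_a^\eta$, the tangential contact forces $H_M\geq H_{\Gamma_a^\eta}$ (or $\leq$, depending on the side of touching) with equal position-normal pairings, which combined with the displayed pointwise inequality contradicts the sign of the normal velocity $-H+\tfrac12\langle X,\nu\rangle$. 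This is essentially the same argument already used for the special case $\eta=1$ in \cite[Section 3]{CHH_blowdown}, with the only change being the rescaling of the effective threshold to $3\eta^{-1}$.
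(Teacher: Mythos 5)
Your proposal is correct and follows essentially the same route as the paper: reduce the barrier property to the pointwise inequality $H_{\Gamma_a^\eta}\leq\tfrac12\langle \vec y,\nu\rangle$, use the rotation identity $H_\Gamma=H_\Sigma+\tfrac{1}{r}(\nu_\Sigma)_1$ together with the shrinker equation for $\Sigma_a$, and exploit that $(\nu_\Sigma)_1\geq 0$ (concave, decreasing ADS profile) while $\tfrac1r-\tfrac\eta2<0$ once $r\geq 2\eta^{-1}$, which the hypothesis $|(y_1,y_2)|\geq 3\eta^{-1}$ guarantees. The paper performs the same computation after reducing by symmetry to the slice $\{y_2=0,\,y_1>0\}$, so your version differs only in bookkeeping, not in substance.
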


\begin{proof}
Being an inner barrier for the renormalized mean curvature flow is equivalent to the condition
\begin{equation}\label{to_show_negative_div}
H_{{\Gamma}_a^{\eta}} \leq \tfrac{1}{2}\langle \vec{y},\nu \rangle.
\end{equation}
To show this, note that by symmetry of the hypersurfaces $\Gamma_a^\eta$, it suffices to compute $H_{{\Gamma}_{a}^{\eta}}$ in the region $\{y_2=0,\;y_1>0\}$, where we can identify points and unit normals in ${\Gamma}_{a}^{\eta}$ with the corresponding ones in ${\Sigma}_a$, by disregarding the $y_2$-component. 
The relation between the mean curvature  of a surface ${\Sigma}\subset\mathbb{R}^3$ and its (unshifted) rotation ${\Gamma} \subset\mathbb{R}^4$ on points with $y_2=0$ and $y_1>0$ is given by 
\begin{equation}\label{rotate_H}
H_{{\Gamma}}=H_{{\Sigma}}+\frac{1}{y_1}\langle e_1,\nu \rangle.
\end{equation}  
In our case, the convexity of $\Sigma_a$ gives $\langle e_1,\nu  \rangle\geq 0$, so using \eqref{rotate_H} and the shrinker equation we infer that
\begin{equation}\label{rot_cal}
H_{{{\Gamma}}_a^{\eta}}
= \frac{1}{2}\langle \vec{y}-\eta e_{1},\nu\rangle+\frac{1}{y_1} \langle e_1,\nu  \rangle \leq \frac{1}{2}\langle \vec{y}, \nu \rangle,  
\end{equation}
where in the last inequality, we have used that $y_1 \geq 2\eta^{-1}$. This proves the proposition.
\end{proof}

\bigskip

\subsection{Sharp asymptotics in bubble-sheet region} 

We consider the renormalized mean curvature flow
\begin{equation}
\bar{M}_\tau=e^{\frac{\tau}{2}}M_{-e^{-\tau}},
\end{equation}
where $\tau=-\log(-t)$. Then,  $\bar{M}_\tau$ converges to
\begin{equation}
\Gamma=\mathbb{R}^2\times S^1(\sqrt{2})
\end{equation}
as $\tau \to -\infty$. Recall that we have circular symmetry (see Theorem \ref{thm_symmetry}). In particular, this symmetry must preserve $\Gamma$. This symmetry must also preserve the positive $e_1$-axis. Hence, after shifting $M$ in the $x_3x_4$-plane, the hypersurfaces $\bar{M}_\tau$ are left invariant by the rotation vector field
\begin{equation}
V= x_3 e_4 - x_4 e_3.
\end{equation}
Denote by  $\Omega_\tau$ the set of points ${\bf{y}}=(y_1,y_2)\in\mathbb{R}^2$ such that $({\bf{y}},r \cos\theta,r\sin\theta )\in \bar{M}_\tau$ for some $r> 0$. There exists a unique function $u:\Omega_\tau \times \mathbb{R}\to (-\sqrt{2},\infty)$ such that  
\begin{equation}
\big({\bf{y}},(\sqrt{2}+u({\bf{y}},\tau))\cos\theta,(\sqrt{2}+u({\bf{y}},\tau))\sin\theta \big) \in \bar{M}_\tau,
\end{equation}
and
\begin{equation}
\lim_{{\bf{y}}\to \partial \Omega_\tau}u({\bf{y}},\tau)=-\sqrt{2}.
\end{equation}
Moreover, there exists an \emph{admissible graphical radius function $\rho_0(\tau)$ for $\tau\leq \tau_\ast$}, namely a positive smooth function $\rho_0:(-\infty,\tau_\ast]\to\mathbb{R}_+$ with  $\lim_{\tau\to -\infty}\rho_0(\tau)=\infty$ such that 
\begin{align}
-\rho_0(\tau)\leq \rho_0'(\tau)\leq 0
\end{align}
and
\begin{equation}\label{eq_graphC4}
\|u\|_{C^4(B(0,2\rho_0(\tau)))}\leq \rho_0(\tau)^{-2}
\end{equation}
hold for $\tau\leq \tau_\ast$.

Since $\bar{M}_\tau$ moves by renormalized mean curvature flow, the graph function $u$ satisfies the equation
\begin{align}
u_\tau= \sum_{i,j=1}^2 \left(\delta_{ij}-\frac{u_{y_i}u_{y_j}}{1+|\nabla u|^2}\right) u_{y_iy_j}-\frac{1}{\sqrt{2}+u}+\frac{1}{2}\Big(\sqrt{2}+u-{\bf{y}}\cdot \nabla u\Big).
\end{align}
This yields
\begin{align}
u_\tau=\mathcal{L}u+E,\label{u.equation}
\end{align}
where
\begin{equation}
\mathcal L=\frac{\partial^2}{\partial y_1^2}+\frac{\partial^2}{\partial y_2^2}  - \frac{y_1}{2} \frac{\partial}{\partial y_1} - \frac{y_2}{2} \frac{\partial}{\partial y_2} + 1,
\end{equation}
and where the error term thanks to \eqref{eq_graphC4} satisfies the pointwise estimate
\begin{equation}\label{pointwiseE}
|E|\leq C\rho_0^{-2}(|u|+|\nabla u|).
\end{equation}
The two-dimensional Ornstein-Uhlenbeck operator $\mathcal{L}$ has $3$ unstable eigenfunctions, namely
\begin{equation}\label{eigenfunctions_unstable}
1,y_1,y_2,
\end{equation}
and $3$ neutral eigenfunctions, namely
\begin{equation}\label{expansion_neutral}
y_1^2-2,y_2^2-2,y_1y_2.
\end{equation}

Next, we fix a smooth cut-off function $\chi:\mathbb{R}^+\to [0,1]$ such that $\chi(s)=1$ for $s\leq 1$ and $\chi(s)=0$ for $s\geq 2$. Then, we define
\begin{equation}\label{def_alpha}
\alpha(\tau)=\left(\int_{|{\bf{y}}|\leq 2\rho_0(\tau)}u^2({\bf{y}},\tau)\chi\left(\tfrac{|{\bf{y}}|}{\rho_0(\tau)}\right)\tfrac{1}{2\sqrt{2\pi}} e^{-\frac{|{\bf{y}}|^2+2}{4}}d{\bf{y}}\right)^{1/2},
\end{equation}
and
\begin{equation}\label{beta def}
\beta(\tau)=\sup_{\sigma \leq \tau}\alpha(\sigma).
\end{equation}
Arguing similarly as in \cite[Proof of Lemma 4.17]{CHH} we see that the inverse Poincare inequality from \cite[Proposition 4.4]{CHH_blowdown} yields that $\lim_{\tau\to -\infty}\beta(\tau)=0$.

\begin{proposition}[barrier estimate]\label{C0 estimate beta3}
There are constants $c>0$ and $C<\infty$ such that
\begin{equation}
 |u ({\bf{y}},\tau)| \leq C\beta(\tau)^{\frac{1}{2}}
\end{equation}
holds for $|{\bf{y}}| \leq c\beta(\tau)^{-\frac{1}{4}}$ and $\tau\ll 0$.
\end{proposition}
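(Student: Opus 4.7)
The plan is to establish upper and lower bounds on $u$ separately, using convexity of $\bar{M}_\tau$ for the upper bound and the inner barriers $\Gamma_{a}^{\eta}$ for the lower bound.

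For the upper bound, I note that $M$ is convex and parabolic rescaling preserves convexity, so $\bar{M}_\tau$ bounds a convex domain $\bar{K}_\tau \subset \mathbb{R}^4$. Combined with the $\mathrm{SO}(2)$-symmetry in the $(y_3,y_4)$-plane from Theorem \ref{thm_symmetry}, each cross-section $\bar{K}_\tau \cap (\{{\bf{y}}\}\times\mathbb{R}^2)$ is a disc of radius $\sqrt{2}+u({\bf{y}},\tau)$. Brunn's theorem applied to the square root of these $2$-dimensional cross-sectional areas yields concavity of $\sqrt{2}+u({\bf{y}},\tau)$, and hence of $u$, in ${\bf{y}}$. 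On the bounded ball $B(0,2)$ the Gaussian weight is comparable to $1$, so the localized $L^2$ bound gives $\|u(\cdot,\tau)\|_{L^2(B(0,2))} \leq C\beta(\tau)$; interior parabolic regularity for equation \eqref{u.equation}, whose nonlinearity is controlled because $\|u\|_{C^4(B(0,2\rho_0))}\leq\rho_0^{-2}$ is tiny, then upgrades this to $|u(0,\tau)|+|\nabla u(0,\tau)| \leq C\beta(\tau)$. The tangent-plane estimate from concavity, $u({\bf{y}},\tau) \leq u(0,\tau) + \nabla u(0,\tau)\cdot{\bf{y}}$, therefore gives $u({\bf{y}},\tau) \leq C\beta(\tau)(1+|{\bf{y}}|) \leq C\beta(\tau)^{3/4}$ on $|{\bf{y}}| \leq c\beta(\tau)^{-1/4}$, which is stronger than the claim.

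For the lower bound I deploy the inner barriers from Proposition \ref{prop_inner.barrier}. Given ${\bf{y}}_0$ with $|{\bf{y}}_0| \leq c\beta(\tau)^{-1/4}$, I set $\eta := 3/|{\bf{y}}_0|$ so that Proposition \ref{prop_inner.barrier} applies at ${\bf{y}}_0$, and pick $a$ of order $\beta(\tau)^{-1/2}$ so that the ADS shrinker asymptotics (cf.\ \cite{ADS1}), which give $\sqrt{2}-u_a(y_1)=O(y_1^2/a^2)$ in the cylindrical portion, yield $\sqrt{2} - u_a(|{\bf{y}}_0|-\eta) \leq C\beta(\tau)^{1/2}$. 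Consequently the graph $u^{a,\eta}({\bf{y}}) := u_a(|{\bf{y}}|-\eta)-\sqrt{2}$ of $\Gamma_{a}^{\eta}$ over $\Gamma$ satisfies $u^{a,\eta}({\bf{y}}_0) \geq -C\beta(\tau)^{1/2}$. To set up the initial inclusion $\Gamma_{a}^{\eta}\subset\bar{K}_{\sigma_0}$ on $|{\bf{y}}|\geq 3\eta^{-1}$, I choose $\sigma_0 \ll \tau$ sufficiently negative that the admissible graphical radius satisfies $\rho_0(\sigma_0) \geq a+\eta$; then $|u(\cdot,\sigma_0)|\leq \rho_0(\sigma_0)^{-2}$ on $B(0,a+\eta)$ is much smaller than $|u^{a,\eta}|$ throughout the barrier region (note that near the tip of $\Gamma_{a}^{\eta}$ one has $u^{a,\eta}\to -\sqrt{2}$, so the inclusion is easy there), so the initial inclusion holds. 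Proposition \ref{prop_inner.barrier} then propagates the inclusion to all $\sigma\in[\sigma_0,\tau]$, yielding $u({\bf{y}}_0,\tau)\geq u^{a,\eta}({\bf{y}}_0) \geq -C\beta(\tau)^{1/2}$.

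The main technical obstacle is calibrating the parameters $(a,\eta)$ so that simultaneously $3\eta^{-1}\leq|{\bf{y}}_0|$, $|{\bf{y}}_0|-\eta$ lies deep within the cylindrical portion of $\Sigma_a$ (so that $u_a$ is within $O(\beta^{1/2})$ of $\sqrt{2}$), and the initial inclusion can be arranged on the barrier validity region. The scale $a\sim \beta^{-1/2}$ emerges precisely from requiring that the cylindrical-region deficit $\sqrt{2}-u_a(y_1)=O(y_1^2/a^2)$ match $\beta(\tau)^{1/2}$ at $y_1\sim\beta(\tau)^{-1/4}$, which is also what forces the geometric exponent $\tfrac{1}{4}$ in the radius $c\beta^{-1/4}$ of the region on which the estimate is claimed.
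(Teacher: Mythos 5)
Your upper bound is correct (concavity of the radius function of the convex, $\mathrm{SO}(2)$-symmetric body plus a pointwise bound on $u$ and $\nabla u$ at the origin; the paper instead deduces the upper bound from the lower bound by the same convexity, so this half is essentially equivalent). The genuine gap is in the lower bound. Applying $\Gamma_a^\eta$ as an inner barrier in the region $\{|{\bf{y}}|\geq 3\eta^{-1}\}$ is not just a matter of an initial inclusion at $\sigma_0$: since Proposition \ref{prop_inner.barrier} only makes $\Gamma_a^\eta$ a subsolution in that region, a first touching point between $\bar M_\sigma$ and the barrier can occur on the inner circle $\{|{\bf{y}}|=3\eta^{-1}\}$, so one must verify a Dirichlet-type condition there for \emph{all} intermediate times $\sigma\in[\sigma_0,\tau]$; your proposal never checks this. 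Worse, your choice $\eta=3/|{\bf{y}}_0|$ places that inner boundary exactly through the point ${\bf{y}}_0$ at which you want the conclusion, so the required boundary inequality $\sqrt{2}+u\geq u_a(|{\bf{y}}_0|-\eta)$ at radius $|{\bf{y}}_0|$ is precisely the estimate $u\geq -C\beta^{1/2}$ you are trying to prove: the argument is circular, and even granting the boundary condition it would only yield information for $|{\bf{y}}|> |{\bf{y}}_0|$, not at ${\bf{y}}_0$.

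The paper avoids this by keeping the barrier parameters independent of the point. One takes $\eta=1$ (so the barrier is valid for $|{\bf{y}}|\geq L_0$ with $L_0$ the fixed ADS constant), uses parabolic estimates together with the monotonicity built into the definition of $\beta$ to get the pointwise bound $|u({\bf{y}},\tau)|\leq K\beta(\hat\tau)$ on $\{|{\bf{y}}|\leq 2L_0\}$ for all $\tau\leq\hat\tau$, and then chooses $a=c_0(K\beta(\hat\tau))^{-1/2}$ with $c_0$ small so that, by \cite[Lemma 4.4]{ADS1}, $u_a(L_0-1)\leq\sqrt{2}-K\beta(\hat\tau)$; this is exactly what makes the boundary condition at $|{\bf{y}}|=L_0$ hold at all times $\tau\leq\hat\tau$, while the initial inclusion is free because the compact barrier lies strictly inside the cylinder $\Gamma=\lim_{\tau\to-\infty}\bar M_\tau$. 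The resulting enclosure at time $\hat\tau$, combined with $u_a(\sqrt{a})^2\geq 2-2/a$, then gives $u\geq -C\beta(\hat\tau)^{1/2}$ simultaneously on the whole range $L_0\leq|{\bf{y}}|\leq\sqrt{a}-1$, which contains $\{|{\bf{y}}|\leq c\beta^{-1/4}\}$, with no point-dependent choice of $\eta$ or $a$ needed. Your calibration $a\sim\beta^{-1/2}$ and the resulting exponent $1/4$ are the right heuristics, but without the boundary-condition verification (and with the point-dependent $\eta$) the barrier step does not go through as written.
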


\begin{proof}
By parabolic estimates (see \cite[Appendix A]{CHH_blowdown}), there is a constant $K<\infty$ such that
\begin{equation}\label{C_0.bound.y=L_0}
|u({\bf{y}},\tau)|\leq K \beta(\tau)
\end{equation}
holds for $|{\bf{y}}|\leq 2L_0$ and $\tau\ll 0$, where $L_0$ is the constant from the ADS-foliation \eqref{fol_ads}.
Given $\hat \tau\ll 0$, consider  the barrier hypersurface $\Gamma_a=\Gamma_a^1$ from \eqref{fol_bubble} with parameters $\eta=1$ and
\begin{equation}\label{a=root.beta}
a=\frac{c_0}{\sqrt{K\beta(\hat \tau)}}.
\end{equation}
If we choose $c_0$ small enough, then by \cite[Lemma 4.4]{ADS1} the profile function $u_a$ of the ADS-shrinker $\Sigma_a$ satisfies
\begin{equation}
u_a(L_0-1) \leq \sqrt{2}-K\beta(\hat \tau).
\end{equation}
Combining this with \eqref{C_0.bound.y=L_0}, the inner barrier principle from Proposition \ref{prop_inner.barrier} implies that $\Gamma_a$ is enclosed by $\bar M_\tau$  for $|{\bf{y}}| \geq L_0$ and $\tau \leq \hat\tau$. Since  $u_a(\sqrt{a})^2\geq 2-2/a$ (see e.g. \cite[Equation (195)]{CHH}), this yields
\begin{equation}
\left(\sqrt{2}+u({\bf{y}},\hat\tau)\right)^2 \geq 2-2/a
\end{equation}
for $|{\bf{y}}|\in [L_0,\sqrt{a}-1]$. Hence, remembering \eqref{a=root.beta} we conclude that 
\begin{equation}
 u ({\bf{y}},\tau) \geq -C\beta(\tau)^{\frac{1}{2}}
\end{equation}
holds for $|{\bf{y}}| \leq c\beta(\tau)^{-\frac{1}{4}}$ and $\tau\ll 0$.
Finally, by convexity, using also \eqref{C_0.bound.y=L_0}, this lower bound implies a corresponding upper bound. This concludes the proof of the proposition.
\end{proof}

We now define
\begin{equation}\label{eq_better_rho}
\rho(\tau):=\beta(\tau)^{-\frac{1}{5}}
\end{equation}
Then, Proposition \ref{C0 estimate beta3} (barrier estimate) and standard interior Schauder estimates give
\begin{equation}
\|u(\cdot,\tau)\|_{C^4( B_{2\rho(\tau)}(0))}\leq \rho(\tau)^{-2}
\end{equation}
for $\tau\ll 0$. Moreover, thanks to \cite[Theorem 1.10]{CHH_blowdown} the neutral eigenfunctions dominate\footnote{Thanks to the $\mathrm{SO}(2)$-symmetry the fine tuning rotation $S(\tau)$ from \cite[Proposition 4.1]{CHH_blowdown} is simply the identity matrix.} and we thus have
\begin{equation}
\left|\frac{d}{d\tau} \alpha^2\right| = o(\alpha^2).
\end{equation}
This implies
\begin{equation}
-\rho(\tau)\leq \rho'(\tau)\leq 0
\end{equation}
for $\tau \ll 0$, i.e. $\rho$ is an admissible graphical radius function.\\

We now work with the truncated graph function
\begin{equation}
\hat{u}({\bf{y}},\tau)=u({\bf{y}},\tau)\chi \left( \tfrac{|{\bf{y}}|}{\rho(\tau)}\right),
\end{equation}
where $\rho$ denotes the improved graphical radius from equation \eqref{eq_better_rho}.

\begin{proposition}[evolution equation]\label{lemma_taylor}
The function $\hat{u}$ satisfies
\begin{equation}
\partial_\tau \hat u = \mathcal L \hat u  -\tfrac{1}{2\sqrt 2} \hat u ^2 + E,
\end{equation}
where the error term can be estimated by
\begin{align}\label{est_eq}
|  E|\leq &C\chi |  u|^3+C\chi |\nabla u|^2 |\nabla^2u|+ C |\chi'|\rho^{-1} \big(|\nabla u|+|{\bf{y}}||u|\big)\nonumber\\
&+ C |\chi''|\rho^{-2}|u|+ C \chi(1-\chi)\big(|u|^2+|\nabla^2u|^2\big).
\end{align}
\end{proposition}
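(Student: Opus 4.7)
The plan is to derive the evolution equation for $\hat u=\chi(|{\bf{y}}|/\rho(\tau))\,u$ by applying the product rule directly to the already-established equation \eqref{u.equation} for $u$, and then identify the leftover terms as $E$. Writing $\chi=\chi(|{\bf{y}}|/\rho(\tau))$ for brevity, I would start from the two basic identities
\begin{equation*}
\partial_\tau \hat u = \chi\, u_\tau + (\partial_\tau \chi)\, u, \qquad \mathcal{L}(\chi u) - \chi\, \mathcal{L} u = 2\,\nabla \chi \cdot \nabla u + u\bigl(\Delta \chi - \tfrac{{\bf{y}}}{2}\cdot\nabla \chi\bigr),
\end{equation*}
the second of which uses that the order-zero $+1$ in $\mathcal{L}=\Delta-\tfrac{1}{2}{\bf{y}}\cdot\nabla+1$ commutes with multiplication. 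Substituting \eqref{u.equation} into the first identity and then using the second to replace $\chi\,\mathcal{L}u$ by $\mathcal{L}\hat u$ yields
\begin{align*}
\partial_\tau \hat u - \mathcal{L}\hat u = -\tfrac{\chi u^2}{2\sqrt 2} - \tfrac{\chi u^3}{4+2\sqrt 2 u} - \tfrac{\chi |\nabla u|^2 \Delta u}{1+|\nabla u|^2} + (\partial_\tau \chi) u - 2\,\nabla \chi \cdot \nabla u - u\bigl(\Delta \chi - \tfrac{{\bf{y}}}{2}\cdot\nabla \chi\bigr).
\end{align*}
Splitting $\chi u^2=\hat u^2+\chi(1-\chi)u^2$ peels off the principal quadratic $-\hat u^2/(2\sqrt{2})$ and identifies the remaining right-hand side with $E$.

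The rest is routine triangle-inequality bookkeeping. The cubic correction $\chi u^3/(4+2\sqrt{2} u)$ is bounded by $C\chi|u|^3$ since $|u|$ is small on the graphical region, and the nonlinear gradient term $\chi|\nabla u|^2\Delta u/(1+|\nabla u|^2)$ is bounded by $C\chi|\nabla u|^2|\nabla^2 u|$. The transition piece $\chi(1-\chi)u^2/(2\sqrt{2})$ contributes the $\chi(1-\chi)|u|^2$ term in \eqref{est_eq}, while the generous $\chi(1-\chi)|\nabla^2 u|^2$ slack absorbs any further cross term that arises if one wishes to re-split the nonlinear gradient piece across the transition annulus.

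For the cutoff-derivative terms, one uses $|\nabla\chi|\leq|\chi'|\rho^{-1}$, $|\Delta\chi|\leq|\chi''|\rho^{-2}+|\chi'|\rho^{-1}|{\bf{y}}|^{-1}$, and, crucially, $|\partial_\tau\chi|\leq|\chi'|\rho^{-1}|{\bf{y}}|$ via the admissibility bound $-\rho\leq\rho'\leq 0$. Since $\chi'$ is supported on $\{|{\bf{y}}|/\rho\in[1,2]\}$, the auxiliary piece $|\chi'|\rho^{-1}|{\bf{y}}|^{-1}$ in $|\Delta\chi|$ satisfies $|\chi'|\rho^{-1}|{\bf{y}}|^{-1}\leq|\chi'|\rho^{-2}$ and can be absorbed into the $|\chi''|\rho^{-2}|u|$ term. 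The main (and only) subtlety is invoking the admissibility condition on $\rho$ to control $\partial_\tau\chi$ uniformly in $\tau$; everything else is routine algebra, and assembling the pieces yields the stated bound on $E$.
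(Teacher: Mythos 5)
Your proof is correct and follows essentially the same route as the paper: compute the commutators of the cutoff with $\partial_\tau$ and with $\mathcal{L}$, split $\chi u^2=\hat u^2+\chi(1-\chi)u^2$, and bound the remaining nonlinear and cutoff-derivative terms exactly as in \eqref{est_eq}, using $-\rho\leq\rho'\leq 0$ for the $\partial_\tau\chi$ term. The only cosmetic point is that the $|\chi'|\rho^{-1}|{\bf{y}}|^{-1}|u|$ piece of $|u\Delta\chi|$ is more naturally absorbed into $C|\chi'|\rho^{-1}|{\bf{y}}||u|$ (since $|{\bf{y}}|\geq 1$ on the support of $\chi'$) rather than into the $|\chi''|$ term, as $|\chi'|$ need not be pointwise dominated by $|\chi''|$.
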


\begin{proof}
We compute
\begin{align}
\left|\partial_\tau \hat u-\chi \partial_\tau  u\right|= |{\bf{y}}||\rho'\rho^{-2}|\chi'u|\leq C |\chi'|\rho^{-1}|{\bf{y}}||u|,
\end{align}
and
\begin{equation}
\left|\mathcal{L} \hat u-\chi \mathcal{L}  u\right|= \left|u\Delta \chi +2\nabla u\cdot\nabla \chi-\tfrac{1}{2}u {\bf{y}}\cdot \nabla \chi\right|
\leq C |\chi'|\rho^{-1} \big(|\nabla u|+|{\bf{y}}||u|\big)+ C |\chi''|\rho^{-2}|u|.
\end{equation}
Moreover, we have
\begin{align}
&\left|-\frac{\hat u^2}{2\sqrt{2}}+\frac{\chi u^2}{2\sqrt{2}}\right|\leq \chi(1-\chi)u^2, &&
 \left|\frac{\chi u^3}{4+2\sqrt{2} u}\right|\leq \chi |u|^3.
\end{align}
Together with \eqref{u.equation} and \eqref{pointwiseE} (with $\rho_0$ replaced by $\rho$) this yields the desired result.
\end{proof}

We now consider the neutral eigenfunction
\begin{align}
\psi_0  =  2^{-\frac{3}{2}} (\tfrac{e}{2\pi})^{\frac{1}{4}}(y_2^2-2),
\end{align}
which is normalized with respect to the Gaussian inner product $\langle \cdot,\cdot\rangle_{\mathcal{H}}$. Here, for $\theta$-independent functions the Gaussian inner product is given by
\begin{equation}
\langle f,g\rangle_{\mathcal{H}}=\int f({\bf{y}}) g({\bf{y}}) (8\pi)^{-\frac{1}{2}}e^{\frac{-|{\bf{y}}|^2+2}{4}}d{\bf{y}}.
\end{equation}
We now define 
\begin{equation}\label{alpha0.def}
\alpha_0=\langle \hat u, \psi_0\rangle_{\mathcal{H}}.
\end{equation}
Then, by \cite[Theorem 1.7]{CHH_blowdown} we have
\begin{equation}\label{hatu_expansion}
\hat u = \alpha_0\psi_0 + o(|\alpha_0|)
\end{equation}
in $\mathcal{H}$-norm.

\begin{lemma}[error estimate]\label{lemma_error_est} 
The error term $E$ from Proposition \ref{lemma_taylor} (evolution equation) satisfies the estimate
\begin{equation}
|\langle E,\psi_0 \rangle_{\cH} |\leq C \beta(\tau)^{2+\frac{1}{5}}
\end{equation}
for $\tau\ll 0$.
\end{lemma}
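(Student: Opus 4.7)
The plan is to bound each of the five pointwise contributions to $|E|$ from Proposition~\ref{lemma_taylor} against $\psi_0$ in the Gaussian inner product and show each is at most $C\beta(\tau)^{2+1/5}$.

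The three terms involving $|\chi'|$, $|\chi''|$, and $\chi(1-\chi)$ are all supported in the transition annulus $\rho\leq|{\bf{y}}|\leq 2\rho$, where $\rho=\beta^{-1/5}$. There the Gaussian weight is bounded by $e^{-\rho^2/4}=e^{-\beta^{-2/5}/4}$, which decays super-polynomially in $\beta^{-1}$. Since the integrands are at most polynomial in $\beta^{-1}$ by Proposition~\ref{C0 estimate beta3} and the $C^4$-bound $|\nabla^k u|\leq C\rho^{-2}=C\beta^{2/5}$, all three of these contributions are $O(\beta^N)$ for every $N$, and so well within $C\beta^{2+1/5}$.

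For the main term $\chi|u|^3$ I would combine the pointwise barrier bound $\|u\|_\infty\leq C\beta^{1/2}$ with the Gaussian $L^2$-estimate $\|u\|_{\mathcal H}\leq C\alpha(\tau)\leq C\beta$, the latter being immediate from the definitions of $\alpha$ and $\beta$ together with the super-polynomial tail decay coming from $\rho_0$ being large. Writing $|u|^3=|u|^{8/5}\cdot|u|^{7/5}$ and applying H\"older to the remaining Gaussian integral with conjugate exponents $\tfrac{10}{7}$ and $\tfrac{10}{3}$ yields
\begin{equation*}
\int\chi|u|^3|\psi_0|\,e^{-|{\bf{y}}|^2/4}\,d{\bf{y}}\ \leq\ \|u\|_\infty^{8/5}\,\|u\|_{\mathcal H}^{7/5}\,\|\psi_0\|_{L^{10/3}(e^{-|{\bf{y}}|^2/4}d{\bf{y}})}\ \leq\ C\beta^{4/5}\cdot\beta^{7/5}\ =\ C\beta^{11/5},
\end{equation*}
where the $L^{10/3}$ Gaussian moment of the polynomial $\psi_0$ is a finite constant. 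For $\chi|\nabla u|^2|\nabla^2u|$ I would apply H\"older with exponents $(\tfrac{3}{2},6,6)$ after pulling out the pointwise factor $\|\nabla u\|_\infty^{2/3}\|\nabla^2 u\|_\infty^{2/3}\leq C(\beta^{2/5})^{2/3}(\beta^{2/5})^{2/3}=C\beta^{8/15}$, giving
\begin{equation*}
\int\chi|\nabla u|^2|\nabla^2u||\psi_0|\,e^{-|{\bf{y}}|^2/4}\,d{\bf{y}}\ \leq\ C\beta^{8/15}\,\|\nabla u\|_{\mathcal H}^{4/3}\,\|\nabla^2 u\|_{\mathcal H}^{1/3}\,\|\psi_0\|_{L^6(e^{-|{\bf{y}}|^2/4}d{\bf{y}})}\ \leq\ C\beta^{8/15+4/3+1/3}\ =\ C\beta^{11/5}.
\end{equation*}

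The main obstacle, used in the last step, is establishing the Gaussian regularity $\|\nabla u\|_{\mathcal H}\leq C\beta$ and $\|\nabla^2 u\|_{\mathcal H}\leq C\beta$ uniformly in $\tau$. I would obtain this from parabolic Gaussian Schauder applied to the PDE~\eqref{u.equation}, using that its nonlinear right-hand side has Gaussian norm bounded by $\|u\|_\infty\|u\|_{\mathcal H}=O(\beta^{3/2})$. At the level of energy, testing~\eqref{u.equation} against $u$ and $\mathcal Lu$ in $\mathcal H$ and invoking the spectral decomposition of $\mathcal L$ on the Hermite basis, for which the eigenvalues are $1-(k_1+k_2)/2$, gives the required bounds after a standard bootstrap.
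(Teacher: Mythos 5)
Your overall strategy is the same one the paper uses (its one-line proof just outsources it to \cite{CHH_blowdown} and \cite{CHH}): kill the cutoff terms by the Gaussian tail at $|{\bf{y}}|\sim\rho=\beta^{-1/5}$, and estimate the two genuine nonlinear terms by interpolating between the pointwise bounds ($|u|\leq C\beta^{1/2}$ from Proposition \ref{C0 estimate beta3}, $|\nabla u|+|\nabla^2u|\leq\rho^{-2}=\beta^{2/5}$ from admissibility of $\rho$) and weighted $L^2$ bounds. Your H\"older bookkeeping is correct and does reproduce the exponent $\beta^{11/5}$, and the treatment of the $\chi'$, $\chi''$, $\chi(1-\chi)$ terms is fine since $e^{-\rho^2/4}=e^{-\beta^{-2/5}/4}$ beats any power of $\beta$.

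The gap is in the inputs you feed into H\"older, which you treat as routine but which are exactly the content of the inverse Poincar\'e inequality of \cite{CHH_blowdown} (Proposition 4.4 there) that the paper's proof invokes. First, $\|\nabla u\|_{\mathcal H}\leq C\beta$ and $\|\nabla^2 u\|_{\mathcal H}\leq C\beta$ (over $B_{2\rho}$) do not follow from ``Gaussian Schauder plus testing against $u$ and $\mathcal L u$'' as sketched: the right-hand side of \eqref{u.equation} contains $\tfrac{|\nabla u|^2\Delta u}{1+|\nabla u|^2}$, so its Gaussian norm is not bounded by $\|u\|_\infty\|u\|_{\mathcal H}$ as you claim (it involves the very second-derivative norm you are trying to bound and must be absorbed); moreover $u$ is only defined on $\Omega_\tau$, so any energy argument must be localized at scale $\rho$ (producing annulus boundary terms) and integrated in time, since $\tfrac{d}{d\tau}$ of the localized energy is not controlled pointwise without further input such as the neutral-mode dominance. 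This is precisely the (nontrivial) proof of the inverse Poincar\'e inequality; either cite it, as the paper does, or reproduce that argument in full -- your sketch as written does not. Second, ``$\|u\|_{\mathcal H}\leq C\alpha(\tau)\leq C\beta$ is immediate from the definitions'' is not quite right: $\alpha$ in \eqref{def_alpha} is truncated at the \emph{initial} radius $\rho_0$, while the support of $\chi$ in $E$ is $B_{2\rho}$ with $\rho=\beta^{-1/5}$, and there is no a priori comparison between $\rho_0$ and $\rho$; on the possible annulus between them one only has $|u|\leq C\beta^{1/2}$ and a Gaussian factor $e^{-c\rho_0^2}$, and to conclude one needs a quantitative lower bound on $\rho_0$ relative to $\beta$ (or, again, the cited machinery applied at the improved radius). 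With those two inputs supplied by citation, your computation closes; without them, the crux of the lemma is missing.
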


\begin{proof}
Using the inverse Poincare inequality from \cite[Proposition 4.4]{CHH_blowdown}, the argument from \cite[Proof of Proposition 4.21]{CHH} applies.
\end{proof}

\begin{proposition}[evolution of expansion coefficient]\label{prop_ODEs}
The coefficient $\alpha_0$ from the expansion \eqref{hatu_expansion} satisfies
\begin{align}
\tfrac{d}{d\tau} \alpha_0 &= -(\tfrac{e}{2\pi})^{\frac{1}{4}}\alpha_0^2  + o(\beta^2).\label{ODE1}
\end{align}
\end{proposition}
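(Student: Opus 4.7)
\emph{Proof plan.} The plan is to differentiate $\alpha_0(\tau)=\langle \hat u,\psi_0\rangle_{\mathcal H}$ in $\tau$ and substitute the evolution equation from Proposition \ref{lemma_taylor}:
\begin{equation*}
\frac{d\alpha_0}{d\tau}=\langle \mathcal L\hat u,\psi_0\rangle_{\mathcal H}-\tfrac{1}{2\sqrt 2}\langle \hat u^2,\psi_0\rangle_{\mathcal H}+\langle E,\psi_0\rangle_{\mathcal H}.
\end{equation*}
A direct calculation gives $\mathcal L(y_2^2-2)=2-y_2^2+(y_2^2-2)=0$, so $\psi_0\in\ker\mathcal L$. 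Since $\hat u$ is compactly supported (by construction of the cutoff), integration by parts against the Gaussian weight of $\mathcal H$ produces no boundary contributions, and hence $\langle \mathcal L\hat u,\psi_0\rangle_{\mathcal H}=\langle\hat u,\mathcal L\psi_0\rangle_{\mathcal H}=0$. The inhomogeneous term is absorbed directly by Lemma \ref{lemma_error_est}, which gives $|\langle E,\psi_0\rangle_{\mathcal H}|\le C\beta^{2+1/5}=o(\beta^2)$.

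For the quadratic term I would use the spectral expansion $\hat u=\alpha_0\psi_0+r$ supplied by \eqref{hatu_expansion}, with $\|r\|_{\mathcal H}=o(|\alpha_0|)$, and decompose
\begin{equation*}
\langle \hat u^2,\psi_0\rangle_{\mathcal H}=\alpha_0^2\,\langle\psi_0^2,\psi_0\rangle_{\mathcal H}+2\alpha_0\,\langle r,\psi_0^2\rangle_{\mathcal H}+\langle r^2,\psi_0\rangle_{\mathcal H}.
\end{equation*}
The cross term is $o(\alpha_0^2)=o(\beta^2)$ by Cauchy--Schwarz, since $\psi_0^2\in\mathcal H$ (polynomial against Gaussian weight) and $|\alpha_0|\le\beta$. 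For the pure remainder term, combining the $\mathcal H$-smallness of $r$ with the $C^4$-control implicit in the graphical radius $\rho(\tau)=\beta^{-1/5}$ and the pointwise barrier from Proposition \ref{C0 estimate beta3}, parabolic Schauder estimates upgrade $\|r\|_{\mathcal H}=o(|\alpha_0|)$ to a pointwise bound $|r|=o(|\alpha_0|)$ on any fixed Euclidean ball; the Gaussian tail outside that ball is negligible, so $\langle r^2,\psi_0\rangle_{\mathcal H}=o(\beta^2)$. The numerical constant is then extracted from the explicit Gaussian integral: applying the standard Hermite moment identities to $(y_2^2-2)^3$ against the Gaussian weight and using the normalization of $\psi_0$ yields precisely $-\tfrac{1}{2\sqrt 2}\langle\psi_0^2,\psi_0\rangle_{\mathcal H}=-(e/2\pi)^{1/4}$, producing the stated ODE.

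The main obstacle I anticipate is exactly this pointwise upgrade of $r$. A crude Cauchy--Schwarz using only the barrier bound $\|\hat u\|_\infty\le C\beta^{1/2}$ delivers merely $\langle r^2,\psi_0\rangle_{\mathcal H}=o(\beta^{3/2})$, which is not inside the $o(\beta^2)$ target. To close the gap one must genuinely exploit the spectral structure: by \cite[Theorem 1.7]{CHH_blowdown} both $\hat u_+,\hat u_-$ and the non-$\psi_0$ components of $\hat u_0$ are of order $o(|\alpha_0|)$ in $\mathcal H$, and parabolic regularity applied to the PDEs satisfied by these components promotes this $L^2$ smallness to $L^\infty$ smallness on compact subsets of $\mathbb R^2$, in the spirit of the one-dimensional fine-neck bootstrap of Angenent--Daskalopoulos--Sesum \cite{ADS1} adapted to the two-dimensional bubble-sheet setting.
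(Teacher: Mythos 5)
Your skeleton coincides with the paper's proof: differentiate $\alpha_0$, use self-adjointness of $\mathcal{L}$ together with $\mathcal{L}\psi_0=0$ to kill the linear term, absorb $\langle E,\psi_0\rangle_{\mathcal{H}}$ via Lemma \ref{lemma_error_est}, expand $\hat u=\alpha_0\psi_0+r$ using \eqref{hatu_expansion}, and extract the constant from $\tfrac{1}{2\sqrt 2}\langle \psi_0^2,\psi_0\rangle_{\mathcal{H}}=(\tfrac{e}{2\pi})^{1/4}$ (your constant computation is correct). The gap is in your treatment of $\langle r^2,\psi_0\rangle_{\mathcal{H}}$. Upgrading $\|r\|_{\mathcal{H}}=o(|\alpha_0|)$ to a pointwise bound on a \emph{fixed} Euclidean ball and declaring the Gaussian tail ``negligible'' does not reach $o(\beta^2)$: $\psi_0$ grows quadratically, and on the support of the cutoff (radius $\rho\sim\beta^{-1/5}$) the only pointwise information available on $\hat u$ is the barrier bound $|\hat u|\leq C\beta^{1/2}$, so the contribution of $\{|{\bf{y}}|>R\}$ is only of order $\beta^{3/2}\epsilon_R$ (Cauchy--Schwarz) or $o(\beta^{8/5})$ (using $\|r\|_{\mathcal{H}}^2\sup_{\mathrm{supp}}|\psi_0|$) for fixed $R$ --- i.e.\ exactly the $o(\beta^{3/2})$-type barrier you yourself identified, not $o(\beta^2)$. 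Forcing the tail below $\beta^2$ would require $R\gtrsim (\log (1/\beta))^{1/2}$, and on such growing balls promoting Gaussian-weighted $L^2$ smallness of order $o(\beta)$ to pointwise smallness is no longer a routine interior estimate, since the weight at radius $R$ costs a factor $e^{R^2/8}$. As written, this step fails.

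The route the paper actually relies on (terse here, but spelled out in the analogous Claim \ref{second_evlove_claim} and in \cite[Proof of Proposition 4.21]{CHH}, \cite[Lemma 5.14]{ADS1}) avoids pointwise bounds altogether: by Ecker's weighted Sobolev inequality, $|\langle r^2,\psi_0\rangle_{\mathcal{H}}|\leq C\int (1+|{\bf{y}}|^2) r^2 e^{-|{\bf{y}}|^2/4}\leq C\big(\|r\|_{\mathcal{H}}^2+\|\nabla r\|_{\mathcal{H}}^2\big)$, so what you really need is the Gaussian $H^1$ bound $\|\nabla r\|_{\mathcal{H}}=o(|\alpha_0|)$. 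This follows from an energy estimate for the equation satisfied by $r$ (project the evolution equation of Proposition \ref{lemma_taylor} off the $\psi_0$-direction and integrate against $r\,e^{-|{\bf{y}}|^2/4}$ over a unit time interval, as in the proof of Claim \ref{second_evlove_claim}), or alternatively from the inverse Poincar\'e inequality of \cite[Proposition 4.4]{CHH_blowdown}. Replacing your pointwise-upgrade step by this weighted-Sobolev-plus-gradient-energy argument closes the proof; the remainder of your proposal matches the paper.
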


\begin{proof}
Using Proposition \ref{lemma_taylor} (evolution equation) and $\mathcal{L}\psi_0=0$ we see that
\begin{align}
\tfrac{d}{d\tau} \alpha_0 &= \langle \partial_\tau \hat u , \psi_0 \rangle_{\cH}=\langle \mathcal L \hat u  -\tfrac{1}{2\sqrt 2} \hat u ^2+  E , \psi_0  \rangle_{\cH}=\langle -\tfrac{1}{2\sqrt{2}} \hat u ^2 +  E , \psi_0 \rangle_{\cH}.
\end{align}
Together with \eqref{hatu_expansion} and Lemma \ref{lemma_error_est} (error estimate) this implies
\begin{equation}
\tfrac{d}{d\tau} \alpha_0=-\tfrac{1}{2\sqrt{2}} c\alpha_0^2+o(\beta^2),
\end{equation}
where
\begin{equation}
c= \int \psi_0^3 (8\pi)^{-\frac{1}{2}} e^{-\frac{|{\bf{y}}|^2+2}{4}}d{\bf{y}}.
\end{equation}
Computing $c$ yields the desired result.
\end{proof}

\begin{theorem}[inwards quadratic bending]\label{thm_L2.convergence}
The function $\hat{u}$ satisfies
\begin{equation}
\lim_{\tau\to -\infty}|\tau|\hat u({\bf{y}},\tau)=-\frac{y_2^2-2}{2\sqrt{2}}
\end{equation}
in $\mathcal{H}$-norm. In particular, for $\tau\ll 0$ we have
\begin{equation}
\|\hat u(\cdot,\tau)\|_{\mathcal{H}}=(2\pi /e)^{\frac{1}{4}}|\tau|^{-1}+o(| \tau|^{-1}).
\end{equation}
\end{theorem}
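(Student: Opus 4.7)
The plan is to reduce to an ODE analysis for the single coefficient $\alpha_0(\tau)$. Combining the expansion \eqref{hatu_expansion}, $\hat u = \alpha_0\psi_0 + o(|\alpha_0|)$ in $\mathcal H$, with $\|\psi_0\|_{\mathcal H} = 1$, gives $\|\hat u(\cdot,\tau)\|_{\mathcal H} = |\alpha_0(\tau)|(1+o(1))$ as $\tau \to -\infty$. Comparing with \eqref{def_alpha} and \eqref{beta def}, this shows $\alpha(\tau) = (1+o(1))|\alpha_0(\tau)|$ and hence $\beta(\tau) = (1+o(1))\sup_{\sigma\le\tau}|\alpha_0(\sigma)|$. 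Once we pin down the sign and monotonicity of $\alpha_0$ (see below), the supremum will simply equal $|\alpha_0(\tau)|$, so the $o(\beta^2)$ error in Proposition \ref{prop_ODEs} upgrades to $o(\alpha_0^2)$, and we arrive at
\begin{equation*}
\tfrac{d}{d\tau}\alpha_0(\tau) = -a\,\alpha_0(\tau)^2\bigl(1+o(1)\bigr), \qquad a := (e/2\pi)^{1/4}.
\end{equation*}

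To fix the sign, I would note that the ODE forces $\alpha_0'\le 0$ for $\tau \ll 0$, so $\alpha_0$ is monotone non-increasing in $\tau$; combined with $\alpha_0(\tau) \to 0$ as $\tau \to -\infty$ (which follows from $\beta(\tau)\to 0$) this leaves only two possibilities on $(-\infty,\tau_\ast]$: either $\alpha_0 \equiv 0$ or $\alpha_0(\tau) < 0$. The first alternative is excluded because \eqref{hatu_expansion} would then force $\hat u \equiv 0$ on a backward time interval, hence $\bar M_\tau \equiv \Gamma$ there; unique continuation for the uniformly parabolic equation \eqref{u.equation} would then force $\bar M_\tau \equiv \Gamma$ for all $\tau$, contradicting the standing hypothesis that $M$ is a strictly convex translator. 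Hence $\alpha_0(\tau) < 0$ near $-\infty$, and consequently $|\alpha_0|$ is monotone non-decreasing in $\tau$, confirming $\beta(\tau) = (1+o(1))|\alpha_0(\tau)|$ and justifying the displayed ODE above.

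With these in hand, I would substitute $f(\tau) := -1/\alpha_0(\tau) > 0$ to rewrite the ODE as $f'(\tau) = -a + o(1)$ as $\tau \to -\infty$. Integrating from $\tau$ to a fixed $\tau_\ast$ and dividing by $|\tau|$ yields $f(\tau) = a|\tau| + o(|\tau|)$, that is,
\begin{equation*}
\alpha_0(\tau) = -\frac{1}{a|\tau|}\bigl(1+o(1)\bigr).
\end{equation*}
Substituting back into \eqref{hatu_expansion} and using the arithmetic identity $\psi_0/a = (y_2^2-2)/(2\sqrt 2)$, we conclude that $|\tau|\hat u(\cdot,\tau) \to -(y_2^2-2)/(2\sqrt 2)$ in $\mathcal H$, and the norm asymptotic $\|\hat u(\cdot,\tau)\|_{\mathcal H} = (2\pi/e)^{1/4}|\tau|^{-1} + o(|\tau|^{-1})$ is then immediate from $\|\hat u\|_{\mathcal H} = |\alpha_0|(1+o(1))$. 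The main subtlety throughout is the bookkeeping needed to turn the $o(\beta^2)$ error (a past supremum) into $o(\alpha_0^2)$ at the current time $\tau$, which is precisely what both the dominance \eqref{hatu_expansion} and the sign/monotonicity of $\alpha_0$ are used for; without this upgrade one cannot close the integration $f' = -a + o(1)$ and only obtains a weaker one-sided bound on $\alpha_0$.
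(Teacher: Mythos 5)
Your overall route is the same as the paper's: reduce to the ODE for $\alpha_0(\tau)=\langle \hat u(\tau),\psi_0\rangle_{\mathcal{H}}$ from Proposition \ref{prop_ODEs}, show $\alpha_0<0$ with $|\alpha_0|$ essentially realizing the running supremum, integrate, and convert back via \eqref{hatu_expansion}. The normalization arithmetic and the final integration step are correct. But there is a genuine gap at the crux: you claim ``the ODE forces $\alpha_0'\le 0$ for $\tau\ll 0$.'' Before any upgrade, the ODE reads $\tfrac{d}{d\tau}\alpha_0=-(\tfrac{e}{2\pi})^{1/4}\alpha_0^2+o(\beta^2)$, where $\beta(\tau)$ is a supremum over all $\sigma\le\tau$. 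At a time where $|\alpha_0(\tau)|$ is much smaller than $\beta(\tau)$ (the supremum attained strictly in the past), the error term can dominate $-a\,\alpha_0^2$ and $\alpha_0'$ can have either sign; monotonicity simply does not follow from this ODE. Conversely, replacing $o(\beta^2)$ by $o(\alpha_0^2)$ requires knowing that the supremum is attained at the current time, which is exactly what the monotonicity of $|\alpha_0|$ was supposed to deliver. So your argument is circular: the upgraded ODE is justified by monotonicity, and monotonicity by the (upgraded) ODE. You flag this bookkeeping as ``the main subtlety'' but never actually perform it.

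The paper closes this loop with a two-step continuation argument that you would need to reproduce. Setting $\beta_0(\tau)=\sup_{\sigma\le\tau}|\alpha_0(\sigma)|$, one first observes that at any time $\tau_1$ with $\beta_0(\tau_1)=|\alpha_0(\tau_1)|$ (such times exist arbitrarily far back, since $\alpha_0\to0$ and the sup is attained), the error is at most $\tfrac1{10}\alpha_0^2(\tau_1)$, so $\alpha_0'(\tau_1)\le-\tfrac12\alpha_0^2(\tau_1)<0$; since $\alpha_0$ just before $\tau_1$ then exceeds $\alpha_0(\tau_1)$ while $|\alpha_0(\tau_1)|$ is the past supremum, necessarily $\alpha_0(\tau_1)<0$. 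Second, one propagates the identity $-\alpha_0=\beta_0$ forward on $[\tau_1,\tau_\ast]$ by an extension argument: as long as $-\alpha_0$ remains the running maximum, the ODE keeps $\alpha_0'\le-\alpha_0^2(\tau_1)<0$, so the identity cannot fail; since $\tau_1$ can be taken arbitrarily negative, $-\alpha_0=\beta_0$ holds on all of $(-\infty,\tau_\ast]$. Only after this does one legitimately have $\alpha_0'=-a\,\alpha_0^2+o(\alpha_0^2)$ everywhere, at which point your substitution $f=-1/\alpha_0$ and integration go through. (Your unique-continuation aside for ruling out $\alpha_0\equiv0$ is a reasonable, if informal, way to dismiss that degenerate case; it is not where the difficulty lies.) As written, however, the sign/monotonicity step fails, and with it the derivation of the sharp rate.
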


\begin{proof}
Let
\begin{equation}
\beta_0(\tau):= \sup_{\sigma\leq \tau} |\alpha_0(\sigma)|,
\end{equation}
where $\alpha_0$ is defined in \eqref{alpha0.def}. By Proposition \ref{prop_ODEs} (evolution of the expansion coefficient) there is some $\tau_\ast>-\infty$ so that for $\tau\leq \tau_\ast$ we have 
\begin{equation}
\left |\tfrac{d}{d\tau} \alpha_0 +(\tfrac{e}{2\pi})^{\frac{1}{4}}\alpha_0^2 \right| \leq \tfrac{1}{10}\beta_0^2.
\end{equation}
Suppose that at some $\tau_0\leq \tau_\ast$ we have $\beta_0(\tau_0)=|\alpha_0(\tau_0)|$. Then,
\begin{equation}
-\tfrac{d}{d\tau} \alpha_0(\tau_0)\geq (\tfrac{e}{2\pi})^{\frac{1}{4}}\alpha_0^2(\tau_0)-\tfrac{1}{10}\beta_0^2(\tau_0) \geq \tfrac12 \alpha_0^2(\tau_0)>0,
\end{equation}
implies that there exists some small $\delta>0$ such that $\alpha_0(\tau)>\alpha_0(\tau_0)$ holds for $\tau \in (\tau_0-\delta,\tau_0)$. Since $\beta_0(\tau_0)=|\alpha_0(\tau_0)|\geq |\alpha_0(\tau)|$ for $\tau\leq \tau_0$, we thus have $\alpha_0(\tau_0)<0$.

\bigskip

Next, we choose any time $\tau_1\leq \tau_\ast$ satisfying $\beta_0(\tau_1)=|\alpha_0(\tau_1)|$, and an interval $I=[\tau_1,\tau']\subset [\tau_1,\tau_\ast]$ such that $\frac{d}{d\tau}\alpha_0(\tau)\leq 0$ for $\tau \in I$. Since $\alpha_0(\tau_1)<0$, we have $-\alpha_0(\tau)=\beta_0(\tau)$ for all $\tau \in I$. Moreover,
\begin{equation}
-\tfrac{d}{d\tau} \alpha_0 \geq \tfrac12 \alpha_0^2(\tau)\geq \tfrac12 \alpha_0^2(\tau_1)>0
\end{equation}
holds for all $\tau\in I$. Therefore, if $\tau'<\tau_\ast$ we can keep extending $\tau'$ until $\tau'=\tau_\ast$. Namely, $-\alpha_0(\tau)=\beta_0(\tau)$ holds for all $\tau \in [\tau_1,\tau_\ast]$. Since $\tau_1$ was arbitrarily, we infer that $-\alpha_0(\tau)=\beta_0(\tau)$ for all $\tau \leq \tau_\ast$. Namely, we have $\alpha_0<0$ and 
\begin{align}
\tfrac{d}{d\tau} \alpha_0 &= -(\tfrac{e}{2\pi})^{\frac{1}{4}}\alpha_0^2  + o(\alpha_0^2),
\end{align}
for all $\tau\leq \tau_\ast$. Integrating this ODE yields
\begin{align}
\alpha_0(\tau)=\frac{-(2\pi /e)^{\frac{1}{4}}+o(1)}{| \tau|}
\end{align}
for $\tau\ll \tau_\ast$. Together with \eqref{hatu_expansion} this implies the assertion.
\end{proof}

\bigskip

Recall that in contrast to \cite{ADS1,ADS2}, where only a single solution was considered, we need estimates for families that are uniform depending only on the quadratic bending in the central region. As opposed to the introduction, we will first work with the following stronger notion of $\kappa$-quadraticity:

\begin{definition}[strongly $\kappa$-quadratic]\label{def_mu_qudratic_str}
We say that a noncollapsed translator $M$ in $\mathbb{R}^4$, normalized as above, that is neither a 3d round bowl nor $\mathbb{R}\times2d$-bowl, is \emph{strongly $\kappa$-quadratic from time $\tau_0$} if
\begin{enumerate}[(i)]
\item $\rho(\tau)=|\tau|^{1/10}$ is an admissible graphical radius function for $\tau \leq \tau_0$, and 
\item the truncated graph function $\hat{u}({\bf{y}},\tau)=u({\bf{y}},\tau)\chi \left( \tfrac{|{\bf{y}}|}{\rho(\tau)}\right),
$ satisfies the estimate  \begin{equation}\label{mu_quad_back}
\left\|\hat{u}({\bf{y}},\tau)+\frac{y_2^2-2}{2\sqrt{2}|\tau|}\right\|_{\mathcal{H}}\leq \frac{\kappa}{|\tau|}\;\;\;\;\;\textrm{for}\;\tau\leq \tau_0.
\end{equation}
\end{enumerate}
\end{definition}

\begin{corollary}[strong $\kappa$-quadraticity]\label{cor_strong_kappa}
For every $\kappa>0$ and every noncollapsed translator $M$ in $\mathbb{R}^4$, normalized as above, that is neither a 3d round bowl nor $2d$-bowl $\times\mathbb{R}$, there exists $\tau_\ast=\tau_\ast(\kappa,M)>-\infty$ such that $M$ is strongly $\kappa$-quadratic from any time $\tau_0\leq \tau_{\ast}$.
\end{corollary}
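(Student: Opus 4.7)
The plan is to deduce the corollary directly from Theorem \ref{thm_L2.convergence} (inwards quadratic bending) together with the improved graphical radius $\tilde{\rho}(\tau):=\beta(\tau)^{-1/5}$ constructed in \eqref{eq_better_rho}. The only real work is to bridge the gap between the truncation scale $\tilde{\rho}$ used in Theorem \ref{thm_L2.convergence} and the scale $|\tau|^{1/10}$ used in Definition \ref{def_mu_qudratic_str}.

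First, I would observe that Theorem \ref{thm_L2.convergence} gives $\|\hat u(\cdot,\tau)\|_{\mathcal{H}} = (2\pi/e)^{1/4}|\tau|^{-1}+o(|\tau|^{-1})$, which together with \eqref{def_alpha}--\eqref{beta def} implies $\alpha(\tau),\beta(\tau)\sim C|\tau|^{-1}$ as $\tau\to -\infty$. Consequently, $\tilde{\rho}(\tau)\geq c|\tau|^{1/5}$ for $\tau\ll 0$. In particular, there is some $\tau_1(M)>-\infty$ such that $|\tau|^{1/10}\leq \tilde{\rho}(\tau)$ for every $\tau\leq \tau_1(M)$.

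To verify condition (i), the function $|\tau|^{1/10}$ is smooth, tends to infinity, and has derivative $-\tfrac{1}{10}|\tau|^{-9/10}$, which satisfies $-|\tau|^{1/10}\leq \tfrac{d}{d\tau}|\tau|^{1/10}\leq 0$ for $|\tau|\geq 1/10$. For the $C^4$-bound, since $B(0,2|\tau|^{1/10})\subseteq B(0,2\tilde{\rho}(\tau))$ once $\tau\leq\tau_1(M)$, the existing estimate $\|u\|_{C^4(B(0,2\tilde\rho(\tau))\cap\Gamma)}\leq \tilde{\rho}(\tau)^{-2}$ restricts to give
\begin{equation}
\|u\|_{C^4(B(0,2|\tau|^{1/10})\cap\Gamma)}\leq \tilde{\rho}(\tau)^{-2}\leq C|\tau|^{-2/5}\leq |\tau|^{-1/5}
\end{equation}
for $|\tau|$ large enough, so (i) holds once $\tau\leq\tau_2(M)$ for suitably negative $\tau_2(M)$.

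For condition (ii), denote by $\hat{u}_2$ the truncation with cutoff $\chi(|\mathbf{y}|/|\tau|^{1/10})$ (appearing in Definition \ref{def_mu_qudratic_str}), and by $\hat{u}_1$ the truncation with cutoff $\chi(|\mathbf{y}|/\tilde{\rho}(\tau))$ used in Theorem \ref{thm_L2.convergence}. The triangle inequality gives
\begin{equation}
\left\|\hat{u}_2+\tfrac{y_2^2-2}{2\sqrt{2}|\tau|}\right\|_{\mathcal{H}}\leq \left\|\hat{u}_1+\tfrac{y_2^2-2}{2\sqrt{2}|\tau|}\right\|_{\mathcal{H}}+\|\hat{u}_1-\hat{u}_2\|_{\mathcal{H}}.
\end{equation}
The first term is $o(|\tau|^{-1})$ by Theorem \ref{thm_L2.convergence}. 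The difference $\hat{u}_1-\hat{u}_2$ is supported in $\{|\mathbf{y}|\geq |\tau|^{1/10}\}\cap B(0,2\tilde{\rho})$, where $|u|\leq\tilde{\rho}^{-2}\leq C|\tau|^{-2/5}$; the Gaussian weight then yields $\|\hat{u}_1-\hat{u}_2\|_{\mathcal{H}}\leq Ce^{-c|\tau|^{1/5}}$, which is negligible. Hence, given $\kappa>0$, one chooses $\tau_\ast=\tau_\ast(\kappa,M)\leq\min(\tau_1,\tau_2)$ negative enough that the combined estimate is below $\kappa/|\tau|$ for all $\tau\leq\tau_\ast$, and both (i) and (ii) hold from any time $\tau_0\leq\tau_\ast$. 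No step is a serious obstacle; the only thing to watch is the comparison between the two cutoff scales, which is controlled by the rapid Gaussian decay.
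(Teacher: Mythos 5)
Your proposal is correct and follows essentially the same route as the paper, whose (very terse) proof likewise combines Theorem \ref{thm_L2.convergence} with the asymptotics $\beta(\tau)\sim|\tau|^{-1}$ to upgrade the graphical radius $\beta(\tau)^{-1/5}\gtrsim|\tau|^{1/5}\geq|\tau|^{1/10}$ and then reads off both conditions of Definition \ref{def_mu_qudratic_str}. Your explicit Gaussian-tail comparison of the two truncation scales is just a spelled-out version of the cutoff bookkeeping the paper delegates to the inverse Poincar\'e inequality of \cite{CHH_blowdown}, so there is no substantive difference.
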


\begin{proof}
By Theorem \ref{thm_L2.convergence} (inwards quadratic bending) and the inverse Poincare inequality from \cite[Proposition 4.4]{CHH_blowdown} we have $\beta(\tau)\sim |\tau|^{-1}$. Together with the above, this implies the assertion.
\end{proof}

Finally, in the parabolic region the $L^2$-estimate from Theorem \ref{thm_L2.convergence} can be upgraded to an $L^\infty$-estimate. Moreover, this estimate kicks in at time $\tau_0$ and is uniform depending only on $\kappa$:

\begin{proposition}[uniform asymptotics in parabolic region]\label{cor_asymptotic.middle}
For every $\eps>0$ there exist constants $\kappa>0$ and ${\tau}_{\ast}>-\infty$, such that if $M$ is strongly $\kappa$-quadratic from time $\tau_0\leq \tau_{\ast}$, then we have the estimate
\begin{align}
\left|u({\bf{y}},\tau)+\frac{y^2_2-2}{2\sqrt{2} \,|\tau|}\right|\leq \frac{\eps}{|\tau|}
\end{align}
for $\tau \leq \tau_0$ and $|{\bf{y}}| \leq \eps^{-1}$.
\end{proposition}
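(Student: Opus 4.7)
The strategy is to argue by contradiction, using parabolic regularity to upgrade the weighted $L^2$-control built into strong $\kappa$-quadraticity to the pointwise control claimed on compact balls. Assume the statement fails; then there exist $\eps_0>0$, sequences $\kappa_i\to 0$ and $\tau_\ast^i\to-\infty$, strongly $\kappa_i$-quadratic translators $M^i$ from times $\tau_0^i\leq \tau_\ast^i$, times $\tau^i\leq \tau_0^i$, and points $|{\bf{y}}^i|\leq \eps_0^{-1}$ with
\[
\Bigl|\,|\tau^i|\,u^i({\bf{y}}^i,\tau^i)+\tfrac{(y_2^i)^2-2}{2\sqrt{2}}\Bigr|\geq \eps_0.
\]
Introduce the rescaled functions $v^i({\bf{y}},s):=|\tau^i|\,\hat u^i({\bf{y}},\tau^i+s)$ on $|{\bf{y}}|\leq 2\eps_0^{-1}$, $s\in[-1,0]$. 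For $|\tau^i|$ large, this domain lies well inside the graphical region $|{\bf{y}}|\leq \rho(\tau^i+s)=|\tau^i+s|^{1/10}$, so $\hat u^i=u^i$ there, and strong $\kappa_i$-quadraticity yields, for each $s\in[-1,0]$,
\[
\Bigl\|\,v^i(\cdot,s)+\tfrac{|\tau^i|}{|\tau^i+s|}\tfrac{y_2^2-2}{2\sqrt{2}}\Bigr\|_{\mathcal H}\leq \kappa_i\,\tfrac{|\tau^i|}{|\tau^i+s|}\longrightarrow 0.
\]

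The heart of the proof is to upgrade this weighted $L^2$-estimate to a uniform $C^0_{\mathrm{loc}}$-estimate on $v^i$. On $|{\bf{y}}|\leq 2\eps_0^{-1}$ the admissibility of $\rho(\tau)=|\tau|^{1/10}$ gives $\|u^i\|_{C^4}\leq \rho(\tau)^{-2}=|\tau|^{-1/5}$, so every nonlinear term in \eqref{u.equation} is bounded in $L^\infty$ by $C|\tau|^{-2/5}$. Treating this nonlinearity as a forcing and applying standard interior parabolic $L^2$-to-$L^\infty$ estimates for the uniformly parabolic operator $\partial_\tau-\mathcal L$ on compact balls yields an initial improvement $\|u^i(\cdot,\tau)\|_{L^\infty(B_{\eps_0^{-1}})}\leq C\bigl(\|u^i\|_{L^2(B_{2\eps_0^{-1}}\times[\tau-1,\tau])}+|\tau|^{-2/5}\bigr)\leq C|\tau|^{-2/5}$. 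Feeding this bound back into the nonlinearity and iterating a finite number of times -- each pass squares the current exponent, since every nonlinear term in \eqref{u.equation} is at least quadratic in $u^i$ or its derivatives -- produces the sharp bound $\|u^i(\cdot,\tau)\|_{L^\infty(B_{\eps_0^{-1}})}\leq C/|\tau|$, i.e.\ $\|v^i\|_{L^\infty}\leq C$ on our parabolic cylinder.

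With a uniform $L^\infty$-bound in hand, interior Schauder estimates applied to the equation satisfied by $v^i$, which is an $O(1/|\tau^i|)$-perturbation of $\partial_s v=\mathcal L v$, give uniform $C^k_{\mathrm{loc}}$-bounds. Arzel\`a--Ascoli yields a subsequential limit $v^i(\cdot,0)\to v^\infty$ in $C^0(\overline{B_{\eps_0^{-1}}})$, while the $\mathcal H$-estimate at $s=0$ forces $v^\infty=-(y_2^2-2)/(2\sqrt 2)$. After passing to a further subsequence with ${\bf{y}}^i\to {\bf{y}}^\infty$, uniform convergence yields $v^i({\bf{y}}^i,0)\to -((y_2^\infty)^2-2)/(2\sqrt 2)$, contradicting the failure. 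The main obstacle is the bootstrap: the naive $C^4$-estimate of size $|\tau|^{-1/5}$ is nowhere near the target $1/|\tau|$, and the upgrade succeeds only because of the superlinear structure of every nonlinear term in \eqref{u.equation}, which allows the exponent to double at each iteration and to terminate at the $L^2$-rate $1/|\tau|$ after finitely many steps.
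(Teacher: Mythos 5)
Your argument is correct, but it runs along a different track than the paper's. The paper's proof is quantitative and avoids compactness: it takes the difference $D=\hat u-\tfrac{y_2^2-2}{2\sqrt 2\,\tau}$, notes that strong $\kappa$-quadraticity gives $\|D\|_{\mathcal H}\leq \kappa/|\tau|$, obtains the pointwise bound $\sup_{|{\bf y}|\leq 2\eps^{-1}}|u|\leq C(\eps)/|\tau|$ by citing Theorem \ref{thm_L2.convergence} together with the parabolic estimates of \cite[Theorem A.1]{CHH_blowdown}, upgrades this to $\|D(\cdot,\tau)\|_{W^{3,2}(B(0,\eps^{-1}))}\leq C/|\tau|$ by interior estimates, and then extracts smallness in $L^\infty$ via Agmon's interpolation inequality between the $\kappa/|\tau|$-small $\fH$-norm and the $C/|\tau|$-bounded Sobolev norm, choosing $\kappa$ small; this yields an explicit dependence $\kappa=\kappa(\eps)$. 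You instead argue by contradiction and compactness: you rederive the interior bound $|u|\leq C/|\tau|$ on compact balls by a self-contained bootstrap off the graphical-radius $C^4$ bound $|\tau|^{-1/5}$ (exploiting that every nonlinearity in \eqref{u.equation} is at least quadratic, so the exponent doubles each pass until it caps at the $L^2$-rate $1/|\tau|$), then use Schauder plus Arzel\`a--Ascoli to pass to a limit which the $\mathcal H$-estimate identifies as $-(y_2^2-2)/(2\sqrt 2)$. Both proofs rest on the same two inputs (the $\fH$-bound in the definition of strong quadraticity and interior parabolic regularity on compact balls); your route is more self-contained, at the price of losing any explicit $\kappa(\eps)$ (harmless here, since the statement is purely existential), while the paper's interpolation argument is shorter because the pointwise $C/|\tau|$ bound is imported rather than bootstrapped. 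One small expository point in your bootstrap: after each $L^\infty$ improvement you must invoke interior Schauder (on a slightly shrunken parabolic cylinder) to convert it into improved bounds on $\nabla u$ and $\Delta u$ before re-estimating the nonlinearity, since the cubic term $|\nabla u|^2\Delta u/(1+|\nabla u|^2)$ is not controlled by $\|u\|_{L^\infty}$ alone; this is routine and uses only finitely many shrinkings, but it should be said.
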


\begin{proof}
Consider the difference
\begin{equation}
D({\bf{y}},\tau):=\hat u({\bf{y}},\tau)-\tfrac{y^2_2-2}{2\sqrt{2}\tau}.
\end{equation}
If $M$ is $\kappa$-quadratic from time $\tau_0$, then by definition we have
\begin{equation}\label{difference.H-norm}
\|D\|_{\mathcal{H}}\leq \frac{\kappa}{|\tau|}
\end{equation}
for every $\tau \leq \tau_0$. On the other hand, by Theorem \ref{thm_L2.convergence} (inwards quadratic bending) and the parabolic estimates from \cite[Theorem A.1]{CHH_blowdown} there exist a constant $C=C(\eps)<\infty$, such that
\begin{equation}
\sup_{|{\bf{y}}|\leq 2\eps^{-1}}|u({\bf{y}},\tau)|\leq  C |\tau|^{-1},
\end{equation}
for $\tau\leq\tau_0$, provided $\tau_0\leq\tau_\ast(\eps)$. Therefore, standard interior estimates give
\begin{equation}\label{difference.H^3-norm}
\|D(\cdot,\tau)\|_{W^{3,2}(B(0,{\eps^{-1}}))}\leq C|\tau|^{-1}
\end{equation}
for such  $\tau$. Applying Agmon's inequality with \eqref{difference.H-norm} and \eqref{difference.H^3-norm} we conclude that
\begin{equation}
\|D(\cdot,\tau)\|_{L^{\infty}(B(0,{\eps^{-1}}))}\leq \frac{\eps}{|\tau|},
\end{equation}
provided $\kappa$ is sufficiently small. This proves the proposition.
\end{proof}

\bigskip

\subsection{Sharp asymptotics in intermediate region} To capture the intermediate region we consider the function
\begin{equation}
\bar v(z,\tau)=\sqrt{2}+u(0,|\tau|^{1/2}z,\tau).
\end{equation}
We will show that $\bar v(z,\tau)$ converges to $\sqrt{2-z^2}$ uniformly on each compact interval in $(-\sqrt{2},\sqrt{2})$. More precisely, we make this convergence explicit in the parameter $\kappa$ of strong $\kappa$-quadraticity:

\begin{proposition}[{intermediate region}]\label{prop_intermediate} For every $\eps>0$ there exist $\kappa>0$ and $\tau_{\ast}>-\infty$, such that if $M$ is strongly $\kappa$-quadratic from time $\tau_0 \leq \tau_{\ast}$, then on $I=[-\sqrt{2}+\eps,\sqrt{2}-\eps]$ we have
\begin{equation}
\sup_{z\in I,\;\tau \leq \tau_0}\left|\bar v(z,\tau) - \sqrt{2-z^2}\,\right|\leq \eps.
\end{equation}
\end{proposition}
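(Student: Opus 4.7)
The plan is to pinch $\bar v(z,\tau)$ between an inner barrier from the shifted ADS shrinkers $\Gamma_a^\eta$ of Proposition \ref{prop_inner.barrier} and an upper bound coming from convexity of $\bar M_\tau$. The limit profile $\sqrt{2-z^2}$ emerges by matching both the parabolic-region expansion of Proposition \ref{cor_asymptotic.middle} and the ADS-shrinker profiles at the appropriate scale. Linearizing the 2d shrinker equation around $r=\sqrt{2}$ shows that for large $a$ the ADS profile satisfies $u_a(y)\approx\sqrt{2}\,(1-y^2/a^2)$ in the intermediate range, since the neutral mode $y^2-2$ of the relevant Ornstein--Uhlenbeck operator $\partial_y^2-\tfrac{y}{2}\partial_y+1$ dominates. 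Choosing $a=2|\tau|^{1/2}$ then gives $u_a(|\tau|^{1/2}z)^2\approx 2-z^2$, which matches $\sqrt{2-z^2}$ uniformly on $[-\sqrt{2}+\eps,\sqrt{2}-\eps]$.

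For the lower bound I would apply the inner barrier principle from Proposition \ref{prop_inner.barrier} to propagate the enclosure of $\Gamma_{a(\tau)}^\eta$ by $\bar M_{\tau'}$ forward from an initial time $\tau_1(\tau)$ chosen sufficiently negative. Since $\bar M_{\tau_1}\to\mathbb{R}^2\times S^1(\sqrt{2})$ smoothly on compact subsets as $\tau_1\to-\infty$, the initial enclosure holds provided $\tau_1(\tau)$ is taken far enough in the past; the quantitative convergence rate from strong $\kappa$-quadraticity (via Theorem \ref{thm_L2.convergence} and Corollary \ref{cor_strong_kappa}) makes this enclosure uniform across all strongly $\kappa$-quadratic translators with $\kappa$ small, yielding $\bar v(z,\tau)\geq\sqrt{2-z^2}-\eps$. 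For the upper bound I would use that the cross-section radius function $(y_1,y_2)\mapsto \sqrt{2}+u(y_1,y_2,\tau)$ is concave on its domain (being the radius of a convex solid of revolution); concavity, combined with $\bar v(0,\tau)=\sqrt{2}+o(1)$ and the pinned second derivative at the origin supplied by Proposition \ref{cor_asymptotic.middle} and Theorem \ref{thm_L2.convergence}, forces $\bar v(z,\tau)\leq\sqrt{2-z^2}+\eps$.

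The main obstacle is establishing the barrier enclosure on the intermediate spatial scale $\sim|\tau|^{1/2}$, which is much larger than the admissible graphical radius $|\tau|^{1/10}$ supplied by the definition of strong $\kappa$-quadraticity; in particular, pointwise $C^k$-bounds on $\hat u$ are not directly available at this scale. One must argue instead by comparison, combining the $\mathcal H$-norm control from strong $\kappa$-quadraticity with the global convexity of $\bar M_\tau$ and the tangent-flow classification, and carefully track the $\tau_1(\tau)$ dependence in the barrier hierarchy so that enclosure persists at every $\tau\leq\tau_0$ uniformly in $\kappa$.
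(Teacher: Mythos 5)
Your upper bound argument has a genuine gap. Concavity of the radius function together with the pinned value and second derivative at the origin does \emph{not} force $\bar v(z,\tau)\leq\sqrt{2-z^2}+\eps$: for a concave function, knowing $v(0,\tau)=\sqrt2+o(1)$, $v_y(0,\tau)\approx 0$ and $v_{yy}(0,\tau)\approx-\tfrac{1}{\sqrt2|\tau|}$ only yields, via the tangent line at the scale of the parabolic region, $\bar v(z,\tau)\leq \sqrt2+o(1)$; nothing prevents a concave profile from staying essentially flat at height $\sqrt2$ out to $|y|\sim|\tau|^{1/2}$ and then dropping, so the ellipse upper bound does not follow from pointwise concavity plus data at bounded $|y|$. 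The paper's mechanism is dynamic, not static: concavity of $u$ gives the differential inequality $u_\tau\leq-\tfrac{1}{\sqrt2+u}+\tfrac12(\sqrt2+u-\mathbf{y}\cdot\nabla u)$, hence $v:=(\sqrt2+u(0,\cdot,\cdot))^2-2$ satisfies $v_\tau\leq v-\tfrac12 yv_y$, so $e^{-\tau}v(\alpha e^{\tau/2},\tau)$ is monotone along rescaling paths and the inward quadratic bending of Proposition \ref{cor_asymptotic.middle} at bounded $|y|$ and earlier times is transported outward to the scale $|\tau|^{1/2}$. Some such propagation-in-time argument is indispensable and is missing from your proposal.

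Your lower bound is closer to the paper's, but as written it is incomplete and contains a scale error. Proposition \ref{prop_inner.barrier} makes $\Gamma_a^\eta$ a barrier only in the region $|(y_1,y_2)|\geq 3\eta^{-1}$, so propagating the enclosure forward in time requires, in addition to the asymptotic enclosure as $\tau\to-\infty$, a Dirichlet condition on the inner boundary $\{|\mathbf{y}|=\delta^{-1}\}$ for all times in the interval; this is exactly where the uniform parabolic-region asymptotics enter (the paper's verification \eqref{Dirichlet}--\eqref{asymptotic.for.barrier}), and you never check it. Relatedly, your perceived ``main obstacle'' (graphical radius $|\tau|^{1/10}$ versus the scale $|\tau|^{1/2}$) is not where the difficulty lies: the barrier comparison is a geometric avoidance statement and the only quantitative input needed is at the fixed scale $\delta^{-1}$ inside the parabolic region. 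Finally, the ADS profiles satisfy $u_a(y)\approx\sqrt{2(1-y^2/a^2)}$ over the whole intermediate range (the quadratic expression is only a bound near the cylinder), and the correct parameter is $a\approx\sqrt{2|\tau|}$ (the paper takes $\hat a=\sqrt{2|\tau|/(1+\delta)}$); your choice $a=2|\tau|^{1/2}$ would produce $2-z^2/2$ rather than $2-z^2$.
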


\begin{proof}
We will adapt the proof from \cite[Section 6]{ADS1} to our setting.\\

\noindent\underline{Lower bound:} 
By \cite[Lemma 4.4]{ADS1}, there exist some $a_0\geq 1$ and an increasing function $M:(a_0,\infty)\to (100,\infty)$ with $\displaystyle \lim_{a\to \infty}M(a)=\infty$ such that the profile function $u_a$ of $\Sigma_a$ satisfies
\begin{equation}\label{barrier.asymptotic}
 u_a(y)\leq \sqrt{2}-\frac{y^2-3}{\sqrt{2}\, a^2}
\end{equation}
for $0\leq y\leq M(a)$. Let  $\delta>0$ be such that $\sqrt{\tfrac{2}{1+\delta}}\geq \sqrt{2}-\eps$, and define 
\begin{equation}\label{hata.def}
\hat a(\tau) =\sqrt{\frac{2 |\tau|}{1+\delta}}.
\end{equation}
Choose $\tau_{\ast}$ such that
\begin{equation}
\delta^{-1} \leq  \min\{M(|{\tau}_{\ast}|^{\frac{1}{2}}),|{\tau}_{\ast}|^{\frac{1}{2}-\frac{1}{100}}\},
\end{equation}
and such that 
\begin{equation}\label{eq_ADS4.3}
\left|\sqrt{2\left(1-\tfrac{y^2}{a(\tau)^2}\right)}-u_{a(\tau)}(y)\right| \leq \eps
\end{equation}
hold for every $\tau \leq {\tau}_{\ast}$ and $|y| \leq a(\tau)$, which is possible in light of \cite[Lemma 4.3]{ADS1}.

By Proposition \ref{prop_inner.barrier} (barriers) for each fixed $\hat \tau\leq \tau_0\leq\tau_{\ast}$ the static hypersurface $\Gamma_{\hat a(\hat \tau)}^{3\delta}\subset \mathbb{R}^4$ plays the role of an inner barrier in the region $|{\bf{y}}| \geq \delta^{-1}$. Since $\Gamma_{\hat a(\hat \tau)}^{3\delta}\subset \mathbb{R}^4$ is compact and enclosed by the cylinder $\displaystyle  \mathbb{R}^2\times S^1(\sqrt{2})=\lim_{\tau\to -\infty} \bar M_\tau$, this yields
\begin{equation}
\sqrt{2}+u({\bf{y}},\tau)\geq u_{\hat a(\hat \tau)}(|{\bf{y}}|-3\delta)
\end{equation}
for ${\bf{y}}\in \Omega_\tau\setminus B_{\delta^{-1}}(0)$ and $\tau\leq \hat \tau$, provided the boundary condition
\begin{equation}\label{Dirichlet}
\sqrt{2}+u({\bf{y}},\tau)\geq u_{\hat a(\hat \tau)}(\delta^{-1}-3\delta)
\end{equation}
holds for $|{\bf{y}}|=\delta^{-1}$ and $\tau\leq \hat \tau$. To check this boundary condition, note that \eqref{barrier.asymptotic} by our choice of constants implies
\begin{align}
u_{\hat a(\hat \tau)}(\delta^{-1}-3\delta)-\sqrt{2} \leq  -\frac{[\delta^{-1}-3\delta]^2-3}{\sqrt{2}\,\hat a^2(\hat \tau)}\leq   -\frac{\delta^{-2}}{2\sqrt{2}\,|\hat \tau|}.
\end{align}
Moreover, using also Corollary \ref{cor_asymptotic.middle} (uniform asymptotics in parabolic region) we see that if our solution is $\kappa$-quadratic from time $\tau_0\leq\tau_\ast$, for $\kappa$ sufficiently small, then (after reducing $\tau_{\ast}$ to be the minimum of its current value and the value from Proposition \ref{cor_asymptotic.middle}) we have
\begin{equation}\label{asymptotic.for.barrier}
 u\left( {\bf{y}},\tau\right)\geq -\frac{y_2^2-2+\delta}{2\sqrt{2}|\tau|}\geq -\frac{|{\bf{y}}|^2}{2\sqrt{2}|\tau|}\geq -\frac{\delta^{-2}}{2\sqrt{2}|\hat \tau|}
\end{equation}
for $|{\bf{y}}|=\delta^{-1}$ and $\tau\leq \hat \tau$. Thus, the boundary condition \eqref{Dirichlet}  indeed holds for $|{\bf{y}}|=\delta^{-1}$ and $\tau\leq \hat \tau$. Consequently,
\begin{equation}
\bar v( z,\hat \tau)=\sqrt{2}+u(0,|\hat \tau|^{\frac{1}{2}}z,\tau) \geq u_{\hat a(\hat \tau)}(|\hat \tau|^{\frac{1}{2}}|z|)
\end{equation}
holds for $\hat \tau \leq \tau_{0}$ and $z$ satisfying $|\hat \tau|^{-\frac{1}{2}}\delta^{-1}\leq  |z|\leq  |\hat \tau|^{-\frac{1}{2}}\hat a(\hat \tau) $.  As $ |\hat \tau|^{-\frac{1}{2}}\hat a(\hat \tau)=\sqrt{\tfrac{2}{1+\delta}}\geq \sqrt{2}-\eps$, while by the choice ot ${\tau}_{\ast}$ one has $|\hat \tau|^{-\frac{1}{2}}\delta^{-1} \geq |\hat \tau|^{-1/100}$, we obtain
\begin{equation}
\bar v( z, \tau)\geq u_{\hat a( \tau)}(|\tau|^{\frac{1}{2}}|z|)
\end{equation}
for $|z|\in [|\tau|^{-\frac{1}{100}},2-\varepsilon]$ and $\tau \leq \tau_{0}$. Thus, by \eqref{eq_ADS4.3}, for every $\tau\leq \bar{\tau}_{0}$ and  $|z|\in [| \tau|^{-\frac{1}{100}},  \sqrt{2}-\varepsilon]$ we get
\begin{equation}
\bar v( z, \tau) +\eps \geq \sqrt{ 2-\frac{2|  \tau|z^2}{\hat a^2(  \tau)}}=\sqrt{2-(1+\delta)z^2}\, .
\end{equation}
Finally, since $\bar v$ is concave in $z$, we have $\bar v(z,\tau) \geq \min\{\bar v(| \tau|^{-\frac{1}{100}},\tau),\bar v(-| \tau|^{\frac{1}{100}},\tau)\}$ for $|z|\leq |\tau|^{-\frac{1}{100}}$. Putting things together, we conclude that 
\begin{equation}\label{barv.lower}
\inf_{|z|\leq \sqrt{2}-\eps,\;\tau \leq \tau_0}\left(\,\bar v(z,\tau)- \sqrt{2-z^2}\,\right)\geq -2\eps.
\end{equation}

\bigskip

\noindent\underline{Upper bound:} 
Since $u$ is concave, we have
\begin{align}
u_\tau\leq -\frac{1}{\sqrt{2}+u}+\frac{1}{2}\Big(\sqrt{2}+u-{\bf{y}}\cdot \nabla u\Big).
\end{align}
Thus, $v(y,\tau):=(\sqrt{2}+u(0,y,\tau))^2-2$ satisfies
\begin{align} 
v_\tau\leq v- \frac{1}{2}y v_y.
\end{align}
Hence, for each $\alpha\in \mathbb{R}$ we have
\begin{align} 
\frac{d}{d \tau}\left(e^{-\tau}v(\alpha e^{\frac{\tau}{2}},\tau)\right) \leq 0,
\end{align}
provided $\tau$ is negative enough so that $(0,\alpha e^{\frac{\tau}{2}})\in \Omega_\tau$. Thus, for $\bar \tau \leq \tau$ we get
\begin{equation}
 v(\alpha e^{\frac{\tau}{2}},\tau) \leq  e^{\tau -\bar \tau} v\left(\alpha e^{\frac{\tau}{2}} e^{-\frac{\tau - \bar \tau}{2}},\tau-(\tau-\bar \tau) \right).
\end{equation}
Therefore, for $\sigma\in (0,1]$ we obtain
\begin{equation}\label{vbddab}
 v(y,\tau) \leq  \sigma^{-2} v( \sigma y ,\tau+2\log \sigma).
\end{equation}
On the other hand, by Proposition \ref{cor_asymptotic.middle} (uniform asymptotics in parabolic region), given any $A<\infty$, there exists $\kappa>0$ such that if $M$  is $\kappa$-quadratic from time $\tau_0 \leq {\tau}_{\ast}$, then  
\begin{equation}\label{vequalsth}
v(y,\tau)\leq  |\tau|^{-1}(2-y^2)+A^{-1}|\tau|^{-1}
\end{equation} 
for $|y|\leq A$.
Thus, for $|y|\geq  A$ we obtain
\begin{align}
v(y,\tau)&\leq (y/A)^2 v(\pm A,\tau-2\log (|y|/A))\\
&= - \frac{(1-2A^{-2})y^2}{|\tau|+2\log (|y|/ A)}+A^{-1}(|\tau-2\log (|y|/A)|^{-1}).
\end{align}
This implies
\begin{align}
\bar v(z,\tau)=\sqrt{2+v(|\tau|^{1/2}z,\tau)}\leq \sqrt{2-z^2(1-2A^{-2})+|\tau|^{-1/2}}
\end{align}
uniformly for $|z| \geq A|\tau|^{-\frac{1}{2}}$. In addition, the concavity of $\bar v$ and  \eqref{barv.lower} yield
\begin{align}
\bar v(z,\tau)&\leq 2\bar v(A|\tau|^{-\frac{1}{2}},\tau)-\bar v(2A|\tau|^{-\frac{1}{2}}-z,\tau) \\
&\leq 2\sqrt{2-z^2(1-2A^{-2})+|\tau|^{-1/2}}-{\sqrt{2-z^2}+2\eps\leq  \sqrt{2-z^2}+4\eps} .
\end{align}
for $|z|\leq A|\tau|^{-\frac{1}{2}}$, provided $|\tau|$ is sufficiently large and $A$ is large enough (which happens for $\kappa$ small enough). Hence,
\begin{equation}\label{barv.upper}
\sup_{|z|\leq \sqrt{2}-\eps,\;\tau \leq \tau_{0}}\left(\,\bar v(z,\tau)- \sqrt{2-z^2}\,\right)\leq 4\eps
\end{equation}
This finishes the proof of the proposition.
\end{proof}

\bigskip

\subsection{Sharp asymptotics in terms of level sets}
Let us now reformulate the results from the previous subsections in terms of the level sets.
Recall that, after re-centering our translator $M$ in the $x_3x_4$-plane, the level sets $\Sigma^h=M\cap \{x_1=h\}$ are left invariant by the field $x_3e_4-x_4e_3$. Hence, we can represent the level sets as
\begin{equation}
\Sigma^h=\left\{ (h,x_2,x_3,x_4)\in\mathbb{R}^4 : -d_1(h)\leq x_2 \leq d_2(h),\, (x_3^2+x_4^2)^{1/2} = V(x_2,-h) \right\}.
\end{equation}
The function $V(x,t)$, where $t=-h$, is called the profile function, and is defined for $x\equiv x_2\in [-d^-(h),d^+(h)]$. It vanishes at the endpoints of this interval. We also consider the rescaled profile function $v$ defined by
\begin{equation}
V(x,t)=\sqrt{-t} v(y,\tau)
\end{equation}
where
\begin{equation}
y= \frac{x}{\sqrt{-t}},\qquad \tau = -\log(-t).
\end{equation}
In the tip regions, since $\partial_ y v\neq 0$, we can define $Y(v,\tau)$ as the inverse function of $v(y,\tau)$. In addition, to capture the tips at scale $|\tau|^{-1/2}$, we consider the function
\begin{equation}\label{Z.definition}
Z(\rho,\tau)= |\tau|^{1/2}\left(Y(|\tau|^{-1/2}\rho,\tau)-Y(0,\tau)\right).
\end{equation}

The following theorem shows that the profile function of the level sets of our translator satisfies exactly the same sharp asymptotics as the profile function of the ancient ovals in \cite{ADS1}. An important difference with \cite{ADS1}, where only a single solution is considered, is that our estimates are uniform:

\begin{theorem}[uniform sharp asymptotics assuming strong $\kappa$-quadraticity]\label{thm_unique_asympt}
For every $\eps>0$ there exists $\kappa>0$ and ${\tau}_{\ast}>-\infty$, such that if $M$ is strongly $\kappa$-quadratic from time $\tau_0 \leq \tau_{\ast}$, then for every $\tau \leq \tau_0$ the following holds:
\begin{enumerate}
\item Parabolic region: The renormalized profile function satisfies
\begin{equation}
\left| v(y, \tau)-\sqrt{2}\left(1-\frac{y^2-2}{4 |\tau|}\right) \right| \leq\frac{\eps}{|\tau|} \qquad (|y |\leq \eps^{-1}).
\end{equation}
\item Intermediate region: The function $\bar{v}(z,\tau):=v(|\tau|^{1/2}z,\tau)$ satisfies
\begin{equation}
|\bar{v}(z,\tau)-\sqrt{2-z^2}|\leq \eps,
\end{equation}
on $[-\sqrt{2}+\eps,\sqrt{2}-\eps]$.
\item Tip regions: We have the estimate
\begin{equation}
\| Z(\cdot,\tau)-Z_0(\cdot)\|_{C^{100}(B(0,\eps^{-1}))}\leq \eps,
\end{equation}
where $Z_0(\rho)$ is the profile function of the $2d$-bowl with speed $1/\sqrt{2}$.
\end{enumerate}
In particular, $\Sigma^h$ satisfies the estimate
\begin{equation}
\left| \frac{d^\pm(h)}{\sqrt{2h\log h}}-1 \right| \leq \eps.
\end{equation}
\end{theorem}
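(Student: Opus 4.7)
The strategy is to derive Parts (1)-(2) and the diameter estimate directly from the propositions already established in this section, and to treat the tip region via a separate parabolic blowup argument. For Parts (1) and (2), the key observation is that after $\mathrm{SO}(2)$-recentering in the $x_3x_4$-plane (Theorem \ref{thm_symmetry}), we have the identifications $v(y,\tau)=\sqrt{2}+u(0,y,\tau)$ and $\bar v(z,\tau)=\sqrt{2}+u(0,|\tau|^{1/2}z,\tau)$. Part (1) is then algebraically equivalent to Proposition \ref{cor_asymptotic.middle} applied at $(y_1,y_2)=(0,y)$, and Part (2) is a direct restatement of Proposition \ref{prop_intermediate}. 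The diameter estimate also follows from Part (2): since $\bar v$ is concave in $z$ (by convexity of $M$), two neighboring values from Part (2) bound $\bar v_z(\sqrt{2}-\eps,\tau)$ from above by $-c/\sqrt{\eps}$, and extrapolating the tangent line places the unique zero of $\bar v(\cdot,\tau)$ within $O(\eps)$ of $\pm\sqrt{2}$; unraveling $d^\pm(h)=e^{-\tau/2}Y(0,\tau)$ with $\tau=-\log h$ gives $d^\pm(h)=\sqrt{2h\log h}\,(1+O(\eps))$.

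The substantive content is Part (3). The plan is to run a tip blowup analysis in the spirit of \cite{ADS1}. By Proposition \ref{prop_mean_curv_levelsets}, the level sets $\Sigma^h$ evolve, as $h$ varies, according to an equation that differs from the mean curvature flow of surfaces of revolution in $\mathbb{R}^3$ by a term of order $H^3$. In the tip region, Parts (1)-(2) together with Corollary \ref{prop_inscribed_lev} give $H\sim |\tau|^{1/2}/\sqrt{h}$, so this $H^3$ error is asymptotically negligible compared to $H$ as $\tau\to-\infty$. After the parabolic rescaling built into $Z(\rho,\tau)$, Ecker-Huisken-type interior estimates, combined with convexity and noncollapsing inherited from $M$, yield uniform smooth bounds, so along any subsequence $\tau_i\to-\infty$ the profiles $Z(\cdot,\tau_i)$ subconverge in $C^\infty_{\mathrm{loc}}$ to the profile of an ancient, strictly convex, rotationally symmetric, noncollapsed translating solution in $\mathbb{R}^3$. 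By the Brendle-Choi classification \cite{BC}, the only such solution is the 2d bowl, and matching with the intermediate-region asymptotic $\bar v(z,\tau)\approx\sqrt{2-z^2}$ as $z\to\sqrt{2}^-$ pins down the translation speed as $1/\sqrt{2}$. Uniqueness of the limit $Z_\infty=Z_0$ promotes subsequential convergence to full convergence, yielding the $C^{100}(B(0,\eps^{-1}))$ estimate. The uniformity across the class of strongly $\kappa$-quadratic $M$ follows from the quantitative nature of each ingredient.

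The main obstacle is carrying out the tip blowup rigorously. The principal challenge is to identify the subsequential limit as the $1/\sqrt{2}$-speed bowl rather than a minimal surface, a round shrinker, or a cylinder: this requires carefully tracking how the translator equation for $M$ descends, through the approximate MCF equation for the level sets of Proposition \ref{prop_mean_curv_levelsets}, to a genuine translator equation for the blowup limit, and verifying that the translation speed is inherited by matching with the intermediate-region asymptotics. In particular, controlling the $H^3$ error term uniformly across the family of strongly $\kappa$-quadratic translators -- so that the limiting PDE is exactly the translator equation and the compactness is uniform in $M$ -- is the key technical step.
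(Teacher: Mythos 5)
Your treatment of Parts (1) and (2) and of the diameter estimate is essentially the paper's: both follow by reading off Proposition \ref{cor_asymptotic.middle} and Proposition \ref{prop_intermediate} through the identification $v(y,\tau)=\sqrt 2+u(0,y,\tau)$, with the concavity/tangent-line observation supplying the scaling-back step. The problem is Part (3), and there your argument has a genuine gap precisely at the point you flag as ``the key technical step.'' First, the claim that the translator equation for $M$ ``descends to a genuine translator equation for the blowup limit'' of the level-set flow is unjustified (and is in fact what one is trying to prove: self-similarity near the tip); it is also unnecessary, since one can classify the limit as an \emph{ancient noncollapsed} flow directly. Second, and more seriously, your mechanism for pinning the speed --- ``matching with the intermediate-region asymptotic'' --- does not work as stated. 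The compactness argument gives convergence of $Z(\cdot,\tau_i)$ only on compact $\rho$-balls, i.e.\ at distance $O(1)$ from the tip after rescaling by $\sqrt{\log h_i/h_i}$, whereas the intermediate asymptotics live at $\rho\sim\sqrt{|\tau_i|}\to\infty$; there is no overlap region in which both estimates hold with comparable precision. Nor can one integrate the tip trajectory: the error $o(\sqrt{h\log h})$ in $d^\pm(h)$ is, after rescaling, of size $o(\log h_i)$, which swamps the tip displacement $\sim cT$ over any fixed rescaled time interval $[-T,0]$ on which convergence is available. So from your ingredients the limit is only identified as \emph{some} bowl, with undetermined speed.

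The paper closes exactly this hole with a separate estimate before the blowup: using Hamilton's Harnack inequality together with the diameter asymptotics, as in \cite[Section 7.2]{ADS1}, it establishes the tip mean curvature bound \eqref{tip.mean.curvature}, i.e.\ $\big|\sqrt{h/\log h}\,H^{\pm}_{\mathrm{tip}}(h)-\tfrac{1}{\sqrt 2}\big|\le 2\eps$. This normalizes the blowup so that the limit has mean curvature exactly $1/\sqrt 2$ at the base point, and then the Brendle--Choi classification \cite{BC} (applied to the noncompact ancient noncollapsed factor $N^\infty_t$, the cylinder being excluded by halfspace containment) identifies it as the bowl with speed $1/\sqrt 2$, after which $Z_i\to Z_0$ locally smoothly gives the contradiction. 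Note also that the paper runs the blowup on the flow $M^i_t=M^i+te_1$ in $\mathbb{R}^4$ and invokes the global convergence theorem of \cite{HaslhoferKleiner_meanconvex}, so that noncollapsing and compactness come for free and the limit splits off a line; your route through the approximate level-set MCF of Proposition \ref{prop_mean_curv_levelsets} would additionally require justifying compactness and noncollapsing for that auxiliary flow. Without an argument replacing the Harnack-based tip curvature estimate, the speed identification --- and hence Part (3) --- remains unproven.
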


\begin{proof} By definition of the level sets, we have
\begin{equation}
\Sigma^h = (M-he_1)\cap\{x_1=0\}.
\end{equation}
Hence, describing $\Sigma^h$ amounts to describing the $x_1=0$ section of the time $t=-h$ slice of the flow $M_t=M+te_1$, which has already been done in the previous subsections. Specifically, observing that $v(y,\tau)=\sqrt{2}+u(0,y,\tau)$ and applying Proposition \ref{cor_asymptotic.middle} (uniform asymptotics in parabolic region) we obtain
\begin{equation}
\left| v(y, \tau)-\sqrt{2}\left(1-\frac{y^2-2}{4 |\tau|}\right) \right| \leq\frac{\eps}{|\tau|} \qquad (|y |\leq \eps^{-1}),
\end{equation}
which proves the first assertion. Next, by Proposition \ref{prop_intermediate} (intermediate region) we have
\begin{equation}
\sup_{\tau\leq \tau_0}\sup_{|z|\leq \sqrt{2}-\eps}|\bar{v}(z,\tau)-\sqrt{2-z^2}|\leq \eps,
\end{equation}
which proves the second assertion. In particular, scaling back to the original surface $\Sigma^h$ this implies
\begin{equation}\label{diameter_sharp}
\left| \frac{d^\pm(h)}{\sqrt{2h\log h}}-1 \right| \leq \eps.
\end{equation}
Recall that by Proposition \ref{lemma_asympt_slope} (asymptotic slope and tip point) we have $H=-\langle e_1,\nu\rangle\to 0$ as $h\to \infty$. Denote by $H^{\pm}_{\textrm{tip}}(h)$ the mean curvature of $M$ at the point $p_h^\pm$ at level $h$ with maximal respectively minimal $x_2$-value.
Using the above and Hamilton's Harnack inequality \cite{Hamilton_Harnack}, similarly as in \cite[Section 7.2]{ADS1}, we get
\begin{equation}\label{tip.mean.curvature}
\left|\sqrt{\frac{h}{\log h}}H^{\pm}_{\textrm{tip}}(h)-\frac{1}{\sqrt{2}}\right| \leq 2\eps.
\end{equation}
Now, suppose towards a contradiction there is a sequence $M^i$ that is $\kappa_i$-quadratic from time $\tau_{0,i}$ with $\kappa_i\rightarrow 0$ and $\tau_{0,i}\rightarrow -\infty$, but such that at some time $\tau_i \leq \tau_{0,i}$  the function $Z_i(\rho,\tau)$ is not $\eps$-close in $C^{100}(B(0,\eps^{-1}))$ to $Z_0(\rho)$, the profile function of the $2d$-bowl with speed $1/\sqrt{2}$. Let $h_i=e^{-\tau_i}\rightarrow \infty$ be the height of the tips. Using the theory of noncollapsed flows from \cite{HaslhoferKleiner_meanconvex} we see that for $i\to \infty$ the sequence of flows that is obtained from $M_t^i$ by shifting $(p^\pm_{h_i},0)$ to the origin and parabolically rescaling by $\sqrt{\log( h_i)/h_i}$ converges to an ancient noncollapsed flow $M_t^\infty$ that splits isometrically as $M_t^\infty=\mathbb{R}\times N_t^\infty$. By construction, $N_t^\infty\subset\mathbb{R}^3$ is a noncompact ancient noncollapsed flow, whose time zero slice is contained in a halfspace and with mean curvature $1/\sqrt{2}$ at the base point. Hence, the classification by Brendle-Choi \cite{BC} implies that $N_t^\infty$ is the rotationally symmetric translating bowl soliton with speed $1/\sqrt{2}$. This yields that $Z_i(\rho,\tau)\to Z_0(\tau)$ smoothly and locally uniformly. For $i$ large enough this contradicts the assumption that $Z_i$ is not $\eps$-close to $Z_0$, and thus finishes the proof of the theorem.
\end{proof}

\bigskip

\subsection{Uniform sharp asymptotics from one time} 

In this subsection, we show that one can conclude the sharp asymptotics from information about the cylindrical profile function of the flow at the time $\tau_0$ itself. This will be used in the next section in the continuity method along the HIMW class.  

Recall that the renormalized profile function $u=u(y_1,y_2,\tau)$ from the bubble-sheet analysis and the renormalized profile function $v=v(y,\tau)$ of the level sets $M\cap \{x_1=e^{-\tau}\}$ are related by
\begin{equation}\label{eq_rel_uv}
v(y,\tau)=\sqrt{2}+u(0,y,\tau)\, .
\end{equation}
In the analysis of the function $v$ we work with the Hilbert space  $\fH:=L^2(\mathbb{R},e^{-y^2/4} dy)$, while on the other hand in the analysis of the function $u$ we worked with the Hilbert space $\mathcal{H}\cong \fH\otimes \fH$. 

\begin{definition}[$\kappa$-quadratic]\label{def_mu_qudratic}
We say that a noncollapsed translator $M\neq \textrm{Bowl}_3,\mathbb{R}\times\textrm{Bowl}_2$ in $\mathbb{R}^4$, normalized as above and centered such that $\fp_{+}(v_{\cC}(\tau_0)-\sqrt{2})=0$, is \emph{$\kappa$-quadratic at time $\tau_{0}$} if
\begin{enumerate}[(i)]
\item the cylindrical profile function $v_{\cC}=v\varphi_{\cC}(v)$ at time $\tau_0$ satisfies
\begin{equation}\label{mu_quad_back}
\left\|v_{\cC}(y,\tau_{0})-\sqrt{2}+\frac{y^2-2}{2\sqrt{2}|\tau_{0}|}\right\|_{\fH}\leq \frac{\kappa}{|\tau_{0}|},
\end{equation}
\item and the bubble-sheet graph function $u$  satisfies
\begin{equation}\label{mu_quad_rad}
\sup_{\tau\in [2\tau_0,\tau_0]}\|u(\cdot,\cdot,\tau)\|_{C^4(B(0,2|\tau_0|^{1/100})} \leq |\tau_0|^{-1/50}.
\end{equation}
\end{enumerate}
\end{definition}

In contrast to Definition \ref{def_mu_qudratic_str} (strongly $\kappa$-quadratic) here we work with the smaller Hilbert space $\fH$, and more importantly we only prescribe the behavior of the function $v_{\cC}$ at the time $\tau_0$ itself as opposed to prescribing the behavior at all times  $\tau\leq \tau_0$.
The main goal of this subsection is to prove:

\begin{theorem}[$\kappa$-quadraticity implies strong $\kappa$-quadraticity]\label{thm_precise_from_one_time}
For every $\kappa>0$ sufficiently small there exists $\tau_{\ast}>-\infty$ with the following significance. If $M$ is $\frac{\kappa}{5}$-quadratic at time $\tau_0$ for some $\tau_0\leq \tau_\ast$, then $M$ is strongly-$\kappa$ quadratic from time $\tau_0$.  
\end{theorem}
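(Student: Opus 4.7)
The overall strategy is a backward-in-time continuity argument starting from $\tau_0$, where the hypothesis gives only one-dimensional information on $v_{\cC}$ in $\fH$ and we must upgrade to the two-dimensional $\mathcal{H}$-estimate of Definition \ref{def_mu_qudratic_str} holding on all of $(-\infty,\tau_0]$. The crucial bridge from 1D to 2D is the translator equation itself. The identity $\mathbf{H}_M=e_1^\perp$ rescales on $\bar M_\tau=e^{\tau/2}(M-e^{-\tau}e_1)$ to $\langle e_1,\bar\nu\rangle=e^{\tau/2}H_{\bar M_\tau}$, and since $H_{\bar M_\tau}\approx 1/\sqrt{2}$ in the cylindrical region, computing the outward normal of the bubble-sheet graph gives $|u_{y_1}|=|\langle e_1,\nu\rangle|\sqrt{1+|\nabla u|^2}\leq Ce^{\tau/2}$. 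On $B(0,2|\tau_0|^{1/100})$ where condition (ii) of Definition \ref{def_mu_qudratic} controls $u$, this forces $|u(y_1,y_2,\tau_0)-u(0,y_2,\tau_0)|\leq C|\tau_0|^{1/100}e^{\tau_0/2}$, exponentially small in $|\tau_0|$. Since $u(0,y,\tau_0)=v(y,\tau_0)-\sqrt{2}$, a Fubini computation in the $\mathcal{H}$-inner product (the 2D Gaussian weight factors as $e^{-y_1^2/4}e^{-y_2^2/4}$, and the $y_1$-integration contributes a constant factor) together with a Gaussian tail bound outside the graphical ball converts the hypothesis $\|v_\cC(\cdot,\tau_0)-\sqrt{2}+\tfrac{y^2-2}{2\sqrt{2}|\tau_0|}\|_\fH\leq \kappa/(5|\tau_0|)$ into the 2D initial bound $\|\hat u(\cdot,\tau_0)+\tfrac{y_2^2-2}{2\sqrt{2}|\tau_0|}\|_\mathcal{H}\leq \kappa/(2|\tau_0|)$, leaving ample room for the subsequent propagation.

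Having the $\mathcal{H}$-estimate with room to spare at $\tau_0$, I define $\tau_1:=\inf\{\tau\leq\tau_0: \text{strong $\kappa$-quadraticity holds on }[\tau,\tau_0]\}$ and suppose for contradiction $\tau_1>-\infty$. At $\tau_1$ either the graphical radius condition or the $\mathcal{H}$-estimate of Definition \ref{def_mu_qudratic_str} must saturate. The graphical radius cannot saturate: assuming the $\mathcal{H}$-bound on $[\tau_1,\tau_0]$, one has $\beta(\tau_1)\leq C/|\tau_1|$, so the ADS-type inner barrier $\Gamma_a^\eta$ of Proposition \ref{prop_inner.barrier} combined with the barrier argument of Proposition \ref{C0 estimate beta3} yields $|u|\leq C|\tau_1|^{-1/2}$ on $B(0,c|\tau_1|^{1/4})$, and interior parabolic Schauder estimates promote this to $\|u\|_{C^4(B(0,2|\tau_1|^{1/10}))}\leq |\tau_1|^{-1/5}$, comfortably better than what admissibility of $\rho(\tau)=|\tau|^{1/10}$ requires. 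Hence the $\mathcal{H}$-estimate must saturate: $\|\hat u(\cdot,\tau_1)+\tfrac{y_2^2-2}{2\sqrt{2}|\tau_1|}\|_\mathcal{H}=\kappa/|\tau_1|$.

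I then derive a contradiction via spectral analysis. Project $\hat u$ into the decomposition $\mathcal{H}_+\oplus\mathcal{H}_0\oplus\mathcal{H}_-$ of $\mathcal{L}$. The coefficient $\alpha_0(\tau)=\langle \hat u,\psi_0\rangle_\mathcal{H}$ with $\psi_0\propto y_2^2-2$ satisfies the ODE of Proposition \ref{prop_ODEs}, which is stable under backward integration from $\alpha_0(\tau_0)=-(2\pi/e)^{1/4}/|\tau_0|+O(\kappa/|\tau_0|)$ and yields $\alpha_0(\tau_1)=-(2\pi/e)^{1/4}/|\tau_1|+O(\kappa/|\tau_1|^{1+\delta})$. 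The remaining 2D neutral modes (associated to $y_1^2-2$ and $y_1y_2$) have exponentially small projection by the $y_1$-independence of $\hat u$ from Step 1 applied uniformly on $[\tau_1,\tau_0]$. The 2D unstable modes $(1,y_1,y_2)$ are small at $\tau_0$ by the centering condition combined with the $y_1$-exponential smallness; since they decay backward under the linear flow, they remain controlled by the quadratic forcing of size $O(\kappa^2/\tau^2)$. Assembling these projections gives $\|\hat u(\cdot,\tau_1)+\tfrac{y_2^2-2}{2\sqrt{2}|\tau_1|}\|_\mathcal{H}\leq \kappa/(2|\tau_1|)$, contradicting saturation and forcing $\tau_1=-\infty$.

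The main obstacle lies in controlling the stable spectral modes uniformly backward in time: under the linear flow they grow like $e^{|\lambda|(\tau_0-\tau)/2}$ going backward, and closing the estimate requires a Merle--Zaag type observation, namely that the uniform saturation hypothesis $\|\hat u\|_\mathcal{H}\leq 2\kappa/|\tau|$ on $[\tau_1,\tau_0]$ together with the quadratic nature of the nonlinear forcing forces $\|P_-\hat u(\cdot,\tau_1)\|_\mathcal{H}=o(1/|\tau_1|)$ uniformly across all $\kappa$-quadratic solutions, not merely asymptotically as $\tau\to-\infty$. This uniformity, together with careful control of the error term $E$ from Proposition \ref{lemma_taylor} in the presence of the truncation $\hat u=u\chi(|{\bf y}|/\rho(\tau))$, is the technical heart of the argument.
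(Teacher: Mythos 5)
Your first step is exactly the paper's: the translator equation gives $|\partial_{y_1}u|\leq 2e^{\tau/2}$ on the graphical region, and a Fubini-plus-Gaussian-tail argument converts the one-dimensional $\fH$-hypothesis at $\tau_0$ into a two-dimensional $\mathcal{H}$-bound on $\hat u(\tau_0)$, together with smallness of $\mathcal{P}_+\hat u(\tau_0)$ from the centering condition (this is the paper's ``upgrade to bubble-sheet'' lemma). After that, however, your backward continuity scheme on a finite window $[\tau_1,\tau_0]$ has a genuine structural gap. To rule out saturation of the graphical radius at $\tau_1$ you invoke the barrier estimate through $\beta(\tau_1)$, but $\beta(\tau_1)=\sup_{\sigma\leq\tau_1}\alpha(\sigma)$ (and the barrier comparison itself needs the boundary condition at $|{\bf y}|=L_0$ for \emph{all} $\tau\leq\tau_1$), so it is controlled by data at times strictly before $\tau_1$ -- precisely the times your induction does not yet cover. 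The same problem recurs, as you yourself flag, in the stable modes: under backward evolution $U_-$ is not controlled by its value at $\tau_0$, and the Merle--Zaag dichotomy cannot be closed on a finite interval; it must be anchored in the asymptotic past. In the paper the anchor is the $\tau\to-\infty$ inwards quadratic bending (Theorem \ref{thm_L2.convergence}), and this can only be used because the Merle--Zaag differential inequalities are available on all of $(-\infty,\tau_0]$, which in turn requires an admissible graphical radius on all of $(-\infty,\tau_0]$. If instead you fall back on the qualitative convergence of the fixed translator to the cylinder to supply this, the threshold $\tau_\ast$ becomes solution-dependent and the uniformity asserted in the theorem (and needed later for the continuity method along the HIMW family) is lost.

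The missing ingredient is the paper's ``initial graphical radius'' lemma: from the single-time hypothesis \eqref{mu_quad_rad} one estimates the Gaussian density gap $0\leq F(\Gamma)-F(\bar M_\tau)\leq 50|\tau_0|^{-1/50}$ for all $\tau\leq\tau_0$ by monotonicity, and then quantitative differentiation together with the Colding--Minicozzi {\L}ojasiewicz--Simon inequality and the discrete {\L}ojasiewicz lemma yields a \emph{uniform} admissible graphical radius $\rho(\tau)=|\tau|^{q}$ for all $\tau\leq\tau_0$. Only with this in hand does one run the Merle--Zaag argument on $(-\infty,\tau_0]$ (anchored at $-\infty$ by inwards quadratic bending, and at $\tau_0$ by the smallness of $\mathcal{P}_+\hat u(\tau_0)$), identify $y_2^2-2$ as the dominant neutral eigenfunction, and integrate the Riccati-type ODE for $\alpha_0$ backward from $\tau_0$ to get $|\alpha_0(\tau)+\tfrac{1}{\gamma\tau}|\leq\tfrac{3\kappa}{4|\tau|}$ for all $\tau\leq\tau_0$, which then upgrades the radius to $|\tau|^{1/10}$ and gives the strong $\kappa$-quadraticity. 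Your outline contains the pieces of the spectral analysis but not the mechanism that makes them applicable on the semi-infinite time interval with constants independent of $M$; as written, the continuity argument does not close.
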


In particular, by definition of strong $\kappa$-quadraticity, we get that $\rho(\tau)=|\tau|^{1/10}$ is an admissible graphical radius for $\tau \leq \tau_0$, so the solution is graphical on a much larger scale than we initially assumed.\\

To prove  Theorem \ref{thm_precise_from_one_time}, we will start with a lemma that upgrades the information of $v$ in $\fH$ to information about $u$ in $\mathcal{H}$, essentially by exploiting the fact that $\partial_{y_1}u$ is very small on our bubble-sheet. Before stating the lemma, we recall that we can decompose
\begin{equation}\label{decomp_Hilbert}
\cH=\cH_{+}\oplus \cH_{0}\oplus\cH_{-},
\end{equation}
according to the positive, neutral and negative eigenspaces of $\mathcal{L}$, and that we denote the corresponding projections by $\cP_{+},\cP_{0}$ and $\cP_{-}$. Moreover, in the following we work with the truncated function
\begin{equation}\label{eq_trunc_bubble}
\hat{u}(y_1,y_2,\tau):= u(y_1,y_2,\tau) \chi \left(\tfrac{|(y_1,y_2)|}{\rho(\tau)}\right),
\end{equation}
where $\rho(\tau)$ is a suitable graphical radius function that will be fixed below.

\begin{lemma}[upgrade to bubble-sheet]\label{prop_cut_compatibility}
For every $\kappa>0$ sufficiently small there exists $\tau_{\ast}>-\infty$ with the following significance. If  $M$ is $\frac{\kappa}{5}$-quadratic at time $\tau_0$ for some $\tau_0\leq\tau_\ast$, then 
\begin{equation}\label{precise_up_to}
\left\|\hat{u}(\tau_0)+\frac{y_2^2-2}{2\sqrt{2}|\tau_0|}\right\|_{\cH} \leq \frac{\kappa}{4|\tau_0|},
\end{equation}
and
\begin{equation}\label{U_plus_small}
\left\| \cP_{+} \hat{u}(\tau_0)  \right\|_{\cH} \leq \frac{1}{|\tau_0|^{100}}.
\end{equation}
\end{lemma}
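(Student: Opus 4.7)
The plan is to upgrade the $1$-dimensional hypothesis on $v_\cC(\tau_0)$ in $\fH$ to the $2$-dimensional conclusions on $\hat u(\tau_0)$ in $\cH$ by exploiting the fact that on the truncation scale the bubble-sheet graph function is essentially a function of $y_2$ alone. The key is the translator equation $M_t=M+te_1$, which ties $u$ and $v$ through the exact identity
\[
\sqrt{2}+u(y_1,y_2,\tau)=\sqrt{1+e^{\tau/2}y_1}\; v\!\left(\tfrac{y_2}{\sqrt{1+e^{\tau/2}y_1}},\; \tau-\log(1+e^{\tau/2}y_1)\right).
\]
Differentiating in $y_1$ and inserting the polynomial-in-$|\tau_0|$ bounds on $v,v_y,v_\tau$ furnished by \eqref{mu_quad_rad} gives $\partial_{y_1}^k u(y_1,y_2,\tau_0)=O(e^{k\tau_0/2}|\tau_0|^{k})$ on $B(0,2|\tau_0|^{1/100})$. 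Taylor expanding in $y_1$ then yields
\[
\sup_{|y|\leq 2|\tau_0|^{1/100}}\bigl|u(y_1,y_2,\tau_0)-(v(y_2,\tau_0)-\sqrt{2})\bigr|\leq e^{\tau_0/3},
\]
so $u(\cdot,\cdot,\tau_0)$ is essentially $y_1$-independent on the truncation scale.

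I fix $\rho(\tau_0)=|\tau_0|^{1/100}$, so $\hat u(\tau_0)$ is supported in $B(0,2|\tau_0|^{1/100})$. The coarse asymptotics of Section~\ref{sec_coarse_asymptotics} (level-set diameter $\sim\sqrt{|\tau_0|\log|\tau_0|}$) imply that $\chi(|y|/\rho)$ is supported deep inside the cylindrical region, hence $\varphi_\cC(v(y_2,\tau_0))\equiv 1$ there and $(v-\sqrt{2})\chi=(v_\cC-\sqrt{2})\chi$. For \eqref{precise_up_to}, I split
\[
\hat u(\tau_0)+\tfrac{y_2^2-2}{2\sqrt{2}|\tau_0|}=\chi\!\left(v_\cC(y_2,\tau_0)-\sqrt{2}+\tfrac{y_2^2-2}{2\sqrt{2}|\tau_0|}\right)+(1-\chi)\tfrac{y_2^2-2}{2\sqrt{2}|\tau_0|}+O_\cH(e^{\tau_0/3}),
\]
bound the first summand in $\cH$ by $\tfrac{\kappa}{5|\tau_0|}$ via the hypothesis and the inequality $\|g\chi\|_\cH\leq \|g\|_\fH$ for $y_2$-only $g$ (a direct Fubini computation using the normalization of the Gaussian in $y_1$), and observe that the second summand is $O(e^{-c\rho^2})$ since the Gaussian weight in $y_2$ beats the polynomial $y_2^2-2$ on $\{|y|\gtrsim\rho\}$. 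Absorbing the exponentially small errors gives \eqref{precise_up_to}.

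For \eqref{U_plus_small}, I reuse the $\cH$-approximation $\hat u(\tau_0)\approx\chi(|y|/\rho)(v_\cC(y_2,\tau_0)-\sqrt{2})$ and compute each of the three projections onto $\{1,y_1,y_2\}$. The projection onto $y_1$ vanishes identically at main order by oddness of $y_1$ against the radial cutoff $\chi(|y|/\rho)$. For the other two, integrating out $y_1$ produces the effective weight $\tilde\chi(y_2):=\int\chi(\sqrt{y_1^2+y_2^2}/\rho)e^{-y_1^2/4}\,dy_1$, which coincides with its full-line value $2\sqrt{\pi}$ away from $|y_2|\sim\rho$. Writing $\tilde\chi=2\sqrt{\pi}+(\tilde\chi-2\sqrt{\pi})$, the constant-weight part is annihilated by the centering condition $\fp_+(v_\cC(\tau_0)-\sqrt{2})=0$, while the residual is exponentially small in $\rho^2$ by the Gaussian tail in $y_2$. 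Summing, $\|\cP_+\hat u(\tau_0)\|_\cH\leq e^{-c|\tau_0|^{1/50}}\leq |\tau_0|^{-100}$.

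The main delicate point is the interplay between the intrinsic cylindrical cutoff $\varphi_\cC$ used for $v_\cC$ and the extrinsic radial cutoff $\chi(|y|/\rho)$ used for $\hat u$. Their compatibility on $B(0,2\rho)$, which places $\rho=|\tau_0|^{1/100}$ well inside the cylindrical region via the coarse asymptotics, is precisely what transfers the 1D centering condition and hypothesis on $v_\cC$ cleanly to the 2D spectral statements on $\hat u$; without it, the cancellations in the $\cP_+$-projection could not be read off directly.
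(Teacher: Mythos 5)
Your proposal is correct and follows essentially the same route as the paper: use the translator structure to show that $u(y_1,y_2,\tau_0)$ is, up to an exponentially small error, independent of $y_1$ and equal to $v_{\cC}(y_2,\tau_0)-\sqrt{2}$ on the truncation scale, then combine this pointwise closeness with Gaussian tail estimates, the normalization factor between $\fH$ and $\cH$, and the centering condition $\fp_{+}(v_{\cC}(\tau_0)-\sqrt{2})=0$ to obtain \eqref{precise_up_to} and \eqref{U_plus_small}. The only (harmless) deviations are that you derive the smallness of $\partial_{y_1}u$ from the exact shift-and-rescale identity relating $u$ and $v$ rather than directly from $|\partial_{y_1}u|\leq 2|\langle\nu,e_1\rangle|=2H\leq 2e^{\tau_0/2}$, and that you dispose of the $y_1$-projection by parity instead of the vanishing constant mode.
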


\begin{proof} First observe that the unit normal at 
\begin{equation}
p=(y_1,y_2,(\sqrt{2}+u(y_1,y_2,\tau_0))\cos\theta, (\sqrt{2}+u(y_1,y_2,\tau_0))\sin\theta)\in \bar{M}_{\tau_0}
\end{equation}
is given by 
\begin{equation}
\nu=\frac{1}{\sqrt{1+(\partial_{y_1}u)^2+(\partial_{y_2}u)^2}}\left(-\partial_{y_1}u,-\partial_{y_2}u,\cos \theta,\sin \theta\right).
\end{equation}
Now, if $|(y_1,y_2)|\leq 2\rho(\tau_0)$, then $(\partial_{y_1}u)^2+(\partial_{y_2}u)^2\ll 1$, hence in particular $|\partial_{y_1} u| \leq 2 |\langle \nu, e_1\rangle|$.
On the other hand, since $p$ lies on a bubble-sheet, at the point $P$ on the unrescaled translator corresponding to $p$, we have $H(P)\leq e^{\tau_0/2}$. Together with the translator equation $H=\langle e_1,\nu \rangle$ and \eqref{mu_quad_rad} this yields
\begin{equation}\label{y_1der}
\sup_{|(y_1,y_2)|\leq 2|\tau_0|^{1/100}} |\partial_{y_1}u| \leq 2e^{\tau_0/2}.
\end{equation}
Now, remembering \eqref{eq_rel_uv}  and integrating this gradient estimate we infer that
\begin{equation}\label{pointwise_diff}
\sup_{|(y_1,y_2)|\leq |\tau_0|^{1/100}}\left|\sqrt{2}+ \hat{u}(y_1,y_2,\tau_0)-v_{\cC}(y_2,\tau_0)\right| \leq e^{\tau_0/3},
\end{equation} 
provided $\tau_0 \leq \tau_{\ast}$, where we also used that $u=\hat{u}$ and $v=v_{\cC}$ in the region under consideration.
On the other hand, we have the Gaussian tail estimates
\begin{equation}\label{error_diff}
\int_{|y|\geq \tfrac{1}{2} |\tau_0|^{1/100}}\left(v_{\cC}(y,\tau_{0})-\sqrt{2}+\frac{y^2-2}{2\sqrt{2}|\tau_{0}|}\right)^2e^{-\tfrac{y^2}{4}}dy \leq \frac{\kappa}{|\tau_0|^{100}},
\end{equation}
and
\begin{equation}
\int_{\max\{|y_1|,|y_2|\}\geq \tfrac{1}{2} |\tau_0|^{1/100}}\left(\hat{u}(y_1,y_2,\tau_{0})+\frac{y_2^2-2}{2\sqrt{2}|\tau_{0}|}\right)^2e^{-\tfrac{y_1^2+y_2^2}{4}}dy_1dy_2 \leq  \frac{\kappa}{|\tau_0|^{100}}.
\end{equation}
Combining the above inequalities and choosing $\tau_\ast=\tau_\ast(\kappa)$ sufficiently negative,  we infer that 
\begin{equation}
\left\|\hat{u}(\tau_0)+\frac{y_2^2-2}{2\sqrt{2}|\tau_0|}\right\|_{\cH} \leq \frac{1}{(2e)^{1/4}}\left\|v_{\cC}(\tau_{0})-\sqrt{2}+\frac{y^2-2}{2\sqrt{2}|\tau_{0}|}\right\|_{\fH}+\frac{\kappa}{20|\tau_0|}\,,
\end{equation}
where the factor $\frac{1}{(2e)^{1/4}}$ comes from the different normalizations of the two Hilbert spaces. Taking also into account the assumption that $M$ is $\tfrac{\kappa}{5}$-quadratic at time $\tau_0$, this proves \eqref{precise_up_to}. \bigskip

To derive \eqref{U_plus_small}, we first recall that $\mathcal{H}_{+}$ is spanned by the eigenfunctions $\phi_0=1,\phi_1=y_1,\phi_2=y_2$. Now, thanks to the normalization $\fp_{+}(v_{\cC}(\tau_0)-\sqrt{2}))=0$, for every $y_1$ and $i=0,1,2$
we have
\begin{equation}
\int_{-\infty}^{\infty} \left(v_{\cC}(y_2,\tau_0)-\sqrt{2}\right)\phi_i \, e^{-\frac{y_2^2}{4}}dy_2=0.
\end{equation}
Moreover, using the pointwise estimate \eqref{pointwise_diff} we see that
\begin{multline}
\left| \int_{\max\{|y_1|,|y_2|\}\leq \tfrac{1}{2} |\tau_0|^{1/100}}  \hat{u}(y_1,y_2,\tau_0)\phi_i\, e^{-\tfrac{y_1^2+y_2^2}{4}}dy_1dy_2 \right. \\
- \left.  \int_{\max\{|y_1|,|y_2|\}\leq \tfrac{1}{2} |\tau_0|^{1/100}}   (v_{\cC}(y_2,\tau_0)-\sqrt{2})\phi_i \, e^{-\tfrac{y_1^2+y_2^2}{4}}dy_1dy_2 \right| \leq \frac{\kappa}{|\tau_0|^{100}}.
\end{multline}
Furthermore, similarly as before we have the Gaussian tail estimates
\begin{equation}
|\tau_0|^{1/100}\sup_{|y_1|\leq \tfrac{1}{2}|\tau_0|^{1/100}}\int_{|y_2|\geq \tfrac{1}{2} |\tau_0|^{1/100}}\left|\left(v_{\cC}(y_2,\tau_{0})-\sqrt{2}\right) \phi_i \right| e^{-\tfrac{y_2^2}{4}}dy_2 \leq \frac{\kappa}{|\tau_0|^{100}},
\end{equation}
and
\begin{equation}
\int_{\max\{|y_1|,|y_2|\}\geq \tfrac{1}{2} |\tau_0|^{1/100}}\left|\hat{u}(y_1,y_2,\tau_{0})\phi_i\right|e^{-\tfrac{y_1^2+y_2^2}{4}}dy_1dy_2 \leq  \frac{\kappa}{|\tau_0|^{100}}.
\end{equation}
Combining the above equations we infer that
\begin{align}
\left|\int_{-\infty}^{\infty}\int_{-\infty}^{\infty} \hat{u}(y_1,y_2,\tau_0)\phi_i \, e^{-\frac{y_1^2+y_2^2}{4}} \, dy_1dy_2\right|\leq  \frac{3\kappa}{|\tau_0|^{100}}.
\end{align}
In particular, this shows that
\begin{equation}
\left\| \cP_{+} \hat{u}(\tau_0)  \right\|_{\cH} \leq \frac{1}{|\tau_0|^{100}},
\end{equation}
and thus finishes the proof of the lemma.
\end{proof}

As another preparation, we need some suitable graphical radius to get the argument started:

\begin{lemma}[initial graphical radius]\label{poly_graph}
There exists some universal number $q>0$ with the following significance. For every $\kappa>0$ sufficiently small, there exists a constant $\tau_{\ast}>-\infty$,  such that if $M$ is $\kappa$-quadratic at time $\tau_0\leq \tau_{\ast}$, then  $\rho(\tau)=|\tau|^q$ is an admissible graphical radius function for  $\tau \leq \tau_0$.
\end{lemma}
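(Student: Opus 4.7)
The plan is a two-pass bootstrap. In the first pass, we use the single-time $C^4$ control at $\tau_0$ from $\kappa$-quadraticity to insert an ADS-type inner barrier $\Gamma_a^\eta$ which, by Proposition \ref{prop_inner.barrier}, propagates constant-in-$\tau$ $C^0$ (and hence via Schauder $C^4$) control backwards to all $\tau\leq\tau_0$ on a spatial region of size $\sim |\tau_0|^{1/100}$. This produces a weak but valid admissible graphical radius $\rho_{\mathrm{weak}}(\tau)\equiv c|\tau_0|^{1/100}$. In the second pass, with $\rho_{\mathrm{weak}}$ in hand we form the quantity $\beta(\tau)$ from \eqref{beta def} and run the bubble-sheet asymptotic machinery of Subsection 3.1: the quantitative initial data provided by Lemma \ref{prop_cut_compatibility} together with the ODE analysis of Proposition \ref{prop_ODEs} yields $\beta(\tau)\leq C/|\tau|$ with $C$ depending only on $\kappa$, and Proposition \ref{C0 estimate beta3} combined with interior Schauder then gives the polynomial graphical radius $|\tau|^q$ for any fixed $q<1/4$.

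In more detail, for the first pass, Definition \ref{def_mu_qudratic} provides $|u(\mathbf{y},\tau_0)|\leq |\tau_0|^{-1/50}$ on $|\mathbf{y}|\leq 2|\tau_0|^{1/100}$. Choosing $\Gamma_a^\eta$ from \eqref{fol_bubble} with $a=c_0|\tau_0|^{1/50}$ and $\eta$ small compared to the controlled radius, the ADS expansion $u_a(y)\leq \sqrt{2}-(y^2-3)/(\sqrt{2}\,a^2)$ of \cite[Lemma 4.4]{ADS1} verifies the enclosure condition at $\tau_0$; Proposition \ref{prop_inner.barrier} propagates it to all $\tau\leq\tau_0$, and the estimate $u_a(y)^2\geq 2-2/a$ for $|y|\leq\sqrt{a}-1$ combined with convexity yields $|u(\mathbf{y},\tau)|\leq C|\tau_0|^{-1/50}$ on $|\mathbf{y}|\leq c|\tau_0|^{1/100}$, hence via interior Schauder for \eqref{u.equation} the claimed $\rho_{\mathrm{weak}}$. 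For the second pass, Lemma \ref{prop_cut_compatibility} gives $\alpha_0(\tau_0)$ quantitatively close to $-(2\pi/e)^{1/4}/|\tau_0|$; integrating $d\alpha_0/d\tau = -(e/2\pi)^{1/4}\alpha_0^2 + O(\beta^{2+1/5})$ backwards from $\tau_0$ using the quantitative error of Lemma \ref{lemma_error_est} yields $|\alpha_0(\tau)|\sim 1/|\tau|$, and the inverse Poincar\'e inequality of \cite[Proposition 4.4]{CHH_blowdown} upgrades this to $\beta(\tau)\leq C/|\tau|$. Proposition \ref{C0 estimate beta3} then gives $|u(\mathbf{y},\tau)|\leq C|\tau|^{-1/2}$ on $|\mathbf{y}|\leq c|\tau|^{1/4}$; with the choice $q=1/10$, the bounds $2|\tau|^q\leq c|\tau|^{1/4}/2$ and $C|\tau|^{-1/2}\leq|\tau|^{-2q}$ both hold for $|\tau|$ sufficiently large, and interior Schauder closes the $C^4$ condition. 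The two remaining admissibility requirements ($-\rho\leq\rho'\leq 0$ and $\rho\to\infty$) are immediate for $\rho(\tau)=|\tau|^q$ once $|\tau|\geq q$.

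The main obstacle is uniformity of the constants in $M$. The $o(\beta^2)$ term of Proposition \ref{prop_ODEs} must admit the universal quantitative bound $C\beta^{2+1/5}$ of Lemma \ref{lemma_error_est}, and the constant in the inverse Poincar\'e inequality must likewise be universal rather than $M$-dependent. Once these universal constants are in place, the quantitative single-time control provided by $\kappa$-quadraticity is sufficient to upgrade the per-solution asymptotic statements of Theorem \ref{thm_L2.convergence} and Proposition \ref{C0 estimate beta3} to uniform estimates with a threshold $\tau_\ast$ depending only on $\kappa$.
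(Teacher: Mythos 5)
Your first pass does not work as stated, because it runs the comparison principle in the wrong direction. Proposition \ref{prop_inner.barrier} propagates enclosure \emph{forward} in $\tau$: to conclude that $\Gamma_a^\eta$ is enclosed by $\bar M_\tau$ for all $\tau\leq\tau_0$ you must start from the enclosure at $\tau=-\infty$ and run forward, and for that you need the Dirichlet-type boundary condition at the fixed circle $|\mathbf{y}|\sim \eta^{-1}$ (a lower bound on $\sqrt2+u$ there, as in \eqref{Dirichlet} and the proof of Proposition \ref{C0 estimate beta3}) for \emph{every} $\tau\leq\tau_0$. The $\kappa$-quadraticity hypothesis \eqref{mu_quad_rad} only controls $u$ on $[2\tau_0,\tau_0]$; for $\tau<2\tau_0$ there is no quantitative bound at the boundary circle (the asymptotic smallness of $u$ has an $M$-dependent threshold, and a priori the surface need not even be graphical there at intermediate times), and one cannot ``verify enclosure at $\tau_0$ and propagate it backwards.'' (Your parameters are also mismatched: with $|u(\cdot,\tau_0)|\leq|\tau_0|^{-1/50}$ the barrier only clears the boundary data if $a\sim|\tau_0|^{1/100}$, not $a\sim|\tau_0|^{1/50}$, and then the controlled region has size $\sqrt a\sim|\tau_0|^{1/200}$; moreover a constant function is not an admissible graphical radius, since the definition requires $\rho\to\infty$.) The mechanism the paper uses to push the single-time information to all earlier times is not a barrier but \emph{monotonicity of the Gaussian density}: the $C^4$ bound at $\tau_0$ gives $F(\Gamma)-F(\bar M_{\tau_0})\leq 50|\tau_0|^{-1/50}$, and since $F(\bar M_\tau)$ is monotone with $\lim_{\tau\to-\infty}F(\bar M_\tau)=F(\Gamma)$, the same density gap holds for all $\tau\leq\tau_0$; quantitative differentiation then yields uniform graphical control on fixed balls for all $\tau\leq\tau_0$.

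Your second pass assumes exactly the uniformity the lemma is supposed to provide. Proposition \ref{prop_ODEs}, Lemma \ref{lemma_error_est} and Theorem \ref{thm_L2.convergence} are per-solution asymptotic statements: their errors ($o(\beta^2)$, $C\beta^{2+1/5}$ ``for $\tau\ll0$'') and thresholds depend on $M$ through the admissible radius $\rho=\beta^{-1/5}$ coming from $M$-dependent asymptotics, so they cannot be cited to get $\beta(\tau)\leq C(\kappa)/|\tau|$ for all $\tau\leq\tau_0$ with $\tau_\ast=\tau_\ast(\kappa)$. You acknowledge this in your final paragraph and then simply posit universal constants; but producing them is the content of the lemma, and the quantitative Merle--Zaag/ODE analysis that does produce sharp decay from time $\tau_0$ is Theorem \ref{thm_precise_from_one_time}, which in the paper takes Lemma \ref{poly_graph} as an input (a growing polynomial radius is used throughout that proof), so your route is essentially circular. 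The paper's proof avoids any ODE analysis at this stage: after the density-gap and quantitative-differentiation step it applies the Colding--Minicozzi {\L}ojasiewicz--Simon inequality and the discrete {\L}ojasiewicz lemma to get $F(\Gamma)-F(\bar M_\tau)\leq C|\tau|^{-1/\eta}$ with universal constants, converts this via the gradient-flow structure into a Gaussian $L^1$ bound $\int |u|\,e^{-|\mathbf{y}|^2/4}\leq C|\tau|^{-p}$, and only then uses the barrier estimate (Proposition \ref{C0 estimate beta3}) to obtain $\beta(\tau)\leq C|\tau|^{-p/2}$ and hence the universal exponent $q=\min\{p/20,1/200\}$. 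The monotonicity and {\L}ojasiewicz ingredients are the missing ideas; barriers plus the asymptotic ODE results alone cannot supply the uniform backward-in-time control.
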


\begin{proof}We will use the Lojasiewicz-Simon inequality from Colding-Minicozzi \cite{CM_uniqueness} in combination with \eqref{mu_quad_rad} and the discussion after  Proposition \ref{C0 estimate beta3}.
Recall that the Gaussian area of a hypersurfaces in $\mathbb{R}^4$, given by
\begin{equation}
F(M)=(4\pi)^{-3/2}\int_{M}e^{-\frac{|x|^2}{4}},
\end{equation}
is decreasing along the renormalized mean curvature flow. Letting $\Gamma$ be the bubble-sheet cylinder, we have
\begin{equation}
\lim_{\tau\rightarrow -\infty} F(\bar{M}_{\tau})=F(\Gamma).
\end{equation} 
Using \eqref{mu_quad_rad} and Taylor expansion, at time $\tau_0$ we can estimate
\begin{equation}
\left|\int_{ |{\bf{y}}| \leq |\tau_0|^{1/100}}\left( (\sqrt{2}+u)\sqrt{1+|\nabla u|^2} e^{-\tfrac{|{\bf{y}}|^2+(\sqrt{2}+u)^2}{4}}-  \sqrt{2}e^{-\tfrac{|{\bf{y}}|^2+2}{4}}\right)\right| \leq 40 |\tau_0|^{-1/50}.
\end{equation}
Together with Gaussian tale estimates and monotonicity this implies
\begin{equation}\label{F_gap}
0 \leq F(\Gamma)-F(\bar{M}_{\tau}) \leq 50 |\tau_0|^{-1/50}
\end{equation}
for all $\tau\leq\tau_0$. Hence, by quantitative differentiation \cite{CHN_stratification}, for any $\eps>0$ and $R<\infty$, there exists ${\tau}_{\ast}$ such that  if $ \tau_0 \leq \tau_{\ast}$ then for every $\tau \leq \tau_0$ one has that $\bar{M}_{\tau}$ is a $C^{2,\alpha}$ graph of with norm at most $\eps$ over the cylinder in  $B(0,R)$.  Thus, by \cite[Theorem 6.1]{CM_uniqueness}, there exist $K<\infty$ and $\eta\in (1/3,1)$ such that for every $\tau \leq \tau_0-1$ we have 
\begin{equation}
\left(F(\Gamma)-F(\bar{M}_{\tau})\right)^{1+\eta} \leq K\left(F(\bar{M}_{\tau-1})-F(\bar{M}_{\tau+1})\right).
\end{equation}
Using the discrete Lojasiewicz lemma  \cite[Lemma 6.9]{CM_uniqueness} this yields
\begin{equation}\label{CM_decay}
\left(F(\Gamma)-F(\bar{M}_{\tau})\right) \leq C(K,\eta)|\tau|^{-1/\eta},
\end{equation}
for every $\tau\leq 2\tau_0$, and
\begin{equation}\label{tail_decay_sum}
\sum_{j=J}^{\infty} \left(F(\bar{M}_{-j-1})-F(\bar{M}_{-j})\right)^{1/2} \leq C(K,\eta)J^{-p}.
\end{equation}
for $J\geq 2|\tau_0|$, where $p :=\frac{1}{4\eta}-\frac{1}{4}$. Since the renormalized mean curvature flow is the negative gradient flow of the $F$-functional this implies
\begin{equation}
\int_{-\infty}^{\tau}\int_{\bar{M}_{\tau'}} \left|\mathbf{H}(q)+\frac{q^{\perp}}{2}\right|e^{-\frac{|q|^2}{4}} d\mu_{\tau'}(q) d\tau' \leq C|\tau |^{-p}
\end{equation}
for $\tau\leq 2\tau_0$. Hence, applying \cite[Lemma A.48]{CM_uniqueness}, we obtain
\begin{equation}
\int_{ \{|(y_1,y_2)|\leq \rho_0(\tau)/2\}}|u(y_1,y_2,\theta,\tau)|e^{-\frac{y_1^2+y_2^2}{4}}  \leq C|\tau|^{-p}
\end{equation}
for $\tau\leq 2\tau_0$, where $\rho_0(\tau)$ is our initial choice of graphical radius.
Thus, by Proposition \ref{C0 estimate beta3} (barrier estimate) the quantity $\beta$ from  \eqref{beta def} satisfies $\beta(\tau)\leq C|\tau|^{-p/2}$, so by \eqref{eq_better_rho} the function $|\tau|^{-p/20}$ is an admissible graphical radius for $\tau\leq 2\tau_0$. Hence, setting $q=\min\{\frac{p}{20},\frac{1}{200}\}$, together with \eqref{mu_quad_rad} we conclude that $\rho(\tau):=|\tau|^{q}$ is an admissible graphical radius for $\tau\leq \tau_0$. This finishes the proof of the lemma.
\end{proof}

\bigskip

After these preparations, we can now prove the main result of this subsection:

\begin{proof}[Proof of Theorem \ref{thm_precise_from_one_time}]
Using the graphical radius $\rho(\tau)=|\tau|^q$ from Lemma \ref{poly_graph} (initial graphical radius), we define the  truncated function $\hat{u}$ as in \eqref{eq_trunc_bubble}. Remembering \eqref{decomp_Hilbert} we consider
\begin{align}
&U_+(\tau) := \|\cP_+ \hat{u}(\tau)\|_{\mathcal{H}}^2\, , \nonumber\\ 
&U_0(\tau) := \|\cP_0 \hat{u}(\tau)\|_{\mathcal{H}}^2\, ,\label{def_U_PNM} \\ 
&U_-(\tau) := \|\cP_- \hat{u}(\tau)\|_{\mathcal{H}}^2\, . \nonumber
\end{align}
Recall from \cite[Section 4]{CHH_blowdown} that for $\tau\leq \tau_0$ we have the differential inequalities
\begin{align}
&\frac{d}{d\tau} U_+ \geq U_+ - C_0\rho^{-1} \, (U_+ + U_0 + U_-), \nonumber\\ 
&\Big | \frac{d}{d\tau} U_0 \Big | \leq C_0\rho^{-1} \, (U_+ + U_0 + U_-), \label{U_PNM_system}\\ 
&\frac{d}{d\tau} U_- \leq -U_- + C_0\rho^{-1} \, (U_+ + U_0 + U_-), \nonumber
\end{align}
where $C_0<\infty$ is a constant. We will first show that $U_0$ dominates in the following quantitative sense:
\begin{claim}[dominant mode]\label{claim_dom_mode}
For every $\tau \leq \tau_0$ we have the inequality
\begin{equation}\label{zero_dom_mz}
U_{+}(\tau)+U_-(\tau) \leq \frac{4C_0}{\rho(\tau_0)}U_{0}(\tau).
\end{equation}
\end{claim}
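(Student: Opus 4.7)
The plan is a Merle--Zaag type dichotomy for the ODE system \eqref{U_PNM_system}. Setting $\varepsilon_0 := 4C_0/\rho(\tau_0)$, I would argue by contradiction: assume there is some $\tau^{\ast} \leq \tau_0$ with $(U_++U_-)(\tau^{\ast}) > \varepsilon_0 U_0(\tau^{\ast})$, and let $\tau_1 \in [\tau^{\ast},\tau_0]$ be the supremum of such times, so that $(U_++U_-)(\tau_1) = \varepsilon_0 U_0(\tau_1)$ while strict inequality holds on $(\tau_1,\tau_0]$. The goal is to contradict the assumption by combining initial data bounds at $\tau_0$ with Merle--Zaag control on the dominated modes.

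At $\tau = \tau_0$, Lemma \ref{prop_cut_compatibility} gives $U_+(\tau_0) \leq |\tau_0|^{-200}$, and projecting the $\cH$-norm estimate onto $\cH_-$ (using that $y_2^2-2 \in \cH_0$) yields $U_-(\tau_0) \leq \kappa^2/(16|\tau_0|^2)$. Meanwhile the leading profile $-(y_2^2-2)/(2\sqrt{2}|\tau_0|)$ lies in $\cH_0$ and has $\cH$-norm of size $c_0/|\tau_0|$, so $U_0(\tau_0) \geq c_0^2/(4|\tau_0|^2)$ once $\kappa$ is small. Choosing $\tau_{\ast} = \tau_{\ast}(\kappa)$ sufficiently negative so that $\rho(\tau_0) = |\tau_0|^q$ is large relative to $\kappa$ ensures that $(U_++U_-)(\tau_0) \leq \varepsilon_0 U_0(\tau_0)$.

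For $\tau_1 < \tau_0$, the $U_+$ bound comes from backward integration of $U_+' \geq U_+ - C_0 \rho^{-1} S$, where $S := U_++U_0+U_-$: using $U_+(\tau_0) \leq |\tau_0|^{-200}$ one obtains $U_+(\tau) \leq e^{\tau-\tau_0}|\tau_0|^{-200} + 2C_0/\rho(\tau_0) \cdot U_0(\tau)$. For $U_-$, the identity $(e^{\tau} U_-)' \leq C_0 e^{\tau} \rho^{-1} S$ integrated from $-\infty$ to $\tau$, together with the fact that $\hat u$ has at most polynomial growth so $e^{\tau_a} U_-(\tau_a) \to 0$ as $\tau_a \to -\infty$, and a continuity bootstrap on $S \leq 2 U_0$ using the Merle--Zaag dichotomy, gives $U_-(\tau) \leq 2C_0/\rho(\tau) \cdot U_0(\tau) \leq 2C_0/\rho(\tau_0) \cdot U_0(\tau)$. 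Adding the two contributions contradicts the choice of $\tau_1$, since $U_+(\tau_1) + U_-(\tau_1) < 4C_0/\rho(\tau_0) \cdot U_0(\tau_1) = \varepsilon_0 U_0(\tau_1)$.

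The main obstacle is the expanding backward dynamics of $U_-$, which prevents a direct Gr\"onwall estimate going backward from $\tau_0$. The remedy is the Merle--Zaag mechanism: by integrating the $U_-$ equation \emph{forward} from $-\infty$ and exploiting that $e^{\tau} U_-(\tau) \to 0$ on any trajectory of polynomial growth, one trades the absence of a backward estimate for asymptotic control. The lemma's smallness $U_+(\tau_0) \leq |\tau_0|^{-200}$ is what rules out the $U_+$-dominating alternative of the dichotomy, forcing the $U_0$-dominating case, which is exactly what the claim asserts quantitatively.
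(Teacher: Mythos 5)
There is a genuine gap, and it sits at the very start of your contradiction argument: the base case at $\tau_0$. From $\kappa$-quadraticity (via Lemma \ref{prop_cut_compatibility}) the only information on the stable mode at $\tau_0$ is $U_-(\tau_0)\le \kappa^2/(16|\tau_0|^2)$, whereas your target inequality at $\tau_0$ reads $U_+(\tau_0)+U_-(\tau_0)\le \varepsilon_0 U_0(\tau_0)$ with $\varepsilon_0 U_0(\tau_0)\sim C_0\rho(\tau_0)^{-1}|\tau_0|^{-2}=C_0|\tau_0|^{-q-2}$. For fixed $\kappa$ this fails to follow: making $\tau_0$ more negative makes $\rho(\tau_0)$ \emph{larger}, hence $\varepsilon_0$ \emph{smaller}, so your statement that choosing $\tau_\ast(\kappa)$ sufficiently negative ``ensures'' the inequality at $\tau_0$ is exactly backwards. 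In fact the bound $U_-(\tau_0)\lesssim \rho(\tau_0)^{-1}U_0(\tau_0)$ is part of the \emph{conclusion} of the claim (it is what later yields Corollary \ref{projection_estimate}), not something extractable from the spectral data at the single time $\tau_0$; it has to come from the dynamics on $(-\infty,\tau_0]$.

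Relatedly, your treatment of $U_-$ by forward integration of $(e^\tau U_-)'\le C_0 e^\tau\rho^{-1}S$ from $-\infty$ needs control of $S=U_++U_0+U_-$ on all of $(-\infty,\tau]$, which your contradiction setup (strict inequality only on $(\tau_1,\tau_0]$) does not provide; invoking ``the Merle--Zaag dichotomy'' as the bootstrap is circular, since the dichotomy is what is being proved. The paper closes precisely this hole differently: it first shows that if ever $2\eps(U_++U_0)\le U_-$ at some time, then the quantity $U_--2\eps(U_++U_0)$ is strictly decreasing there, so the domination of $U_-$ would persist for all earlier times, contradicting the known asymptotics of Theorem \ref{thm_L2.convergence} (inwards quadratic bending) as $\tau\to-\infty$ --- an ingredient your proposal never uses --- and only afterwards runs a first-violation argument for $U_+<8\eps U_0$ anchored at $\tau_0$ via Lemma \ref{prop_cut_compatibility}. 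Your handling of $U_+$ is in the right spirit (and could be made rigorous with a slow-variation estimate for $U_0$ coming from $|U_0'|\le C_0\rho^{-1}S$), but without the asymptotics-based exclusion of a dominating $U_-$ and with the false base case, the argument as written does not go through.
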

\begin{proof}
We argue as in the proof of the Merle-Zaag ODE lemma \cite{MZ}. Set $\eps=C_0\rho(\tau_0)^{-1}$. Possibly after decreasing $\tau_\ast$, we can assume that $\eps<1/100$. Now, if
at some time $\tau \leq \tau_0$ we had the inequality $2\eps(U_{+}+U_{0}) \leq U_{-}$, then by \eqref{U_PNM_system} at this time $\tau$ we would get
\begin{align}\label{neg_too_large_der}
\frac{d}{d\tau}\left(U_{-} -2\eps(U_{+}+U_{0}) \right) &\leq -U_- +\eps(1+4\eps)(U_{+}+U_0+U_-)-2\eps U_{+}\nonumber\\
&\leq  -U_-+\eps(1+4\eps)(1+\frac{1}{2\eps})U_- < 0.
\end{align}
Hence, if the inequality $2\eps(U_{+}+U_{0}) \leq U_{-}$ held at some time $\tau_1\leq \tau_0$, then it would hold on $(-\infty, \tau_1]$, contradicting Theorem \ref{thm_L2.convergence} (inwards quadratic bending). Thus, we must have      
\begin{equation}\label{zero_dom_mz2}
U_{-} <  2\eps(U_0+U_{+})
\end{equation}  
for every $\tau \leq \tau_0$. To finish the proof, we will show that
\begin{equation}\label{neg_small}
U_{+} <  8\eps U_0.
\end{equation}
for all $\tau\leq \tau_0$. Note that by Lemma \ref{prop_cut_compatibility} (upgrade to bubble-sheet) this inequality indeed holds at time $\tau=\tau_0$.
Now, if the inequality \eqref{neg_small} failed for some at some time less than $\tau_0$, then at the largest time $\tau<\tau_0$ where it failed we would have $U_{+} = 8\eps U_0$. Together with \eqref{U_PNM_system} and \eqref{zero_dom_mz2} this would imply
\begin{align*}
\frac{d}{d\tau}\left(8\eps U_0-U_{+}\right) &\leq \eps(8\eps+1)(U_{+}+U_0+U_-)-U_{+}\\
&\leq \eps(8\eps+1)(8\eps+1+2\eps(1+8\eps))U_0 - 8\eps U_0 \leq -\eps U_0<0.
\end{align*}
This contradicts the definition of $\tau$, and thus establishes \eqref{neg_small}. This concludes the proof of the claim.
\end{proof}

Now that we know that $U_0$ dominates, it is important to determine which eigenfunction in
\begin{equation}
\mathcal{H}_0=\textrm{span}\{ y_1^2-2,y_2^2-2,y_1y_2 \}
\end{equation}
is the dominated one. The following claim shows that $y_2^2-2$ dominates in a quantitative sense:
\begin{claim}[dominant eigenfunction]\label{alpha_3_small_claim}
For $\tau\leq\tau_0$ with $\|\hat{u}(\tau)\|_{\cH} \geq e^{-\rho(\tau)}$ we have the estimate
\begin{equation}
\left|\langle \hat{u}(\tau), y_1^2-2 \rangle_{\cH}\right|+\left|\langle  \hat{u}(\tau), y_1y_2 \rangle_{\cH}\right| \leq  \frac{C}{\rho(\tau)}\|\hat{u}(\tau)\|_{\cH}\, .
\end{equation}
\end{claim}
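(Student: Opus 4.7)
The plan is to exploit the fact that the translator equation forces the bubble-sheet graph $u(y_1,y_2,\tau)$ to depend on $y_1$ only through an exponentially small error in $\tau$, and to combine this with the Hermite-orthogonality identities $\int_{\mathbb{R}}(y_1^2-2)e^{-y_1^2/4}dy_1 = 0$ and $\int_{\mathbb{R}}y_1\,e^{-y_1^2/4}dy_1 = 0$. These two facts together force the inner products of $\hat u$ with $y_1^2-2$ and $y_1 y_2$ to be negligible compared to the bulk of $\|\hat u\|_\cH$.

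The first step is to repeat the computation leading to \eqref{y_1der} at a general time $\tau\leq\tau_0$. Writing the outward normal of $\bar M_\tau$ in graphical coordinates and using $\mathbf{H}=e_1^\perp$ together with the rescaling $\bar H(\bar p)=e^{-\tau/2}H(p)$ yields the pointwise identity $|u_{y_1}| = e^{\tau/2}\,|\bar H|\,\sqrt{1+|\nabla u|^2}$. On the admissible graphical region $|(y_1,y_2)|\leq 2\rho(\tau)$ provided by Lemma \ref{poly_graph}, the bound $\|u\|_{C^2}\leq \rho(\tau)^{-2}$ forces $\bar H$ to stay close to the cylindrical mean curvature $1/\sqrt{2}$ and $|\nabla u|\ll 1$, giving $|u_{y_1}(y_1,y_2,\tau)|\leq Ce^{\tau/2}$ uniformly on that region. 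Integrating in $y_1$ and setting $v(y_2,\tau):=u(0,y_2,\tau)$, one obtains on the support of $\hat u$,
\begin{equation*}
|u(y_1,y_2,\tau)-v(y_2,\tau)|\leq C\rho(\tau)e^{\tau/2}.
\end{equation*}

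Next, decompose $\hat u = v(y_2,\tau)\chi_\rho + R$, where $\chi_\rho := \chi(|(y_1,y_2)|/\rho(\tau))$ and $|R|\leq C\rho(\tau)e^{\tau/2}\cdot 1_{\{|(y_1,y_2)|\leq 2\rho(\tau)\}}$. The remainder contributes at most $Ce^{\tau/2}\rho(\tau)$ in each of the two inner products. For the main term with $\phi=y_1 y_2$, the cutoff $\chi_\rho$ is even in $y_1$ while $y_1 e^{-y_1^2/4}$ is odd, so $\langle v\chi_\rho, y_1 y_2\rangle_\cH = 0$ exactly. For $\phi = y_1^2-2$, Hermite orthogonality gives $\int_\mathbb{R}(y_1^2-2)e^{-y_1^2/4}dy_1 = 0$, so for $|y_2|\leq \rho(\tau)/2$ the difference between $\int\chi_\rho(y_1^2-2)e^{-y_1^2/4}dy_1$ and the full integral is only a Gaussian tail of size $Ce^{-c\rho(\tau)^2}$; the region $|y_2|\geq \rho(\tau)/2$ contributes an additional Gaussian tail in $y_2$. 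Collecting,
\begin{equation*}
|\langle \hat u, y_1^2-2\rangle_\cH| + |\langle \hat u, y_1 y_2\rangle_\cH| \leq Ce^{-c\rho(\tau)^2}\|\hat u\|_\cH + Ce^{\tau/2}\rho(\tau).
\end{equation*}
Under the hypothesis $\|\hat u(\tau)\|_\cH \geq e^{-\rho(\tau)}$, both error terms are bounded by $(C/\rho(\tau))\|\hat u\|_\cH$, since $\rho(\tau)=|\tau|^q$ with $q$ small ensures $e^{\tau/2}\rho(\tau)\ll e^{-\rho(\tau)}/\rho(\tau)$ whenever $\tau_\ast$ is chosen sufficiently negative.

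The main technical point requiring care is Step 1: ensuring the pointwise bound $|u_{y_1}|\leq Ce^{\tau/2}$ on the full graphical region of radius $\rho(\tau)=|\tau|^q$, rather than only on the initial scale $|\tau_0|^{1/100}$ from \eqref{mu_quad_rad}. This is where the quantitative $C^2$-control from Lemma \ref{poly_graph}, giving both a bound on $|\bar H|$ and a bound on $|\nabla u|$, enters essentially.
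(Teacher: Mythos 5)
Your proposal is correct and follows essentially the same route as the paper: use the translator equation to show $|\partial_{y_1}u|\lesssim e^{\tau/2}$ on the graphical region, replace $\hat u$ by a $y_1$-independent function up to an exponentially small error, and then exploit the exact oddness of $y_1y_2$ in $y_1$ together with $\int_{\mathbb{R}}(y_1^2-2)e^{-y_1^2/4}\,dy_1=0$, absorbing the Gaussian tails and the $e^{\tau/2}$-error via the hypothesis $\|\hat u(\tau)\|_{\cH}\geq e^{-\rho(\tau)}$. The only differences are bookkeeping (your global decomposition $\hat u=v\chi_\rho+R$ versus the paper's split into the box $\max\{|y_1|,|y_2|\}\leq\rho/2$ and its complement, which it bounds by Cauchy--Schwarz and the inverse Poincar\'e inequality), and these do not change the substance of the argument.
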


\begin{proof}
Let $\psi_1:=y_1^2-2, \psi_2:=y_1y_2$ and set $\alpha_i:=\langle \hat{u},\psi_i\rangle_{\cH}$. Then, for $i=1,2$ we have
\begin{align}\label{alpha_3_est}
|\alpha_i| \leq \left| \int_{\max\{|y_1|,|y_2|\} \leq \tfrac{1}{2}\rho(\tau) } u \psi_i \, e^{-\tfrac{y_1^2+y_2^2}{4}}dy_1dy_2\right|+\int_{\max\{|y_1|,|y_2|\} \geq \tfrac{1}{2}\rho(\tau) } |\hat{u} \psi_i| e^{-\tfrac{y_1^2+y_2^2}{4}}dy_1dy_2.
\end{align}
Using the Cauchy-Schwarz inequality and the inverse Poincare inequality from \cite[Proposition 4.4]{CHH_blowdown} we can estimate the second integral by
\begin{equation}\label{second_int_alpha_3}
\int_{\max\{|y_1|,|y_2|\} \geq \tfrac{1}{2}\rho(\tau) } |\hat{u} \psi_i|\,  e^{-\tfrac{y_1^2+y_2^2}{4}}dy_1dy_2 \leq \frac{C}{\rho(\tau)}\|\hat{u}\|_{\cH}.
\end{equation}
To bound the first integral, note that as in the proof of Lemma \ref{prop_cut_compatibility} (upgrade to bubble-sheet) we have
\begin{equation}\label{pointwise_diff_again}
\sup_{\max\{|y_1|,|y_2|\} \leq \tfrac{1}{2}\rho(\tau) } \left| \sqrt{2}+u(y_1,y_2,\tau)-v(y_2,\tau)\right| \leq e^{\tau/3}.
\end{equation}
Hence,
\begin{multline}
\left| \int_{\max\{|y_1|,|y_2|\} \leq \tfrac{1}{2}\rho(\tau) } u(y_1,y_2,\tau) \psi_i \, e^{-\tfrac{y_1^2+y_2^2}{4}}dy_1dy_2\right.\\
\left. - \int_{\max\{|y_1|,|y_2|\} \leq \tfrac{1}{2}\rho(\tau) } \left(v(y_2,\tau)-\sqrt{2}\right) \psi_i \, e^{-\tfrac{y_1^2+y_2^2}{4}}dy_1dy_2\right| \leq e^{\tau/4}.
\end{multline}
Now, since $\psi_2$ is an odd function of $y_1$ we clearly have
\begin{equation}
\int_{\max\{|y_1|,|y_2|\} \leq \tfrac{1}{2}\rho(\tau) } \left(v(y_2,\tau)-\sqrt{2}\right) \psi_2 \, e^{-\tfrac{y_1^2+y_2^2}{4}}dy_1dy_2=0.
\end{equation}
To estimate the integral involving $\psi_1$ observe that using the identity
\begin{equation}
\int_{-\infty}^{\infty} \psi_1 e^{-\frac{y_1^2}{4}}dy_1=0
\end{equation}
for every $y_2$ we have
\begin{equation}
\int_{|y_1| \leq \tfrac{1}{2}\rho(\tau) }   \left(v(y_2,\tau)-\sqrt{2}\right) \psi_1 \, e^{-\tfrac{y_1^2+y_2^2}{4}}dy_1=-\int_{|y_1| \geq \tfrac{1}{2}\rho(\tau) }   \left(v(y_2,\tau)-\sqrt{2}\right) \psi_1 \, e^{-\tfrac{y_1^2+y_2^2}{4}}dy_1.
\end{equation}
This yields
\begin{equation}
\left| \int_{\max\{|y_1|,|y_2|\} \leq \tfrac{1}{2}\rho(\tau) } \left(v(y_2,\tau)-\sqrt{2}\right) \psi_1 \, e^{-\tfrac{y_1^2+y_2^2}{4}}dy_1dy_2\right| \leq e^{-2\rho(\tau)}.
\end{equation}
Combining the above equations establishes the claim.
\end{proof}

Continuing the proof of the theorem, we consider the evolution of the coefficient
\begin{equation}
\alpha_0(\tau) := \langle \hat{u}(\tau),\psi_0\rangle_{\cH},
\end{equation}
where we now work with the normalized eigenfunction
\begin{equation}
\psi_0=2^{-3/2}(\tfrac{e}{2\pi})^{1/4}(y_2^2-2).
\end{equation}
Note that by the above two claims, $\alpha_0$ is dominant in a quantitative sense. Specifically, if we write
\begin{equation}\label{decomp_uw}
\hat{u}(\tau)=\alpha_0(\tau)\psi_0+w(\tau),
\end{equation}
then for all $\tau\leq \tau_0$ with $|\alpha_0(\tau)|\geq e^{-\rho(\tau)}$ we have the estimate
\begin{equation}\label{proj_small}
\|w(\tau)\|_{\cH} \leq \frac{C}{\rho(\tau_0)}|\alpha_0(\tau)|.
\end{equation}
Now, using Proposition \ref{lemma_taylor} (evolution equation)  and equation \eqref{decomp_uw} we see that
\begin{align}\label{second_order_eq}
\frac{d}{d\tau}\alpha_0&=-\tfrac{1}{2\sqrt{2}}\langle \hat{u}^2,\psi_0\rangle_{\cH}+ \langle E,\psi_0 \rangle_{\cH} \nonumber \\
& = -(\tfrac{e}{2\pi})^{\frac{1}{4}}\alpha_0^2-  \tfrac{1}{\sqrt{2}} \alpha_0 \langle w,\psi_0^2 \rangle_{\cH}   -\tfrac{1}{2\sqrt{2}}  \langle w^2, \psi_0 \rangle_{\cH} + \langle E,\psi_0 \rangle_{\cH},  
\end{align}
where $E$ satisfies the pointwise estimate \eqref{est_eq}.

\begin{claim}[error estimate]\label{second_evlove_claim}
For all $\tau\leq \tau_0$ with $|\alpha_0(\tau)|\geq e^{-\rho(\tau)^{1/2}}$ we have the estimate
\begin{equation}\label{second_evolve_eq}
\left|\alpha_0 \langle w,\psi_0^2 \rangle_{\cH} \right| + \left| \langle w^2, \psi_0 \rangle_{\cH} \right| +\left| \langle E,\psi_0 \rangle_{\cH}\right| \leq \frac{C}{\rho(\tau_0)}\alpha_0^2(\tau).
\end{equation}
\end{claim}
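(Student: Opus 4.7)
The plan is to use the decomposition $\hat u=\alpha_0\psi_0+w$ from \eqref{decomp_uw} together with the bound \eqref{proj_small}, which applies under our hypothesis since $|\alpha_0(\tau)|\geq e^{-\rho(\tau)^{1/2}}\geq e^{-\rho(\tau)}$ once $\rho(\tau)\geq 1$. This gives $\|w(\tau)\|_\cH\leq \frac{C}{\rho(\tau_0)}|\alpha_0(\tau)|$, and it remains to estimate the three contributions separately. The bilinear term is immediate from Cauchy--Schwarz: since $\psi_0^2$ is a polynomial, $\|\psi_0^2\|_\cH$ is a finite universal constant, giving
\begin{equation*}
|\alpha_0\langle w,\psi_0^2\rangle_\cH|\leq |\alpha_0|\cdot\|w\|_\cH\cdot\|\psi_0^2\|_\cH\leq \frac{C}{\rho(\tau_0)}\alpha_0^2.
\end{equation*}

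The main obstacle is the quadratic term $|\langle w^2,\psi_0\rangle_\cH|$, whose quadratic dependence on $w$ precludes a direct Cauchy--Schwarz against $\|w\|_\cH$. My approach would be to split the integration at radius $R:=C_0(\log\rho(\tau_0))^{1/2}$ with $C_0$ large. For $|{\bf{y}}|\geq R$, outside the support of $\hat u$ we have $w=-\alpha_0\psi_0$, so the integrand is $\alpha_0^2\psi_0^3$ against the Gaussian weight, and Gaussian tail decay yields a contribution of at most $C\alpha_0^2 e^{-R^2/8}\leq \frac{C}{\rho(\tau_0)}\alpha_0^2$ by the choice of $R$; on the annulus $R\leq |{\bf{y}}|\leq 2\rho(\tau)$ an analogous bound follows by combining the graphical estimate $\|\hat u\|_{L^\infty}\leq \rho(\tau)^{-2}$ with Gaussian tail decay. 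For the inner region $|{\bf{y}}|\leq R$, I would apply interior parabolic regularity to the equation that $w$ satisfies, derived from Proposition \ref{lemma_taylor} by subtracting the ODE contribution of $\alpha_0\psi_0$; after accounting for the leading-order ODE for $\alpha_0$, the source is $O(\alpha_0^2)$ pointwise modulo the error $E$ and the smaller terms $\alpha_0\psi_0 w$ and $w^2$. This yields $\|w\|_{L^\infty(B_R)}\leq C(R)(\|w\|_\cH+\alpha_0^2)\leq C(R)|\alpha_0|/\rho(\tau_0)$ with $C(R)$ polynomial in $R$, so using $|\psi_0|\leq C(1+R^2)$ on $B_R$ one gets
\begin{equation*}
\Big|\langle w^2\,\mathbf{1}_{B_R},\psi_0\rangle_\cH\Big|\leq C(1+R^2)\|w\|_{L^\infty(B_R)}\|w\|_\cH\leq \frac{C(\log\rho(\tau_0))^{k}}{\rho(\tau_0)^2}\alpha_0^2\leq \frac{C}{\rho(\tau_0)}\alpha_0^2.
\end{equation*}

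For the error term, I would appeal to Lemma \ref{lemma_error_est}, which gives $|\langle E,\psi_0\rangle_\cH|\leq C\beta^{2+1/5}$. Combined with $\beta(\tau)\sim |\alpha_0(\tau)|\sim |\tau|^{-1}$ from Theorem \ref{thm_L2.convergence} (inwards quadratic bending) and the inverse Poincar\'e inequality of \cite{CHH_blowdown}, and with $\rho(\tau_0)=|\tau_0|^q$ for $q\leq 1/200$ from Lemma \ref{poly_graph} (initial graphical radius), this yields
\begin{equation*}
|\langle E,\psi_0\rangle_\cH|\leq C|\tau|^{-1/5}\alpha_0^2\leq C|\tau_0|^{-q}\alpha_0^2\leq \frac{C}{\rho(\tau_0)}\alpha_0^2.
\end{equation*}
The delicate point throughout is tracking the precise dependence on $\rho(\tau_0)$ on the right-hand side; for the quadratic term this is achieved by the logarithmic choice of $R$, which trades polynomial-in-$R$ interior regularity constants against the exponentially decaying Gaussian tail.
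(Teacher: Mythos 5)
Your estimate for the bilinear term $\alpha_0\langle w,\psi_0^2\rangle_{\cH}$ is fine and is exactly the paper's argument, but your treatment of the quadratic term $\langle w^2,\psi_0\rangle_{\cH}$ has a genuine gap. The hypothesis only guarantees $\alpha_0^2(\tau)\geq e^{-2\rho(\tau)^{1/2}}$, i.e.\ $\alpha_0$ is allowed to be exponentially small in $\rho(\tau)$ (the claim is invoked precisely in a continuity argument where this regime cannot be excluded a priori). With your logarithmic split radius $R\sim(\log\rho(\tau_0))^{1/2}$, the annulus contribution coming from $\hat u^2|\psi_0|$ is bounded only via the pointwise estimate $|\hat u|\leq\rho(\tau)^{-2}$ and a Gaussian tail, which gives a quantity that is merely \emph{polynomially} small in $\rho$; this can be far larger than $\tfrac{C}{\rho(\tau_0)}\alpha_0^2$ when $\alpha_0$ sits near the floor $e^{-\rho(\tau)^{1/2}}$, so the claimed "analogous bound" on the annulus does not follow. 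If you try to repair this by taking $R$ comparable to $\rho(\tau)^{1/2}$ (as the paper does for the error term, so that outer contributions are of size $e^{-\rho^{2/3}}\leq\alpha_0^2/\rho(\tau_0)$), then your inner estimate collapses: the assertion that interior parabolic regularity yields $\|w\|_{L^\infty(B_R)}\leq C(R)\|w\|_{\cH}$ with $C(R)$ polynomial in $R$ is not correct, since converting the Gaussian $L^2$-norm into an unweighted sup near $|{\bf{y}}|=R$ inevitably costs a factor of order $e^{R^2/8}$ (already $\rho(\tau_0)^{C_0^2/8}$ for your logarithmic $R$, and $e^{\rho/8}$ for $R\sim\rho^{1/2}$). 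The paper avoids sup-norm bounds altogether: it uses Ecker's weighted Sobolev inequality, $|\langle\psi_0,w^2\rangle_{\cH}|\leq C(\|w\|_{\cH}^2+\|\nabla w\|_{\cH}^2)$, and then controls $\|\nabla w\|_{\cH}$ by a parabolic smoothing estimate for $\partial_\tau w=\mathcal{L}w+g$ with $\|g\|_{\cH}\leq\tfrac{C}{\rho(\tau_0)}|\alpha_0|$, integrating over a unit time interval and using the Merle--Zaag ODE to compare $\sup_{[\tau,\tau+1]}\alpha_0^2$ with $\alpha_0^2(\tau)$; this gradient estimate is the missing ingredient in your proposal.

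For the error term $\langle E,\psi_0\rangle_{\cH}$ there is a secondary problem: you invoke Lemma \ref{lemma_error_est} together with $\beta(\tau)\sim|\alpha_0(\tau)|\sim|\tau|^{-1}$ from Theorem \ref{thm_L2.convergence}. But Theorem \ref{thm_L2.convergence} is an asymptotic statement for a \emph{fixed} solution, valid only for $\tau$ below a solution-dependent threshold and with solution-dependent implied constants, whereas the claim must hold for all $\tau\leq\tau_0\leq\tau_\ast$ with constants uniform over all $\kappa$-quadratic solutions (this uniformity is exactly what feeds into choosing $\tau_\ast$ in the ODE argument afterwards). Moreover, Lemma \ref{lemma_error_est} was proved for the truncation at the radius $\beta^{-1/5}$, while here $E$ is defined with respect to $\rho(\tau)=|\tau|^q$ from Lemma \ref{poly_graph}. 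The paper instead re-derives the bound directly from \eqref{est_eq}: on $\{|{\bf{y}}|\leq\rho^{1/2}\}$ it uses the inverse Poincar\'e inequality and the quantitative dominance from Claim \ref{claim_dom_mode} and Claim \ref{alpha_3_small_claim} to get $\tfrac{C}{\rho(\tau)}\|\hat u\|_{\cH}^2\leq\tfrac{C}{\rho(\tau)}\alpha_0^2$ with uniform constants, and on the remaining region a crude bound of size $e^{-\rho(\tau)^{2/3}}$, which is absorbed into $\alpha_0^2/\rho(\tau_0)$ precisely because of the hypothesis $|\alpha_0(\tau)|\geq e^{-\rho(\tau)^{1/2}}$. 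You should replace your appeal to Theorem \ref{thm_L2.convergence} by these uniform ingredients.
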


\begin{proof}
Using equation \eqref{proj_small} the first term is easily controlled as  
\begin{equation}
|\alpha_0 \langle w,\psi_0^2 \rangle_{\cH}| \leq C|\alpha_0|\|w\|_{\cH} \leq  \frac{C}{\rho(\tau_0)}\alpha_0^2(\tau).
\end{equation} 
To bound the last term, first observe that $E$ from \eqref{est_eq} is supported in the ball $\{|{\bf{y}}|\leq 2\rho(\tau)\}$, so in particular by the definition of admissible graphical radius we have the estimate
\begin{equation}
|u|+|\nabla u| + |\nabla^2 u| \leq \frac{1}{\rho(\tau)^2}.
\end{equation}
Now, for $|{\bf{y}}|\leq \rho(\tau)^{1/2}$ we can estimate 
\begin{equation}
|E||\psi_0|\leq \left(C |u|^3 + C |\nabla u|^2 |\nabla^2 u|\right) \rho(\tau)\leq \frac{C}{\rho(\tau)} \left(|u|^2 + |\nabla u|^2\right).
\end{equation}
Together with the inverse Poincare inequality from \cite[Proposition 4.4]{CHH_blowdown} this yields
\begin{equation}
\int_{|{\bf{y}}|\leq \rho(\tau)^{\frac{1}{2}}}|E\psi_0|\, e^{-\frac{|{\bf{y}}|^2}{4}}  \leq \frac{C}{\rho(\tau)}\|\hat{u}\|_{\cH}^2  \leq \frac{C}{\rho(\tau)}\alpha_0^2(\tau),
\end{equation}
where in the last step we used the above two claims.
In the remaining domain we have the coarse estimate $|E\psi_0| \leq C \rho(\tau)^2$ so we can bound
\begin{align}
\int_{\rho(\tau)^{1/2} \leq |{\bf{y}}| \leq 2\rho(\tau)}|E\psi_0|\, e^{-\frac{|{\bf{y}}|^2}{4}}  \leq  e^{-\rho(\tau)^{2/3}} \, .
\end{align}
Hence, for all $\tau\leq \tau_0$ with $|\alpha_0(\tau)|\geq e^{-\rho(\tau)^{1/2}}$ we get
\begin{equation}
\left| \langle E,\psi_0 \rangle_{\cH}\right| \leq \frac{C}{\rho(\tau_0)}\alpha_0^2(\tau).
\end{equation}
Finally, the second term is controlled similarly as in  \cite[Proof of Lemma 5.14]{ADS1}, but since we need to check that everything works from time $\tau_0$ we include the details. By Ecker's weighted Sobolev inequality \cite{Ecker_logsob} we have
\begin{equation}\label{eck_sob}
|\langle \psi_0, w^2 \rangle_{\cH}| \leq C\int (1+|{\bf{y}}|^2)w^2e^{-|{\bf{y}}|^2/4} \leq C(\|w\|^2_{\cH}+ \|\nabla w\|^2_{\cH}).
\end{equation}
Since the Gaussian $L^2$-norm is already controlled by \eqref{proj_small}, it thus suffices to control $\|\nabla w\|^2_{\cH}$. To this end,  note that projecting the evolution equation for $\hat{u}$ from Proposition \ref{lemma_taylor} (evolution equation) to the orthonormal complement of $\mathrm{span}\{\psi_0\}$ gives
\begin{equation}\label{w_evolve_Eq}
\partial_{\tau}w=\mathcal{L}w+g,
\end{equation}
where $g$ at all $\tau\leq \tau_0$ with $|\alpha_0(\tau)|\geq e^{-\rho(\tau)^{1/2}}$ satisfies the estimate
\begin{equation}\label{g_est}
\|g\|_{\mathcal{H}} \leq \frac{1}{2\sqrt{2}}\|\hat{u}^2\|_{\cH}+\|E\|_{\cH}  \leq  \frac{C}{\rho(\tau)} \|\hat{u}\|_{\cH}\leq \frac{C}{\rho(\tau_0)}|\alpha_0(\tau)|\, .
\end{equation}
Now, given $\hat{\tau}\leq \tau_0$, using \eqref{w_evolve_Eq} and integration by parts we compute
\begin{align}
\frac{d}{d\tau}\int {e^{\hat{\tau}-\tau}} w^2e^{-|{\bf{y}}|^2/4} &= \int  {e^{\hat{\tau}-\tau}}(2wg-2|\nabla w|^2 -w^2)\, e^{-|{\bf{y}}|^2/4}\nonumber\\
& \leq \int  {e^{\hat{\tau}-\tau}}(g^2-2|\nabla w|^2)\, e^{-|{\bf{y}}|^2/4},
\end{align}
and
\begin{align}
 \frac{d}{d\tau}\int (\tau-\hat{\tau})|\nabla w|^2 e^{-|{\bf{y}}|^2/4}
 &=\int \left(|\nabla w|^2  -2(\tau-\hat{\tau}) (\mathcal{L}  w)(\mathcal{L}w+g)\right)\, e^{-|{\bf{y}}|^2/4}\nonumber\\
 &\leq\int \left(|\nabla w|^2  +\tfrac{1}{2}(\tau-\hat{\tau}) g^2)\right)\, e^{-|{\bf{y}}|^2/4}\, .
\end{align}
For $\tau\in [\hat{\tau}-1,\hat{\tau}]$ this yields
\begin{align}
 \frac{d}{d\tau}\int \left((\tau-\hat{\tau})|\nabla w|^2+\tfrac{e^{\hat{\tau}-\tau}}{2} w^2\right)\, e^{-|{\bf{y}}|^2/4}\leq \int g^2\, e^{-|{\bf{y}}|^2/4}\, .
 \end{align}
Hence, together with \eqref{proj_small} and \eqref{g_est} we infer that
\begin{equation}
\|\nabla w(\tau)\|_{\mathcal{H}}^2 \leq \frac{C}{\rho(\tau_0)^2} \sup_{\tau' \in [\tau,\tau+1]}\alpha_0^2(\tau').
\end{equation}
Finally, using the second Merle-Zaag ODE from \eqref{U_PNM_system} and remembering \eqref{proj_small} we see that
\begin{equation}
\sup_{\tau' \in [\tau,\tau+1]} \alpha_0^2(\tau') \leq 2 \alpha_0^2(\tau).
\end{equation}
In light of \eqref{eck_sob}, the last estimate is established and the claim follows.
\end{proof}

Now, setting $\gamma:=(\tfrac{e}{2\pi})^{\frac{1}{4}}$ and $\eps:=C\rho(\tau_0)^{-1}$ by the evolution equation \eqref{second_order_eq} and Claim \ref{second_evlove_claim} (error estimate) we have
\begin{equation}
\left| \frac{d}{d\tau}\alpha_0 + \gamma\alpha_0^2 \right|\leq \eps \alpha_0^2(\tau)
\end{equation}
for all $\tau\leq \tau_0$ with $|\alpha_0(\tau)|\geq e^{-\rho(\tau)^{1/2}}$. Integrating this differential inequality backwards in time gives
\begin{equation}\label{inte_boud_alpha_0}
(\gamma-\eps)(\tau-\tau_0) \leq \frac{1}{\alpha_0(\tau)} - \frac{1}{\alpha_0(\tau_0)} \leq (\gamma+\eps)(\tau-\tau_0),
\end{equation}
as long as $|\alpha_0| \geq e^{-\rho^{1/2}}$ on $[\tau,\tau_0]$. Regarding the initial condition, observe that by \eqref{precise_up_to} we have
\begin{equation}
\left| \frac{1}{\alpha_0(\tau_0)} -\gamma \tau_0\right| \leq \frac{\gamma^2\kappa}{2} |\tau_0|, 
\end{equation}
provided $\kappa$ is sufficiently small. Hence, if $\tau_0\leq \tau_\ast(\kappa)$ is so that $\eps\leq\frac{\gamma^2\kappa}{2}$, then we obtain
\begin{equation}\label{alpha_0_behave}
\left|\frac{1}{\alpha_0(\tau)}-\gamma \tau \right| \leq \frac{\gamma^2\kappa}{2}|\tau|,
\end{equation}
as long as $|\alpha_0| \geq e^{-\rho^{1/2}}$ on $[\tau,\tau_0]$. Finally, since $\frac{1}{|\tau|} \gg e^{-|\tau|^{\frac{q}{2}}}$ if follows from continuity that \eqref{alpha_0_behave} holds unconditionally. In other words, we have shown that for all $\tau\leq \tau_0$ we have
\begin{equation}\label{alpha_0_behaves}
\left| \alpha_0(\tau) + \frac{1}{\gamma\tau}\right| \leq \frac{3\kappa}{4|\tau|}.
\end{equation}
Together with the estimate  \eqref{proj_small} this shows that $\|\hat{u}(\tau)\|_{\cH} \sim |\tau|^{-1}$ for every $\tau \leq \tau_0$. Hence, similarly as in \eqref{eq_better_rho} we can now upgrade to the new graphical radius $\rho(\tau)=|\tau|^{1/10}$ for $\tau\leq \tau_0$. Furthermore, combining \eqref{proj_small} and \eqref{alpha_0_behaves} also shows that $\hat{u}$, now defined with respect to the new graphical radius, satisfies
\begin{equation}\label{mu_quad_back_rest}
\left\|\hat{u}(\tau)+\frac{y_2^2-2}{2\sqrt{2}|\tau|}\right\|_{\mathcal{H}}\leq \frac{\kappa}{|\tau|}\;\;\;\;\;\textrm{for}\;\tau\leq \tau_0.
\end{equation}
Thus, we conclude that $M$ is strongly $\kappa$-quadratic from time $\tau_0$. This finishes the proof of the theorem.
\end{proof}

As a corollary of the proof, we also obtain the following projection estimate:

\begin{corollary}[projection estimate]\label{projection_estimate}
If $M$ is $\kappa$-quadratic at time $\tau_{0} \leq \tau_{\ast}$, then
\begin{equation}
\|\fp_{-}(v_{\cC}(\tau_0))\|_{\fH} \leq \frac{\kappa}{100|\tau_0|}.
\end{equation}
\end{corollary}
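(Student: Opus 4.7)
The $\kappa$-quadratic hypothesis combined with the trivial inclusion $\sqrt{2}-\tfrac{y^2-2}{2\sqrt{2}|\tau_0|}\in \fH_+\oplus \fH_0$ immediately gives the bound $\|\fp_-(v_\cC(\tau_0))\|_\fH\le \kappa/|\tau_0|$. The plan is to improve this by the required factor of $100$ by extracting the Merle--Zaag dominance of the neutral mode that was already established, at the bubble-sheet level, inside the proof of Theorem~\ref{thm_precise_from_one_time}, and then transferring this $2$D gain back to the $1$D cylindrical profile $v_\cC$.

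First, I re-use Lemma~\ref{prop_cut_compatibility} and Claim~\ref{claim_dom_mode} applied with $5\kappa$ in place of $\kappa$ (which only alters the fixed constants). At the single time $\tau=\tau_0$, these yield $\|\hat u(\tau_0)\|_\cH\le C/|\tau_0|$ and $U_-(\tau_0)\le (4C_0/\rho(\tau_0))\,U_0(\tau_0)$, where $\rho(\tau_0)=|\tau_0|^q$ is the admissible graphical radius provided by Lemma~\ref{poly_graph}. Combining these, and using $U_0(\tau_0)\le \|\hat u(\tau_0)\|_\cH^2$, one obtains
\begin{equation*}
\|\cP_-\hat u(\tau_0)\|_\cH\le \frac{2\sqrt{C_0}}{\rho(\tau_0)^{1/2}}\,\|\hat u(\tau_0)\|_\cH\le \frac{C}{|\tau_0|^{1+q/2}},
\end{equation*}
which is the essential gain beyond the trivial estimate.

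Next, I translate this $2$D estimate to a $1$D estimate on $\bar u(y_2):=u(0,y_2,\tau_0)$. The tensor embedding $f\mapsto 1\otimes f$ from $\fH$ into $\cH$ satisfies $\|1\otimes f\|_\cH=(e/2)^{1/4}\|f\|_\fH$, and carries $\fH_-$ into $\cH_-$ because the $2$D Ornstein--Uhlenbeck eigenvalue of a degree-$k$ $1$D Hermite polynomial viewed as a function of $y_2$ only equals $1-k/2$, which is negative precisely for $k\ge 3$; consequently $\cP_-(1\otimes f)=1\otimes \fp_-f$. Integrating the translator-equation estimate \eqref{y_1der}, $|\partial_{y_1}u|\le 2e^{\tau_0/2}$ on $|(y_1,y_2)|\le 2|\tau_0|^{1/100}$, in the $y_1$-direction, and combining with $2$D Gaussian tail bounds outside the ball of radius $\rho(\tau_0)$, one shows $\|\hat u(\tau_0)-1\otimes \bar u\|_\cH\le e^{-\rho(\tau_0)^2/20}$. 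Applying $\cP_-$ to this comparison,
\begin{equation*}
\|\fp_-\bar u\|_\fH\le C\,\|\cP_-\hat u(\tau_0)\|_\cH+Ce^{-\rho(\tau_0)^2/20}\le \frac{C'}{|\tau_0|^{1+q/2}}.
\end{equation*}

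Finally, by the uniform sharp asymptotics (Theorem~\ref{thm_unique_asympt} applied at $\tau=\tau_0$), one has $v(y_2,\tau_0)\ge \theta$, and hence $\varphi_\cC(v)=1$ and $v_\cC-\sqrt 2=\bar u$, throughout $|y_2|\le c|\tau_0|^{1/2}$ for some fixed $c>0$; therefore $v_\cC-\sqrt 2-\bar u$ is supported on $|y_2|\ge c|\tau_0|^{1/2}$ and uniformly bounded there, which via a $1$D Gaussian tail estimate gives $\|v_\cC-\sqrt 2-\bar u\|_\fH\le Ce^{-c^2|\tau_0|/16}$. Since $\sqrt 2\in \fH_+$ yields $\fp_-(v_\cC)=\fp_-(v_\cC-\sqrt 2)$, I conclude
\begin{equation*}
\|\fp_-(v_\cC(\tau_0))\|_\fH\le \|\fp_-\bar u\|_\fH+Ce^{-c^2|\tau_0|/16}\le \frac{C'}{|\tau_0|^{1+q/2}},
\end{equation*}
which is at most $\kappa/(100|\tau_0|)$ once $|\tau_\ast|^{q/2}\ge 200C'/\kappa$. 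The conceptually important point is that the factor-of-$100$ improvement is genuinely dynamical and cannot come from any static re-examination of the $\kappa$-quadratic bound: it requires the backward-in-$\tau$ Merle--Zaag argument; the main technical subtlety is reconciling the profile-value truncation in $v_\cC$ (extending to $|y_2|\sim |\tau_0|^{1/2}$) with the spatial-radius truncation at $|\tau_0|^q$ in $\hat u$, but the super-polynomial decay of the Gaussian weights absorbs this mismatch cleanly.
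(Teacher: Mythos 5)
Your proof is correct and is essentially the paper's own argument: both rest on Claim \ref{claim_dom_mode} (dominant mode) evaluated at $\tau_0$ together with $U_0(\tau_0)^{1/2}\leq C/|\tau_0|$ to get $\|\cP_-\hat{u}(\tau_0)\|_{\cH}\leq C|\tau_0|^{-1-q/2}=o(|\tau_0|^{-1})$, and on comparing $v_{\cC}(\tau_0)-\sqrt{2}$ with $\hat{u}(\tau_0)$ via the translator gradient estimate \eqref{y_1der} (equivalently the pointwise estimate \eqref{pointwise_diff}) plus Gaussian tail estimates, transferred through the constant-in-$y_1$ embedding of $\fH$ into $\cH$, which respects the spectral decomposition. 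The only difference is cosmetic: the paper compares the $2$D extension of $v_{\cC}$ with $\hat{u}$ in a single triangle inequality, whereas you insert the intermediate function $\bar{u}(y_2)=u(0,y_2,\tau_0)$ and split the same comparison into two steps.
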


\begin{proof}Setting $w(y_1,y_2,\tau):=v_{\cC}(y_2,\tau)$  we compute
\begin{align}
(2e)^{-\frac{1}{4}}\|\fp_{-}(v_{\cC}(\tau_0))\|_{\fH} =\|\cP_{-}(w(\tau_0))\|_{\cH}  &\leq  \|\cP_-(w(\tau_0)-\hat{u}(\tau_0))\|_{\cH}+\|{\cP}_{-}(\hat{u}(\tau_0))\|_{\cH}\nonumber \\
 &\leq  \|w(\tau_0)-\sqrt{2}-\hat{u}(\tau_0)\|_{\cH}+U_-(\tau)^{1/2}.
\end{align}
Using again a combination of the pointwise estimate  \eqref{pointwise_diff} and Gaussian tail estimates we get
\begin{equation}
\|w(\tau_0)-\sqrt{2}-\hat{u}(\tau_0)\|_{\cH}\leq \frac{\kappa}{|\tau_0|^{100}}. 
\end{equation}
Moreover, by Claim \ref{claim_dom_mode} (dominant mode) and the inequality $U_0(\tau)^{1/2}\leq C|\tau|^{-1}$ we have
\begin{equation}
U_{-}(\tau)^{1/2}\leq 2\left(\frac{C_0}{\rho(\tau_0)}\right)^{1/2} \frac{C}{|\tau|}
\end{equation}
Taking $\tau_\ast$ sufficiently negative, this implies the assertion.
\end{proof}

As a consequence, we now obtain uniform sharp asymptotics depending only on $\kappa$-quadraticity:

\begin{theorem}[uniform sharp asymptotics]\label{cor_unique_asympt}
For every $\eps>0$ there exists $\kappa>0$ and $\tau_{\ast}>-\infty$, such that if $M$ is $\kappa$-quadratic at time $\tau_{0}$ for some $\tau_0 \leq \tau_{\ast}$, then for every $\tau \leq \tau_{0}$ the following holds:
\begin{enumerate}
\item Parabolic region: The renormalized profile function satisfies
\begin{equation}
\left| v(y, \tau)-\sqrt{2}\left(1-\frac{y^2-2}{4 |\tau|}\right) \right| \leq\frac{\eps}{|\tau|} \qquad (|y |\leq \eps^{-1}).
\end{equation}
\item Intermediate region: The function $\bar{v}(z,\tau):=v(|\tau|^{1/2}z,\tau)$ satisfies
\begin{equation}
|\bar{v}(z,\tau)-\sqrt{2-z^2}|\leq \eps,
\end{equation}
on $[-\sqrt{2}+\eps,\sqrt{2}-\eps]$.
\item Tip regions: We have the estimate
\begin{equation}
\| Z(\cdot,\tau)-Z_0(\cdot)\|_{C^{100}(B(0,\eps^{-1}))}\leq \eps,
\end{equation}
where $Z_0(\rho)$ is the profile function of the $2d$-bowl with speed $1/\sqrt{2}$.
\end{enumerate}
Moreover, we have the estimate
\begin{equation}
\|\fp_{-}(v_{\cC}(\tau_0))\|_{\fH} \leq \frac{\kappa}{100|\tau_0|}.
\end{equation}
Furthermore, for every $\tau\leq \tau_0$ the renormalized hypersurface $\bar{M}_{\tau}=e^{-\tau/2}M_{-e^{-\tau}}$ can be expressed locally as a graph of a function $u(y_1,y_2,\tau)$ over the cylinder $\mathbb{R}^2\times S^1(\sqrt{2})$ with the estimate
\begin{equation}
\| u \|_{{C^4}(B(0,2|\tau|^{1/10})}\leq |\tau|^{-1/5}.
\end{equation}
\end{theorem}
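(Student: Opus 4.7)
The plan is to obtain the theorem as a clean combination of the three preceding results: Theorem \ref{thm_precise_from_one_time} ($\kappa$-quadraticity upgrades to strong $\kappa$-quadraticity), Theorem \ref{thm_unique_asympt} (uniform sharp asymptotics under strong $\kappa$-quadraticity), and Corollary \ref{projection_estimate} (projection estimate). The main content of these results has already been established, so the remaining work is essentially to choose parameters in the right order and to cite the correct statements.

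The first step is bookkeeping of the constants. Given $\eps>0$, fix a target parameter $\kappa_1=\kappa_1(\eps)>0$ small enough and $\tau_{\ast,1}=\tau_{\ast,1}(\eps)>-\infty$ negative enough so that the hypothesis of Theorem \ref{thm_unique_asympt} yields conclusions (1)-(3) with the given $\eps$ for any $M$ that is strongly $\kappa_1$-quadratic from time $\tau_0\leq \tau_{\ast,1}$. Next, apply Theorem \ref{thm_precise_from_one_time} with parameter $\kappa_1$: this gives a threshold $\tau_{\ast,2}\leq \tau_{\ast,1}$ such that any $M$ which is $\kappa_1/5$-quadratic at some time $\tau_0\leq \tau_{\ast,2}$ is automatically strongly $\kappa_1$-quadratic from time $\tau_0$. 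Finally set $\kappa:=\min\{\kappa_1/5,\kappa_2\}$, where $\kappa_2$ is the smallness threshold from Corollary \ref{projection_estimate}, and set $\tau_\ast:=\min\{\tau_{\ast,2},\tau_{\ast,3}\}$, where $\tau_{\ast,3}$ is the threshold in Corollary \ref{projection_estimate}.

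The second step is to chain the results. Assume $M$ is $\kappa$-quadratic at time $\tau_0\leq \tau_\ast$. By the choice of $\kappa$ and $\tau_\ast$, Theorem \ref{thm_precise_from_one_time} applies and shows that $M$ is strongly $\kappa_1$-quadratic from time $\tau_0$. By Definition \ref{def_mu_qudratic_str} this already supplies the improved graphical radius $\rho(\tau)=|\tau|^{1/10}$ for all $\tau\leq \tau_0$, which together with the admissibility condition $\|u(\cdot,\tau)\|_{C^4(B(0,2\rho(\tau)))}\leq \rho(\tau)^{-2}$ gives the last display in the theorem, namely the $C^4$-graphicality estimate on scale $|\tau|^{1/10}$. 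Applying Theorem \ref{thm_unique_asympt} to this strongly $\kappa_1$-quadratic $M$ then yields items (1), (2), (3) with the prescribed $\eps$. Finally, the projection estimate $\|\fp_{-}(v_{\cC}(\tau_0))\|_{\fH}\leq \tfrac{\kappa}{100|\tau_0|}$ is produced by invoking Corollary \ref{projection_estimate} directly on $M$ (whose hypothesis is satisfied by the choice of $\kappa$ and $\tau_\ast$).

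There is no essential new difficulty here: all analytic work has been done in the preceding results. The only potential pitfall is making sure the chain of constants closes up correctly, since $\kappa$ appears both as a hypothesis (measuring deviation from the quadratic profile) and in the conclusion (in the projection estimate), and since the transition between strong $\kappa$-quadraticity (expressed in $\mathcal{H}$) and $\kappa$-quadraticity (expressed in $\fH$) carries a harmless normalization factor $(2e)^{1/4}$ that was already tracked in Lemma \ref{prop_cut_compatibility}. Once the parameters are sequenced as above, each implication is a direct citation, and the theorem follows.
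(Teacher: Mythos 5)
Your proposal is correct and follows exactly the paper's own argument, which proves this theorem by combining Theorem \ref{thm_unique_asympt} (uniform sharp asymptotics assuming strong $\kappa$-quadraticity), Theorem \ref{thm_precise_from_one_time} ($\kappa$-quadraticity implies strong $\kappa$-quadraticity) and Corollary \ref{projection_estimate} (projection estimate). Your explicit sequencing of the constants, and the observation that the $C^4$-graphicality estimate on scale $|\tau|^{1/10}$ comes directly from the admissible graphical radius in Definition \ref{def_mu_qudratic_str}, is just a more detailed write-up of the same chain of citations.
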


\begin{proof}
This follows combining Theorem \ref{thm_unique_asympt} (uniform sharp asymptotics assuming strong $\kappa$-quadraticity), Theorem \ref{thm_precise_from_one_time} ($\kappa$-quadraticity implies strong $\kappa$-quadraticity) and Corollary \ref{projection_estimate} (projection estimate).
\end{proof}

\bigskip

\section{From spectral uniqueness to classification}\label{sec_HIMW}

In this section, we explain how to derive the main classification theorem from spectral uniqueness.

\subsection{The Hoffman-Ilmanen-Martin-White class}\label{sec_existence}

In this subsection, we introduce the HIMW class by slightly generalizing the construction from \cite[Cor. 8.2]{HIMW}, and establish some of its basic properties. We also fix notations that will be used throughout the remaining subsections.\\

For every $a\in [0,\frac{1}{3}]$ and every $R<\infty$, consider the ellipsoidal domain given by
\begin{equation}
\Omega_{a,R}:=\left\{(x_2,x_3,x_4)\, \Big|\, a^2x_2^2+\left(\frac{1-a}{2}\right)^2x_3^2+\left(\frac{1-a}{2}\right)^2x_4^2 < R^2\right\}.
\end{equation}
Let $u_{a,R}$ be the solution to the upward moving translator equation\footnote{In contrast to \cite{HIMW}, we use the convention that translators move upwards. In particular, we have $u_{a,R}\leq 0$.}
\begin{align}
\mathrm{div}\left(\frac{\nabla u}{\sqrt{1+|\nabla u|^2}}\right)-\frac{1}{\sqrt{1+|\nabla u|^2}}=0\;\;\;&\textrm{on } \Omega_{a,R},\label{graph_transl}\\
 u=0\;\;\;&\textrm{on } \partial \Omega_{a,R}. \nonumber
\end{align}
As shown in \cite[Section 9]{HIMW}, it follows from the moving plane method that $u_{a,R}(x_2,x_3,x_4)$ attains its minimum $\xi=\xi(a,R)\in (-\infty,0)$ at $x_2=x_3=x_4=0$, and that $u_{a,R}$ is $\mathrm{SO}(2)$-symmetric in the $x_3x_4$-plane, and reflection symmetric in the $x_2$-coordinate. Using interior and exterior bowl barriers one easily sees that $\xi(a,R)\rightarrow -\infty$ as $R\rightarrow \infty$ and $\xi(a,R)\rightarrow 0$ as $R\rightarrow 0$. Observing also that for any fixed $a$, the function  $R\mapsto \xi(a,R)$ is strictly decreasing, it follows that for every $(\xi, a)$ there is a unique $R=R(\xi,a)$, depending continuously on $(\xi,a)$, such that $u_{a,R}(0)=\xi$. By abuse of notation, write $u_{a,\xi}=u_{a,R(\xi,a)}$.\\

We also recall that the gradient estimate from \cite[Theorem 7.4]{EvansSpruck}, together with standard higher derivative estimates, gives uniform estimates (depending only on a bound for $R$) for all derivatives of solutions of the problem \eqref{graph_transl}. In particular, this yields smooth compactness for sequences of translators-with-boundary with bounded $R$, and also yields locally smooth compactness for sequences along which $R\to \infty$.\\

We now shift the tip to the origin, namely we consider the translator (with boundary) defined by
\begin{equation}
M^{a,\xi}:= \mathrm{graph}(u_{a,\xi}-\xi). 
\end{equation}

We can now introduce the HIMW class as the collection of all translators that are obtained as limits of the above translators 
  $M^{a_i,\xi_i}$, for any sequences $a_i\in [0,\frac{1}{3}]$ and $ \xi_i\to -\infty$:

\begin{definition}[HIMW class]\label{gen_HIMW_def}
The HIMW class is
\begin{equation}
\mathcal{A}:=\left\{ \lim_{i\to \infty} M^{a_i,\xi_i} \, | \,  a_i\in [0,1/3] \textrm{ and } \xi_i\to -\infty \right\}. 
\end{equation}
\end{definition}

Note that, inheriting the properties from $M^{a_i,\xi_i}$, all elements in $\mathcal{A}$ are $\mathrm{SO}(2)$-symmetric in the $x_3x_4$-plane, and reflection symmetric in the $x_2$-coordinate. Moreover, the proof of \cite[Theorem 8.1, Corollary 8.2]{HIMW} carries through to our setting, showing that every $M\in \mathcal{A}$ is an entire graph.
Furthermore, the circular symmetry together with \cite[Theorem 9.2]{HIMW} implies that
\begin{equation}
\textrm{the principal curvatures at the tip $0\in M$ are equal to $(k,\tfrac{1-k}{2},\tfrac{1-k}{2})$ for some $k\in [0,\tfrac{1}{3}]$.} 
\end{equation}

Let us next explain the relationship with the construction from \cite[Cor. 8.2]{HIMW}. To this end, note first that when $a=0$ then $M^{a,\xi}$ splits off the $x_2$-direction by \cite[Theorem 3.2]{HIMW} hence is a piece of $\mathbb{R}\times$2d-bowl, and when $a=\frac{1}{3}$ then $M^{a,\xi}$ is $\mathrm{O}(3)$-symmetric hence a piece of the 3d round bowl.   For each fixed $\xi$, we consider the tip curvature map
\begin{equation}
F^{\xi}:[0,\tfrac{1}{3}]\rightarrow  [0,\tfrac{1}{3}],\quad a\mapsto  k.
\end{equation}
Observe that $F^\xi$ is continuous as a consequence of the uniqueness and the uniform derivative estimates that we recalled above. It thus follows from the intermediate value theorem that $F^\xi$ is surjective.\\

In the construction from \cite[Cor. 8.2]{HIMW} one fixes the tip curvature $k\in [0,1/3]$ and then for $\xi_i\to -\infty$ chooses $a_i$ with $F^{\xi_i}(a_i)=k$ and passes to a limit of $M^{a_i,\xi_i}$. Here, we slightly generalized the construction by also allowing that $k_i\to k$ depends on $i$, which a priori leads a larger class of translators (a posteriori it will be the same) and is important for the argument in Section \ref{sec_spectral_eccent}.

\begin{theorem}[noncollapsing and convexity]\label{thm_non_collapsed}
Every $M\in \mathcal{A}$ is noncollapsed and convex.
\end{theorem}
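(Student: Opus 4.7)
The plan is to prove convexity and noncollapsing separately, each by reduction to classical results.

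For convexity, I first show that each translator-with-boundary $M^{a,\xi}$ in the defining family of $\mathcal{A}$ is convex. Since $M^{a,\xi}$ is the graph of the Dirichlet solution $u_{a,\xi}$ of the quasilinear translator equation~\eqref{graph_transl} on the convex domain $\Omega_{a,R}$ with constant zero boundary data, convexity of $u_{a,\xi}$ follows from a concavity maximum principle in the spirit of Korevaar--Lewis, or equivalently from the fact that $M^{a,\xi}$ is obtained in \cite[Section 8]{HIMW} as the long-time limit, in the translator frame, of the mean curvature flow of an explicit convex cap over $\Omega_{a,R}$ (with convexity preserved along mean curvature flow). Convexity of $M \in \mathcal{A}$ then follows from Definition~\ref{gen_HIMW_def}, since $M$ is a smooth locally uniform limit of convex hypersurfaces.

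For noncollapsing, I exploit the ancient mean curvature flow structure induced by the translator: the family $\{M_t := M + te_1\}_{t\in\mathbb{R}}$ is a smooth convex ancient mean curvature flow in $\mathbb{R}^4$. Since $M$ is an entire graph $x_1 = u(x_2,x_3,x_4)$ of a convex function $u$, the upward unit normal $\nu = (1,-\nabla u)/\sqrt{1+|\nabla u|^2}$ satisfies $H(p) = \langle \nu, e_1 \rangle = 1/\sqrt{1+|\nabla u|^2} > 0$ everywhere, so the flow is strictly mean-convex with bounded curvature. By Brendle's sharp inscribed radius theorem~\cite{Brendle_inscribed}, extended to complete noncompact ancient flows by Haslhofer--Kleiner~\cite{HK_inscribed}, we obtain $r_{\mathrm{in}}(p) \geq 1/H(p)$ at every $p \in M$. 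The exterior radius is handled purely by convexity: the tangent hyperplane at any $p \in M$ is a supporting hyperplane to the convex body $K = \{x_1 \geq u\}$, so the open half-space on the exterior side lies entirely in $K^c$; any ball tangent to this hyperplane at $p$ contained in that half-space is disjoint from $K$, giving $r_{\mathrm{ext}}(p) = \infty$. Hence $M$ is $1$-noncollapsed.

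The main obstacle is essentially bookkeeping: verifying that the convexity and graphicality arguments of \cite[Section 8]{HIMW} for $M^{a,\xi}$ extend to arbitrary sequences $a_i \in [0,1/3]$ and $\xi_i \to -\infty$, rather than only the specific sequences used there to realize prescribed tip principal curvatures. Since the relevant estimates in \cite{HIMW} depend continuously on $(a,\xi)$ and are uniform on compact subsets of the parameter space, this extension is routine.
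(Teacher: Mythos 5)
Your proposal has two genuine gaps, one in each half. For convexity, you assert that each translator-with-boundary $M^{a,\xi}$ is convex, either by a Korevaar--Lewis type concavity maximum principle or because it is ``the long-time limit of the mean curvature flow of a convex cap with convexity preserved.'' Neither is established: the concavity maximum principle does not apply to the graphical translator equation \eqref{graph_transl} in any standard form, convexity preservation is not available for flows with fixed Dirichlet boundary, and \cite{HIMW} does not prove convexity of these finite pieces. Indeed the paper is explicit about this point: in the proof of Proposition \ref{shift_to_zero_plus_boundary} it is stated that ``we do not know that the $M^{a,\xi}$ are convex.'' So convexity of $M\in\mathcal{A}$ cannot be obtained by passing convexity of the approximators to the limit; in the paper it is deduced \emph{after} noncollapsing, via the convexity estimate \cite[Theorem 1.10]{HaslhoferKleiner_meanconvex}.

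For noncollapsing, your argument runs in the opposite logical direction from the paper and relies on a misapplication of \cite{Brendle_inscribed,HK_inscribed}. Those results give the sharp constant $\alpha=1$ for flows that are already known to be noncollapsed (blowup limits of compact mean-convex flows, or ancient flows assumed $\alpha$-noncollapsed for some $\alpha>0$); they do not show that a complete convex, strictly mean-convex ancient flow is noncollapsed at all. Convexity plus mean-convexity does \emph{not} imply interior noncollapsing---collapsed convex ancient solutions exist (grim reaper products, translators in slabs as in \cite{BLT}), so some global input is indispensable. In the paper this input is the entropy bound $\mathrm{Ent}[M]\leq \mathrm{Ent}[S^1]<2$: one shows the tangent flow at $-\infty$ is not a hyperplane (using the $\mathrm{SO}(2)$-symmetry and \cite[Theorem 9.3]{HIMW}), that it is a smooth multiplicity-one shrinker by White's methods, hence a generalized cylinder by Huisken's classification; the entropy bound then rules out multiplicity-two planes in the blowup argument, and interior/exterior noncollapsing follows from a contradiction argument using the local regularity theorem and White's strong maximum principle (Lemma \ref{max_prin}), or alternatively \cite{BN_noncollapsing}. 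Your exterior-radius observation is fine, but without the entropy (or an equivalent) step the interior noncollapsing claim is unsupported, and since your convexity step also fails, the proposal does not close.
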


\begin{proof}
Consider the associated mean curvature flow $M_t=M+te_1$. Since $M$ is an entire graph, $M_t$ foliates the entire space and thus by mean-convexity has  polynomial volume growth (indeed this follows from a standard calibration argument as explained e.g. in \cite[Remark 2.6]{HaslhoferKleiner_meanconvex}). Therefore, the entropy $\mathrm{Ent}[M]$ is finite. Hence, we can let $N_t$ be a tangent flow to $M_t$ at $-\infty$.   
We claim that $N_t$ cannot be a hyperplane (of any multiplicity).
To this end, note that \cite[Theorem 9.3]{HIMW} implies that for every  $M\in \mathcal{A}$ and height $h>0$, the level sets $\Sigma^h:=M\cap \{x_1=h\}$ satisfy
\begin{equation}\label{large_ax_HIMW}
\max_{x\in \Sigma^h}x_2 \geq \max_{x\in \Sigma^h}x_3.
\end{equation}
Now, if $N_t=Q$ for some hyperplane $Q$, then clearly $e_1\in Q$. Moreover, by counting dimensions  we see that $Q\cap \mathrm{span}\{e_3,e_4\}\neq \{0\}$. Together with the $\mathrm{SO}(2)$-symmetry this implies $Q=\mathrm{span}\{e_1,e_3,e_4\}$, contradicting \eqref{large_ax_HIMW}. Hence, $N_t$ is not a hyperplane.

\begin{claim} $N_t$ is a smooth multiplicity-one self-shrinker.
\end{claim}

\begin{proof}
Since we have already excluded hyperplanes, in particular the ones of multiplicity-two, this follows from the methods of White \cite{White_size}. Indeed,  first observe that every tangent flow to $N_t$ has to be a static or quasi-static hyperplane (with multiplicity one or two), being a one-sided minimizing stationary cone. Note also that since $N_t$ is self-shrinking, quasi-static hyperplanes (of any multiplicities) are excluded by the clearing out lemma.  Now, letting $K_t$ be the domain enclosed by $N_t$, the above implies that a point $x\in N_t$ is regular with multiplicity-one if and only if $x\in \mathrm{Cl}(\mathrm{Int}K_t)$. This is a closed condition, so the regular set is closed. Also, the regular set is always open by the local regularity theorem. We will next show that $0\in\mathrm{Int}(K_t)$ for $t<0$. 
To this end, note that in addition to \eqref{large_ax_HIMW} the moving plane method also yields that $\max_{x\in \Sigma^h}x_3$ is attained at a point with $x_2=x_4=0$, and that  
 $\Sigma^h\cap \{x_4=0\}\cap \{x_2>0\}\cap \{x_3>0\}$ is graphical both over the $x_2$-axis and the $x_3$-axis. Hence, if $0$ was not an interior point of $K_t$, then we would have $x_3=0$ on $K_t$. Together with the circular symmetry this would imply that $K_t \subseteq \mathrm{span}\{e_1,e_2\}$, which is a contradiction. Thus, $\mathrm{Int}(K_t)\neq \emptyset$. Since $N_t$ is connected (being a limit of graphs), we conclude that all points are regular with multiplicity-one. This proves the claim.
\end{proof}

Thanks to the claim, we can apply Huisken's classification of smooth mean-convex shrinkers \cite{Huisken_shrinker}, which gives that $N_t$ must be a generalized cylinder. In particular, we infer that
\begin{equation}
\mathrm{Ent}[M] \leq \mathrm{Ent}[S^1]<2.
\end{equation}
It is well known to experts that this implies that $M$ is $\alpha$-noncollapsed. For convenience of the reader we provide a short proof using methods from the work of White  \cite{White_nature} (alternatively, one could apply the recent local noncollapsing estimate from Brendle-Naff \cite{BN_noncollapsing}).
Suppose towards a contradiction that there is a sequence of points $x_j\in M$ whose maximal interior tangent ball is of radius $r_j \leq   j^{-1}H(x_j)^{-1}$. Consider the sequence of flows $N^j_t$ that is obtained from $M_t$ by centering at $(x_j,0)$ and parabolically rescaling by $r_j^{-1}$. Passing to a subsequential  limit \cite{Ilmanen_book,White_Currents}, we obtain an ancient, cyclic, unit-regular, integral Brakke flow $\hat{M}_t$, with $0 \in \mathrm{spt}(\hat{M}_0)$.
As the tangent flow  to $\hat{M}_t$ at $(0,0)$ is contained in a half-space, it must be a hyperplane, with multiplicity-one, by the entropy bound. Hence, $(0,0)$ is a regular point \cite{White_regularity}, and $H(0,0)=0$.
By the strong maximum principle (see  Lemma \ref{max_prin} below), this implies that $\{\hat{M}_t\}_{t\leq 0}$ is a static hyperplane. For $j$ large, this contradicts the fact that $r_j$ was maximal. This establishes interior noncollapsing. A similar argument yields exterior noncollapsing. Finally, by \cite[Theorem 1.10]{HaslhoferKleiner_meanconvex} the noncollapsing implies convexity.
\end{proof}

In the above proof we have used the following lemma:

\begin{lemma}[{White's strong maximum principle, c.f. \cite[Theorem 6]{White_nature}}]\label{max_prin}
Suppose $\{M_t\}_{t\leq 0}$ is an ancient, cyclic, unit-regular, integral Brakke flow in $\mathbb{R}^{4}$ with entropy strictly less than two and such that $H\geq 0$ at regular points. If $(0,0)$ is a regular point and $H(0,0)=0$, then $\{M_t\}_{t\leq 0}$ is a flat hyperplane. 
\end{lemma}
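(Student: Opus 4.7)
The plan is to combine the strong maximum principle for the linear parabolic equation satisfied by $H$ on the smooth part of the flow with an entropy-rigidity argument for stationary integral varifolds. The argument proceeds in three stages.

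First, I would reduce to a classical smooth setting near the regular point. Since $(0,0)$ is a regular point of a unit-regular integral Brakke flow, White's local regularity theorem ensures that the flow coincides with a smooth, multiplicity-one classical mean curvature flow in some parabolic neighborhood $\mathcal{P}$ of $(0,0)$. On this smooth piece the mean curvature satisfies the linear parabolic equation $(\partial_t-\Delta)H=|A|^2H$, with $H\geq 0$ by hypothesis and $H(0,0)=0$. The classical strong maximum principle, applied backwards in time, then forces $H\equiv 0$ on the parabolic component of the regular set containing $(0,0)$. In particular, the flow is locally \emph{static} there, and its spacetime support is locally a product $\Sigma\times(-\delta,0]$ for some smooth minimal hypersurface $\Sigma$.

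Second, I would upgrade this local staticness to a global statement. The set of regular spacetime points at which $H=0$ is open by the argument above and closed by continuity. The hypothesis $\mathrm{Ent}<2$ excludes multiplicity-two planes as possible tangent flows and, combined with White's stratification, ensures that the singular set has parabolic Hausdorff codimension at least one; in particular, it cannot disconnect the spacetime regular set. Hence $H\equiv 0$ on the whole regular set of the ancient flow. Combined with unit-regularity and cyclicity of the Brakke flow, this means that the support of $M_t$ is independent of $t$, so $M_t=V$ for every $t\leq 0$, where $V$ is a single nonzero stationary integral varifold in $\mathbb{R}^4$ with $\mathrm{Ent}[V]<2$.

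Finally, I would classify $V$. The blowdown $V_\infty:=\lim_{\lambda\to 0}\lambda V$ exists as a stationary integral cone by the monotonicity formula for stationary varifolds, and its density is bounded by $\mathrm{Ent}[V]<2$. By Allard's regularity theorem, a stationary integral cone in $\mathbb{R}^4$ of density strictly less than two must be a multiplicity-one hyperplane through the origin. The equality case of the monotonicity formula then forces $V$ to coincide with its tangent cone at infinity, so $V$ is itself a multiplicity-one flat hyperplane, and $\{M_t\}_{t\leq 0}$ is a static flat hyperplane as claimed. The hard part will be the second stage: promoting the locally established staticness to the entire ancient flow across potentially singular spacetime points. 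The entropy bound $<2$ is essential here, since it rules out higher-multiplicity tangent planes and ensures that the singular set is too thin to obstruct the propagation of $H\equiv 0$ through the strong maximum principle.
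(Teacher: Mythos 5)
Your broad outline (local strong maximum principle near the regular point, propagation of $H\equiv 0$ past a small singular set, then a rigidity argument at large scales) is in the same spirit as the paper's proof, but two of your steps contain genuine gaps. The most serious one is your final classification. It is simply not true that a stationary integral cone in $\mathbb{R}^4$ of density strictly less than two must be a multiplicity-one hyperplane, and Allard's theorem does not give this: Allard requires density ratios close to $1$, not merely below $2$. Concretely, the cone over the Clifford torus $S^1(\tfrac{1}{\sqrt{2}})\times S^1(\tfrac{1}{\sqrt{2}})\subset S^3$ is a non-flat stationary integral cone in $\mathbb{R}^4$ with vertex density $\pi/2<2$, so your third stage does not close; excluding non-flat minimal cones is exactly the delicate point, and it cannot come from the entropy bound plus Allard alone. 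The paper's endgame is different: it never reduces to a single stationary varifold $V$. It only proves that the fixed smooth minimal piece $\Sigma=M_t\cap B(0,\eps)$ stays in $\mathrm{spt}(M_t)$ for all $t\le 0$, and then concludes by identifying the tangent flow at $-\infty$ (based at the regular spacetime origin) with a flat hyperplane, whence the flow itself is a static multiplicity-one hyperplane by the rigidity case of the monotonicity formula.

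Your second stage is also not justified as written. The parabolic strong maximum principle for $(\partial_t-\Delta)H=|A|^2H$ propagates the vanishing of $H$ only backward in time, so the set $\{H=0\}$ inside the regular part is not open by this argument; forward in time nothing is forced on a local piece, since the local Cauchy problem for the flow is not determined without boundary data, and in addition a Brakke flow may lose mass abruptly, so even $H\equiv 0$ on the regular set would not immediately give that $\mathrm{spt}(M_t)$ is independent of $t$. Moreover, a singular set of parabolic codimension one could perfectly well disconnect the regular set, so your stated bound is too weak even for a connectivity argument. What the paper actually uses is the much stronger conclusion of White's stratification theorem under the hypotheses of the lemma: the singular set has parabolic Hausdorff dimension at most $2$ (codimension at least $3$ in the $5$-dimensional spacetime track). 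This allows one to join any $(x_0,t_0)$ with $x_0\in\Sigma$ and $t_0<0$ to $(0,0)$ by a \emph{time-like} curve lying entirely in the regular part, and the strong maximum principle applied along such curves is precisely the backward-in-time propagation that is available; it yields $\Sigma\subseteq\mathrm{spt}(M_t)$ for all $t\le 0$ without ever claiming global staticness, openness of $\{H=0\}$, or any forward-in-time control. If you want to salvage your route, you must replace the open-closed argument by this backward propagation along time-like curves and replace the Allard step by an argument that genuinely excludes non-flat minimal cones.
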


\begin{proof}
By the smooth strong maximum principle, there is an $\eps>0$ such that  $M_t\cap B(0,\eps)$ is a smooth minimal hypersurface
$\Sigma$ for $t\in(-\eps^2,0]$. Furthermore, the assumptions of the lemma and \cite[Theorem 9]{White_stratification} imply that the singular set of $\{M_t\}_{t\leq 0}$ has parabolic Hausdorff dimension at most $2$. We claim that 
\begin{equation}\label{in_sup}
\Sigma \subseteq \mathrm{spt}(M_t),\;\;\textrm{for all}\; t\in (-\infty,0].
\end{equation}
Indeed, taking any $x_0\in \Sigma$ and $t_0<0$, the smallness of the singular set implies that $(x_0,t_0)$ can be connected to $(0,0)$ by a time-like space-time curve $\gamma$ that stays in the regular part of the flow. Hence, by the smooth strong maximum principle we obtain $H=0$ along $\gamma$. This proves \eqref{in_sup}.
 It follows that the tangent flow to $\{M_t\}_{t\leq 0}$ at $-\infty$ must be a flat hyperplane. Hence, $\{M_t\}_{t\leq 0}$ itself is a flat hyperplane.
\end{proof}

\bigskip

\subsection{Monotonicity of the tip curvature map}
Recall that by definition
\begin{equation}
F^{\xi}:[0,1/3]\rightarrow  [0,1/3], \quad a\mapsto k,
\end{equation}
maps $a$ to the smallest principal curvature $k$ of the tip $0\in M^{a,\xi}=\textrm{graph}(u_{a,\xi}-\xi)$. Recall also that since $F^{\xi}$ is continuous and fixes the endpoints, it must be surjective. The goal of this subsection is to prove:

\begin{theorem}[monotonicity]\label{rado_thm}
$F^{\xi}$ is strictly monotone.
\end{theorem}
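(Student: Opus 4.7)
The plan is to prove strict monotonicity of $F^{\xi}$ by a Rado-type argument, reducing via the circular symmetry of the HIMW translators to a $2$-dimensional nodal-set count.

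First, I would exploit the $\mathrm{SO}(2)$-symmetry of each $M^{a,\xi}$ in the $x_3x_4$-plane to write $u_{a,\xi} = u_{a,\xi}(x_2,r)$ with $r = \sqrt{x_3^2+x_4^2}$, so that $u_{a,\xi}$ solves a (mildly degenerate at $r=0$) quasilinear elliptic PDE on the half-ellipse $\Omega^+_{a}:=\Omega_{a,R(\xi,a)}\cap\{r\geq 0\}$, with zero Dirichlet data on $\partial\Omega_{a,R(\xi,a)}\cap\{r>0\}$ and a smooth Neumann condition across the axis $\{r=0\}$. The moving-plane method used in \cite{HIMW} to construct $u_{a,\xi}$ additionally yields that $u_{a,\xi}$ is even in $x_2$. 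Now suppose, toward a contradiction, that $F^{\xi}(a_1)=F^{\xi}(a_2)=k$ for some $a_1<a_2$. Since the tip lies at the minimum of $u_{a_i,\xi}$, the translator equation at the origin reduces to the algebraic relation $\lambda_1+2\lambda_2=1$, which together with the $\mathrm{SO}(2)$-symmetry and $F^{\xi}(a_i)=k$ forces $\nabla u_{a_i,\xi}(0)=0$ and $D^{2}u_{a_i,\xi}(0)=\mathrm{diag}(k,(1-k)/2)$ in $(x_2,r)$-coordinates. Hence the difference $w:=u_{a_1,\xi}-u_{a_2,\xi}$ solves a linear uniformly elliptic PDE on $\Omega^+_{a_1}\cap\Omega^+_{a_2}$ with $w$, $\nabla w$ and $D^{2}w$ all vanishing at the origin, i.e.\ $w$ has order of vanishing at least $3$ at the tip.

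Next I would rule out $w\equiv 0$ near the origin: the translator PDE has real-analytic coefficients, so unique continuation would force $u_{a_1,\xi}\equiv u_{a_2,\xi}$ on the whole overlap, which is impossible because at any $p\in\partial\Omega_{a_1,R_1}\cap\Omega^+_{a_2}$ one has $u_{a_1,\xi}(p)=0$ while $u_{a_2,\xi}(p)<0$. The Hartman--Wintner / Bers asymptotic structure theorem for $2$-dimensional linear elliptic equations then furnishes an expansion $w = p_m + o(|\cdot|^m)$ near the origin, where $p_m$ is a nonzero homogeneous polynomial of some finite degree $m\geq 3$. Since $w$ is even in $x_2$ and $\mathrm{SO}(2)$-invariant, and since the linearized operator at a critical point of a translator graph is (up to a positive constant) the Laplacian, $p_m$ is itself even in $x_2$, $\mathrm{SO}(2)$-invariant, and harmonic. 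A short calculation in the basis $\{x_2^4,\, x_2^{2}r^{2},\, r^4\}$ shows that the lowest admissible degree is $m=4$ and that, up to scaling, $p_4 = x_2^{4}-3x_2^{2}r^{2}+\tfrac{3}{8}r^{4}$, whose nodal set in $\{r\geq 0\}$ consists of exactly $4$ distinct rays from the origin, dividing a neighborhood into $6$ sectors of alternating sign.

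Finally I would close the argument by a Rado-type nodal count. Each of the $4$ nodal rays of $p_4$ extends to a smooth nodal arc of $w$ inside $\Omega^+_{a_1}\cap\Omega^+_{a_2}$; the strong maximum principle rules out any nodal domain of $w$ whose topological boundary avoids the overlap boundary, so every such arc must either terminate on the boundary of the overlap or accumulate at a further interior critical zero of $w$, to which the same asymptotic analysis applies. But the overlap boundary splits into the arc $\partial\Omega_{a_1,R_1}\cap\Omega^+_{a_2}$, where $w>0$ since $u_{a_1,\xi}=0>u_{a_2,\xi}$, the arc $\partial\Omega_{a_2,R_2}\cap\Omega^+_{a_1}$, where $w<0$, and (possibly) segments of the axis $\{r=0\}$; these pieces meet at the at most two transverse intersection points of the two boundary ellipses. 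Hence $w$ has at most two sign transitions along the overlap boundary, so at most $2$ interior nodal arcs of $w$ can exit. Treating the nodal set of $w$ as an embedded planar graph and iterating the Hartman--Wintner analysis at each interior critical zero, an Euler-characteristic bookkeeping in the spirit of Rado \cite{Rado} and Gulliver \cite{Gulliver} (cf.\ also \cite{HIMW}) then contradicts the presence of $4$ nodal arcs at the origin, forcing $F^{\xi}(a_1)\neq F^{\xi}(a_2)$ and, by continuity of $F^{\xi}$, strict monotonicity. The main technical obstacle is to carry out this combinatorial count precisely in the presence of the axis degeneracy and of the a priori possibility that the two boundary ellipses are tangent rather than transverse; each of these requires a separate local analysis.
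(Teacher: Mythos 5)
Your overall strategy is the same Rado-type argument the paper uses: equal tip curvature plus the $\mathrm{SO}(2)$-symmetry and the relation $\lambda_1+2\lambda_2=1$ force the difference $w=u_{a_1,\xi}-u_{a_2,\xi}$ to vanish to order at least $3$ at the tip, and one then derives a contradiction from the structure of the nodal set of $w$ on the lens-shaped overlap. However, there are two genuine gaps in your execution. First, your identification of the leading homogeneous polynomial at the tip is justified with the wrong operator. You reduce to the half-plane variables $(x_2,r)$ and invoke the Hartman--Wintner/Bers expansion for two-dimensional uniformly elliptic equations, asserting that the leading polynomial is harmonic for the flat Laplacian; but the tip lies on the symmetry axis, where the reduced equation carries the singular drift $\tfrac{1}{r}\partial_r$, which is of the same homogeneity as the second-order terms, so the 2d structure theorem does not apply there and the leading polynomial is not 2d-harmonic. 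Indeed the polynomial you write down, $x_2^4-3x_2^2r^2+\tfrac38 r^4$, has 2d-Laplacian $6x_2^2-\tfrac32 r^2\neq 0$; it is in fact the zonal harmonic $\rho^4P_4(\cos\theta)$ for the axially symmetric 3d Laplacian, i.e.\ exactly what the paper obtains by doing the expansion in $\mathbb{R}^3$ (finite vanishing order via Almgren's frequency, harmonicity of the Taylor polynomial directly from the translator equation, zonality $r^dP_d(\cos\theta)$ from $\mathrm{SO}(2)$-invariance). So your conclusion is the right one, but the stated justification would fail as written and needs to be rerun in three dimensions; also note the argument should cover all (even) degrees $d\ge 3$, not only $d=4$, and your sector count (``6 sectors'' from 4 rays in a half-disk) does not match either the half-plane or the full-slice picture.

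Second, and more seriously, the concluding nodal count is precisely where the work lies, and your version of it does not close. In the half-plane model the axis segment of $\partial(\hat\Omega\cap\{r\ge0\})$ is an artificial boundary on which $w$ has no sign (the axis is interior to the true domain, and nodal arcs of $w$ can reach it and $w$ can change sign along it), so the assertion ``at most two sign transitions along the overlap boundary, hence at most $2$ interior nodal arcs can exit'' is unjustified; the subsequent ``Euler-characteristic bookkeeping'', together with the axis degeneracy and the possible tangency of the two ellipses, is exactly what you defer, whereas these must be handled for the proof to exist. The paper avoids all of this: it slices with the full plane $\{x_4=0\}$, so the slice boundary consists only of four elliptical arcs on which $w$ has strict alternating signs by the maximum principle, meeting at the four intersection points of the two ellipses (nesting/tangency is excluded because both solutions have the same depth $\xi$ at the center, by the comparison principle); then at most four complementary components of the nodal set can meet the boundary while the tip contributes $2d\ge 6$ alternating sectors, and a pigeonhole plus Jordan-curve argument produces a nodal domain not meeting the boundary, which after rotating back under $\mathrm{SO}(2)$ contradicts the maximum principle for $w$ (no zeroth-order term). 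You should either adopt this component-counting argument or supply the detailed graph/Euler-characteristic analysis, including the behavior of nodal arcs at interior axis zeros, which your sketch currently omits.
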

\begin{proof}
If not, there exist $a_1\neq a_2$ such that $M^{a_1,\xi}$ and $M^{a_2,\xi}$ agree at the origin to more than second order.
Consider the difference function
\begin{equation}
w:=u_{a_1,\xi}-u_{a_2,\xi},
\end{equation}
defined over the intersection of ellipsoidal domains
\begin{equation}
\Omega:=\Omega_{a_1,R(a_1,\xi)}\cap \Omega_{a_2,R(a_2,\xi)}.
\end{equation}
We will analyze the nodal set
\begin{equation}
Z:=\{w=0\}.
\end{equation}
To this end, for any $p\in Z$ denoting by $d=d(p)$ be the leading order of $w$ around $p$,  we write
\begin{equation}
w=w_p+ E_p,
\end{equation}
where $w_p$ is the degree $d$ Taylor polynomial and the error satisfies
\begin{equation}\label{error_taylor}
E_p=O(|x-p|^{d+1}),\quad \nabla E_p=O(|x-p|^{d}),\quad \nabla^2 E_p=O(|x-p|^{d-1}).
\end{equation}
Here, $d$ is finite by Almgren's frequency function argument (see for instance \cite[Theorem 6.1]{CM_book}). Now, observe that
\begin{equation}\label{to_show_harm}
\left(\delta_{ij}-\frac{\nabla_i u_{a_1,\xi}(p)\nabla_j u_{a_1,\xi}(p)}{1+|\nabla u_{a_1,\xi}(p)|^2}\right) \nabla_i\nabla_j w_p =0.
\end{equation}
Indeed, using the translator equation \eqref{graph_transl} and the product rule for differences we see that
\begin{align}
\Delta w =&\frac{\nabla_i u_{a_1,\xi}\nabla_j u_{a_1,\xi}\nabla_i\nabla_j u_{a_1,\xi}}{1+|\nabla u_{a_1,\xi}|^2}-\frac{\nabla_i u_{a_2,\xi}\nabla_j u_{a_2,\xi}\nabla_i\nabla_j u_{a_2,\xi}}{1+|\nabla u_{a_2,\xi}|^2}\nonumber\\
=&\frac{\nabla_i u_{a_1,\xi} \nabla_j u_{a_1,\xi}\nabla_i\nabla_j w}{1+|\nabla u_{a_1,\xi}|^2}+\frac{\nabla_i u_{a_1,\xi} \nabla_j w\nabla_i\nabla_j u_{a_2,\xi}}{1+|\nabla u_{a_1,\xi}|^2}
+\frac{\nabla_i w \nabla_j u_{a_2,\xi}\nabla_i\nabla_j u_{a_2,\xi}}{1+|\nabla u_{a_1,\xi}|^2}\nonumber\\
&-\frac{\nabla_k(u_{a_1,\xi}+u_{a_2,\xi})\nabla_k w}{(1+|\nabla u_{a_1,\xi}|^2)(1+|\nabla u_{a_2,\xi}|^2)} \nabla_i u_{a_2,\xi} \nabla_j u_{a_2,\xi}\nabla_i\nabla_j u_{a_2,\xi}.
\end{align}
Since $\nabla w = O(|x-p|^{d-1})$ this yields
\begin{equation}
\left(\delta_{ij}-\frac{\nabla_i u_{a_1,\xi}\nabla_j u_{a_1,\xi}}{1+|\nabla u_{a_1,\xi}|^2}\right) \nabla_i\nabla_j w = O(|x-p|^{d-1}).
\end{equation}
Moreover, thanks to \eqref{error_taylor} replacing $w$ by $w_p$ only introduces an error of size $O(|x-p|^{d-1})$, and likewise freezing the coefficients only introduces an error of size $O(|x-p|^{d-1})$ as well. We thus obtain
\begin{equation}
\left(\delta_{ij}-\frac{\nabla_i u_{a_1,\xi}(p)\nabla_j u_{a_1,\xi}(p)}{1+|\nabla u_{a_1,\xi}(p)|^2}\right) \nabla_i\nabla_j w_p = O(|x-p|^{d-1}),
\end{equation}
which, since $\nabla_i\nabla_j w_p$ has degree at most $d-2$, implies \eqref{to_show_harm}.\\

Now, by the circular symmetry it suffices to analyze the set
\begin{equation}
 \hat{Z}:=Z\cap\{x_4=0\}.
 \end{equation}

\begin{claim}\label{origin_inersect_claim}
There exists a neighborhood of $0$ where $\hat{Z}$ consists of $d=d(0)$ smooth curves intersecting transversally at $0$. Moreover, crossing any of the $2d$ rays,  $w$ changes  sign.
\end{claim}

\begin{proof}[Proof of the claim]
Since $0$ lies on the axis of circular symmetry, $w_0$ is a spherical harmonic that is invariant under rotations in the $x_3x_4$-plane. Thus, in suitable spherical coordinates we have
\begin{equation}
w_0=c_0r^dP_d(\cos\vartheta),
\end{equation}
where $P_d$ is the $d$-th Legendre polynomial, and $c_0\neq 0$ is a constant. As $P_d$ has $d$ distinct roots in $(-1,1)$, we infer that that near $p=0$ the set $\{w_0=0\}\cap \{x_4=0\}$ consists of $d$ curves intersecting transversally, and that $w_0$ changes sign whenever one crosses any of the 2d rays. The corresponding behavior of $\{w=0\}\cap \{x_4=0\}$ now follows from Lemma \ref{lemma_below} below.
\end{proof}

Next, setting $\hat{\Omega}:=\Omega\cap \{x_4=0\}$ we have:

\begin{claim}
There exists a connected component $\hat{D}$ of $\mathrm{Cl}(\hat{\Omega})\setminus\mathrm{Cl}(\hat{Z})$ that does not meet $\partial \hat{\Omega}$.
\end{claim}

\begin{proof} Writing $\hat{\Omega}_i:=\Omega_{a_i,R(a_i,\xi)}$, observe that the ellipses $\partial\hat{\Omega}_1$ and $\partial\hat{\Omega}_2$ intersect at 4 points.
Hence,  $\partial\hat{\Omega}$ consists of $4$ arcs meeting these 4 intersection points $p_1,\ldots,p_4$. Note that $w=0$ on those four intersection points, but $w\neq 0$ anywhere else on $\partial \hat{\Omega}$ by the maximum principle. Hence, $\mathrm{Cl}(\hat{Z})\cap\partial \hat{\Omega}=\{p_1,\ldots,p_4\}$, and consequently there are at most 4 connected components of $\mathrm{Cl}(\hat{\Omega})\setminus \mathrm{Cl}(\hat{Z})$ that meet $\partial \hat{\Omega}$.

On the other hand, 
 by Claim \ref{origin_inersect_claim} around $0$ the set $\hat\Omega\setminus\hat{Z}$ looks like $2 d$ sectors. Let $q_1^{+},\ldots q_d^{+}, q_1^{-},\ldots q_d^{-}$ be points in those distinct sectors, where the sign is according to the sign of $w$.  Note that $d=d(0)\geq 3$, since $M^{a_1,\xi}$ and $M^{a_2,\xi}$ agree at the origin to more than second order. 
 
Suppose towards a contradiction that all connected components of $\mathrm{Cl}(\hat{\Omega})\setminus\mathrm{Cl}(\hat{Z})$ meet $\partial \hat{\Omega}$. Then, since $2d >4$, by the pigeonhole principle two points of the set $\{q_{i}^{\pm}\}$ must be in the same connected component $A$, and these points moreover must be of the same sign, as otherwise $A=(A\cap \{w>0\})\sqcup (A\cap \{w<0\})$. Since open connected sets in $\mathbb{R}^2$ are path connected, we can assume without loss of generality that there is a continuous path $\gamma$ from $q_1^{-}$ to $q_2^{-}$ in $\mathrm{Cl}(\hat{\Omega})\setminus \mathrm{Cl}(\hat{Z})$. We can further assume that $\gamma$ is injective. Finally, let us complete $\gamma$ to a simple closed curve $\tilde{\gamma}$ in  $\mathrm{Cl}(\hat{\Omega})\setminus\mathrm{Cl}(\hat{Z})\cup \{0\}$ by connecting $q_{1}^{-}$ and $q_{2}^{-}$ to $0$ in the small neighborhood of $0$.  Now, by the Jordan curve theorem, $\tilde{\gamma}$ encloses a bounded domain $B$. Letting $q_{1}^+$ and $q_2^{+}$ be the points in the two sectors neighboring $q_1^-$,  one of them, without loss of generality $q_1^{+}$, is necessarily in $B$. But then the connected component $\hat{D}$ of $q_1^{+}$ in $\mathrm{Cl}(\hat{\Omega})\setminus \mathrm{Cl}(\hat{Z})$ does not intersect the boundary: If it did, a curve from $q_1^{+}$ to the boundary would have had to intersect $\tilde{\gamma}$, which is impossible by intermediate value theorem. This proves the claim.
\end{proof}

Finally, considering the orbit of the enclosed region $\hat{D}$ under the $\mathrm{SO}(2)$-symmetry, this implies that there is a domain $D \subseteq \Omega$ such that $w=0$ on $\partial D$ and $w>0$ or $w<0$ in $D$. This contradicts the maximum principle, and thus concludes the proof of the theorem.
\end{proof}

In the above proof we used the following lemma:

\begin{lemma}\label{lemma_below}
Let $\gamma_0^k(r)=(r\cos\theta_0^k,r\sin\theta_0^k)$ be a zero ray of $w_0$ around $0$. Then there exists a  corresponding zero curve  $\gamma^k(r)=(r\cos\theta^k(r) ,r\sin\theta^k(r))$ of $w$, such that $\lim_{r\rightarrow 0} \theta^k(r)=\theta^k_0$.  Moreover, there exists some $r_0>0$ such that all the zeros of $w$ in $B(0,r_0)$ lie on such a curve. 
\end{lemma}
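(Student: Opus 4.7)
I will work in the plane $\{x_4=0\}$ with polar coordinates $(r,\theta)$ around the origin, in which the harmonic polynomial $w_d$ takes the form $r^d P_d(\cos\theta)$ as already recorded in the proof of Claim \ref{origin_inersect_claim}. Writing $\hat w(r,\theta):=w(r\cos\theta,r\sin\theta,0)$, the Taylor expansion of $w$ at $0$ gives
\[ \hat w(r,\theta)=r^d P_d(\cos\theta)+O(r^{d+1}), \]
and the analogous expansion holds for $\partial_\theta \hat w$. Since $w=u_{a_1,\xi}-u_{a_2,\xi}$ is smooth near the interior point $0$ by standard regularity for the quasilinear translator equation, the rescaled function
\[ \tilde w(r,\theta):=r^{-d}\hat w(r,\theta) \]
extends smoothly across $r=0$ with $\tilde w(0,\theta)=P_d(\cos\theta)$, and the nodal set of $\hat w$ in a punctured neighborhood of $0$ coincides with $\{\tilde w=0\}$.

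The first assertion then follows from the implicit function theorem applied to $\tilde w$ near $(0,\theta_d^k)$. Indeed, each zero ray $\gamma_d^k$ corresponds to an angle $\theta_d^k$ with $P_d(\cos\theta_d^k)=0$, and since the $d$ Legendre zeros are simple and lie in $(-1,1)$ we obtain
\[ \partial_\theta\tilde w(0,\theta_d^k)=-\sin(\theta_d^k)\,P_d'(\cos\theta_d^k)\neq 0. \]
IFT therefore yields a unique smooth function $\theta^k(r)$ on some interval $[0,r_0^k)$ with $\theta^k(0)=\theta_d^k$ and $\tilde w(r,\theta^k(r))=0$, and the curve $\gamma^k(r)=(r\cos\theta^k(r),r\sin\theta^k(r))$ is the desired nodal curve with the prescribed asymptotic direction.

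For the second assertion, I take $r_0>0$ smaller than all of the $r_0^k$ and small enough that the IFT uniqueness holds inside a uniform angular tube $\{|\theta-\theta_d^k|<\eta\}$ around each of the $2d$ rays, using the fact that $P_d(\cos\theta)$ is bounded away from $0$ off these tubes. If some zero $p_n=(r_n\cos\theta_n,r_n\sin\theta_n)$ of $w$ in $B(0,r_0)$ did not lie on any $\gamma^k$, then a subsequential limit $\theta_n\to\theta_\infty$ combined with continuity of $\tilde w$ would force $P_d(\cos\theta_\infty)=0$, hence $\theta_\infty=\theta_d^k$ for some $k$, and then the IFT uniqueness in the tube around $\theta_d^k$ would force $p_n\in\gamma^k$ for large $n$, a contradiction.

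The only genuine technical point is the smooth extension of $\tilde w$ across $r=0$. I would verify this by substituting the Taylor series $w(x_2,x_3,0)=\sum_{|\alpha|\geq d}c_\alpha x_2^{\alpha_2}x_3^{\alpha_3}$ into polar coordinates and factoring $r^d$; since $w$ vanishes to order at least $d$ at $0$, every surviving term carries a nonnegative power of $r$ times a trigonometric polynomial in $\theta$, so the resulting series is smooth in $(r,\theta)$ for small $r\geq 0$. Once this is in hand, the remainder of the argument is a textbook application of IFT together with a continuity/compactness step.
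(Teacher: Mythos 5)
Your proof is correct and takes essentially the same route as the paper: the paper establishes angular transversality along each ray ($|w|\leq Cr^{d+1}$ on the ray, $|\langle \nabla w, T\rangle| \geq c r^{d-1}$, $|\nabla^2 w|\leq Cr^{d-2}$) and applies a quantitative implicit function theorem circle by circle, while your polar blow-up $\tilde w = r^{-d}\hat w$ packages the same information so that the standard IFT applies at $(0,\theta_d^k)$ using the simplicity of the Legendre roots, with the off-tube domination $|w_d|\geq \delta r^d$ disposing of all other zeros in both arguments. The one point to phrase carefully is the smooth (or at least continuous-with-continuous-$\partial_\theta$) extension of $\tilde w$ across $r=0$: the Taylor-series substitution as literally stated needs analyticity (which $w$ does enjoy here), and for merely smooth $w$ one should instead invoke Hadamard's lemma for the remainder $E_d$, but this is exactly the routine verification you flag, so there is no genuine gap.
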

\begin{proof}
Note that there exist $c>0$ and $C<\infty$ such that  $|w(\gamma_0^k(r))|\leq Cr^{d+1}$, and $ |\langle \nabla w(\gamma_0^k(r)),T \rangle|\geq cr^{d-1}$, and $|\nabla^2 w|\leq Cr^{d-2}$, where $T$ denotes the unit tangent vector to $S(0,r)$. Thus,  there exists $\eps>0$ such that $|\langle \nabla w,T \rangle|\geq \frac{c}{2}r^{d-1}$ on $S(0,r)\cap B(\gamma_0^k(r),\eps r)$. It follows that the equation $w=0$ has a unique solution in $S(0,r)\cap B(\gamma_0^k(r),\eps r)$, and this solution $x$ must in fact lie in $S(0,r)\cap B(\gamma_0^k(r),Dr^2)$, where $D<\infty$. The quantitative version of the implicit function theorem gives that the function $r\rightarrow x(r)$ is smooth, proving the existence of such asserted zero curve $\gamma^k(r)$. Moreover, as $|\gamma^k(r)-\gamma_0^k(r)| \leq Dr^2$, the curve starts at the same angle, as asserted.    Finally, note that there exists $\delta>0$ such that when $|x|=r$ and 
\begin{equation}
x\notin \bigcup_{k=1}^{2d} \left( S(0,r)\cap B(\gamma_0^k(r),\eps r)\right),
\end{equation}
then $|w_0(x)| \geq \delta r^d$. Choosing $r_0$ small enough, this completes the proof of the lemma.
\end{proof}

\bigskip

\subsection{The spectral eccentricity of the HIMW class}\label{sec_spectral_eccent}
In this subsection, we prove that the HIMW construction realizes all spectral eccentricities. Together with spectral uniqueness this will immediately yield that the HIMW class  is homeomorphic to an interval. Furthermore, we will also show that the tip curvature function on the HIMW class is weakly monotone.

Recall that we work with the Hilbert space $\fH=L^2(\mathbb{R},e^{-y^2/4}dy)$, and that $\fp_{+}$ denotes the orthogonal projection to $\fH_+$, which is spanned by the unstable eigenfunctions $\psi_1=1$ and $\psi_2=y$. Moreover, recall from Definition \ref{gen_HIMW_def} (HIMW class) that we work with the class of all HIMW translators,
\begin{equation}
\mathcal{A}=\left\{ \lim_{i\to \infty} M^{a_i,\xi_i} \, | \,  a_i\in [0,1/3] \textrm{ and } \xi_i\to -\infty \right\}. 
\end{equation}
We equip $\mathcal{A}$ with the smooth topology corresponding to smooth convergence on compact subsets.\\
Let us first suitably shift these translators so that their spectral center agrees with the one of the cylinder:

\begin{proposition}[shift map]\label{shift_to_zero_plus}
Given any $\tau_0<0$, for every $M\in \mathcal{A}$ there exists a unique $\alpha=\alpha(M,\tau_0)\in \mathbb{R}$ such that the cylindrical profile function $v_{\cC}=\varphi_{\cC}(v)v$ of the shifted translator $M+\alpha e_1$ satisfies
\begin{equation}
\fp_{+}(v_{\cC}(\tau_0)-\sqrt{2})=0,
\end{equation}
and setting $\mathcal{A}':=\{ M+\alpha(M,\tau_0)e_1 | M\in \mathcal{A} \}$ endowed with the smooth topology, the shift map
\begin{equation}
\mathcal{S}:\mathcal{A}\to \mathcal{A}',\quad M\mapsto M+\alpha(M,\tau_0)e_1
\end{equation}
is a homeomorphism. Moreover, for every $\kappa>0$ there exist $\kappa'>0$ and $\tau_\ast>-\infty$ such that if $\tau_0\leq\tau_\ast$ and $M\in\mathcal{A}$ is strongly $\kappa'$-quadratic from time $\tau_0+1$, then $\mathcal{S}(M)$ is $\kappa$-quadratic at time $\tau_0$.
\end{proposition}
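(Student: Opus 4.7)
The plan is to first reduce the spectral-center condition to a scalar root-finding problem, then use a renormalized-time re-parametrization identity to transfer strong quadraticity across the shift. I would begin by noting that every $M \in \mathcal{A}$ inherits the reflection symmetry in $x_2$ that the approximating translators $M^{a,\xi}$ enjoy by the moving plane argument, and that shifts in $e_1$ preserve this. Hence (writing tilded quantities for those of $M+\alpha e_1$) the profile $\tilde v(y,\tau_0)$ is always even in $y$ and the $\psi_2=y$ component of $\fp_+(\tilde v_\cC(\tau_0)-\sqrt{2})$ vanishes automatically for every $\alpha$, reducing the condition to the scalar equation $G(\alpha) := \langle \tilde v_\cC(\tau_0)-\sqrt{2},\,1\rangle_\fH = 0$. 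Because $(M+\alpha e_1) \cap \{x_1 = e^{-\tau_0}\} = M \cap \{x_1 = e^{-\tau_0}-\alpha\}$, the convexity of $M$ implies that $\tilde v(y,\tau_0)$ is pointwise strictly decreasing in $\alpha$, making $G$ strictly monotone. Since $G(\alpha) \to +\infty$ as $\alpha \to -\infty$ (arbitrarily large level sets) and $G(\alpha)$ becomes negative as $\alpha \uparrow e^{-\tau_0}$ (the level set degenerates to the tip), the intermediate value theorem supplies the unique $\alpha(M,\tau_0)$. Joint continuity of $(\alpha,M) \mapsto G$ in the smooth-on-compacta topology combined with strict monotonicity then yields continuity of $\mathcal{S}$; its inverse is manifestly continuous, being the $e_1$-translation that places the (unique) tip at the origin, so $\mathcal{S}$ is a homeomorphism.

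For the moreover-clause, the key identity I will exploit is
\[
\tilde{\bar M}_\tau = e^{(\tau-\tau')/2}\, \bar M_{\tau'}, \qquad \tau' := \tau - \log(1-\alpha e^\tau),
\]
which follows from $(M+\alpha e_1)_t = M_{t+\alpha}$ and realizes the $e_1$-shift as a small rescaling coupled with a small renormalized-time shift $\epsilon(\tau) := \tau'-\tau \approx \alpha e^\tau$. To bound $|\alpha|$, I plan to apply the Merle-Zaag argument from Claim \ref{claim_dom_mode}: combined with strong $\kappa'$-quadraticity from time $\tau_0+1$, it gives $\|\cP_+ \hat u(\tau_0)\|_\cH \leq C|\tau_0|^{-1-1/20}$. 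Since the leading neutral term $-(y_2^2-2)/(2\sqrt{2}|\tau_0|)$ has vanishing $\psi_1$-projection in $\fH$, converting via the estimates from the proof of Lemma \ref{prop_cut_compatibility} then yields $|G(0,M)| \leq C|\tau_0|^{-1-1/20}$. A direct Taylor expansion of $\tilde v - v$ in $\epsilon$ (using $v\approx\sqrt{2}$) gives $\partial_\alpha G|_{\alpha=0} \approx -e^{\tau_0}\sqrt{2\pi}$, from which $|\alpha| \leq C e^{-\tau_0}|\tau_0|^{-1-1/20}$, and consequently $|\epsilon(\tau)| \leq C|\tau_0|^{-1-1/20}$ uniformly for all $\tau \leq \tau_0$.

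With $\epsilon(\tau)$ this small, the identity above yields $\tilde u - u = O(\epsilon)$ in $C^4$ on any region where $u$ is controlled, so the strong-quadraticity estimate $\|u(\tau)\|_{C^4(B(0,2|\tau|^{1/10}))} \leq |\tau|^{-1/5}$ transfers to $\|\tilde u(\tau)\|_{C^4(B(0,2|\tau_0|^{1/100}))} \leq |\tau_0|^{-1/50}$ for $\tau \in [2\tau_0,\tau_0]$, verifying the graphical part of $\kappa$-quadraticity. The triangle inequality then gives
\[
\left\|\tilde v_\cC(\tau_0) - \sqrt{2} + \frac{y^2-2}{2\sqrt{2}|\tau_0|}\right\|_\fH \leq \frac{C\kappa'}{|\tau_0|} + \frac{C}{|\tau_0|^{1+1/20}} \leq \frac{\kappa}{|\tau_0|}
\]
provided $\kappa' \leq \kappa/(2C)$ and $\tau_0 \leq \tau_\ast$ is sufficiently negative, while the spectral-center condition $\fp_+(\tilde v_\cC(\tau_0)-\sqrt 2)=0$ holds by construction of $\alpha$. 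The main technical obstacle I anticipate is reconciling the potentially huge absolute shift $\alpha$ in the $e_1$-direction (of order $e^{-\tau_0}$) with the fact that $\mathcal{S}(M)$ must be $\kappa$-quadratic in renormalized coordinates; the rescaling-time identity above is exactly the tool that makes this reconciliation possible.
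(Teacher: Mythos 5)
Your treatment of the first part (reduction to the scalar equation via the $x_2$-reflection symmetry, pointwise monotonicity of $v^{\alpha}$ in $\alpha$ from convexity, the intermediate value theorem, and continuity of $\mathcal{S}$ together with continuity of $\mathcal{S}^{-1}$ via the tip height) is essentially the paper's argument. The gap is in the ``moreover'' clause, specifically in your bound on $\alpha$. You propose to apply the Merle--Zaag argument of Claim \ref{claim_dom_mode} to the \emph{uncentered} translator $M$ to conclude $\|\cP_{+}\hat u(\tau_0)\|_{\cH}\leq C|\tau_0|^{-1-1/20}$. But that claim is proved under $\kappa$-quadraticity at $\tau_0$, i.e.\ including the centering $\fp_{+}(v_{\cC}(\tau_0)-\sqrt2)=0$: the centering is exactly what, via Lemma \ref{prop_cut_compatibility}, seeds the starting inequality $U_{+}(\tau_0)<8\eps\, U_0(\tau_0)$ with $\eps\sim C_0\rho(\tau_0)^{-1}\to 0$, from which the smallness of $U_+$ is then propagated \emph{backwards}. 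For your $M$, strong $\kappa'$-quadraticity from $\tau_0+1$ only gives $\|\cP_{+}\hat u(\tau_0)\|_{\cH}\leq \kappa'/|\tau_0|$, which for fixed $\kappa'$ and $\tau_0\to-\infty$ does not verify the hypothesis $U_+<8\eps U_0$ at $\tau_0$; and since the $U_+$-mode satisfies $\tfrac{d}{d\tau}U_+\geq U_+-C_0\rho^{-1}(\cdots)$, smallness of the unstable mode cannot be propagated forward from the asymptotic neutral-mode dominance to the final time $\tau_0$. So the claimed rate $|G(0,M)|\leq C|\tau_0|^{-1-1/20}$, hence $|\alpha|\leq Ce^{-\tau_0}|\tau_0|^{-1-1/20}$, is unjustified as stated (and invoking the claim here is essentially circular, since its hypothesis presupposes the centering you are constructing).

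Fortunately the enhanced rate is unnecessary, and your argument can be repaired by following what the paper actually does: expand, with $a=\sqrt{1+\alpha e^{\tau_0}}-1$,
\begin{equation}
v^{\alpha}(y,\tau_0)-\sqrt{2}=\sqrt{2}\,a-(1+a)\frac{\left(\tfrac{y}{1+a}\right)^2-2}{\sqrt{8}\,|\tau_0-2\log(1+a)|}+O(\kappa'/|\tau_0|)
\end{equation}
in $\fH$, and observe that the orthogonality condition itself forces $|a|\leq C\kappa'/|\tau_0|$ (no Merle--Zaag input is needed). The shift then perturbs the quadratic profile by $O(\kappa'/|\tau_0|)$ in $\fH$, so the triangle inequality closes with $\kappa'\leq\kappa/C$ and $\tau_0\leq\tau_\ast$, and the graphical condition \eqref{mu_quad_rad} transfers exactly as you indicate since $|a|=o(1)$ and strong quadraticity provides the much stronger bound $|\tau|^{-1/5}$ on the much larger ball $B(0,2|\tau|^{1/10})$. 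Two smaller points: your continuity argument for $\mathcal{S}$ should include the boundedness of the shifts $\alpha(M_i,\tau_0)$ (the paper gets this from the uniform divergence of $v^{M,\alpha}$ as $\alpha\to-\infty$ on compact subsets of $\mathcal{A}\times\mathbb{R}$) before passing to subsequential limits, and the element $\mathbb{R}\times\mathrm{Bowl}_2\in\mathcal{A}$ has no unique tip point, so the inverse map should be phrased via the (continuous) minimal height of the tip set rather than ``the unique tip''.
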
 

\begin{proof}
Note that for every $M\in \mathcal{A}$, every $\alpha$ and $\tau_0$,  the renormalized profile function $v^{M,\alpha}(\tau_0)$ of $(M+\alpha e_1)\cap \{x_1=e^{-\tau_0}\}$ can be viewed as an entire function in  $y\in\mathbb{R}$, with the convention that it is equal to zero above the diameter. By reflection-symmetry of the HIMW translators, we always have
\begin{equation}\label{inner_p_y}
\langle v^{M,\alpha}_{\cC}(\tau_0), y \rangle_{\fH} =0.
\end{equation}  
Thus, we only have to analyze the inner product with the constant function $1$. To this end, note  that by convexity of $M$ and the definitions of $v^{\alpha}$ and $v^{\alpha}_{\cC}$,  for every $y\in \mathbb{R}$ the function $\alpha \mapsto v_{\cC}^{M,\alpha}(y,\tau_0)$ is monotonically decreasing. Thus, the function 
\begin{equation}
p^M(\alpha):= \langle  v_{\cC}^{M,\alpha}(\tau_0),1 \rangle_{\fH} 
\end{equation}
is monotonically decreasing. Note that the monotonicity is strict as long as $p^M$ does not vanish. Moreover, observe that $p^M(\alpha)=0$ for $\alpha> e^{-\tau_0}$ and $p^M(\alpha)\to \infty$ for $\alpha\to -\infty$. Hence, by strict monotonicity and continuity there exists a unique $\alpha=\alpha(M,\tau_0)$ such that $p^M(\alpha)=\langle 1,\sqrt{2} \rangle_{\fH}$. In other words, remembering \eqref{inner_p_y}, this is the unique $\alpha$ with
\begin{equation}\label{eq_unique_alpha}
\fp_{+}(v^{M,\alpha}_{\cC}(\tau_0)-\sqrt{2})=0.
\end{equation}
This defines the shift map $\mathcal{S}$. Since each member of $\mathcal{A}$, except $\mathrm{Bowl}_2\times \mathbb{R}$, has its unique tip point at the origin, no two elements of $\mathcal{A}$ are vertical shifts of one another. Hence, $\mathcal{S}$ is injective. To establish the continuity of $\mathcal{S}$, note first that $(M,\alpha,y)\mapsto v^{M,\alpha}(y,\tau_0)$  is continuous, and that
\begin{equation}
\lim_{\alpha\rightarrow -\infty}v^{M,\alpha}(y,\tau_0)=\infty 
\end{equation}
uniformly on compact sets  of $\mathcal{A}\times \mathbb{R}$. Therefore, if $M_i\rightarrow M$, then the sequence $\{\alpha(M_i,\tau_0)\}_{i=1}^{\infty}$ is bounded, so it converges up to a subsequence to some $\alpha\in \mathbb{R}$. But then, by continuity again, we infer that $\fp_{+}(v^{M,\alpha}(\tau_0))=\fp_{+}(\sqrt{2})$, hence $\alpha=\alpha(M,\tau_0)$ by uniqueness. As this is true for every converging subsequence, it follows that $\alpha(M_i,\tau_0)\rightarrow \alpha(M,\tau_0)$, proving the continuity of $M\mapsto \alpha(M,\tau_0)$, and thus of $\mathcal{S}$. 
Finally, for every $M'\in \mathcal{A}'$ denoting $h(M')$ the height of the tip of $M'$, we have the equation
\begin{equation}
\mathcal{S}^{-1}(M')=M'-h(M')e_1.
\end{equation}
Since the height of the tip is continuous, $\mathcal{S}^{-1}$ is continuous as well.\\
Moreover, if $M\in\mathcal{A}$ is strongly $\kappa'$-quadratic from time $\tau_0+1$, then in light of  the proof of Lemma \ref{prop_cut_compatibility}, remembering in particular \eqref{pointwise_diff}, for $\tau\leq \tau_0$ we  get
\begin{equation}\label{strong_k_imp}
\left\|v(\tau)-\sqrt{2}\Big(1-\frac{y^2-2}{4|\tau|}\Big)\right\|_{\fH} \leq C\frac{\kappa'}{|\tau|}\, .
\end{equation}
Since the profile functions of $M+\alpha e_1$ and $M$ are related by
\begin{equation}\label{transofrm_scale_1}
v^{\alpha}(y,\tau)=(1+a)v\left(\frac{y}{1+a},\tau-2\log(1+a)\right), \quad \textrm{where}\quad a= \sqrt{1+\alpha  e^{\tau}}-1,
\end{equation} 
we can expand
\begin{align}
    v^{\alpha}(y,\tau)-\sqrt{2}&=\sqrt{2}a-(1+a)\frac{\left(\frac{y}{1+a}\right)^2-2}{\sqrt{8}|\tau-2\log(1+a)|}+O(\kappa'/|\tau|)\, 
    \end{align}
in $\fH$-norm. It follows that for the unique solution of the orthogonality condition \eqref{eq_unique_alpha} we have
\begin{equation}
|a|\leq C\kappa'/|\tau|\, .
\end{equation}
Thus, choosing $\kappa'$ sufficiently small and $\tau_0\leq\tau_\ast$ sufficiently negative, we conclude that $\mathcal{S}(M)$ is $\kappa$-quadratic at time $\tau_0$. This finishes the proof of the proposition.
\end{proof}

We also need the following version for translators-with-boundary:

\begin{proposition}[shift map with boundary]\label{shift_to_zero_plus_boundary}
Given any $\tau_0<0$, there exists a constant $H\in (e^{-\tau_0},\infty)$  with the following significance. For every $a\in [0,\frac{1}{3}]$ and for every $\xi\leq -H$ there exists a unique $\alpha(a,\xi)\in [-H/2,H/2]$ such that the cylindrical profile function $v_{\mathcal{C}}$ of $M^{a,\xi}+\alpha e_1$ satisfies  
\begin{equation}
\fp_{+}(v_{\cC}(\tau_0)-\sqrt{2})=0.
\end{equation}
Moreover, for each fixed $\xi$, the function $a\mapsto \alpha(a,\xi)$ is continuous.  
\end{proposition}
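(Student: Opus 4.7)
The plan is to mirror the proof of Proposition \ref{shift_to_zero_plus}, with the only nontrivial new ingredient being a compactness-and-limit argument that extracts the uniform height $H$ from the entire case. First, every translator-with-boundary $M^{a,\xi}$ is reflection symmetric in $x_2$ (by the moving plane method applied to the ellipsoidal boundary), hence $\langle v^{a,\xi,\alpha}_{\mathcal{C}}(\tau_0), y \rangle_{\fH} = 0$ automatically for every shift $\alpha$. Thus the centering condition $\fp_{+}(v_{\mathcal{C}}(\tau_0)-\sqrt{2}) = 0$ reduces to the single scalar equation
\begin{equation*}
p^{a,\xi}(\alpha) := \langle v^{a,\xi,\alpha}_{\mathcal{C}}(\tau_0), 1 \rangle_{\fH} = \langle \sqrt{2}, 1 \rangle_{\fH} =: \Lambda_\ast.
\end{equation*}

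Second, I would check that $p^{a,\xi}$ is jointly continuous in $(a,\alpha)$ (from smooth dependence of $u_{a,\xi}$ on $a$ and smoothness of $\varphi_{\mathcal{C}}$), identically zero off the interval $I(\xi) := (e^{-\tau_0} + \xi,\, e^{-\tau_0})$, and strictly monotone decreasing on $I(\xi)$. The monotonicity follows because $0$ is the unique critical point of $u_{a,\xi}$ in $\Omega_{a,R(a,\xi)}$ (the moving plane method shows $u_{a,\xi}$ is strictly monotone in each coordinate away from the symmetry planes), so the sublevel sets $\{u_{a,\xi} \leq c\}$ are strictly nested in $c$; shifting $M^{a,\xi}$ upward by $\alpha$ realizes the level set at $x_1 = e^{-\tau_0}$ of the shifted surface as the $\{u_{a,\xi} = e^{-\tau_0} + \xi - \alpha\}$ level set of the unshifted surface, which shrinks strictly pointwise as $\alpha$ increases. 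Whenever $p^{a,\xi}$ attains values both above and below $\Lambda_\ast$ on $I(\xi)$, the intermediate value theorem combined with strict monotonicity yields a unique solution $\alpha(a,\xi) \in I(\xi)$.

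Third, I expect the main obstacle to be extracting the uniform height $H$ so that $\alpha(a,\xi)$ exists and lies in $[-H/2, H/2]$ for all $a \in [0,\tfrac{1}{3}]$ and $\xi \leq -H$. I would prove this by compactness and contradiction. If no such $H$ works, there exist sequences $a_i \in [0,\tfrac{1}{3}]$ and $\xi_i \to -\infty$ for which either the solution $\alpha(a_i,\xi_i)$ fails to exist, or it satisfies $|\alpha(a_i,\xi_i)| \to \infty$. Since each $M^{a_i,\xi_i}$ has its tip at the origin, is trapped between bowl barriers from above, and has a uniform lower bound from the upwards translator equation, a subsequence converges smoothly on compact subsets to an entire limit $M_\infty \in \mathcal{A}$ (with $a_i \to a_\infty \in [0,\tfrac{1}{3}]$ along a further subsequence). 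By Proposition \ref{shift_to_zero_plus}, $M_\infty$ admits a unique finite spectral-center shift $\alpha_\infty = \alpha(M_\infty,\tau_0)$. Since $\varphi_{\mathcal{C}}$ is an intrinsic smooth function of $v$ and the cylindrical region $\{v \geq \tfrac{5}{8}\theta\}$ converges in Hausdorff sense, the smooth compact-subset convergence implies $p^{a_i,\xi_i}(\alpha) \to p^{M_\infty}(\alpha)$ uniformly on any compact interval of $\alpha$. Strict monotonicity then forces the solutions $\alpha(a_i,\xi_i)$ to converge to $\alpha_\infty$, contradicting the assumption $|\alpha(a_i,\xi_i)| \to \infty$.

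Finally, continuity of $a \mapsto \alpha(a,\xi)$ for fixed $\xi \leq -H$ follows from the strict monotonicity of $p^{a,\xi}$ in $\alpha$ and joint continuity in $(a,\alpha)$: if $a_k \to a$, the a priori bound $|\alpha(a_k,\xi)| \leq H/2$ lets one extract a subsequential limit $\tilde\alpha$, and $\Lambda_\ast = p^{a_k,\xi}(\alpha(a_k,\xi)) \to p^{a,\xi}(\tilde\alpha)$ forces $\tilde\alpha = \alpha(a,\xi)$ by uniqueness, so the whole sequence converges.
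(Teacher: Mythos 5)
Your proposal is correct in outline and follows the same skeleton as the paper's proof: pass to a subsequential limit $M_\infty\in\mathcal{A}$ of the $M^{a_i,\xi_i}$, use Proposition \ref{shift_to_zero_plus} for the limit, and combine locally uniform convergence of the centering functional with the intermediate value theorem to get existence of the shift for $\xi$ sufficiently negative, with continuity in $a$ coming from uniqueness plus the a priori bound. The genuine difference lies in how uniqueness is obtained. The paper deliberately does not claim monotonicity of $\alpha\mapsto p^{a,\xi}(\alpha)$ for the translators-with-boundary, because convexity of $M^{a,\xi}$ is not known (convexity is exactly what drove the monotonicity in Proposition \ref{shift_to_zero_plus}); instead it brackets with two reference shifts $\alpha_1(M),\alpha_2(M)$ (centering the convex limits at $1$ and $2$), bounds them by compactness of $\mathcal{A}$, gets existence by the IVT between them, and then transfers uniqueness for $\xi\ll 0$ from the fact that $p^M$ has nonvanishing derivative wherever it is nonzero for the convex $M\in\mathcal{A}$, via smooth convergence. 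You instead prove strict monotonicity of $p^{a,\xi}$ directly from the moving-plane structure of $u_{a,\xi}$ (coordinate monotonicity $\partial_{x_2}u\geq 0$ for $x_2\geq 0$ and $\partial_r u\geq 0$, as in \cite[Section 9]{HIMW} and as the paper itself uses in the proof of Theorem \ref{thm_non_collapsed}), which makes the slices radially graphical and pointwise ordered in $\alpha$, hence $p^{a,\xi}$ strictly decreasing where positive. This buys uniqueness for every $\xi$ (not just $\xi\leq -H$) and a more self-contained argument per $(a,\xi)$, at the price of invoking the Gidas--Ni--Nirenberg-type monotonicity for the quasilinear Dirichlet problem; the paper's route needs nothing about $M^{a,\xi}$ beyond its symmetries. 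One small repair: your contradiction as written only refutes the horn $|\alpha(a_i,\xi_i)|\to\infty$; in the nonexistence horn you should say explicitly that uniform convergence $p^{a_i,\xi_i}\to p^{M_\infty}$ on a compact $\alpha$-interval around $\alpha_\infty$, together with the strict crossing of the target value by $p^{M_\infty}$ there, produces (via the IVT) a solution near $\alpha_\infty$ for large $i$, which is the desired contradiction — this is precisely the bracketing step the paper performs with $\alpha_1(M)$ and $\alpha_2(M)$.
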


\begin{proof}
The reasoning is similar as above, but we need to be a tad more careful as we do not know that the $M^{a,\xi}$ are convex. To begin with, let us observe that the same argument as above with $\sqrt{2}$ replaced by $1$ and $2$, respectively, yields the existence of two continuous maps $\alpha_{1}:\mathcal{A}\rightarrow \mathbb{R}$ and $\alpha_{2}:\mathcal{A}\rightarrow \mathbb{R}$ such that the cylindrical profile functions of the shifted translators $M+\alpha_1(M) e_1$ and $M+\alpha_2(M) e_1$ satisfy
\begin{equation}
\fp_{+}(v^1_{\cC}(\tau_0)-1)=0\quad \textrm{and} \quad\fp_{+}(v^2_{\cC}(\tau_0)-2)=0.
\end{equation}
Since $\mathcal{A}$ is compact, we get that $|\alpha_1|,|\alpha_2| \leq H_0/2$ for some constant $H_0<\infty$.\\
Now, suppose there was a sequence $(a_i,\xi_i)$ with $\xi_i\rightarrow -\infty$ such that the asserted $\alpha(a_i,\xi_i)$ does not exist. After passing to a subsequence the translators-with-boundary $M^{a_i,\xi_i}$  converge to some $M\in \mathcal{A}$, and hence for $i$ large enough the  cylindrical profile functions of $M^{a_i,\xi_i}+\alpha_1(M)$ and $M^{a_i,\xi_i}+\alpha_2(M,\tau_0)$ satisfy
\begin{equation}
\fp_{+}(v^{1,i}_{\cC}(\tau_0)-\sqrt{2}) < 0 \quad \textrm{and} \quad \fp_{+}(v^{2,i}_{\cC}(\tau_0)-\sqrt{2}) > 0.
\end{equation}
However, then by the intermediate value theorem, we can find some $\alpha(a_i,\xi_i)$ between $\alpha_2(M)$ and $\alpha_1(M)$ such that  the  cylindrical profile function of $M^{a_i,\xi_i}+\alpha(a_i,\xi_i)e_1$ satisfies 
\begin{equation}
\fp_{+}(v^i_{\cC}(\tau_0)-\sqrt{2})=0.
\end{equation}
Next, to address uniqueness, observe that for every $M\in \mathcal{A}$ the function $p^M$ from the proof of Proposition \ref{shift_to_zero_plus} is strictly monotone with nonvanishing derivative whenever $p^M\neq 0$. Therefore, it follows again from smooth convergence and compactness that for $\xi$ large enough, the value $\alpha(a,\xi)$ is unique. 
Finally, continuity of the map $a\mapsto \alpha(a,\xi)$ follows from uniqueness and boundedness as before.
\end{proof}

Now, consider the eccentricity map
\begin{equation}\label{ecc_map_dep}
\mathcal{E}:\mathcal{A}' \rightarrow \mathbb{R}, \quad M \mapsto  \langle v^M_{\mathcal{C}}(\tau_0),2-y^2 \rangle_{\fH} .
\end{equation}
Observe that the expected value of $\mathcal{E}$ for translators satisfying the sharp asymptotics at time $\tau_0$ is
\begin{equation}
e_0:= \frac{4\sqrt{2\pi}}{|\tau_0|}\, .
\end{equation}

\begin{theorem}[existence with prescribed eccentricity]\label{all_achived_thm} 
There exist constants $\kappa>0$ and $\tau_\ast>-\infty$ with the following significance. For every $\tau_0\leq \tau_\ast$ and every $x\in \mathbb{R}$ with $|x-e_0|\leq \tfrac{\kappa}{10|\tau_0|}$ there exists a shifted HIMW translator $M\in  \mathcal{A}'$ that is $\kappa$-quadratic at time $\tau_0$ and satisfies
\begin{equation}
\mathcal{E}(M)=x.
\end{equation}
\end{theorem}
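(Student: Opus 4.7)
The plan is a continuity and intermediate value argument along the HIMW family of shifted translators-with-boundary at each fixed large tip height. Fix $\tau_0 \leq \tau_\ast$ sufficiently negative. By Corollary~\ref{cor_strong_kappa} applied to any non-degenerate element of $\mathcal{A}$ together with Proposition~\ref{shift_to_zero_plus}, one can pick a reference $M_0 \in \mathcal{A}'$ that is $\tfrac{\kappa}{100}$-quadratic at $\tau_0$; Theorem~\ref{cor_unique_asympt} then shows that $\mathcal{E}(M_0)$ lies well inside the target interval $I := [e_0 - \tfrac{\kappa}{10|\tau_0|},\, e_0 + \tfrac{\kappa}{10|\tau_0|}]$. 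By Definition~\ref{gen_HIMW_def} write $M_0 = \lim_i \mathcal{S}(M^{c_i,\xi_i})$ with $c_i \in [0, 1/3]$ and $\xi_i \to -\infty$, and set $M_i^a := \mathcal{S}(M^{a,\xi_i})$, which is continuous in $a$ by Proposition~\ref{shift_to_zero_plus_boundary}.

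For each large $i$, let $[a_i, b_i] \ni c_i$ be the maximal closed interval such that every $M_i^a$ with $a \in [a_i, b_i]$ is $\kappa$-quadratic at $\tau_0$ (this makes sense for translators-with-boundary because Definition~\ref{def_mu_qudratic} only uses data at the single time $\tau_0$, and for large $|\xi_i|$ the boundary lies far from the support of $v_\cC$) and satisfies $\mathcal{E}(M_i^a) \in I$. Continuity and strict feasibility at $c_i$ yield $0 < a_i < c_i < b_i < 1/3$. The first main claim is that $\mathcal{E}(M_i^{a_i}), \mathcal{E}(M_i^{b_i}) \in \partial I$ for $i$ large. To see this, I would exclude the possibility that the $\kappa$-quadraticity constraint is saturated first: whenever $\mathcal{E}(M_i^a)$ is comfortably in the interior of $I$, the argument of Theorem~\ref{cor_unique_asympt} (together with Theorem~\ref{thm_precise_from_one_time} and Corollary~\ref{projection_estimate}) improves the $\kappa$-quadraticity constant strictly below $\kappa$, contradicting maximality of $[a_i, b_i]$. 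This step crucially uses that $\kappa$-quadraticity is a one-time spectral condition on $\fp_+$ and $\fp_0$ of $v_\cC$ at $\tau_0$, with $\fp_+$ automatically zero on $\mathcal{A}'$ by the centering and $\fp_0$ controlled by $\mathcal{E}\in I$.

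The second main claim is that $\mathcal{E}(M_i^{a_i}) \neq \mathcal{E}(M_i^{b_i})$ for $i$ large, which I would establish via a Rado-type nodal argument in the spirit of Theorem~\ref{rado_thm}. Supposing equality, both endpoint translators-with-boundary would share the full $\fp_+$ and $\fp_0$ data of $v_\cC$ at $\tau_0$; translating this into an order-of-vanishing gain for the difference $u_{a_i,\xi_i} - u_{b_i,\xi_i}$ at the appropriate distinguished point on the axis of symmetry, the Jordan-curve argument from the proof of Theorem~\ref{rado_thm} would produce an enclosed nodal region on which the translator PDE violates the maximum principle. Combined with the first claim, $\{\mathcal{E}(M_i^{a_i}), \mathcal{E}(M_i^{b_i})\}$ coincides with the two distinct endpoints of $I$. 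Given any $x \in I$, the intermediate value theorem then produces $d_i \in [a_i, b_i]$ with $\mathcal{E}(M_i^{d_i}) = x$, and a subsequential smooth limit (which exists by compactness for noncollapsed flows combined with the automatic tip-height control coming from the centering in $\mathcal{A}'$) yields $M \in \mathcal{A}'$ with $\mathcal{E}(M) = x$, which remains $\kappa$-quadratic at $\tau_0$ by closedness of that one-time condition.

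The main obstacle is the Rado step. Theorem~\ref{rado_thm} controls agreement to high order at the tip, whereas our hypothesis provides matching of macroscopic spectral projections at the remote time $\tau_0$. Converting the latter into sufficient high-order vanishing at a single local point, so as to generate at least six nodal rays and complete the Jordan-curve contradiction, requires a careful bridge between the macroscopic spectral data at $\tau_0$ and the microscopic geometry at the tip; this bridge must be built from the sharp asymptotics of Theorem~\ref{cor_unique_asympt} and the associated concavity estimates, and is the subtlest part of the whole argument.
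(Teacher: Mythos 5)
Your overall scheme coincides with the paper's: fix a $\tfrac{\kappa}{100}$-quadratic reference $M_0\in\mathcal{A}'$, approximate it by shifted translators-with-boundary $M^{c_i,\xi_i}+\alpha(c_i,\xi_i)e_1$, take the maximal parameter interval $[a_i,b_i]$ on which $\kappa$-quadraticity and $\mathcal{E}\in I$ both hold, and prove the endpoint claim $\mathcal{E}(M_i^{a_i}),\mathcal{E}(M_i^{b_i})\in\partial I$ exactly as the paper does (saturation of the graphical-radius condition is excluded by passing to a limit and invoking Theorem \ref{thm_precise_from_one_time}, and saturation of the spectral inequality is excluded by combining the centering, Corollary \ref{projection_estimate}, and $\mathcal{E}\in I$). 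The genuine gap is in your second claim. You propose to prove $\mathcal{E}(M_i^{a_i})\neq\mathcal{E}(M_i^{b_i})$ by a direct Rado-type nodal argument on $u_{a_i,\xi_i}-u_{b_i,\xi_i}$, converting equality of the $\fp_+$ and $\fp_0$ data of $v_{\cC}$ at the single time $\tau_0$ into high-order vanishing at a distinguished point. You yourself flag this ``bridge'' as unconstructed, and it cannot be built in the form you describe: since $a_i\neq b_i$, Theorem \ref{rado_thm} itself gives that the two endpoint translators-with-boundary have \emph{distinct} tip curvatures, so their difference does not vanish to order $\geq 3$ at the tip, and there is no candidate point where agreement of macroscopic spectral projections at $\tau_0$ forces the needed local vanishing. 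More fundamentally, a nodal/maximum-principle proof that matching $\fp_+,\fp_0$ data at one time forces coincidence would amount to a short proof of the spectral uniqueness theorem, which the paper needs the entire energy-estimate machinery of Section \ref{sec_profile} to establish; for translators-with-boundary no such statement is available at all.

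The paper avoids your second claim altogether. In the degenerate case $\mathcal{E}(M_i^{a_i})=\mathcal{E}(M_i^{b_i})$ for all large $i$, it passes to limits $M_1,M_2\in\mathcal{B}_\kappa$ with equal eccentricity in $\partial I$ and applies Theorem \ref{thm:uniqueness_eccentricity_intro} (spectral uniqueness, for complete translators, where it is actually proved) to conclude $M_1=M_2$; then the strict monotonicity of $F^{\xi_i}$ from Theorem \ref{rado_thm} squeezes the tip curvature along $[a_i,b_i]$, so every limit of the construction has tip curvature $k(M_0)$. Since $\mathcal{E}|_{\mathcal{B}_\kappa}$ is injective, at most two elements of $\mathcal{B}_\kappa$ lie over $\partial I$, hence at most two tip curvatures can arise this way; running the same construction from three reference translators $M_0,M_0',M_0''$ with pairwise distinct tip curvatures produces a contradiction, so for at least one reference the non-degenerate case occurs and the intermediate value theorem plus a subsequential limit yields the prescribed eccentricity. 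This is where the Rado-type input actually enters --- only through the monotonicity of the tip-curvature map at fixed boundary height --- rather than through a nodal comparison of the two endpoint solutions, and it is the step your proposal would need to replace or repair.
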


\begin{proof}
Let $\kappa>0$ and $\tau_\ast>-\infty$ be constants such that Theorem \ref{thm:uniqueness_eccentricity_intro} (spectral uniqueness) applies. 
Let us fix $\tau_0 \leq \tau_{\ast}$ and denote by $\mathcal{B}_\kappa=\mathcal{B}_{\kappa}(\tau_0)$  the set of all translators $M\in\mathcal{A}'$ that are $\kappa$-quadratic at time $\tau_0$. Note that Theorem \ref{thm:uniqueness_eccentricity_intro} (spectral uniqueness) implies that the restricted eccentricity map $\mathcal{E}|_{\mathcal{B}_\kappa}:\mathcal{B}_\kappa\to \mathbb{R}$ is injective. Our goal is to show that the image of $\mathcal{E}|_{\mathcal{B}_\kappa}$ contains the interval
\begin{equation}
I:=\left[e_0-\frac{\kappa}{10|\tau_0|},e_0+\frac{\kappa}{10|\tau_0|}\right]\, .
\end{equation}
Possibly after decreasing $\tau_\ast$,
by Corollary \ref{cor_strong_kappa} (strong $\kappa$-quadraticity) and Proposition \ref{shift_to_zero_plus} (shift map)
for any $\tau_0\leq \tau_\ast$ we can find a reference translator $M_0\in \mathcal{A}'$ that is $\frac{\kappa}{100}$-quadratic at time $\tau_0$. In particular, observe that $\mathcal{E}(M_0) \in \mathrm{Int}(I)$.
Let $M_i:=M^{c_i,\xi_i}+\alpha(c_i,\xi_i) e_1$ be a sequence of shifted HIMW translators-with-boundary converging to $M_0$, where the shift parameters $\alpha(c_i,\xi_i)$ are chosen according to Proposition \ref{shift_to_zero_plus_boundary} (shift map with boundary) to ensure that $\fp_{+}(v^i_{\cC}(\tau_0)-\sqrt{2})=0$.\\
Now, for each $i$, choose the maximal  interval $[a_i,b_i]$ containing $c_i$ such that for every $a\in [a_i,b_i]$,  the translator-with-boundary $M^a_i:=M^{a,\xi_i}+\alpha(a,\xi_i) e_1$ satisfies:
\begin{enumerate}
\item $M^a_i$ is $\kappa$-quadratic at time $\tau_0$, and\label{cont_cond1}
\item we have that\label{cont_cond2}
\begin{equation}\label{image_cont}
\mathcal{E}(M^a_i)\in I.
\end{equation}  
\end{enumerate}  
Here, we interpret Definition \ref{def_mu_qudratic} ($\kappa$-quadraticity) in the setting of translators-with-boundary by demanding that its inequalities must hold literally. This is possible since $\xi_i\to -\infty$, while $\tau_0$ and the constant $H$ from Proposition \ref{shift_to_zero_plus_boundary} (shift map with boundary) are fixed.
Recall that the HIMW construction at any fixed level $\xi_i$ depends continuously on the ellipsoidal parameter and interpolates between a piece of the 3d round bowl and a line times a piece of the 2d bowl. Taking also into account that for any fixed $\xi_i$ the shift function $a\mapsto \alpha(a,\xi_i)$ from Proposition \ref{shift_to_zero_plus_boundary} (shift map with boundary) is continuous, it follows that
\begin{equation}
0<a_i<c_i<b_i<\tfrac{1}{3}.
\end{equation}  

\begin{claim}[endpoints]\label{claim_endpoints} The endpoints elements are mapped to the boundary of the interval $I$, namely for all large $i$ we have
\begin{equation}\label{cont_claim_eq}
\mathcal{E}(M^{a_i}_i),\mathcal{E}(M^{b_i}_i)\in  \partial I\, .
\end{equation}
\end{claim}

\begin{proof}[Proof of the claim]
Since the interval $[a_i,b_i]$ is maximal, either condition (i) or condition (ii) must be saturated at its endpoints. Suppose towards a contradiction that $\mathcal{E}(M^{b_i}_i)\notin  \partial I$ for increasingly high values of $i$. Then, for $M^{b_i}_i$ the condition (i) must be saturated, i.e. at least one of the weak inequalities
\begin{equation}\label{mu_quad_back_not}
\left\|v^{M_i^{b_i}}_{\cC}(y,\tau_{0})-\sqrt{2}+\frac{y^2-2}{\sqrt{8}|\tau_{0}|}\right\|_{\fH} \leq \frac{\kappa}{|\tau_{0}|},
\end{equation}
and
\begin{equation}\label{mu_quad_rad_not}
\sup_{\tau\in [2\tau_0,\tau_0]}\|u^{M_i^{b_i}}(\cdot,\cdot,\tau)\|_{C^4(B(0,2|\tau_0|^{1/100})}  \leq |\tau_0|^{-1/50},
\end{equation}
must be an equality. After passing to a subsequence the $M^{b_i}_i$ converge to a limit $M\in \mathcal{A}'$, which by \eqref{mu_quad_back_not} and \eqref{mu_quad_rad_not} is $\kappa$-quadratic at time $\tau_0$. Thus, by Theorem \ref{thm_precise_from_one_time} ($\kappa$-quadraticity implies strong $\kappa$-quadraticity), the translator $M$ is strongly $5\kappa$-quadratic from time $\tau_{0}$. In particular, $\rho^M(\tau)=|\tau|^{1/10}$ is an admissible graphical radius function for $\tau\leq \tau_0$, so inequality \eqref{mu_quad_rad_not} is a strict inequality for $i$ large enough. Thus, it must be the case that
\begin{equation}\label{mu_quad_back__realy_not}
\left\|v_{\cC}^{M}(\tau_{0})-\sqrt{2}+\frac{y^2-2}{2\sqrt{2}|\tau_{0}|}\right\|_{\fH} = \frac{\kappa}{|\tau_{0}|}
\end{equation}
On the other hand, by the centering condition we have
\begin{equation}
\fp_{+}(v_{\cC}^{M}(\tau_0)-\sqrt{2})=0,
\end{equation}
and Corollary \ref{projection_estimate} (projection estimate) tells us that
\begin{equation}
\|\fp_{-}(v_{\cC}^M(\tau_{0}))\|_{\fH}\\ \leq \frac{\kappa}{100|\tau_0|},
\end{equation}
and the fact that $\mathcal{E}(M)\in   I$ yields
\begin{equation}\label{still_in_int}
\left\|\fp_{0}(v_{\cC}^M(\tau_{0}))-\frac{y^2-2}{2\sqrt{2}|\tau_{0}|}\right\|_{\fH} \leq \frac{\kappa}{10|\tau_0|}\|\psi_0\|_{\fH} \leq \frac{6\kappa}{10|\tau_0|}.
\end{equation}
Adding these estimates implies that
\begin{equation}\label{mu_quad_back__realy_not_less}
\left\|v_{\cC}^{M}(\tau_{0})-\sqrt{2}+\frac{y^2-2}{2\sqrt{2}|\tau_{0}|}\right\|_{\fH} < \frac{\kappa}{|\tau_{0}|}.
\end{equation}
This contradicts \eqref{mu_quad_back__realy_not}, and thus proves the claim.
\end{proof}

Now, if along our sequence we have $\mathcal{E}(M^{a_i}_i)\neq \mathcal{E}(M^{b_i}_i)$ for infinitely many $i$, then we are done. Indeed, in this case by the claim and the intermediate value theorem for each $x\in  I$  we can find some $d_i\in [a_i,b_i]$ such that $\mathcal{E}(M^{d_i}_i)=x$. Passing to a subsequential limit, we get a translator $M\in \mathcal{B}_\kappa$ with $\mathcal{E}(M)=x$.\\

On the other hand, if $\mathcal{E}(M^{a_i}_i)= \mathcal{E}(M^{b_i}_i)$ for all large $i$, then we argue as follows. After passing to a subsequence $M^{a_i}_i$ and $M^{b_i}_i$ converge to some limits $M_1,M_2\in \mathcal{B}_\kappa$ with $\mathcal{E}(M_1)= \mathcal{E}(M_2)\in \partial I$. By Theorem \ref{thm:uniqueness_eccentricity_intro} (spectral uniqueness) we see that $M_1=M_2$. Then, applying Theorem \ref{rado_thm} (monotonicity) we infer that the tip curvature is constant along the construction, namely $k(M)=k(M_0)$ for all $M$ that are obtained as limit of a sequence $M_i^{d_i}$ with $d_i\in [a_i,b_i]$. 
Since $\partial I$ has only two elements it follows that the preimage $\mathcal{E}|_{\mathcal{B}_\kappa}^{-1}(I)$ realizes at most two different tip curvatures. However, choosing $\tfrac{\kappa}{100}$-quadratic reference translators $M_0,M_0',M_0''$ such that their tip curvatures $k(M_0),k(M_0'),k(M_0'')$ are all distinct, this yields the desired contradiction, and thus concludes the proof of the theorem.
\end{proof}

\bigskip

As a corollary we obtain that the HIMW class $\mathcal{A}$ is homeomorphic to a closed interval, and that under any such identification the tip curvature map $k:\mathcal{A}\to [0,1/3]$ becomes weakly monotone:

\begin{corollary}[HIMW class]\label{uniq_2_thm_unshift}
There exists a homeomorphism $\psi:[0,1] \rightarrow \mathcal{A}$, and for any such $\psi$ the composed map $k\circ \psi:[0,1]\to [0,1/3]$ is weakly monotone.
\end{corollary}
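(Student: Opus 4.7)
The plan is to combine the eccentricity map with Theorem \ref{thm:uniqueness_eccentricity_intro} (spectral uniqueness) and Theorem \ref{all_achived_thm} (existence with prescribed eccentricity) to endow $\mathcal{A}\setminus\{\mathrm{Bowl}_3,\mathbb{R}\times\mathrm{Bowl}_2\}$ with the structure of a connected topological $1$-manifold, and then to identify $\mathcal{A}$ as its two-point compactification. Weak monotonicity of $k\circ\psi$ will follow from Theorem \ref{rado_thm} by passage to the limit $\xi\to -\infty$.

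For the local interval neighborhoods, fix any $M\in\mathcal{A}\setminus\{\mathrm{Bowl}_3,\mathbb{R}\times\mathrm{Bowl}_2\}$. By Corollary \ref{cor_strong_kappa} combined with Proposition \ref{shift_to_zero_plus}, for any $\kappa'\ll \kappa$ there is a sufficiently negative $\tau_0$ (depending on $M$ and $\kappa'$) such that $\mathcal{S}(M)\in\mathcal{A}'$ is $\kappa'$-quadratic at time $\tau_0$. Testing the $\kappa'$-quadraticity bound against $2-y^2$ via Cauchy--Schwarz forces $\mathcal{E}(\mathcal{S}(M))$ into the interior of the interval $I$ from Theorem \ref{all_achived_thm}. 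Hence every nearby value of eccentricity is attained on $\mathcal{B}_\kappa(\tau_0)$; by Theorem \ref{thm:uniqueness_eccentricity_intro} the attaining translator is unique; and since $\mathcal{B}_\kappa(\tau_0)$ is a closed, hence compact, subset of the smoothly compact space $\mathcal{A}'$, the restriction of $\mathcal{E}$ is a homeomorphism onto its image. Transferring through the homeomorphism $\mathcal{S}$, the point $M$ acquires an open interval neighborhood in $\mathcal{A}$.

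For the global topology, first note that $\mathcal{A}$ is compact (as a closed subset of the smoothly compact class of noncollapsed translators), Hausdorff, and second countable. It is also connected: any two $M_1, M_2\in\mathcal{A}$ can be realized as limits $M_j = \lim_i M^{a_i^j, \xi_i^j}$ with $\xi_i^j\to -\infty$, and passing to subsequential Hausdorff limits of the connected HIMW sweeps $\{M^{a,\xi_i^j}\,:\,a\in[0,1/3]\}$ yields two connected subsets $\Psi_1, \Psi_2\subseteq\mathcal{A}$, each containing its respective $M_j$ together with $\mathrm{Bowl}_3$ (from $a = 1/3$) and $\mathbb{R}\times\mathrm{Bowl}_2$ (from $a = 0$); thus $\Psi_1\cup\Psi_2$ is a connected subset of $\mathcal{A}$ joining $M_1$ and $M_2$. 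Consequently $\mathcal{A}\setminus\{\mathrm{Bowl}_3,\mathbb{R}\times\mathrm{Bowl}_2\}$ is a connected topological $1$-manifold, so homeomorphic either to an open interval or to $S^1$. The circle case is excluded because it would make the interior compact, hence clopen in the connected space $\mathcal{A}$, contradicting the fact that the two degenerate points are in $\mathcal{A}$ but outside the interior. Therefore the interior is an open interval with $\mathcal{A}$ its two-point compactification, giving the homeomorphism $\psi\colon [0,1]\to \mathcal{A}$.

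Finally, for the weak monotonicity of $k\circ\psi$: at each fixed $\xi$, the map $a\mapsto k(M^{a,\xi}+\alpha(a,\xi)e_1)$ runs strictly monotonically from $0$ to $1/3$ by Theorem \ref{rado_thm}. Approximating any triple of points in $\mathcal{A}$ (ordered along $\psi$) by triples on common HIMW sweeps with compatible ellipsoidal order and passing to the limit $\xi\to -\infty$, the Rado monotonicity descends to weak monotonicity of $k\circ\psi$. The chief obstacle is ensuring that the local eccentricity parametrizations at different $\tau_0$'s patch into a single coherent $\psi$, which follows from Theorem \ref{thm:uniqueness_eccentricity_intro} (each non-degenerate translator corresponds to a unique eccentricity at any given $\tau_0$) together with the uniformity in Theorem \ref{thm_unique_asympt_intro} (sharp asymptotics).
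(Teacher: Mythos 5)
Your local-structure step and your overall strategy (eccentricity map plus Theorems \ref{all_achived_thm} and \ref{thm:uniqueness_eccentricity_intro} for the interval structure, Theorem \ref{rado_thm} at finite height for monotonicity) follow the paper, but your global step has a genuine gap: the connectedness you prove is the wrong one. The connected sets $\Psi_1,\Psi_2$ obtained as limits of the sweeps $\{M^{a,\xi_i}\}_{a\in[0,1/3]}$ each contain $\mathrm{Bowl}_3$ and $\mathbb{R}\times\mathrm{Bowl}_2$, so they only show that $\mathcal{A}$ is connected; they say nothing about $\mathcal{A}^{\mathrm{o}}:=\mathcal{A}\setminus\{\mathrm{Bowl}_3,\mathbb{R}\times\mathrm{Bowl}_2\}$, and indeed they cannot, since every sweep passes through $a=0$ and $a=1/3$. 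Yet your next sentence applies the classification of \emph{connected} $1$-manifolds to $\mathcal{A}^{\mathrm{o}}$, which presupposes exactly the connectedness you have not established. Nothing in your argument excludes, say, a ``theta''-shaped $\mathcal{A}$: two or more disjoint open arcs, all of whose ends accumulate on the two degenerate translators; such a space is compact, connected, Hausdorff, second countable, its interior is a $1$-manifold and its complement consists of two points, but it is not an arc (and your circle-exclusion argument does not touch this case). The paper closes this by proving connectedness of $\mathcal{A}^{\mathrm{o}}$ directly: every $M\in\mathcal{A}^{\mathrm{o}}$ is $\tfrac{\kappa}{100}$-quadratic from all sufficiently negative times (Corollary \ref{cor_strong_kappa}, Proposition \ref{shift_to_zero_plus}), so after shifting it lies in $\mathcal{B}_\kappa(\tau_0)$ together with a fixed reference translator for $\tau_0$ negative enough; since $\mathcal{E}|_{\mathcal{B}_\kappa(\tau_0)}$ is a continuous injection on a compact set whose image contains $I$, the preimage $\mathcal{E}|_{\mathcal{B}_\kappa(\tau_0)}^{-1}(I)$ is homeomorphic to $I$, hence connected, and it lies entirely in (the shift of) $\mathcal{A}^{\mathrm{o}}$ because $\kappa$-quadratic translators are by definition neither $\mathrm{Bowl}_3$ nor $\mathbb{R}\times\mathrm{Bowl}_2$. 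You need this, or an equivalent path staying inside $\mathcal{A}^{\mathrm{o}}$.

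A second, smaller issue is that your monotonicity step asserts, rather than proves, the key point: that an ordered triple in $\mathcal{A}$ can be approximated by triples on a \emph{common} sweep ``with compatible ellipsoidal order.'' A priori the order along $\psi$ has nothing to do with the order of the ellipsoidal parameters; the paper mediates between the two through the eccentricity map. Concretely, by Claim \ref{claim_endpoints} the endpoint elements $M^{a_i}_i,M^{b_i}_i$ have eccentricities at the two distinct ends of $I$, the intermediate value theorem along the sweep then produces parameters $c_i<d_i$ realizing any prescribed values $s<t$ in $\mathrm{Int}(I)$, Theorem \ref{rado_thm} gives $k(M^{c_i}_i)<k(M^{d_i}_i)$ at finite height, and spectral uniqueness identifies the limits with $\mathcal{E}|_{\mathcal{B}_\kappa}^{-1}(s)$ and $\mathcal{E}|_{\mathcal{B}_\kappa}^{-1}(t)$, so that $k\circ(\mathcal{E}|_{\mathcal{B}_\kappa})^{-1}$, and hence $k\circ\psi$, is weakly monotone. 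Without this eccentricity-mediated matching (or some substitute), the ``compatible order'' approximation is precisely what remains to be shown.
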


\begin{proof}
Theorem \ref{all_achived_thm} (existence with prescribed eccentricity) together with Theorem \ref{thm:uniqueness_eccentricity_intro} (spectral uniqueness) and Proposition \ref{shift_to_zero_plus} (shift map) shows that every $M\in \mathcal{A}^{\mathrm{o}}:= \mathcal{A}-\{\mathbb{R}\times\mathrm{Bowl}_2,\mathrm{Bowl}_3 \}$ has a neighborhood homeomorphic to an interval. Moreover, since every $M\in \mathcal{A}^{\mathrm{o}}$ is $\frac{\kappa}{100}$-quadratic from some time, it also follows that  $\mathcal{A}^{\mathrm{o}}$ is connected. Observing also that $\mathcal{A}^{\mathrm{o}}$ is Hausdorff and second countable, we thus infer that $\mathcal{A}^{\mathrm{o}}$ is an open interval.\\
Let us fix a homeomorphism $\psi_{\mathrm{o}}:(0,1)\to\mathcal{A}^{\mathrm{o}}$. To show monotonicity of $k\circ \psi_{\mathrm{o}}$ it suffices to show, using the notions introduced in the above proof, that for every $\tau_0\leq \tau_\ast$ the map $F_\kappa:= k\circ (\mathcal{E}|_{\mathcal{B}_\kappa})^{-1}: I \to (0,1/3)$ is weakly monotone. To this end, recall that by Claim \ref{claim_endpoints} (endpoints) and the final paragraph of the proof of the theorem for $i$ large enough the eccentricity map sends the endpoint elements to the two different boundary points of the interval $I$. Assume without loss of generality that $\mathcal{E}(M^{a_i}_i)=\min I$ and $\mathcal{E}(M^{b_i}_i)=\max I$. Now, given any $s\in \textrm{Int}(I)$ for  $i$ large enough choose $c_i$ with $\mathcal{E}(M^{c_i}_i)=s$. Then, for any $t\in (s,\max I)$, the intermediate value theorem gives $d_i\in (c_i,b_i)$ with $\mathcal{E}(M^{d_i}_i)=t$. By Theorem \ref{rado_thm} (monotonicity) the tip curvature of the translators-with-boundary satisfies
\begin{equation}
k(M^{c_i}_i) < k(M^{d_i}_i).
\end{equation}
Since the left hand side converges to $F_\kappa(s)$ while the right hand side converges to $F_\kappa(t)$, this shows that $k\circ \psi_{\mathrm{o}}:(0,1)\to (0,1/3)$ is weakly monotone. Possibly after adjusting the definition of $\psi_0$, we can assume without loss of generality that $k\circ \psi_{\mathrm{o}}:(0,1)\to (0,1/3)$ is weakly monotone increasing.\\
Finally, by the above monotonicity, for every sequence $t_i\to 0$ or $t_i\to 1$ a subsequence of $\psi_{\mathrm{o}}(t_i)$ converges to a translator in $\mathcal{A}\setminus \mathcal{A}^{\mathrm{o}}$ whose tip curvature $k$ is $0$ or $1/3$, respectively. Since $\mathcal{A}\setminus \mathcal{A}^{\mathrm{o}}$ consists of two elements whose tip curvatures are $0$ and $1/3$, respectively, the subsequential convergence in fact entails full convergence. We conclude that $\psi_{\mathrm{o}}$ can be extended to a homeomorphism $\psi:[0,1] \rightarrow \mathcal{A}$ such that the composed map $k\circ \psi:[0,1]\to [0,1/3]$ is weakly monotone.
\end{proof}

\bigskip

\subsection{Conclusion of the proof} 

In this subsection, we conclude the proof of the classification theorem, modulo the proof of the spectral uniqueness theorem. To this end, we first show that every noncollapsed translator in $\mathbb{R}^4$ is realized by the HIMW construction:

\begin{theorem}[representation theorem for noncollapsed translators]\label{thm_all_are_HIMW}
Every noncollapsed translator in $\mathbb{R}^4$ is, up to rigid motion and scaling, a member of the HIMW class $\mathcal{A}$.
\end{theorem}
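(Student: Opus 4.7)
The plan is to identify an arbitrary noncollapsed translator with a member of the HIMW class via its spectral coordinates at a very negative renormalized time, by matching a shifted copy of $M$ against the translators produced by Theorem \ref{all_achived_thm} and invoking Theorem \ref{thm:uniqueness_eccentricity_intro}.

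First I would dispose of the degenerate cases. If $M$ splits off a line, then by \cite{Haslhofer_bowl} $M=\mathbb{R}\times\mathrm{Bowl}_2$, and if its tangent flow at $-\infty$ is a neck then likewise $M=\mathrm{Bowl}_3$; both are limits of $M^{0,\xi_i}$ and $M^{1/3,\xi_i}$ respectively, so they lie in $\mathcal{A}$. From now on I may assume $M$ is strictly convex with bubble-sheet tangent flow, and, by Theorem \ref{thm_symmetry}, after a rigid motion $M$ is $\mathrm{SO}(2)$-symmetric in the $x_3x_4$-plane and reflection-symmetric in $x_2$.

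Second, fix $\kappa>0$ small enough and $\tau_{\ast}$ negative enough that Theorem \ref{thm:uniqueness_eccentricity_intro} and Theorem \ref{all_achived_thm} both apply with parameter $\kappa$. Choose $\kappa'\ll\kappa$, and, using Corollary \ref{cor_strong_kappa}, pick $\tau_0=\tau_0(M,\kappa')\leq \tau_{\ast}-1$ so that $M$ is strongly $\kappa'$-quadratic from time $\tau_0+1$. The convexity, reflection-symmetry, and tip uniqueness of $M$ allow the same argument as in Proposition \ref{shift_to_zero_plus} to produce a unique vertical shift $\alpha\in\mathbb{R}$ such that $\tilde M:=M+\alpha e_1$ satisfies the centering condition $\fp_+(v_{\cC}^{\tilde M}(\tau_0)-\sqrt{2})=0$. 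Combining the transformation rule \eqref{transofrm_scale_1} with the strong $\kappa'$-quadraticity of $M$ (and possibly shrinking $\kappa'$), $\tilde M$ is $\kappa$-quadratic at time $\tau_0$ in the sense of Definition \ref{def_mu_qudratic}, and a direct Gaussian moment computation gives $\mathcal{E}(\tilde M)=e_0+O(\kappa'/|\tau_0|)$, so $|\mathcal{E}(\tilde M)-e_0|\leq \kappa/(10|\tau_0|)$.

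Third, Theorem \ref{all_achived_thm} (existence with prescribed eccentricity) then furnishes some $M'\in\mathcal{A}'$ which is $\kappa$-quadratic at time $\tau_0$ and satisfies $\mathcal{E}(M')=\mathcal{E}(\tilde M)$. The hypotheses of Theorem \ref{thm:uniqueness_eccentricity_intro} (spectral uniqueness) are met: $\tilde M$ and $M'$ are both $\kappa$-quadratic at $\tau_0$, both obey the centering condition (equal spectral center), and they have equal spectral eccentricity by construction. Consequently $\tilde M=M'$, so $M=M'-\alpha e_1$ is, up to rigid motion, the element $M'\in\mathcal{A}$.

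The main obstacle is the bookkeeping in the second step: one has to ensure that applying the vertical shift $\alpha$ chosen to align the spectral center at the single time $\tau_0$ does not destroy the quadratic structure coming from strong $\kappa'$-quadraticity, and that the resulting eccentricity lands in the interval $I$ from Theorem \ref{all_achived_thm}. Both are reduced to the explicit expansion \eqref{transofrm_scale_1} of $v^\alpha$ near the cylinder and an elementary Gaussian moment identity matching $\|y^2-2\|_{\fH}^2/\sqrt{8}$ with $|\tau_0|\, e_0$; once these are verified the conclusion is immediate from the named theorems.
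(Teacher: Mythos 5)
Your overall strategy (center the translator, verify $\kappa$-quadraticity at a single time $\tau_0$, match the eccentricity against Theorem \ref{all_achived_thm}, then invoke spectral uniqueness) is the same as the paper's, and the handling of the degenerate cases and the Gaussian moment bookkeeping for $\mathcal{E}(\tilde M)$ are fine. However, there is a genuine gap in your second step: you assert that after a rigid motion a general noncollapsed translator $M$ is reflection-symmetric in $x_2$, and you use this symmetry to conclude that $\langle v_{\cC}^{M+\alpha e_1}(\tau_0), y\rangle_{\fH}=0$ automatically, so that a single shift $\alpha$ in the $x_1$-direction suffices to arrange the centering condition $\fp_+(v_{\cC}(\tau_0)-\sqrt{2})=0$. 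Reflection symmetry in $x_2$ is \emph{not} available a priori: Theorem \ref{thm_symmetry} only gives $\mathrm{SO}(2)$-symmetry in the $x_3x_4$-plane, and reflection symmetry of the HIMW translators is a special feature inherited from the ellipsoidal construction via the moving plane method (and it only becomes known for all noncollapsed translators a posteriori, i.e.\ after the classification you are trying to prove — so assuming it here is circular). Without it, a vertical shift alone cannot in general kill the $\psi_2=y$ component of $\fp_+(v_{\cC}(\tau_0)-\sqrt{2})$, and then neither Definition \ref{def_mu_qudratic} ($\kappa$-quadraticity, which presupposes the centering) nor the equal-spectral-center hypothesis of Theorem \ref{thm:uniqueness_eccentricity_intro} is met, so the matching argument breaks down.

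The paper closes exactly this gap with Proposition \ref{prop_orthogonality} (recentering): one shifts by $\alpha e_1+\beta e_2$, i.e.\ in the whole $x_1x_2$-plane, and uses the expansion of the shifted profile function together with a mapping degree (winding number) argument to find a zero of the two-component map $(a,b)\mapsto\big(\langle \psi_1, v^{ab}_{\cC}-\sqrt{2}\rangle_{\fH},\langle \psi_2, v^{ab}_{\cC}-\sqrt{2}\rangle_{\fH}\big)$ inside a disc small enough that $\kappa$-quadraticity is preserved. To repair your proof you should replace your one-parameter shift argument (which mimics Proposition \ref{shift_to_zero_plus}, valid only for the reflection-symmetric class $\mathcal{A}$) with this two-parameter recentering; the rest of your argument then goes through as written.
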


In essence, this will follow by combining several results from other sections. The only ingredient that has not been discussed yet, is that we can shift our translator so that it has the same spectral center as the cylinder. To state this recentering result precisely, recall that for any noncollapsed translator $M\subset\mathbb{R}^4$ (normalized as before) the cylindrical profile function is defined by
\begin{equation}
v_{\mathcal{\cC}} = \varphi_{\cC}(v)v,
\end{equation}
where $v=v(y,\tau)$ is the renormalized profile function of $M\cap \{x_1=e^{-\tau}\}$, and $\varphi_{\cC}$ is a suitable cutoff function. Recall also that we work with the Hilbert space $\fH=L^2(\mathbb{R},e^{-y^2/4}dy)$, and that $\fp_+$ denotes the orthogonal projection to $\fH_+$, which is spanned by the unstable eigenfunctions $\psi_1=1$ and $\psi_2=y$.

\begin{proposition}[recentering]\label{prop_orthogonality}
Given any noncollapsed translator $M\subset\mathbb{R}^4$ (normalized as before), with $M\neq \mathrm{Bowl}_3, \mathrm{Bowl}_2\times \mathbb{R}$, and $\kappa>0$, there exists $\tau_{\ast}={\tau}_\ast(M,\kappa)>-\infty$ so that for any $\tau_0\leq {\tau}_{\ast}$  we can find $\alpha,\beta$ 
so that the cylindrical profile function $v^{\alpha\beta}_{\cC}$ of the shifted translator $M^{\alpha\beta}=M+\alpha e_1 + \beta e_2$ satisfies
\begin{equation}
\fp_+ (v^{\alpha\beta}_\cC(\tau_0)-\sqrt{2})=0,
\end{equation}
and so that $M^{\alpha\beta}$ is $\kappa$-quadratic at time $\tau_0$. 
\end{proposition}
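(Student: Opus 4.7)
The plan is a two-parameter implicit function argument: the sharp asymptotics of $M$ already nearly achieve the centering, the remaining spectral errors are killed by small shifts in $e_{1}$ and $e_{2}$, and then the $\kappa$-quadraticity of the shifted translator follows from the triangle inequality.

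Setup and shift analysis. Choose $\kappa':=\kappa/C_{0}$ with $C_{0}<\infty$ to be fixed below. By Corollary \ref{cor_strong_kappa} (strong $\kappa$-quadraticity) there exists $\tau_{\ast}=\tau_{\ast}(M,\kappa')>-\infty$ such that $M$ is strongly $\kappa'$-quadratic from time $\tau_{\ast}$, so that for any $\tau_{0}\leq\tau_{\ast}$
\begin{equation}\label{rp_strong}
\Big\|v_{\cC}(\cdot,\tau_{0})-\sqrt{2}+\frac{y^{2}-2}{\sqrt{8}|\tau_{0}|}\Big\|_{\fH}\leq \frac{\kappa'}{|\tau_{0}|},
\end{equation}
and since $y^{2}-2\in\fH_{0}\perp\fH_{+}$, we get $\|\fp_{+}(v_{\cC}(\tau_{0})-\sqrt{2})\|_{\fH}\leq \kappa'/|\tau_{0}|$. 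As in the proof of Proposition \ref{shift_to_zero_plus} (shift map), a shift by $\alpha e_{1}$ rescales and retimes the profile via $v^{\alpha,0}(y,\tau)=(1+a)v(\tfrac{y}{1+a},\tau-2\log(1+a))$ with $a=\sqrt{1+\alpha e^{\tau}}-1$, while a shift by $\beta e_{2}$ simply translates, $v^{0,\beta}(y,\tau)=v(y-\beta e^{\tau/2},\tau)$. In the rescaled parameters $\tilde\alpha:=\alpha e^{\tau_{0}}$, $\tilde\beta:=\beta e^{\tau_{0}/2}$, Taylor expanding about $(\tilde\alpha,\tilde\beta)=(0,0)$ and using \eqref{rp_strong} yields in $\fH$
\begin{equation}\label{rp_expand}
v_{\cC}^{\alpha,\beta}(y,\tau_{0})-\sqrt{2}+\frac{y^{2}-2}{\sqrt{8}|\tau_{0}|}\;=\;\Bigl[v_{\cC}(y,\tau_{0})-\sqrt{2}+\tfrac{y^{2}-2}{\sqrt{8}|\tau_{0}|}\Bigr]-\tfrac{\sqrt{2}}{2}\tilde\alpha+\tfrac{2\tilde\beta\,y}{\sqrt{8}|\tau_{0}|}+R(\tilde\alpha,\tilde\beta),
\end{equation}
with the remainder satisfying $\|R(\tilde\alpha,\tilde\beta)\|_{\fH}\leq C(\tilde\alpha^{2}+\tilde\beta^{2})/|\tau_{0}|$ for $|\tilde\alpha|+|\tilde\beta|\leq 1$.

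Fixed point. Define $\Phi:\mathbb{R}^{2}\to\fH_{+}$ by $\Phi(\tilde\alpha,\tilde\beta):=\fp_{+}(v_{\cC}^{\alpha,\beta}(\tau_{0})-\sqrt{2})$. Projecting \eqref{rp_expand} onto the basis $\{1,y\}$, the linearization $\mathrm{D}\Phi(0,0)$ is, up to lower-order corrections, diagonal with entries $\bigl(-\tfrac{\sqrt{2}}{2}\|1\|_{\fH}^{2},\,\tfrac{2\|y\|_{\fH}^{2}}{\sqrt{8}|\tau_{0}|}\bigr)$. After rescaling the $y$-component of the target by a factor of $|\tau_{0}|$, this linearization becomes diagonal with order-one entries uniformly in $\tau_{0}$, while $R$ remains uniformly subquadratic in the same rescaled norm. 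The Banach fixed point theorem applied to the contraction $(\tilde\alpha,\tilde\beta)\mapsto -\mathrm{D}\Phi(0,0)^{-1}\bigl(\Phi(0,0)+\fp_{+}R(\tilde\alpha,\tilde\beta)\bigr)$ on a ball of radius $C_{1}\kappa'$ in the rescaled coordinates produces a unique solution $(\tilde\alpha^{\ast},\tilde\beta^{\ast})$ of $\Phi=0$ with $|\tilde\alpha^{\ast}|\leq C_{1}\kappa'/|\tau_{0}|$ and $|\tilde\beta^{\ast}|\leq C_{1}\kappa'$. Setting $\alpha:=\tilde\alpha^{\ast}e^{-\tau_{0}}$ and $\beta:=\tilde\beta^{\ast}e^{-\tau_{0}/2}$ realizes the centering condition.

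Verification and main obstacle. Plugging the bounds on $(\tilde\alpha^{\ast},\tilde\beta^{\ast})$ into \eqref{rp_expand} and combining with \eqref{rp_strong} yields $\|v^{\alpha,\beta}_{\cC}(\tau_{0})-\sqrt{2}+(y^{2}-2)/(\sqrt{8}|\tau_{0}|)\|_{\fH}\leq (\kappa'+CC_{1}\kappa'+CC_{1}^{2}\kappa'^{2})/|\tau_{0}|\leq \kappa/|\tau_{0}|$, provided $C_{0}$ is chosen large enough in terms of $C,C_{1}$. The graphical-radius condition in Definition \ref{def_mu_qudratic} ($\kappa$-quadratic) is inherited from the strong $\kappa'$-quadraticity of $M$, which provides $|\tau|^{1/10}\gg|\tau_{0}|^{1/100}$ as an admissible graphical radius on $[2\tau_{0},\tau_{0}]\subset(-\infty,\tau_{\ast}]$; the shifts are small enough that graphicality survives on the smaller scale, so this condition is also inherited by $M^{\alpha\beta}$. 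The main subtlety is the very different scales of the two shift parameters --- a consequence of $e_{1}$ being the translation direction and $e_{2}$ being the axis of the inwards quadratic bending --- which forces the rescaling step in the fixed point argument; without it, $\mathrm{D}\Phi(0,0)$ would appear nearly degenerate as $\tau_{0}\to-\infty$.
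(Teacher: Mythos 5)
Your setup is the same as the paper's: use strong $\kappa'$-quadraticity to expand the effect of the two shifts on the $\fH_+$-projections, observe that the $e_1$-shift acts at order one on $\langle 1,\cdot\rangle$ while the $e_2$-shift acts only at order $1/|\tau_0|$ on $\langle y,\cdot\rangle$ (forcing the anisotropic rescaling of parameters), and then solve the two-by-two system. The paper closes the argument with a purely topological step: it shows (its Claim in the proof) only a $C^0$ estimate, namely that $\Psi(a,b)$ is within $\kappa/(100|\tau|)$ of the explicit model map $\Psi_0(a,b)=\sqrt{2}\,(a-\tfrac{b^2}{4|\tau|},\tfrac{b}{2|\tau|})$ on the ellipse $|\tau|^2a^2+b^2\le 100\kappa^2$, and then invokes the winding number of $\Psi_0$ on the boundary to produce a zero. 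Your Banach fixed point closure is genuinely stronger than what your stated estimates support: for the map $(\tilde\alpha,\tilde\beta)\mapsto -\mathrm{D}\Phi(0,0)^{-1}\bigl(\Phi(0,0)+\fp_+R(\tilde\alpha,\tilde\beta)\bigr)$ to be a contraction you need $\fp_+R$ to be Lipschitz with small constant on the rescaled ball, i.e.\ $C^1$-in-parameter closeness of $\Phi$ to its linearization, and the bound $\|R\|_{\fH}\le C(\tilde\alpha^2+\tilde\beta^2)/|\tau_0|$ gives no Lipschitz control at all. Concretely, $\partial_{\tilde\beta}$ of the shifted error term produces $\langle \psi_i,\epsilon_y(\cdot-b)\dots\rangle$ with $\epsilon=v-\sqrt2\bigl(1-\tfrac{y^2-2}{4|\tau_0|}\bigr)$, and after the rescaling that makes the $\tilde\beta$-entry of $\mathrm{D}\Phi(0,0)$ order one, smallness of this term requires a gradient (Gaussian $H^1$) version of the quadraticity estimate, $\|\epsilon_y\|_{\fH}\lesssim \kappa'/|\tau_0|$, which is not part of ($\kappa$- or strong $\kappa$-) quadraticity and which you neither state nor prove. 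So as written the contraction step has a gap; the easy repair is either to prove such a derivative estimate (by differentiating the expansion, using the evolution equation and interior estimates), or — as the paper does — to abandon uniqueness of the fixed point and conclude existence from Brouwer/degree theory, for which your $C^0$ expansion \eqref{rp_expand} already suffices.

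Two smaller points. First, your remainder bound is not quite right: the $e_1$-shift rescales time and hence perturbs the coefficient of the neutral mode $y^2-2$ at \emph{linear} order in $\tilde\alpha$, a term of size $|\tilde\alpha|/|\tau_0|$ not covered by $C(\tilde\alpha^2+\tilde\beta^2)/|\tau_0|$; this is harmless because it lies in $\fH_0$ (so it is invisible to $\fp_+$) and, once $|\tilde\alpha^\ast|\lesssim\kappa'/|\tau_0|$, it contributes only $O(\kappa'/|\tau_0|^2)$ to the final $\fH$-verification, but the statement should be corrected. Second, passing from Corollary \ref{cor_strong_kappa} (which controls the bubble-sheet function $\hat u$ in $\cH$) to the bound \eqref{rp_strong} for $v_\cC$ in $\fH$ uses the pointwise comparison \eqref{pointwise_diff} plus Gaussian tails, exactly as in the paper's proof of the analogous claim; it costs a constant, which your choice $\kappa'=\kappa/C_0$ absorbs, so this is fine but worth saying.
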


\begin{proof}
We will use a mapping degree argument, similarly as in \cite[Section 4]{ADS2}. For convenience, we set 
\begin{equation}\label{new_vars_scale}
a= \sqrt{1+\alpha  e^{\tau}}-1, \;\;\;\;\;\;\;b=\beta e^{\tau/2}.
\end{equation}
Then, the renormalized profile function $v^{\alpha\beta}$ for the level sets of $M+\alpha e_1+\beta e_2$ relates to the renormalized profile function $v$ for the level sets of $M$ by
\begin{equation}\label{transofrm_scale}
v^{\alpha\beta}(y,\tau)=(1+a)v\left(\frac{y-b}{1+a},\tau-2\log(1+a)\right).
\end{equation}
Our goal is to find a suitable zero of the map
\begin{equation}
\Psi(a,b)=\left(\left\langle \psi_1,v^{ab}_\cC-\sqrt{2} \right\rangle_{\fH},\left\langle \psi_2,v^{ab}_\cC-\sqrt{2}  \right\rangle_{\fH}\right)\, ,
\end{equation}
where $a=a(\alpha,\beta)$ and $b=b(\alpha,\beta)$ are defined via \eqref{new_vars_scale}, while maintaining $\kappa$-quadraticity. To this end, we start with the following estimate:

\begin{claim}\label{tranform_est}
For every $\kappa>0$ there exists $\tau_{\kappa}=\tau_{\kappa}(M)>-\infty$ such that for every $\tau\leq\tau_{\kappa}$ and all $(a,b)\in [-1/|\tau|,1/|\tau|]\times[-1,1]$ 
we have
\begin{equation}
\left|  \Big\langle \frac{\psi_1}{\| \psi_1\|^2_{\fH}},v^{ab}_\cC-\sqrt{2} \Big\rangle_{\fH}-\sqrt{2}\Big(a -\frac{b^2}{4|\tau|}\Big) \right|+\left|  \Big\langle \frac{\psi_2}{\| \psi_2\|^2_{\fH}},v^{ab}_\cC-\sqrt{2}  \Big\rangle_{\fH} -  \frac{b}{\sqrt{2}|\tau|} \right|\leq \frac{\kappa}{100|\tau|}\, .
\end{equation}
\end{claim}

\begin{proof}  By Corollary \ref{cor_strong_kappa} (strong $\kappa$-quadraticity), given any $\kappa'>0$ the translator $M$ is strongly $\kappa'$-quadratic from some time $\tau_{\ast}$. In light of  the proof of Lemma \ref{prop_cut_compatibility}, remembering in particular \eqref{pointwise_diff}, for $\tau\leq \tau_\ast-1$ we  get
\begin{equation}\label{strong_k_imp}
\left\|v(\tau)-\sqrt{2}\Big(1-\frac{y^2-2}{4|\tau|}\Big)\right\|_{\fH} \leq C\frac{\kappa'}{|\tau|}\, .
\end{equation}
Since $|a|\leq 1/|\tau|$ and $|b|\leq 1$, together with \eqref{transofrm_scale} this implies
\begin{align}
    v^{\alpha\beta}(y,\tau)-\sqrt{2}&=\sqrt{2}a-(1+a)\frac{\left(\frac{y-b}{1+a}\right)^2-2}{\sqrt{8}|\tau-2\log(1+a)|}+O(\kappa'/|\tau|)\nonumber\\
    &=\sqrt{2}a-\frac{b^2}{\sqrt{8}|\tau|} +  \frac{b}{\sqrt{2}|\tau|}y+O(\kappa'/|\tau|)
\end{align}
in $\fH$-norm. Choosing $\kappa'\ll \kappa$, together with standard Gaussian tail estimates, the claim follows.
\end{proof}

Now, consider the map
\begin{equation}
\Psi_0(a,b)=\sqrt{2}\Big(a-\frac{b^2}{4|\tau|}, \frac{b}{2|\tau|}\Big).
\end{equation}
By Claim \ref{tranform_est}, for $\kappa$ small enough, for $\tau\leq \tau_\kappa$ the maps $\Psi$ and $\Psi_0$ are homotopic when restricted to the boundary of
\begin{equation}
D:=\left\{(a,b)\;|\; |\tau|^2a^2+b^2 \leq 100 \kappa^2\right\},
\end{equation}
where the homotopy can be chosen through maps avoiding the origin. Because the winding number of $\Psi_0|_{\partial D}$ around the origin is $1$, there exists $(a,b)\in D$ with $\Psi(a,b)=0$.  Finally,  by the above estimates, the shifted translator $M^{\alpha\beta}$ is  $\kappa$-quadratic at time $\tau_0$.
\end{proof}

We can now prove the representation theorem:

\begin{proof}[Proof of Theorem \ref{thm_all_are_HIMW}]
Recall first that by Theorem \ref{thm_symmetry} (circular symmetry) every noncollapsed translator in $\mathbb{R}^4$ is $\mathrm{SO}(2)$-symmetric. Now, let $M\subset\mathbb{R}^4$ be a noncollapsed translator that is neither a 3d round bowl nor the product of a line and a 2d bowl. After a rigid motion and rescaling, we can assume that $M$ translates with unit speed in positive $x_1$-direction, and that the circular symmetry is in the $x_3x_4$-plane centered at the origin. 
Furthermore, by Proposition \ref{prop_orthogonality} (recentering) given $\kappa>0$ and $\tau_0\ll 0$ after a suitable shift in the $x_1x_2$-plane we can assume that the cylindrical profile function of $M$ satisfies
\begin{equation}
\fp_{+}(v_{\cC}(\tau_0)-\sqrt{2})=0,
\end{equation}
and that $M$ is $\tfrac{\kappa}{100}$-quadratic from time $\tau_0$. Let us fix a reference translator $M_0\in \mathcal{A}'$ as in the previous subsection. Possibly after decreasing $\tau_0$ we can assume that $M_0$ is also $\tfrac{\kappa}{100}$-quadratic at time $\tau_0$. Since both $M$ and $M_0$ are $\frac{\kappa}{100}$-quadratic at time $\tau_0$, it follows that 
\begin{equation}
|\mathcal{E}(M)-\mathcal{E}(M_0)| \leq \frac{\kappa}{10|\tau_0|}.
\end{equation}
Hence, by Theorem \ref{all_achived_thm} (existence with prescribed eccentricity) there exists a HIMW translator $M'\in \mathcal{A}'$ that is $\kappa$-quadratic at time $\tau_0$ and satisfies
\begin{equation}
\mathcal{E}(M')=\mathcal{E}(M).
\end{equation}
Therefore, remembering also that $\fp_+(v^{M'}_{\cC}(\tau_0))=\fp_{+}(\sqrt{2})=\fp_{+}(v^M_{\cC}(\tau_0))$ by construction, we can apply Theorem \ref{thm:uniqueness_eccentricity_intro} (spectral uniqueness) to conclude that the translators $M$ and $M'$ coincide.
\end{proof}

Modulo the spectral uniqueness theorem, which will be established in the next section, we can now conclude the proof  of our main classification result and its corollary, which we restate here:

\begin{theorem}[{classification of noncollapsed translators in $\mathbb{R}^4$}]\label{thm_main_restated}
Every noncollapsed translator in $\mathbb{R}^4$ is, up to rigid motion and scaling, either (i) $\mathbb{R}\times \mathrm{Bowl}_2$, or (ii) the 3d round bowl $\mathrm{Bowl}_3$, or (iii) belongs to the one-parameter family of 3d oval bowls $\{M_k\}_{k\in (0,1/3)}$ constructed by Hoffman-Ilmanen-Martin-White.
\end{theorem}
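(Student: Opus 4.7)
The plan is to assemble the main theorem by stringing together the key results already in hand, with essentially no new computation. Let $M\subset\mathbb{R}^4$ be a noncollapsed translator. After a rigid motion and rescaling, we may normalize so that $\mathbf{H}=e_1^\perp$. By the convexity estimate \cite{HaslhoferKleiner_meanconvex}, $M$ is convex. If $M$ splits off a line, then by \cite{Haslhofer_bowl} (applied to the 2d factor) we see that $M=\mathbb{R}\times\mathrm{Bowl}_2$, giving case (i). If instead $M$ is strictly convex and the tangent flow at $-\infty$ is a neck $\mathbb{R}\times S^2(\sqrt{-4t})$, then \cite{Haslhofer_bowl} yields $M=\mathrm{Bowl}_3$, giving case (ii). So we may assume $M$ is strictly convex and admits a bubble-sheet tangent flow $\mathbb{R}^2\times S^1(\sqrt{-2t})$.

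Next, apply Theorem \ref{thm_all_are_HIMW} (representation theorem). This places $M$, up to a rigid motion, into the HIMW class $\mathcal{A}$. Here the key inputs were Theorem \ref{cor_unique_asympt} (uniform sharp asymptotics), Proposition \ref{prop_orthogonality} (recentering to arrange the centering condition $\fp_+(v_\cC(\tau_0)-\sqrt{2})=0$ and $\kappa$-quadraticity at $\tau_0$), Theorem \ref{all_achived_thm} (existence with prescribed eccentricity, to produce a HIMW translator $M'\in\mathcal{A}'$ with $\mathcal{E}(M')=\mathcal{E}(M)$), and finally Theorem \ref{thm:uniqueness_eccentricity_intro} (spectral uniqueness) to conclude $M=M'$.

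It now remains to identify $\mathcal{A}$ with the list in the theorem. Corollary \ref{uniq_2_thm_unshift} furnishes a homeomorphism $\psi:[0,1]\to\mathcal{A}$, and shows that under any such parametrization the tip curvature map $k\circ\psi:[0,1]\to[0,1/3]$ is weakly monotone. Theorem \ref{thm_mon_analyt} strengthens weak monotonicity to strict monotonicity, since the map is also analytic and therefore cannot be locally constant unless it is globally constant (which it is not, since the endpoints $\mathbb{R}\times\mathrm{Bowl}_2$ and $\mathrm{Bowl}_3$ have tip curvatures $0$ and $1/3$ respectively). Hence $k\circ\psi$ is a homeomorphism of $[0,1]$ with $[0,1/3]$, so the tip curvature $k\in[0,1/3]$ is a complete invariant parametrizing $\mathcal{A}$. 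The endpoints $k=0$ and $k=1/3$ correspond to $\mathbb{R}\times\mathrm{Bowl}_2$ and $\mathrm{Bowl}_3$, while for each $k\in(0,1/3)$ the corresponding element is, by definition, $M_k$, the HIMW oval bowl. This exhausts all three cases in the statement.

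No step in this assembly is itself an obstacle, since all the heavy lifting has been done; the only subtlety is to be sure that the endpoint identification in $\mathcal{A}$ matches what we called $M_0$ and $M_{1/3}$ in the introduction, which follows from the fact that for $a=0$ the HIMW translator splits off the $x_2$-direction (so the limit must be $\mathbb{R}\times\mathrm{Bowl}_2$) and for $a=1/3$ it is $\mathrm{O}(3)$-symmetric (so the limit must be $\mathrm{Bowl}_3$). The genuinely hard input, Theorem \ref{thm:uniqueness_eccentricity_intro} (spectral uniqueness), is deferred to Section \ref{sec_profile}; and the analyticity half of Theorem \ref{thm_mon_analyt} is deferred to \cite{CHH_tip}. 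Modulo those, the proof of the main theorem is a short concatenation of the five key results of the introduction.
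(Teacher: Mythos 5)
Your proposal is correct and follows essentially the same route as the paper: reduce via the representation theorem (itself a consequence of recentering, existence with prescribed eccentricity, and spectral uniqueness) to classifying the HIMW class $\mathcal{A}$, then combine Corollary \ref{uniq_2_thm_unshift} (interval structure with weakly monotone tip curvature) with the analyticity from \cite{CHH_tip} to get strict monotonicity and hence bijectivity of $k$ onto $[0,1/3]$. Your added remarks on the preliminary case split (line-splitting and neck-tangent cases) and on the endpoint identification just make explicit what the paper establishes in Sections \ref{sec_coarse_asymptotics} and \ref{sec_HIMW}.
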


\begin{proof} By Theorem \ref{thm_all_are_HIMW} (representation theorem for noncollapsed translators) every noncollapsed translator in $\mathbb{R}^4$ that is neither a 3d round bowl nor a line times a 2d bowl, is up to rigid motion and scaling a member of $\mathcal{A}^{\mathrm{o}}:=\mathcal{A}-\{\mathrm{Bowl}_3,\mathbb{R}\times\mathrm{Bowl}_2\}$. Hence it suffices to classify members of $\mathcal{A}^{\mathrm{o}}$.
On the one hand, we have seen in Corollary \ref{uniq_2_thm_unshift} (HIMW class) that $\mathcal{A}^{\mathrm{o}}$ is homeomorphic to an open interval over which $k$ is weakly monotone. On the other hand,  \cite{CHH_tip} shows that $\mathcal{A}^{\mathrm{o}}$ is analytically equivalent to an interval over which $k$ is analytic. As any weakly monotone analytic function is strictly monotone, we conclude that $k:\mathcal{A}\to [0,1/3]$ is bijective. This finishes the proof of the classification theorem.
\end{proof}

\bigskip

\section{Proof of the spectral uniqueness theorem}\label{sec_profile}

The goal of this section is to prove the spectral uniqueness theorem, which we restate here for convenience of the reader:

\begin{theorem}[spectral uniqueness]\label{thm:uniqueness_eccentricity} There exist $\kappa>0$ and $\tau_{\ast}>-\infty$ with the following significance:
Suppose $M^1$ and $M^2$ are noncollapsed translators in $\mathbb{R}^4$ (neither 3d round bowl, nor $\mathbb{R}\times$ 2d-bowl, and normalized and centered as before) that are $\kappa$-quadratic at time $\tau_0 \leq {\tau}_{\ast}$. If their cylindrical profile functions $v^1_{\cC}$ and $v^2_{\cC}$  satisfy
\begin{equation}\label{rest_eq_cent}
\fp_{+}(v^1_{\cC}(\tau_0))=\fp_{+}(v^2_{\cC}(\tau_0))\;\;\;\;\textrm{(equal spectral center)},
\end{equation}
and
\begin{equation}
\fp_{0}(v^1_{\cC}(\tau_0))=\fp_{0}(v^2_{\cC}(\tau_0))\;\;\;\; \textrm{(equal spectral eccentricity)},
\end{equation}
then
\begin{equation}
M^1=M^2.
\end{equation} 
\end{theorem}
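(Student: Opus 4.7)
The plan is to adapt the fine neck analysis of \cite{ADS2} to our translator setting, which brings two new difficulties absent from the ancient ovals case: the profile function $V$ of the level sets satisfies the \emph{elliptic} equation \eqref{evVintro} with degenerating coefficients rather than the uniformly parabolic \eqref{evUintro}, and our intrinsic localization $v_\cC=\varphi_\cC(v)v$ introduces additional cutoff-derivative terms. The spectral hypotheses together with the centering condition give $\fp_+(v^1_\cC(\tau_0)-v^2_\cC(\tau_0))=0$ and $\fp_0(v^1_\cC(\tau_0)-v^2_\cC(\tau_0))=0$, so the argument reduces to showing that the negative-mode component of the difference decays to zero backwards in $\tau$, then bootstrapping to pointwise vanishing.

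First, I would establish the almost quadratic concavity $(v^2)_{yy}\leq e^\tau/v^2$ via the maximum principle applied to the evolution equation for $v$, carefully handling the nonlinear term $e^\tau\cN[v]$. This yields the Gaussian-type decay $Y\sim C e^{-v^2/4}$ near the tips, which is what makes the weight $\mu(v,\tau)=-\tfrac14 Y_1(v,\tau)^2$ in the tip-region norm integrable and confers the correct analytic setup. Next, I would introduce the differences $w:=v^1-v^2$, $w_\cC:=v^1\varphi_\cC(v^1)-v^2\varphi_\cC(v^2)$ in the cylindrical region, and $W:=Y^1-Y^2$, $W_\cT:=\varphi_\cT W$ in the tip region. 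A direct computation using the PDEs for $v$ and $Y$ yields
\begin{equation*}
w_\tau=\fL w+\mathcal{E}[w]+e^\tau\mathcal{F}[w],
\end{equation*}
together with an analogous degenerate-parabolic equation for $W$, where the tail contributions coming from $e^\tau\cN$ and $e^\tau\mathcal{M}$ are collected into error terms.

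The next step is the \emph{backward decay} estimate. Running Merle--Zaag-type energy estimates with the Gaussian-weighted norms $\|\cdot\|_{\mathcal{D},\infty}$ in the cylindrical region, and the tip-region norm $\|\cdot\|_{2,\infty}$, while exploiting the vanishing of the $\fp_+$ and $\fp_0$ projections and the norm equivalences in the transition zone (as in \cite[Section 8]{ADS2}), one obtains
\begin{equation*}
\|w_\cC\|_{\mathcal{D},\infty}+\|W_\cT\|_{2,\infty}\leq C\bigl(\|w\|_{C^2_{\exp}(\cC)}+\|W\|_{C^2_{\exp}(\cT)}\bigr).
\end{equation*}
The challenging bookkeeping here involves the extra terms generated when $\varphi_\cC$ is differentiated and the localization weight $\varphi_\cT$ interacts with the degeneracy of the tip equation; both are absorbable thanks to Theorem \ref{cor_unique_asympt} (uniform sharp asymptotics), which controls the geometry of the regions $\{v^i\geq\tfrac58\theta\}$ and $\{v^i\leq 2\theta\}$ uniformly.

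The main obstacle, and the genuinely new ingredient, is the \emph{forward} estimate \eqref{eq_forward_intro} that absorbs the exponentially weighted error. I would consider the Hausdorff distance $D(h):=d_{\mathrm{H}}(M^1\cap\{x_1=h\},M^2\cap\{x_1=h\})$, which is comparable to the $L^\infty$-norms of $w$ and $W$ at renormalized time $\tau=-\log h$, then pick $\tau'\leq\tau_0$ realizing the supremum in $\|w\|_{C^2_{\exp}(\cC)}+\|W\|_{C^2_{\exp}(\cT)}$ via \eqref{eq_C2_maximum_time_intro}. Applying the translator comparison principle (both $M^i$ are convex and move by the same ambient translation, so their Hausdorff distance is non-increasing for the associated flows) one obtains the weighted $L^\infty$-bound
\begin{equation*}
\sup_{h\in[h'/e^2,h']}D(h)\leq 10(\log h')^{1/2}D(h'),
\end{equation*}
and interior Schauder estimates for \eqref{evVintro}, whose ellipticity degenerates only polynomially in $\log h$ by Theorem \ref{cor_unique_asympt}, upgrade this $L^\infty$-control into the desired $C^2_{\exp}$-control:
\begin{equation*}
\|w\|_{C^2_{\exp}(\cC)}+\|W\|_{C^2_{\exp}(\cT)}\leq \eps\bigl(\|w_\cC\|_{\mathcal{D},\infty}+\|W_\cT\|_{2,\infty}\bigr).
\end{equation*}
Combining the backward and forward inequalities for $\eps$ sufficiently small forces both sides to vanish, hence $w\equiv 0$ and $W\equiv 0$, i.e.\ $M^1=M^2$.
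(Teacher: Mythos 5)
Your proposal follows essentially the same route as the paper's own proof: almost quadratic concavity by the maximum principle, energy estimates for $w_\cC$ and $W_\cT$ in the cylindrical and tip regions leading to the backward decay estimate, and then the forward estimate via the Hausdorff distance of level sets, the comparison principle, and interior estimates whose ellipticity degenerates only polynomially in $\log h$, so combining the two inequalities forces $M^1=M^2$. The only imprecision is your parenthetical claim that the Hausdorff distance of the level sets is non-increasing under the flow; the paper instead shifts $K^1$ in the $e_1$-direction by an amount controlled by $D(h')$ and the slope bounds from the uniform sharp asymptotics, propagates the resulting inclusion of level sets downward by the comparison principle, and converts back using the upper slope bound, which is exactly what yields the stated $(\log h')^{1/2}$-weighted estimate you then use correctly.
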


We recall that given $M\subset \mathbb{R}^4$ (normalized as before, namely such that it translates with unit speed in positive $x_1$-direction and such that the circular symmetry is in the $x_3x_4$-plane centered at the origin), we denote by $V(x,t)$ the profile function of the level sets $M\cap \{x_1 = -t\}$, and write
\begin{align}
V(x,t)=\sqrt{-t}\, v(y,\tau), \quad\textrm{where}\quad y=\frac{x}{\sqrt{-t}},\quad\tau=-\log (-t).
\end{align}
The cylindrical profile function is defined by
\begin{equation}
v_{\cC}(y,\tau)=\varphi_{\cC}(v(y,\tau))v(y,\tau).
\end{equation}
Here, we  fix a sufficiently small constant $\theta>0$ and a smooth cutoff function $\varphi_\cC:\mathbb{R}^+\to [0,1]$, such that
\begin{align}
&\varphi_\cC(v)=0 \quad\text{if}\; v\leq \tfrac58 \theta, && \varphi_\cC(v)=1\quad \text{if}\; v\geq  \tfrac78 \theta, 
\end{align}
and
\begin{align}\label{eq_def.collar.cutoff_pre}
 0\leq \varphi_\cC'\leq 5/\theta, &&|\varphi_\cC''|\leq 25/\theta^2, &&|\varphi_\cC'''|\leq 125/\theta^3. 
\end{align}
We also recall that the evolution of $v_{\cC}$ is governed by the Ornstein-Uhlenbeck operator
\begin{equation}
\mathfrak{L}=\partial_y^2-\frac{y}{2}\partial_y+1,
\end{equation}
which is a self-adjoint operator on the Hilbert space $\fH:=L^2(\mathbb{R},e^{-y^2/4} dy)$, and
\begin{equation}
\fH=\fH_+\oplus \fH_0 \oplus \fH_-,
\end{equation} 
where $\fH_+$ is spanned by the unstable eigenfunctions $\psi_1=1$ and $\psi_2=y$, and $\fH_0$ is spanned by the neutral eigenfunction $\psi_0=y^2-2$, and that we write $\mathfrak{p}_\pm$ and $\mathfrak{p}_0$ for the orthogonal projections on $\fH_\pm$ and $\fH_0$. Finally, we recall that \eqref{rest_eq_cent} is in fact automatically satisfied as a consequence of our centering condition
\begin{equation}
\mathfrak{p}_{+}(v_{\cC}(\tau_0)-\sqrt{2})=0.
\end{equation}

Now, similarly as in \cite[Figure 1]{ADS2} we consider the following regions:
\begin{definition}[regions]
Fixing $\theta>0$ sufficiently small and $L<\infty$ sufficiently large, we call
\begin{itemize}
\item { $\mathcal{C}=\{ v \geq \tfrac58 \theta\}$} the \emph{cylindrical region},
\item $\mathcal{T}=\{ v \leq 2 \theta\}$ the \emph{tip region}, which can be decomposed as the union of the \emph{soliton region}  $\mathcal{S}=\{ v \leq L /\sqrt{|\tau|} \}$ and the  \emph{collar region}
 $\mathcal{K}=\{L /\sqrt{|\tau|}\leq  v \leq 2 \theta\}$.
\end{itemize}
\end{definition}
Observe that the cutoff function $\varphi_\cC$ from above localizes in the cylindrical region, namely
\begin{equation}
\mathrm{spt}(v_\cC)\subset\mathcal{C}.
\end{equation}
To localize in the tip region, we fix a smooth cutoff function $\varphi_\cT(v)\in [0,1]$, such that
\begin{align}\label{cutoff_tip_region}
&\varphi_\cT(v)=1 \quad\text{if}\; v\leq \theta, && \varphi_\cT(v)=0\quad \text{if}\; v\geq  2 \theta\, .
\end{align}
In the tip region, say the one with $y>0$, we consider the inverse profile function $Y(v,\tau)$ defined by
\begin{equation}
Y(v(y,\tau),\tau)=y,
\end{equation}
and its zoomed in version $Z$ defined by
\begin{equation}\label{zoomed_in_5}
Z(\rho,\tau)= |\tau|^{1/2}\left(Y(|\tau|^{-1/2}\rho,\tau)-Y(0,\tau)\right).
\end{equation}
By convention during the whole section  $\theta$ is a fixed small constant and $L$ is a fixed large constant. During the proof one is allowed to decrease $\theta$ and increase $L$ at finitely many instances, as needed or convenient.

\bigskip
 
\subsection{Evolution equations}

In this subsection we compute the evolution equations of the profile functions, both in the cylindrical region and the tip regions.

As before, we denote by $V(x,t)$ the profile function of the level set $M\cap \{x_1 = -t\}$ of our translator, and write
\begin{align}
v(y,\tau) = e^{\tau/2} V(e^{-\tau/2} y, -e^{-\tau})\, .
\end{align}

\begin{proposition}[evolution equation for profile function]\label{prop_evol_profile}
The profile function $V(x,t)$ and its renormalized version $v(y,\tau)$ satisfy\footnote{For comparison, the profile function $U$ and renormalized profile function $u$ of a mean curvature flow of surfaces would satisfy the simpler equations $U_t = \frac{U_{xx}}{1+U_x^2}-\frac{1}{U}$ and $u_\tau=\frac{u_{yy}}{1+u_y^2}-\frac{y}{2}u_y+\frac{u}{2}-\frac{1}{u}$, respectively.}
\begin{equation}\label{V eq}
V_t=\frac{(1+V_t^2)V_{xx}+(1+V_x^2)V_{tt}-2V_xV_tV_{xt}}{1+V_x^2+V_t^2}-\frac{1}{V}.
\end{equation}
and 
\begin{align}
v_\tau=&\frac{v_{yy}}{1+v_y^2}-\frac{y}{2}v_y+\frac{v}{2}-\frac{1}{v}+ e^{\tau}\cN[v],
\end{align}
where 
\begin{multline}
\cN[v]=\frac{\left( v_y(v_\tau-\frac{v}{2})-\frac{y}{2}\right)^2}{(1+v_y^2)\left(1+v_y^2+e^{\tau}(v_\tau+\frac{y}{2}v_y-\frac{v}{2})^2\right) }v_{yy}\\
+\frac{(1+v_y^2)v_{\tau\tau}-2\left(v_y(v_\tau -\frac{v}{2})-\frac{y}{2}\right) v_{\tau y}+\frac{1}{4}(1+v_y^2)(yv_y-v)}{1+v_y^2+e^{\tau}(v_\tau+\frac{y}{2}v_y-\frac{v}{2})^2 }\, .
\end{multline}
\end{proposition}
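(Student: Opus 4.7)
The plan is to derive both equations directly from the translator equation $\mathbf{H}=e_1^\perp$ using the explicit parametrization afforded by the $\mathrm{SO}(2)$-symmetry, and then to change variables.

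For the first identity I would parametrize a piece of $M$ as $\Phi(x,t,\theta)=(-t,\,x,\,V(x,t)\cos\theta,\,V(x,t)\sin\theta)$. A direct computation gives the induced metric (block-diagonal, with angular block $V^2$ and $(x,t)$-block of determinant $D:=1+V_x^2+V_t^2$) and the outward unit normal $\nu=D^{-1/2}(V_t,-V_x,\cos\theta,\sin\theta)$. The only nonzero entries of the second fundamental form are $A_{xx}=V_{xx}/\sqrt D$, $A_{xt}=V_{xt}/\sqrt D$, $A_{tt}=V_{tt}/\sqrt D$, and $A_{\theta\theta}=-V/\sqrt D$, and summing $g^{ij}A_{ij}$ gives
\begin{equation*}
\mathbf{H}\cdot\nu=\frac{(1+V_t^2)V_{xx}-2V_xV_tV_{xt}+(1+V_x^2)V_{tt}}{D^{3/2}}-\frac{1}{V\sqrt D}.
\end{equation*}
The translator equation gives $\mathbf{H}\cdot\nu=\langle e_1,\nu\rangle=V_t/\sqrt D$, so multiplication by $\sqrt D$ produces exactly equation \eqref{V eq}.

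For the renormalized equation I would change variables to $y=x/\sqrt{-t}$, $\tau=-\log(-t)$, so $V=\sqrt{-t}\,v(y,\tau)$. Using $\partial_t\tau=e^\tau$ and $\partial_t y=(y/2)e^\tau$, routine chain-rule bookkeeping gives $V_x=v_y$, $V_{xx}=e^{\tau/2}v_{yy}$, $V_t=e^{\tau/2}f$ with $f:=v_\tau+\tfrac{y}{2}v_y-\tfrac{v}{2}$, together with $V_{xt}=e^\tau(v_{\tau y}+\tfrac{y}{2}v_{yy})$ and $V_{tt}=e^{3\tau/2}(v_{\tau\tau}+yv_{\tau y}+\tfrac{y^2}{4}v_{yy}+\tfrac{y}{4}v_y-\tfrac{v}{4})$. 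Substituting into \eqref{V eq} and dividing through by $e^{\tau/2}$ (the natural scale of $V_t$ and $1/V$) puts the PDE into the form $f+1/v=v_{yy}/D+e^\tau\mathcal{R}[v]/D$, where now $D=1+v_y^2+e^\tau f^2$ and $\mathcal{R}[v]$ collects the $e^\tau$-scale contributions from $V_{tt}$, $V_xV_tV_{xt}$, and $V_t^2V_{xx}$. Splitting $v_{yy}/D=v_{yy}/(1+v_y^2)-e^\tau f^2v_{yy}/[(1+v_y^2)D]$ extracts the drift/shrinker terms $v_{yy}/(1+v_y^2)-\tfrac{y}{2}v_y+\tfrac{v}{2}-\tfrac{1}{v}$, and what remains is the claimed $e^\tau\mathcal{N}[v]$.

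What is then left is to simplify the $e^\tau$-piece into the precise closed form $\mathcal{N}[v]$ stated. Collecting coefficients of $v_{yy}$, $v_{\tau y}$, $v_{\tau\tau}$ and of the lower-order terms is pure algebra; the conceptually crucial step is the completion of squares
\begin{equation*}
\tfrac{v_y^2 f^2}{1+v_y^2}+\tfrac{(1+v_y^2)y^2}{4}-v_yfy=\frac{(v_yf-(1+v_y^2)y/2)^2}{1+v_y^2}=\frac{(v_y(v_\tau-v/2)-y/2)^2}{1+v_y^2},
\end{equation*}
together with the parallel identity $(1+v_y^2)y-2v_yf=-2(v_y(v_\tau-v/2)-y/2)$ for the $v_{\tau y}$ coefficient; both use only the definition of $f$. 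The hardest --- though still not deep --- part of the derivation is just keeping track of signs across the change of variables; the rest is bookkeeping, and the algebraic repackaging above is what makes the degenerate-elliptic structure of the equation manifest and what gets exploited throughout Section~\ref{sec_profile}.
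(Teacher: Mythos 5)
Your proposal is correct and follows essentially the same route as the paper: the same cylindrical parametrization and normal leading to \eqref{V eq} (computing $\sum g^{ij}A_{ij}$ is the same calculation as the paper's $\langle\Delta X,N\rangle$), the same chain-rule formulas for $V_x,V_t,V_{xx},V_{xt},V_{tt}$, and the same fraction-splitting to peel off $\tfrac{v_{yy}}{1+v_y^2}-\tfrac{y}{2}v_y+\tfrac{v}{2}-\tfrac1v$, with your completion-of-squares identity and the identity $(1+v_y^2)y-2v_yf=-2\bigl(v_y(v_\tau-\tfrac v2)-\tfrac y2\bigr)$ being exactly the "grouping of terms proportional to $v_{yy}$, $v_{\tau y}$, $v_{\tau\tau}$" that the paper's proof invokes to reach the stated form of $\cN[v]$.
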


\begin{proof} We parametrize our translator $M\subset \mathbb{R}^4$ by
\begin{equation}
X(x,t,\theta)=(-t,x,V(x,t)\cos\theta,V(x,t)\sin\theta).
\end{equation}
Setting $e_r=\cos\theta e_3+\sin\theta e_4$ and $e_s=-\sin\theta e_3+\cos\theta e_4$ we can express the tangent vectors as
\begin{align}
& X_x=e_2+V_xe_r,&&X_t=-e_1+V_te_r,  && X_\theta=Ve_s.
\end{align}
Thus, the non-vanishing components of the induced metric are given by
\begin{align}
& g_{xx}=1+V_x^2, && g_{tt}=1+V_t^2, && g_{xt}=V_tV_x, && g_{\theta\theta}=V^2.
\end{align}
Hence, the non-vanishing components of the inverse metric are
\begin{align}
& g^{xx}=\frac{1+V_t^2}{1+V_x^2+V_t^2}, && g^{tt}=\frac{1+V_x^2}{1+V_x^2+V_t^2}, && g^{xt}=-\frac{V_tV_x}{1+V_x^2+V_t^2} && g^{\theta\theta}=V^{-2}.
\end{align}
Next, the upwards unit normal equals
\begin{align}\label{eq_unitnorm}
&N=\frac{V_te_1-V_xe_2+e_r}{\sqrt{1+V_x^2+V_t^2}}.
\end{align}
Furthermore, we have
\begin{align}
& X_{xx}=V_{xx}e_r, &&X_{tt}=V_{tt}e_r,  && X_{xt}=V_{xt}e_r, && X_{\theta\theta}=-Ve_r.
\end{align}
Using the above formulas, we can now compute
\begin{equation}
H=\langle \Delta X,N\rangle=\left(\frac{(1+V_t^2)V_{xx}+(1+V_x^2)V_{tt}-2V_xV_tV_{xt}}{1+V_x^2+V_t^2}-\frac{1}{V}\right)\langle e_r,N\rangle.
\end{equation}
Together with the translator equation $H=\langle e_1,N\rangle$ and equation \eqref{eq_unitnorm}, this yields
\begin{equation}
V_t=\frac{(1+V_t^2)V_{xx}+(1+V_x^2)V_{tt}-2V_xV_tV_{xt}}{1+V_x^2+V_t^2}-\frac{1}{V},
\end{equation}
which proves the first evolution equation.\\

Next, observing that
\begin{equation}
V_x=v_y,
\end{equation}
and
\begin{equation}
V_t=-\frac{1}{2}(-t)^{-\frac{1}{2}}v+\frac{x}{2}(-t)^{-1}v_y+(-t)^{-\frac{1}{2}}v_\tau=e^{\frac{\tau}{2}}\left(v_\tau+\frac{y}{2}v_y-\frac{v}{2}\right),
\end{equation}
as well as
\begin{equation}
V_{xx}=e^{\frac{\tau}{2}}v_{yy},
\end{equation}
and 
\begin{equation}
V_{tt}=e^{\frac{3\tau}{2}}\left(v_{\tau\tau}+yv_{\tau y}+\frac{y^2}{4}v_{yy}+\frac{y}{4}v_y-\frac{v}{4}\right),
\end{equation}
and
\begin{equation}
V_{xt}=e^{\tau}\left(v_{y\tau}+\frac{y}{2}v_{yy}\right),
\end{equation}
we infer that
\begin{align}
e^{\frac{\tau}{2}}\left(v_\tau+\frac{y}{2}v_y-\frac{1}{2}v\right)=
\frac{1+e^{\tau}\left(v_\tau+\frac{y}{2}v_y-\frac{v}{2}\right)^2}{1+v_y^2+e^{\tau}\left(v_\tau+\frac{y}{2}v_y-\frac{v}{2}\right)^2}e^{\frac{\tau}{2}}v_{yy}
+\frac{(1+v_y^2)e^{\frac{3\tau}{2}}\left(v_{\tau\tau}+yv_{\tau y}+\frac{y^2}{4}v_{yy}+\frac{y}{4}v_y-\frac{v}{4}\right)}{1+v_y^2+e^{\tau}\left(v_\tau+\frac{y}{2}v_y-\frac{v}{2}\right)^2}\nonumber\\
-\frac{2v_ye^{\frac{3\tau}{2}}\left(v_\tau+\frac{y}{2}v_y-\frac{v}{2}\right)\left(v_{y\tau}+\frac{y}{2}v_{yy}\right)}{1+v_y^2+e^{\tau}\left(v_\tau+\frac{y}{2}v_y-\frac{v}{2}\right)^2}-\frac{e^{\frac{\tau}{2}}}{v}\, .
\end{align}
Together with the formula
\begin{equation}\label{identity.fraction}
\frac{1+b}{1+a+b}=\frac{1}{1+a}+\frac{ab}{(1+a)(1+a+b)}
\end{equation}
this implies
\begin{align}
v_\tau=&\frac{v_{yy}}{1+v_y^2}-\frac{y}{2}v_y+\frac{v}{2}-\frac{1}{v}+ e^{\tau}\cN[v],
\end{align}
where
\begin{multline}
\cN[v]=\frac{v_y^2(v_\tau+\frac{y}{2}v_y-\frac{v}{2})^2}{(1+v_y^2)\left(1+v_y^2+e^{\tau}(v_\tau+\frac{y}{2}v_y-\frac{v}{2})^2\right) } v_{yy}\\
+\frac{(1+v_y^2)(v_{\tau\tau}+yv_{\tau y}+\frac{y^2}{4}v_{yy}+\frac{y}{4}v_y-\frac{v}{4})-2v_y(v_\tau+\frac{y}{2}v_y-\frac{v}{2})(v_{\tau y}+\frac{y}{2}v_{yy})}{1+v_y^2+e^{\tau}(v_\tau+\frac{y}{2}v_y-\frac{v}{2})^2 } \, .
\end{multline}
Grouping together terms proportional to $v_{yy}$, $v_{\tau\tau}$ and $v_{\tau y}$, respectively, this proves the proposition.
\end{proof}

As before, in the tip regions, we consider the inverse profile function $Y(v,\tau)$ defined as the inverse function of $v(y,\tau)$, and its zoomed in version $Z$ defined by
\begin{equation}
Z(\rho,\tau)= |\tau|^{1/2}\left(Y(|\tau|^{-1/2}\rho,\tau)-Y(0,\tau)\right).
\end{equation}

\begin{proposition}[evolution equation for inverse profile function]\label{prop_inverse.evol}
We have
\begin{equation}\label{eq_Y.evolution}
Y_\tau=\frac{Y_{vv}}{1+Y_v^2}+\frac{1}{v}Y_v +\frac{1}{2}(Y-vY_v)+e^{\tau}\mathcal{M}[Y],
\end{equation}
where
\begin{multline}
\mathcal{M}[Y]= \frac{\left( (\tfrac{1}{2}Y-Y_\tau)Y_v+\tfrac{v}{2}  \right)^2}{(1+Y_v^2)\left(1+Y_v^{2}+e^{\tau}(\frac{Y}{2}-Y_\tau-\frac{v}{2}Y_v)^2\right) } Y_{vv}\\
+\frac{(1+Y_v^2)Y_{\tau\tau}+(v+YY_v-2Y_vY_\tau)Y_{v\tau}+\frac{1}{4}(1+Y_v^2)(vY_v-Y)}{1+Y_v^{2}+e^{\tau}(\frac{Y}{2}-Y_\tau-\frac{v}{2}Y_v)^2}\, .
\end{multline}
\end{proposition}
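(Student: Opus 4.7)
The proposition follows from Proposition 5.1 by the change of variables $v(y,\tau)\mapsto Y(v,\tau)$. The plan is to: (i) convert the derivatives of $v$ into derivatives of $Y$ by implicit differentiation; (ii) substitute into the evolution equation from Proposition 5.1 and multiply both sides by $-Y_v$; (iii) verify that the nonlinear term $-Y_v\cN[v]$ rearranges into the stated $\mathcal{M}[Y]$.

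For step (i), differentiating $Y(v(y,\tau),\tau)=y$ yields $v_y=1/Y_v$ and $v_\tau=-Y_\tau/Y_v$, and a second differentiation together with the chain rule gives
\begin{equation*}
v_{yy}=-\frac{Y_{vv}}{Y_v^3},\quad v_{y\tau}=\frac{Y_\tau Y_{vv}}{Y_v^3}-\frac{Y_{v\tau}}{Y_v^2},\quad v_{\tau\tau}=-\frac{Y_{\tau\tau}}{Y_v}+\frac{2Y_\tau Y_{v\tau}}{Y_v^2}-\frac{Y_\tau^2 Y_{vv}}{Y_v^3}\, .
\end{equation*}
Since $1+v_y^2=(1+Y_v^2)/Y_v^2$, multiplying the leading-order part of the $v$-equation by $-Y_v$ converts $\tfrac{v_{yy}}{1+v_y^2}-\tfrac{y}{2}v_y+\tfrac{v}{2}-\tfrac{1}{v}$ into $\tfrac{Y_{vv}}{1+Y_v^2}+\tfrac{Y_v}{v}+\tfrac{1}{2}(Y-vY_v)$, matching the stated leading terms.

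For the nonlinear term, I would write $\cN[v]=c_{yy}v_{yy}+c_{y\tau}v_{y\tau}+c_{\tau\tau}v_{\tau\tau}+c_0$ with the coefficients read off from Proposition 5.1, and set $D:=1+v_y^2+e^\tau(v_\tau+\tfrac{y}{2}v_y-\tfrac{v}{2})^2$, noting that after substitution $D=D_Y/Y_v^2$ where $D_Y:=1+Y_v^2+e^\tau(Y/2-Y_\tau-vY_v/2)^2$. Direct computation then yields the $Y_{\tau\tau}$ coefficient $(1+Y_v^2)/D_Y$, the $Y_{v\tau}$ coefficient $(v+YY_v-2Y_vY_\tau)/D_Y$, and the zero-order term $(1+Y_v^2)(vY_v-Y)/(4D_Y)$; each simplification uses the key intermediate identity
\begin{equation*}
\bigl(v_y(v_\tau-v/2)-y/2\bigr)+(1+v_y^2)Y_\tau=-\frac{(Y/2-Y_\tau)Y_v+v/2}{Y_v}\, .
\end{equation*}

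The main algebraic obstacle is the $Y_{vv}$ coefficient, which receives simultaneous contributions from $v_{yy}$, $v_{y\tau}$, and $v_{\tau\tau}$. After collecting them and dividing by $Y_v^2$, the critical observation is the perfect-square identity
\begin{equation*}
c_{yy}-c_{y\tau}Y_\tau+c_{\tau\tau}Y_\tau^2=\frac{\bigl[(v_y(v_\tau-v/2)-y/2)+(1+v_y^2)Y_\tau\bigr]^2}{(1+v_y^2)D}\, ,
\end{equation*}
which, combined with the intermediate identity above, produces the desired coefficient $[(Y/2-Y_\tau)Y_v+v/2]^2/[(1+Y_v^2)D_Y]$ of $Y_{vv}$ in $\mathcal{M}[Y]$. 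Recognizing this perfect-square combination --- which is hidden until one collects the $Y_{vv}$ pieces from all three of $v_{yy}$, $v_{y\tau}$, $v_{\tau\tau}$ and uses the relation $c_{y\tau}^2=4c_{yy}c_{\tau\tau}/(1+v_y^2)^{-1}\cdot$ (the degeneracy of the quadratic form) --- is the crux of the calculation; everything else is bookkeeping of the $Y_v$-factors coming from $1+v_y^2$ and $D$.
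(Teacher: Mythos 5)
Your proposal is correct and follows essentially the same route as the paper: implicit differentiation of $y=Y(v(y,\tau),\tau)$ to express $v_y,v_\tau,v_{yy},v_{y\tau},v_{\tau\tau}$ through $Y$, multiplication of the $v$-equation by $-Y_v$, and conversion of $-Y_v\cN[v]$ by collecting the $Y_{vv}$, $Y_{v\tau}$, $Y_{\tau\tau}$ and zero-order coefficients, with the $Y_{vv}$ coefficient emerging from exactly the perfect-square combination you identify (the paper records it as $A_{vv}=\frac{(v_y(v_\tau-\frac{v}{2})-\frac{y}{2})^2}{1+v_y^2}+(1+v_y^2)Y_\tau^2+2(v_y(v_\tau-\frac{v}{2})-\frac{y}{2})Y_\tau=\frac{((\frac12 Y-Y_\tau)Y_v+\frac{v}{2})^2}{1+Y_v^2}$). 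Only your parenthetical degeneracy relation should read $c_{y\tau}^2=4c_{yy}c_{\tau\tau}$ without the extra $(1+v_y^2)$ factor, but since the displayed perfect-square identity you actually use is correct, this is a harmless slip.
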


\begin{proof} Differentiating $y=Y(v(y,\tau),\tau)$ yields
\begin{align}\label{eq_impl}
&0=Y_\tau+Y_v v_\tau, &&1=Y_v v_y.
\end{align}
Differentiating again gives
\begin{align}
&0=Y_{\tau\tau}+2Y_{\tau v}v_\tau +Y_{vv}v_\tau^2+Y_v v_{\tau\tau}, && 0=Y_{vv}v_y^2+Y_v v_{yy}, && 0= Y_{\tau v}v_y+Y_{vv}v_yv_\tau + Y_v v_{\tau y}.
\end{align}
Solving these equations we obtain
\begin{align}
& v_\tau=-Y_v^{-1}Y_\tau, && v_y=Y_v^{-1},
\end{align}
and
\begin{align}
& v_{\tau\tau}=-Y_v^{-1}Y_{\tau\tau}+ 2Y_v^{-2}Y_\tau Y_{\tau v} -Y_v^{-3}Y_\tau^2Y_{vv} , && v_{yy}=-Y_v^{-3}Y_{vv}, && v_{\tau y}=-Y_v^{-2}Y_{\tau v}+Y_v^{-3}Y_\tau Y_{vv}.
\end{align}
Together with the evolution equation for $v$ this yields
\begin{align}
Y_\tau&=-Y_v\left(\frac{v_{yy}}{1+v_y^2}-\frac{y}{2}v_y+\frac{v}{2}-\frac{1}{v}+ e^{\tau}\cN[v]\right)\nonumber\\
&=\frac{Y_{vv}}{1+Y_v^2}+\frac{1}{v}Y_v +\frac{1}{2}(Y-vY_v)-e^{\tau}Y_v \cN[v].
\end{align}
Finally, to express $\cN[v]$ in terms of $Y$, we compute
\begin{align}
Y_v\cN[v]\left(1+Y_v^2+e^{\tau}(Y_\tau-\tfrac{1}{2}Y+\tfrac{v}{2}Y_v)^2 \right)&=Y_v^3\cN[v]\left(1+v_y^2+e^{\tau}(v_\tau+\tfrac{y}{2}v_y-\tfrac{v}{2})^2 \right)\nonumber\\
&=-A_{vv}Y_{vv}-A_{v\tau}Y_{v\tau}-A_{\tau\tau}Y_{\tau\tau}-\tfrac14(1+Y_v^2)(vY_v-Y),
\end{align}
where
\begin{equation}
A_{\tau\tau}=1+Y_v^2,
\end{equation}
and
\begin{align}
A_{v\tau}=-2Y_vY_\tau(1+v_y^2) -2Y_v \left(v_y(v_\tau-\tfrac{v}{2})-\tfrac{y}{2}\right)
=-2Y_vY_\tau  +Y_v Y  +  v,
\end{align}
and
\begin{align}
A_{vv}= \frac{(v_y(v_\tau-\tfrac{v}{2})-\tfrac{y}{2})^2}{(1+v_y^2)}
+(1+v_y^2)Y_\tau^2+2\left( v_y(v_\tau-\tfrac{v}{2})-\tfrac{y}{2} \right) Y_\tau
=\frac{\left(   (\tfrac{1}{2}Y-Y_\tau)Y_v +\tfrac{v}{2} \right)^2}{(1+Y_v^2)}.
\end{align}
This proves the proposition.
\end{proof}

\bigskip

\subsection{Maximum principle estimates}

The goal of this subsection is to prove the following a priori estimate:

\begin{proposition}[almost quadratic concavity]\label{prop_almost_quad_conc} There exist constants $\kappa>0$ an $\tau_{\ast}>-\infty$ with the following significance. If $M$ is $\kappa$-quadratic at time $\tau_0 \leq {\tau}_{\ast}$, then its profile function $v$ satisfies
\begin{equation}
(v^2)_{yy}\leq \frac{e^\tau}{v^2}.
\end{equation}
for every $\tau \leq \tau_0$. 
\end{proposition}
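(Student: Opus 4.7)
The plan is to run a maximum principle argument on a carefully chosen auxiliary function that measures the failure of quadratic concavity. Concretely, I will work with
\begin{equation*}
Q(y,\tau) := (v^2)_{yy}(y,\tau) - \frac{e^\tau}{v(y,\tau)^2}
\end{equation*}
and aim to show $Q \leq 0$ on $\{v > 0\}$ for every $\tau \leq \tau_0$. The starting point is the evolution equation from Proposition \ref{prop_evol_profile}, which we write schematically as $v_\tau = \mathcal P[v] + e^\tau \mathcal N[v]$, where $\mathcal P$ is the rescaled mean curvature operator for a surface of revolution in $\mathbb R^3$. If $\mathcal N[v]$ were absent, a classical computation (compare \cite{ADS1}) would give that $(v^2)_{yy}$ satisfies a parabolic inequality preserving nonpositivity backwards in time. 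The translator correction $e^\tau \mathcal N[v]$ produces an extra source term of order $e^\tau$, and the subtraction of $e^\tau / v^2$ from $Q$ is designed so that the contribution of $\partial_\tau(-e^\tau / v^2)$ cancels this source at leading order via the $-1/v$ present in the equation.

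The main technical step (Step 1) is deriving the evolution equation for $Q$. Differentiating Proposition \ref{prop_evol_profile} twice in $y$ yields an equation for $(v^2)_{yy}$ in terms of spatial derivatives of $v_\tau$. Using the sharp asymptotics from Theorem \ref{cor_unique_asympt} to control the higher $\tau$-derivatives appearing in $\mathcal N[v]$ (via iterated use of the equation itself), I expect to obtain
\begin{equation*}
Q_\tau \leq A\, Q_{yy} + B\, Q_y + C\, Q + R,
\end{equation*}
with $A > 0$ and $B, C$ controlled on the region of validity of the sharp asymptotics and with $R \leq 0$ once the $-e^\tau/v^2$ contribution is taken into account.

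For the initial and boundary data (Step 2), at $\tau = \tau_0$ the $\kappa$-quadraticity gives $v(y,\tau_0) \approx \sqrt{2} - (y^2 - 2)/(\sqrt{8}|\tau_0|)$ in the cylindrical region, so $(v^2)_{yy} \approx -2/|\tau_0| < e^{\tau_0}/v^2$. In the intermediate and tip regions, Theorem \ref{cor_unique_asympt} reduces the computation to the $2d$ bowl profile $Z_0$, on which $(v^2)_{yy}$ is strictly negative at leading order. At the tips one has $v \to 0$, so $e^\tau / v^2 \to +\infty$ and $Q \to -\infty$, which is a favorable boundary condition. Applying the maximum principle backward in time from $\tau_0$, on the approximations $\{v \geq \delta\}$ and then letting $\delta \to 0$, then yields $Q \leq 0$ for all $\tau \leq \tau_0$.

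The main obstacle is Step 1: the correction $\mathcal N[v]$ contains $v_{\tau\tau}$ and mixed $v_{y\tau}$ derivatives, so differentiating twice in $y$ produces many higher-order terms. Each must be bounded by $e^\tau$ times a quantity controlled by the sharp asymptotics, and then absorbed into either the principal part of the $Q$-equation or the cancellation provided by the choice of auxiliary function. A secondary difficulty arises at the tips, where the equation degenerates; the quantitative convergence to the $2d$ bowl profile $Z_0$ from Theorem \ref{cor_unique_asympt} must be used to verify both the initial bound on $Q$ and the sign of $Q$ as $v \to 0$.
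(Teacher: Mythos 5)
Your choice of the auxiliary quantity $Q=(v^2)_{yy}-e^\tau v^{-2}$ is exactly the paper's $\Phi$, and Step 1 is broadly in the right spirit, but Step 2 contains a decisive error of direction: you verify $Q\le 0$ at the single time $\tau_0$ and then propose to ``apply the maximum principle backward in time from $\tau_0$'' to get $Q\le 0$ for all $\tau\le\tau_0$. A parabolic inequality $Q_\tau\le AQ_{yy}+BQ_y+CQ+R$ with $A>0$ propagates nonpositivity \emph{forward} in $\tau$; there is no maximum principle in the backward direction, so knowledge of $Q(\cdot,\tau_0)$ gives no control whatsoever on $\tau<\tau_0$. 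Since the assertion concerns the whole ancient time range, the anchoring must be at $\tau\to-\infty$, not at $\tau_0$. This is how the paper argues (Lemma \ref{lemma_max_princ} and the proof of Proposition \ref{prop_almost_quad_conc}): at an interior maximum with $\Phi>0$ one has $\Phi_t<0$, so a positive value of $\Phi$ at any time would force $\max\Phi(\cdot,\tau)$ to be at least that value at \emph{all earlier} times; since $(v^2)_{yy}=2vv_{yy}+2v_y^2<2v_y^2$ by concavity, this pins $v_y^2\ge c/2$ at the maximum points, contradicting the vanishing-slope asymptotics (from Corollary \ref{cor_unique_asympt} and Lemma \ref{lemma_derivatives}) as $\tau\to-\infty$. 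In other words, the forward maximum principle is used contrapositively against the asymptotic behavior in the far past; your argument, which only uses information at $\tau_0$, cannot reach the conclusion.

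A secondary gap is the treatment of the tip. Your exhaustion by $\{v\ge\delta\}$ with $\delta\to 0$ presumes that the evolution inequality for $Q$, with controlled $B$, $C$ and a signed $R$, holds uniformly down to the tip; but the error terms produced by $e^\tau\mathcal N[v]$ (and by differentiating it twice) are controlled by the cylindrical derivative estimates only where the slope is small, i.e.\ on $\{v\gtrsim L/\sqrt{|\tau|}\}$, and these estimates fail near the tip where $v_y$ blows up. The paper therefore restricts the maximum principle computation to $\{v\ge L/\sqrt{|\tau|}\}$ (working in the unrescaled variables, with the errors absorbed at maximum points into the good term $-1/Q^2$ rather than having a pointwise sign), and disposes of the soliton region separately by showing $(v^2)_{yy}<0$ there via the $C^{100}$-closeness of $Z$ to the bowl profile $Z_0$ and \cite[Lemma 4.4]{ADS2}; your claim that $Q\to-\infty$ as $v\to 0$ would itself require such tip asymptotics and does not substitute for controlling the equation in the collar. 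Repairing your proof would thus require both reversing the logical direction (contradiction against the $\tau\to-\infty$ asymptotics) and splitting off the soliton region as in the paper.
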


To show this, we will adapt the argument from \cite[Section 5]{ADS2} to our setting. To begin with, we have the following cylindrical derivative estimates away from the tip:

\begin{lemma}[derivative estimates]\label{lemma_derivatives}
For every $\eps>0$, there exist $\kappa(\eps)>0$,  $L_0(\eps)<\infty$ and $T_\ast(\eps)>-\infty$ so that the profile function $V(x,t)$ of any $\kappa$-quadratic solution  satisfies
\begin{multline}\label{multi_est}
|V_x|+V|V_{xx}|+V^2|V_{xxx}|+V^3|V_{xxxx}|+|VV_t+1|+ V^2|V_{tx}| + V^3|V_{txx}|+V^4|V_{txxx}| \\
+ |V^3V_{tt}-1|+ V^4|V_{ttx}|+V^5 |V_{ttxx}|  \leq \eps
\end{multline}
at all points where $V(x,t)\geq L_0\left(\frac{-t}{\log(-t)}\right)^{1/2}$ and $t\leq T_\ast$. 
\end{lemma}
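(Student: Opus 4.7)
The plan is to reduce \eqref{multi_est} to interior derivative estimates on known smooth limit profiles via Theorem \ref{cor_unique_asympt} (uniform sharp asymptotics). Transforming to renormalized coordinates $y = x/\sqrt{-t}$, $\tau = -\log(-t)$, the hypothesis $V(x,t) \geq L_0(-t/\log(-t))^{1/2}$ becomes $v(y,\tau)\geq L_0/\sqrt{|\tau|}$, a region exhausted by three overlapping sub-regions from Theorem \ref{cor_unique_asympt}: the parabolic sub-region $\{|y|\leq \eps^{-1}\}$, the intermediate sub-region $\{\eps \leq |y|/\sqrt{|\tau|} \leq \sqrt{2}-\eps\}$, and the collar sub-region where $v \in [L_0/\sqrt{|\tau|}, 2\theta]$. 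In each case the sharp asymptotics identify an explicit smooth limit (the cylinder $\sqrt 2$, the ellipse $\sqrt{2-z^2}$, or the $2d$-bowl profile $Z_0$), and a direct computation shows that all quantities on the left of \eqref{multi_est} either vanish or attain explicit constants on the corresponding limit.

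I would upgrade $C^0$-closeness to $C^k$-closeness, $k\leq 4$, by working on the hypersurface directly rather than on $V$. Namely, by Theorem \ref{thm_non_collapsed} our translator is an entire convex noncollapsed graph $x_1 = u(x_2,x_3,x_4)$ satisfying the quasilinear elliptic translator PDE, and noncollapsing gives $|A|\leq C/V$. Around any interior point $p_0$ in the region, rescaling by $V_0:=V(x_0,t_0)$ produces a hypersurface with $|A|\leq C$ on unit balls which is $C^0$-close to a fixed model (the cylinder, the shrinking cylinder at its intermediate scale, or the unit-speed $2d$-bowl times $\mathbb{R}$) by the sharp asymptotics. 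Standard interior elliptic Schauder estimates for the translator equation then upgrade this to $C^k$-closeness with constants depending only on $\eps$. Translating derivatives of $u$ back to derivatives of the profile function $V$ via the implicit relation $u(x_2,V\cos\theta,V\sin\theta)=-t$ and the chain rule yields \eqref{multi_est}, with each factor of $V$ on the left exactly balancing a rescaling factor.

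The main obstacle is the need for a priori smallness of $V_x$ and $|VV_t+1|$ before higher derivatives can be interpreted geometrically, and the collar sub-region, in which only the $C^{100}$-closeness of the rescaled tip profile $Z$ to $Z_0$ is available and where extra factors of $|\tau|^{1/2}$ from the zoom-in \eqref{zoomed_in_5} must be carefully tracked. I expect the cleanest unified route is a compactness/contradiction argument: if \eqref{multi_est} failed along sequences $\kappa_i \to 0$, $L_{0,i}\to \infty$, $T_{\ast,i}\to -\infty$ at points $(x_i,t_i)$, then the parabolic rescalings of the mean curvature flows $M^i_t = M^i + te_1$ by $V(x_i,t_i)^{-1}$ would, by the global convergence theorem \cite[Theorem 1.12]{HaslhoferKleiner_meanconvex}, subsequentially converge to an ancient noncollapsed limit, which Theorem \ref{cor_unique_asympt} pins down as a round cylinder in sub-regions (a)/(b) or as a $2d$-bowl times a line in sub-region (c). On any of these limits the quantities in \eqref{multi_est} vanish or equal their asserted values, contradicting the assumed failure and completing the proof.
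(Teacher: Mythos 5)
Your overall strategy in the final paragraph --- scale invariance of the left-hand side, a contradiction sequence, parabolic rescaling at the bad points, and the global convergence theorem of Haslhofer--Kleiner --- is indeed the route the paper takes. But the step where you identify the blow-up limit is exactly where your argument breaks, and it is the crux of the lemma. You allow the limit to be ``a $2d$-bowl times a line in sub-region (c)'' and assert that on either limit the quantities in \eqref{multi_est} ``vanish or equal their asserted values''. That is not true on $\mathbb{R}\times\mathrm{Bowl}_2$: at bounded rescaled distance $\rho$ from the tip one has, e.g., $|V_x|\sim 1/Z_0'(\rho)$ and $V|V_{xx}|$ of order one, so no contradiction would result --- which is precisely why the lemma only claims the estimate where $V\geq L_0(\eps)\left(\tfrac{-t}{\log(-t)}\right)^{1/2}$ with $L_0(\eps)$ large, i.e.\ why the soliton scale is excluded. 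In the contradiction setup (where effectively $L_{0,i}\to\infty$) the bowl limit has to be \emph{ruled out}, not accommodated, and your appeal to Theorem \ref{cor_unique_asympt} neither rules it out nor identifies the limit in the bulk of the collar: the tip asymptotics give closeness to $Z_0$ only on the bounded ball $B(0,\eps^{-1})$ in the zoomed variable $\rho=|\tau|^{1/2}v$, and the intermediate asymptotics only for $|z|\leq\sqrt{2}-\eps$, so at points with, say, $v\sim|\tau|^{-1/4}$ neither statement supplies a model at scale $V$. The same coverage gap undermines your first, region-by-region Schauder route (where, in addition, the claimed bound $|A|\leq C/V$ in the collar is not a consequence of noncollapsing alone).

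What is missing is a mechanism that identifies the limit uniformly across the collar, and this is what the paper supplies: first, the tip asymptotics combined with convexity (concavity of $V$ in $x$ propagates the small slope at the edge of the tip region inward) give $|V_x|\leq\eps_1$ throughout $\{V\geq L_0\sqrt{-t/\log(-t)}\}$; second, sharp noncollapsing together with the inscribed radius estimate gives $H\geq\tfrac{1}{2V}$ there, so one can rescale by $H^{-1}$ and invoke the global convergence theorem; third, the small slope, the divergence $(\log(-t_i)/(-t_i))^{1/2}V(x_i,t_i)\to\infty$, and the vanishing asymptotic slope force the limit to split off two lines, whence by \cite[Lemma 3.14]{HaslhoferKleiner_meanconvex} it is the round shrinking $\mathbb{R}^2\times S^1$ --- in particular the \emph{self-similarly shrinking} cylinder, correctly normalized at the basepoint, which matters because the displayed combinations are tuned to it (note $VV_t=-1$ there); a static or differently scaled cylinder, let alone a bowl, would not produce the contradiction. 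Without an argument of this type (two-line splitting plus the rigidity lemma, or a genuine substitute), and with your rescaling by $V^{-1}$ additionally requiring an unproven two-sided comparison of $H$ with $1/V$ to even apply the compactness theorem, the proposal does not close.
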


\begin{proof}
By the sharp asymptotics in the tip region from Corollary \ref{cor_unique_asympt} (uniform sharp asymptotics) and convexity, for every $\eps_1>0$ there exist $L_1<\infty$ and $T_1>-\infty$ such that
\begin{equation}\label{eq_small_slope}
|V_x| \leq \eps_1
\end{equation}
at all points where $V(x,t)\geq L_0\left(\frac{-t}{\log(-t)}\right)^{1/2}$ and $t\leq T_1$.\\
Observe that the left hand side of \eqref{multi_est} is scale invariant and vanishes on $\mathbb{R}^2\times S^1$. Now suppose towards a contraction there are times $t_i\to -\infty$ and points $x_i$ such that
\begin{equation}\label{utaulimit}
\left(\tfrac{\log(-t_i)}{-t_i}\right)^{1/2} V(x_i,t_i)\to \infty,
\end{equation}
but such that the left hand side of \eqref{multi_est} is bigger than $\eps$. Note also that by Corollary \ref{prop_inscribed_lev} (inscribed radius), letting $p_i\in M_{t_i}$ be a point corresponding to $x_i$, for all large $i$ we have
\begin{equation}\label{eq_mean_curv_lower}
H(p_i,t_i)\geq \frac{1}{2V(x_i,t_i)}.
\end{equation}
Let $M^i_t$ be the sequence of flows that is obtained from $M_t$ by shifting $(p_i,t_i)$ to the origin, and parabolically rescaling by $H(p_i,t_i)^{-1}$. By the global convergence theorem \cite[Theorem 1.12]{HaslhoferKleiner_meanconvex}, we can pass to a subsequential limit $M^\infty_t$. It follows from \eqref{eq_small_slope}, \eqref{utaulimit}, \eqref{eq_mean_curv_lower} and Proposition \ref{lemma_asympt_slope} (asymptotic slope) that $M_t^\infty$ splits off two lines. Hence, applying \cite[Lemma 3.14]{HaslhoferKleiner_meanconvex} we infer that $M_t^\infty$ must be a round shrinking $\mathbb{R}^2\times S^{1}$. This yields the desired contradiction, and thus proves the proposition.
\end{proof}

After this preparation, we can now establish the main maximum principle estimate:

\begin{lemma}[maximum principle]\label{lemma_max_princ}
Given a sufficiently large $L<\infty$, if $\max \big((v^2)_{yy}-e^{\tau}v^{-2}\big) >0$ in $\{v \geq L/\sqrt{|\tau|}\}$, then we have $\partial_\tau \big( (v^2)_{yy}-e^{\tau}v^{-2}\big) <0$ at any interior maximum.
\end{lemma}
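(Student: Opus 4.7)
The goal is a maximum principle computation for $Q:=(v^2)_{yy}-e^{\tau}v^{-2}$, adapting the argument in \cite[Section 5]{ADS2}, where one considers the simpler quantity $(u^2)_{yy}$ for the 2d ancient oval. The starting point is to split the evolution equation from Proposition \ref{prop_evol_profile} (evolution equation for profile function) as
\begin{equation*}
v_\tau = F[v] + e^{\tau}\mathcal{N}[v], \qquad F[v]=\tfrac{v_{yy}}{1+v_y^2}-\tfrac{y}{2}v_y+\tfrac{v}{2}-\tfrac{1}{v},
\end{equation*}
so that $F[v]$ is precisely the evolution driving the ADS analysis while $e^{\tau}\mathcal{N}[v]$ carries the ``translator correction''. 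The first step is to differentiate $v v_\tau = v F[v]+e^\tau v\mathcal{N}[v]$ twice in $y$ to get an evolution equation for $(v^2)_{yy}$, and combine it with $\partial_\tau(e^\tau v^{-2})=e^\tau v^{-2}-2e^\tau v^{-3}v_\tau$. The subtraction $e^{\tau}v^{-2}$ is designed exactly so that, at leading order, the reaction terms generated by the curvature term $-1/v$ (which produced a cancellation in \cite{ADS2}) still cancel, modulo a manifestly negative contribution and a small error.

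The second step is the use of the interior-maximum conditions $Q_y=0$ and $Q_{yy}\leq 0$. The identity $Q_y=0$ expresses $v_{yyy}$ in terms of $v,v_y,v_{yy}$ and $e^\tau v^{-3}v_y$, which eliminates all third-order terms from $\partial_\tau Q$. The inequality $Q_{yy}\leq 0$ similarly replaces $v_{yyyy}$ by an explicit expression with a favorable sign in front of the leading elliptic operator $\partial_y^2/(1+v_y^2)$. What remains, following the same bookkeeping as in the ADS computation but tracked to one higher order because of the $e^\tau/v^2$ piece, has the schematic form
\begin{equation*}
\partial_\tau Q\Big|_{\mathrm{main}}= -\tfrac{2}{v^2}(1+v_y^2)Q -\tfrac{2e^\tau}{v^4}\big((v_y^2-1)+O(v^2)\big)+\cdots
\end{equation*}
which is strictly negative in the region $\{v\geq L/\sqrt{|\tau|}\}$ where $Q>0$, provided $L$ is large and the sharp asymptotics of Corollary \ref{cor_unique_asympt} ensure $|v_y|$ is small compared to $1$.

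The final step, and the main obstacle, is to control the error $e^\tau\mathcal{N}[v]$ and its first and second $y$-derivatives. The prefactor $e^\tau=-t^{-1}$ is extremely small, but $\mathcal{N}[v]$ contains $v_{\tau\tau}$ and $v_{\tau y}$ which a priori could be large at small scales. Translating back via $V_t=e^{\tau/2}(v_\tau+\tfrac{y}{2}v_y-\tfrac{v}{2})$ and the analogous formulas, the required bounds on $v,v_y,v_{yy},v_\tau,v_{\tau y},v_{\tau\tau}$ in $\{v\geq L/\sqrt{|\tau|}\}$ are exactly the scale-invariant derivative estimates supplied by Lemma \ref{lemma_derivatives} (derivative estimates) with $\eps$ arbitrarily small. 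Combining the $\eps$-smallness of these quantities with the $e^\tau$ prefactor, the error terms and their $y$-derivatives up to order two are bounded by, say, $\eps e^{\tau/2}v^{-4}$, which is dominated by the $-2e^\tau v^{-4}$ term in the main part for $L$ large and $\tau_\ast$ sufficiently negative. The only delicate point is that differentiating $\mathcal{N}[v]$ in $y$ produces $v_{yyy}$ and $v_{\tau yy}$, but these can either be eliminated using $Q_y=0$ as above or absorbed after another application of the derivative estimates to the rescaled flow. This finishes the strict negativity of $\partial_\tau Q$ at the interior maximum.
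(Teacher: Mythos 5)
Your overall strategy---compute the evolution of $\Phi:=(v^2)_{yy}-e^{\tau}v^{-2}$, use $\Phi_y=0$ and $\Phi_{yy}\le 0$ at the interior maximum, and control the translator correction via Lemma \ref{lemma_derivatives}---is the same as the paper's, which carries out the identical computation in the unrescaled variables with $Q=V^2$ and $\Phi=Q_{xx}-Q^{-1}$ (that change of variables is cosmetic). However, as written your quantitative bookkeeping has two genuine gaps. First, the error control fails at the level you state: the decisive negative term is of size $e^{\tau}v^{-4}$ (it is $-2/Q^2=-2/V^4$ in unrescaled variables, rescaled by the factor $e^{-\tau}$ in the time change), so an error bound of the form $\eps\, e^{\tau/2}v^{-4}$ is useless---since $e^{\tau/2}\gg e^{\tau}$ as $\tau\to-\infty$, such an error would swamp, not be dominated by, the good term. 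To win one must show the twice-differentiated nonlinearity is $O(\eps\, e^{\tau}v^{-4})$, i.e.\ that it carries the full $e^{\tau}$ prefactor with the correct $v$-weight. This is exactly what the weighted third- and fourth-order bounds in Lemma \ref{lemma_derivatives} (namely $V^3|V_{xxxx}|,\ V^4|V_{txxx}|,\ V^5|V_{ttxx}|\le\eps$) are designed for; your sketch invokes only the bounds through second order, and the fourth-order mixed terms $v_{\tau\tau yy}$, $v_{\tau yyy}$ (and $v_{yyyy}$ inside $\cN$) produced by differentiating $e^\tau\cN[v]$ twice cannot be eliminated by the spatial maximum conditions $\Phi_y=0$, $\Phi_{yy}\le 0$; note also that the raw renormalized derivatives $v_\tau,v_{\tau\tau}$ are not small in $\{v\ge L/\sqrt{|\tau|}\}$---only the correctly $v$-weighted combinations are.

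Second, the ``main part'' you display does not establish negativity: with $|v_y|$ small the term $-\tfrac{2e^\tau}{v^4}\big((v_y^2-1)+O(v^2)\big)$ is positive, and the term $-\tfrac{2}{v^2}(1+v_y^2)\Phi$ degenerates as the maximum value tends to $0^{+}$, so neither yields the required strict sign whenever $\max\Phi>0$. The actual mechanism is the uncancelled $-2/Q^2$ coming from the fiber-curvature term $-1/V$ together with $\partial_t(-Q^{-1})$, and making it survive requires precisely the algebra you assert away: the positive $Q^{-2}Q_{xx}$-type contributions must be cancelled or absorbed using $\Phi_x=0$ (i.e.\ $Q_{xxx}=-Q^{-2}Q_x$), the concavity inequality $2QQ_{xx}<Q_x^2$, the smallness of $Q_x=2VV_x$ from Lemma \ref{lemma_derivatives}, and a case distinction on the sign of $Q_{xx}$. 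Without carrying out this cancellation, and with the error bound corrected as above, the strict inequality $\partial_\tau\Phi<0$ at the maximum is asserted rather than proved.
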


\begin{proof}[{Proof of Lemma \ref{lemma_max_princ}}]
For this proof it is convenient to work in the $(x,t)$ variables instead of the $(y,\tau)$ variables. Set $Q=V^2$. We will apply the maximum principle to the function
\begin{equation}
\Phi:=Q_{xx}-Q^{-1}=(v^2)_{yy}-e^{\tau}v^{-2}.
\end{equation}
By Proposition \ref{prop_evol_profile} (evolution equation for profile function), remembering also \eqref{identity.fraction}, the function $V$ satisfies
\begin{equation}\label{V eq_rewritten}
V_t=\frac{V_{xx}}{1+V_x^2}-\frac{1}{V}+ \frac{(1+V_x^2)^{-1}V_x^2V_t^2V_{xx}+(1+V_x^2)V_{tt}-2V_xV_tV_{xt}}{1+V_t^2+V_x^2}.
\end{equation}
This implies
\begin{equation}\label{Q eq}
Q_t=\frac{4QQ_{xx}-2 Q_x^2}{4Q+  Q_x^2}-2+\mathcal{E},
\end{equation}
where
\begin{equation}
\mathcal{E}= 2V\frac{(1+V_x^2)^{-1}V_x^2V_t^2V_{xx}+(1+V_x^2)V_{tt}-2V_xV_tV_{xt}}{1+V_t^2+V_x^2}.
\end{equation}
Differentiating \eqref{Q eq} with respect to $x$ yields
\begin{equation}
Q_{xt}=\frac{4QQ_{xxx}}{4Q+Q_x^2}+\frac{4Q_x(2+Q_{xx})(Q_x^2-2QQ_{xx})}{(4Q+Q_x^2)^2}+\mathcal{E}_x.
\end{equation}
Differentiating again gives
\begin{align}
Q_{xxt}=&\frac{4QQ_{xxxx}+4Q_xQ_{xxx}}{4Q+Q_x^2}-\frac{16QQ_xQ_{xxx}(2+Q_{xx})}{(4Q+Q_x^2)^2}\nonumber\\
&+\frac{\left[4Q_{xx}(2+Q_{xx})+4Q_xQ_{xxx}\right](Q_x^2-2QQ_{xx})}{(4Q+Q_x^2)^2}-\frac{16Q_x^2(2+Q_{xx})^2(Q_x^2-2QQ_{xx})}{(4Q+Q_x^2)^3}+\mathcal{E}_{xx}.
\end{align}
In addition, we have
\begin{align}
(Q^{-1})_t=-\frac{4QQ_{xx}-2 Q_x^2}{Q^2(4Q+  Q_x^2)}+\frac{2}{Q^2}-\frac{\mathcal{E}}{Q^2}
=\frac{4Q(Q^{-1})_{xx} }{4Q+  Q_x^2}-\frac{6 Q_x^2}{Q^2(4Q+  Q_x^2)}+\frac{2}{Q^2}-\frac{\mathcal{E}}{Q^2}.
\end{align}
Taking the difference of the above equations, we obtain
\begin{align}
\Phi_{t}=&\frac{4Q}{4Q+Q_x^2}\Phi_{xx}+\frac{4Q_xQ_{xxx}}{4Q+Q_x^2}+\frac{6 Q_x^2}{Q^2(4Q+  Q_x^2)}-\frac{2}{Q^2}
-\frac{16Q_x^2(2+Q_{xx})^2(Q_x^2-2QQ_{xx})}{(4Q+Q_x^2)^3}\nonumber\\
&+\frac{\left[4Q_{xx}(2+Q_{xx})+4Q_xQ_{xxx}\right](Q_x^2-2QQ_{xx})-16QQ_xQ_{xxx}(2+Q_{xx})}{(4Q+Q_x^2)^2}
+\frac{\mathcal{E}}{Q^2}+\mathcal{E}_{xx}.
\end{align}
Now, at an interior maximum of $\Phi$ we have
\begin{align}
\Phi_{xx}\leq 0,\quad\textrm{ and }\quad 0=\Phi_x=Q_{xxx}+Q^{-2}Q_x,
\end{align}
hence
\begin{multline}
\Phi_t\leq  \frac{2 Q_x^2}{Q^2(4Q+  Q_x^2)}-\frac{2}{Q^2}-\frac{16Q_x^2(2+Q_{xx})^2(Q_x^2-2QQ_{xx})}{(4Q+Q_x^2)^3}\\
+\frac{\left[4Q^2Q_{xx}(2+Q_{xx})-4Q_x^2\right](Q_x^2-2QQ_{xx})+16QQ_x^2(2+Q_{xx})}{Q^2(4Q+Q_x^2)^2}+\frac{\mathcal{E}}{Q^2}+\mathcal{E}_{xx}.
\end{multline}
Note that Lemma \ref{lemma_derivatives} (derivative estimates) implies
\begin{align}\label{easy_pointwise_q}
\frac{2 Q_x^2}{Q^2(4Q+  Q_x^2)}+\frac{16QQ_x^2(2+Q_{xx})}{Q^2(4Q+Q_x^2)^2}\leq \frac{C\varepsilon^2}{Q^2}
\end{align}
for some constant $C<\infty$, provided $L$ is large enough and $t\leq T_\ast$. In a similar vain we have:
\begin{claim}[error estimate]\label{claim_error_est_q} We have
\begin{equation}
|\mathcal{E}|\leq \frac{C}{Q}\qquad\textrm{and}\qquad |\mathcal{E}_{xx}|\leq \frac{C\eps}{Q^2}
\end{equation}
for some constant $C<\infty$, provided $L$ is large enough and $t\leq T_\ast$. 
\end{claim}

\begin{proof}[Proof of the claim]
We will repeatedly apply Lemma \ref{lemma_derivatives} (derivative estimates). To begin with, note that
\begin{equation}
V_x^2 V_t^2 |V_{xx}|\leq \eps^2 \frac{(1+\eps)^2}{V^2}\frac{\eps}{V}\leq \frac{C\eps}{V^3},
\end{equation}
hence
\begin{equation}
\left|V\frac{(1+V_x^2)^{-1}V_x^2V_t^2V_{xx}}{1+V_t^2+V_x^2}\right|\leq \frac{C\eps}{Q}.
\end{equation}
Moreover, we have
\begin{equation}
(1+V_x^2) |V_{tt}| \leq \frac{C}{V^3}\qquad \textrm{and}\qquad |V_xV_tV_{xt}| \leq \frac{C\eps}{V^3}.
\end{equation}
This yields the estimate
\begin{equation}
|\mathcal{E}|\leq \frac{C}{Q}.
\end{equation}
Concerning the first and second derivatives observe that
\begin{equation}
\left|(V_x^2 V_t^2 V_{xx})_x\right|\leq \frac{C\eps}{V^4} \qquad \textrm{and}\qquad \left|(V_x^2 V_t^2 V_{xx})_{xx}\right|\leq \frac{C\eps}{V^5},
\end{equation}
as well as
\begin{equation}
\left|\left(\frac{(1+V_x^2)^{-1}}{1+V_t^2+V_x^2}\right)_x\right|\leq \frac{C\eps}{V} \qquad \textrm{and}\qquad \left|\left(\frac{(1+V_x^2)^{-1}}{1+V_t^2+V_x^2}\right)_{xx}\right|\leq \frac{C\eps}{V^2}.
\end{equation}
Together with the product rule this implies
\begin{equation}
\left|\left(V\frac{(1+V_x^2)^{-1}V_x^2V_t^2V_{xx}}{1+V_t^2+V_x^2}\right)_{xx}\right|\leq \frac{C\eps}{Q^2}.
\end{equation}
Arguing similarly we see that
\begin{equation}
\left|\left(V\frac{(1+V_x^2)V_{tt}}{1+V_t^2+V_x^2}\right)_{xx}\right|\leq \frac{C\eps}{Q^2} \qquad \textrm{and}\qquad
\left|\left(V\frac{V_xV_tV_{xt}}{1+V_t^2+V_x^2}\right)_{xx}\right|\leq \frac{C\eps}{Q^2}.
\end{equation}
We conclude that
\begin{equation}
|\mathcal{E}_{xx}|\leq \frac{C\eps}{Q^2}.
\end{equation}
This finishes the proof of the claim.
\end{proof}

Now, thanks to \eqref{easy_pointwise_q} and Claim \ref{claim_error_est_q} (error estimate), taking also into account the fact that $Q\gg 1$ in the region under consideration, we thus obtain
\begin{align}
\Phi_t\leq -\frac{1}{Q^2}-\frac{16Q_x^2(2+Q_{xx})^2(Q_x^2-2QQ_{xx})}{(4Q+Q_x^2)^3}+\frac{\left[4Q^2Q_{xx}(2+Q_{xx})-4Q_x^2\right](Q_x^2-2QQ_{xx})}{Q^2(4Q+Q_x^2)^2}.
\end{align}
Moreover, since $Q=V^2$ and since $V$ is concave, we have
\begin{equation}\label{V concavity}
2QQ_{xx}<Q_x^2.
\end{equation}
Thus, considering signs yields
\begin{align}
\Phi_t\leq -\frac{1}{Q^2}-\frac{16Q_x^2(2+Q_{xx})^2(Q_x^2-2QQ_{xx})}{(4Q+Q_x^2)^3}+\frac{ 4 Q_{xx}(2+Q_{xx}) (Q_x^2-2QQ_{xx})}{(4Q+Q_x^2)^2}.
\end{align}
Furthermore, \eqref{V concavity} implies $Q_{xx}(4Q+Q_x^2)\leq Q_x^2(2+Q_{xx})$, and Lemma \ref{lemma_derivatives} (derivative estimates) gives us $2+Q_{xx}>0$. We thus  conclude that
\begin{align}
\Phi_t\leq -\frac{1}{Q^2}-\frac{12Q_x^2(2+Q_{xx})^2(Q_x^2-2QQ_{xx})}{(4Q+Q_x^2)^3}.
\end{align}
Hence, $\Phi_t< 0$ holds at interior maximum points of $\Phi$ in $\{V \geq L\sqrt{-t/\log(-t)}\}$. This proves the assertion.
\end{proof}

We can now prove the main result of this subsection:

\begin{proof}[{Proof of Proposition \ref{prop_almost_quad_conc} (almost quadratic convexity)}] Fix $\kappa>0$ small enough and $\tau_\ast>-\infty$ negative enough so that the above results apply.
We recall that
\begin{equation}
\Phi= Q_{xx}-Q^{-1}=(v^2)_{yy}-e^{\tau}v^{-2}.
\end{equation}
By Corollary \ref{cor_unique_asympt} (uniform sharp asymptotics) in the tip regions we have that  $Z(\rho,\tau)$ is $\eps$-close to $Z_0(\rho)$, where $Z_0$ is the profile function of the bowl soliton.
Hence, applying \cite[Lemma 4.4]{ADS2} we get that in the soliton region $\mathcal{S}$ we have $(v^2)_{yy}<0$ for $\tau \leq \tau_0$. In particular, $\Phi<0$ in the soliton region.\\
Now, suppose towards a contradiction there is a point $(y_0,\tau_0)$, where $\tau_0\leq \tau_\ast$, with $\Phi(y_0,\tau_0)>0$. It the follows from the paragraph above, and from \ref{lemma_max_princ} (maximum principle) that $\max \Phi(\cdot,\tau)\geq \Phi(y_0,\tau_0)$ for every $\tau \leq \tau_0$. In particular, we have $(v^2)_{yy}(y_{\tau},\tau)\geq c$ for some $c>0$, whenever $v(y_{\tau},\tau)=\max \Phi(\cdot,\tau)$.
Together with $(v^2)_{yy}=2vv_{yy}+2v_y^2<2v_y^2$, which holds by concavity, we infer that $v_{y}^2(y_{\tau},\tau)\geq c/2$. This is in contradiction with $v(y_{\tau},\tau)\sqrt{|\tau|} \to \infty$ and the fact that the soliton region converges to a bowl soliton, and thus proves the proposition.
\end{proof}

In particular, we see that $Y\sim Ce^{-v^2/4}$ in the collar region:

\begin{corollary}[almost Gaussian collar]\label{cor_gaussian_collar}
Given $\eps>0$, there exist $\theta(\eps)>0$, $L_0(\eps)< \infty$, ${\tau}_{\ast}(\eps)>-\infty$ and $\kappa(\eps)>0$ such that if $M$ is $\kappa$-quadratic at time $\tau_0 \leq {\tau}_{\ast}$, then for $\tau\ \leq \tau_0$ in the collar region $\{L_0 /\sqrt{|\tau|}\leq  v \leq 2 \theta\}$ we have
\begin{equation}
\bigg|1+\frac{Yv}{2Y_v}\bigg|\leq \eps\, .
\end{equation}
\end{corollary}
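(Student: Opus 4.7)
The plan is to combine the almost quadratic concavity (Proposition \ref{prop_almost_quad_conc}) with the uniform sharp asymptotics (Corollary \ref{cor_unique_asympt}) by setting up a differential inequality in $v$ for the quantity $G(v,\tau) := 1 + \frac{Yv}{2Y_v}$ and then propagating bounds from the two endpoints of the collar interval. Note that $G \equiv 0$ precisely when $Y$ satisfies the Gaussian ODE $Y_v = -vY/2$, so $G$ measures the deviation from the ideal collar profile $Y = Ce^{-v^2/4}$.

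First, I would translate the almost quadratic concavity $(v^2)_{yy}\leq e^\tau/v^2$ into $Y$-coordinates using $v_y = 1/Y_v$ and $v_{yy} = -Y_{vv}/Y_v^3$. Multiplying the resulting inequality by $Y_v^2/2>0$ rearranges it to
\[
\frac{vY_{vv}}{Y_v} \;\geq\; 1 - \delta, \qquad \delta := \frac{e^\tau Y_v^2}{2v^2}.
\]
A direct computation then gives $G_v = \frac{v}{2} + \frac{Y}{2Y_v}\!\left(1-\tfrac{vY_{vv}}{Y_v}\right)$, and since $Y/Y_v < 0$ in either tip, substituting the above inequality yields the pointwise lower bound
\[
G_v \;\geq\; \frac{v}{2} - \frac{|Y|\,|Y_v|\,e^\tau}{4v^2}.
\]
Using $|Y|,|Y_v| \lesssim \sqrt{|\tau|}$ from the uniform sharp asymptotics and $v\geq L_0/\sqrt{|\tau|}$ in the collar, the error term is at most $C|\tau|^2 e^\tau/L_0^2$, which is $o(1)$ uniformly as $\tau_\ast\to-\infty$. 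In particular $G_v$ is nonnegative modulo a negligible remainder whose integral over the whole collar is also $o(1)$.

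Second, I would compute the boundary values of $G$. At the upper edge $v=2\theta$, the intermediate asymptotic from Corollary \ref{cor_unique_asympt}(ii), upgraded to $C^1$ convergence by standard interior parabolic regularity for the $v$-equation of Proposition \ref{prop_evol_profile}, gives $Y \approx \sqrt{|\tau|(2-v^2)}$ and $Y_v \approx -\sqrt{|\tau|}\,v/\sqrt{2-v^2}$, whence $G(2\theta,\tau) = 2\theta^2 + o(1)$. At the lower edge $v=L_0/\sqrt{|\tau|}$, i.e.\ $\rho = v\sqrt{|\tau|} = L_0$, the tip asymptotic $Z(\cdot,\tau)\to Z_0$ in $C^{100}$ together with the bowl asymptotic $Z_0'(\rho)\sim -\rho/\sqrt{2}$ as $\rho\to\infty$ and the diameter asymptotic $Y(0,\tau) \approx \sqrt{2|\tau|}$ produces the cancellation $1 + \sqrt{2}L_0/(2Z_0'(L_0)) \to 0$, giving $G(L_0/\sqrt{|\tau|},\tau) = o_{L_0\to\infty}(1) + o_{\tau_\ast\to-\infty}(1)$.

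Third, I would integrate the differential inequality across the collar. The bound $G_v\geq v/2 - o(1)\geq 0$ shows that $G$ is almost monotone nondecreasing in $v$, so on the one hand $G(v,\tau) \leq G(2\theta,\tau)+o(1)\leq 2\theta^2+o(1)$, and on the other $G(v,\tau) \geq G(L_0/\sqrt{|\tau|},\tau) - o(1) \geq -o(1)$. Choosing $\theta$ small with $2\theta^2\leq\eps/2$, then $L_0$ large and $\kappa,\tau_\ast$ such that all $o(1)$-terms are at most $\eps/2$, concludes the proof. The main technical obstacle will be the lower boundary computation, since obtaining the cancellation requires simultaneous $C^1$-control of $Z$ on balls of size $L_0$ and the precise leading-order asymptotics of the $2$d-bowl at infinity; a minor point is handling the left and right tips uniformly, which is automatic because the derived inequality for $G_v$ is symmetric under the sign flip $Y\mapsto -Y$, $Y_v\mapsto -Y_v$.
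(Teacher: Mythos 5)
Your strategy is, at bottom, the paper's own argument rewritten in the $v$-variable: since $1+\tfrac{Yv}{2Y_v}=1+\tfrac{y(v^2)_y}{4}$, your quantity $G$ is exactly what the paper controls, and both proofs run on the same two ingredients — the almost quadratic concavity $(v^2)_{yy}\le e^\tau v^{-2}$ gives almost-monotonicity of the relevant slope quantity across the collar with an exponentially small integrated error, and the uniform sharp asymptotics pin its values at the two collar edges ($v=2\theta$ via the intermediate region, $v=L_0/\sqrt{|\tau|}$ via the $C^{100}$-closeness of $Z$ to the bowl profile $Z_0$), after which one squeezes. The paper keeps $-(v^2)_y$ as the monotone quantity in $y$ and multiplies by $y\approx\sqrt{2|\tau|}$ separately, whereas you fold the $Y$-factor into $G$ and differentiate in $v$; this is a cosmetic difference. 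Your lower-edge computation is fine as written (only one-sided bounds on $Z_\rho(L_0,\tau)$ and on $Y\le Y(0)\le(\sqrt2+o(1))|\tau|^{1/2}$ are needed, and these come straight from Corollary \ref{cor_unique_asympt}), and you correctly avoid the circular use of Proposition \ref{lemma_tip_estimates}, whose proof relies on this corollary.

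The one step that does not work as stated is the upper edge: you cannot upgrade the intermediate-region asymptotics to $C^1$ at $v=2\theta$ by ``standard interior parabolic regularity.'' Blow-up/interior estimates there (Lemma \ref{lemma_derivatives}) only give $|V_x|\le\eps$, i.e.\ that the slope is small, because the local models are cylinders; they do not identify the leading-order value of $Y_v$, which is what your boundary value $G(2\theta,\tau)=2\theta^2+o(1)$ requires, and naive interpolation fails since $\bar v_{zz}$ is only bounded by $C|\tau|$. The fix is the concavity/difference-quotient argument (this is exactly how the paper, following \cite[Lemma 5.7]{ADS2}, bounds $-(v^2)_y$ at the edges): concavity of $v$ in $y$ plus the $C^0$ intermediate asymptotics yield the needed one-sided bound on $|v_y|$ at $v=2\theta$, and a one-sided bound is all your squeeze uses. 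A related small point: the bound $|Y_v|\lesssim\sqrt{|\tau|}$ throughout the collar, which you need for the error term in $G_v$, is not a direct consequence of the $C^0$ sharp asymptotics at interior collar points; derive it as in the paper's later rough tip estimate, namely bound $|Y_v|$ at $v=2\theta$ and propagate inward by convexity. With these repairs your proof is complete and matches the paper's.
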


\begin{proof}
Suppose that $-\tau$ is large enough so that  $\Phi\leq 0$ holds. It is enough to show that
\begin{equation}
1-\eps \leq -\frac{y(v^2)_y}{4} \leq 1+\eps.
\end{equation}
First of all, using the description of the intermediate region from Corollary \ref{cor_unique_asympt} (uniform sharp asymptotics) we see that in the region $\{v\leq 2\theta\}$ we have
\begin{equation}\label{collar.y}
\sqrt{2|\tau|}(1-4\theta^2-\delta) \leq y \leq \sqrt{2|\tau|}(1+\delta),\;\;\;\tau \leq {\tau_0}
\end{equation}
for any $M$ that is $\kappa(\delta,\theta)$-quadratic from time $\tau_0 \leq \tau_{\ast}(\delta,\theta)$. 
By Proposition \ref{prop_almost_quad_conc} (almost quadratic concavity), after decreasing $\kappa$ and ${\tau}_{\ast}$ we can assume that $-(v^2)_{yy}+ e^\tau v^{-2}\geq 0$, from which we infer that
\begin{equation}
-(v^2)_y|_{v=2\theta}- e^{\tau}\int^{\sqrt{2|\tau|}(1+\delta)}_{\sqrt{2|\tau|}(1-4\theta^2)} v^{-2} dy \leq -(v^2)_y \leq -(v^2)_y|_{v=L_0/\sqrt{|\tau|}}+e^{\tau}\int^{\sqrt{2|\tau|}(1+\delta)}_{\sqrt{2|\tau|}(1-4\theta^2)} v^{-2} dy\,.
\end{equation}
In the considered region, we have $  v^{-2}\leq L_0^{-2}|\tau|$ and thus
\begin{equation}
-(v^2)_y|_{v=2\theta}-10\theta^2L_0^{-2} e^{\tau}|\tau|^{\frac{3}{2}} \leq -(v^2)_y \leq -(v^2)_y|_{v=L_0/\sqrt{|\tau|}}+10\theta^2L_0^{-2} e^{\tau}|\tau|^{\frac{3}{2}}.
\end{equation}
Finally, using again Corollary \ref{cor_unique_asympt} (uniform sharp asymptotics) and arguing similarly as in the proof of \cite[Lemma 5.7]{ADS2} we obtain
\begin{equation}
-(v^2)_y|_{v=2\theta} \geq \frac{2\sqrt{2}}{\sqrt{|\tau|}}\sqrt{1-2\theta^2-\delta},
\end{equation}
and
\begin{equation}
-(v^2)_y|_{v=L_0/\sqrt{|\tau|}}\leq \frac{2\sqrt{2}}{\sqrt{|\tau|}}(1+CL_0^{-1}),
\end{equation}
for $L_0$ large enough, possibly after decreasing $\kappa$ and ${\tau}_{\ast}$.
Combining the above inequalities yields the desired result.
\end{proof}

\bigskip

\subsection{Difference between solutions}
Given our translators $M^1$ and $M^2$, we consider the difference function of their renormalized profile functions
\begin{equation}
w:=v_1-v_2,
\end{equation}
and its truncated version
\begin{equation}
w_\cC:=v_1 \varphi_\cC (v_1)-v_2\varphi_{\cC}(v_2),
\end{equation}
as well as the difference of the inverse profile functions
\begin{equation}
W:=Y_1-Y_2,
\end{equation}
and its truncated version
\begin{equation}
W_\cT:=W \varphi_\cT (v)\, .
\end{equation}

\begin{proposition}[evolution of difference]\label{prop_evol_w}
The difference function $w$ satisfies the evolution equation
\begin{equation}\label{eq_w.evolution}
w_\tau=\mathfrak{L}w+\mathcal{E}[w]+e^{\tau}\mathcal{F}[w],
\end{equation}
with
\begin{equation}
\mathfrak{L}w=w_{yy}-\frac{y}{2}w_y+w,
\end{equation}
and
\begin{equation}\label{eq_def.E[w]}
\mathcal{E}[w]=-\frac{v_{1,y}^2}{1+ v_{1,y}^2}w_{yy}-\frac{(v_{1,y}+v_{2,y})v_{2,yy}}{(1+v_{1,y}^2)(1+v_{2,y}^2)}w_y+\frac{2-v_1v_2}{2v_1v_2}w,
\end{equation}
and 
\begin{align}
\mathcal{F}[w]=&\frac{ \mathcal{P}[v_1,v_1,w]}{\mathcal{Q}[v_1,v_1]}+\mathcal{R}[v_1,v_2]\left(w_\tau-\tfrac{w}{2}\right)+\mathcal{S}[v_1,v_2]w_y,
\end{align}
where $\mathcal{P},\mathcal{Q},\mathcal{R},\mathcal{S}$ are certain second order differential expressions specified in the proof below.
\end{proposition}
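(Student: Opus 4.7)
The plan is to subtract the evolution equations for $v_1$ and $v_2$ supplied by Proposition \ref{prop_evol_profile} and then rearrange the result into the claimed schematic form $w_\tau = \mathfrak{L}w + \mathcal{E}[w] + e^\tau \mathcal{F}[w]$. Writing the equation for each $v_i$ as
\begin{equation*}
\partial_\tau v_i = \frac{v_{i,yy}}{1+v_{i,y}^2} - \tfrac{y}{2}v_{i,y} + \tfrac{v_i}{2} - \tfrac{1}{v_i} + e^{\tau}\cN[v_i],
\end{equation*}
and subtracting, the transport term gives $-\tfrac{y}{2}w_y$ directly, and the reaction terms combine to $\tfrac{w}{2} + \tfrac{w}{v_1v_2}$, which together with the $+w$ coming from $\mathfrak{L}w$ will produce $\tfrac{2-v_1v_2}{2v_1v_2}w$ once we move the linear piece into $\mathfrak{L}w$.

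First I would handle the elliptic term. The identity
\begin{equation*}
\frac{v_{1,yy}}{1+v_{1,y}^2}-\frac{v_{2,yy}}{1+v_{2,y}^2}=\frac{w_{yy}}{1+v_{1,y}^2}-\frac{(v_{1,y}+v_{2,y})v_{2,yy}}{(1+v_{1,y}^2)(1+v_{2,y}^2)}\,w_y
\end{equation*}
follows from a common-denominator computation together with the factorization $v_{2,y}^2-v_{1,y}^2=-(v_{1,y}+v_{2,y})w_y$. Splitting off $w_{yy}$ via $\tfrac{1}{1+v_{1,y}^2}=1-\tfrac{v_{1,y}^2}{1+v_{1,y}^2}$ gives exactly the $w_{yy}$ and $w_y$ coefficients appearing in $\cE[w]$ as defined in \eqref{eq_def.E[w]}. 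Combining this with the reaction-term calculation above completes the identification of the linear part $\mathfrak{L}w+\cE[w]$.

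The remaining task is to express the nonlinear remainder $\cN[v_1]-\cN[v_2]$ in the stated form. Inspecting the formula for $\cN[v]$ in Proposition \ref{prop_evol_profile}, one sees it is a rational expression whose numerator is linear in the second derivatives $(v_{yy},v_{\tau\tau},v_{\tau y})$ and whose denominator depends only on $(v,v_y,v_\tau)$. Writing $\cN[v]=\tfrac{\mathcal{P}[v,v,v]}{\mathcal{Q}[v,v]}$ with an obvious multilinear extension in its second-order slot, I would use the telescoping
\begin{align*}
\cN[v_1]-\cN[v_2] &= \frac{\mathcal{P}[v_1,v_1,v_1]-\mathcal{P}[v_1,v_1,v_2]}{\mathcal{Q}[v_1,v_1]} \\
&\quad +\mathcal{P}[v_1,v_1,v_2]\!\left(\frac{1}{\mathcal{Q}[v_1,v_1]}-\frac{1}{\mathcal{Q}[v_1,v_2]}\right)+\frac{\mathcal{P}[v_1,v_1,v_2]-\mathcal{P}[v_2,v_2,v_2]}{\mathcal{Q}[v_1,v_2]}.
\end{align*}
The first term contains exactly the linear-in-$w$ (and its derivatives) dependence in the slot of second derivatives, producing $\tfrac{\mathcal{P}[v_1,v_1,w]}{\mathcal{Q}[v_1,v_1]}$. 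The remaining two terms depend on $w$ only through the coefficients $v,v_y,v_\tau$; grouping them and using the further telescoping identities
\begin{equation*}
v_1-v_2 = w, \qquad v_{1,y}-v_{2,y} = w_y, \qquad v_{1,\tau}-v_{2,\tau} = w_\tau,
\end{equation*}
together with the algebraic observation that wherever $v$ appears in $\cN[v]$ it is in the combination $v_\tau+\tfrac{y}{2}v_y-\tfrac{v}{2}$ (equivalently, $w$ enters only through $w_\tau-\tfrac{w}{2}$ and $w_y$), collects the rest into $\mathcal{R}[v_1,v_2](w_\tau-\tfrac{w}{2}) + \mathcal{S}[v_1,v_2]\,w_y$ for explicit rational expressions $\mathcal{R},\mathcal{S}$.

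The main obstacle is purely bookkeeping: the expression for $\cN[v]$ has many terms, and one must verify that the $w$-dependence in the coefficients of $\mathcal{Q}[v_1,v]$ and $\mathcal{P}[v_1,v_1,v]$ really does collapse into the two scalar combinations $w_\tau-\tfrac{w}{2}$ and $w_y$ as claimed, with no bare $w$ or $v_{\tau\tau}$-coefficient contribution outside the $\mathcal{P}/\mathcal{Q}$ piece. This can be done systematically by noting the structural appearance of $(v_\tau+\tfrac{y}{2}v_y-\tfrac{v}{2})$ throughout $\cN[v]$ (reflecting the original $V_t$ dependence from the derivation of Proposition \ref{prop_evol_profile}); once this is exploited, the factorization into the schematic form follows.
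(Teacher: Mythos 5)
Your proposal is correct and is essentially the paper's argument: the paper likewise subtracts the two evolution equations from Proposition \ref{prop_evol_profile}, performs the same reorganization of the linear terms into $\mathfrak{L}w+\mathcal{E}[w]$ (via exactly your common-denominator identity), and then records explicit formulas for $\mathcal{R}$ and $\mathcal{S}$ obtained from the same kind of straightforward bookkeeping that your multilinear telescoping of $\mathcal{P}/\mathcal{Q}$ organizes. Two immaterial caveats: your telescoping order yields slightly different (equally valid) explicit expressions than the paper's --- e.g.\ $\mathcal{P}[v_1,v_1,v_2]$ where the paper's $\mathcal{R}$ has $\mathcal{P}[v_1,v_2,v_2]$ --- and $v$ actually also enters $\mathcal{N}[v]$ through $v_\tau-\tfrac{v}{2}$ and through the bare $(yv_y-v)$ in the linear slot, though your operative conclusion, that outside the linear piece $\mathcal{P}[v_1,v_1,w]$ the function $w$ enters the coefficients only via $w_\tau-\tfrac{w}{2}$ and $w_y$, remains correct.
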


\begin{proof}
We will denote derivatives by
\begin{align}
\partial_y^l\partial_\tau^m v_i=v_{i\, ,\underbrace{y\cdots y}_{l} \underbrace{\tau \cdots \tau}_{m}}.
\end{align}
Using the evolution equations for $v_1$ and $v_2$ from Proposition \ref{prop_evol_profile} (evolution equation for profile function), a straightforward computation shows that $w=v_1-v_2$ satisfies the claimed evolution equation with
\begin{multline}
\mathcal{R}[v_1,v_2]=\frac{ v_{1,y}(1+ v_{1,y}^2)^{-1}  \left[ v_{1,y}( v_{1,\tau}+v_{2,\tau}-\frac{v_1}{2}-\frac{v_2}{2})-y \right] v_{2,yy} -  2v_{1,y}v_{2,\tau y}}{\mathcal{Q}[v_1,v_1]}\\
-\frac{e^\tau (yv_{1,y}+v_{1,\tau}+v_{2,\tau}-\frac{v_1}{2}-\frac{v_2}{2})\mathcal{P}[v_1,v_2,v_2]}{\mathcal{Q}[v_1,v_1]\mathcal{Q}[v_1,v_2]},
\end{multline}
and
\begin{multline}
\mathcal{S}[v_1,v_2]=\frac{ (v_{2,\tau}-\frac{v_2}{2})  \left[ (v_{1,y}+v_{2,y})(v_{2,\tau}-\frac{v_2}{2})-y \right]v_{2,yy}-\frac{v_{1,y}+v_{2,y}}{1+ v_{2,y}^2}\left[ v_{2,y}(v_{2,\tau}-\frac{v_2}{2})-\frac{y}{2} \right]^2 v_{2,yy} }{(1+ v_{1,y}^2)\mathcal{Q}[v_1,v_2]}\\
-\frac{2(v_{2,\tau}- \frac{v_2}{2})   v_{2,y\tau}}{\mathcal{Q}[v_1,v_2]}  +\frac{(v_{1,y}+v_{2,y})\left[ v_{2,\tau\tau}-\frac14(yv_{2,y}-v_2)-\mathcal{N}(v_2)\right]}{\mathcal{Q}[v_1,v_2]},
\end{multline}
where the functions $\mathcal{P}$ and $\mathcal{Q}$ are defined by
\begin{multline}
\mathcal{P}[p,q,r](y,\tau)=(1+ p_y^2)^{-1}\left(p_y( q_{\tau}-\tfrac{q}{2})-\tfrac{y}{2}\right)^2r_{yy} +(1+p_{y}^2 )r_{\tau\tau}\\
-2\left( p_{y}(q_{\tau} -\tfrac{q}{2})-\tfrac{y}{2}\right) r_{\tau y}
+\tfrac{1}{4}(1+p_{y}^2)(yr_{y}-r),
\end{multline}
and
\begin{align}
\mathcal{Q}[p,q](y,\tau)=1+p_{y}^2+e^{\tau}(\tfrac{y}{2}p_{y}+q_{\tau}-\tfrac{q}{2})^2.
\end{align}
This proves the proposition.
\end{proof}

To capture some extra terms from the cutoff, similarly as in \cite[Equation (6.11)]{ADS2} we set
\begin{equation}
\overline{\mathcal{E}}[w,\varphi_{\cC}(v_1)]:= (\partial_\tau-\mathfrak{L})(w \varphi_{\cC}(v_1) )-\varphi_{\cC}(v_1)(\partial_\tau-\mathfrak{L})w +\varphi_{\cC}(v_1) \mathcal{E}[w]-\mathcal{E}[w\varphi_{\cC}(v_1)]\, .
\end{equation}
Moreover, given any scalar function $\varphi$, we write
\begin{equation}
D[\varphi](y,\tau):=\varphi(v_1(y,\tau))-\varphi(v_2(y,\tau)).
\end{equation}

\begin{corollary}[evolution of the truncated difference]\label{lemma_int.cyl.wc}
The function $w_\cC$ satisfies 
\begin{multline}
(\partial_\tau -\mathfrak{L}) w_\cC=\mathcal{E}[w_\cC]+\overline{\mathcal{E}}[w,\varphi_\cC(v_1)]+e^{\tau}\varphi_\cC(v_1)\mathcal{F}[w]\\
-\mathcal{E}[v_2 D[\varphi_\cC]]+D[\varphi_\cC]( v_{2,\tau}- v_{2,yy}+\tfrac{y}{2}v_{2,y})-2v_{2,y}\partial_y D[\varphi_\cC]+v_2 (\partial_\tau -\mathfrak{L})D[\varphi_\cC].
\end{multline}
\end{corollary}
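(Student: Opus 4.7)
The proof will be a straightforward computation built on a single algebraic identity. The plan is to write the truncated difference as
\begin{equation*}
w_\cC = v_1\varphi_\cC(v_1)-v_2\varphi_\cC(v_2) = \varphi_\cC(v_1)\,w + v_2\,D[\varphi_\cC],
\end{equation*}
apply the operator $\partial_\tau-\mathfrak{L}$ to each summand separately, and then recombine using the linearity of $\mathcal{E}$. This decomposition is the main point; everything afterwards is dictated by the product rules.

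First I would record the Leibniz rule for $\mathfrak{L}=\partial_y^2-\tfrac{y}{2}\partial_y+1$, which a direct computation gives as
\begin{equation*}
\mathfrak{L}(fg)=g\,\mathfrak{L}f+f\,\mathfrak{L}g+2f_y g_y - fg,
\end{equation*}
and hence
\begin{equation*}
(\partial_\tau-\mathfrak{L})(fg)=g\,(\partial_\tau-\mathfrak{L})f+f\,(\partial_\tau-\mathfrak{L})g-2f_y g_y + fg.
\end{equation*}
For the first summand $\varphi_\cC(v_1)w$, I would invoke the very definition of $\overline{\mathcal{E}}[w,\varphi_\cC(v_1)]$, which rewrites $(\partial_\tau-\mathfrak{L})(\varphi_\cC(v_1)w)$ as $\overline{\mathcal{E}}[w,\varphi_\cC(v_1)]+\varphi_\cC(v_1)(\partial_\tau-\mathfrak{L})w-\varphi_\cC(v_1)\mathcal{E}[w]+\mathcal{E}[w\varphi_\cC(v_1)]$, and then substitute Proposition \ref{prop_evol_w}, which says $(\partial_\tau-\mathfrak{L})w=\mathcal{E}[w]+e^\tau\mathcal{F}[w]$; the two $\varphi_\cC(v_1)\mathcal{E}[w]$ terms cancel, leaving
\begin{equation*}
(\partial_\tau-\mathfrak{L})(\varphi_\cC(v_1)w)=\overline{\mathcal{E}}[w,\varphi_\cC(v_1)]+e^\tau\varphi_\cC(v_1)\mathcal{F}[w]+\mathcal{E}[w\varphi_\cC(v_1)].
\end{equation*}

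For the second summand $v_2\,D[\varphi_\cC]$, the Leibniz rule above yields
\begin{equation*}
(\partial_\tau-\mathfrak{L})(v_2 D[\varphi_\cC]) = D[\varphi_\cC](\partial_\tau-\mathfrak{L})v_2 + v_2(\partial_\tau-\mathfrak{L})D[\varphi_\cC] - 2 v_{2,y}\partial_y D[\varphi_\cC] + v_2 D[\varphi_\cC].
\end{equation*}
The explicit form of $\mathfrak{L}$ gives $(\partial_\tau-\mathfrak{L})v_2 = v_{2,\tau}-v_{2,yy}+\tfrac{y}{2}v_{2,y}-v_2$, and the stray $-v_2 D[\varphi_\cC]$ produced here exactly cancels the $+v_2 D[\varphi_\cC]$ coming from the Leibniz rule. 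Finally, by linearity $\mathcal{E}[w\varphi_\cC(v_1)] = \mathcal{E}[w_\cC]-\mathcal{E}[v_2 D[\varphi_\cC]]$; substituting this in and adding the two contributions reproduces the claimed identity termwise. There is no serious obstacle --- the only thing to watch is the sign bookkeeping arising from the $+1$ in $\mathfrak{L}$, which is precisely what makes the apparent $v_2 D[\varphi_\cC]$ term disappear in the final formula.
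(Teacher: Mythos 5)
Your proposal is correct and follows essentially the same route as the paper: the same decomposition $w_\cC=w\varphi_\cC(v_1)+v_2D[\varphi_\cC]$, with the first summand handled via the definition of $\overline{\mathcal{E}}$ together with Proposition \ref{prop_evol_w}, the second via the Leibniz rule for $\partial_\tau-\mathfrak{L}$, and the final recombination via linearity of $\mathcal{E}$. The paper simply states the two intermediate identities without the intermediate Leibniz/cancellation bookkeeping that you spell out; your computations (including the cancellation of the $v_2D[\varphi_\cC]$ terms) are accurate.
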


\begin{proof}
First observe that
\begin{equation}
(\partial_\tau-\mathfrak{L})( w \varphi_\cC(v_1))=\mathcal{E}[w\varphi_\cC(v_1)]+\overline{\mathcal{E}}[w,\varphi_\cC(v_1)]+ e^{\tau}\varphi_\cC(v_1)\mathcal{F}[w]\, .
\end{equation}
In addition, we have
\begin{equation}
(\partial_\tau-\mathfrak{L})(v_2D[\varphi_\cC])=D[\varphi_\cC]( v_{2,\tau}- v_{2,yy}+\tfrac{y}{2}v_{2,y})-2v_{2,y}\partial_y D[\varphi_\cC]+v_2 (\partial_\tau -\mathfrak{L})D[\varphi_\cC]\, .
\end{equation}
Using $w_\cC=w\varphi_\cC (v_1)+v_2D[\varphi_\cC]$ and linearity this implies the assertion.
\end{proof}

\begin{proposition}[evolution of inverse difference]\label{prop_evol_w_inv}
The difference function $W$ satisfies the evolution equation
\begin{equation}\label{W.equation}
W_\tau=\frac{W_{vv}}{1+Y_{1,v}^2}+\left(\frac{1}{v}-\frac{v}{2}-\frac{Y_{2,vv}(Y_{1,v}+Y_{2,v})}{(1+Y_{1,v}^2)(1+Y_{2,v}^2)}\right)W_v+\frac{1}{2}W+e^\tau \mathcal{F}[W]\, ,
\end{equation}
with
\begin{equation}
\mathcal{F}[W]=\frac{\mathcal{P}[Y_1,Y_1,W]}{\mathcal{Q}[Y_1,Y_1]}+\mathcal{R}[Y_1,Y_2]\left(\frac{W}{2}-W_\tau\right)+\mathcal{S}[Y_1,Y_2]W_v,
\end{equation}
where $\mathcal{P},\mathcal{Q},\mathcal{R},\mathcal{S}$ are certain second order differential expressions specified in the proof below.
\end{proposition}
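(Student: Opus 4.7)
The strategy is to prove this proposition by direct subtraction, in complete parallel to the derivation of the evolution equation for $w = v_1 - v_2$ in Proposition \ref{prop_evol_w}. First, I would subtract the two copies of \eqref{eq_Y.evolution}, one for $Y_1$ and one for $Y_2$, which immediately gives
\[
W_\tau = \frac{Y_{1,vv}}{1+Y_{1,v}^2} - \frac{Y_{2,vv}}{1+Y_{2,v}^2} + \frac{1}{v} W_v + \frac{1}{2}\big(W - v W_v\big) + e^\tau \big(\mathcal{M}[Y_1] - \mathcal{M}[Y_2]\big).
\]

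Next, I would decompose the quasilinear principal part using the algebraic identity
\[
\frac{A}{1+a} - \frac{B}{1+b} = \frac{A - B}{1+a} + \frac{B(b-a)}{(1+a)(1+b)},
\]
applied with $A = Y_{1,vv}$, $B = Y_{2,vv}$, $a = Y_{1,v}^2$, $b = Y_{2,v}^2$. Combined with the factorization $Y_{2,v}^2 - Y_{1,v}^2 = -(Y_{1,v}+Y_{2,v})W_v$, this yields
\[
\frac{Y_{1,vv}}{1+Y_{1,v}^2} - \frac{Y_{2,vv}}{1+Y_{2,v}^2} = \frac{W_{vv}}{1+Y_{1,v}^2} - \frac{Y_{2,vv}(Y_{1,v}+Y_{2,v})}{(1+Y_{1,v}^2)(1+Y_{2,v}^2)}\, W_v,
\]
which, substituted back above, reproduces exactly the asserted linear part of the evolution equation.

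For the nonlinear remainder, I would decompose $\mathcal{M}[Y_1] - \mathcal{M}[Y_2]$ into three structural pieces matching the definition of $\mathcal{F}[W]$. In the summand of $\mathcal{M}$ proportional to $Y_{vv}$, I freeze the coefficient at $Y_1$ and replace $Y_{1,vv}$ by $Y_{1,vv} - Y_{2,vv} = W_{vv}$, producing the contribution $\mathcal{P}[Y_1,Y_1,W]/\mathcal{Q}[Y_1,Y_1]$. The residual from this replacement, together with the entire second summand of $\mathcal{M}$, depends only on the $Y_i$ together with $Y_{i,v}, Y_{i,\tau}, Y_{i,\tau\tau}, Y_{i,v\tau}$, and can be rewritten using the telescoping identities $Y_{1,\tau} - Y_{2,\tau} = W_\tau$, $Y_{1,v}^2 - Y_{2,v}^2 = (Y_{1,v}+Y_{2,v})W_v$, $Y_1 - Y_2 = W$, together with the analogous ones for $W_{\tau\tau}$ and $W_{v\tau}$. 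Collecting the resulting pieces by the factor they multiply -- namely $W_v$ and $(W/2 - W_\tau)$ -- defines the explicit second-order differential expressions $\mathcal{S}[Y_1,Y_2]$ and $\mathcal{R}[Y_1,Y_2]$ required by the statement.

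The argument is entirely algebraic and presents no genuine analytic obstacle; the only care that is needed, as in Proposition \ref{prop_evol_w}, is organizational. Each telescoping step contains an asymmetric choice of which solution is labeled $Y_1$ versus $Y_2$, and this choice must be made consistently so that the coefficients in $\mathcal{R}$ and $\mathcal{S}$ remain controlled by the uniform derivative bounds on $Y_1, Y_2$ coming from Theorem \ref{cor_unique_asympt} and Corollary \ref{cor_gaussian_collar}, since those bounds are precisely what will be invoked in the subsequent weighted energy estimates in the tip region.
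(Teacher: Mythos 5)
Your proposal is correct and follows essentially the same route as the paper: subtract the two copies of \eqref{eq_Y.evolution}, split the quasilinear principal part with the identity $\frac{A}{1+a}-\frac{B}{1+b}=\frac{A-B}{1+a}+\frac{B(b-a)}{(1+a)(1+b)}$ together with $Y_{2,v}^2-Y_{1,v}^2=-(Y_{1,v}+Y_{2,v})W_v$, and then write $\mathcal{F}=\mathcal{M}[Y_1]-\mathcal{M}[Y_2]$, telescoping the remainder into terms proportional to $W_v$ and $\tfrac{W}{2}-W_\tau$, exactly as in the paper's ``straightforward computation.'' The only cosmetic imprecision is that $\mathcal{P}[Y_1,Y_1,W]/\mathcal{Q}[Y_1,Y_1]$ collects the frozen-at-$Y_1$ coefficients applied to \emph{all} of $W_{vv}$, $W_{\tau\tau}$, $W_{v\tau}$, $vW_v-W$ (i.e.\ the whole structure of $\mathcal{M}$), not only the summand proportional to $Y_{vv}$, but this does not affect the validity of your decomposition.
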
 

\begin{proof}
Using the evolution equations for $Y_1$ and $Y_2$ from Proposition \ref{prop_inverse.evol} (evolution equation for inverse profile function), we see that $W$ satisfies the claimed evolution with
\begin{equation}
\mathcal{F}=\mathcal{M}[Y_1]-\mathcal{M}[Y_2].
\end{equation}
Then, a straightforward computation shows that $\mathcal{F}$ can be expressed as claimed with
\begin{align}
\mathcal{R}[Y_1,Y_2]=&\frac{(1+Y_{1,v}^2)^{-1}Y_{1,v}\left( (\tfrac{1}{2}Y_{1}-Y_{1,\tau}+\tfrac{1}{2}Y_2-Y_{2,\tau})Y_{1,v}+v  \right)Y_{2,vv}+2Y_{1,v}Y_{2,v\tau}}{Q[Y_1,Y_1]}\nonumber\\
&-\frac{e^\tau \left( \tfrac{1}{2}Y_1-Y_{1,\tau}+\tfrac{1}{2}Y_2-Y_{2,\tau}+vY_{1,v}  \right) \mathcal{P}[Y_1,Y_1,Y_2]}{Q[Y_1,Y_1]Q[Y_1,Y_2]},
\end{align}
and
\begin{align}
\mathcal{S}[Y_1,Y_2]=&\frac{ (\frac{Y_2}{2}-Y_{2,\tau})  \left[ (Y_{1,v}+Y_{2,v})(\frac{Y_2}{2}-Y_{2,\tau})+v \right]v_{2,yy}-\frac{Y_{1,v}+Y_{2,v}}{1+ Y_{2,v}^2}\left[ (\tfrac{1}{2}Y_2-Y_{2,\tau})Y_{2,v}+\tfrac{v}{2}\right]^2 Y_{2,vv} }{(1+ Y_{1,v}^2)Q[Y_1,Y_2]}\nonumber\\
&+\frac{2(\frac{Y_2}{2}-Y_{2,\tau})   Y_{2,v\tau}}{Q[Y_1,Y_2]}  +\frac{(Y_{1,v}+Y_{2,v})\left[ Y_{2,\tau\tau}+\frac14(vY_{2,v}-Y_2)-\mathcal{M}(Y_2)\right]}{Q[Y_1,Y_2]},
\end{align}
where
\begin{align}
\mathcal{P}[p,q,r](v,\tau)=&(1+p_v^2)^{-1} \left( (\tfrac{q}{2}-q_\tau)p_v+\tfrac{v}{2}  \right)^2 r_{vv}\nonumber\\
&+ (1+p_v^2)r_{\tau\tau}+2\left( (\tfrac{q}{2}-q_\tau)p_v+\tfrac{v}{2}  \right)r_{v\tau}+\tfrac{1}{4}(1+p_v^2)(vr_v-r).
\end{align}
and
\begin{equation}
\mathcal{Q}[p,q](v,\tau)=1+p_v^2+e^\tau\left(\tfrac{q}{2}-q_\tau-\tfrac{v}{2}p_v\right)^2.
\end{equation}
This proves the proposition.
\end{proof}

To conclude this subsection, we observe that since the cutoff function $\varphi_\cT(v)$ does not depend on $\tau$, the time derivative of $W_\cT =\varphi_\cT W$ is simply
\begin{equation}
(W_\cT)_\tau = \varphi_\cT W_\tau.
\end{equation}

\bigskip

\subsection{Energy estimates in the cylindrical region} The goal of this subsection is to prove the following energy estimate in the cylindrical region:

\begin{proposition}[energy estimate in the cylindrical region]\label{prop_int_cyl}
For every $\eps>0$  there exist  $\kappa>0$ and ${\tau}_{\ast}>-\infty$ with the following significance.  If $M_1$ and $M_2$ are $\kappa$-quadratic at time $\tau_0 \leq {\tau}_{\ast}$,  then 
\begin{equation}
\| w_\cC-\fp_0w_\cC \|_{\mathcal{D},\infty}\leq  \varepsilon\left( \|w_{\cC}\|_{\mathcal{D},\infty}+\|w\, 1_{\{\theta/2\leq v_1\leq\theta\}}\|_{\mathfrak{H},\infty}\right)+\frac{C}{|\tau_0|} \| w\|_{C^2_{\exp}(\mathcal{C})}.
\end{equation}
\end{proposition}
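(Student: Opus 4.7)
The plan is to adapt the cylindrical energy estimate of \cite[Section 7]{ADS2} to our translator setting, the two main new features being the intrinsic localization $\varphi_{\cC}(v_i)$ used in the definition of $w_{\cC}$ and the exponentially small nonlinear term $e^\tau\mathcal{F}[w]$. Starting from the evolution equation for $w_{\cC}$ supplied by Corollary \ref{lemma_int.cyl.wc}, I would first estimate each right-hand-side contribution in the parabolic dual norm $\|\cdot\|_{\cD^\ast,\infty}$. Using Theorem \ref{cor_unique_asympt} (uniform sharp asymptotics), the coefficients $v_{i,y}^2/(1+v_{i,y}^2)$, $(v_{1,y}+v_{2,y})v_{2,yy}/\cdots$ and $(2-v_1v_2)/(2v_1v_2)$ entering the semilinear piece $\mathcal{E}[w_{\cC}]$ of \eqref{eq_def.E[w]} are pointwise $o(1)$ as $\tau_0\to-\infty$ throughout $\mathrm{spt}(\varphi_{\cC})$, so one integration by parts yields $\|\mathcal{E}[w_{\cC}]\|_{\cD^\ast,\infty}\leq \tfrac{\eps}{4}\|w_{\cC}\|_{\cD,\infty}$. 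The cutoff commutator $\overline{\mathcal{E}}[w,\varphi_{\cC}(v_1)]$ together with the four terms built from $D[\varphi_{\cC}]$ are pointwise supported in the transition collar $\{\theta/2\leq v_1\leq\theta\}$ where $\varphi_{\cC}'$ is nonzero; carefully tracking the smallness of the bounded coefficients there, via $|v_y|=O(|\tau|^{-1/2})$ coming from the sharp asymptotics in the intermediate region, produces a $\cD^\ast$-contribution controlled by $\tfrac{\eps}{4}\bigl(\|w_{\cC}\|_{\cD,\infty}+\|w\,1_{\{\theta/2\leq v_1\leq\theta\}}\|_{\fH,\infty}\bigr)$. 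Finally, every derivative of $w$ inside $e^\tau\varphi_{\cC}(v_1)\mathcal{F}[w]$ comes with an $e^\tau$ prefactor and is of order at most two, so the very definition of $\|\cdot\|_{C^2_{\exp}(\cC)}$ gives directly
\begin{equation*}
\|e^\tau\varphi_{\cC}(v_1)\mathcal{F}[w]\|_{\cD^\ast,\infty}\leq \frac{C}{|\tau_0|}\|w\|_{C^2_{\exp}(\cC)}.
\end{equation*}

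With these error bounds in hand, the core parabolic step is a Gaussian energy estimate: testing the evolution equation against $w_{\cC}^\perp := w_{\cC}-\fp_0 w_{\cC}$ and integrating by parts, then using the spectral gap $\lambda\leq -\tfrac{1}{2}$ of $\mathfrak{L}$ on $\fH_-$ together with the eigenvalues $\lambda\geq\tfrac{1}{2}$ on $\fH_+$, yields a differential inequality of the shape
\begin{equation*}
\tfrac{d}{d\tau}\bigl(\|\fp_- w_{\cC}\|_{\fH}^2 - \|\fp_+ w_{\cC}\|_{\fH}^2\bigr) + c\,\|w_{\cC}^\perp\|_{\cD}^2 \leq C\cdot(\text{error})^2.
\end{equation*}
The unstable modes $\fp_+ w_{\cC}$ are controlled by a Merle--Zaag-type ODE argument, in the spirit of Claim \ref{claim_dom_mode}, anchored at the centering condition $\fp_+(v^i_{\cC}(\tau_0)-\sqrt{2})=0$ (which forces $\fp_+ w_{\cC}(\tau_0)=0$); this shows $\|\fp_+ w_{\cC}\|_{\fH,\infty}$ is quantitatively dominated by $\|\fp_0 w_{\cC}\|_{\fH,\infty}+\|\fp_- w_{\cC}\|_{\fH,\infty}\leq \|w_{\cC}\|_{\cD,\infty}$, so no independent $\fp_+$ term survives on the final right-hand side. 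Integrating the differential inequality backward from $\tau_0$, averaging over unit intervals $[\tau-1,\tau]$, and taking $\sup_{\tau\leq\tau_0}$ then produces the claimed estimate after absorbing a small multiple of $\|w_{\cC}^\perp\|_{\cD,\infty}^2$ into the left-hand side.

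The main obstacle I anticipate is the appearance of the second-order time derivatives $w_{\tau\tau}$ and $w_{y\tau}$ inside $e^\tau\mathcal{F}[w]$: these resist the standard parabolic integration by parts and cannot be absorbed into the Dirichlet energy on the left-hand side, which is exactly why they have to be offloaded into the $C^2_{\exp}(\cC)$-norm, where the $e^\tau$ prefactor makes them cheap. A second delicate point is keeping the constants in the cutoff-commutator estimate and the Merle--Zaag step sharp enough (the former producing the factor $\tfrac{\eps}{4}$ rather than a naked $C$ in front of $\|w\,1_{\{\theta/2\leq v_1\leq \theta\}}\|_{\fH,\infty}$, the latter dominating $\|\fp_+ w_{\cC}\|$ strictly by $\|\fp_0 w_{\cC}\| + \|\fp_- w_{\cC}\|$) so that the combined right-hand side collapses to exactly the three terms in the statement and nothing extraneous survives.
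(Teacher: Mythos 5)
Your overall architecture matches the paper's: start from the evolution equation for $w_\cC$ (Corollary \ref{lemma_int.cyl.wc}), estimate the semilinear term, the cutoff commutator, the $D[\varphi_\cC]$-terms and the nonlinearity $e^\tau\mathcal{F}[w]$ in the $\mathcal{D}^\ast$-norm (this is exactly the content of the paper's Lemmas for $I$, $J$, $K$ and the nonlinear error), and then feed them into a linear spectral estimate anchored at $\fp_+w_\cC(\tau_0)=0$. However, your core linear step contains a genuine gap. The correct mechanism, which is what the paper uses by quoting the general energy inequality $\|w_\cC-\fp_0w_\cC\|_{\mathcal{D},\infty}\leq C\|(\partial_\tau-\mathfrak{L})w_\cC\|_{\mathcal{D}^\ast,\infty}$ from \cite[Lemma 6.7]{ADS2}, is that the finitely many unstable modes vanish at $\tau_0$ and are therefore controlled \emph{by the forcing} upon integrating their ODEs backward from $\tau_0$ (the homogeneous solutions decay backward in time); the same holds for the stable part integrating forward from $-\infty$. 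Your proposed substitute — a Merle--Zaag-type argument showing $\|\fp_+w_\cC\|_{\fH,\infty}$ is dominated by $\|\fp_0w_\cC\|_{\fH,\infty}+\|\fp_-w_\cC\|_{\fH,\infty}\leq\|w_\cC\|_{\mathcal{D},\infty}$ — is both the wrong mechanism (Merle--Zaag gives asymptotic dominance of one mode, not a quantitative bound valid at all $\tau\leq\tau_0$, and its hypotheses are not available here since the forcing contains $\|w\,1_{\{\theta/2\leq v_1\leq\theta\}}\|_{\fH}$ and the $C^2_{\exp}$-term, which are not small multiples of $\|w_\cC\|$) and quantitatively insufficient: a bound of $\fp_+w_\cC$ by a non-small constant times $\|w_\cC\|_{\mathcal{D},\infty}$ would put $C\|w_\cC\|_{\mathcal{D},\infty}$, not $\eps\|w_\cC\|_{\mathcal{D},\infty}$, on the right-hand side, and this cannot be absorbed in the later applications of the proposition. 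The smallness of the coefficient of $\|w_\cC\|_{\mathcal{D},\infty}$ must come entirely from the error-term estimates, with the linear estimate contributing only a fixed constant.

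A second, smaller but real, error is your claim that the zeroth-order coefficient $\frac{2-v_1v_2}{2v_1v_2}$ of $\mathcal{E}[w_\cC]$ is pointwise $o(1)$ throughout $\mathrm{spt}(\varphi_\cC)$. On the support of $\varphi_\cC$ one only has $v_i\geq\tfrac58\theta$, so near the transition collar this coefficient is of size $\theta^{-2}$, not small. The smallness of its $\mathcal{D}^\ast$-contribution there comes instead from the weighted Sobolev inequality $\|(1+|y|)f\|_{\mathcal{D}^\ast}\leq C\|f\|_{\fH}$ combined with the fact, supplied by Theorem \ref{cor_unique_asympt}, that $|y|\gtrsim|\tau|^{1/2}$ wherever $v_i$ deviates substantially from $\sqrt{2}$ — the same device you correctly use for the $D[\varphi_\cC]$-terms, and the same as in \cite[Lemmas 6.8--6.9]{ADS2}. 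With these two points repaired (cite or reprove the backward-integration energy inequality using $\fp_+w_\cC(\tau_0)=0$, and replace the pointwise smallness claim by the weighted argument), your outline coincides with the paper's proof; note also that the nonlinear term reappears inside the $K$-term through $v_2(\partial_\tau-\mathfrak{L})D[\varphi_\cC]$, and that a cancellation of $\varphi_\cC'(v_1)v_2\,\mathcal{E}[w]$ between the two pieces of $K$ is used there, so your accounting of the cutoff terms should be organized to capture this.
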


Recall that our definition of $\kappa$-quadratic imposes the centering condition $\mathfrak{p}_+(v^i_{\cC}(\tau_0)-\sqrt{2})=0$, and observe that this in particular implies that
\begin{equation}\label{eq_w_orth}
\mathfrak{p}_+(w_{\cC}(\tau_0))=0.
\end{equation}
The norms appearing in the energy estimate have been briefly described in the introduction, but let us discuss them in more detail now. Similarly as in \cite{ADS2}, in addition to the Gaussian $L^2$-norm
\begin{equation}
\|f\|_{\mathfrak{H}} := \left( \int f^2  e^{-y^2/4} dy \right)^{1/2},
\end{equation}
one also needs the Gaussian $H^1$-norm
\begin{equation}
\|f\|_{\mathcal{D}} := \left( \int (f^2 +f_y^2) e^{-y^2/4} dy \right)^{1/2},
\end{equation}
and its dual norm
\begin{equation}
\|f\|_{\mathcal{D}^\ast} := \sup_{\|g\|_{\mathcal{D}}\leq 1  } \langle f,g\rangle\, .
\end{equation}
For time-dependent functions this induces the parabolic norms 
\begin{equation}
\|f \|_{\mathcal{X},\infty}(\tau):=\sup_{\tau'\leq \tau }\left( \int_{\tau'-1}^{\tau'} \| f(\cdot,\sigma)\|^2_{\mathcal{X}} \, d\sigma \right)^{1/2},
\end{equation}
where $\mathcal{X}=\fH,\mathcal{D}$ or $\mathcal{D}^\ast$, and we often simply write
\begin{equation}
\| f \|_{\mathcal{X},\infty}:=\| f \|_{\mathcal{X},\infty}(\tau_0).
\end{equation}
In contrast to \cite{ADS2}, we also need exponentially weighted $C^2$-norms to control the higher derivative terms coming from the nonlinearity $e^{\tau}\mathcal{F}[w]$. Specifically, setting
\begin{equation}
 C_\tau:=\left\{ y:  v_1(y,\tau) \geq \tfrac{5}{8} \,\,\, \textrm{or} \,\,\, v_2(y,\tau) \geq \tfrac{5}{8} \theta \right\},
 \end{equation}
  we define
\begin{align}
\| f\|_{C^2_{\exp}(\mathcal{C})}(\tau):=\sup_{\tau'\leq \tau} \left( |\tau'| e^{\tau'}\sup_{y\in C_{\tau'} } \big(|f|+|f_y|+|f_{\tau}|+ |f_{yy}|+|f_{y\tau}|+|f_{\tau\tau}|\big)(y,\tau')\right),
\end{align}
and we often simply write
\begin{equation}
\| f\|_{C^2_{\exp}(\mathcal{C})}:=\| f\|_{C^2_{\exp}(\mathcal{C})}(\tau_0)\, .
\end{equation}

\bigskip

To prove Proposition \ref{prop_int_cyl}, we note that thanks to \eqref{eq_w_orth} and \cite[Lemma 6.7]{ADS2} we have the general energy inequality
\begin{equation}\label{gen_energy_ineq}
\|w_\cC-\fp_0w_\cC\|_{\mathcal{D},\infty}\leq C\|(\partial_\tau -\mathfrak{L})w_\cC\|_{\mathcal{D}^*,\infty}.
\end{equation}
Hence, our task is to estimate $(\partial_\tau -\mathfrak{L})w_\cC$ in the parabolic $\mathcal{D}^\ast$-norm. In contrast to  \cite[Section 6]{ADS2}, this will require estimating several new terms coming from the intrinsic cutoff and the nonlinearities. Specifically, rewriting the conclusion of Corollary \ref{lemma_int.cyl.wc} (evolution of truncated difference) in the form
\begin{align}\label{ev_trunc_diff}
(\partial_\tau -\mathfrak{L}) w_\cC=I+J+K+e^{\tau}\varphi_\cC(v_1)\mathcal{F}[w],
\end{align}
where
\begin{align}
I=&\,\mathcal{E}[w_\cC]+\overline{\mathcal{E}}[w,\varphi_\cC(v_1)],\\
J=&\,( v_{2,\tau}- v_{2,yy}+\tfrac{y}{2}v_{2,y}-\mathcal{E}[v_2])D[\varphi_\cC]-2v_{2,y}\partial_y D[\varphi_\cC],\\
K=&\mathcal{E}[v_2]D[\varphi_\cC]-\mathcal{E}[v_2D[\varphi_\cC]]+v_2 (\partial_\tau -\mathfrak{L})D[\varphi_\cC].
\end{align}
we will now estimate the $\mathcal{D}^\ast$-norm of $I$, $J$, $K$ and  $\varphi_\cC(v_1)\mathcal{F}[w]$ in turn. A term similar to $I$ already appeared in \cite[Section 6]{ADS2}, but the other three terms are new.  Let us recall a few basic facts that will be used frequently for estimating the $\mathcal{D}^\ast$-norm. By the  weighted Sobolev inequality (see e.g. \cite[Lemma 4.12]{ADS1}) multiplication with $1+|y|$ is a bounded operator from $\mathcal{D}$ to $\mathfrak{H}$, hence by duality
\begin{equation}\label{eq_weighted_sob_est}
\| ( 1+|y|) f \|_{\mathcal{D}^\ast} \leq C \| f \|_{\mathfrak{H}}\, .
\end{equation}
Consequently, $\partial_y$ and $\partial_y^\ast=-\partial_y + \frac{y}{2}$ are bounded operators from $\mathcal{D}$ to $\mathfrak{H}$ and from $\mathfrak{H}$ to $\mathcal{D}^\ast$, in particular
\begin{equation}\label{rec_basic_der_norm}
\|  f_y \|_{\mathcal{D}^\ast} \leq C \| f \|_{\mathfrak{H}}\, .
\end{equation}
Also, if $g\in \mathcal{D}$ and $h\in W^{1,\infty}$ then by the product rule $\| hg\|_{\mathcal{D}}\leq 2 \| h \|_{W^{1,\infty}}\| g\|_{\mathcal{D}}$, hence by duality
\begin{equation}\label{eq_product_rule_norm}
\| h f \|_{\mathcal{D}^\ast} \leq 2 \| h\|_{W^{1,\infty}} \| f \|_{\mathcal{D}^\ast}\, .
\end{equation}
In the following, we write
\begin{equation}
D_\tau:=\left\{ y:  \tfrac{5}{8}\theta \leq v_1(y,\tau) \leq  \tfrac{7}{8}\theta  \,\,\, \textrm{or} \,\,\,   \tfrac{5}{8}\theta \leq v_2(y,\tau) \leq  \tfrac{7}{8}\theta  \right\}
 \end{equation}
for the transition region.

\begin{lemma}[{estimate for $I$}]\label{lemma_int.cyl.I}
Given $\varepsilon>0$, there exist $\kappa>0$ and ${\tau}_\ast>-\infty$ such that for $\tau \leq \tau_*$ we have
\begin{align}
\|I(\tau)\|_{\mathcal{D}^*}\leq \varepsilon \big(\|w_\cC(\tau)\|_{\mathcal{D}}+\|w(\tau) 1_{D_{\tau}} \|_{\mathfrak{H}}\big).
\end{align}
\end{lemma}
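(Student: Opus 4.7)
The plan is to treat the two summands $\mathcal{E}[w_\cC]$ and $\overline{\mathcal{E}}[w, \varphi_\cC(v_1)]$ separately, extracting a small prefactor from each. Two ingredients will be used repeatedly: the uniform cylindrical sharp asymptotics of Corollary \ref{cor_unique_asympt}, together with the almost quadratic concavity of Proposition \ref{prop_almost_quad_conc}, to control the coefficients built out of $v_{1,y}, v_{2,y}, v_{i,yy}$; and the weighted-Sobolev duality \eqref{eq_weighted_sob_est}, $\|(1+|y|)f\|_{\mathcal{D}^\ast}\le C\|f\|_{\mathfrak{H}}$, which converts unbounded $y$-factors into small gain factors at the cost of switching from $\mathfrak{H}$ to $\mathcal{D}^\ast$. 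Derivatives of $w_\cC$ will be moved to $w_\cC$ via the mapping properties $\partial_y\colon \mathcal{D}\to \mathfrak{H}$ and $\partial_y\colon \mathfrak{H}\to \mathcal{D}^\ast$, and the product bound \eqref{eq_product_rule_norm}.

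To bound $\mathcal{E}[w_\cC]$, I would split into a parabolic sub-region $|y|\le A$ and an outer sub-region $|y|\ge A$. In the parabolic region the sharp asymptotics give $|v_{i,y}|\le C/\sqrt{|\tau|}$, $|v_{i,yy}|\le C/|\tau|$, and $|2-v_1v_2|\le C/|\tau|$, so every coefficient of $\mathcal{E}$ is $O(|\tau|^{-1})$. In the outer region the three coefficients are merely bounded by some $C_\theta$ (using the intermediate-region asymptotics, which force $v_2/v_1$ to stay close to $1$ throughout the support of $w_\cC$ so that $v_1 v_2\ge c_\theta>0$), but the weighted-Sobolev inequality yields $\|f\, 1_{|y|\ge A}\|_{\mathfrak{H}} \le \tfrac{C}{1+A}\|f\|_{\mathcal{D}}$, producing the small factor $1/A$. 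Choosing first $A$ large depending on $\varepsilon$ and $\theta$ and then $\tau_\ast$ sufficiently negative, I would obtain $\|\mathcal{E}[w_\cC]\|_{\mathcal{D}^\ast}\le \tfrac{\varepsilon}{2}\|w_\cC\|_{\mathcal{D}}$.

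For the commutator $\overline{\mathcal{E}}[w,\psi]$ with $\psi=\varphi_\cC(v_1)$, a direct product-rule computation expresses it as
\[
w\psi_\tau+(\alpha-1)\,w\psi_{yy}+\bigl(\tfrac{y}{2}+\beta\bigr) w\psi_y+2(\alpha-1)\psi_y w_y,
\]
where $\alpha=\tfrac{v_{1,y}^2}{1+v_{1,y}^2}$ and $\beta=\tfrac{(v_{1,y}+v_{2,y})v_{2,yy}}{(1+v_{1,y}^2)(1+v_{2,y}^2)}$. Since $\varphi_\cC'$ is supported in $[\tfrac{5}{8}\theta,\tfrac{7}{8}\theta]$, every derivative of $\psi$ is supported in $D_\tau$, and the chain rule together with the collar asymptotics gives $|\psi_\tau|,|\psi_{yy}|\le C_\theta/|\tau|$ and $|\psi_y|\le C_\theta/\sqrt{|\tau|}$. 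The only delicate term is $\tfrac{y}{2}w\psi_y$, since $|y|\sim\sqrt{|\tau|}$ on $D_\tau$: I handle it via \eqref{eq_weighted_sob_est}, obtaining $\|\tfrac{y}{2}\psi_y w\|_{\mathcal{D}^\ast}\le C\|\psi_y w\|_{\mathfrak{H}}\le CC_\theta/\sqrt{|\tau|}\,\|w\, 1_{D_\tau}\|_{\mathfrak{H}}$. The $w_y$ summand is integrated by parts against the Gaussian weight to convert it into $w$ terms, all with small coefficients. For $\tau_\ast$ sufficiently negative this yields $\|\overline{\mathcal{E}}\|_{\mathcal{D}^\ast}\le \tfrac{\varepsilon}{2}\|w\,1_{D_\tau}\|_{\mathfrak{H}}$, which combined with the previous step proves the lemma.

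The main obstacles, already visible above, are (i) the zeroth-order coefficient $\tfrac{2-v_1v_2}{2v_1v_2}$ in $\mathcal{E}[w_\cC]$, which is not uniformly small on $\cC$ but only on the parabolic sub-region, and (ii) the factor $\tfrac{y}{2}w\psi_y$ in the commutator, which grows like $\sqrt{|\tau|}$ exactly where $\psi_y$ is supported. Both are resolved by the weighted-Sobolev duality \eqref{eq_weighted_sob_est}: the gain $1/A$ for (i) when restricting to $|y|\ge A$, and the gain $1/\sqrt{|\tau|}$ for (ii) coming from the $L^\infty$ bound on $\psi_y$. The uniformity in $\tau\le\tau_0$ demanded by the conclusion is precisely what makes the uniform version of Corollary \ref{cor_unique_asympt} (rather than pointwise-in-$\tau$ asymptotics) indispensable here.
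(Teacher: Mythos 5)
Your overall route is the same as the paper's (which carries out the ADS2-type estimates of Lemmas 6.8--6.9): split $I=\mathcal{E}[w_\cC]+\overline{\mathcal{E}}[w,\varphi_\cC(v_1)]$, get smallness of the coefficients of $\mathcal{E}$, and treat the commutator as a term supported in $D_\tau$, absorbing the dangerous factor $\tfrac{y}{2}\psi_y$ via the duality form of the weighted Sobolev inequality \eqref{eq_weighted_sob_est}. Your explicit formula for $\overline{\mathcal{E}}$ is correct, and your handling of the zeroth-order coefficient $\tfrac{2-v_1v_2}{2v_1v_2}$ (smallness on $|y|\le A$ from the asymptotics, gain of $\tfrac{1}{1+A}$ on $|y|\ge A$ from the weight) is exactly the right mechanism for that term.

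However, there is a genuine gap in your outer-region treatment of the \emph{derivative} terms of $\mathcal{E}[w_\cC]$. On $\{|y|\ge A\}$ you only assume the coefficients $\alpha=\tfrac{v_{1,y}^2}{1+v_{1,y}^2}$ and $\beta=\tfrac{(v_{1,y}+v_{2,y})v_{2,yy}}{(1+v_{1,y}^2)(1+v_{2,y}^2)}$ are bounded by $C_\theta$ and claim the factor $\tfrac{1}{1+A}$ supplies the smallness. For the first-order term this can be salvaged (pair against a test function and insert $(1+|y|)^{-1}$ on the product $\beta\,(w_\cC)_y\,1_{\{|y|\ge A\}}$), but for the second-order term $\alpha\,(w_\cC)_{yy}$ it cannot: to avoid $(w_\cC)_{yy}$, which is not controlled by $\|w_\cC\|_{\cD}$, you must move one derivative onto the test function, i.e.\ use $\|\partial_y f\|_{\mathcal{D}^\ast}\le C\|f\|_{\fH}$ with $f=\alpha\chi_A (w_\cC)_y$; this step consumes the duality gain, and you cannot simultaneously insert the weight $(1+|y|)^{-1}$ (applying $\|f 1_{\{|y|\ge A\}}\|_{\fH}\le \tfrac{C}{1+A}\|f\|_{\cD}$ to $f=(w_\cC)_y$ would require $\|(w_\cC)_{yy}\|_{\fH}$). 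With merely $C_\theta$-bounded $\alpha$, $\alpha_y$ you therefore only get $C_\theta\|w_\cC\|_{\cD}$, not $\varepsilon\|w_\cC\|_{\cD}$. The missing ingredient is that no splitting is needed for these terms at all: by Lemma \ref{lemma_derivatives}, rescaled as in \eqref{eq_cylinder.derivatives}, $|v_{i,y}|$, $|v_{i,yy}|$, $|v_{i,yyy}|$, $|v_{i,\tau}|$, \dots are $\le\varepsilon$ on the \emph{entire} cylindrical region $C_\tau$ for $\tau\le\tau_\ast$ (not just on $|y|\le A$), so $\alpha$, $\beta$ and their $y$-derivatives are uniformly small there, and the integration by parts plus \eqref{eq_product_rule_norm} immediately gives $\varepsilon\|w_\cC\|_{\cD}$; the region-splitting is genuinely needed only for the zeroth-order coefficient. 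A minor related imprecision: after integrating the $2(\alpha-1)\psi_y w_y$ term by parts, the resulting $\tfrac{y}{2}$-term does not have a pointwise small coefficient on $D_\tau$; it must again be absorbed through \eqref{eq_weighted_sob_est}, with the smallness coming from $\sup_{D_\tau}|\psi_y|$, just as for $\tfrac{y}{2}w\psi_y$.
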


\begin{proof}
Recall that
\begin{equation}\label{i_restated}
I=\,\mathcal{E}[w_\cC]+\overline{\mathcal{E}}[w,\varphi_\cC(v_1)]\, .
\end{equation}
By Lemma \ref{lemma_derivatives} (derivative estimates), given $\varepsilon>0$ there exists $\tau_*>-\infty$ such that
\begin{equation}\label{eq_cylinder.derivatives}
|v_{i,y}|+|v_{i,\tau}|+
|v_{i,yy}|+|v_{i,\tau y}|+|v_{i,\tau\tau}|
+|v_{i,yyy}|+|v_{i,\tau yy}|+|v_{i,\tau \tau y}|+|v_{i,\tau\tau\tau}|\leq \varepsilon
\end{equation}
holds on $C_\tau$ for $\tau\leq \tau_\ast$ and $i=1,2$. Using these derivative estimates, similarly as in \cite[Lemma 6.8 and Lemma 6.9]{ADS2} for $\tau \leq \tau_\ast$ we get
\begin{equation}
\|\mathcal{E}[w_\cC]\|_{\mathcal{D}^*}\leq  C\varepsilon \|w_\cC\|_{\mathcal{D}}\, ,
\end{equation}
and
\begin{equation}\label{eq_error_commute}
\left\|\overline{\mathcal{E}}[w,\varphi_\cC(v_1)]\right\|_{\mathcal{D}^*}\leq C \varepsilon\left\|w 1_{\left\{\tfrac{5}{8}\theta\leq v_1\leq \tfrac{7}{8}\theta\right\}} \right\|_{\mathfrak{H}}\leq  C\varepsilon\left\|w 1_{D_\tau}  \right\|_{\mathfrak{H}}\, ,
\end{equation}
where $C=C(\theta)$ only depends on $\theta$. Thus, replacing $C\varepsilon$ by $\varepsilon$ completes the proof.
\end{proof}

We next bound the error terms $J$ and $K$ coming from the intrinsic cutoff.

\begin{lemma}[{estimate for $J$}]\label{lemma_int.cyl.J}
Given $\varepsilon>0$, there exist $\kappa>0$ and ${\tau}_\ast>-\infty$ such that for $\tau \leq \tau_*$ we have
\begin{align}
\|J(\tau)\|_{\mathcal{D}^*}\leq \varepsilon  \|w(\tau)\, 1_{D_\tau}\|_{\mathfrak{H}} \,  .
\end{align}
\end{lemma}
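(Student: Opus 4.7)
The proof decomposes $J$ into its constituent pieces and estimates each using four ingredients: (i) the mean-value bound $|D[\varphi_\cC]| \leq \tfrac{5}{\theta}|w|\mathbf{1}_{D_\tau}$, which both localizes the support to $D_\tau$ and converts cutoff differences into pointwise bounds on $w$; (ii) the cylindrical derivative estimates of Lemma \ref{lemma_derivatives}, which make $|v_{i,y}|$ and $|v_i v_{i,yy}|$ uniformly small on $C_\tau$ once $\tau_\ast$ is sufficiently negative; (iii) the $\mathfrak{H}\to\mathcal{D}^\ast$ boundedness of $\partial_y$, namely \eqref{rec_basic_der_norm}; and crucially, (iv) the geometric observation from the sharp intermediate-region asymptotics (Theorem \ref{cor_unique_asympt}) that for $\theta$ sufficiently small and $\tau$ sufficiently negative, $D_\tau \subset \{|y|\geq c\sqrt{|\tau|}\}$ for a constant $c=c(\theta)>0$.

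I first substitute the translator equation for $v_2$ from Proposition \ref{prop_evol_profile} together with the explicit form \eqref{eq_def.E[w]} of $\mathcal{E}$ into the prefactor of $D[\varphi_\cC]$. After collecting terms via the algebraic identity
\begin{equation*}
\tfrac{v_{1,y}^2}{1+v_{1,y}^2}-\tfrac{v_{2,y}^2}{1+v_{2,y}^2}+\tfrac{(v_{1,y}+v_{2,y})v_{2,y}}{(1+v_{1,y}^2)(1+v_{2,y}^2)}=\tfrac{v_{1,y}(v_{1,y}+v_{2,y})}{(1+v_{1,y}^2)(1+v_{2,y}^2)},
\end{equation*}
one obtains
\begin{equation*}
v_{2,\tau}-v_{2,yy}+\tfrac{y}{2}v_{2,y}-\mathcal{E}[v_2]=\tfrac{v_{1,y}(v_{1,y}+v_{2,y})v_{2,yy}}{(1+v_{1,y}^2)(1+v_{2,y}^2)}+\bigl(v_2-\tfrac{1}{v_1}-\tfrac{1}{v_2}\bigr)+e^\tau\mathcal{N}[v_2].
\end{equation*}
The derivative-weighted summand is $O(\varepsilon^3)$ in $L^\infty(D_\tau)$ by Lemma \ref{lemma_derivatives}, and the $e^\tau\mathcal{N}[v_2]$ summand is exponentially small; multiplied against $D[\varphi_\cC]$ and using (i), both contribute at most $\varepsilon\|w\mathbf{1}_{D_\tau}\|_{\mathfrak{H}}$. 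Next, the term $-2v_{2,y}\partial_y D[\varphi_\cC]$ is rewritten as $-\partial_y(2v_{2,y}D[\varphi_\cC])+2v_{2,yy}D[\varphi_\cC]$; the first summand is bounded via (iii) by $C\|v_{2,y}D[\varphi_\cC]\|_{\mathfrak{H}}$ and the second is bounded directly in $\mathfrak{H}$, each with a prefactor that is $O(\varepsilon)$ on $D_\tau$ by Lemma \ref{lemma_derivatives}.

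The main obstacle is the algebraic residual $(v_2-\tfrac{1}{v_1}-\tfrac{1}{v_2})D[\varphi_\cC]$, whose prefactor is of bounded size $\sim 1/\theta$ on $D_\tau$ and cannot be made small by derivative estimates. To extract the required $\varepsilon$-smallness here, I dualize the weighted Sobolev inequality \eqref{eq_weighted_sob_est} to obtain $\|g\|_{\mathcal{D}^\ast}\leq C\|(1+|y|)^{-1}g\|_{\mathfrak{H}}$. Applied to $g=(v_2-\tfrac{1}{v_1}-\tfrac{1}{v_2})D[\varphi_\cC]$, which by ingredient (iv) is supported in $\{|y|\geq c\sqrt{|\tau|}\}$, this estimate absorbs a factor $|\tau|^{-1/2}$. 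Combined with the pointwise bound $|v_2-\tfrac{1}{v_1}-\tfrac{1}{v_2}|\leq C/\theta$ on $D_\tau$ and ingredient (i), the net result is $C(\theta)|\tau|^{-1/2}\|w\mathbf{1}_{D_\tau}\|_{\mathfrak{H}}$, which is $\leq\varepsilon\|w\mathbf{1}_{D_\tau}\|_{\mathfrak{H}}$ once $\tau_\ast$ is sufficiently negative (depending on $\theta$ and $\varepsilon$). Replacing the various $C\varepsilon$ prefactors by $\varepsilon$ at the end completes the proof.
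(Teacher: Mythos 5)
Your proof is correct, and it rests on exactly the same ingredients as the paper's: the mean-value bound $|D[\varphi_\cC]|\leq C(\theta)|w|1_{D_\tau}$, the cylindrical derivative estimates, the boundedness of $\partial_y$ from $\fH$ to $\mathcal{D}^\ast$, and the crucial gain from the dualized weighted Sobolev inequality combined with $|y|\gtrsim|\tau|^{1/2}$ on $D_\tau$. The only real difference is bookkeeping: the paper does not substitute the evolution equation for $v_2$ at all, but simply bounds the entire prefactor $v_{2,\tau}+\tfrac{y}{2}v_{2,y}-v_{2,yy}-\mathcal{E}[v_2]$ in $W^{1,\infty}(C_\tau)$ by a constant, uses the product rule \eqref{eq_product_rule_norm} in $\mathcal{D}^\ast$, and then applies the $(1+|y|)^{-1}\lesssim|\tau|^{-1/2}$ gain to $D[\varphi_\cC]$ as a whole; your decomposition (whose algebraic identity checks out, yielding the residual $v_2-\tfrac1{v_1}-\tfrac1{v_2}$) applies that gain only to the residual and handles the derivative-weighted and $e^\tau\cN[v_2]$ pieces by pointwise smallness, which trades extra algebra for needing only $L^\infty$ rather than $W^{1,\infty}$ control of the multipliers. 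Two minor points to make explicit: on $D_\tau$ the lower bound $v_1,v_2\gtrsim\theta$ (needed for the $C/\theta$ bound on the residual) follows from the uniform intermediate-region asymptotics, and $\cN[v_2]$ is only bounded by $C|\tau|$ there because of the explicit factors of $y$, which is harmless since $e^\tau|\tau|$ is still superexponentially small; your handling of $-2v_{2,y}\partial_yD[\varphi_\cC]$ by writing it as $-\partial_y(2v_{2,y}D[\varphi_\cC])+2v_{2,yy}D[\varphi_\cC]$ is a clean and valid variant of the paper's expansion of $\partial_yD[\varphi_\cC]$.
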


\begin{proof} Recall that
\begin{equation}\label{j_restated}
J=\,( v_{2,\tau}- v_{2,yy}+\tfrac{y}{2}v_{2,y}-\mathcal{E}[v_2])D[\varphi_\cC]-2v_{2,y}\partial_y D[\varphi_\cC]\, .
\end{equation}
By Proposition \ref{prop_evol_profile} (evolution equation of profile function) and the pointwise derivative bounds from \eqref{eq_cylinder.derivatives} we have
\begin{equation}
\left\|  (v_{2,\tau}+\tfrac{y}{2}v_{2,y}- v_{2,yy}-\mathcal{E}[v_2] ) \right\|_{W^{1,\infty}}  \leq C\, .
\end{equation}
By definition of the dual norm, as explained in \eqref{eq_product_rule_norm}, this yields 
\begin{equation}
\left\|  (v_{2,\tau}+\tfrac{y}{2}v_{2,y}- v_{2,yy}-\mathcal{E}[v_2] )D[\varphi_\cC] \right\|_{\cD^\ast}  \leq C \left\| D[\varphi_\cC] \right\|_{\cD^\ast}\, .
\end{equation}
By the weighted Sobolev inequality, as explained in \eqref{eq_weighted_sob_est}, we can estimate
\begin{equation}
\left\|   D[\varphi_\cC] \right\|_{\cD^\ast}  \leq C \left\|  \frac{1}{1+|y|} D[\varphi_\cC]\right\|_{\mathfrak{H}} \, . 
\end{equation}
Next, note that $D[\varphi_{\cC}](\tau)=0$ outside the support of $1_{D_\tau}$, while on the support we have
\begin{equation}
D[\varphi_{\mathcal{C}}](y,\tau) =\int_{v_1(y,\tau)}^{v_2(y,\tau)}\varphi_{\mathcal{C}}'(s)\, ds\, ,
\end{equation}
hence
\begin{equation}
|D[\varphi_{\mathcal{C}}]| \leq C |w| 1_{D_\tau}\, .
\end{equation}
This yields
\begin{equation}
\left\|   D[\varphi_\cC] \right\|_{\cD^\ast} \leq C \left\|  \frac{1}{1+|y|} w 1_{D_\tau} \right\|_{\mathfrak{H}}. 
\end{equation}
Since $|y| \geq |\tau|^{1/2}$ on the support of $1_{D_\tau}$  for sufficiently large $-\tau$ by Corollary  \ref{cor_unique_asympt} (uniform sharp asymptotics), this shows that
\begin{equation}
\left\|   D[\varphi_\cC] \right\|_{\cD^\ast} \leq \frac{C}{|\tau|^{1/2}} \left\|  w 1_{D_\tau}  \right\|_{\mathfrak{H}} \, .
\end{equation}
To bound the other term in \eqref{j_restated}, we observe that the derivative bound $|v_{2,y}|+|v_{2,yy}|\leq \eps$ implies that
\begin{equation}
\left\|  v_{2,y}\partial_y D[\varphi_\cC] \right\|_{\cD^\ast}  \leq C\eps \left\| \partial_y D[\varphi_\cC] \right\|_{\cD^\ast}  \, ,
\end{equation}
and compute
 \begin{equation}\label{eq_Dphi_y}
\partial_y D[\varphi_\cC] =\varphi_\cC'(v_1)w_y +v_{2,y} D[\varphi_\cC'] \, .
\end{equation}
Noting also that
\begin{equation}
|D[\varphi'_{\mathcal{C}}]|=\left|\int_{v_1}^{v_2}\varphi_{\mathcal{C}}''(s)\, ds\right| 1_{D_\tau}  \leq C |w| 1_{D_\tau}\, ,
\end{equation}
and
\begin{equation}
\varphi_\cC'(v_1)w_y=\varphi_\cC'(v_1)(w1_{D_\tau})_y\, ,
\end{equation}
arguing similarly as above we can thus estimate
\begin{equation}
\left\|  \partial_y D[\varphi_\cC] \right\|_{\cD^\ast} \leq C \left\|  (w 1_{D_\tau})_y \right\|_{\cD^\ast} + C \left\|  D[\varphi_\cC']  \right\|_{\cD^\ast} \leq C \left\|  w 1_{D_\tau}  \right\|_{\mathfrak{H}}\, ,
\end{equation}
where we also used \eqref{rec_basic_der_norm}. Putting things together the assertion follows.
\end{proof}

\begin{lemma}[{estimate for $K$}]\label{lemma_int.cyl.K}
Given $\varepsilon>0$, there exist $\kappa>0$ and ${\tau}_\ast>-\infty$ such that for $\tau \leq \tau_*$ we have
\begin{align}
\|K(\tau)\|_{\mathcal{D}^*}\leq \varepsilon  \|w(\tau)\, 1_{D_\tau}  \|_{\mathfrak{H}}+\|v_2\varphi_{\cC}'(v_1) e^\tau \mathcal{F}[w]\|_{\mathcal{D}^*} \, .
\end{align}
\end{lemma}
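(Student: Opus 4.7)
The approach is to write $K$ as a single second-order linear expression in $D:=D[\varphi_\cC]$, then to expand that expression via the chain rule and the $w$-evolution equation so that every contribution except $v_2\varphi_\cC'(v_1)e^{\tau}\mathcal{F}[w]$ becomes manifestly dominated by $\|w\,1_{D_\tau}\|_{\mathfrak{H}}$ in the dual norm $\mathcal{D}^\ast$. First, writing $\mathcal{E}[f]=af_{yy}+bf_y+cf$ with the coefficients $a,b,c$ read off from \eqref{eq_def.E[w]}, a short commutator computation gives
\begin{align*}
\mathcal{E}[v_2]D-\mathcal{E}[v_2 D]=-av_2 D_{yy}-(bv_2+2av_{2,y})D_y,
\end{align*}
which, combined with $v_2(\partial_\tau-\mathfrak{L})D=v_2 D_\tau-v_2 D_{yy}+\tfrac{y}{2}v_2 D_y-v_2 D$, yields the compact representation
\begin{align*}
K=v_2 D_\tau-(1+a)v_2 D_{yy}+\bigl[(\tfrac{y}{2}-b)v_2-2av_{2,y}\bigr]D_y-v_2 D.
\end{align*}

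Next I will expand $D_\tau,D_y,D_{yy}$ via the chain rule applied to $D=\varphi_\cC(v_1)-\varphi_\cC(v_2)$, separating the pieces carrying the factor $\varphi_\cC'(v_1)$, which involve derivatives of $w=v_1-v_2$, from the pieces carrying $D[\varphi_\cC']$, $D[\varphi_\cC'']$ or $D$ itself, which only involve derivatives of $v_2$. Using the identity $1+a=(1+v_{1,y}^2)^{-1}$ together with the $w$-evolution
\begin{align*}
w_\tau=w_{yy}-\tfrac{y}{2}w_y+w+\mathcal{E}[w]+e^{\tau}\mathcal{F}[w]
\end{align*}
from Proposition \ref{prop_evol_w}, the second-order-in-$w$ piece $-(1+a)v_2\varphi_\cC'(v_1)w_{yy}$ cancels against the $aw_{yy}$ inside $v_2\varphi_\cC'(v_1)\mathcal{E}[w]$, while the $(\tfrac{y}{2}-b)v_2\varphi_\cC'(v_1)w_y$ cancels the $-\tfrac{y}{2}w_y+bw_y$ produced by $\mathfrak{L}w+\mathcal{E}[w]$. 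Only the term $(1+c)w=\tfrac{v_1v_2+2}{2v_1v_2}w$, a stray $w_y$ piece proportional to $a v_{2,y}$, and the nonlinearity $e^{\tau}\mathcal{F}[w]$ survive, so that explicitly
\begin{align*}
K=v_2\varphi_\cC'(v_1)e^{\tau}\mathcal{F}[w]&+v_2\varphi_\cC'(v_1)\tfrac{v_1v_2+2}{2v_1v_2}w-2a\varphi_\cC'(v_1)v_{2,y}w_y\\
&-(1+a)v_2\varphi_\cC''(v_1)(v_{1,y}+v_{2,y})w_y+R_1 D[\varphi_\cC']+R_2 D[\varphi_\cC'']+R_3 D[\varphi_\cC],
\end{align*}
where $R_1,R_2,R_3$ are scalar functions of $v_2$ and its first and second derivatives, bounded on the support of the cutoffs.

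Finally, every term on the right except $v_2\varphi_\cC'(v_1)e^{\tau}\mathcal{F}[w]$ is supported in $D_\tau$. By Corollary \ref{cor_unique_asympt} and Lemma \ref{lemma_derivatives}, on the set $\{\tfrac{5}{8}\theta\leq v_i\leq \tfrac{7}{8}\theta\}\subset D_\tau$ one has $|v_{i,y}|+|v_{i,\tau}|+|v_{i,yy}|\leq C(\theta)|\tau|^{-1/2}$ and $|y|\geq \tfrac12\sqrt{2|\tau|}$, and the mean-value theorem together with \eqref{eq_def.collar.cutoff_pre} gives the pointwise bounds $|D[\varphi_\cC^{(k)}]|\leq C(\theta)|w|\,1_{D_\tau}$ for $k=0,1,2$. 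After one integration by parts in the duality pairing defining $\mathcal{D}^\ast$ (used to dispose of the two $w_y$ terms, with the $\tfrac{y}{2}$ produced by the Gaussian weight contributing only a factor $|y|\leq C|\tau|^{1/2}$ that is absorbed by the coefficient's smallness), each non-$\mathcal{F}$ term is pointwise dominated by $C(\theta)|w|\,1_{D_\tau}$. The weighted Sobolev inequality \eqref{eq_weighted_sob_est} together with the localization $|y|\geq \tfrac12\sqrt{2|\tau|}$ then yields
\begin{align*}
\bigl\|K-v_2\varphi_\cC'(v_1)e^{\tau}\mathcal{F}[w]\bigr\|_{\mathcal{D}^\ast}\leq \frac{C(\theta)}{|\tau|^{1/2}}\,\|w\,1_{D_\tau}\|_{\mathfrak{H}}\leq \varepsilon\,\|w\,1_{D_\tau}\|_{\mathfrak{H}}
\end{align*}
once $\tau_\ast$ is chosen negative enough depending on $\theta$ and $\varepsilon$.

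The main obstacle is the algebraic cancellation in the middle paragraph: the precise values of $a,b,c$ coming from $\mathcal{E}$ and the structure of $\mathcal{F}$ in Proposition \ref{prop_evol_w} must be orchestrated exactly so that after substituting the $w$-PDE into $v_2\varphi_\cC'(v_1)[w_\tau-(1+a)w_{yy}]$ no uncontrolled second-order or $y$-linear term survives. That $(1+a)$, $b$, and $\tfrac{y}{2}$ combine in precisely the right way is what produces the single leftover nonlinear contribution with prefactor $v_2\varphi_\cC'(v_1)$, as demanded by the statement of the lemma. Once this cancellation has been carried out, the remaining analysis is a routine combination of pointwise coefficient estimates from Lemma \ref{lemma_derivatives}, the smallness $|y|^{-1}\leq C|\tau|^{-1/2}$ on $D_\tau$, and the duality bounds recalled in \eqref{eq_weighted_sob_est}–\eqref{eq_product_rule_norm}.
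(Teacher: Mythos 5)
Your proposal is correct and follows essentially the same route as the paper: you expand the commutator $\mathcal{E}[v_2]D[\varphi_\cC]-\mathcal{E}[v_2D[\varphi_\cC]]$ together with $v_2(\partial_\tau-\mathfrak{L})D[\varphi_\cC]$ via the chain rule and the $w$-equation, exploit exactly the cancellation of $v_2\varphi_\cC'(v_1)\mathcal{E}[w]$ (and of the $\tfrac{y}{2}w_y$ pieces) that the paper's proof uses, and control the surviving transition-region terms with the same tools, namely the pointwise bounds $|D[\varphi_\cC^{(k)}]|\leq C|w|1_{D_\tau}$, the derivative estimates, $|y|\geq c|\tau|^{1/2}$ on $D_\tau$, and \eqref{eq_weighted_sob_est}--\eqref{eq_product_rule_norm}. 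The only cosmetic correction is that the $\tfrac{y}{2}$-term produced when the $y$-derivative is moved onto the test function is handled directly by the weighted Sobolev inequality (equivalently by \eqref{rec_basic_der_norm}), not by weighing $|y|\leq C|\tau|^{1/2}$ against coefficient smallness, and the quantitative $|\tau|^{-1/2}$ decay you attribute to Lemma \ref{lemma_derivatives} is not needed: the stated $\varepsilon$-smallness suffices.
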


\begin{proof}
Recall that
\begin{equation}\label{k_restated}
K=\mathcal{E}[v_2]D[\varphi_\cC]-\mathcal{E}[v_2D[\varphi_\cC]]+v_2 (\partial_\tau -\mathfrak{L})D[\varphi_\cC]\, .
\end{equation}
First, using the expression for $\mathcal{E}$ from \eqref{eq_def.E[w]} we compute
\begin{align}\label{eq_multiline_K}
\mathcal{E}[v_2]D[\varphi_\cC]-\mathcal{E}[v_2D[\varphi_\cC]]=&\frac{v_{1,y}^2}{1+v_{1,y}^2}(v_2 \partial_y^2D[\varphi_\cC]+2v_{2,y}\partial_yD[\varphi_\cC])\nonumber\\
&+\frac{(v_{1,y}+v_{2,y})v_{2,yy}}{(1+v_{1,y}^2)(1+v_{2,y}^2)}v_2\partial_yD[\varphi_\cC].
\end{align}
Differentiating  \eqref{eq_Dphi_y}, we obtain
\begin{equation}
\partial_y^2 D[\varphi_\cC] =\varphi_\cC'(v_1)w_{yy}+\varphi''_\cC(v_1)(v_{1,y}+v_{2,y}) w_y +v_{2,yy}D[\varphi_\cC'] +v_{2,y}^2D[\varphi_\cC''] \, .
\end{equation}
Combining the above equations, and remembering also \eqref{eq_def.E[w]} and \eqref{eq_Dphi_y}, we infer that
\begin{align}\label{J1}
\mathcal{E}[v_2]D[\varphi_\cC]-\mathcal{E}[v_2D[\varphi_\cC]]=
-\varphi_\cC'(v_1)v_2 \mathcal{E}[w]+a w_y + bw + cD[\varphi_\cC']+dD[\varphi_\cC'']\, ,
\end{align}
where
\begin{align}
a&=\frac{v_{1,y}^2}{1+v_{1,y}^2}\left(\varphi''_\cC(v_1) v_2 (v_{1,y}+v_{2,y})+2 \varphi'_\cC(v_1) v_{2,y} \right)\ ,\\
b&=\frac{2-v_1v_2}{2v_1v_2}\varphi'_\cC(v_1)v_2 \, , \\
c&=\frac{v_{1,y}^2}{1+v_{1,y}^2} (v_2v_{2,yy}+2v_{2,y}^2)  + \frac{(v_{1,y}+v_{2,y})v_{2,yy}}{(1+v_{1,y}^2)(1+v_{2,y}^2)} v_2 v_{2,y}\ ,\\
d&=\frac{v_{1,y}^2}{1+v_{1,y}^2} v_2 v_{2,y}^2 \, .
\end{align}
Now, using the pointwise derivative bounds from \eqref{eq_cylinder.derivatives} and arguing similarly as in the proof of the previous lemma we see that
\begin{align}
\left\| a w_y + cD[\varphi_\cC']+dD[\varphi_\cC'']\right\|_{\mathcal{D}^*} &\leq C\eps \left(\left\|  (w1_{D_\tau})_y \right\|_{\mathcal{D}^*} + \left\| D[\varphi_\cC']\right\|_{\mathcal{D}^*} + \left\| D[\varphi_\cC'']\right\|_{\mathcal{D}^*}\right)\nonumber\\
&\leq C\eps \left\|  w 1_{D_\tau}  \right\|_{\mathfrak{H}}\, .
\end{align}
Using also the weighted Sobolev inequality we can estimate
\begin{equation}
\left\| bw\right\|_{\mathcal{D}^*} \leq C \left\|   \frac{1}{1+|y|}w 1_{D_\tau}\right\|_{\fH} \leq \frac{C}{|\tau|^{1/2}}\left\| w 1_{D_\tau} \right\|_{\fH}\, .
\end{equation}
We remark that we do not have to estimate the term $\varphi_\cC'(v_1)v_2 \mathcal{E}[w]$ as it will cancel out later.
Next, to estimate the other term in \eqref{k_restated}, using Proposition \ref{prop_evol_w} (evolution of difference) we compute
\begin{multline}\label{J2}
(\partial_\tau -\mathfrak{L})D[\varphi_\cC]=\,\varphi'_\cC(v_1)\left(w+\mathcal{E}[w]+e^\tau \mathcal{F}[w] \right)-\varphi_\cC''(v_1)(v_{1,y}+v_{2,y})w_y\\
-D[\varphi_\cC]+\left(  v_{2,\tau}- v_{2,yy}+\tfrac{y}{2} v_{2,y}\right)D[\varphi_\cC']-v_{2,y}^2D[\varphi_\cC'']\, .
\end{multline}
Arguing as above, we see that
\begin{multline}
\left\| v_2 \varphi_\cC'(v_1)w \right\|_{\mathcal{D}^*} +\left\| v_2 \varphi_\cC''(v_1)(v_{1,y}+v_{2,y})w_y \right\|_{\mathcal{D}^*}  + \left\| v_2 D[\varphi_\cC] \right\|_{\mathcal{D}^*}\\
+ \left\| v_2\left(  v_{2,\tau}- v_{2,yy}+\tfrac{y}{2} v_{2,y}\right)D[\varphi_\cC']\right\|_{\mathcal{D}^*}+\left\| v_2 v_{2,y}^2D[\varphi_\cC'']\right\|_{\mathcal{D}^*}\leq C\eps  \left\|  w 1_{D_\tau}  \right\|_{\mathfrak{H}}\, .
\end{multline}
Finally, we observe that when we multiply equation \eqref{J2} by $v_2$, and add the result to equation \eqref{J1} then the term $\varphi_\cC'(v_1)v_2 \mathcal{E}[w]$ cancels out. Putting things together the assertion follows.
\end{proof}

Finally, we estimate the nonlinear error term:

\begin{lemma}[estimate for nonlinear error]\label{lemma_int.cyl.F}
There exist $\kappa>0$ and ${\tau}_\ast>-\infty$ such that for $\tau \leq \tau_*$ we have
\begin{align}
\|v_2\varphi_{\cC}'(v_1) e^\tau \mathcal{F}[w]\|_{\mathcal{D}^*,\infty}(\tau)+ \|\varphi_\cC(v_1)e^{\tau}\mathcal{F}[w]\|_{\mathcal{D}^*,\infty}(\tau) \leq \frac{C}{|\tau|} \| w\|_{C^2_{\exp}(\mathcal{C})}(\tau).
\end{align}
\end{lemma}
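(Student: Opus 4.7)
The strategy is a direct pointwise estimate, exploiting the competition between three factors: the super-exponential smallness of $e^\tau$, the polynomial-in-$|y|$ growth of the coefficients of $\mathcal{F}[w]$ in the cylindrical region, and the fact that the $C^2_{\exp}(\mathcal{C})$-norm permits each derivative of $w$ to be as large as $\|w\|_{C^2_{\exp}(\mathcal{C})}/(|\tau|e^{\tau})$ pointwise. Writing these out, the product $e^{\tau'}|D[w]|$ is controlled pointwise by $\|w\|_{C^2_{\exp}(\mathcal{C})}(\tau)/|\tau'|$, which is exactly the factor $1/|\tau|$ that appears on the right-hand side of the lemma. The remaining polynomial factor in $|y|$ will then be absorbed by the Gaussian weight when passing to the $\mathfrak{H}$-norm, and hence the $\mathcal{D}^\ast$-norm.

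Concretely, I would first apply Lemma \ref{lemma_derivatives} (derivative estimates) to bound $|v_{i,y}|,|v_{i,\tau}|,|v_{i,yy}|,|v_{i,y\tau}|,|v_{i,\tau\tau}|\leq\varepsilon$ on the cylindrical region, and note that the denominators $\mathcal{Q}[v_1,v_1]$, $\mathcal{Q}[v_1,v_2]$ are bounded below by $1$. Inspecting the expressions for $\mathcal{P},\mathcal{R},\mathcal{S}$ from Proposition \ref{prop_evol_w} (evolution of difference) and the $\mathcal{F}[w]$-decomposition, each term is a coefficient times one of $w,w_y,w_\tau,w_{yy},w_{y\tau},w_{\tau\tau}$; the worst coefficient is that of $w_{yy}$, namely $(v_{1,y}(v_{1,\tau}-v_1/2)-y/2)^2/((1+v_{1,y}^2)\mathcal{Q}[v_1,v_1])$, of size $O(1+y^2)$. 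All other coefficients are bounded by $O(1+|y|)$. This yields the pointwise estimate
\begin{equation*}
|\mathcal{F}[w](y,\tau')|\leq C(1+y^2)\big(|w|+|w_y|+|w_\tau|+|w_{yy}|+|w_{y\tau}|+|w_{\tau\tau}|\big)(y,\tau').
\end{equation*}
Combined with the defining pointwise bound of the $C^2_{\exp}(\mathcal{C})$-norm, this gives
\begin{equation*}
|\varphi_\cC(v_1)e^{\tau'}\mathcal{F}[w](y,\tau')|\leq \frac{C(1+y^2)\|w\|_{C^2_{\exp}(\mathcal{C})}(\tau)}{|\tau'|}.
\end{equation*}

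Since the support of $\varphi_\cC(v_1)$ lies in $C_{\tau'}$ and $\int (1+y^2)^2 e^{-y^2/4}\,dy<\infty$, taking the Gaussian $L^2$-norm gives $\|\varphi_\cC(v_1)e^{\tau'}\mathcal{F}[w](\cdot,\tau')\|_{\mathfrak{H}}\leq C\|w\|_{C^2_{\exp}(\mathcal{C})}(\tau)/|\tau'|$. Using $\|\cdot\|_{\mathcal{D}^\ast}\leq \|\cdot\|_{\mathfrak{H}}$ and integrating over $[\tau'-1,\tau']$ before taking the supremum over $\tau'\leq\tau$ produces the $\|\cdot\|_{\mathcal{D}^\ast,\infty}(\tau)$ bound. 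The first term $\|v_2\varphi_\cC'(v_1)e^\tau\mathcal{F}[w]\|_{\mathcal{D}^\ast,\infty}(\tau)$ is handled by the identical argument, with the added bonus that the support of $\varphi_\cC'(v_1)$ sits near $|y|\approx\sqrt{2|\tau|}$ by the sharp asymptotics (Theorem \ref{cor_unique_asympt}), so the Gaussian measure on this support is super-exponentially small, providing room to spare.

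The main (mild) obstacle is simply verifying that every coefficient in the elaborate expressions $\mathcal{P}[v_1,v_1,w]/\mathcal{Q}[v_1,v_1]$, $\mathcal{R}[v_1,v_2]$, $\mathcal{S}[v_1,v_2]$ grows at most polynomially in $|y|$ rather than exponentially; this reduces to (a) the smallness of the $v_i$-derivatives from Lemma \ref{lemma_derivatives}, and (b) the elementary lower bound $\mathcal{Q}\geq 1$. Notably, the argument does not require a careful integration-by-parts in the duality pairing, since the polynomial factor $(1+y^2)$ is harmless in Gaussian $L^2$.
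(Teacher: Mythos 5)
Your proposal is correct and follows essentially the same route as the paper's proof: bound the weighted dual norm by the Gaussian $L^2$-norm of the localized quantity, use the derivative estimates of Lemma \ref{lemma_derivatives} together with $\mathcal{Q}\geq 1$ to see that the coefficients in $\mathcal{F}[w]$ grow at most polynomially in $y$ (harmless against the Gaussian weight), and then convert $e^{\tau'}\sup_{y\in C_{\tau'}}$ of the $w$-derivatives into $\|w\|_{C^2_{\exp}(\mathcal{C})}(\tau)/|\tau'|$ via the definition of the exponentially weighted norm. Your extra remarks (the explicit $(1+y^2)$ pointwise bound and the support of $\varphi_{\cC}'(v_1)$) are consistent refinements, not a different argument.
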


\begin{proof}
Observe that
\begin{equation}
\|v_2\varphi_{\cC}'(v_1) e^\tau \mathcal{F}[w]\|_{\mathcal{D}^*}+ \|\varphi_\cC(v_1)e^{\tau}\mathcal{F}[w]\|_{\mathcal{D}^*} \leq C e^{\tau} \|\mathcal{F}[w]\|_{\mathfrak{H}}\, .
\end{equation}
Inspecting the expression for $\mathcal{F}$ from Proposition \ref{prop_evol_w} (evolution of difference) and using the pointwise derivative bounds from  \eqref{eq_cylinder.derivatives}  we can estimate 
\begin{equation}
\|\mathcal{F}[w]\|_{\mathfrak{H}}\leq   C \sup_{y\in C_\tau}\left(|w|+|w_y|+|w_\tau| + |w_{yy}| +|w_{y\tau}|+|w_{\tau\tau}| \right)(y,\tau)\, .
\end{equation}
and the assertion follows.
\end{proof}

Combining the above results we can now conclude the proof of the energy estimate:

\begin{proof}[Proof of Proposition \ref{prop_int_cyl}]
Recall that the general energy inequality \eqref{gen_energy_ineq} tells us that
 \begin{equation}
\|w_\cC-\fp_0w_\cC\|_{\mathcal{D},\infty}\leq C\|(\partial_\tau -\mathfrak{L})w_\cC\|_{\mathcal{D}^*,\infty}.
\end{equation}
Combining the above lemmas, we can estimate
\begin{equation}
\|(\partial_\tau -\mathfrak{L})w_\cC\|_{\mathcal{D}^*,\infty}\leq  C\varepsilon \left(\|w_\cC \|_{\mathcal{D,\infty}}+\|w\, 1_{D}  \|_{\mathfrak{H},\infty}\right)+\frac{C}{|\tau_0|}\|w\|_{C^2_{\exp}(\cC)} \, ,
\end{equation}
where $D=\bigcup_{\tau\leq \tau_0} D_\tau \times \{\tau\}$. Replacing $C^2\varepsilon$ by $\varepsilon$ this gives 
\begin{equation}
\|w_\cC-\fp_0w_\cC\|_{\mathcal{D},\infty}\leq  \varepsilon\left( \|w_{\cC}\|_{\mathcal{D},\infty}+\|w\, 1_{D} \|_{\mathfrak{H},\infty}\right)+\frac{C}{|\tau_0|}\|w\|_{C^2_{\exp}(\cC)}.
\end{equation}
Finally, by Corollary  \ref{cor_unique_asympt} (uniform sharp asymptotics) we have
\begin{equation}
1_{D}\leq 1_{\{\theta/2\leq v_1\leq\theta\}}.
\end{equation}
This concludes the proof of the proposition.
\end{proof}

\bigskip

\subsection{Energy estimates in the tip region}  In this subsection, we generalize the arguments from \cite[Section 7]{ADS2} to our setting to establish the following energy estimate in the tip region:

\begin{proposition}[energy estimate in tip region]\label{prop_int_tip}
There exist  $\kappa>0$ and ${\tau}_{\ast}>-\infty$ with the following significance.  If $M_1$ and $M_2$ are $\kappa$-quadratic at time $\tau_0 \leq {\tau}_{\ast}$,  then for $\tau\leq \tau_0$ we have
\begin{equation}
\|W_{\mathcal{T}}\|_{2,\infty}(\tau)\leq \frac{C}{|\tau|}\left(\|W 1_{[\theta,2\theta]}\|_{2,\infty}(\tau)+\|W\|_{C^2_{\exp}(\cT)}(\tau)\right).
\end{equation}
\end{proposition}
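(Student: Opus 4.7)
My plan is to carry out a weighted $L^2$ energy estimate for $W_\cT = \varphi_\cT(v) W$ in the tip region, adapting the argument of \cite[Section 7]{ADS2} to the present translator setting. Using Proposition \ref{prop_evol_w_inv}, $(W_\cT)_\tau$ satisfies an equation of the form
\[
(W_\cT)_\tau = \frac{(W_\cT)_{vv}}{1+Y_{1,v}^2} + \Bigl(\frac{1}{v} - \frac{v}{2} + b\Bigr) (W_\cT)_v + c\, W_\cT + R_\cT + e^\tau \varphi_\cT \mathcal{F}[W],
\]
where $b, c$ involve $Y_{2,vv}$ and lower order terms, and $R_\cT$ collects the commutator contributions coming from the cutoff $\varphi_\cT$, which are supported in $\{v\in[\theta,2\theta]\}$. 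The strategy is to multiply by $W_\cT\, e^\mu$, integrate over $[0,2\theta]\times[\tau-1,\tau]$, integrate by parts in $v$, and exploit the structure of the Gaussian-type weight $\mu = -Y_1^2/4$ (extended smoothly into the soliton region as in \cite[Section 7]{ADS2}).

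First I will compute $\frac{d}{d\sigma}\int_0^{2\theta} W_\cT^2 e^\mu\, dv$ and identify the good negative terms. The crucial cancellation, analogous to \cite[Lemma 7.3]{ADS2}, pairs $\mu_v \sim -Y_1 Y_{1,v}/2$ against the drift coefficient $1/v$: by Corollary \ref{cor_gaussian_collar} (almost Gaussian collar), in the collar region $\{L/\sqrt{|\tau|}\leq v\leq 2\theta\}$ we have $1+Y_1 v/(2Y_{1,v}) = o(1)$, which yields the cancellation needed so that the combination of drift plus weight-derivative terms produces a definite negative contribution of order $\int |W_{\cT,v}|^2 e^\mu\, dv$. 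The factor $1/|\tau|$ in the final estimate originates from the Gaussian suppression across the collar, where $Y_1 \sim \sqrt{2|\tau|}$, together with the $L^2$-to-pointwise conversion in the soliton region where the bowl profile provides the necessary exponential concentration.

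Next I will handle the commutator $R_\cT$ and the boundary contributions. The commutator terms are supported in the transition band $\{v\in[\theta,2\theta]\}$ and after integration by parts are bounded, via Cauchy--Schwarz, by the $\|W\, 1_{[\theta,2\theta]}\|_{2,\infty}$ contribution on the right-hand side. For the boundary at $v=0$ I will use the sharp soliton-region asymptotics of Theorem \ref{cor_unique_asympt}(iii): the rescaled inverse profiles $Z_i(\cdot,\tau)$ converge to the bowl profile $Z_0$ uniformly on compact sets, and together with the smoothness and convexity of $Z_0$ this controls the boundary contribution after changing variables $v\mapsto \rho = |\tau|^{1/2} v$.

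The main obstacle is the nonlinear error $e^\tau \varphi_\cT \mathcal{F}[W]$, absent from \cite{ADS2} and containing second derivatives of $W$ in both $v$ and $\tau$ with coefficients that degenerate as $v\to 0$. The point is that every term in $\mathcal{F}[W]$ carries a prefactor $e^\tau$; pairing against $W_\cT e^\mu$ and applying Cauchy--Schwarz produces a bound by $C e^\tau \|W\|_{C^2_{\exp}(\cT)}\, \|W_\cT\|_{2,\infty}$, and by definition of the $C^2_{\exp}(\cT)$ norm the $e^\tau$ cancels, yielding the $C\|W\|_{C^2_{\exp}(\cT)}$ contribution on the right-hand side of the claim. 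Assembling all ingredients, taking the supremum over $\tau'\leq \tau$ and dividing by $|\tau'|^{1/4}$, gives the claimed bound with constant $C/|\tau|$.
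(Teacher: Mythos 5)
Your overall route --- multiplying the equation for $W_\cT$ by $W_\cT e^\mu$, integrating by parts in $v$, treating the cutoff commutators via the transition band $[\theta,2\theta]$, and handling the nonlinearity by Cauchy--Schwarz against the exponentially weighted $C^2$ norm --- is the same energy method the paper uses. But there is a genuine gap at the heart of the argument: you never explain how decay, let alone the prefactor $C/|\tau|$, is extracted. After completing the square on the drift term, the energy identity carries a zeroth-order potential term of the schematic form $\int \bar G\, W_\cT^2 e^\mu\, dv$ with $\bar G=(1+Y_v^2)G^2+1+2\mu_\tau$, and the best available bound (using the tip estimates $\tfrac14\sqrt{|\tau|}\leq |Y_v/v|\leq \sqrt{|\tau|}$, $|Y_\tau|\leq\eta|Y_v/v|$, and the weight estimates $v\mu_v/(1+Y_v^2)\approx 1$, $\mu_\tau\leq\eta|\tau|$ from Propositions \ref{lemma_tip_estimates} and \ref{lemma_weight_estimates}, which themselves rest on almost quadratic concavity and the almost Gaussian collar, not on Corollary \ref{cor_gaussian_collar} alone) is $\bar G\leq\eta|\tau|$. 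So the ``definite negative contribution of order $\int|\partial_v W_\cT|^2e^\mu$'' is not by itself enough: without a further ingredient the differential inequality reads $\tfrac{d}{d\tau}\|W_\cT\|_2^2\leq \eta|\tau|\,\|W_\cT\|_2^2+\dots$, which goes the wrong way. The missing ingredient is the weighted Poincar\'e inequality (Corollary \ref{prop_poincare}, the analogue of \cite[Proposition 7.6]{ADS2}), $\int F^2 e^\mu\leq \tfrac{C_0}{|\tau|}\int\tfrac{F_v^2}{1+Y_v^2}e^\mu$ for functions with $F'(0)=0$, which converts the gradient term into a strong damping term $-c|\tau|\,\|W_\cT\|_2^2$ dominating the potential term.

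Relatedly, your attribution of the $1/|\tau|$ factor to ``Gaussian suppression across the collar'' and an ``$L^2$-to-pointwise conversion in the soliton region'' is not the actual mechanism, and ``taking the supremum over $\tau'\leq\tau$ and dividing by $|\tau'|^{1/4}$'' does not produce it. In the paper it arises from two quantitative steps you omit: (i) the transition/commutator term carries a factor $1/|\tau|$ because $1/(1+Y_v^2)\leq C(\theta)/|\tau|$ on $\{\theta\leq v\leq 2\theta\}$; and (ii) the resulting ODE $\tfrac{d}{d\tau}\|W_\cT\|_2^2\leq -c|\tau|\,\|W_\cT\|_2^2+C|\tau|^{-1}\big(\|W1_{[\theta,2\theta]}\|_2^2+\|W\|^2_{C^2_{\exp}(\cT)}\big)$ is integrated over unit time intervals with the integrating factor $e^{-c\tau^2/2}$, using that $\|W_\cT\|_2\to 0$ as $\tau\to-\infty$ (via the rough bound $e^\mu\leq e^{-\tau/4}$); it is this Gronwall-type step that delivers the $C/|\tau|$ in front of both terms, including the nonlinear one (which your sketch produces only with constant $C$, not $C/|\tau|$). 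To close the argument you must supply the Poincar\'e inequality together with the a priori tip and weight estimates it depends on, and then carry out this ODE integration; as written, the proposal cannot conclude.
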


For ease of notation, we will always assume that our tip satisfies $y>0$ (considering the map $y\mapsto -y$ this implies the estimate for the second tip), and will simply write $Y=Y_1$. In this notation, we have
\begin{equation}
W=Y-Y_2\, .
\end{equation}
We recall that $W_{\cT}=\varphi_\cT W$, where $\varphi_\cT$ is the cutoff function from  \eqref{cutoff_tip_region} that localizes in the tip region $\cT = \{v\leq 2\theta\}$. The norms appearing in the energy estimate have been briefly mentioned in the introduction. Let us define them in detail now. To this end, fix a smooth function $\zeta(v)$  satisfying $0\leq \zeta'\leq 5\theta^{-1}$ and 
\begin{align}
&\zeta(v)=0 \qquad \text{for}\quad v \leq \theta/4, && \zeta(v)=1 \qquad \text{for}\quad v \geq \theta/2.
\end{align}
 Then, similarly as in  \cite{ADS2} we consider the weight function
\begin{equation}\label{def_weight_fn}
\mu(v,\tau):=-\frac14 Y^2(\theta,\tau)+\int^\theta_v \left[ \zeta(\tilde{v}) \left(\frac{Y^2}{4}\right)_{\tilde{v}}-(1-\zeta(\tilde{v}))\frac{1+Y_{\tilde{v}}^2}{\tilde{v}} \right]\, d\tilde{v}\, .
\end{equation}
Notice that $\mu(v,\tau)=-\frac{1}{4}Y^2(v,\tau)$ for $v \geq \theta/2$. Set
\begin{equation}
\|F(\cdot,\tau)\|_{2}= \left[ \int^{2\theta}_0F^2(v,\tau)\, e^{\mu(v,\tau)}dv\right]^{1/2},
\end{equation}
and
\begin{equation}
\|F\|_{2,\infty}(\tau)=\sup_{\tau' \leq \tau}\frac{1}{|\tau'|^{1/4}}\left[\int^{\tau'}_{\tau'-1}\int^{2\theta}_0 F^2(v,\sigma)e^{\mu(v,\sigma)}\, dvd\sigma\right]^{1/2}\, .
\end{equation}
As before, we simply write
 \begin{equation}
\|F\|_{2,\infty}= \|F\|_{2,\infty}(\tau_0).
 \end{equation}
Finally, to capture the higher derivative error terms we consider the exponentially weighted $C^2$-norm
\begin{align}
\| F\|_{C^2_{\exp}(\mathcal{T})}(\tau):=\sup_{\tau'\leq \tau} \left( e^{\tau'}\sup_{v\leq 2\theta } \big(|F|+|F_v|+|F_{\tau}|+ |F_{vv}|+|F_{v\tau}|+|F_{\tau\tau}|\big)(v,\tau')\right),
\end{align}
and we often simply write
\begin{equation}
\| F\|_{C^2_{\exp}(\mathcal{T})}:=\| f\|_{C^2_{\exp}(\mathcal{T})}(\tau_0)\, .
\end{equation}

\bigskip

To prove Proposition \ref{prop_int_tip} (energy estimate in the tip region), we first establish certain a priori estimates for $Y$ and the weight $\mu$. The statements of these a priori estimates are similar to the ones in \cite[Section 7.1]{ADS2}, but the proofs are a bit more involved. Once these a priori estimates are established, similarly as in \cite[Section 7.2]{ADS2}, we will obtain a weighted Poincare inequality. Finally, using the weighted Poincare inequality we will implement the energy method by generalizing \cite[Section 7.3]{ADS2}.

\bigskip

To control some new error terms we need the following rough estimate:

\begin{lemma}[rough tip estimate]\label{prop_Y.collar.rough}
There exist $\kappa>0$, ${\tau}_{\ast}>-\infty$ and $C<\infty$ such that
\begin{align}
|Y|+|Y_v|+|Y_\tau|+|Y_{vv}|+|Y_{v\tau}|+|Y_{\tau\tau}|\leq C|\tau|^{\frac{5}{2}}
\end{align}
holds for $\tau \leq \tau_{\ast}$ and $v\leq 2\theta$.
\end{lemma}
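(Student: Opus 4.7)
\medskip

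\noindent\textbf{Proof plan.} The strategy is to split $\{v\leq 2\theta\}$ into the soliton region $\cS=\{v\leq L|\tau|^{-1/2}\}$ and the collar region $\cK=\{L|\tau|^{-1/2}\leq v\leq 2\theta\}$, first bound the spatial quantities $|Y|,|Y_v|,|Y_{vv}|,|Y_{vvv}|$ by $C|\tau|^{1/2}$ using the sharp asymptotics, and then recover the mixed and time derivatives by inserting these estimates into the evolution equation \eqref{eq_Y.evolution}, treating the fully nonlinear error $e^{\tau}\mathcal{M}[Y]$ by a short bootstrap.

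\medskip

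\noindent\textbf{Spatial bounds.} In the soliton region, Corollary \ref{cor_unique_asympt} gives $\|Z(\cdot,\tau)-Z_0\|_{C^{100}(B(0,L))}\leq\eps$. Since $Y(v,\tau)=Y(0,\tau)+|\tau|^{-1/2}Z(|\tau|^{1/2}v,\tau)$, differentiating in $v$ yields
\begin{equation*}
Y_v(v,\tau)=Z_\rho(|\tau|^{1/2}v,\tau),\qquad Y_{vv}(v,\tau)=|\tau|^{1/2}Z_{\rho\rho}(|\tau|^{1/2}v,\tau),\qquad Y_{vvv}=|\tau|\,Z_{\rho\rho\rho},
\end{equation*}
so $|Y_v|\leq C$ and $|Y_{vv}|+|\tau|^{-1/2}|Y_{vvv}|\leq C|\tau|^{1/2}$ on $\cS$. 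In the collar region, the intermediate asymptotics from Corollary \ref{cor_unique_asympt} give $|Y|\leq C|\tau|^{1/2}$, while Corollary \ref{cor_gaussian_collar} gives $|Y_v+Yv/2|\leq \eps|Y_v|$, hence $|Y_v|\leq C|\tau|^{1/2}/\theta$. Differentiating this relation and using the standard $C^k$-estimates for smooth convex noncollapsed translators (which follow from the local curvature estimate \cite[Theorem 1.8]{HaslhoferKleiner_meanconvex} combined with Lemma \ref{lemma_derivatives}) yields $|Y_{vv}|+|Y_{vvv}|\leq C|\tau|^{1/2}$ on $\cK$. Finally, since $Y_v(0,\tau)=0$ (the tip is a smooth maximum of $v$), the quotient $|Y_v|/v$ equals $|\tau|\,|Z_\rho(\rho)/\rho|$ in the soliton region, with $Z_\rho(\rho)/\rho\to Z_{\rho\rho}(0,\tau)$ as $\rho\to 0$; together with the collar estimate $|Y_v/v|\approx|Y|/2$ this gives the uniform bound $|Y_v/v|\leq C|\tau|^{1/2}$ throughout the tip region.

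\medskip

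\noindent\textbf{Time derivatives via bootstrap.} Set
\begin{equation*}
K(\tau_*):=\sup_{\tau\leq\tau_*}\sup_{v\leq 2\theta}|\tau|^{-5/2}\bigl(|Y_\tau|+|Y_{v\tau}|+|Y_{\tau\tau}|\bigr),
\end{equation*}
which is finite by smoothness of $M$ and compactness for $\tau$ in any bounded interval. Inspecting the expression for $\mathcal{M}[Y]$ in Proposition \ref{prop_inverse.evol}, and using the spatial bounds above together with the a priori estimate $K\leq K(\tau_*)$, we obtain $|\mathcal{M}[Y]|\leq C|\tau|^{3}(1+K(\tau_*))$, so $e^{\tau}|\mathcal{M}[Y]|$ is super-exponentially small for $\tau\leq\tau_*$ with $\tau_*$ sufficiently negative. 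Substituting the spatial bounds into \eqref{eq_Y.evolution} then yields $|Y_\tau|\leq C|\tau|^{1/2}$. Differentiating \eqref{eq_Y.evolution} in $v$ respectively in $\tau$, the potentially singular terms $Y_{vv}/v$ and $Y_v/v^2$ combine as $\partial_v(Y_v/v)$, which by Taylor expansion of $Y_v$ at $v=0$ is bounded by $\tfrac12|Y_{vvv}(0,\tau)|+O(v|Y_{vvvv}|)\leq C|\tau|$; all other terms are manifestly $O(|\tau|^{3/2})$, and the $v$- and $\tau$-derivatives of $e^{\tau}\mathcal{M}[Y]$ are again super-exponentially small. This gives $|Y_{v\tau}|+|Y_{\tau\tau}|\leq C|\tau|^{2}$, hence $K(\tau_*)\leq C$ once $\tau_*$ is sufficiently negative. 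Since $C|\tau|^{2}\leq|\tau|^{5/2}$ for $\tau\leq\tau_*$, the bootstrap closes and the lemma follows.

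\medskip

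\noindent\textbf{Main obstacle.} The only subtle point is the behavior at $v=0$, where the first order term $Y_v/v$ in \eqref{eq_Y.evolution} and its $v$-derivative have apparent singularities. These are resolved by exploiting the smoothness of the tip: the $C^{100}$-closeness of $Z$ to the bowl profile $Z_0$ forces $Y_v$ to vanish linearly in $v$ at the tip with coefficients bounded by $C|\tau|^{1/2}$, and the singular combinations $Y_{vv}/v-Y_v/v^{2}$ that arise on differentiating the equation are in fact $\partial_v(Y_v/v)=\tfrac12 Z_{\rho\rho\rho}(0,\tau)+O(v)$, which is bounded by $C|\tau|$.
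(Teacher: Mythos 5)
Your spatial bounds in the soliton region and the bound $|Y|+|Y_v|\leq C|\tau|^{1/2}$ in the collar region are fine (the paper gets the $Y_v$ bound even more simply, from convexity: $\sup_{v\leq 2\theta}|Y_v|\leq |Y_v(2\theta,\tau)|$ plus the intermediate asymptotics). But from there your argument has genuine gaps. First, the collar-region claim $|Y_{vv}|+|Y_{vvv}|\leq C|\tau|^{1/2}$ is not established: the relation from Corollary \ref{cor_gaussian_collar} is an inequality, so ``differentiating this relation'' is not a valid step, and the appeal to ``standard $C^k$-estimates'' does not produce bounds on derivatives of the \emph{inverse} function at that strength (the available route, $Y_{vv}=-v_{yy}Y_v^3$ with Lemma \ref{lemma_derivatives}, only gives $C|\tau|^{2}$, which is all one needs but not what you assert). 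Second, the bootstrap for the time derivatives does not close as written: (a) the quantity $K(\tau_\ast)$ is a supremum over the unbounded interval $(-\infty,\tau_\ast]$, and its finiteness is not justified by ``compactness for $\tau$ in any bounded interval'' (this could be repaired by absorbing pointwise in $\tau$, since $\mathcal{M}[Y](\cdot,\tau)$ only involves data at time $\tau$); (b) more seriously, when you differentiate \eqref{eq_Y.evolution} in $\tau$ to bound $Y_{\tau\tau}$, the right-hand side contains $Y_{vv\tau}$ (from $\partial_\tau\bigl(Y_{vv}/(1+Y_v^2)\bigr)$), a third derivative controlled neither by your spatial bounds nor by $K$, so the claim that ``all other terms are manifestly $O(|\tau|^{3/2})$'' is false; likewise your treatment of the singular term near $v=0$ invokes $Y_{vvvv}$, which is never bounded. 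Closing this scheme would force you to carry derivatives of ever higher order (or to prove genuine interior parabolic estimates), none of which is done.

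The paper avoids all of this with a purely algebraic reduction, and you should be aware of it since it is much shorter: from the chain-rule identities $Y_\tau=-v_\tau Y_v$, $Y_{vv}=-v_{yy}Y_v^3$, $Y_{\tau v}=-v_yv_\tau Y_vY_{vv}-v_{\tau y}Y_v^2$ and $Y_{\tau\tau}=-2v_\tau Y_{\tau v}-v_\tau^2Y_{vv}-v_{\tau\tau}Y_v$, the bound $|Y_v|\leq C|\tau|^{1/2}$ together with the derivative estimates of Lemma \ref{lemma_derivatives} (which control $v_\tau, v_{yy}, v_{\tau y}, v_{\tau\tau}$ in the collar, where $v\geq L/\sqrt{|\tau|}$) immediately yields the stated $C|\tau|^{5/2}$ bound, with no use of the equation \eqref{eq_Y.evolution}, no treatment of $e^\tau\mathcal{M}[Y]$, and no bootstrap.
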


\begin{proof}
By the tip region asymptotics from Corollary \ref{cor_unique_asympt} (uniform sharp asymptotics) the estimate clearly holds in the soliton region $\mathcal{S}=\{v\leq L/\sqrt{|\tau|}\}$. Thus, our task is to establish the estimate in the collar region $\mathcal{K}=\{ L/\sqrt{|\tau|} \leq v \leq 2 \theta\}$. To this end, note first that by convexity of our translator we have
\begin{equation}
\sup_{v\leq 2\theta}|Y_v(v,\tau)| \leq |Y_v(2\theta,\tau)|.
\end{equation}
Hence, together with Corollary \ref{cor_unique_asympt} (uniform sharp asymptotics) we infer that
\begin{equation}
|Y|+|Y_v|\leq C |\tau|^{1/2}\, .
\end{equation}
Next, recall from the proof of Proposition \ref{prop_evol_profile} (evolution equation for inverse profile function) that
\begin{align}
&Y_\tau=- v_\tau Y_v , && Y_{vv} = - v_{yy} Y_v^3
\end{align}
and
\begin{align}
&Y_{\tau v} = -v_yv_\tau Y_v Y_{vv} -  v_{\tau y}Y_v^2,&& Y_{\tau\tau}=-2v_\tau Y_{\tau v} -v_\tau^2 Y_{vv}-v_{\tau\tau}Y_v \, .
\end{align}
Together with Lemma \ref{lemma_derivatives} (derivative estimates) this implies the desired result.
\end{proof}

We also need the following standard cylindrical estimate:

\begin{lemma}[cylindrical estimate]\label{lemma_cyl_est} Given any $\eta>0$, for $L$ large enough and $\tau_\ast$ negative enough in the collar region we have
\begin{equation}
\frac{|Y_{vv}|}{1+Y_v^2}<\eta \left|\frac{Y_v}{v}\right|\, .
\end{equation}
\end{lemma}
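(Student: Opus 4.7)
The plan is to derive the bound by rearranging the evolution equation for $Y$ from Proposition \ref{prop_inverse.evol} and estimating each resulting term as a small multiple of $|Y_v|/v$ on the collar. Concretely, I will write
\begin{equation*}
\frac{Y_{vv}}{1+Y_v^2}=Y_\tau-\Big(\frac{Y_v}{v}+\frac{Y}{2}\Big)+\frac{vY_v}{2}-e^{\tau}\mathcal{M}[Y]
\end{equation*}
and bound the four terms in turn. The second term is the easy one: factoring out $Y_v/v$ and applying Corollary \ref{cor_gaussian_collar} directly yields $|Y_v/v+Y/2|=(|Y_v|/v)\,|1+vY/(2Y_v)|\leq\delta\,|Y_v|/v$ for any prescribed $\delta>0$, provided $\theta$ is small, $L$ is large, and $\tau_\ast$ is negative enough. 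The third term is also immediate from $v\leq 2\theta$: $|vY_v|/2\leq\theta|Y_v|\leq 2\theta^2\,|Y_v|/v$, which is $<\eta/4\cdot|Y_v|/v$ once $\theta$ is chosen small.

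The fourth term, $e^\tau|\mathcal{M}[Y]|$, is where the rough derivative estimates of Lemma \ref{prop_Y.collar.rough} kick in: every entry in the expression for $\mathcal{M}[Y]$ given in Proposition \ref{prop_inverse.evol} is polynomial in $|\tau|$, so $e^\tau|\mathcal{M}[Y]|\leq Ce^{\tau}|\tau|^{N}$ for some fixed $N$, which decays exponentially. Comparing with the lower bound $|Y_v|/v\geq c\sqrt{|\tau|}$ obtained from Corollary \ref{cor_gaussian_collar} together with $Y\sim\sqrt{2|\tau|}$ (from Corollary \ref{cor_unique_asympt}), this term is clearly negligible for $\tau\leq\tau_\ast$ sufficiently negative.

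The main obstacle is the first term, $Y_\tau$, which is not directly controlled by anything proved so far. The plan is to extract a quantitative bound from the derivative estimates of Lemma \ref{lemma_derivatives}. In original coordinates $|VV_t+1|\leq\varepsilon$ in the collar, and translating via $V=e^{-\tau/2}v$, $V_t=e^{\tau/2}(v_\tau+\tfrac{y}{2}v_y-\tfrac{v}{2})$, this becomes $v\,v_\tau=-1-\tfrac{y}{2}v\,v_y+\tfrac{v^2}{2}+O(\varepsilon)$. Using Corollary \ref{cor_gaussian_collar} in the form $v_y=1/Y_v\approx-2/(vY)$, so $v\,v_y\approx-2/Y$, combined with $y\approx Y$ (from the intermediate-region asymptotics), the leading two terms cancel: $-1-\tfrac{y}{2}v\,v_y=O(\delta)$. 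This forces $|v\,v_\tau|\leq C(\theta^2+\delta+\varepsilon)$, hence $|Y_\tau|=|v_\tau Y_v|\leq C(\theta^2+\delta+\varepsilon)\,|Y_v|/v$, which can be made $<\eta/4\cdot|Y_v|/v$.

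Summing the four estimates yields $|Y_{vv}|/(1+Y_v^2)<\eta\,|Y_v|/v$ throughout the collar, completing the proof. The only delicate point, and the step requiring the most care, is combining the estimates $|VV_t+1|\leq\varepsilon$ with the first-order asymptotic $Y_v\approx-vY/2$ to obtain cancellation in $v\,v_\tau$; everything else reduces to direct application of results already established.
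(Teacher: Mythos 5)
Your proof is correct, but it follows a genuinely different route from the paper. The paper's argument is a one-line geometric observation: using $v_y=1/Y_v$ and $v_{yy}=-Y_{vv}/Y_v^3$ one has $\frac{v|Y_{vv}|}{|Y_v|(1+Y_v^2)}=\frac{v|v_{yy}|}{1+v_y^2}=\lambda_1/\lambda_2$, the ratio of the principal curvatures of the level set $\Sigma^h$, which vanishes identically on $\mathbb{R}\times S^1$; since in the collar region the rescaled flow is close to the round $\mathbb{R}^2\times S^1$, the same blow-up/compactness argument as in Lemma \ref{lemma_derivatives} gives $\lambda_1/\lambda_2<\eta$ for $L$ large and $\tau$ negative enough — in particular with no smallness requirement on $\theta$. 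You instead solve the evolution equation of Proposition \ref{prop_inverse.evol} for $Y_{vv}/(1+Y_v^2)$ and estimate term by term; the terms $\tfrac{Y_v}{v}+\tfrac{Y}{2}$, $\tfrac{vY_v}{2}$ and $e^\tau\mathcal{M}[Y]$ are handled exactly as you say (Corollary \ref{cor_gaussian_collar}, $v\leq 2\theta$, and Lemma \ref{prop_Y.collar.rough} plus $|Y_v|/v\gtrsim\sqrt{|\tau|}$), and your treatment of $Y_\tau$ — converting $|VV_t+1|\leq\varepsilon$ from Lemma \ref{lemma_derivatives} into $|vv_\tau|\leq C(\delta+\varepsilon+\theta^2)$ via the cancellation $-\tfrac{y}{2}vv_y=\tfrac{y}{Y}(1+O(\delta))=1+O(\delta)$ (note $y=Y(v,\tau)$ is exact, so the appeal to the intermediate-region asymptotics is not even needed) and then $|Y_\tau|=|vv_\tau|\,|Y_v|/v$ — is sound, uses only results stated before this lemma, and involves no circularity. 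What each approach buys: the paper's proof is shorter and matches the stated hypotheses verbatim, whereas yours additionally requires decreasing $\theta$ (and $\kappa$), which is permitted by the section's standing convention and harmless for the only application, Proposition \ref{lemma_tip_estimates}; on the other hand, your argument proves the collar bound $|Y_\tau|\leq\eta|Y_v/v|$ directly, without the cylindrical estimate, so it would in fact streamline the second half of the tip-estimates proof, where the paper derives that bound from the evolution equation together with this lemma.
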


\begin{proof}Observe that
\begin{align}
\frac{|vY_{vv}|}{|Y_v|(1+Y_v^2)}=\frac{|v v_{yy}|}{1+v_y^2}=\frac{\lambda_1}{\lambda_2},
\end{align}
where $\lambda_1$ and $\lambda_2$ are the principal curvatures of the level set. Since $\lambda_1/\lambda_2=0$ on $\mathbb{R}\times S^1$, arguing as in the proof of Lemma \ref{lemma_derivatives} (derivative estimates) the assertion follows.
\end{proof}

After these preparations we can now establish the tip estimates:

\begin{proposition}[{tip estimates, c.f. \cite[Lemma 7.4]{ADS2}}]\label{lemma_tip_estimates}
Given $\eta>0$, there exist $\theta>0$, $\kappa>0$ and ${\tau}_{\ast}>-\infty$  such that if $M$ is $\kappa$-quadratic at time $\tau_0\leq \tau_\ast$, then for $v\leq 2\theta$ and $\tau\leq \tau_0$ we have
\begin{align}\label{est_toshow_Y}
&\frac{1}{4} \sqrt{|\tau|}\leq \left|\frac{Y_v}{v}\right|\leq \sqrt{|\tau|}\, , && |Y_\tau| \leq \eta \left|\frac{Y_v}{v}\right|\, .
\end{align}
\end{proposition}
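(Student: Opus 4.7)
The plan is to split the region $\{v\leq 2\theta\}$ into the soliton region $\mathcal{S}=\{v\leq L/\sqrt{|\tau|}\}$ and the collar region $\mathcal{K}=\{L/\sqrt{|\tau|}\leq v\leq 2\theta\}$, and to argue in each region separately, using several of the a priori tools already developed in this subsection.

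For the $|Y_v/v|$ bounds, in the collar region I would apply Corollary \ref{cor_gaussian_collar} (almost Gaussian collar) with a small parameter $\eps_1>0$, which gives $|Y_v/v+Y/2|\leq \eps_1 |Y_v/v|$ and hence $|Y_v/v|=(1+O(\eps_1))|Y|/2$. Combined with $Y\approx \sqrt{2|\tau|}$ from equation \eqref{collar.y}, this yields $|Y_v/v|\approx \sqrt{|\tau|/2}$, comfortably inside $[\tfrac{1}{4}\sqrt{|\tau|},\sqrt{|\tau|}]$. In the soliton region I would invoke Corollary \ref{cor_unique_asympt} (uniform sharp asymptotics), which gives $C^{100}$-closeness of $Z(\cdot,\tau)$ to the 2d-bowl profile $Z_0$ of speed $1/\sqrt{2}$ on $B(0,L)$. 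Under the change of variables $\rho=|\tau|^{1/2}v$ we have $Y_v/v=|\tau|^{1/2}\,Z_\rho/\rho$. Since $Z_0''(0)=1/(2\sqrt{2})$ and $Z_{0,\rho}/\rho\to 1/\sqrt{2}$ as $\rho\to\infty$, the function $Z_{0,\rho}/\rho$ takes values in a compact subinterval of $(1/4,1)$ on $[0,L]$, and the same holds for $Z_\rho/\rho$ for $\tau$ sufficiently negative, yielding $|Y_v/v|\in[\tfrac14\sqrt{|\tau|},\sqrt{|\tau|}]$.

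For the $|Y_\tau|\leq \eta|Y_v/v|$ bound, I would use the evolution equation from Proposition \ref{prop_inverse.evol}:
\begin{equation}
Y_\tau = \frac{Y_{vv}}{1+Y_v^2} + \frac{Y_v}{v} + \frac{1}{2}(Y-vY_v) + e^{\tau}\mathcal{M}[Y].
\end{equation}
In the collar region I would bound each term. Lemma \ref{lemma_cyl_est} (cylindrical estimate) gives $|Y_{vv}|/(1+Y_v^2)\leq \eta_1|Y_v/v|$ after choosing $L$ large. Rewriting
\begin{equation}
\frac{Y_v}{v}+\frac{1}{2}(Y-vY_v) = \Big(\frac{Y_v}{v}+\frac{Y}{2}\Big) - \frac{v^2}{2}\cdot \frac{Y_v}{v},
\end{equation}
the first parenthesis is controlled by $\eps_1|Y_v/v|$ via Corollary \ref{cor_gaussian_collar}, while the second is bounded by $2\theta^2|Y_v/v|$ using $v\leq 2\theta$. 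The nonlinear term $e^\tau\mathcal{M}[Y]$ is handled using Lemma \ref{prop_Y.collar.rough} (rough tip estimate), which gives a polynomial-in-$|\tau|$ pointwise bound on all second derivatives of $Y$; the prefactor $e^\tau$ beats this polynomial and renders the contribution much smaller than $\sqrt{|\tau|}\sim|Y_v/v|$ for $\tau$ sufficiently negative. Choosing, in order, $\eta_1\ll \eta$, then $L$ large, $\theta$ small, $\kappa$ small, and finally $\tau_\ast$ sufficiently negative, all pieces combine to give $|Y_\tau|\leq \eta|Y_v/v|$ in $\mathcal{K}$.

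In the soliton region $\mathcal{S}$, I would argue more directly: since $Z\to Z_0$ in $C^{100}$ on $B(0,L)$ by Corollary \ref{cor_unique_asympt} and $Z_0$ is time-independent, $Z_\tau\to 0$ uniformly. Writing $Y(v,\tau)=Y(0,\tau)+|\tau|^{-1/2}Z(|\tau|^{1/2}v,\tau)$ and differentiating in $\tau$, each of the resulting terms---$Y_\tau(0,\tau)\sim 1/\sqrt{|\tau|}$ from the diameter asymptotic $Y(0,\tau)\sim\sqrt{2|\tau|}$, the chain-rule terms $|\tau|^{-3/2}Z$ and $vZ_\rho/|\tau|$, and $|\tau|^{-1/2}Z_\tau$---is controlled by $C/\sqrt{|\tau|}$, which is much smaller than $|Y_v/v|\sim \sqrt{|\tau|}$. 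The main obstacle will be threading the parameters $\eps,\eta_1,L,\theta,\kappa,\tau_\ast$ in the correct order so that the compounded error terms in the collar region all fit under $\tfrac{\eta}{2}|Y_v/v|$ uniformly across all $\kappa$-quadratic solutions, and to ensure that the bounds from the soliton region and the collar region are consistent at the overlap $v\sim L/\sqrt{|\tau|}$; the uniformity in Corollary \ref{cor_unique_asympt} and the intrinsic (rather than solution-dependent) nature of the cylindrical estimate in Lemma \ref{lemma_cyl_est} are what make this compatible.
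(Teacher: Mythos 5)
Your proposal is correct in substance, and the second half (the bound $|Y_\tau|\leq\eta|Y_v/v|$ in the collar) is essentially the paper's argument verbatim: the same regrouping of the evolution equation from Proposition \ref{prop_inverse.evol} into $\frac{Y_{vv}}{1+Y_v^2}+\frac{Y_v}{v}\bigl(1+\frac{vY}{2Y_v}-\frac{v^2}{2}\bigr)+e^\tau\mathcal{M}[Y]$, with the three pieces controlled by Lemma \ref{lemma_cyl_est}, Corollary \ref{cor_gaussian_collar}, the smallness of $v\le 2\theta$, and Lemma \ref{prop_Y.collar.rough} absorbed by the factor $e^\tau$; the soliton-region treatment via closeness of $Z$ to $Z_0$ also matches. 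Where you genuinely diverge is the two-sided bound on $|Y_v/v|$ in the collar: the paper takes the boundary value $|Y_v/v|(2\theta,\tau)\leq\frac{9}{10}\sqrt{|\tau|}$ from the intermediate-region asymptotics and propagates it, together with the soliton-region lower bound, across the collar using the almost-monotonicity of $v\mapsto|Y_v|/v$ that comes from Proposition \ref{prop_almost_quad_conc} (the computation \eqref{monotonicity.Y_v/v_pre}); you instead read the estimate directly off Corollary \ref{cor_gaussian_collar} as $|Y_v/v|=(1+O(\eps_1))|Y|/2$ combined with the range \eqref{collar.y} for $y$. Your route is logically sound (Corollary \ref{cor_gaussian_collar} precedes the proposition and does not use it), is arguably more direct, and even gives the sharper value $\approx\sqrt{|\tau|/2}$ in the collar; note, though, that it does not really avoid almost quadratic concavity, since Corollary \ref{cor_gaussian_collar} is itself proved from it, and you must take $L\geq L_0(\eps_1)$ so that the collar on which that corollary holds covers yours — both of which are consistent with the paper's convention of adjusting $\theta,L$ finitely often.

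Two small soft spots, neither fatal and both at the same level of terseness as the paper: near $\rho=0$ you should pass from $|Z_{\rho}-Z_{0,\rho}|\le\eps$ to $|Z_\rho/\rho-Z_{0,\rho}/\rho|\le\eps$ using $Z_\rho(0,\tau)=Z_{0,\rho}(0)=0$, otherwise dividing by $\rho$ is not justified; and in the soliton region the statements ``$Z_\tau\to 0$'' and ``$Y_\tau(0,\tau)\sim|\tau|^{-1/2}$'' do not follow from the fixed-time $C^{100}$-closeness of $Z(\cdot,\tau)$ to the time-independent $Z_0$ alone (a small-amplitude, rapidly oscillating perturbation would violate them), so they should be justified either from the space-time smooth convergence of the rescaled flows to the translating bowl established in the proof of Theorem \ref{thm_unique_asympt}, or by reading $Y_\tau$ off the equation once the spatial derivatives and the error $e^\tau\mathcal{M}[Y]$ are controlled. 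The paper itself asserts the soliton-region estimates in one line on exactly this basis, so your treatment is an acceptable variant once these points are made explicit.
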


\begin{proof} We will argue similarly as in \cite{ADS2}, but we will encounter some new error terms coming from the fact that our profile function $v$ is only almost quadratically concave and from the nonlinearity.\\
Since by the tip region asymptotics from Corollary \ref{cor_unique_asympt} (uniform sharp asymptotics) the zoomed in profile function $Z$ is arbitrarily close to the profile function $Z_0$ of the 2d-bowl with speed $1/\sqrt{2}$ in the soliton region $\mathcal{S}$ we get
\begin{align}
&\frac{1-\eps}{2\sqrt{2}} \sqrt{|\tau|}\leq \left|\frac{Y_v}{v}\right|\leq \sqrt{|\tau|}\, , && |Y_\tau| \leq \eta \left|\frac{Y_v}{v}\right|\, .
\end{align}
Next, note that by the intermediate region asymptotics from Corollary \ref{cor_unique_asympt} (uniform sharp asymptotics) we have
\begin{equation}\label{first_tech_est}
\left|\frac{Y_v}{v}\right| (2\theta,\tau) \leq\frac{9}{10} \sqrt{|\tau|}\, .
\end{equation}
Together with the fact that the function $v\mapsto |Y_v|/v$ is almost monotone in the collar region $\mathcal{K}$ by Proposition \ref{prop_almost_quad_conc} (almost quadratic concavity), we infer that 
\begin{equation}\label{claimedfirstes}
\frac{1}{4} \sqrt{|\tau|}\leq \left|\frac{Y_v}{v}\right|\leq \sqrt{|\tau|}
\end{equation}
holds in the whole tip region $\mathcal{T}$. Indeed, using the results that we just cited, in the collar region we get
 \begin{equation}\label{monotonicity.Y_v/v_pre}
\left(\frac{|Y_v|}{v}\right)_v=\frac{-vY_{vv}+Y_v}{v^2}=-\frac{(v^2)_{yy}}{2v^2|v_y|^3} \geq  \frac{e^{\tau}}{v^4|v_y^3|}\geq -C|\tau|^5 e^{\tau}, 
 \end{equation}
which, possibly after decreasing $\tau_\ast$, yields \eqref{claimedfirstes}.

Finally, to check that $|Y_\tau|\leq \eta |Y_v/v|$ holds in the collar region $\mathcal{K}$ as well, we rewrite the evolution equation from Proposition \ref{prop_inverse.evol} (evolution equation for inverse profile function) in the form
\begin{equation}
     Y_{\tau}=\frac{Y_{vv}}{1+Y^2_{v}}+\frac{Y_{v}}{v}\left(1+\frac{vY}{2Y_{v}}-\frac{v^2}{2}\right)+e^{\tau}\mathcal{M}[Y].
\end{equation}
By Lemma \ref{lemma_cyl_est} (cylindrical estimate) choosing $L$ large enough we can ensure that 
\begin{equation}
\left|\frac{Y_{vv}}{1+Y^2_{v}}\right| \leq \frac{\eta}{4}\left|\frac{Y_v}{v}\right|.
\end{equation}
By Corollary \ref{cor_gaussian_collar} (almost Gaussian collar), for $\theta$ small enough we get
\begin{equation}
\left| 1+\frac{vY}{2Y_{v}}\right| \leq \frac{\eta}{4}.
\end{equation}
Since $v\leq 2\theta$ in the tip region, possibly after decreasing $\theta$ we have
\begin{equation}
\frac{v^2}{2} \leq \frac{\eta}{4}.
\end{equation}
And using Lemma \ref{prop_Y.collar.rough} (rough tip estimates) for $\tau\leq\tau_\ast$ we get
\begin{equation}
\left|\frac{v}{Y_v}\right||e^{\tau}\mathcal{M}[Y]|\leq \frac{\eta}{4}.
\end{equation}
Combining the above inequalities yields
\begin{equation}
|Y_\tau|\leq \eta \left|\frac{Y_v}{v}\right|,
\end{equation}
which concludes the proof.
\end{proof}

Using the above, we can now establish the following estimates for the weight function:

\begin{proposition}[{weight estimates, c.f. \cite[Lemma 7.5]{ADS2}}]\label{lemma_weight_estimates}
Given $\eta>0$, there exist $\theta>0$, $\kappa>0$ and ${\tau}_{\ast}>-\infty$  such that if $M$ is $\kappa$-quadratic at time $\tau_0\leq \tau_\ast$, then for $v\leq 2\theta$ and $\tau\leq \tau_0$ we have
\begin{align}
&1-\eta \leq \frac{v\mu_v}{1+Y_v^2}\leq 1+\eta\, , && \mu_\tau \leq \eta|\tau|\, .
\end{align}
\end{proposition}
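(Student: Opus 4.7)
The plan is to compute $\mu_v$ and $\mu_\tau$ directly from the definition \eqref{def_weight_fn} and then bound them using the tip a priori estimates from Proposition \ref{lemma_tip_estimates} together with the almost Gaussian collar from Corollary \ref{cor_gaussian_collar} and, where needed, the rough estimate from Lemma \ref{prop_Y.collar.rough}.

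For the first inequality I would simply differentiate in $v$ to obtain
\begin{equation*}
\frac{v\mu_v}{1+Y_v^2} \;=\; (1-\zeta(v)) \;-\; \zeta(v)\cdot \frac{vYY_v}{2(1+Y_v^2)}.
\end{equation*}
On $\{\zeta(v)>0\}\subset \{v\geq \theta/4\}$, Proposition \ref{lemma_tip_estimates} gives $|Y_v|\geq (\theta/16)\sqrt{|\tau|}$, so $Y_v^2/(1+Y_v^2)=1+O((\theta^2|\tau|)^{-1})$. Factoring
\begin{equation*}
-\frac{vYY_v}{2(1+Y_v^2)}\;=\;\Big(-\frac{vY}{2Y_v}\Big)\cdot \frac{Y_v^2}{1+Y_v^2},
\end{equation*}
the first factor is within $\eta/4$ of $1$ by Corollary \ref{cor_gaussian_collar} and the second factor is within $\eta/4$ of $1$ by the above, provided $\theta>0$ is small and $\tau_0$ is negative enough. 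Since $\zeta$ and $1-\zeta$ sum to $1$ and lie in $[0,1]$, the convex combination stays within $\eta$ of $1$.

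For the second inequality, I would differentiate under the integral and then integrate by parts in $\tilde v$ on the term containing $YY_{v\tau}$ in order to exchange an unbounded $Y_{v\tau}$ for the well-controlled quantity $Y_\tau$. The boundary contribution at $\tilde v=\theta$ cancels the explicit $-\tfrac12 Y(\theta,\tau)Y_\tau(\theta,\tau)$ coming from the first term of \eqref{def_weight_fn}, leaving a representation of the form
\begin{equation*}
\mu_\tau(v,\tau) \;=\; -\tfrac12\zeta(v)Y(v,\tau)Y_\tau(v,\tau) \;-\; \int_v^\theta \zeta_{\tilde v}\,\tfrac{YY_\tau}{2}\,d\tilde v \;-\; \int_v^\theta (1-\zeta)\,\tfrac{2Y_vY_{v\tau}}{\tilde v}\,d\tilde v.
\end{equation*}
For the first two terms one combines $|Y_\tau|\leq \eta|Y_v|/v$ from Proposition \ref{lemma_tip_estimates} with the almost Gaussian collar estimate $|Y_v|/v\leq (1+\eta)Y/2$ (Corollary \ref{cor_gaussian_collar}) and the collar bound $Y^2\leq 2|\tau|(1+o(1))$ from \eqref{collar.y}; the product is bounded by $C\eta|\tau|$. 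The $\zeta_{\tilde v}$-supported integrand is localized to $[\theta/4,\theta/2]$, so the total variation of $\zeta$ gives only an $O(\eta|\tau|)$ contribution.

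The main obstacle is the final integral, which sweeps across the entire tip region down to the soliton region where $v\to 0$ and the crude bound $|Y_{v\tau}|\leq C|\tau|^{5/2}$ from Lemma \ref{prop_Y.collar.rough} is too weak. Here my plan is to rewrite $2Y_vY_{v\tau}=(Y_v^2)_\tau$ and integrate by parts once more in $\tilde v$, converting
\begin{equation*}
\int_v^\theta (1-\zeta)\,\tfrac{(Y_v^2)_\tau}{\tilde v}\,d\tilde v \;=\; \Big[(1-\zeta)\tfrac{Y_v^2}{\tilde v}\Big]_\tau \text{-type boundary terms} \;-\;\int_v^\theta Y_\tau\,\partial_{\tilde v}\!\Big(\tfrac{2(1-\zeta)Y_v}{\tilde v}\Big)\,d\tilde v,
\end{equation*}
or equivalently exchanging time and space derivatives, so that the remaining integrand involves only $Y_v,\,Y_{vv},\,Y_\tau$; on the soliton region the $C^{100}$ convergence $Z\to Z_0$ from Corollary \ref{cor_unique_asympt}(3) controls all these factors in the zoomed-in variable $\rho=v\sqrt{|\tau|}$, while on the collar portion one again uses Corollary \ref{cor_gaussian_collar}. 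The resulting bound is $O(\eta|\tau|)$, which after adjusting $\eta$ finishes the proof.
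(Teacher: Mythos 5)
Your treatment of the first inequality, and your reduction of $\mu_\tau$ --- differentiating under the integral, integrating by parts to trade $Y_{v\tau}$ for $Y_\tau$ (the boundary term at $\tilde v=\theta$ cancelling the explicit $-\tfrac12 Y(\theta,\tau)Y_\tau(\theta,\tau)$), and bounding the $\zeta$-supported terms via the tip estimates, the almost Gaussian collar and $Y^2\le 2(1+o(1))|\tau|$ --- is exactly the paper's route. The gap is in your handling of the remaining integral $\int_v^\theta Y_\tau\,\partial_{\tilde v}\bigl(2(1-\zeta)Y_{\tilde v}/\tilde v\bigr)\,d\tilde v$. After pulling out $|Y_\tau|\le\eta|Y_v|/v\le\eta\sqrt{|\tau|}$, what you must show is $\int_v^\theta\bigl|\bigl(Y_{\tilde v}/\tilde v\bigr)_{\tilde v}\bigr|\,d\tilde v\le C\sqrt{|\tau|}$, and neither of the tools you cite delivers this. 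On the collar, Corollary \ref{cor_gaussian_collar} is a pointwise zeroth-order relation between $Y$, $Y_v$ and $v$; it gives no control on $Y_{vv}$ and cannot be differentiated. The natural substitute, the cylindrical estimate $|Y_{vv}|/(1+Y_v^2)\le\eta|Y_v|/v$, only yields a bound of order $|\tau|^{3/2}\log|\tau|$ for $\int|Y_{\tilde v\tilde v}|/\tilde v\,d\tilde v$, too weak by roughly a factor $|\tau|$. The paper instead invokes the almost quadratic concavity (Proposition \ref{prop_almost_quad_conc}), which gives $(|Y_v|/v)_v\ge -C|\tau|^5e^{\tau}$; this approximate sign lets the integral of the absolute value telescope to the boundary value $|Y_v/v|(\theta,\tau)+C|\tau|^5e^{\tau}\le C\sqrt{|\tau|}$.

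On the soliton region your appeal to the $C^{100}$ closeness of $Z$ to $Z_0$ is also incomplete as stated: in the zoomed variable $(Y_v/v)_v=|\tau|\bigl(Z_{\rho\rho}/\rho-Z_\rho/\rho^2\bigr)$, and each of the two terms separately behaves like $|\tau|\,Z_{\rho\rho}(0,\tau)/\rho$ near $\rho=0$, hence is not integrable down to $v=0$; boundedness of $Y_v$ and $Y_{vv}$ in the rescaled variable is not enough. You need the cancellation of the $1/\rho$ singularities, which the paper extracts from $Z_\rho(0,\tau)=0$ (rotational symmetry of the level sets) together with uniform third-order control on $Z$ from the sharp asymptotics, giving $|Z_{\rho\rho}/\rho-Z_\rho/\rho^2|\le C$ uniformly and hence a soliton contribution of at most $C|\tau|\cdot L|\tau|^{-1/2}=CL\sqrt{|\tau|}$. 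With these two repairs --- almost monotonicity of $|Y_v|/v$ on the collar and the explicit tip cancellation on the soliton region --- the rest of your argument goes through.
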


\begin{proof} We will argue similarly as in \cite{ADS2}, but we will encounter some new error terms coming from the fact that our profile function $v$ is only almost quadratically concave and from the nonlinearity.\\
By definition of the weight function $\mu$ we have
\begin{equation}
v\mu_v  =\zeta(v)\left(\frac{-v Y Y_v}{2}\right) +\left(1-\zeta(v)\right)(1+Y^2_{v}).
\end{equation}
Hence, to prove the first estimate it suffices to show that for $\theta/4\leq v \leq 2\theta$ we have
\begin{equation}
\left|\frac{v Y Y_v}{2(1+Y^2_{v})}+1\right|\leq \eta.
\end{equation}
But this easily follows from Corollary \ref{cor_gaussian_collar} (almost Gaussian collar) since in the region under consideration we have $|Y_v|\gg 1$.\\
To prove the second estimate, using the results that we already established and arguing similarly as in \cite[proof of Lemma 7.5]{ADS2}, we can obtain the estimates (7.12), (7.14) and (7.15) therein, hence
\begin{equation}\label{mu-speed_pre}
\mu_\tau \leq  c\eta  |\tau|+2  \int_v^\theta (1-\zeta)\left( \frac{Y_{v'}}{v'}\right)_{v'}  Y_\tau   \, dv' \, .
\end{equation}
Using Proposition \ref{lemma_tip_estimates} (tip estimates) we can estimate
\begin{equation}\label{mu-speed}
 \int_v^\theta (1-\zeta)\left( \frac{Y_{v'}}{v'}\right)_{v'}  Y_\tau   \, dv' \leq \eta \sqrt{|\tau|}  \int_v^\theta \left|\left( \frac{Y_{v'}}{v'}\right)_{v'}\right| \, dv' \, .
\end{equation}
Thus, our task is to bound the latter integral by a multiple of $\sqrt{|\tau|}$. Using the almost positivity from \eqref{monotonicity.Y_v/v_pre} and Proposition \ref{lemma_tip_estimates} (tip estimates) we can estimate the collar region contribution to this integral via
\begin{align}
\int_{\frac{L}{\sqrt{\tau}}}^{\theta}\left|\left( \frac{Y_{\tilde{v}}}{v'}\right)_{v'}\right| \, dv' \leq \int_{\frac{L}{\sqrt{\tau}}}^{\theta} \left( \frac{Y_{v'}}{v'}\right)_{v'} \, dv' + C|\tau|^5 e^{\tau}\leq  \left|\frac{Y_v}{v}\right|(\theta,\tau)+C|\tau|^5e^\tau \leq C \sqrt{|\tau|}\, .
\end{align}
To deal with the soliton region, recall first from \eqref{zoomed_in_5} that
\begin{equation}
Y(v,\tau)=Y(0,\tau)+|\tau|^{-1/2}Z(|\tau|^{1/2}v,\tau)\, .
\end{equation}
Using this, we compute
\begin{equation}
\frac{Y_{vv}(v,\tau)}{v}-\frac{Y_v(v,\tau)}{v^2}=|\tau|\left( \frac{Z_{\rho\rho}(\rho,\tau)}{\rho} - \frac{Z_{\rho}(\rho,\tau)}{\rho^2} \right),\qquad \textrm{where} \quad \rho=|\tau|^{1/2}v\, .
\end{equation}
By Corollary \ref{cor_unique_asympt} (uniform sharp asymptotics) the function $Z(\rho,\tau)$ is $\eps$-close in $C^{100}(B(0,\eps^{-1}))$ to $Z_0(\rho)$, the profile function of the 2d-bowl with speed $1/\sqrt{2}$. Thus, given any $\rho_0>0$, for $\tau\leq \tau_\ast$ we get
\begin{equation}
\sup_{\rho_0\leq \rho\leq L} \left| \frac{Z_{\rho\rho}(\rho,\tau)}{\rho} - \frac{Z_{\rho}(\rho,\tau)}{\rho^2} \right| \leq C(\rho_0)\, .
\end{equation}
On the other hand, since the profile function of any surface of rotation satisfies
\begin{equation}
Z_{\rho}(0,\tau)=0,
\end{equation}
for $\rho\leq \rho_0(\tau)$ we can expand
\begin{equation}
Z(\rho,\tau)= a_0(\tau)+a_2(\tau)\rho^2 + R(\rho,\tau),
\end{equation}
with the estimate
\begin{equation}
\sup_{\rho\leq\rho_0(\tau)}\left( \frac{|R(\rho,\tau)|}{\rho^3}  + \frac{|R_{\rho}(\rho,\tau)|}{\rho^2}+\frac{|R_{\rho\rho}(\rho,\tau)|}{\rho} \right)\leq C(\tau)\, ,
\end{equation}
where $\rho_0(\tau)>0$ and $C(\tau)<\infty$ are constants that might initially depend on $\tau$.
This yields
\begin{equation}
\sup_{\rho\leq\rho_0(\tau)}\left| \frac{Z_{\rho\rho}(\rho,\tau)}{\rho} - \frac{Z_{\rho}(\rho,\tau)}{\rho^2} \right| =\sup_{\rho\leq\rho_0(\tau)} \left| \frac{R_{\rho\rho}(\rho,\tau)}{\rho} - \frac{R_{\rho}(\rho,\tau)}{\rho^2} \right| \leq C(\tau)\, .
\end{equation}
Now using again Corollary \ref{cor_unique_asympt} (uniform sharp asymptotics) we see that for $\tau\leq \tau_\ast$ this estimate in fact holds with uniform constants $\rho_0(\tau)=\rho_0>0$ and $C(\tau)=C<\infty$. Hence,
\begin{equation}
\sup_{ \rho\leq L} \left| \frac{Z_{\rho\rho}(\rho,\tau)}{\rho} - \frac{Z_{\rho}(\rho,\tau)}{\rho^2} \right| \leq C\, .
\end{equation}
We have thus shown that
\begin{equation}
\left|\left(\frac{Y_{v'}}{v'} \right)_{v'} \right| \leq C|\tau|
\end{equation} 
for $v'\leq L/\sqrt{\tau}$ and $\tau\leq \tau_\ast$. Integrating gives
\begin{equation}
\int_0^{\frac{L}{\sqrt{\tau}}}\left|\left( \frac{Y_{v'}}{v'}\right)_{v'}\right| \, dv' \leq  C \sqrt{|\tau|}\, .
\end{equation}
Putting things together, and adjusting $\eta$, this concludes the proof of the proposition.
\end{proof}

\begin{corollary} [{weighted Poincare inequality, c.f. \cite[Proposition 7.6]{ADS2}}]\label{prop_poincare}
There are constants $C_0<\infty$, $\kappa>0$ and $\tau_\ast\ll 0$ with the following significance. If $M$ is $\kappa$-quadratic at time $\tau_0\leq\tau_\ast$, then for  any $\theta\ll 1$ and all $\tau\leq \tau_0$, we have
\begin{equation}\label{poincare}
  \int^{2\theta}_{0}F^2(v) e^{\mu(v, \tau)}dv \leq \frac{C_0}{|\tau|}\int^{2\theta}_{0}\frac{F^2_{v}}{1+Y^2_{v}}e^{\mu(v, \tau)}dv
\end{equation}
for all smooth functions $F$ satisfying $F'(0)=0$ and $\mathrm{spt}(F)\subseteq [0,2\theta]$.
\end{corollary}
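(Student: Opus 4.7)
My plan is to adapt the strategy of \cite[Proposition 7.6]{ADS2} to our setting, using the a priori estimates from Propositions \ref{lemma_tip_estimates} and \ref{lemma_weight_estimates}. First I would combine these two propositions: for any fixed $\eta>0$, by choosing $\theta$ and $\kappa$ sufficiently small and $\tau\leq\tau_\ast$ sufficiently negative, one obtains the two-sided bounds
\begin{equation*}
(1-\eta)\frac{1+Y_v^2}{v}\leq \mu_v\leq (1+\eta)\frac{1+Y_v^2}{v}, \qquad \tfrac14\sqrt{|\tau|}\leq |Y_v/v|\leq \sqrt{|\tau|},
\end{equation*}
together with $|Y_\tau|\leq\eta|Y_v/v|$. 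In particular $1+Y_v^2$ is comparable to $1+v^2|\tau|$, and $\mu_v$ to $v|\tau|$ in the collar region.

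Next I would perform the change of variable $u=v\sqrt{|\tau|}$, under which the target inequality transforms into the Poincare-type statement
\begin{equation*}
\int_0^U F^2 e^{\tilde\mu}\,du\leq C_0\int_0^U\frac{F_u^2}{1+u^2}e^{\tilde\mu}\,du
\end{equation*}
on the interval $[0,U]$ with $U=2\theta\sqrt{|\tau|}$, Neumann condition $F_u(0)=0$, and Dirichlet condition $F(U)=0$; here $\tilde\mu(u)=\mu(u/\sqrt{|\tau|})$ behaves like $u^2/4$ modulo an overall constant and lower-order corrections. Translating back via the Jacobian will convert a constant $C_0$ uniform in $|\tau|$ in this rescaled inequality into the desired $1/|\tau|$ factor in the $v$-variable statement. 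The rescaled inequality is of Hermite type, and the associated Sturm-Liouville eigenvalue problem
\begin{equation*}
-\left(\frac{e^{\tilde\mu}F_u}{1+u^2}\right)_u=\lambda F e^{\tilde\mu}, \qquad F_u(0)=0,\ F(U)=0,
\end{equation*}
admits a first eigenvalue $\lambda_1$ bounded below uniformly in $U$ by a spectral gap argument analogous to the one for the one-dimensional Ornstein-Uhlenbeck operator on a confining Gaussian measure. The Neumann condition $F_u(0)=0$ is essential here to exclude the constant ground state and obtain a spectral gap.

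The main obstacle will be that the weight $\mu$, the profile $Y$, and $1+Y_v^2$ only approximate their Hermite model up to errors controlled by $\eta$; these deviations must be quantitatively small enough to preserve the uniform spectral gap. This will require a careful matching between the soliton region ($u\lesssim 1$), where Corollary \ref{cor_unique_asympt} gives closeness to the 2d-bowl profile, and the collar region ($u\gg 1$), where $Y_v\approx -v\sqrt{|\tau|}$. Once this matching is carried out, the spectral gap should follow from a standard perturbation of the explicit Hermite problem, yielding $C_0$ uniform in $|\tau|$.
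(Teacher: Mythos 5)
The step where all the content of the corollary lies is precisely the one you do not prove: the uniform lower bound on the first Sturm--Liouville eigenvalue of the rescaled problem. You assert it ``by a spectral gap argument analogous to the one for the one-dimensional Ornstein--Uhlenbeck operator'' plus ``standard perturbation of the explicit Hermite problem'', and both halves of that plan are problematic. The analogy is off: here the weight $e^{\mu}$ \emph{grows} towards the outer endpoint (recall $\mu=-\tfrac14Y^2$ for $v\geq\theta/2$, and $\mu_v>0$), so in the rescaled variable the measure concentrates at $u=U$ rather than being a confining Gaussian probability measure; the mechanism is a Hardy-type inequality relative to the vanishing of $F$ at the outer end $v=2\theta$, not a gap above a constant ground state. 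In particular your claim that the Neumann condition $F_u(0)=0$ is ``essential to exclude the constant ground state'' is backwards: constants satisfy Neumann conditions, and what rules them out is the support condition at $v=2\theta$; the role of $F'(0)=0$ is only to handle the degenerate left endpoint, where $e^{\mu}$ vanishes like $v$.

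More seriously, the perturbation step fails quantitatively. Propositions \ref{lemma_tip_estimates} and \ref{lemma_weight_estimates} pin down $\mu_v$ only through $(1-\eta)\tfrac{1+Y_v^2}{v}\leq\mu_v\leq(1+\eta)\tfrac{1+Y_v^2}{v}$ together with $\tfrac14\sqrt{|\tau|}\leq|Y_v|/v\leq\sqrt{|\tau|}$, so after setting $u=v\sqrt{|\tau|}$ the rescaled weight exponent is only comparable to $u^2$ up to a bounded factor (roughly between $u^2/32$ and $u^2/2$), not ``$u^2/4$ modulo lower-order corrections''. Consequently $e^{\tilde\mu}$ is not uniformly comparable to the Hermite weight (the ratio can be of size $e^{c\theta^2|\tau|}$ on $[0,U]$ with $U=2\theta\sqrt{|\tau|}$), the Rayleigh quotients are not comparable, and eigenvalue perturbation off the explicit model cannot produce a constant $C_0$ uniform in $|\tau|$. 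To close the gap one must argue directly with the actual weight using only these differential bounds: this is exactly what the paper does, namely it observes that with Propositions \ref{lemma_tip_estimates} and \ref{lemma_weight_estimates} in hand the integration-by-parts proof of \cite[Proposition 7.6]{ADS2} goes through verbatim (equivalently, a weighted Hardy/Muckenhoupt argument works, since $\tilde\mu_u\gtrsim u^{-1}+u$ alone gives a uniform bound on $\sup_r\big(\int_0^r e^{\tilde\mu}du\big)\big(\int_r^U(1+u^2)e^{-\tilde\mu}du\big)$). Your scaling bookkeeping, by contrast, is fine: the factor $1/|\tau|$ does come out of $dv=|\tau|^{-1/2}du$ and $1+Y_v^2\asymp1+u^2$.
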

\begin{proof}Having established Proposition \ref{lemma_tip_estimates} (tip estimates) and Proposition \ref{lemma_weight_estimates} (weight estimates), the argument from \cite[proof of Proposition 7.6]{ADS2} goes through.
\end{proof}

\bigskip

Having established the weighted Poincare inequality, we can now implement the energy method. Recall from Proposition \ref{prop_evol_w_inv} (evolution of inverse difference) that
the function $W=Y-Y_2$ satisfies
\begin{equation}\label{W.equation_rest}
W_\tau=\frac{W_{vv}}{1+Y_{v}^2}+\left(\frac{1}{v}-\frac{v}{2}+D\right)W_v+\frac{1}{2}W+e^\tau \mathcal{F}[W]\, ,
\end{equation}
where
\begin{equation}
D=-\frac{Y_{2,vv}(Y_{v}+Y_{2,v})}{(1+Y_{v}^2)(1+Y_{2,v}^2)}\, .
\end{equation}

\begin{lemma}[energy inequality]\label{lemma_energy_tip_with_errors}
There exist $\theta>0$, $\kappa>0$ and $\tau_\ast>-\infty$ with the following significance. If $M^1$ and $M^2$ are $\kappa$-quadratic at time $\tau_0\leq \tau_\ast$, then for $\tau\leq\tau_0$
we have
\begin{align}
\frac{d}{d\tau}\int W_\cT^2e^\mu dv\leq &-\frac{1}{2}\int \frac{|\partial_v W_\cT|^2}{1+Y_{v}^2} e^\mu dv +\int \bar G W_\cT^2 e^\mu dv+\frac{C(\theta)}{|\tau|}\int^{2\theta}_\theta W^2 e^\mu dv\nonumber\\
&+e^\tau \sup_{v\leq 2\theta } \big(|W|+|W_v|+|W_{\tau}|+ |W_{vv}|+|W_{v\tau}|+|W_{\tau\tau}|\big) \left(\int W_\cT^2e^\mu dv\right)^{1/2}\, ,
\end{align}
where
\begin{equation}
\bar G=(1+Y_v^2)G^2   +1 +2\mu_\tau,\quad \textrm{with}\quad G= \frac{1}{v}-\frac{v}{2}-\frac{\mu_v}{1+Y_{v}^2}+\frac{2Y_{v}Y_{vv}}{(1+Y_{v}^2)^2}+D\, .
\end{equation}
\end{lemma}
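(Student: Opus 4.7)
The starting point is to differentiate the weighted energy $\int W_\cT^2 e^\mu dv$ under the integral sign. Since $\varphi_\cT$ depends only on $v$, we have $(W_\cT)_\tau = \varphi_\cT W_\tau$, so
\[
\tfrac{d}{d\tau}\!\int\! W_\cT^2 e^\mu dv \;=\; 2\!\int\!\varphi_\cT^2 W W_\tau\, e^\mu dv \;+\; \int\! W_\cT^2 \mu_\tau e^\mu dv .
\]
I plug in the evolution equation \eqref{W.equation_rest} for $W_\tau$, and rewrite $\varphi_\cT W_v$ and $\varphi_\cT W_{vv}$ in terms of $(W_\cT)_v$ and $(W_\cT)_{vv}$ using the identities $\varphi_\cT W_v=(W_\cT)_v-\varphi_\cT' W$ and $\varphi_\cT W_{vv}=(W_\cT)_{vv}-\varphi_\cT''W - 2\varphi_\cT' W_v$. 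All occurrences of $\varphi_\cT'$ or $\varphi_\cT''$ are supported on the transition annulus $\{\theta \leq v \leq 2\theta\}$; I collect these as ``cutoff errors'' to be handled later. The resulting main identity expresses $2\int W_\cT \cdot \varphi_\cT W_\tau e^\mu dv$ in terms of $W_\cT$ and its derivatives alone, modulo cutoff errors and the nonlinear contribution $2\int W_\cT\varphi_\cT e^\tau\mathcal{F}[W]e^\mu dv$.

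Next I do one integration by parts on the diffusion term (boundary contributions at $v=2\theta$ vanish by compact support of $W_\cT$, and at $v=0$ they vanish because $(1+Y_v^2)^{-1}\to 0$ there), producing $-2\int (W_\cT)_v^2 e^\mu/(1+Y_v^2)\,dv$ together with a cross term $-2\int W_\cT (W_\cT)_v\bigl[\mu_v/(1+Y_v^2)-2Y_vY_{vv}/(1+Y_v^2)^2\bigr]e^\mu dv$. Combining the latter with the advection term $2\int W_\cT A (W_\cT)_v e^\mu dv$ and using the key algebraic identity
\[
A - \frac{\mu_v}{1+Y_v^2}+\frac{2Y_vY_{vv}}{(1+Y_v^2)^2} \;=\; G ,
\]
collapses the two cross terms into the single expression $2\int W_\cT (W_\cT)_v G\, e^\mu dv$. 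Young's inequality with parameter $\tfrac{3}{2}$ then yields
\[
2|W_\cT (W_\cT)_v G|\;\leq\; \tfrac{3}{2}\,\tfrac{(W_\cT)_v^2}{1+Y_v^2}+\tfrac{2}{3}(1+Y_v^2)G^2 W_\cT^2,
\]
which absorbs $\tfrac{3}{2}$ of the $-2$ in front of $\int (W_\cT)_v^2 e^\mu /(1+Y_v^2)$, leaving $-\tfrac12$. The reaction term $W/2$ in the equation and the $\mu_\tau$ contribution from $\partial_\tau e^\mu$ then combine to give the remaining part of $\bar G$, where I use the obvious bound $\tfrac23(1+Y_v^2)G^2+1+\mu_\tau \leq \bar G = (1+Y_v^2)G^2+1+2\mu_\tau$ (the slack being harmless since this upper bound on $\bar G$ is what is used downstream).

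It remains to handle the cutoff errors and the nonlinearity. The cutoff errors all take the form of integrals supported on $\{\theta \leq v \leq 2\theta\}$ involving one or two factors among $W$, $W_v$, $(W_\cT)_v$, and $\varphi_\cT^{(j)}$. After one more application of Young's inequality to move any $(W_\cT)_v^2$ term into the main $-\tfrac12\int (W_\cT)_v^2 e^\mu/(1+Y_v^2)\,dv$ term, what remains is a pointwise bound of the form $C(\theta)\bigl(|W|^2+|WW_v|\bigr) e^\mu/(1+Y_v^2)$. On the transition annulus Proposition~\ref{lemma_tip_estimates} gives $|Y_v|\geq \tfrac14 v\sqrt{|\tau|}\geq c(\theta)\sqrt{|\tau|}$, hence $(1+Y_v^2)^{-1}\leq C(\theta)/|\tau|$; combined with standard $\varphi_\cT'$ absorption this yields exactly the error $\tfrac{C(\theta)}{|\tau|}\int_\theta^{2\theta}W^2 e^\mu dv$. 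For the nonlinear term, I use Cauchy--Schwarz together with the rough derivative bound from Lemma~\ref{prop_Y.collar.rough} (which controls the coefficients inside $\mathcal{F}[W]$) to obtain
\[
\Bigl|2\!\int\! W_\cT \varphi_\cT e^\tau \mathcal{F}[W]e^\mu dv\Bigr|\leq C e^\tau \!\!\sup_{v\leq 2\theta}\bigl(|W|+\cdots+|W_{\tau\tau}|\bigr)\Bigl(\!\int\! W_\cT^2 e^\mu dv\Bigr)^{\!1/2}\!,
\]
since the Gaussian weight makes $\int_0^{2\theta} e^\mu dv$ uniformly bounded.

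The main obstacle, as in \cite{ADS2}, is the careful bookkeeping of the cutoff errors: one must verify that every error term arising from $\varphi_\cT',\varphi_\cT''$ can either be absorbed into the leading $-\tfrac12\int (W_\cT)_v^2 e^\mu/(1+Y_v^2)\,dv$ or bounded by $\tfrac{C(\theta)}{|\tau|}\int_\theta^{2\theta}W^2 e^\mu dv$. Unlike \cite{ADS2}, here one additionally has to confirm that the nonlinear contribution $e^\tau\mathcal{F}[W]$ respects the weighted $L^2$ structure and only depends on $C^2$ data of $W$ in the tip region, which is where Lemma~\ref{prop_Y.collar.rough} (polynomial bounds on $Y$ and its derivatives) becomes essential.
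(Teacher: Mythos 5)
Your overall strategy (differentiate the weighted energy, insert \eqref{W.equation_rest}, integrate by parts, identify the coefficient $G$, absorb the cross term by Young, treat the transition annulus via $(1+Y_v^2)^{-1}\leq C(\theta)/|\tau|$, and estimate the nonlinearity by Cauchy--Schwarz together with Lemma \ref{prop_Y.collar.rough}) is the same as the paper's, but your order of operations creates a term that your bookkeeping does not cover. By rewriting $\varphi_\cT W_v=(W_\cT)_v-\varphi_\cT' W$ already inside the advection term, you generate the cutoff error
\begin{equation*}
-2\int W_\cT\Big(\tfrac1v-\tfrac v2+D\Big)\varphi_\cT'\, W\,e^\mu\,dv\, ,
\end{equation*}
which is supported on $[\theta,2\theta]$ but carries no factor $(1+Y_v^2)^{-1}$: the coefficient $\tfrac1v-\tfrac v2+D$ is of size $C(\theta)$ there, with no smallness in $|\tau|$, and the term contains no $(W_\cT)_v$ that could be absorbed into the good gradient term. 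Hence your claim that every residual cutoff error is pointwise bounded by $C(\theta)\big(|W|^2+|WW_v|\big)e^\mu/(1+Y_v^2)$ fails for this term, and with it your derivation of the crucial factor $\tfrac{C(\theta)}{|\tau|}$ in front of $\int_\theta^{2\theta}W^2e^\mu dv$; that factor is exactly what later makes the transition term absorbable (it becomes $\eps$-small against $\|w_\cC\|_{\cD,\infty}$ through the equivalence of norms), so it cannot be weakened without further argument. The paper avoids the term by a different ordering: it first applies Young's inequality to $\int G\varphi_\cT^2WW_v\,e^\mu dv$ against $\int \varphi_\cT^2W_v^2e^\mu/(1+Y_v^2)\,dv$, so the advection coefficient only ever appears inside $(1+Y_v^2)G^2W_\cT^2$, and only afterwards converts $\varphi_\cT^2W_v^2=(\partial_vW_\cT)^2-(\varphi_\cT')^2W^2-2\varphi_\cT\varphi_\cT'WW_v$, so that every transition-region error inherits the factor $(1+Y_v^2)^{-1}$. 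You should rearrange your argument accordingly (or argue separately that a non-decaying error $C(\theta)\int_\theta^{2\theta}W^2e^\mu dv$ still suffices downstream, which in any case does not prove the lemma as stated).

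Two smaller points. Your justification of the vanishing boundary term at $v=0$ is incorrect: at the tip $Y_{i,v}(0,\tau)=0$, so $(1+Y_v^2)^{-1}\to 1$ rather than $0$; the boundary contribution vanishes because $W_v(0,\tau)=Y_{1,v}(0,\tau)-Y_{2,v}(0,\tau)=0$ by smoothness and rotational symmetry of both level sets (the same fact behind the condition $F'(0)=0$ in Corollary \ref{prop_poincare}). Also, your step $\tfrac23(1+Y_v^2)G^2+1+\mu_\tau\leq\bar G$ implicitly uses $\mu_\tau\geq-\tfrac13(1+Y_v^2)G^2$, which is not justified since only the one-sided bound $\mu_\tau\leq\eta|\tau|$ is available; it is cleaner to keep the coefficient you actually derive, which is all that is used afterwards.
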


\begin{proof}
Using \eqref{W.equation_rest} and integration by parts we compute
\begin{align}
\frac{1}{2}\frac{d}{d\tau}\int W_\cT^2e^\mu dv=&-\int \frac{\varphi^2W_v^2}{1+Y_{v}^2} e^\mu dv +\int G\varphi^2 WW_ve^\mu dv
-2\int \frac{\varphi\varphi_vWW_v}{1+Y_{v}^2}e^\mu dv\nonumber\\
&+\int \left(\tfrac12 +\mu_\tau\right)W_\cT^2 e^\mu dv+e^\tau\int  \mathcal{F}\varphi W_\cT e^\mu dv, 
\end{align}
where for simplicity we write $\varphi=\varphi_\cT$. Using $ab\leq \tfrac{1}{2}a^2+\tfrac{1}{2}b^2$ we can estimate the second term by
\begin{equation}
\int G \varphi^2 WW_ve^\mu dv \leq \frac{1}{2}\int \frac{\varphi^2W_v^2}{1+Y_{v}^2} e^\mu dv+\frac{1}{2}\int G^2(1+Y_v^2) W_\cT^2e^\mu dv\, .
\end{equation}
Via absorption, and observing also that $\partial_v W_\cT=\varphi W_v+\varphi_v W$ implies the pointwise identity
\begin{equation}
\varphi^2 W_v^2 = (\partial_v W_{\cT})^2 - \varphi_v^2W^2- 2 \varphi\varphi_v WW_v  \, ,
\end{equation}
this yields
\begin{align}
\frac{1}{2}\frac{d}{d\tau}\int W_\cT^2e^\mu dv\leq &-\frac{1}{2}\int \frac{|\partial_v W_\cT|^2}{1+Y_{v}^2} e^\mu dv+\frac{1}{2}\int \frac{\varphi_v^2 W^2}{1+Y_{v}^2} e^\mu dv -\int \frac{\varphi\varphi_v WW_v}{1+Y_{v}^2}e^\mu dv\nonumber\\
&+\int \left(\tfrac12 G^2(1+Y_v^2)+\tfrac12 +\mu_\tau\right) W_{\cT}^2 e^\mu dv+e^\tau\int  \mathcal{F}\varphi W_\cT e^\mu dv\, .
\end{align}
We can then use
\begin{equation}
-\varphi\varphi_vWW_v=-\varphi_vW(\partial_v W_\cT)+\varphi_v^2W^2\leq \frac{1}{4}(\partial_v W_\cT)^2+2\varphi_v^2 W^2,
\end{equation}
to absorb the third term into the first two terms. This yields
\begin{align}
\frac{d}{d\tau}\int W_\cT^2e^\mu dv\leq -\frac{1}{2}\int \frac{|\partial_v W_\cT|^2}{1+Y_{v}^2} e^\mu dv+\int \bar{G}W_{\cT}^2 e^\mu dv+5\int \frac{W^2}{1+Y_{v}^2}\varphi_v^2 e^\mu dv +2e^\tau\int  \mathcal{F}\varphi W_\cT e^\mu dv\, .
\end{align}
Now, since $\textrm{spt}(\varphi_v)\subset [\theta,2\theta]$ and since by Proposition \ref{lemma_tip_estimates} (tip estimates), for $\theta$ and $\kappa$ sufficiently small and $\tau_\ast$ sufficiently negative, we have    
\begin{equation}
\sup_{\theta\leq v\leq 2\theta}\frac{1}{1+Y_{v}^2(v,\tau)} \leq \frac{16}{\theta^2|\tau|}\, ,
\end{equation}
we can estimate the third time by
\begin{align}
5\int \frac{W^2}{1+Y_{v}^2}\varphi_v^2 e^\mu dv\leq \frac{C(\theta)}{|\tau|} \int_{\theta}^{2\theta} W^2 e^\mu dv\, .
\end{align}
Finally, using the Cauchy-Schwarz inequality we can estimate the last term by
\begin{equation}
2e^\tau\int  \mathcal{F}\varphi W_\cT e^\mu dv \leq 2e^{\tau} \left( \int \mathcal{F}^2\varphi^2 e^{\mu}\right)^{1/2} \left(\int W_\cT^2e^\mu dv\right)^{1/2}\ .
\end{equation}
Note that by Corollary \ref{cor_unique_asympt} (uniform sharp asymptotics) for $v\leq 2\theta$ we have the rough estimate
\begin{equation}\label{rough_mu_decay}
e^{\mu(v,\tau)} \leq e^{-\frac{1}{4}\tau}\, .
\end{equation}
Hence, remembering the structure of $\mathcal{F}$ and applying Lemma \ref{prop_Y.collar.rough} (rough tip estimate) we get
\begin{equation}
2\left( \int \mathcal{F}^2\varphi^2 e^{\mu}\right)^{1/2} \leq \sup_{v\leq 2\theta}\big(|W|+|W_v|+|W_{\tau}|+ |W_{vv}|+|W_{v\tau}|+|W_{\tau\tau}|\big)\, .
\end{equation}
Putting everything together, this proves the lemma.
\end{proof}
  
We can now prove the main result of this subsection:

\begin{proof}[Proof of Proposition \ref{prop_int_tip}]
Recall that by Lemma \ref{lemma_energy_tip_with_errors} (energy inequality) we have
\begin{align}
\frac{d}{d\tau}\|W_\cT\|_2^2\leq -\frac{1}{2}\int \frac{|\partial_v W_\cT|^2}{1+Y_{v}^2} e^\mu dv +\int \bar G W_\cT^2 e^\mu dv+\frac{C}{|\tau|}\|W1_{[\theta,2\theta]}\|_2^2 
+e^\tau \|W \|_{C^2|T_\tau} \|W_\cT\|_2\, ,
\end{align}
where
\begin{equation}
\|W \|_{C^2|T_\tau}:=\sup_{v\leq 2\theta } \big(|W|+|W_v|+|W_{\tau}|+ |W_{vv}|+|W_{v\tau}|+|W_{\tau\tau}|\big)\, .
\end{equation}
Now for $\kappa$ and $\theta$ sufficiently small, $L$ sufficiently large, and $\tau_\ast$ sufficiently negative, similarly as in \cite[Claim 7.7]{ADS2} we can estimate
\begin{equation}\label{prop_bar_G_est}
\bar G \leq \eta|\tau|\, .
\end{equation}
Moreover, possibly after adjusting the constants, by Proposition \ref{prop_poincare} (weighted Poincare inequality) we have
\begin{equation}
\|W_\cT\|_2^2 \leq  \frac{C_0}{|\tau|}\int \frac{|\partial_v W_\cT|^2}{1+Y_{v}^2}e^\mu dv\, .
\end{equation}
Combining the above facts and taking $\eta=\tfrac{1}{4C_0}$ we infer that
\begin{equation}
\frac{d}{d\tau}\|W_\cT\|_2^2\leq -\eta |\tau|\|W_\cT\|_2^2 +\frac{C}{|\tau|}\|W1_{[\theta,2\theta]}\|_2^2 +e^\tau  \|W \|_{C^2|T_\tau}    \|W_\cT\|_2\, .
\end{equation}
Using the Peter-Paul inequality the last term can be estimated by
\begin{equation}
e^\tau  \|W \|_{C^2|T_\tau}   \|W_\cT\|_2 \leq \frac{\eta}{2}|\tau| \|W_\cT\|_2^2 +  \frac{1}{2\eta|\tau|}e^{2\tau}  \|W \|_{C^2|T_\tau}^2 \, .
\end{equation}
Hence, setting $c=\eta/2$ we obtain
\begin{equation}
\frac{d}{d\tau}\|W_\cT\|_2^2\leq -c|\tau|\|W_\cT\|_2^2 +C|\tau|^{-1}\|W1_{[\theta,2\theta]}\|_2^2+ C|\tau|^{-1} \|W \|_{C^2_{\exp}(\cT)}^2(\tau)\, 
\end{equation}
for all $\tau\leq \tau_0$.

\bigskip

To analyze this differential inequality, similarly as in \cite{ADS2},  we define
\begin{align}
&f(\tau)=\|W_\cT\|_2^2\, , && g(\tau)=\|W1_{[\theta,2\theta]}\|_2^2\, , 
\end{align}
and
\begin{align}
&F(\tau)=\int^{\tau}_{\tau-1} f(\tau')d\tau', && G(\tau)=\int^{\tau}_{\tau-1}  g(\tau')d\tau'.
\end{align}
Then, we obtain
\begin{align}
\frac{d}{d\tau}F(\tau)=\int^\tau_{\tau-1}\frac{d}{d\tau'}f(\tau')\, d\tau' \leq -c|\tau|F(\tau)+C|\tau|^{-1}G(\tau)+ C |\tau|^{-1}\|W \|_{C^2_{\exp}(\cT)}^2(\tau).
\end{align}
We rewrite this as
\begin{equation}
\frac{d}{d\tau} \left[ e^{-\frac{c\tau^2}{2}}F(\tau)\right] \leq e^{-\frac{c\tau^2}{2}}\left( C|\tau|^{-1}G(\tau)+C|\tau|^{-1}\|W \|_{C^2_{\exp}(\cT)}^2(\tau)\right).
\end{equation}
Observing also that thanks to \eqref{rough_mu_decay} the functions $F(\tau)$ and $G(\tau)$ converge (exponentially fast) to zero as $\tau\to -\infty$, we thus infer that
\begin{align}
e^{-\frac{c\tau^2}{2}}F(\tau) &\leq C\int_{-\infty}^{\tau} e^{-\frac{c\tau'^2}{2}}  \left(|\tau'|^{-1}G(\tau')+|\tau'|^{-1}\|W \|_{C^2_{\exp}(\cT)}^2(\tau')\right)d\tau'\nonumber\\
& \leq C\left(\int_{-\infty}^\tau |\tau'| e^{-\frac{c\tau'^2}{2}}d\tau'\right) \left(\sup_{\tau'\leq \tau}|\tau'|^{-2} G(\tau')+ |\tau'|^{-2} \|W \|_{C^2_{\exp}(\cT)}^2(\tau) \right)\\
& \leq Ce^{-\frac{c\tau^2}{2}} \left( |\tau|^{-\frac{3}{2}}\sup_{\tau'\leq \tau} |\tau'|^{-\frac{1}{2}}G(\tau')+|\tau|^{-2} \|W \|_{C^2_{\exp}(\cT)}^2(\tau)\right)\, .\nonumber
\end{align}
Hence, we conclude that 
\begin{equation}
|\tau|^{-\frac{1}{2}}F(\tau)\leq C|\tau|^{-2}\sup_{\tau'\leq \tau}|\tau'|^{-\frac{1}{2}}G(\tau')+C|\tau|^{-5/2}\|W \|_{C^2_{\exp}(\cT)}^2(\tau)\, ,
\end{equation}
from which 
\begin{equation}
\|W_\cT\|_{2,\infty}(\tau)\leq \frac{C}{|\tau|}\left(\|W 1_{[\theta,2\theta]}\|_{2,\infty}(\tau)+ \|W \|_{C^2_{\exp}(\cT)}(\tau)\right)
\end{equation}
readily follows. This finishes the proof of the proposition.
\end{proof}

\bigskip

\subsection{Decay estimate}
The goal of this subsection is to prove the following estimate:

\begin{proposition}[decay estimate]\label{main_decay}
There exist $\kappa>0$ and $\tau_{\ast}>-\infty$ with the following significance. If $M^1$ and $M^2$ are are $\kappa$-quadratic from time $\tau_0 \leq \tau_{\ast}$, and if  $w_\cC=v_1 \varphi_\cC(v_1)-v_2\varphi_\cC(v_2)$ satisfies
\begin{equation}\label{ass_neut}
\mathfrak{p}_{+}w_\cC(\tau_0)=0\quad \textrm{and}\quad \mathfrak{p}_{0}w_\cC(\tau_0)=0,
\end{equation}
then
\begin{equation}
\|w_\cC\|_{\cD,\infty} +\| W_\cT \|_{2,\infty} \leq   C\left( \|w\|_{C^2_{\exp}(\cC)}+\|W\|_{C^2_{\exp}(\cT)} \right).
\end{equation}
\end{proposition}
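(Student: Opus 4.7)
The plan is to combine the cylindrical energy estimate (Proposition on energy estimate in the cylindrical region) and the tip energy estimate (Proposition on energy estimate in the tip region) via a Merle--Zaag type spectral argument for $w_\cC$, together with norm equivalence in the overlap region $\{\theta/2\leq v\leq 2\theta\}$. The net effect will be a coupled system of inequalities in the two target quantities that closes under absorption.

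First I would promote the cylindrical estimate, which only controls $w_\cC-\fp_0 w_\cC$, to full control over $w_\cC$ by using the hypothesis $\fp_{+}w_\cC(\tau_0)=\fp_{0}w_\cC(\tau_0)=0$. Since $\fL\psi_0=0$, the ODE $\tfrac{d}{d\tau}\fp_0w_\cC=\fp_0(\partial_\tau-\fL)w_\cC$ integrated backward from $\tau_0$ gives pointwise control of $\fp_0w_\cC(\tau)$ by $\int_\tau^{\tau_0}\|(\partial_\tau-\fL)w_\cC(\sigma)\|_{\cD^*}\,d\sigma$. For the unstable modes, the backward OU-semigroup $e^{(\tau-\sigma)\fL}$, $\tau<\sigma$, is contractive on $\fH_+$ (eigenvalues $\geq 1/2$), so Duhamel yields an analogous bound for $\fp_+w_\cC$. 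Since the proof of the cylindrical estimate in fact bounds $(\partial_\tau-\fL)w_\cC$ in the parabolic $\cD^*$-norm by the right-hand side of that proposition, these two bounds combine with Proposition (cylindrical energy estimate) and absorption to give
\[
\|w_\cC\|_{\cD,\infty}\leq C\varepsilon\,\|w\,1_{\{\theta/2\leq v_1\leq \theta\}}\|_{\fH,\infty}+\frac{C}{|\tau_0|}\|w\|_{C^2_{\exp}(\cC)}.
\]

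Second I would close the coupling with the tip estimate via the transition region. By Corollary (uniform sharp asymptotics) and Proposition (tip estimates), in the overlap $\{\theta/2\leq v\leq 2\theta\}$ we have $|Y_v|\sim\sqrt{|\tau|}$, while $y\sim\sqrt{2|\tau|}$ so the Gaussian weight $e^{-y^2/4}$ is comparable (up to controlled constants) to the tip weight $e^{\mu}$. Change of variables $y=Y_1(v)$ and the Taylor expansion $v_2(Y_1(v))\approx v-W/Y_{2,v}$ then yield
\[
\|w\,1_{\{\theta/2\leq v_1\leq\theta\}}\|_{\fH,\infty}\leq \frac{C}{\sqrt{|\tau_0|}}\,\|W\,1_{[\theta/2,\theta]}\|_{2,\infty},\qquad \|W\,1_{[\theta,2\theta]}\|_{2,\infty}\leq C\sqrt{|\tau_0|}\,\|w\,1_{\{\theta\leq v_1\leq 2\theta\}}\|_{\fH,\infty}.
\]
Because $\varphi_\cT\equiv 1$ on $\{v\leq\theta\}$ the first right-hand side is bounded by $\|W_\cT\|_{2,\infty}$, and because $\varphi_\cC\equiv 1$ on $\{v\geq 7\theta/8\}$ the second right-hand side is bounded by $\|w_\cC\|_{\cD,\infty}$. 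Plugging these into the two energy estimates produces, with $X:=\|w_\cC\|_{\cD,\infty}$ and $Y:=\|W_\cT\|_{2,\infty}$,
\[
X\leq \frac{C\varepsilon}{\sqrt{|\tau_0|}}\,Y+\frac{C}{|\tau_0|}\|w\|_{C^2_{\exp}(\cC)},\qquad Y\leq \frac{C}{\sqrt{|\tau_0|}}\,X+\frac{C}{|\tau_0|}\|W\|_{C^2_{\exp}(\cT)}.
\]
Substituting the first into the second (or vice versa), the coupling constant $C^2\varepsilon/|\tau_0|$ is $\ll 1$ for $\tau_0\ll 0$, so each inequality can be absorbed, yielding the asserted combined bound.

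The main obstacle is the careful matching across the transition region: the cylindrical transition $\{\theta/2\leq v_1\leq\theta\}$ and the tip transition $[\theta,2\theta]$ are disjoint, so both directions of the $w\leftrightarrow W$ conversion must be invoked with the correct $|\tau|^{1/2}$ Jacobian, and one must verify that the Gaussian weight and the weight $e^\mu$ agree to leading order on each side of the overlap. A secondary difficulty, absent in \cite{ADS2}, is the bookkeeping of the new $C^2_{\exp}$ error terms arising from the nonlinearities $e^\tau\cN[v]$ and $e^\tau\mathcal{M}[Y]$ in the evolution equations; these terms carry an explicit $e^\tau$ prefactor, so they are compatible with the energy method, but must be tracked carefully through the backward Merle--Zaag integration of the $\fp_0$ and $\fp_+$ ODEs as well as through the Cauchy--Schwarz step in the tip energy inequality (Lemma, energy inequality).
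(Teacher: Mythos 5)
Your second step (coupling the cylindrical and tip energy estimates through the transition region and absorbing) is essentially the paper's Lemma \ref{prop_coercivity} (coercivity estimate), and your treatment of the unstable modes is harmless, since the backward Ornstein--Uhlenbeck semigroup decays exponentially on $\fH_+$ (in the paper this is already built into the general energy inequality \eqref{gen_energy_ineq}, which uses $\fp_+w_\cC(\tau_0)=0$). The genuine gap is in your first step, the treatment of the neutral mode. You propose to control $\fp_0 w_\cC(\tau)$ by integrating $\tfrac{d}{d\tau}\fp_0 w_\cC=\fp_0\big[(\partial_\tau-\fL)w_\cC\big]$ backward from $\tau_0$ and bounding the source by its parabolic $\cD^\ast$-norm. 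But that norm is a supremum over unit time windows, so the integral over $[\tau,\tau_0]$ picks up a factor $|\tau-\tau_0|\to\infty$; since $\fL\psi_0=0$ there is no semigroup decay to compensate, and the right-hand side of the cylindrical estimate itself contains $\eps\|w_\cC\|_{\cD,\infty}$, so no absorption can close an estimate with an unbounded prefactor. Your claimed intermediate bound $\|w_\cC\|_{\cD,\infty}\leq C\eps\|w\,1_{\{\theta/2\leq v_1\leq\theta\}}\|_{\fH,\infty}+\tfrac{C}{|\tau_0|}\|w\|_{C^2_{\exp}(\cC)}$ therefore does not follow from the argument you sketch.

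What is missing is the structural point that the projection of $\mathcal{E}[w_\cC]$ onto $\psi_0$ is not merely "small": since $v_1v_2\approx 2-\tfrac{y^2-2}{|\tau|}$, the potential term $\tfrac{2-v_1v_2}{2v_1v_2}w_\cC$ contributes a \emph{linear} term $\tfrac{2a(\tau)}{|\tau|}$ to $\tfrac{d}{d\tau}a(\tau)$, where $a(\tau)=\langle w_\cC(\tau),\psi_0\rangle_{\fH}$, and $\int \tfrac{d\tau}{|\tau|}$ diverges. The paper's proof extracts this term explicitly, writes $\tfrac{d}{d\tau}a=\tfrac{2a}{|\tau|}+F$, solves with the integrating factor $\tau^{-2}$ using $a(\tau_0)=0$, i.e.\ $a(\tau)=-\tau^{-2}\int_\tau^{\tau_0}F(\sigma)\sigma^2\,d\sigma$, and then proves (this is the technical heart, using $\|\sqrt2-v_i\|_{\fH}\leq C/|\tau|$, Gaussian tails for the cutoff terms $J,K,\overline{\mathcal{E}}$, and the transition-region norm equivalence) that $\int_{\tau-1}^{\tau}|F|\leq \tfrac{\eps}{|\tau|}A(\tau_0)+\tfrac{C}{|\tau|}\big(\|w\|_{C^2_{\exp}(\cC)}+\|W\|_{C^2_{\exp}(\cT)}\big)$; only then does the weighted sum $\sum_j\tfrac{(|j|+1)^2}{|j|}\leq|\tau|^2$ convert the non-integrable $1/|\sigma|$ decay into the bound $|a(\tau)|\leq\eps A(\tau_0)+C(\cdots)$, which closes by absorption. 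Without identifying the $\tfrac{2a}{|\tau|}$ term, proving the extra $1/|\tau|$ decay of the remaining forcing, and using the $\sigma^2/\tau^2$ weights, the neutral mode cannot be controlled, and this is precisely the step your proposal skips. (A minor additional remark: the transition-region conversions between $w$ and $W$ do not come with the $|\tau_0|^{\pm1/2}$ gains you assert; the paper's Lemma \ref{prop_coercivity} instead exploits the factor $C/|\tau|$ already present in the tip energy estimate, so your coupling loop should be rephrased accordingly, though this is a bookkeeping issue rather than a conceptual one.)
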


To show this we will adapt the argument from \cite[Section 8]{ADS2} to our setting. To begin with, combining the energy estimates from the previous subsections, and observing also that the norms in the transition region are equivalent similarly as in \cite[Lemma 8.1]{ADS2},  we obtain:

\begin{lemma}[coercivity estimate]\label{prop_coercivity}
For every $\eps>0$ there exist $\kappa>0$ and $\tau_{\ast}>-\infty$  such that if $M^1$ and $M^2$ are $\kappa$-quadratic form time $\tau_0 \leq {\tau}_{\ast}$, and if the spectral condition \eqref{ass_neut} holds, then
\begin{equation}
\| w_\cC-\mathfrak{p}_0 w_\cC \|_{\cD,\infty}+\| W_\cT \|_{2,\infty}  \leq \eps \| \mathfrak{p}_0 w_\cC  \|_{\cD,\infty}+ C\left( \|w\|_{C^2_{\exp}(\cC)}+\|W\|_{C^2_{\exp}(\cT)} \right)\, .
\end{equation}
\end{lemma}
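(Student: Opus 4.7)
The plan is to combine the cylindrical energy estimate (Proposition~\ref{prop_int_cyl}) and the tip energy estimate (Proposition~\ref{prop_int_tip}) by showing that the transition region terms on the right hand side of each estimate can be bounded by a norm controlled by the other estimate, following the general strategy of \cite[Lemma~8.1]{ADS2} but keeping track of the new exponentially weighted error terms coming from the translator nonlinearity.

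The first step is to establish equivalence of norms in the overlap region $\{\theta/2\leq v\leq 2\theta\}$. By Proposition~\ref{lemma_tip_estimates} (tip estimates), in the collar region $|Y_v|\sim \sqrt{|\tau|}\,v$ and $|Y_\tau|\ll |Y_v/v|$; by Proposition~\ref{lemma_weight_estimates} (weight estimates) the weight satisfies $\mu(v,\tau)\approx -Y_1(v,\tau)^2/4$, which in turn is comparable to $-y^2/4$ under the correspondence $y=Y_1(v,\tau)$. At corresponding points one has the Taylor expansion $w(y,\tau) = -Y_{2,v}(v_1(y,\tau),\tau)\,W(v_1(y,\tau),\tau) + O(W^2)$, where the quadratic remainder is controlled by $\|W\|_{C^2_{\exp}(\cT)}$. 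Performing the change of variables $dy=Y_{1,v}\,dv$ and tracking the Jacobians together with the normalization factor $|\tau|^{-1/4}$ in the $\|\cdot\|_{2,\infty}$ norm yields
\begin{align*}
\|w\,1_{\{\theta/2\leq v_1\leq \theta\}}\|_{\fH,\infty} &\leq C\|W_\cT\|_{2,\infty}+C\|W\|_{C^2_{\exp}(\cT)},\\
\|W\,1_{[\theta,2\theta]}\|_{2,\infty} &\leq C\|w_\cC\|_{\cD,\infty}+C\|w\|_{C^2_{\exp}(\cC)},
\end{align*}
where the first inequality uses that $\varphi_\cT(v_1)=1$ on $\{v_1\leq\theta\}$, so $W=W_\cT$ there, and the second uses that $\varphi_\cC(v_i)=1$ on $\{v_i\geq \theta\}$, so $w_\cC=w$ there (together with the continuous embedding $\fH\subset \cD$ after restriction).

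The second step is to chain the two energy estimates. Inserting the cylindrical transition bound into Proposition~\ref{prop_int_cyl} and the tip transition bound into Proposition~\ref{prop_int_tip}, and then substituting the resulting tip inequality into the cylindrical inequality, yields
\begin{equation*}
\|w_\cC-\fp_0 w_\cC\|_{\cD,\infty} \leq \eps\,\|w_\cC\|_{\cD,\infty} + \frac{C\eps}{|\tau_0|}\|w_\cC\|_{\cD,\infty} + C\bigl(\|w\|_{C^2_{\exp}(\cC)}+\|W\|_{C^2_{\exp}(\cT)}\bigr).
\end{equation*}
Taking $|\tau_0|$ sufficiently negative absorbs the second term into the first, and applying the triangle inequality $\|w_\cC\|_{\cD,\infty}\leq \|w_\cC-\fp_0 w_\cC\|_{\cD,\infty}+\|\fp_0 w_\cC\|_{\cD,\infty}$ together with a further absorption (for $\eps$ small) produces the asserted bound on $\|w_\cC-\fp_0 w_\cC\|_{\cD,\infty}$. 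Feeding this back into the tip inequality, together with the triangle inequality and the bound on $\|W\,1_{[\theta,2\theta]}\|_{2,\infty}$, gives the analogous control on $\|W_\cT\|_{2,\infty}$; adding the two yields the coercivity estimate.

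The main obstacle is the equivalence-of-norms step. Conceptually this parallels \cite[Lemma~8.1]{ADS2}, but here we must verify that the quadratic Taylor remainder $w+Y_{2,v}W$ is absorbed into the exponentially weighted error terms $\|w\|_{C^2_{\exp}(\cC)}$ and $\|W\|_{C^2_{\exp}(\cT)}$, and that these newly introduced error norms (which are absent in \cite{ADS2}) propagate consistently through the absorption argument without reintroducing any unweighted factors of $|\tau_0|$ that could defeat the smallness needed to absorb into the left hand side.
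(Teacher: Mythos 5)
Your overall architecture is exactly the paper's: establish equivalence of the $\fH$- and weighted-$L^2$-norms in the transition band (as in \cite[Lemma 8.1]{ADS2}), feed it into Proposition \ref{prop_int_tip} and Proposition \ref{prop_int_cyl}, exploit the $C/|\tau_0|$ prefactor of the tip estimate for smallness, and finish with the triangle inequality $\|w_\cC\|_{\cD,\infty}\leq \|w_\cC-\fp_0 w_\cC\|_{\cD,\infty}+\|\fp_0 w_\cC\|_{\cD,\infty}$ plus absorption. The chaining and absorption in your second step are fine (the paper chains in the opposite order, first bounding $\|W_\cT\|_{2,\infty}\leq\eps(\|w_\cC\|_{\cD,\infty}+\|W\|_{C^2_{\exp}(\cT)})$ and then substituting into the cylindrical estimate, but this is immaterial), and your observations that $\varphi_\cT\equiv 1$ on $\{v\leq\theta\}$ and $\varphi_\cC\equiv 1$ on $\{v_1\geq\theta\}$ are precisely what makes the transition bands compatible with $W_\cT$ and $w_\cC$.

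The one step that does not hold up as written is your treatment of the transition-region comparison. You expand $w(y,\tau)=-Y_{2,v}(v_1(y,\tau),\tau)W(v_1(y,\tau),\tau)+O(W^2)$ and assert that the quadratic remainder is "controlled by $\|W\|_{C^2_{\exp}(\cT)}$". This is not justified: the $C^2_{\exp}$ norm carries the weight $e^{\tau}$, so the only pointwise bound it yields is $\sup_{v\leq 2\theta}|W|\leq e^{-\tau}\|W\|_{C^2_{\exp}(\cT)}$, and a term quadratic in $W$ (with a factor $|Y_{2,vv}|\lesssim|\tau|^{5/2}$ from Lemma \ref{prop_Y.collar.rough}) then produces an exponentially large multiplicative factor rather than an additive error of size $C\|W\|_{C^2_{\exp}(\cT)}$; inserting such a term into the linear absorption scheme would destroy it. Fortunately the remainder is unnecessary: by the mean value theorem $W(v_1(y,\tau),\tau)=Y_2(v_2(y,\tau),\tau)-Y_2(v_1(y,\tau),\tau)=-Y_{2,v}(\xi,\tau)\,w(y,\tau)$ exactly, with $\xi$ between $v_1$ and $v_2$, and on the band $\{\theta/2\leq v\leq 2\theta\}$ Proposition \ref{lemma_tip_estimates} gives $|Y_{2,v}(\xi,\tau)|\sim\sqrt{|\tau|}\,\xi$, uniformly comparable; combined with the weight comparison $e^{\mu}\approx e^{-Y_1^2/4}$, the Jacobian $dy=|Y_{1,v}|\,dv$ and the $|\tau|^{-1/4}$ normalization, this yields the clean two-sided equivalence
\begin{equation*}
C(\theta)^{-1}\| W(\tau) 1_{[\theta,2\theta]} \|_{2}\leq \| w(\tau)\, 1_{\{\theta\leq v_1(\tau)\leq 2\theta\}} \|_{\fH}\leq  C(\theta)\| W(\tau) 1_{[\theta,2\theta]} \|_{2}
\end{equation*}
(and likewise with $\theta$ replaced by $\theta/2$), with no $C^2_{\exp}$ error terms at all — which is exactly what the paper uses. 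With this replacement your argument goes through unchanged.
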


\begin{proof}
First, using Corollary \ref{cor_unique_asympt} (uniform sharp asymptotics) and arguing similarly as in \cite[proof of Lemma 8.1]{ADS2} we see that for $\kappa>0$ small enough and $\tau$ negative enough we get
\begin{equation}\label{eq_equiv_of_norms}
C(\theta)^{-1}\| W(\tau) 1_{[\theta,2\theta]} \|_{2}\leq \| w(\tau)\, 1_{\{\theta\leq v_1(\tau)\leq 2\theta\}} \|_{\fH}\leq  C(\theta)\| W(\tau) 1_{[\theta,2\theta]} \|_{2}\, .
\end{equation}
Now, recall that by Proposition \ref{prop_int_tip} (energy estimate in tip region) we have
\begin{equation}\label{eq_WT-WD_compatible}
\|W_\cT\|_{2,\infty}(\tau)\leq \frac{C}{|\tau|}\left(\|W 1_{[\theta,2\theta]}\|_{2,\infty}(\tau)+\|W\|_{C^2_{\exp}(\cT)}(\tau)\right)\, .
\end{equation}
Together with \eqref{eq_equiv_of_norms} and the observation that $\varphi_\cC(v_1(\cdot,\tau))\equiv \varphi_\cC(v_2(\cdot,\tau))\equiv 1$ when $\theta \leq v_1 \leq 2\theta$ by Corollary \ref{cor_unique_asympt} (uniform sharp asymptotics) provided $\kappa$ is small enough and $\tau$ is negative enough, this yields
\begin{equation}
\|W_\cT\|_{2,\infty}\leq  \eps\left( \| w_\cC   \|_{\mathcal{D},\infty}+ \|W\|_{C^2_{\exp}(\cT)}\right)\, .
\end{equation}
Next, recall that by Proposition \ref{prop_int_cyl} (energy estimate in the cylindrical region) we have
\begin{equation}
\|w_\cC-\mathfrak{p}_0 w_\cC \|_{\mathcal{D},\infty}\leq  \eps \left( \|  w_\cC  \|_{\cD,\infty} + \| w\, 1_{\{\theta/2\leq v_1\leq \theta\}} \|_{\fH,\infty}\right)
+C \|w\|_{C^2_{\exp}(\cC)}\, .
\end{equation}
Using again \eqref{eq_equiv_of_norms}, this time with $\theta$ replaced by $\theta/2$,
and the fact that $\varphi_\cT(v)\equiv 1$ for $v\leq \theta$, this yields
\begin{equation}
\|w_\cC-\mathfrak{p}_0 w_\cC \|_{\mathcal{D},\infty}\leq  \eps \left( \|  w_\cC  \|_{\cD,\infty} + C \| W_{\cT}\|_{2,\infty}\right)
+C \|w\|_{C^2_{\exp}(\cC)}\, .
\end{equation}
Finally, by the triangle inequality we clearly have
\begin{equation}
\|w_\cC \|_{\mathcal{D},\infty}\leq \|w_\cC-\mathfrak{p}_0 w_\cC \|_{\mathcal{D},\infty}+\|\mathfrak{p}_0 w_\cC \|_{\mathcal{D},\infty}\, .
\end{equation}
Combining the above inequalities, choosing $\tau_\ast$ negative enough, and replacing $C\varepsilon$ by $\varepsilon$, the assertion follows.
\end{proof}

We can now establish the decay estimate:

\begin{proof}[{Proof of Proposition \ref{main_decay}}]
In light of Lemma \ref{prop_coercivity} (coercivity estimate) our task boils down to controlling the expansion coefficient
\begin{equation}
a(\tau):=\langle w_\cC(\tau),\psi_0\rangle_{\mathfrak{H}}\, ,
\end{equation}
where  $\psi_0 = c (y^2-2)$ with $c=\| y^2-2\|_{\mathfrak{H}}^{-1}$. To this end, recall from equation \eqref{ev_trunc_diff} that  $w_\cC$ evolves by
\begin{align}
(\partial_\tau -\mathfrak{L}) w_\cC=\mathcal{E}[w_\cC]+\overline{\mathcal{E}}[w,\varphi_\cC(v_1)]+J+K+e^{\tau}\varphi_\cC(v_1)\mathcal{F}[w]\, .
\end{align}
Using also that $\mathfrak{L}\psi_0=0$, this implies
\begin{equation}
\frac{d}{d\tau}a(\tau)=\left\langle \mathcal{E}[w_\cC]+\overline{\mathcal{E}}[w,\varphi_\cC]+J+K+e^{\tau}\varphi_\cC\mathcal{F}[w],\psi_0\right\rangle_{\mathfrak{H}}\, .
\end{equation}
Since $\langle \psi_0,\psi_0^2\rangle =8$, we can rewrite this as
\begin{equation}\label{ODE_final}
\frac{d}{d\tau}a(\tau)=\frac{2a(\tau)}{|\tau|}+F(\tau),
\end{equation}
where
\begin{equation}\label{eq_error_F}
F(\tau):=\left\langle\mathcal{E}[w_\cC]-\frac{a(\tau)}{4|\tau|}\psi_0^2 ,\psi_0\right\rangle_{\mathfrak{H}}+  \left\langle \overline{\mathcal{E}}[w,\varphi_\cC],\psi_0\right\rangle_{\mathfrak{H}} +\left\langle J+K+e^{\tau}\varphi_\cC(v_1)\mathcal{F}[w], \psi_0\right\rangle_{\mathfrak{H}}.
\end{equation}
Solving the ODE \eqref{ODE_final}, and using that $a(\tau_0)=0$ thanks to the spectral condition \eqref{ass_neut}, we obtain
\begin{equation}\label{eq_solution_a}
a(\tau)=-\frac{1}{\tau^2}\int_{\tau}^{\tau_0} F(\sigma)\sigma^2\, d\sigma.
\end{equation}
In the following, we use the notation
\begin{equation}
A(\tau):= \sup_{\tau'\leq \tau}\left( \int_{\tau'-1}^{\tau'} a(\sigma)^2d\sigma \right)^{1/2}\, .
\end{equation}

\begin{claim}
For every $\varepsilon>0$, there exist $\kappa>0$ and ${\tau}_{\ast}>-\infty$, such that assuming $\kappa$-quadraticity at some $\tau_0\leq \tau_\ast$, the estimate 
\begin{equation}
\int_{\tau-1}^\tau |F(\sigma)|\, d\sigma \leq \frac{\varepsilon}{|\tau|} A(\tau_0) + \frac{C}{|\tau|}\left( \|w\|_{C^2_{\exp}(\cC)}+\|W\|_{C^2_{\exp}(\cT)}\right)
\end{equation}
holds for $\tau \leq \tau_0$.
\end{claim}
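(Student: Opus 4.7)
Our plan is to estimate $F$ pointwise in $\sigma\leq \tau_0$ and then integrate. We decompose
\[
F(\sigma) = F_{\mathcal{E}}(\sigma) + F_{\overline{\mathcal{E}}}(\sigma) + F_J(\sigma)+F_K(\sigma)+F_{\mathcal{F}}(\sigma),
\]
according to the five pieces of $F$, where $F_{\mathcal{E}}:=\langle \mathcal{E}[w_\cC]-\tfrac{a}{4|\sigma|}\psi_0^2,\psi_0\rangle_{\fH}$ is the delicate neutral-mode piece. For the four ancillary pieces the plan is to recycle the $\cD^\ast$-estimates already proven in Lemmas~\ref{lemma_int.cyl.I}--\ref{lemma_int.cyl.F}: since $\psi_0=c(y^2-2)\in\cD$, the Cauchy-Schwarz type bound $|\langle G,\psi_0\rangle_{\fH}|\leq \|G\|_{\cD^\ast}\|\psi_0\|_{\cD}$ converts those estimates into pointwise bounds of the form $\eps\bigl(\|w_\cC\|_{\cD}+\|w\,\mathbf{1}_{D_\sigma}\|_{\fH}\bigr)+\tfrac{C}{|\sigma|}\|w\|_{C^2_{\exp}(\cC)}(\sigma)$. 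The equivalence of norms \eqref{eq_equiv_of_norms} in the transition region then lets us replace $\|w\,\mathbf{1}_{D_\sigma}\|_{\fH}$ by a multiple of $\|W(\sigma)\mathbf{1}_{[\theta,2\theta]}\|_{2}$ and ultimately by $\|W_\cT(\sigma)\|_2$.

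The heart of the matter is $F_{\mathcal{E}}$. Corollary~\ref{cor_unique_asympt} (uniform sharp asymptotics) gives in the cylindrical region the pointwise estimates $|v_{i,y}|\leq C(1+|y|)/|\sigma|$ and $|v_{i,yy}|\leq C/|\sigma|$, so the two gradient-containing terms in $\mathcal{E}[w_\cC]$ contribute at most $C|\sigma|^{-2}\|w_\cC\|_{\cD}$ to $\langle \mathcal{E}[w_\cC],\psi_0\rangle_{\fH}$. For the zeroth-order coefficient the plan is to establish the expansion
\[
\frac{2-v_1v_2}{2v_1v_2}(y,\sigma) = \frac{y^2-2}{4|\sigma|} + R(y,\sigma),
\]
where $|R|(1+y^2)$ is dominated by $\eps/|\sigma|$ on the support of $\varphi_\cC(v_i)$. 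This holds in $|y|\le \eps^{-1}$ directly from the parabolic-region sharp asymptotics, and for larger $|y|$ by combining the intermediate-region estimate $|\bar v(z,\sigma)-\sqrt{2-z^2}|\leq \eps$ with the Gaussian weight $e^{-y^2/4}$, which absorbs the polynomial growth of $\psi_0$.

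Writing $w_\cC=a(\sigma)\psi_0+w_\cC^\perp$ with $w_\cC^\perp\perp \psi_0$ and using $\psi_0=c(y^2-2)$, the leading contribution of the zeroth-order term reads
\[
\bigl\langle \tfrac{y^2-2}{4|\sigma|}w_\cC,\psi_0\bigr\rangle_{\fH} = \tfrac{a(\sigma)}{4c|\sigma|}\langle \psi_0,\psi_0^2\rangle_{\fH} + \tfrac{1}{4c|\sigma|}\langle \psi_0\, w_\cC^\perp,\psi_0\rangle_{\fH},
\]
and by the identity $\langle\psi_0,\psi_0^2\rangle_{\fH}=8$ employed in deriving \eqref{ODE_final} the first term equals the subtraction $\langle \tfrac{a}{4|\sigma|}\psi_0^2,\psi_0\rangle_{\fH}=\tfrac{2a}{|\sigma|}$, so these leading parts cancel in $F_{\mathcal{E}}$. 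The orthogonal remainder is estimated by $\tfrac{C}{|\sigma|}\|w_\cC-\fp_0 w_\cC\|_{\cD}$, and the $R$-error by $\tfrac{\eps}{|\sigma|}\|w_\cC\|_{\cD}$. Altogether we expect the pointwise bound
\[
|F(\sigma)|\leq \frac{\eps}{|\sigma|}\bigl(|a(\sigma)|+\|w_\cC(\sigma)-\fp_0 w_\cC(\sigma)\|_{\cD}+\|W_\cT(\sigma)\|_{2}\bigr)+\frac{C}{|\sigma|}\bigl(\|w\|_{C^2_{\exp}(\cC)}(\sigma)+\|W\|_{C^2_{\exp}(\cT)}(\sigma)\bigr).
\]

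To conclude the proof of the claim, we integrate this pointwise bound over $[\tau-1,\tau]$ and apply Cauchy-Schwarz, which replaces $|a|$, $\|w_\cC-\fp_0w_\cC\|_{\cD}$, and $\|W_\cT\|_2$ by their parabolic sup-norms $A(\tau)$, $\|w_\cC-\fp_0w_\cC\|_{\cD,\infty}(\tau)$, and $\|W_\cT\|_{2,\infty}(\tau)$ respectively. The two perpendicular-mode norms are then absorbed using Lemma~\ref{prop_coercivity} (coercivity estimate), which dominates them by $\eps\|\fp_0 w_\cC\|_{\cD,\infty}$ plus the $C^2_{\exp}$ errors; since $\|\fp_0 w_\cC\|_{\cD,\infty}(\tau)$ is comparable to $A(\tau)\leq A(\tau_0)$ by monotonicity, the asserted bound follows upon taking $\eps$ sufficiently small. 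The main obstacle I anticipate is the uniform pointwise control of the remainder $R$ across the whole cylindrical region: the expansion is clean only in $|y|\leq\eps^{-1}$, and one has to carefully stitch the parabolic and intermediate region estimates from Corollary~\ref{cor_unique_asympt} to produce a weighted pointwise bound on $R$ that survives integration against the polynomially growing $\psi_0$ under the Gaussian weight.
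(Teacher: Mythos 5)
Your treatment of the four ancillary pieces has a genuine gap, and it is exactly where the factor $\tfrac{1}{|\tau|}$ in the claim has to come from. You pair each of $\overline{\mathcal{E}}[w,\varphi_\cC]$, $J$, $K$ with the \emph{full} $\|\psi_0\|_{\cD}$ via $|\langle G,\psi_0\rangle_{\fH}|\leq\|G\|_{\cD^\ast}\|\psi_0\|_{\cD}$, which only yields the pointwise bound $C\eps\,\|w\,1_{D_\sigma}\|_{\fH}\leq C\eps\,\|W_\cT(\sigma)\|_{2}$ with no decay in $\sigma$; after integrating over $[\tau-1,\tau]$ and recalling that $\|\cdot\|_{2,\infty}$ carries a built-in factor $|\tau|^{-1/4}$, this gives at best $\eps\,|\tau|^{1/4}\|W_\cT\|_{2,\infty}$, which is short of the required $\tfrac{\eps}{|\tau|}A(\tau_0)+\tfrac{C}{|\tau|}(\dots)$ by a polynomial factor in $|\tau|$ --- and since the claim must hold uniformly for all $\tau\leq\tau_0$ with a fixed $\eps$, no choice of $\eps$ repairs this, and the subsequent summation $\int_\tau^{\tau_0}F(\sigma)\sigma^2\,d\sigma\lesssim|\tau|^2(\dots)$ in the ODE argument no longer closes. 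Your final displayed pointwise bound silently places a $\tfrac{1}{|\sigma|}$ in front of $\|W_\cT(\sigma)\|_{2}$ that was never derived. The missing idea, which is how the paper proceeds, is that $J$, $K$, $\overline{\mathcal{E}}$ (and the extra piece $v_2\varphi_\cC'(v_1)e^\tau\mathcal{F}[w]$ hidden in the $K$-estimate) are supported in the transition region $\{\tfrac58\theta\leq v_i\leq\tfrac78\theta\}$, where the sharp asymptotics force $|y|\gtrsim\sqrt{|\tau|}$; writing $J=\chi(v_1)J$, $K=\chi(v_1)K$ for a cutoff $\chi$ localized there and pairing with $\chi(v_1)\psi_0$ instead of $\psi_0$, one gains the Gaussian tail $\|\chi(v_1(\tau))\psi_0\|_{\cD}\leq e^{\tau/4}$, which supplies far more than the needed $1/|\tau|$ decay.

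Two smaller points. First, the pointwise derivative bounds $|v_{i,y}|\leq C(1+|y|)/|\sigma|$ and $|v_{i,yy}|\leq C/|\sigma|$ on the whole cylindrical region are stronger than what Corollary \ref{cor_unique_asympt} provides (it gives $C^0$-closeness in the parabolic and intermediate regions and a $C^4$ graph bound only on $B(0,2|\tau|^{1/10})$); the paper instead works with the integrated bounds $\|\sqrt2-v_i\|_{\fH}\leq C/|\tau|$ and $\|v_{i,y}1_{C_\tau}\|_{\fH}\leq C/|\tau|$ together with the weighted Sobolev inequality, following the scheme of \cite{ADS2}, so you would need to justify your pointwise versions (e.g.\ via interior estimates) or switch to the integrated formulation. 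Second, your claimed exact cancellation of $\langle\tfrac{y^2-2}{4|\sigma|}\,a\psi_0,\psi_0\rangle_{\fH}$ against $\langle\tfrac{a}{4|\sigma|}\psi_0^2,\psi_0\rangle_{\fH}$ is off by the normalization constant of $\psi_0$ (one has $y^2-2=\psi_0/c$), so the difference of the coefficient from $\tfrac{\psi_0}{4|\sigma|}$ must be estimated as in the term $|a(\tau)|\,\bigl|\langle\tfrac{2-v_1v_2}{2v_1v_2}\varphi_\cC(v_1)-\tfrac{\psi_0}{4|\tau|},\psi_0^2\rangle_{\fH}\bigr|$ of the paper rather than declared to vanish; this is a bookkeeping matter but should be stated correctly, since the whole point of the claim is that this near-cancellation is exact to order $\eps/|\tau|$ times $A(\tau_0)$.
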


\begin{proof}[Proof of the claim] We will adapt the argument from \cite[proof of Claim 8.3]{ADS2} to our setting. During the proof we will frequently use the bound
\begin{equation}\label{eq_wC-a_compatible}
\|w_\cC-\mathfrak{p}_0 w_\cC\|_{\mathcal{D},\infty}+\| W_\cT \|_{2,\infty}\leq  \eps A(\tau_0)+C\left( \|w\|_{C^2_{\exp}(\cC)}+\|W\|_{C^2_{\exp}(\cT)} \right)\, ,
\end{equation}
which follows from Proposition \ref{prop_coercivity} (coercivity estimate).\\

Let us first estimate the terms that are not present in \cite{ADS2}. To this end, let us fix a smooth cutoff function $\chi:\mathbb{R}^+\to [0,1]$ such that
\begin{align}
&\chi(v)=1  \qquad\text{if}\; v\in [\tfrac{9}{16}\theta,\tfrac{15}{16}\theta], && \chi(v)=1\qquad \text{if}\; v\neq [\tfrac{\theta}{2},\theta], 
\end{align}
and  such that $|\chi'|+|\chi''|\leq C(\theta)$. Observe that since $\textrm{spt}(\varphi_\cC')\subset[\tfrac{5}{8}\theta,\tfrac{7}{8}\theta]$ we have
\begin{equation}
J=\chi(v_1)J,\qquad K=\chi(v_1)K\, .
\end{equation}
Hence, using Lemma \ref{lemma_int.cyl.J} (estimate for J) and Lemma \ref{lemma_int.cyl.K} (estimate for K) we can estimate
\begin{align}
\left|\left\langle J(\tau)+K(\tau), \psi_0\right\rangle_{\mathfrak{H}}\right| &\leq \left( \| J(\tau)\|_{\mathcal{D}^\ast} +\| K(\tau)\|_{\mathcal{D}^\ast}\right)  \|\, \chi(v_1(\tau))\psi_0\|_{\mathcal{D}}\nonumber\\
& \leq \left( \eps \| w(\tau) 1_{\{\theta/2 \leq v_1\leq \theta \}} \|_{\mathfrak{H}} +\| v_2\varphi_{\cC}'(v_1)e^{\tau} \mathcal{F}[w]\|_{\mathcal{D}^\ast}\right)  \|\, \chi(v_1(\tau))\psi_0\|_{\mathcal{D}}\, .
\end{align}
Now, considering the support of  $y\mapsto \chi(v_1(y,\tau))$ and using Corollary \ref{cor_unique_asympt} (uniform sharp asymptotics) we see that for $\tau$ negative enough we get the Gaussian tail estimate
\begin{equation}
\|\, \chi(v_1(\tau))\psi_0\|_{\mathcal{D}} \leq e^{\tau/4}\, .
\end{equation}
Also, similarly as in \eqref{eq_equiv_of_norms} by the equivalence of norms in the transition region  we have
\begin{equation}
 \| w(\tau)\, 1_{\{\theta/2\leq v_1\leq \theta\}} \|_{\fH}\leq  C\| W_\cT  \|_{2}\, .
\end{equation}
Moreover, using Lemma \ref{lemma_int.cyl.F} (estimate for nonlinear error) we can estimate
\begin{align}
\|v_2\varphi_{\cC}'(v_1) e^\tau \mathcal{F}[w]\|_{\mathcal{D}^*,\infty}(\tau)+ \|\varphi_\cC(v_1)e^{\tau}\mathcal{F}[w]\|_{\mathcal{D}^*,\infty}(\tau) \leq \frac{C}{|\tau|} \| w\|_{C^2_{\exp}(\mathcal{C})}(\tau).
\end{align}
Combining the above inequalities, and remembering also \eqref{eq_wC-a_compatible}, we infer that
\begin{equation}
\int_{\tau-1}^\tau \left| \left\langle (J+K+e^{\tau}\varphi_\cC(v_1)\mathcal{F}[w])(\sigma), \psi_0\right\rangle_{\mathfrak{H}} \right| \, d\sigma \leq \frac{\varepsilon}{|\tau|} A(\tau_0) + \frac{C}{|\tau|}\left( \|w\|_{C^2_{\exp}(\cC)}+\|W\|_{C^2_{\exp}(\cT)}\right)\, .
\end{equation}

\bigskip

Next, arguing similarly as above, and using also \eqref{eq_error_commute}, we see that
\begin{align}
\left|\left\langle \overline{\mathcal{E}}[w,\varphi_\cC](\tau), \psi_0\right\rangle_{\mathfrak{H}}\right|\leq 
\|  \overline{\mathcal{E}}[w,\varphi_\cC](\tau)\|_{\mathcal{D}^\ast}   \|\, \chi(v_1(\tau))\psi_0\|_{\mathcal{D}}\leq C\| W_\cT  \|_{2}e^{\tau/4}\, ,
\end{align}
and consequently, remembering again \eqref{eq_wC-a_compatible}, that
\begin{equation}
\int_{\tau-1}^\tau \left| \left\langle  \overline{\mathcal{E}}[w,\varphi_\cC](\sigma), \psi_0\right\rangle_{\mathfrak{H}} \right| \, d\sigma \leq \frac{\varepsilon}{|\tau|} A(\tau_0) + \frac{C}{|\tau|}\left( \|w\|_{C^2_{\exp}(\cC)}+\|W\|_{C^2_{\exp}(\cT)}\right)\, .
\end{equation}

\bigskip

Finally, let us estimate the first term on the right hand side of \eqref{eq_error_F}. Broadly speaking, we argue similarly as in \cite[proof of Claim 8.3]{ADS2}. However, since there are quite many technical tweaks (and also to fix some minor glitches in the quoted proof) let us provide full details. Recall from \eqref{eq_def.E[w]} that
\begin{equation}\label{eq_E[w_c]}
\mathcal{E}[w_\cC]-\frac{a(\tau)}{4|\tau|}\psi_0^2 =\left(\frac{2-v_1v_2}{2v_1v_2}w_\cC-\frac{a(\tau)}{4|\tau|}\psi_0^2\right)-\frac{(v_{1,y}+v_{2,y})v_{2,yy}}{(1+v_{1,y}^2)(1+v_{2,y}^2)}(w_\cC)_y-\frac{v_{1,y}^2}{1+ v_{1,y}^2}(w_\cC)_{yy},
\end{equation}
The inner product of the first term on the right hand side with $\psi_0$ can be estimated by 
\begin{align}\label{est_third_term2_F}
&\left|\left\langle \frac{2-v_1v_2}{2v_1v_2}w_\cC-\frac{a(\tau)}{4|\tau|}\psi_0^2,\psi_0\right\rangle_{\mathfrak{H}} \right| 
\leq \left|\left\langle \frac{2-v_1v_2}{2v_1v_2}\varphi_{\cC}(v_1) (w_\cC-a(\tau)\psi_0),\psi_0\right\rangle_{\mathfrak{H}} \right| \nonumber\\
&\qquad +|a(\tau)|\left|\left\langle\frac{2-v_1v_2}{2v_1v_2} \varphi_{\cC}(v_1)-\frac{1}{4|\tau|}\psi_0,\psi_0^2\right\rangle_{\mathfrak{H}} \right|+\left|\left\langle \frac{2-v_1v_2}{2v_1v_2}(1-\varphi_{\cC}(v_1))w_\cC,\psi_0\right\rangle_{\mathfrak{H}} \right|\, . 
\end{align}
To estimate the first term on the right hand side of \eqref{est_third_term2_F} we write
\begin{align}
&\left|\left\langle \frac{2-v_1v_2}{2v_1v_2}\varphi_{\cC}(w_\cC-a(\tau)\psi_0),\psi_0\right\rangle_{\mathfrak{H}} \right| \leq \left|\left\langle \frac{(\sqrt{2}-v_1)(\sqrt{2}+v_1)}{2v_1v_2}\varphi_{\cC}(w_\cC-a(\tau)\psi_0),\psi_0\right\rangle_{\mathfrak{H}} \right|\nonumber\\
& \qquad +\left|\left\langle \frac{v_1-\sqrt{2}}{2v_2}\varphi_{\cC}(w_\cC-a(\tau)\psi_0),\psi_0\right\rangle_{\mathfrak{H}} \right|+\left|\left\langle\frac{\sqrt{2}-v_2}{2v_2} \varphi_{\cC}(w_\cC-a(\tau)\psi_0),\psi_0\right\rangle_{\mathfrak{H}} \right|.
\end{align}
Using this decomposition, and observing that $v_i\geq \theta/2$ on the support of $\varphi_{\cC}(v_1) (w_\cC-a(\tau)\psi_0)$ by Corollary \ref{cor_unique_asympt} (uniform sharp asymptotics), we can estimate
\begin{align}
&\left|\left\langle \frac{2-v_1v_2}{2v_1v_2}\varphi_{\cC}(v_1)(w_\cC-a(\tau)\psi_0),\psi_0\right\rangle_{\mathfrak{H}} \right| \nonumber\\
&\qquad\qquad\leq C \sum_{i=1}^2\left\| (\sqrt{2}-v_i)\varphi_\cC(v_1)\sqrt{|\psi_0|}  \right\|_{\mathfrak{H}} \left\| (w_\cC-a(\tau)\psi_0)\sqrt{|\psi_0|}   \right\|_{\mathfrak{H}} \\
&\qquad\qquad\leq C\sum_{i=1}^2\left\|(\sqrt{2}-v_i)\varphi_{\cC}(v_1)\right\|_{\cD}\left\|(w_\cC-a(\tau)\psi_0)\right\|_{\cD},
\end{align}
where in the last step we used the weighted Sobolev inequality. Now, since the $v_i$ are $\kappa$-quadratic at time $\tau_0$, we have
\begin{equation}\label{diff_norm_small}
\left\|\sqrt{2}-v_i\right\|_{\fH}\leq \frac{C}{|\tau|}
\end{equation}
Furthermore, arguing similarly as in the proof of Claim \ref{second_evlove_claim} (error estimate) we see that
\begin{equation}
\left\|v_{i,y} 1_{C_\tau}\right\|_{\fH}\leq \frac{C}{|\tau|}\, .
\end{equation}
This yields
\begin{equation}
\left\|(\sqrt{2}-v_i)\varphi_{\cC}(v_1)\right\|_{\cD} \leq \frac{C}{|\tau|}.
\end{equation}
 Together with Lemma \ref{prop_coercivity} (coercivity estimate), remembering also that $a\psi_0=\mathfrak{p}_0 w_\cC$,  this implies
\begin{equation}
\int_{\tau-1}^{\tau}\left|\left\langle \frac{2-v_1v_2}{2v_1v_2}\varphi_{\cC}(v_1)(w_\cC-a(\sigma)\psi_0),\psi_0\right\rangle_{\mathfrak{H}} \right|d\sigma \leq \frac{\varepsilon}{|\tau|} A(\tau_0) + \frac{C}{|\tau|}\left( \|w\|_{C^2_{\exp}(\cC)}+\|W\|_{C^2_{\exp}(\cT)}\right).
\end{equation}
Next, similarly as in \cite[Equation (8.20) and (8.21)]{ADS2} we can estimate the contribution from the second term of the right hand side of \eqref{est_third_term2_F} by
\begin{align}
\int_{\tau-1}^{\tau}|a(\sigma)|\left|\left\langle \frac{2-v_1v_2}{2v_1v_2}\varphi_{\cC}(v_1)-\frac{\psi_0}{4|\tau|},\psi_0^2\right\rangle_{\mathfrak{H}}\right|d\sigma<\frac{\varepsilon}{|\tau|}A(\tau_0),
\end{align}
provided $\kappa$ is small enough and $\tau \leq \tau_0 \leq \tau_{\ast}$, with $\tau_\ast$ negative enough.\\
Furthermore, considering the support of $(1-\varphi_{\cC}(v_1))w_{\cC}$ we can estimate the third term of the right hand side of \eqref{est_third_term2_F} by
\begin{align}
\left|\left\langle \frac{2-v_1v_2}{2v_1v_2}(1-\varphi_{\cC}(v_1))w_\cC,\psi_0\right\rangle_{\mathfrak{H}} \right| 
\leq Ce^{\tau/4} \left\| w(\tau) 1_{\{\theta/2\leq v_1\leq \theta \}} \right\|_{\fH}\, .
\end{align}
Together with the equivalence of norms in the transition region, and \eqref{eq_wC-a_compatible}, this yields
\begin{equation}
\int_{\tau-1}^\tau \left|\left\langle \frac{2-v_1v_2}{2v_1v_2}(1-\varphi_{\cC}(v_1))w_\cC,\psi_0\right\rangle_{\mathfrak{H}} \right| \, d\sigma \leq \frac{\varepsilon}{|\tau|} A(\tau_0) + \frac{C}{|\tau|}\left( \|w\|_{C^2_{\exp}(\cC)}+\|W\|_{C^2_{\exp}(\cT)}\right)\, .
\end{equation}

\bigskip

To finish, similarly as in \cite[Equation (8.23) and (8.25)]{ADS2} we get
\begin{equation}
\int_{\tau-1}^{\tau}\left|\left\langle \frac{(v_{1,y}+v_{2,y})v_{2,yy}}{(1+v_{1,y}^2)(1+v_{2,y}^2)}(w_\cC)_y,\psi_0\right\rangle_{\mathfrak{H}}\right|\leq\frac{\varepsilon}{|\tau|}\|w_\cC\|_{\cD,\infty}\, ,
\end{equation}
and
\begin{equation}
\int_{\tau-1}^{\tau}\left|\left\langle \frac{v_{1,y}^2}{ 1+v_{1,y}^2}(w_\cC)_{yy},\psi_0\right\rangle_{\mathfrak{H}}\right|\leq \frac{\varepsilon}{|\tau|}\|w_\cC\|_{\cD,\infty}\, ,
\end{equation}
provided $\kappa$ is small enough and $\tau \leq \tau_0 \leq \tau_{\ast}$, with $\tau_\ast$ negative enough. 
Together with \eqref{eq_wC-a_compatible} this shows that
\begin{multline}
\int_{\tau-1}^{\tau}\left|\left\langle \frac{(v_{1,y}+v_{2,y})v_{2,yy}}{(1+v_{1,y}^2)(1+v_{2,y}^2)}(w_\cC)_y,\psi_0\right\rangle_{\mathfrak{H}}\right|+\left|\left\langle \frac{v_{1,y}^2}{ 1+v_{1,y}^2}(w_\cC)_{yy},\psi_0\right\rangle_{\mathfrak{H}}\right|\\
\leq \frac{\varepsilon}{|\tau|} A(\tau_0) + \frac{C}{|\tau|}\left( \|w\|_{C^2_{\exp}(\cC)}+\|W\|_{C^2_{\exp}(\cT)}\right).
\end{multline}
Combining the above inequalities establishes the claim.
\end{proof}

Now, using the claim we can estimate 
\begin{align}
\left| \int^{\tau_0}_\tau F(\sigma)\sigma^2d\sigma\right|&\leq \sum^{\lceil\tau_0\rceil}_{j=\lfloor\tau\rfloor}\int^j_{j-1}|F(\sigma)|\sigma^2 d\sigma \nonumber\\
& \leq \sum^{\lceil\tau_0\rceil}_{j=\lfloor\tau\rfloor} \frac{(|j|+1)^2}{|j|} \left(\eps A(\tau_0) + C( \|w\|_{C^2_{\exp}(\cC)}+\|W\|_{C^2_{\exp}(\cT)} )\right)\\
&\leq |\tau|^2\left(\eps A(\tau_0)+C( \|w\|_{C^2_{\exp}(\cC)}+\|W\|_{C^2_{\exp}(\cT)} )\right)\, .\nonumber
\end{align}
Remembering \eqref{eq_solution_a}, this shows that
\begin{equation}\label{a_small_always}
|a(\tau)|\leq \eps A(\tau_0)+C( \|w\|_{C^2_{\exp}(\cC)}+\|W\|_{C^2_{\exp}(\cT)} )
\end{equation}
for $\tau\leq \tau_0$. Choosing $\varepsilon =1/2$ this implies
\begin{equation}
A(\tau_0)\leq C\left( \|w\|_{C^2_{\exp}(\cC)}+\|W\|_{C^2_{\exp}(\cT)} \right)\, .
\end{equation}
Combining this with Lemma \ref{prop_coercivity} (coercivity estimate) we conclude that
\begin{equation}
\|w_\cC\|_{\cD,\infty} +\| W_\cT \|_{2,\infty} \leq   C\left( \|w\|_{C^2_{\exp}(\cC)}+\|W\|_{C^2_{\exp}(\cT)} \right)\, .
\end{equation}\
This finshes the proof of the proposition.
\end{proof}

\bigskip

\subsection{Interior estimates in the cylindrical region}
In this subsection, we establish interior $C^{2}$-estimates in the cylindrical region starting from bounds for the Gaussian $L^2$-norm.  Specifically, we will first prove the following weighted $L^\infty$-estimate.

\begin{proposition}[$L^\infty$-estimate in cylindrical region]\label{cor:C0_cylinder}
There exist $\kappa>0$, $\tau_{\ast}>-\infty$ and $C<\infty$, such that  whenever $M^1$ and $M^2$ are $\kappa$-quadratic at time $\tau_0\leq {\tau}_{\ast}$, then for all $\tau\leq\tau_0-1$ we have
\begin{equation}
\sup_{\tau' \leq \tau}e^{\frac{\tau'}{4}}\sup\left\{| w(y,\tau')|:   v_1(y,\tau')\geq \tfrac{8}{9}\theta \right\} \leq C \|w_\cC\|_{\fH,\infty}(\tau+1)\, .
\end{equation}
\end{proposition}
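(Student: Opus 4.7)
The plan is to combine interior parabolic regularity with the uniform intermediate region asymptotics: the latter confine the set $\{v_1\geq\tfrac{8}{9}\theta\}$ inside $|y|\leq\sqrt{|\tau'|(2-c\theta^2)}$, leaving a definite gap below the Gaussian decay scale of $\|\cdot\|_{\fH}$.

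I would fix any $(y_0,\tau')$ with $\tau'\leq\tau$ and $v_1(y_0,\tau')\geq\tfrac{8}{9}\theta$, and work on the parabolic cylinder $P_1(y_0,\tau')=B_1(y_0)\times[\tau'-1,\tau']$. Lemma~\ref{lemma_derivatives} together with Theorem~\ref{thm_unique_asympt} ensure that, for $\kappa$ small and $\tau_{\ast}$ sufficiently negative, all derivatives of $v_1,v_2$ up to order two are arbitrarily small on $P_1(y_0,\tau')$ and $v_i\geq\tfrac{7}{8}\theta$ there. In particular $\varphi_\cC(v_i)\equiv 1$ on this cylinder, so $w\equiv w_\cC$ and the coefficients of the linearized equation $w_\tau=\mathfrak{L}w+\mathcal{E}[w]+e^\tau\mathcal{F}[w]$ from Proposition~\ref{prop_evol_w} are bounded with principal spatial symbol uniformly close to $\partial_y^2$. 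Standard interior Moser iteration for uniformly parabolic equations will then yield
\begin{equation*}
|w(y_0,\tau')|^2\leq C\int_{\tau'-1}^{\tau'}\!\!\int_{B_1(y_0)}w^2\,dy\,d\sigma\leq Ce^{(|y_0|+1)^2/4}\,\|w_\cC\|_{\fH,\infty}^2(\tau+1).
\end{equation*}

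To finish I would use the intermediate region asymptotics of Theorem~\ref{thm_unique_asympt} to bound $y_0^2\leq|\tau'|(2-c\theta^2)$ whenever $v_1(y_0,\tau')\geq\tfrac{8}{9}\theta$, so that $(|y_0|+1)^2/8\leq|\tau'|/4-c'\theta^2|\tau'|$ for $|\tau'|$ large. Multiplying through by $e^{\tau'/4}$ absorbs the dominant factor $e^{|\tau'|/4}$ and leaves an exponentially small safety margin $e^{-c'\theta^2|\tau'|/2}$, producing the claimed estimate.

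The main obstacle I anticipate is justifying interior Moser iteration for the operator governing $w$: the term $e^\tau\mathcal{F}[w]$ contains $w_{\tau\tau}$ and $w_{y\tau}$, so the equation is really degenerate elliptic in $(y,\tau)$ rather than strictly parabolic. This should not be fatal because the offending derivatives appear with the exponentially small prefactor $e^\tau$; for $\tau\leq\tau_{\ast}$ they constitute an arbitrarily small perturbation of $\partial_\tau-\partial_y^2$ that can be absorbed in the iteration. Alternatively, one may perform the estimate directly in the unrescaled $(x,t)$ variables, where $V^1-V^2$ satisfies a uniformly elliptic linear PDE with bounded coefficients on the region under consideration, and transfer the resulting $L^\infty$ bound back to $(y,\tau)$.
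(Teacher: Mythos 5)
Your fallback route is essentially the paper's proof, and it is the route you should promote to the actual argument. The paper makes no attempt to run an estimate in the renormalized variables: it passes to the unrescaled difference, setting $\tilde V(x,s,t)=V(x,s+t)$ so that $V^D=\tilde V_1-\tilde V_2$ satisfies a genuinely uniformly parabolic linear equation on the cylindrical region (with no degenerate second-order-in-time term at all), applies standard interior sup-over-$L^2$ estimates on parabolic cubes of scale $\lambda=\sqrt{|t'|}$ (Lemma \ref{prop:interior_estimates_cylinder}), and then converts the local $L^2$-norm into the Gaussian norm exactly as you propose, via the bound $y^2\leq -2\tau$ on the set $\{v_1\geq\tfrac89\theta\}$, so that $e^{\tau/2}\leq e^{-y^2/4}$. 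Bookkeeping of the scaling factors gives $|w(y',\tau')|\leq C\,|t'|^{1/4}\|w_\cC\|_{\fH,\infty}(\tau'+1)=C e^{-\tau'/4}\|w_\cC\|_{\fH,\infty}(\tau'+1)$; the weight $e^{\tau'/4}$ is thus spent precisely on the worst-case Gaussian factor, with no need for the extra margin $e^{-c\theta^2|\tau'|}$ that you reserve (though your margin is a legitimate cushion).

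Your primary route has a genuine gap exactly at the point you flag, and your proposed fix (a) does not repair it: the term $e^\tau\mathcal F[w]$ contains $w_{\tau\tau}$ and $w_{y\tau}$, and a small coefficient in front of $w_{\tau\tau}$ does not make the equation a perturbation of a parabolic one in any sense usable by Moser iteration --- the iteration gives no control of second derivatives, so this term can neither be absorbed into the principal part nor be moved to the right-hand side without a priori $C^2$ bounds on $w$. The only such bounds available at this stage are the crude ones of size $|\tau|^{O(1)}$ coming from Lemma \ref{lemma_derivatives}, and using them as a source term would produce an additive error of order $e^{3\tau'/4}|\tau'|^{C}$, which is not controlled by $\|w_\cC\|_{\fH,\infty}$ and therefore does not yield the stated inequality, which has no additive term. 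A secondary point you do not address: on the unit cylinder $B_1(y_0)\times[\tau'-1,\tau']$ the drift coefficient $\tfrac{y}{2}$ has size $\sim\sqrt{|\tau'|}$, so your Moser constant cannot be taken uniform in $\tau'$; this is harmless (the dependence is at worst polynomial and is swallowed by your exponential margin), but it should be acknowledged. Likewise, the claim that $v_i\geq\tfrac78\theta$ (hence $w\equiv w_\cC$) on the whole unit cylinder should be justified from the uniform intermediate-region asymptotics of Corollary \ref{cor_unique_asympt} rather than from smallness of derivatives alone, since $v_\tau$ is small only after a cancellation. With your alternative (interior estimates for $V_1-V_2$ in unrescaled variables, where by Lemma \ref{lemma_derivatives} the full second-order operator is uniformly elliptic/parabolic) and your Gaussian-weight conversion, the proof closes as in the paper.
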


And then we prove the following $C^2$-estimate.\footnote{The supremum in the $C^2$-estimate is taken over a somewhat larger spatial region than in the $L^\infty$-estimate, but this does not cause problems for applications since we will establish corresponding estimates in the tip region in the next subsection.}

\begin{proposition}[$C^2$-estimate in cylindrical region]\label{cor:Schauder_cylinder}
There exist $\kappa>0$ and $\tau_{\ast}>0$,  such that  whenever $M^1$ and $M^2$ are $\kappa$-quadratic at time $\tau_0\leq {\tau}_{\ast}$, then for all $\tau \leq \tau_0-1$ we have 
\begin{equation}
\|w\|_{C^2|C_\tau} \leq e^{-\frac{\tau}{100}}\sup\left\{| w(y,\tau')|: \tau-1\leq \tau'\leq \tau+\tfrac{1}{{100}}, v_1(y,\tau')\geq \tfrac12 \theta\right\}\, .
\end{equation}
\end{proposition}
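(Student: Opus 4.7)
The plan is to deduce the $C^2$-estimate from interior Schauder estimates applied in the unrescaled coordinates $(x,t)$, combined with the renormalized evolution equation from Proposition \ref{prop_evol_w}. In $(x,t)$-coordinates the translator equation
\begin{equation*}
\frac{(1+V_t^2)V_{xx}+(1+V_x^2)V_{tt}-2V_xV_tV_{xt}}{1+V_x^2+V_t^2}-\frac{1}{V}-V_t=0
\end{equation*}
is a quasilinear $2$D PDE whose principal symbol has determinant identically equal to $1$, hence it is uniformly elliptic. Subtracting the equations for $V_1,V_2$ yields a linear uniformly elliptic PDE for $\tilde w:=V_1-V_2=e^{-\tau/2}w$ whose coefficients are smooth and uniformly bounded throughout the cylindrical region by Lemma \ref{lemma_derivatives} applied to $V_1$ and $V_2$. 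Standard interior $C^k$ Schauder estimates on a Euclidean ball $B_R(x_0,t_0)$ of radius $R=c_0 e^{-\tau_0/2}$ centered at the target point (chosen small so that the induced $y$-extent $\Delta y=c_0$ stays inside $\{v_i\geq \theta/2\}$, which is possible since $|v_{i,y}|$ is tiny), after converting via $y=xe^{\tau/2}$ and $w=e^{\tau/2}\tilde w$, translate into the pure spatial derivative bounds
\begin{equation*}
|\partial_y^k w|(y_0,\tau_0)\leq C_k\sup_S|w|\quad\text{for every }k\geq 0,
\end{equation*}
where $S$ is a small space-time neighborhood of $(y_0,\tau_0)$ sitting inside $\{\tau-1\leq\tau'\leq\tau+\tfrac{1}{100},\ v_1\geq\tfrac{\theta}{2}\}$.

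The reason we cannot read off $\tau$-derivative bounds directly from the elliptic estimates is that the chain-rule identities $\tilde w_t=e^{\tau/2}(w_\tau+\tfrac{y}{2}w_y-\tfrac{w}{2})$ and $\tilde w_{tt}=e^{3\tau/2}(w_{\tau\tau}+y w_{y\tau}+\tfrac{y^2}{4}w_{yy}+\tfrac{y}{4}w_y-\tfrac{w}{4})$ would give only the useless bounds $|w_\tau|\lesssim e^{-\tau/2}\sup|w|$ and $|w_{\tau\tau}|\lesssim e^{-\tau}\sup|w|$ after solving for the highest $\tau$-derivative. I would instead obtain the $\tau$-derivative bounds from the renormalized evolution equation $w_\tau=\fL w+\mathcal{E}[w]+e^\tau\mathcal{F}[w]$ of Proposition \ref{prop_evol_w} together with its $y$- and $\tau$-differentiated versions: schematically, $w_\tau, w_{y\tau}, w_{\tau\tau}$ can be expressed as linear combinations of pure $y$-derivatives of $w$ of order at most four, with $y$- and $|\tau|$-polynomial coefficients, plus correction terms coming from $\mathcal{E}[w]$ (whose coefficients are of size $\varepsilon$ on the cylindrical region by Lemma \ref{lemma_derivatives}) and from $e^\tau\mathcal{F}[w]$ (which is second order in $(y,\tau)$ but carries the exponentially small prefactor $e^\tau$). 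Since the spatial derivatives up to order four have just been controlled, combining these identities yields
\begin{equation*}
|w_\tau|+|w_{y\tau}|+|w_{\tau\tau}|\leq C|\tau|^{O(1)}\sup_S|w|+C(\varepsilon+e^\tau)\bigl(|w_\tau|+|w_{y\tau}|+|w_{\tau\tau}|\bigr).
\end{equation*}

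Choosing $\theta$ (hence $\varepsilon$) small and $\tau_\ast$ sufficiently negative that $C(\varepsilon+e^\tau)\leq\tfrac{1}{2}$, the mixed-derivative terms on the right are absorbed into the left, yielding $|w_\tau|+|w_{y\tau}|+|w_{\tau\tau}|\leq C|\tau|^{O(1)}\sup_S|w|$. Combined with the spatial bounds this gives $\|w\|_{C^2|C_\tau}\leq C|\tau|^{O(1)}\sup_S|w|\leq e^{-\tau/100}\sup_S|w|$ after further lowering $\tau_\ast$ to absorb the polynomial factor, which is the desired estimate. I expect the main technical obstacle to be the bookkeeping required when differentiating $\mathcal{E}$ and $\mathcal{F}$: one has to verify that each application of $\partial_y$ or $\partial_\tau$ to their coefficients (which depend on $v_1,v_2$ and their derivatives) keeps the overall bound polynomial in $|\tau|$, but this is routine using iterated scale-invariant elliptic interior estimates for $V_1$ and $V_2$ separately in $(x,t)$, together with the derivative bounds of Lemma \ref{lemma_derivatives}.
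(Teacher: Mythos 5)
Your elliptic step can indeed be made to give the pure spatial bounds $|\partial_y^k w|\lesssim |\tau|^{O(1)}\sup_S|w|$, and you correctly diagnose why the naive chain rule fails for $\tau$-derivatives. The genuine gap is in the step where you recover $w_{y\tau}$ and $w_{\tau\tau}$ from the renormalized equation: to bound these you must differentiate $w_\tau=\fL w+\mathcal{E}[w]+e^\tau\mathcal{F}[w]$ in $y$ and in $\tau$, and this generates third-order mixed derivatives that are absent from your schematic inequality. Concretely, $\partial_\tau(\fL w)$ contains $w_{yy\tau}$ with an $O(1)$ coefficient (neither an $\eps$ nor an $e^\tau$ prefactor), and $\partial_y\big(e^\tau\mathcal{F}[w]\big)$, $\partial_\tau\big(e^\tau\mathcal{F}[w]\big)$ contain $e^\tau w_{\tau yy}$, $e^\tau w_{\tau\tau y}$, $e^\tau w_{\tau\tau\tau}$, since $\mathcal{F}[w]$ is itself second order in $(y,\tau)$. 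None of these can be absorbed into a left-hand side consisting only of $|w_\tau|+|w_{y\tau}|+|w_{\tau\tau}|$, and they cannot be discarded by crude bounds either: in your elliptic scheme each $\tau$-derivative costs a factor $e^{-\tau/2}$ (exactly the loss you identified), so for instance $e^\tau|w_{\tau\tau\tau}|\lesssim e^\tau\cdot e^{-3\tau/2}|\tau|^{O(1)}\sup_S|w|=e^{-\tau/2}|\tau|^{O(1)}\sup_S|w|$, which exceeds the target $e^{-\tau/100}\sup_S|w|$, while bounding such terms by an absolute constant times $e^\tau$ (using derivative bounds for $v_1,v_2$ separately) destroys the homogeneity in $w$ that the spectral-uniqueness argument requires. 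One could try to iterate the substitution, trading each $\tau$-derivative for two $y$-derivatives via the equation so that the leftover bad terms accumulate enough powers of $e^\tau$, but that is precisely the nontrivial bookkeeping you defer as routine; as written, the key absorption inequality is not established.

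For comparison, the paper sidesteps this entirely by reinstating a genuine time variable: it sets $\tilde V(x,s,t)=V(x,s+t)$ (i.e.\ runs the translator as the eternal flow $M_t=M+te_1$), so that $V^D=\tilde V_1-\tilde V_2$ satisfies a uniformly parabolic equation in the cylindrical region, and then applies interior parabolic $L^\infty$ and weighted $C^{4,\alpha}_W$ Schauder estimates on parabolic cubes of spatial size $\sqrt{-t'}$ (Lemma \ref{prop:interior_estimates_cylinder}). In parabolic scaling a $t$-derivative costs $R^{-2}\sim|t|^{-1}$ rather than $R^{-1}\sim|t|^{-1/2}$, which is exactly the gain your elliptic route lacks; after the chain-rule identities $|t|^{1/2}V^D_t=w_\tau+\tfrac{y}{2}w_y-\tfrac{w}{2}$, $|t|\,V^D_{xt}=w_{y\tau}+\tfrac{y}{2}w_{yy}$, $|t|^{3/2}V^D_{tt}=w_{\tau\tau}+yw_{\tau y}+\dots$, the only losses are powers of $|y|\lesssim|\tau|^{1/2}$, which the factor $e^{-\tau/100}$ absorbs. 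If you want to salvage your approach, the cleanest fix is to adopt this parabolic reformulation rather than to patch the absorption argument.
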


We recall that we use the notation
\begin{equation}\label{recall_c2_restricted}
\|w \|_{C^2|C_\tau}:=\sup_{y\in C_\tau } \big(|w|+|w_y|+|w_{\tau}|+ |w_{yy}|+|w_{y\tau}|+|w_{\tau\tau}|\big)\, ,
\end{equation}
where the time $\tau$-slice of the cylindrical region is defined by
\begin{equation}
C_\tau=\left\{y: v_1(y,\tau)\geq \tfrac58 \theta \,\textrm{ or }\, v_2(y,\tau)\geq \tfrac58\theta\right\}\, .
\end{equation}

Loosely speaking, both estimates follow from standard parabolic estimates for the mean curvature flow near a bubble-sheet. However, since some care is needed to scale and convert estimates for the bubble-sheet function to estimates for the profile function of the level sets, we provide the details.

\bigskip

To bring the translator equation back into parabolic form, we define 
\begin{equation}
\tilde{V}(x,s,t)=V(x,s+t),
\end{equation}
and consider the difference
\begin{equation}
V^D(x,s,t)=\tilde{V}_1(x,s,t)-\tilde{V}_2(x,s,t).
\end{equation}
Then, using Proposition \ref{prop_evol_profile} (evolution equation for profile function) we see that
\begin{align}\label{eq_V^D_evolution}
V^D_t=&\tfrac{1+\tilde{V}_{1,x}^2}{1+\tilde{V}_{1,x}^2+\tilde{V}_{1,s}^2}V^D_{ss}+\tfrac{1+\tilde{V}_{1,s}^2}{1+\tilde{V}_{1,x}^2+\tilde{V}_{1,s}^2}V^D_{xx}-\tfrac{2\tilde{V}_{1,x}\tilde{V}_{1,s}}{1+\tilde{V}_{1,x}^2+\tilde{V}_{1,s}^2}V^D_{xs}+\tfrac{1}{\tilde{V}_1\tilde{V}_2}V^D\nonumber\\
&+ \tfrac{(\tilde{V}_{1,x}+\tilde{V}_{2,x})(\tilde{V}_{2,ss}-\tilde{V}_{2,t}+1/\tilde{V}_2)-2\tilde{V}_{1,s}\tilde{V}_{2,xs}}{1+\tilde{V}_{1,x}^2+\tilde{V}_{1,s}^2} V^D_x+ \tfrac{ (\tilde{V}_{1,s}+\tilde{V}_{2,s})(\tilde{V}_{2,xx}-\tilde{V}_{2,t}+1/\tilde{V}_2)-2\tilde{V}_{2,x} \tilde{V}_{2,xs}}{1+\tilde{V}_{1,x}^2+\tilde{V}_{1,s}^2} V^D_s.
\end{align}
Let us introduce some notation for weighted parabolic H\"older norms. Given $\alpha \in (0,1)$, a nonnegative integer $k$, and a region $U$, we set
\begin{equation}
[f]^W_{k;U}=\sup_{(x,s,t)\in U}\sup_{i+j+2m= k} |t|^{\frac{k-1}{2}} \left| \partial_x^i \partial_s^j\partial_t^mf(x,s,t)\right|\, ,
\end{equation}
and
\begin{equation}
[f]^W_{k,\alpha;U}=\sup_{X,X'\in U }\;\sup_{i+j+2m= k}\left|\tfrac12 (t+t')\right|^{\frac{k+\alpha- 1}{2}}\frac{ |\partial_x^i \partial_s^j\partial_t^mf(X)- \partial_x^i \partial_s^j\partial_t^mf(X')|}{|d(X,X')|^\alpha}\, ,
\end{equation}
where for $X=(x,s,t)$ and $X'=(x',s',t')$ we work with the parabolic distance
\begin{equation}
d(X,X')=\sqrt{|x-x'|^2+|s-s'|^2+|t-t'|}.
\end{equation}
Then, we can define weighted $C^{k,\alpha}_W$ norms by
\begin{align}
 \|f\|_{C^{k,\alpha}_W	(U)}=\|f\|_{C^k_W(U)}+  [f]^W_{k,\alpha;U}\, ,\qquad \textrm{where} \qquad \|f\|_{C^k_W(U)}=\sum_{m=0}^k [f]^W_{m;U}\, .
\end{align}
Moreover, we consider the parabolic cube $Q_r$ given by
\begin{equation}
Q_r(x',s',t')=\{(x,s,t): |x-x'|\leq r,|s-s'| \leq r , t'-r^2\leq t\leq t' \}.
\end{equation}

\begin{lemma}[interior estimates in cylindrical region]\label{prop:interior_estimates_cylinder}
There exist $\kappa>0$ and ${\tau}_{\ast}>-\infty$, as well as a constant $C<\infty$,  such that whenever $M_1$ and $M_2$ are $\kappa$-quadratic at time $\tau_0 \leq {\tau}_{\ast}$, then
\begin{equation}
\sup_{Q_{\lambda/2}(x',0,t')}|V^D|\leq   \frac{C}{\lambda^{2}}\left( \int_{Q_{\lambda}(x',0,t')} (V^D)^2\, dx\, ds\, dt\right)^{\frac{1}{2}},
\end{equation}
and 
\begin{equation}
\|V^D\|_{C^{4,\alpha}_W(Q_{\lambda/2}(x',0,t'))}\leq C \|V^D\|_{C^{0}_W(Q_{\lambda}(x',0,t'))}\, ,
\end{equation}
hold if $\lambda^{-1} x'\in C_{-\log(-t')}$ and $t' \leq -e^{-\tau_0}$,   where $\lambda=(-t')^{\frac{1}{2}}$.
\end{lemma}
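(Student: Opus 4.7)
The plan is to parabolically rescale equation \eqref{eq_V^D_evolution} to unit scale, verify that it becomes a linear uniformly parabolic equation with smooth bounded coefficients, and then invoke standard Moser iteration and Schauder theory before scaling back.

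Concretely, set $\lambda = (-t')^{1/2}$ and consider the rescaled difference
\[
\hat{V}^D(\hat x, \hat s, \hat t) := \lambda^{-1} V^D(x' + \lambda \hat x, \lambda \hat s, t' + \lambda^2 \hat t),
\]
and analogous rescalings $\hat{\tilde V}_i$ of the individual profile functions. This scaling is dictated by the asymptotic $V_i \sim \sqrt{-2t}$ in the cylindrical region, which follows from Corollary \ref{cor_unique_asympt} (uniform sharp asymptotics); under it, the parabolic cube $Q_\lambda(x',0,t')$ corresponds to the unit cube $Q_1$, and equation \eqref{eq_V^D_evolution} becomes
\[
\hat V^D_{\hat t} = \hat a^{ij} \partial_{\hat i} \partial_{\hat j} \hat V^D + \hat b^i \partial_{\hat i} \hat V^D + \hat c\, \hat V^D,
\]
with coefficients that are smooth functions of $\hat{\tilde V}_1, \hat{\tilde V}_2$ and their (rescaled) derivatives.

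Next I would verify uniform parabolicity and smooth bounded coefficients. By Lemma \ref{lemma_derivatives} (derivative estimates), at points with $v_1 \geq \tfrac58 \theta$ one has $|\tilde V_{i,x}| \leq \eps$, $|\tilde V_{i,s}| = |V_{i,t}| \sim 1/\sqrt{-t}$, and higher mixed derivatives decay like appropriate powers of $V$. Plugging these into \eqref{eq_V^D_evolution}, the principal symbol $\hat a^{ij}$ stays strictly positive definite with bounded entries, since the denominator $1 + \tilde V_{1,x}^2 + \tilde V_{1,s}^2$ remains bounded and bounded below. The scaled lower order coefficients $\hat b^i$ (which absorb factors like $\lambda \cdot (1/\tilde V_2) \sim 1$ and $\lambda \cdot \tilde V_{2,xs} = O(\lambda/(-t))$) and $\hat c = \lambda^2/(\tilde V_1 \tilde V_2) \sim 1$ are uniformly bounded. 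The same derivative estimates, together with the chain rule, yield $C^k$ bounds for any $k$ on the coefficients, which is more than enough to apply Schauder theory.

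With the equation reduced to a linear uniformly parabolic PDE on $Q_1$ with bounded smooth coefficients, the first estimate follows from the De Giorgi-Nash-Moser local boundedness theorem,
\[
\sup_{Q_{1/2}} |\hat V^D| \leq C \left( \int_{Q_1} |\hat V^D|^2 \right)^{1/2};
\]
unrescaling via $V^D = \lambda \hat V^D$ and $dx\, ds\, dt = \lambda^4\, d\hat x\, d\hat s\, d\hat t$ converts this into $\sup_{Q_{\lambda/2}}|V^D| \leq C \lambda^{-2} (\int_{Q_\lambda} (V^D)^2)^{1/2}$, as claimed. The second estimate follows from the interior parabolic Schauder estimate $\|\hat V^D\|_{C^{4,\alpha}(Q_{1/2})} \leq C \|\hat V^D\|_{C^0(Q_1)}$, since the weighted norms $\|\cdot\|_{C^{k,\alpha}_W}$ are designed to be invariant under our parabolic rescaling (the weight $|t|^{(k-1)/2}$ exactly compensates the factor $\lambda^{-(k-1)}$ produced by differentiating $k$ times). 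The main technical point is therefore not any single analytic estimate but the bookkeeping involved in confirming that every coefficient in \eqref{eq_V^D_evolution} scales compatibly with $\lambda$; this works out cleanly because the flow is well-approximated by the round cylinder $\mathbb{R}^2\times S^1(\sqrt{-2t})$ in the cylindrical region, so the translator asymptotics $V \sim \sqrt{-2t}$, $V_t \sim -1/V$, $V_{tt} \sim 1/V^3$ etc.\ from Lemma \ref{lemma_derivatives} pair with the powers of $\lambda$ in just the right way.
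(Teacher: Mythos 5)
Your proposal is correct and follows essentially the same route as the paper: parabolically rescale by $\lambda=(-t')^{1/2}$ to the unit cube, use Lemma \ref{lemma_derivatives} to verify uniform ellipticity and coefficient bounds for the rescaled linear equation, apply standard interior $L^\infty$ and Schauder estimates, and scale back using the invariance of the weighted norms. The only cosmetic difference is that the paper records merely $C^{2,\alpha}$ bounds on the coefficients (which suffices for the $C^{4,\alpha}$ Schauder estimate), whereas you assert $C^k$ bounds for all $k$; this stronger claim is not needed and not directly provided by Lemma \ref{lemma_derivatives} as stated.
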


We note that to control the second time derivative $w_{\tau\tau}$ in \eqref{recall_c2_restricted} we need the parabolic $C^4$-norm.

\begin{proof}
We consider the rescaling
\begin{equation}
\hat V^D(\hat x_1,\hat x_2,\hat t)= \frac{1}{\lambda}V^D(x,s,t),\quad \textrm{where}\quad 
(\hat x_1,\hat x_2,\hat t)=\left(\frac{x-x'}{\lambda},\frac{s}{\lambda},\frac{t-t'}{\lambda^2}\right)\, .
\end{equation}
The evolution equation \eqref{eq_V^D_evolution} and  Lemma \ref{lemma_derivatives} (derivative estimates) imply
\begin{equation}
\tfrac{\partial}{\partial \hat t}  \hat V^D (\hat x,\hat t)=a_{ij}(\hat x,\hat t)\tfrac{\partial^2}{\partial \hat x_i \partial \hat x_j}\hat V^D (\hat x,\hat t)+b_{i}(\hat x,\hat t)\tfrac{\partial}{\partial \hat x_i}\hat V^D (\hat x,\hat t)+c(\hat x,\hat t)\hat V^D (\hat x,\hat t),
\end{equation}
where for sufficiently large $-\tau_*$ the smooth functions $a_{ij},b_i,c$ satisfy
\begin{equation}
\sum_{i,j}\|a_{ij}\|_{C^{2,\alpha}(Q_1(0))}+\sum_i\|b_i\|_{C^{2,\alpha}(Q_1(0))}+\|c\|_{C^{2,\alpha}(Q_1(0))}\leq C,\;\;\;\;\;\;\;a_{ij}\xi^i\xi^j \geq C^{-1}|\xi|^2\, .
\end{equation} 
Therefore, standard interior $L^\infty$-estimates yield
\begin{equation}
\sup_{Q_{1/2}(0)}|\hat V^D|\leq C \left( \int_{Q_{1}(0)}(\hat{V}^D)^2\, d\hat{x}\, d\hat{s}\, d\hat{t}\right)^{\frac{1}{2}}\, ,
\end{equation}
and standard interior Schauder estimates yield
\begin{equation}
\|\hat V^D\|_{C^{4,\alpha}(Q_{1/2}(0))}\leq C \|\hat V^D\|_{C^{0}(Q_{1}(0))} \, ,
\end{equation}
These imply the desired results.
\end{proof}

\bigskip

We can now establish the two estimates stated at the beginning of this subsection:

\begin{proof}[Proof of Proposition \ref{cor:C0_cylinder}]

For any point $(x',0,t')$ with $(-t')^{-1/2} x'  \in C_{-\log(-t')}$ denote the corresponding point in the renormalized flow by $(y',\tau')$. The $L^2$-norm of $V^D$ is related to norms of $w$ by 
\begin{align}
\int_{Q_{\sqrt{-t'}}(x',0,t')} (V^D )^2 \, dx\, ds\, dt&=\int_{-\sqrt{|t'|}}^{\sqrt{|t'|}}\int_{2t'}^{t'}\int_{x'-\sqrt{|t'|}}^{x'+\sqrt{|t'|}}(V^D)^2(x,s,t)\, dx\, dt\, ds\nonumber\\
&\leq  C|t'|^{1/2}\int_{2t'-\sqrt{|t'|}}^{t'+\sqrt{|t'|}}\int_{x'-\sqrt{|t'|}}^{x'+\sqrt{|t'|}}(V_1(x,r)-V_2(x,r))^2\, dx\, dr \nonumber\\
&\leq C|t'|^{1/2} \int_{\tau'-2}^{\tau'+1}\int_{y'-2}^{y'+2} e^{\tfrac{-5\tau}{2}}w^2(y,\tau)\, dy\, d\tau\\
& \leq C |t'|^{7/2} \int_{\tau'-2}^{\tau'+1}\int_{y'-2}^{y'+2} w^2(y,\tau)e^{\tfrac{\tau}{2}}\, dy\, d\tau, \nonumber
\end{align}
whenever $t' \leq -10$. Now, for $\kappa$ sufficiently small and ${\tau}_{\ast}$ sufficiently negative, assuming $\kappa$-quadraticity at time $\tau_0\leq\tau_\ast$, by Corollary \ref{cor_unique_asympt} (uniform sharp asymptotics) if $v_1(y',\tau') \geq\tfrac{8}{9}\theta$ and $\tau' = -\log(-t') \leq \tau_0-1$, then in the region under consideration we have
\begin{equation}
\varphi_{\cC}(v_i)=1 \qquad \textrm{and}\qquad y^2\leq -2\tau \, .
\end{equation}
Therefore, we obtain 
\begin{align*}
&\int_{Q_{\sqrt{-t'}}(x',0,t')} (V^D )^2\, dx\, ds\, dt \leq C |t'|^{7/2} \int_{\tau'-2}^{\tau'+1}\int w_{\cC}^2(y,\tau)e^{-\tfrac{y^2}{4}} \, dy\, d\tau \leq C|t'|^{7/2}\|w_{\cC}\|^2_{\fH,\infty}(\tau'+1).
\end{align*}
Thus, Lemma \ref{prop:interior_estimates_cylinder} (interior estimates in cylindrical region)  yields
\begin{equation}
\sqrt{|t'|}w(y',\tau') \leq  C|t'|^{3/4}\|w_{\cC}\|_{\fH,\infty}(\tau'+1),
\end{equation}
from which the result follows.
\end{proof}

\begin{proof}[Proof of Proposition \ref{cor:Schauder_cylinder}]
Using the definition of $ V^D$ we see that
\begin{equation}
|t|^{-\frac{1}{2}}  V^D(x,0,t)=w(|t|^{-\frac{1}{2}}x,-\log(-t)).
\end{equation}
As in the proof of Proposition \ref{prop_evol_profile} (evolution equation for profile function), this yields
\begin{align}
&  V^D_x=w_y , && |t|^{\frac{1}{2}}  V^D_{xx} =w_{yy}, && |t|  V^D_{xt}= w_{y\tau}+\frac{y}{2}w_{yy} ,
\end{align}
and
\begin{align}
&|t|^{\frac{1}{2}}V^D_t= w_\tau+\frac{y}{2}w_y-\frac{w}{2} , &&|t|^{\frac{3}{2}}V^D_{tt}= w_{\tau\tau}+yw_{\tau y}+\frac{y^2}{4}w_{yy}+\frac{y}{4}w_y-\frac{w}{4}.
\end{align}
Therefore,  Lemma \ref{prop:interior_estimates_cylinder}  (interior estimates in cylindrical region), taking also into account that
\begin{equation}
|y|^2 \leq 2(1+o(1))|\tau|
\end{equation}
by Corollary \ref{cor_unique_asympt} (uniform sharp asymptotics), as well as the elementary inequality
\begin{equation}
-\log(-t+|t|^{1/2})\leq -\log(-t)+\tfrac{1}{100}
\end{equation}
for $t\ll 0$, implies the desired result. 
\end{proof}

\bigskip

\subsection{Interior estimates in the tip region}
In this subsection, we establish interior $C^{2}$-estimates in the tip region starting from bounds for the Gaussian $L^2$-norm.  Specifically, we will first prove the following weighted $L^\infty$-estimate:
\begin{proposition}[{$L^\infty$-estimate in tip region}]\label{cor:C0_tip}
There exist $\kappa>0$ and ${\tau}_{\ast}>-\infty$  such that  if $M^1$ and $M^2$ are $\kappa$-quadratic at time $\tau_0\leq\tau_\ast$, then for any $\tau\leq \tau_0-1$ we have
\begin{equation}
\sup_{\tau' \leq \tau}e^{\frac{26}{100}\tau'}\sup\left\{| W(v,\tau')|: v\leq \tfrac{9}{10} \theta\right\} \leq \|W_\cT\|_{2,\infty}(\tau+1)\, .
\end{equation}
\end{proposition}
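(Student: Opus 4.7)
The strategy mirrors Proposition~\ref{cor:C0_cylinder}, but with the cylindrical profile $V(x,t)$ replaced by the Cartesian inverse profile $X(r,t)$ near the tip, defined by $X(V(x,t),t)=x$ on the side of the tip of $M\cap\{x_1=-t\}$ under consideration. Here $r$ plays the role of the radial distance $\sqrt{x_3^2+x_4^2}$ from the $SO(2)$-axis and $r=0$ corresponds to the tip itself; by construction $W(v,\tau)=e^{\tau/2}X^D(e^{-\tau/2}v,-e^{-\tau})$ with $X^D:=X_1-X_2$.

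\textbf{Setting up a parabolic PDE.} Exactly as in the proof of Lemma~\ref{prop:interior_estimates_cylinder}, introduce the auxiliary time $s$ and set $\tilde X(r,s,t):=X(r,s+t)$. The identity $\tilde X_t=\tilde X_s$ combined with the unrenormalized version of Proposition~\ref{prop_inverse.evol} shows that $\tilde X$ solves a quasilinear parabolic equation in $t$ of the schematic form
\[
\tilde X_t=\frac{(1+\tilde X_s^2)\tilde X_{rr}+(1+\tilde X_r^2)\tilde X_{ss}-2\tilde X_r\tilde X_s\tilde X_{rs}}{1+\tilde X_r^2+\tilde X_s^2}+\frac{\tilde X_r}{r},
\]
with spatial variables $(r,s)$ (the $1/r$ factor is smooth across $r=0$ thanks to the $SO(2)$-symmetry, which forces $\tilde X_r$ to vanish at $r=0$). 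Consequently $\tilde X^D:=\tilde X_1-\tilde X_2$ satisfies a linear parabolic equation whose coefficients are controlled via the sharp tip asymptotics (Theorem~\ref{cor_unique_asympt}) together with the tip estimates (Proposition~\ref{lemma_tip_estimates}).

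\textbf{Interior estimate at the bowl scale and change of norms.} For a reference point with renormalized coordinates $v_0\le 9\theta/10$ and $\tau_0\le\tau$, localize on a parabolic cube $Q_\lambda(r_0,0,t_0)$ of size $\lambda:=c\sqrt{|t_0|/|\tau_0|}$, the natural bowl scale. On this scale, Theorem~\ref{cor_unique_asympt} guarantees that both $M^i$ are $C^\infty_\mathrm{loc}$-close to the $2d$-bowl of speed $1/\sqrt{2}$; in particular the rescaled $\tilde X^D$ satisfies a uniformly parabolic linear equation with smooth coefficients, and the standard interior $L^\infty$-$L^2$ estimate (with the $2$-dimensional measure $r\,dr\,ds$ produced by the $SO(2)$-symmetry) gives
\[
\sup_{Q_{\lambda/2}(r_0,0,t_0)}|\tilde X^D|\le \frac{C}{\lambda^{2}}\left(\int_{Q_\lambda(r_0,0,t_0)}(\tilde X^D)^2\,r\,dr\,ds\,dt\right)^{1/2}.
\]
Using $(X^D)^2=e^{-\tau}W^2$, $r\,dr=e^{-\tau}v\,dv$ and $dt=e^{-\tau}\,d\tau$, rewrite the right-hand integral as a weighted $L^2$-integral of $W$ in $(v,\sigma)$. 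On $\{v\le 9\theta/10\}$, the sharp intermediate-region asymptotics yield $Y^2(v,\sigma)/4\ge (1-C\theta^2)|\sigma|/2$, so that the Jacobian ratio between this weight and the Gaussian weight $e^{\mu(v,\sigma)}$ from \eqref{def_weight_fn} is bounded by $|\tau|^K e^{-c_0\tau}$ with a constant $c_0$ arbitrarily close to $1/4$ for $\theta$ small. Combining this with the $e^{\tau/2}$ factor relating $W$ and $X^D$ produces the claimed pointwise bound with exponent $\tfrac{26}{100}\tau$, after possibly shrinking $\theta$ and decreasing $\tau_*$; the $\sup_{\tau'\le\tau}$ on the left is obtained by running the argument at every $\tau_0\le\tau$.

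\textbf{Main obstacle.} The principal subtlety is the degeneration of the ellipticity of the PDE for $\tilde X^D$: Proposition~\ref{lemma_tip_estimates} gives $\tilde X_r\sim v|\tau|^{1/2}$, so $1/(1+\tilde X_r^2)$ becomes small on the collar portion of $\{v\le 9\theta/10\}$, and no single global rescaling makes the problem uniformly parabolic throughout this region. The remedy is the location-dependent bowl-scale rescaling $\lambda\sim\sqrt{|t_0|/|\tau_0|}$ used above: after this blow-up, the uniform sharp asymptotics reduce the problem to one on a limiting bowl soliton, the linear equation for the rescaled $\tilde X^D$ is uniformly parabolic, and standard interior estimates are available. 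The accounting in the change-of-norms step must then be carried out carefully so that the Gaussian suppression $e^\mu$, the scale factor $\lambda^{-2}$ and the $e^{\tau/2}$ factor from the relation between $W$ and $X^D$ combine into the single exponent $\tfrac{26}{100}\tau$.
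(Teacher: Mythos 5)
Your overall architecture — a graphical parabolic reformulation near the tip, interior $L^\infty$--$L^2$ estimates on parabolic cubes, and conversion of the $L^2$ data into the weighted norm $\|W_\cT\|_{2,\infty}$ using the structure of the weight $e^{\mu}$ and the sharp asymptotics — is the same as the paper's, and your rotationally reduced two-variable formulation (with the $\tilde X_r/r$ term, harmless at $r=0$ by symmetry) is an acceptable variant of the paper's three-spatial-variable graph functions $\tilde X^k$. The genuine gap is in your ``remedy'' for the degenerating ellipticity. The gradient of a graph function is invariant under isotropic parabolic rescaling, so blowing up at the bowl scale $\lambda\sim\sqrt{|t_0|/|\tau_0|}$ does nothing to the slope: by Proposition \ref{lemma_tip_estimates} one has $\tilde X_r\sim v\sqrt{|\tau|}$, so at a collar point with $v\sim\theta$ the rescaled equation still has ellipticity ratio of order $(\theta^2|\tau|)^{-1}$, and it is \emph{not} uniformly parabolic; the constants in a naive interior estimate are therefore not uniform in $\tau$. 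Moreover, Theorem \ref{cor_unique_asympt} only gives closeness of $Z$ to the bowl profile $Z_0$ on balls of radius $\eps^{-1}$ at the tip scale, i.e.\ in the soliton region $v\lesssim L/\sqrt{|\tau|}$; at a collar point the bowl-scale blow-up sees the bowl far from its tip, where the graph gradient over the $e_2$-direction is itself of order $v\sqrt{|\tau|}\to\infty$, so ``reduce the problem to one on a limiting bowl soliton, uniformly parabolic'' is incorrect on the collar portion of $\{v\le\tfrac9{10}\theta\}$.

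This is exactly why the paper proves two separate interior lemmas: Lemma \ref{prop_interior_estimates_solition} (isotropic rescaling, valid only for $|x'|\le\lambda L$), and Lemma \ref{prop_interior_estimates_collar} for the collar, where the degeneration is compensated by an \emph{anisotropic} rescaling stretching the steep direction by $1+\rho$ with $\rho=|DX_1(x',t')|$, and by working at the smaller scale $\lambda=|\log(-t')|^{-q}(-t')^{1/2}$ so that, via Lemma \ref{lemma_derivatives} and Lemma \ref{prop_Y.collar.rough}, the rescaled coefficients are uniformly elliptic with bounded $C^{2,\alpha}$-norms; the price is a polynomial factor $|\log(-t')|^{p}$, absorbed thanks to the slack between the exponent coming from the density bound of Lemma \ref{prop:density.bound} and the claimed $\tfrac{26}{100}$. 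One could alternatively accept polynomially degenerate ellipticity and track the polynomial dependence of the De Giorgi--Nash--Moser/Schauder constants, but as written your proof asserts uniform parabolicity throughout $\{v\le\tfrac9{10}\theta\}$, which fails. (A smaller bookkeeping concern: the ratio between the measure $v\,dv$ and the weight $e^{\mu}\,dv$ is of size $e^{-c_0\tau}$ with $c_0$ close to $\tfrac12$ — this is the content of $e^{\mu}\ge v\,e^{\frac{51}{100}\tau}$ — not $\tfrac14$; since you defer the exponent accounting this is not by itself fatal, but it must be done correctly for the factor $e^{\frac{26}{100}\tau'}$ to emerge.)
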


And then we prove the following $C^2$-estimate:

\begin{proposition}[{$C^2$-estimate in tip region}]\label{cor:Schauder_tip}
There exist $\kappa>0$ and ${\tau}_{\ast}>-\infty$  such that  if $M^1$ and $M^2$ are $\kappa$-quadratic at time $\tau_0\leq\tau_\ast$, then for any $\tau\leq \tau_0-1$ we have
\begin{align}
\|W\|_{C^2|T_\tau} \leq e^{-\frac{\tau}{100}}  \sup\left\{| W(v,\tau')|: \tau-1\leq \tau'\leq \tau+\tfrac{1}{100},v \leq 3\theta\right\}\, .
\end{align}
\end{proposition}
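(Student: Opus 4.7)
The plan is to mirror the strategy of Proposition~\ref{cor:Schauder_cylinder}: unrescale to the original translating mean curvature flow, where the level sets evolve by a (genuinely) parabolic equation that is uniformly parabolic at the appropriate tip scale, apply a standard interior Schauder estimate to the difference of the two solutions, and then convert the bound back to the renormalized variables, absorbing any polynomial losses in $|\tau|$ into the prefactor $e^{-\tau/100}$. Because the level sets near the tip are not graphical over the $x_2$-axis, the main new ingredient is to choose the right parameterization depending on where we sit inside $T_\tau$.

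Concretely, I would split the tip region into the \emph{soliton subregion} $\mathcal{S}_\tau=\{v\le L|\tau|^{-1/2}\}$ and the \emph{collar subregion} $\mathcal{K}_\tau=\{L|\tau|^{-1/2}\le v\le 2\theta\}$. In $\mathcal{S}_\tau$ I parameterize the positive-$x_2$ tip by $x_2=X_i(r,t)$ defined by $V_i(X_i(r,t),t)=r$; this is related to $Y_i$ by $X_i(r,t)=\sqrt{-t}\,Y_i(r/\sqrt{-t},-\log(-t))$, so that $W(v,\tau)=e^{\tau/2}X^D(v e^{-\tau/2},-e^{-\tau})$ with $X^D:=X_1-X_2$. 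Inverting \eqref{V eq} gives an elliptic PDE for $X_i(r,t)$; introducing the fake time via $\tilde X_i(r,s,t)=X_i(r,s+t)$ then yields a linear parabolic equation for $\tilde X^D$ analogous to \eqref{eq_V^D_evolution}. Rescaling by the natural tip scale $\lambda=\sqrt{-t'/|\tau'|}$ at a reference point $(r',s',t')$, the rescaled function $\hat X^D$ satisfies a uniformly parabolic linear equation on $Q_1(0)$ with $C^{2,\alpha}$-bounded coefficients, thanks to the $C^{100}$-convergence of $Z(\rho,\tau)$ to the $2$d-bowl profile $Z_0$ provided by Corollary~\ref{cor_unique_asympt}(3). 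Standard interior $L^\infty$ and Schauder estimates give $\|\hat X^D\|_{C^{4,\alpha}(Q_{1/2})}\le C\|\hat X^D\|_{C^0(Q_1)}$, which via the chain rule controls $W$ and its $(v,\tau)$-derivatives up to order two. In $\mathcal{K}_\tau$ the function $V$ is still single-valued, so I can directly invoke Lemma~\ref{prop:interior_estimates_cylinder}: its proof only used the derivative bounds of Lemma~\ref{lemma_derivatives}, which are valid precisely in $\{V\ge L_0\sqrt{-t/\log(-t)}\}$, and this range covers the collar. The resulting $C^{4,\alpha}$ bound on $V^D$ converts into a $C^2$ bound on $W$ on $\mathcal{K}_\tau$ via $W=Y_1-Y_2$, using that $|V_{i,x}|\le\varepsilon$ bounds the Jacobian of inversion away from zero.

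The main obstacle is verifying uniform parabolicity and $C^{2,\alpha}$-bounds for the coefficients of the equation for $\hat X^D$ after the tip rescaling in Step~1; once this is in place, the rest is a routine bookkeeping exercise. The soliton-region bounds are essentially built into Corollary~\ref{cor_unique_asympt}(3), while in the overlap between the two subregions one uses Proposition~\ref{lemma_tip_estimates} together with Proposition~\ref{prop_almost_quad_conc} to see that the two parameterizations give equivalent estimates modulo $|\tau|^{O(1)}$ factors. The conversion between $(r,t)$ and $(v,\tau)$ derivatives, and between the $\sqrt{-t/|\tau|}$- and $\sqrt{-t}$-scales, introduces at worst polynomial powers of $|\tau|$, which are all absorbed by the factor $e^{-\tau/100}$ in the statement for $\tau\le\tau_\ast$ sufficiently negative. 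The small extension in time to $\tau'\in[\tau-1,\tau+1/100]$ on the right-hand side accommodates the $\lambda^2=-t/|\tau|$ time window of the parabolic ball $Q_\lambda$ after converting to renormalized time $\tau$, exactly as in the cylindrical analogue.
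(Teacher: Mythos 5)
Your overall strategy (unrescale, introduce a fake time, apply interior Schauder estimates at the natural tip scale, convert back via the chain rule and absorb polynomial losses of $|\tau|$ into $e^{-\tau/100}$) is the same as the paper's, but your soliton-region step has a genuine gap. By parameterizing the tip as $x_2=X_i(r,t)$ with the radial variable $r$, you have reduced by the $\mathrm{SO}(2)$-symmetry, and the resulting equation for $X_i$ (hence the linear equation for $X^D=X_1-X_2$) contains a first-order term with coefficient proportional to $1/r$, coming from the principal curvature in the circle direction. The rotation axis $r=0$ is the tip itself, i.e.\ it lies in the interior of the region where you must estimate $W$, $W_v$, $W_{vv}$, $W_{v\tau}$, $W_{\tau\tau}$ (the norm $\|W\|_{C^2|T_\tau}$ includes $v$ down to $0$). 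After rescaling by $\lambda=\sqrt{-t'/|\tau'|}$ around a center at or near the axis, this coefficient becomes $1/\hat r$, which is unbounded on $Q_1(0)$; the $C^{100}$-closeness of $Z$ to the bowl profile from Corollary \ref{cor_unique_asympt} does nothing to remove this coordinate singularity, so "standard interior Schauder with $C^{2,\alpha}$-bounded coefficients" does not apply where you need it most. The paper avoids exactly this by \emph{not} reducing the symmetry in the tip region: it writes $x_2$ as a graph $X^k$ over the three Cartesian variables $(x_1+t,x_3,x_4)$, so the equation \eqref{eq_X^D_evolution} has smooth, bounded coefficients across the axis, and Schauder applies on cubes centered at the tip (Lemma \ref{prop_interior_estimates_solition}). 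Your argument can be repaired by doing the same (or by invoking Schauder theory for rotationally symmetric solutions viewed as restrictions of the Cartesian problem), but as written the key claim is false.

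A secondary problem is the collar region. You cannot invoke Lemma \ref{prop:interior_estimates_cylinder} "directly": its cubes have radius $\lambda=\sqrt{-t'}$, and for a center in the inner collar, where $V\sim L\sqrt{-t'/\log(-t')}$, such a cube in the $x$-direction reaches past the tip, where $V^D$ is not even defined and where $|V_x|$ blows up, so the hypotheses of the lemma (and of Lemma \ref{lemma_derivatives}) fail on part of the cube. Moreover, even where the graph exists, the zeroth- and first-order coefficients involve $1/(\tilde V_1\tilde V_2)\sim \log(-t')/(-t')$, which after the $\sqrt{-t'}$-rescaling is of size $\log(-t')$, so the Schauder constant is not the uniform $C$ you quote but degenerates polynomially in $|\tau|$. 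Both issues are fixable by shrinking the cube radius to $|\log(-t')|^{-q}\sqrt{-t'}$ and tracking the polynomial losses, which is precisely what the paper does with its separate collar lemma (Lemma \ref{prop_interior_estimates_collar}), there combined with an anisotropic stretching in the height direction to restore uniform parabolicity for the $e_2$-direction graph; your alternative of using the $x$-direction graph in the collar is viable, but only after these adjustments, not "directly".
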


We recall that we use the notation
\begin{equation}
\|W \|_{C^2|T_\tau}:=\sup_{v\leq 2\theta } \big(|W|+|W_v|+|W_{\tau}|+ |W_{vv}|+|W_{v\tau}|+|W_{\tau\tau}|\big)\, .
\end{equation}
To put the translator equation into convenient form for establishing these interior estimates, for $h\gg 1$ we define positive functions $X^k$ by
\begin{equation}
(h,X^k(-h,{x}_3,{x}_4),{x}_3,{x}_4)\in M^k\, ,
\end{equation}
and then define $\tilde{X}^k$ by
\begin{equation}
\tilde{X}^k(x_1,x_2,x_3,t)=X^k(x_1+t,x_2,x_3).
\end{equation}
The functions $\tilde{X}^k$ satisfy the parabolic equation
\begin{equation}\label{eq_X^D_evolution}
\tilde{X}^k_t=\left( \delta_{ij} -\tfrac{\tilde{X}^k_i\tilde{X}^k_j}{1+|\nabla \tilde{X}^k|^2}\right)\tilde{X}^k_{ij}\, ,
\end{equation}
and the difference $X^D=\tilde{X}^1-\tilde{X}^2$ evolves by
\begin{equation}
X^D_t=\left(\delta_{ij}-\frac{\tilde{X}^1_{i}\tilde{X}^1_{j}}{1+|D\tilde{X}^1|^2}\right)X^D_{ij}+\frac{\tilde{X}^1_{i}\tilde{X}^2_{ij}X^D_j+\tilde{X}^2_{i}\tilde{X}^2_{ij}X^D_j}{1+|D\tilde{X}^1|^2}-\frac{\tilde{X}^2_{i}\tilde{X}^2_{j}\tilde{X}^2_{ij}(\tilde{X}^1_{k}+\tilde{X}^2_{k})X^D_k}{(1+|D\tilde{X}^1|^2)(1+|D\tilde{X}^2|^2)}\, .
\end{equation}
Similarly as before, writing $X=(x_1,x_2,x_3,t)=(x,t)$,  we set
\begin{equation}
[f]^W_{k;U}=\sup_{X\in U}\sup_{i+j+\ell+2m= k} |t|^{\frac{k-1}{2}} \left| \partial_{x_1}^i \partial_{x_2}^j  \partial_{x_3}^{\ell} \partial_t^mf(X)\right|\, ,
\end{equation}
and
\begin{equation}
[f]^W_{k,\alpha;U}=\sup_{X,X'\in U }\;\sup_{i+j+\ell+2m= k}\left|\tfrac12 (t+t')\right|^{\frac{k+\alpha- 1}{2}}\frac{ |\partial_{x_1}^i \partial_{x_2}^j  \partial_{x_3}^{\ell} \partial_t^mf(X)- \partial_{x_1}^i \partial_{x_2}^j  \partial_{x_3}^{\ell} \partial_t^mf(X')|}{|d(X,X')|^\alpha}\, ,
\end{equation}
where 
\begin{equation}
d(X,X')=\sqrt{|x-x'|^2+|t-t'|}\, .
\end{equation}
Then, we work with the weighted $C^{k,\alpha}_W$ norm given by
\begin{align}
 \|f\|_{C^{k,\alpha}_W	(U)}=\|f\|_{C^k_W(U)}+  [f]^W_{k,\alpha;U}\, \qquad \textrm{where} \qquad \|f\|_{C^k_W(U)}=\sum_{m=0}^k [f]^W_{m;U}\, .
\end{align}
Finally, we denote the parabolic cube $Q_r$ by
\begin{equation}
Q_r(x',t')=\left\{(x,t): t'-r^2\leq t\leq t',  |\langle x-x' , e_i\rangle|\leq r \;\; \text{for each} \;i=1,2,3  \right\}.
\end{equation}

\begin{lemma}[interior estimates in soliton region]\label{prop_interior_estimates_solition}
There exist constants  $\kappa>0$, ${\tau}_{\ast}>-\infty$, and $C<\infty$ with the following significance. If  $M^1$ and $M^2$ are $\kappa$-quadratic at time $\tau_0\leq {\tau}_{\ast}$, then
\begin{equation}
\sup_{Q_{\lambda/2}(x',t')}|X^D|\leq   \frac{C}{\lambda^{\frac{5}{2}}}\left( \int_{Q_{\lambda}(x',t')} (X^D)^2 \, dx\, dt\right)^{\frac{1}{2}}\, ,
\end{equation}
and
\begin{equation}
\|X^D\|_{C^{4,\alpha}_W(Q_{\lambda/2}(x',t'))}\leq |\log(-t')|^{10} \|X^D\|_{C^{0}_W(Q_{\lambda}(x',t'))}
\end{equation}
hold whenever $\langle x',e_1\rangle=0$, $|x'|\leq \lambda L $ and $-\log(-t')\leq \tau_0-1$, where $\lambda=|\log(-t')|^{-1/2}(-t')^{1/2}$.
\end{lemma}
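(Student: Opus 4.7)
The plan is to reduce, via parabolic rescaling, to interior estimates for a uniformly parabolic linear equation on a unit parabolic cube. Define
\[
\hat{X}^k(\hat{x},\hat{t}) := \lambda^{-1}\,\tilde{X}^k(x' + \lambda\hat{x},\,t' + \lambda^2\hat{t}), \qquad \hat{X}^D := \hat{X}^1 - \hat{X}^2.
\]
Since the mean-curvature-type equation \eqref{eq_X^D_evolution} is invariant under this scaling, each $\hat{X}^k$ satisfies \eqref{eq_X^D_evolution} on $Q_2(0)$, and by subtraction $\hat{X}^D$ solves a linear parabolic equation
\[
\partial_{\hat{t}}\hat{X}^D \;=\; a_{ij}(\hat{x},\hat{t})\,\partial_i\partial_j\hat{X}^D + b_i(\hat{x},\hat{t})\,\partial_i\hat{X}^D,
\]
where $a_{ij}, b_i$ are smooth rational expressions in $D\hat{X}^1, D\hat{X}^2, D^2\hat{X}^2$.

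The crucial step is to verify uniform $C^{2,\alpha}$-bounds and uniform ellipticity of the rescaled equation on $Q_2(0)$. Because $\langle x', e_1\rangle = 0$ and $|x'| \le L\lambda$, the cube $Q_2(0)$ lies inside the soliton region of the tip. Applying Corollary \ref{cor_unique_asympt}, part (3), I would deduce that, for sufficiently negative $\tau_0$, the rescaled zoomed inverse profile $Z^k$ is arbitrarily $C^{100}(B(0, 2L))$-close to the profile $Z_0$ of the $2d$-bowl of speed $1/\sqrt{2}$. Converting this profile-function information into the graphical parametrization $\hat{X}^k$ -- using Proposition \ref{prop_almost_quad_conc} (almost quadratic concavity), convexity of $M^k$, and the fact that $Z_0$ is smooth and strictly convex with nondegenerate Hessian at the tip -- gives uniform $C^{10}$-proximity of $\hat{X}^k$ to the corresponding graph of the rescaled bowl soliton on $Q_2(0)$. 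In particular $\|\hat{X}^k\|_{C^{10}(Q_2(0))} \le C(L)$, so that $a_{ij},b_i \in C^{2,\alpha}(Q_2(0))$ with norms $\le C(L)$, and $a_{ij}(\hat{x},\hat{t})\xi^i\xi^j \ge c(L)|\xi|^2$.

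With these uniform controls, classical Moser-type $L^2$-to-$L^\infty$ estimates and interior Schauder estimates yield
\[
\sup_{Q_{1/2}(0)}|\hat{X}^D| \le C(L)\Big(\int_{Q_1(0)}(\hat{X}^D)^2\,d\hat{x}\,d\hat{t}\Big)^{1/2}, \qquad \|\hat{X}^D\|_{C^{4,\alpha}(Q_{1/2}(0))} \le C(L)\,\|\hat{X}^D\|_{C^0(Q_1(0))}.
\]
Rescaling back via $X^D = \lambda \hat{X}^D$ and $dx\,dt = \lambda^5\,d\hat{x}\,d\hat{t}$ immediately produces the asserted $L^\infty$ bound with prefactor $\lambda^{-5/2}$. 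For the Schauder statement, one computes that the weighted parabolic norms scale as $[X^D]^W_k \sim |\log(-t')|^{(k-1)/2}[\hat{X}^D]_k$ while $\|X^D\|_{C^0_W} \sim |\log(-t')|^{-1/2}\|\hat{X}^D\|_{C^0}$, so the rescaled Schauder estimate translates to a constant of order $|\log(-t')|^{5/2+\alpha/2}$, comfortably below the very generous cushion $|\log(-t')|^{10}$ asserted.

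The only nontrivial step is the conversion in the middle paragraph from uniform sharp asymptotic control of the inverse profile $Z^k$ to uniform $C^{10}$ control of the graph parametrization $\hat{X}^k$ on $Q_2(0)$ -- in particular, the fact that $|D\hat{X}^k|$ stays bounded independently of $\tau_0$, so that the rescaled equation is uniformly parabolic on the entire unit cube. This is where the full strength of the tip-region analysis (convexity, almost quadratic concavity, Gaussian collar behavior, and the local bowl limit) must be assembled quantitatively; once this is in hand, the remaining steps are routine applications of standard parabolic theory.
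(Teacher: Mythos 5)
Your proposal is correct and follows essentially the same route as the paper: rescale by $\lambda$ about $(x',t')$, use the uniform sharp asymptotics in the soliton region (Corollary \ref{cor_unique_asympt}) to obtain uniform $C^{2,\alpha}$-bounds and uniform ellipticity for the coefficients of the linear parabolic equation satisfied by the rescaled difference, and then apply standard interior $L^\infty$- and Schauder estimates before scaling back. The conversion step you single out as the only nontrivial point is exactly what the paper dispatches by citing Corollary \ref{cor_unique_asympt}, and your bookkeeping of the weighted norms under the rescaling is consistent with the generous $|\log(-t')|^{10}$ factor in the statement.
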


\begin{proof}
We consider the rescaled function
\begin{equation}
\hat X^D(\hat x, \hat t)= \frac{1}{\lambda} X^D(x'+\lambda \hat x,t'+\lambda^2 \hat t). 
\end{equation}
Then, the evolution equation \eqref{eq_X^D_evolution} and Corollary \ref{cor_unique_asympt} (uniform sharp asymptotics) imply that there exist constants $C<\infty$ and $\tau_*>-\infty$ such that the smooth functions $a_{ij}$ and $b_i$ defined by 
\begin{equation}
 \hat X^D_t =a_{ij} \hat X^D_{ij}  +b_{i} \hat X^D_i
\end{equation}
satisfy
\begin{equation}
\sum_{i,j}\|a_{ij}\|_{C^{2,\alpha}(Q_1(0))}+\sum_i\|b_i\|_{C^{2,\alpha}(Q_1(0))}\leq C,\qquad   a_{ij}\xi_i\xi_j \geq C^{-1}|\xi|^2
\end{equation} 
at the points and times under consideration. Thus, standard interior $L^\infty$-estimates yield
\begin{equation}
\sup_{Q_{1/2}(0)}|\hat X^D|\leq C \|\hat X^D\|_{L^2(Q_1(0))}\, ,
\end{equation}
and standard interior Schauder estimates yield
\begin{equation}
\|\hat X^D\|_{C^{4,\alpha}(Q_{1/2}(0))}\leq C \|\hat X^D\|_{C^{0}(Q_{1}(0))}\, .
\end{equation}
These estimates imply the assertion.
\end{proof}

\begin{lemma}[interior estimates in collar region]\label{prop_interior_estimates_collar}
There exist $\kappa>0$ and ${\tau}_{\ast}>-\infty$, as well as positive integers $p,q$ and a constant $C<\infty$ with the following significance. If $M^1$ and $M^2$ are $\kappa$-quadratic at time $\tau_0 \leq {\tau}_{\ast}$, then 
\begin{equation}
\sup_{Q_{\lambda/2}(x',t')}|X^D|\leq   \frac{C}{\lambda^{\frac{5}{2}}}\left( \int_{Q_{\lambda}(x',t')} (X^D)^2 \, dx\, dt\right)^{\frac{1}{2}}\, ,
\end{equation}
and
\begin{equation}
\|X^D\|_{C^{4}_W(Q_{\lambda/2}(x',t'))}\leq |\log(-t')|^{p} \|X^D\|_{C^{0}_W(Q_{\lambda}(x',t'))} 
\end{equation}
hold whenever $\langle x',e_1\rangle=0$, $L|\log(-t')|^{-\frac{1}{2}} \leq  (-t)^{-1/2}|x'| \leq 4 \theta $, and $-\log(-t') \leq \tau_0-1$,  where $\lambda=|\log(-t')|^{-q}(-t)^{1/2}$.
\end{lemma}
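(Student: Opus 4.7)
The plan is to adapt the strategy of Lemma \ref{prop_interior_estimates_solition} (parabolic rescaling followed by standard interior $L^\infty$ and Schauder estimates) to the collar region. The new difficulty is that the graph function $\tilde X^k$ has gradient of order $\sqrt{|\tau|}$ there, since $X^k_r=Y_v$ and $|Y_v/v|\sim\sqrt{|\tau|}$ by Proposition \ref{lemma_tip_estimates}, so naive parabolic rescaling does not produce a uniformly elliptic equation. The remedy combines an ambient rotation of $\mathbb{R}^4$ (to zero out the gradient at the reference point) with parabolic rescaling by $\lambda=|\log(-t')|^{-q}(-t')^{1/2}$ for a sufficiently large integer $q$.

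Using the $\mathrm{SO}(2)$-symmetry in the $(x_3,x_4)$-plane (Theorem \ref{thm_symmetry}) I would first reduce to $\langle x',e_4\rangle=0$, then rotate in the $(x_2,x_3)$-plane by the unique angle sending the outward unit normal to $M^1$ at the point corresponding to $(x',t')$ to $e_2$. Since the MCF graph equation \eqref{eq_X^D_evolution} is manifestly invariant under rotations of $\mathbb{R}^4$ (it encodes only the ambient mean curvature flow of a graphical hypersurface), the rotated graphs still satisfy \eqref{eq_X^D_evolution}, and their difference solves a linear parabolic PDE of the same schematic form as the one for $X^D$. By construction, the rotated $M^1$-graph has vanishing gradient at the reference point.

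Combining Corollary \ref{cor_unique_asympt}, Proposition \ref{lemma_tip_estimates}, Lemma \ref{lemma_cyl_est}, Proposition \ref{prop_almost_quad_conc} and Lemma \ref{prop_Y.collar.rough}, and iterating the renormalized translator equation to bootstrap derivatives up to order five, I would derive pointwise bounds of the form $|\partial_r^k Y(r,t)|\leq C|\tau|^{(k-1)/2}(-t')^{-(k-1)/2}$ on the collar region for $k\leq 5$. Since the rotated $M^1$-graph has zero gradient at the reference point, on the parabolic cube $Q_\lambda(x',t')$ its gradient is bounded by $\lambda\cdot \sup|\nabla^2|\leq C|\tau|^{1/2-q}$, which is at most $1/2$ for $q\geq 1$. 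Likewise, rescaling by $\lambda$ absorbs the remaining powers of $|\tau|$: the rescaled rotated graph functions have $C^{4,\alpha}(Q_1(0))$-norm bounded by $|\log(-t')|^{p_0}$ for some integer $p_0$, which in turn yields $\|a_{ij}\|_{C^{2,\alpha}(Q_1)}+\|b_i\|_{C^{2,\alpha}(Q_1)}\leq C|\log(-t')|^{p_0}$.

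With the rescaled equation uniformly elliptic, $a_{ij}\xi_i\xi_j\geq\tfrac12|\xi|^2$, and the above coefficient bounds, standard interior $L^\infty$ estimates---whose constants depend only on ellipticity and the $L^\infty$ norm of the coefficients, both uniformly bounded on $Q_1(0)$---give $\sup_{Q_{1/2}}|\hat X^D|\leq C\|\hat X^D\|_{L^2(Q_1)}$ with universal $C$, while standard Schauder estimates give $\|\hat X^D\|_{C^4(Q_{1/2})}\leq C|\log(-t')|^{p_1}\|\hat X^D\|_{C^0(Q_1)}$. Unrescaling via $d\hat x\,d\hat t=\lambda^{-5}\,dx\,dt$ and $\hat X^D=\lambda^{-1}X^D$ (so that spatial $k$-th derivatives scale with $\lambda^{k-1}$) then yields the two claimed estimates, with the integer $p$ being the sum of $p_1$ and the scaling exponents. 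The main obstacle is the quantitative derivative bookkeeping for $Y$ in the collar region: while first- and second-order bounds follow directly from the a priori estimates proved earlier in this section, the third-, fourth- and fifth-order bounds require careful iterated differentiation of the evolution equation and tracking of polynomial $|\tau|$-factors to simultaneously secure uniform ellipticity (after rotation and rescaling) and the claimed polynomial $C^{2,\alpha}$-control of coefficients.
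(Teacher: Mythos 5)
Your approach is workable but genuinely different from the paper's. The paper does not rotate the ambient space at all: it keeps graphing in the $x_2$-direction and neutralizes the degenerating ellipticity by an \emph{anisotropic} parabolic rescaling, stretching the direction of the gradient by the factor $1+\rho$ with $\rho=|DX_1(x',t')|$, so that the rescaled difference $\hat X^D$ satisfies a uniformly parabolic equation and all estimates are obtained directly for $X^D$ itself. The coefficient bounds needed for the rescaled equation are not produced by differentiating the translator equation; they come from Lemma \ref{lemma_derivatives} (derivative estimates for $V$ up to fourth order, weighted by powers of $V$) combined with the collar lower bound $(V_k)^{-1}\leq C(L)\sqrt{|t|^{-1}\log|t|}$ from Corollary \ref{cor_unique_asympt}, and the loss is only polynomial in $\log(-t')$, which is exactly why the Schauder estimate is stated with the factor $|\log(-t')|^p$ and why $\lambda$ carries the factor $|\log(-t')|^{-q}$. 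Your rotation-plus-isotropic-rescaling scheme buys a cleaner uniformly parabolic equation (graph over the tangent plane with zero gradient at the base point), at the price of estimating the wrong quantity: what you control is the separation in the rotated graph direction, and converting back to $X^D$ (the $x_2$-separation) costs a factor of order $\sqrt{1+|D\tilde X^1|^2}\sim\sqrt{|\log(-t')|}$, since the original graphs keep slope up to $\theta\sqrt{|\tau|}$ on the collar regardless of the rotation.

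Two points therefore need attention. First, as written you do not obtain the first inequality with a uniform constant $C$; you obtain it with an extra power of $|\log(-t')|$. This is harmless downstream (the proof of Proposition \ref{cor:C0_tip} explicitly absorbs a factor $|\tau'|^m$), but it is not the statement you claim to prove, so you should either track and state the loss or switch to the paper's anisotropic rescaling, which avoids the conversion entirely. Second, the derivative input is under-justified: Lemma \ref{prop_Y.collar.rough} only gives second-order bounds, and ``iterating the renormalized translator equation'' is not by itself a regularity mechanism — differentiating a degenerate equation does not bootstrap higher derivatives without an a priori estimate. The clean source for the third- and fourth-order (and mixed time) bounds is Lemma \ref{lemma_derivatives} together with the collar lower bound on $V$, exactly as in the paper; you also need this for $\tilde X^2$, since the coefficients of the difference equation involve second derivatives of the second graph (and, for Schauder, their $C^{2,\alpha}$ norms), while uniform ellipticity after your rotation additionally requires that the rotated $M^2$-graph has bounded slope, which does follow from Proposition \ref{lemma_tip_estimates} applied to both solutions once $L$ is taken large.
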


\begin{proof}
To take care of the degenerating ellipticity, we set
\begin{equation}
\rho=|DX_1(x',t')|\, ,
\end{equation}
and consider the anisotropic rescaling
\begin{equation}
\hat X^D(\hat x, \hat t)= \frac{1}{\lambda} X^D(x_1'+(1+\rho)\lambda \hat x_1,x_2'+\lambda\hat x_2,x_3'+\lambda\hat x_3,t'+\lambda^2 \hat t)\, . 
\end{equation}
Using the evolution equation \eqref{eq_X^D_evolution} we see that
\begin{equation}
 \hat X^D_t =a_{ij} \hat X^D_{ij}  +b_{i} \hat X^D_i\, ,
\end{equation}
for some coefficients $a_{ij}, b_i$. Now, by definition of $X^k$ we have
\begin{equation}
X^k(t,V_k(x,t)\cos\theta,V_k(x,t)\sin\theta)=x\, ,
\end{equation}
and Corollary \ref{cor_unique_asympt} (uniform sharp asymptotics) shows that
\begin{equation}
(V_k)^{-1}\leq C(L)\sqrt{|t|^{-1}  \log |t|}
\end{equation}
 for some constant $C(L)<\infty$ depending only on $L$. Therefore, 
applying  Lemma \ref{lemma_derivatives} (derivative estimates), we infer that there are a positive integer $p'$ and a constant $C<\infty$ such that
\begin{equation}
\|\tilde{X}^k\|_{C^{4,\alpha}_W(Q_{\lambda}(x',t'))}\leq C |\log(-t')|^{p'}
\end{equation}
for sufficiently large $\log(-t')$ and sufficiently large $q$. For the coefficients of the rescaled difference, possibly after increasing $q$, this yields
\begin{equation}
\sum_{i,j}\|a_{ij}-\delta_{ij}\|_{C^{2,\alpha}(Q_1(0))}+\sum_i\|b_i\|_{C^{ 2,\alpha} (Q_1(0))}\leq C,\qquad \sum_{i,j}a_{ij}\xi_i \xi_j  \geq C^{-1}|\xi|^2\, .
\end{equation} 
Thus, similarly as in the proof of the previous lemma, interior $L^\infty$-estimates and interior Schauder estimates yield the desired result.
\end{proof}

As a final preparation, we need the following bound for the weight function:

\begin{lemma}[density bound]\label{prop:density.bound}
There exist $\kappa>0$ and ${\tau}_{\ast}>-\infty$ with the following significance. Assuming $\kappa$-quadraticity at time $\tau_0 \leq {\tau}_{\ast}$, for $\tau\leq \tau_0$ and $v\leq 5\theta$ we have
\begin{equation}
e^{\mu(v,\tau)} \geq v e^{\frac{51}{100}\tau}\, .
\end{equation} 
\end{lemma}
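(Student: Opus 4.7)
I will prove the pointwise inequality $\mu(v,\tau)\geq \log v + \tfrac{51}{100}\tau$, which is equivalent to the claim after exponentiating. The argument splits into two regimes according to the structure of the cutoff $\zeta$ in the definition \eqref{def_weight_fn} of $\mu$: an outer regime $v\in[\theta/2,5\theta]$ and an inner regime $v\in[0,\theta/2]$.

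For $v\in[\theta/2,5\theta]$, a direct inspection of \eqref{def_weight_fn}, using that $\zeta\equiv 1$ on $[\theta/2,\infty)$ and that the outer boundary value $-Y^2(\theta,\tau)/4$ cancels against the integrated $(Y^2/4)_{\tilde v}$ term, reduces $\mu(v,\tau)$ to $-\tfrac14 Y^2(v,\tau)$. The intermediate-region and tip-region asymptotics in Corollary \ref{cor_unique_asympt} (together with the tip location $Y(0,\tau)\sim\sqrt{2|\tau|}$, which follows from the sharp diameter estimate) give $Y^2(v,\tau)\leq 2|\tau|(1+\eta)$ for $\eta$ arbitrarily small once $\tau_\ast$ is sufficiently negative. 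Hence $\mu\geq \tfrac{(1+\eta)\tau}{2}$, and since $(1+\eta)/2<\tfrac{51}{100}$ for $\eta$ small while $\log v\leq\log(5\theta)<0$, the desired inequality holds in this range.

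For $v\leq\theta/2$, the delicate case, I would first integrate the $\zeta$-term in \eqref{def_weight_fn} by parts to obtain
\begin{equation*}
\mu(v,\tau)= -\int_{\theta/4}^{\theta/2}\zeta'(\tilde v)\tfrac{Y^2(\tilde v,\tau)}{4}\,d\tilde v \;-\; \int_{v}^{\theta/2}(1-\zeta(\tilde v))\tfrac{1+Y_{\tilde v}^2}{\tilde v}\,d\tilde v,
\end{equation*}
where the boundary contribution at $\tilde v=\theta$ cancels the outer prefactor $-Y^2(\theta,\tau)/4$ and the boundary contribution at $\tilde v=v$ vanishes since $\zeta(v)=0$. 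The first integral is bounded by $\sup_{[\theta/4,\theta/2]}\!Y^2/4$ (as $\int\zeta'=1$), hence by $\tfrac{(1+\eta)|\tau|}{2}$. For the second integral I would decompose $[v,\theta/2]$ into the collar part $[L/\sqrt{|\tau|},\theta/2]$ and the soliton part $[v,L/\sqrt{|\tau|}]$. The $1/\tilde v$ piece integrates to $-\log v + \log(\theta/2)$, producing the crucial $\log v$ term on the right-hand side. In the collar part, Proposition \ref{lemma_tip_estimates} gives $Y_{\tilde v}^2\leq |\tau|\tilde v^2$, bounding the $Y_{\tilde v}^2/\tilde v$ contribution by $O(|\tau|\theta^2)$. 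In the soliton part, the $C^{100}$-convergence of $Z(\cdot,\tau)$ to the bowl profile $Z_0$ (with $Z_0'(0)=0$) from Corollary \ref{cor_unique_asympt} gives $Y_{\tilde v}=Z_\rho(|\tau|^{1/2}\tilde v,\tau)=O(|\tau|^{1/2}\tilde v)$, so the contribution is bounded by $CL^2$. Assembling these yields
\begin{equation*}
\mu(v,\tau)\;\geq\;\log v \;+\;\left[\tfrac{1+\eta}{2}+O(\theta^2)\right]\tau \;-\; C_{\theta,L}.
\end{equation*}
Choosing $\eta$ and $\theta$ small enough that $\tfrac{1+\eta}{2}+O(\theta^2)<\tfrac{51}{100}$, and then $\tau_\ast$ negative enough that the gap $\bigl(\tfrac{51}{100}-\tfrac{1+\eta}{2}-O(\theta^2)\bigr)|\tau|$ absorbs the constant $C_{\theta,L}$, concludes the proof.

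The main obstacle is the careful bookkeeping of the $O(|\tau|\theta^2)$ error from the collar integral: the coefficient of $\tau$ in the final bound must stay strictly below $\tfrac{51}{100}$, which forces the order of choices to be $\eta,\theta$ first (driven by the spectral gap $\tfrac{1}{100}$), then $L$ (which sets $C_{\theta,L}$), and only afterwards $\tau_\ast$. A secondary subtlety is ensuring that the $-\log v$ contribution from the $1/\tilde v$ integration propagates cleanly through the transition between the soliton and collar regions, which is where the uniformity of the sharp asymptotics in Corollary \ref{cor_unique_asympt} (independent of the particular $\kappa$-quadratic solution) is essential.
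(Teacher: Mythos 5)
Your argument is sound in outline and arrives at the same final bound $\mu(v,\tau)\geq \log v+\tfrac{51}{100}\tau$, but it is organized quite differently from the paper's proof. The paper never splits into regions: it differentiates the weight and rewrites $\mu_v=\tfrac{1-\zeta}{v}+\big(\tfrac{(1-\zeta)Y_v}{2vY}-\tfrac{\zeta}{4}\big)(Y^2)_v$, so that the troublesome $Y_v^2/\tilde v$ term is absorbed into the $(Y^2)_v$ term; using $|Y_v/v|\leq\sqrt{|\tau|}$ (Proposition \ref{lemma_tip_estimates}) and $Y=(\sqrt{2}+o(1))|\tau|^{1/2}$ this gives the pointwise inequality $\mu_v\leq \tfrac1v-\tfrac38(Y^2)_v$, and a single integration from $v$ to $\theta$ yields $\mu(v,\tau)\geq\log(v/\theta)+\tfrac18Y^2(\theta,\tau)-\tfrac38Y^2(v,\tau)$, from which the claim follows by the sharp asymptotics. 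Your route (integration by parts in the $\zeta$-term, then a collar/soliton decomposition of the $(1-\zeta)$-integral) uses the same inputs and delivers the same $\log v+(\tfrac12+o(1))\tau$ bound, at the cost of more bookkeeping; the paper's derivative trick buys a uniform treatment of the soliton, collar and transition zones in one stroke.

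Two points in your write-up need patching. First, the displayed integration-by-parts identity is only valid for $v\leq\theta/4$: for $v\in(\theta/4,\theta/2]$ the boundary term $-\zeta(v)Y^2(v,\tau)/4$ does not vanish and the $\zeta'$-integral should start at $v$. This is harmless — the combined $\zeta$-contribution is still bounded below by $-\tfrac14\sup_{[0,\theta/2]}Y^2\geq\tfrac{(1+\eta)\tau}{2}$ since $\zeta(v)+\int_v^{\theta/2}\zeta'=1$ — but it should be stated. Second, and more substantively, in the soliton region the $C^{100}$-closeness of $Z(\cdot,\tau)$ to $Z_0$ together with $Z_0'(0)=0$ only gives $|Z_\rho(\rho,\tau)|\leq C\rho+\eps$; the additive $\eps$ would produce a term of size $\eps^2\log(1/v)$ in your integral, which cannot be absorbed uniformly as $v\to0$ and would degrade the coefficient of $\log v$ below $1$. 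What you actually need is the exact vanishing $Z_\rho(0,\tau)=0$, which holds because the level set is a smooth rotationally symmetric cap (this is precisely the fact invoked in the proof of Proposition \ref{lemma_weight_estimates}), combined with the $C^2$-bound on $Z$ coming from the closeness, to conclude $|Z_\rho(\rho,\tau)|\leq C\rho$. Simpler still: the upper bound $|Y_v/v|\leq\sqrt{|\tau|}$ of Proposition \ref{lemma_tip_estimates} is stated on the whole tip region $v\leq 2\theta$, so you may apply it on all of $[v,\theta/2]$, dispensing with the collar/soliton split and with the $L$-dependent constant entirely; with that, your order of choices ($\eta$ and $\theta$ first, then $\tau_\ast$) closes the argument as you describe.
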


\begin{proof}
By definition of the weight function  we have
\begin{align}
\mu_v &= -\frac{1}{4}\zeta(v) (Y^2)_v +(1-\zeta(v))\frac{1+Y_v^2}{v}\nonumber\\
&=\frac{1-\zeta(v)}{v} + \left( \frac{(1-\zeta(v))Y_v}{2vY} - \frac{\zeta(v)}{4}   \right) (Y^2)_v\, .
\end{align}
Since $Y(0,\tau)=(\sqrt{2}+o(1))|\tau|^{\frac{1}{2}}$ by Corollary \ref{cor_unique_asympt} (uniform sharp asymptotics) and $|Y_v/v|\leq \sqrt{\tau}$ by Proposition \ref{lemma_tip_estimates} (tip estimates), possibly after decreasing $\theta$ and $\tau_\ast$, for $\tau\leq\tau_0$ and $v\leq 5\theta$ this yields
\begin{equation}
\mu_v \leq \frac{1}{v} -\frac38  (Y^2)_v \, .
\end{equation}
Therefore,  we get
\begin{align}\label{mu_bound_eq}
\mu(v,\tau)=-\frac14 Y^2(\theta,\tau)-\int^\theta_v \mu_{v'}(v',\tau)\, dv'\geq \log \left( \frac{v}{\theta}\right)+ \frac{1}{8} Y^2(\theta,\tau)-\frac{3}{8}Y^2(v,\tau)\, .
\nonumber
\end{align}
Using again Corollary \ref{cor_unique_asympt} (uniform sharp asymptotics), this implies the assertion.
\end{proof}

\bigskip

We can now prove the propositions stated at the beginning of this subsection.

\begin{proof}[Proof of Proposition \ref{cor:C0_tip}]By definition of $X^D$ we have
\begin{equation}
X^D(x_1,x_2,x_3,t)=e^{-\frac{\tau}{2}}W(v,\tau)\, ,
\end{equation}
where
\begin{equation}
 \tau=-\log(-x_1-t) \qquad\textrm{ and }\qquad v=e^{\frac{\tau}{2}}\left(x_2^2+x_3^2\right)^{\frac{1}{2}}\, .
\end{equation}
Notice also that
\begin{equation}
\int\int dx_2\, dx_3=2\pi e^{-\tau} \int v\, dv\, .
\end{equation}
Suppose that $x'=(0,x_2',x_3')$ and $t'=-e^{-\tau'}$ satisfy $|x'|e^{\frac{\tau'}{2}}\leq \tfrac{9}{10}\theta $ and $\tau'\leq \tau_0-1$. For any $R \leq |\tau'|^{-\frac{1}{2}}e^{-\frac{\tau'}{2}}$ we can compute
\begin{align}
\frac{1}{R}   \int_{Q_{R}(x',t')} (X^D )^2 \, dx\, dt = &\frac{1}{R} \int_{-R}^R \int_{t'-R^2}^{t'}  \int_{x_2'-R}^{x_2'+R}\int_{x_3'-R}^{x_3'+R} (X^D )^2 \, dx_2\, dx_3 \, dt\, dx_1\nonumber\\
   \leq&\,C e^{-3\tau'}\sup_{|x_1|\leq R} \int_{-\log(-t'-x_1)-1}^{-\log(-t'-x_1)} \int_{0}^{\theta} W^2(v,\tau)  v\,dv \,d\tau .
\end{align}
Using also Lemma \ref{prop:density.bound} (density bound) and the fact that $\phi_{\mathcal{T}}=1$ when $v\leq \theta$, this yields
\begin{equation}
\frac{1}{R}   \int_{Q_{R}(x',t')} (X^D )^2 \,dx\, dt\leq C|\tau'|^{\frac{1}{2}}e^{-\frac{351}{100}\tau'}\|W_\cT\|_{2,\infty}^2(\tau'+1)\, .
\end{equation}
Combining the above inequality with Lemma \ref{prop_interior_estimates_solition} (interior estimates in soliton region) and Lemma \ref{prop_interior_estimates_collar} (interior estimates in collar region) implies that there exists some positive integer $m$ such that
\begin{equation}
|W(v',\tau')|^2=e^{\tau'}|X^D(x',t')|^2\leq  |\tau'|^{m}e^{-\frac{51}{100}\tau'}\|W_\cT\|_{2,\infty}^2(\tau'+1)\, .
\end{equation}
Namely,
\begin{equation}
e^{\frac{26}{100}\tau'}|W(v',\tau')|\leq \|W_\cT\|_{2,\infty}(\tau'+1)
\end{equation}
holds for $\tau'\leq \tau_0-1$ and $v'\leq \tfrac{9}{10} \theta$. This proves the proposition.
\end{proof}

\begin{proof}[Proof of Proposition \ref{cor:Schauder_tip}]
By definition of $X^D$ we have
\begin{equation}
W(v,\tau)=e^{\frac{\tau}{2}}X^D(0,e^{-\frac{\tau}{2}}v\cos\theta,e^{-\frac{\tau}{2}}v\sin\theta,-e^{-\tau})\, .
\end{equation}
This implies
\begin{align}
&W_v=X^D_r\, , && W_{vv}=e^{-\frac{\tau}{2}}X^D_{rr}\, , && W_{v\tau}=-\tfrac{v}{2}e^{-\frac{\tau}{2}}X^D_{rr}+e^{-\tau}X^D_{rt}\, ,
\end{align}
where
\begin{equation}
X^D_{r}=\cos\theta\, X^D_2+\sin\theta\, X^D_3\, , \qquad X^D_{rr}=\cos^2\theta\, X^D_{22}+2 \cos\theta\sin\theta\, X^D_{23}+\sin^2\theta\, X^D_{33}\, .
\end{equation}
Similarly, we can compute
\begin{equation}
W_\tau =\tfrac{1}{2}e^{\frac{\tau}{2}}X^D-\tfrac{v}{2}X^D_r+ e^{-\frac{\tau}{2}}X^D_t\, ,
\end{equation}
and
\begin{equation}
W_{\tau\tau}= \tfrac{1}{4}e^{\frac{\tau}{2}}X^D -\tfrac{v}{4}X^D_r
+\tfrac{v^2}{4}e^{-\frac{\tau}{2}}X^D_{rr} - ve^{-\tau} X^D_{rt}+e^{-\frac{3\tau}{2}}X^D_{tt}\, .
\end{equation}
Hence, applying Lemma \ref{prop_interior_estimates_solition} (interior estimates in soliton region) and Lemma \ref{prop_interior_estimates_collar} (interior estimates in collar region), we obtain the desired result.
\end{proof}

\bigskip

\subsection{Conclusion of the proof}

In this subsection we conclude the proof of Theorem \ref{thm:uniqueness_eccentricity} (spectral uniqueness theorem).

Denoting the level sets by $\Sigma^i_h=M^i\cap \{x_1=h\}$, we consider their Hausdorff distance
\begin{equation}
D(h):=d_{\mathrm{Hausdorff}}\left(\Sigma^1_h\, ,\, \Sigma^2_h\right)\, .
\end{equation}

\begin{proposition}[Hausdorff-estimate]\label{lemma:C0_entire}
There exist $\kappa>0$ and ${\tau}_{\ast}>-\infty$ such that if $M^1$ and $M^2$ are $\kappa$-quadratic at time $\tau_0 \leq {\tau}_{\ast}$, then for every $\tau\leq \tau_0-1$ we have
\begin{equation}
\sup_{h\geq e^{-\tau}}h^{-\frac{76}{100}}D(h) \leq \|w_\cC\|_{\fH,\infty}(\tau+1)+\|W_\mathcal{T}\|_{2,\infty}(\tau+1)\, .
\end{equation}
\end{proposition}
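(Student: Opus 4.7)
\emph{Proof plan.} The strategy is to reduce the Hausdorff distance between the level sets $\Sigma^i_h$, which by the $\mathrm{SO}(2)$-symmetry are determined by planar convex meridian curves in the $(x_2,r)$-half-plane, to pointwise differences of the profile functions $V^i$ in the cylindrical portion and of the inverse profile functions $X^i$ in the tip portion, and then to invoke the weighted $L^\infty$-estimates established in Propositions~\ref{cor:C0_cylinder} and \ref{cor:C0_tip}. Because $8\theta/9 < 9\theta/10$, every point on $\Sigma^i_h$ belongs to at least one of the cylindrical region $\{v_i \geq 8\theta/9\}$ (where the cylindrical estimate applies) and the tip region $\{v_i \leq 9\theta/10\}$ (where the tip estimate applies), so the two regimes cover the meridians with overlap.

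For a point $p \in \Sigma^1_h$ lying in the cylindrical portion, write $p = (h,x_2, V^1(x_2,-h)\cos\phi, V^1(x_2,-h)\sin\phi)$ and pair it with the point $q\in\Sigma^2_h$ obtained by replacing $V^1$ with $V^2$ at the same $x_2$ and $\phi$. Recalling $V(x,t) = \sqrt{-t}\, v(x/\sqrt{-t}, -\log(-t))$, Proposition~\ref{cor:C0_cylinder} yields
\[
|p-q| \leq |V^1 - V^2|(x_2,-h) = \sqrt{h}\,|w(y,-\log h)| \leq C\,h^{1/2}\cdot h^{1/4}\,\|w_{\cC}\|_{\fH,\infty}(\tau+1),
\]
with $y = x_2/\sqrt{h}$ and $\tau = -\log h$. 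For a point $p \in \Sigma^1_h$ in the tip portion it is more natural to parametrize the meridian horizontally as $(X^1(r,-h), r)$ and to pair $p$ with $q = (X^2(r,-h), r)\in\Sigma^2_h$, which is a valid match since $r \leq 9\theta\sqrt{h}/10$ lies in the domain of both inverse profiles for $h$ sufficiently large. Using $X(r,t) = \sqrt{-t}\, Y(r/\sqrt{-t}, -\log(-t))$, Proposition~\ref{cor:C0_tip} gives
\[
|p-q| \leq |X^1 - X^2|(r,-h) = \sqrt{h}\,|W(v,-\log h)| \leq h^{1/2}\cdot h^{26/100}\,\|W_{\cT}\|_{2,\infty}(\tau+1),
\]
with $v = r/\sqrt{h}$. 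A symmetric argument, with $\Sigma^1_h$ and $\Sigma^2_h$ interchanged, controls the distance from points of $\Sigma^2_h$ to $\Sigma^1_h$.

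Collecting exponents, the cylindrical contribution is bounded by $h^{75/100}\|w_\cC\|_{\fH,\infty}$ and the tip contribution by $h^{76/100}\|W_\cT\|_{2,\infty}$, so since $75/100 < 76/100$ both are absorbed into the common bound
\[
h^{-76/100}D(h) \leq \|w_{\cC}\|_{\fH,\infty}(\tau+1) + \|W_{\cT}\|_{2,\infty}(\tau+1),
\]
valid uniformly in $h \geq e^{-\tau}$. The constants from the interior $L^\infty$-estimates are absorbed by the slack in the exponent: the margin $1/100$ in the cylindrical case and the $e^{\tau/200}$-type slack that already appears in the proof of Proposition~\ref{cor:C0_tip}, both of which become arbitrarily small when $\tau_\ast$ is chosen sufficiently negative.

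The main (and rather minor) obstacle is purely geometric and administrative: verifying that the Hausdorff distance between the $\mathrm{SO}(2)$-symmetric surfaces equals the Hausdorff distance between the meridian curves, and that in the overlap region $8\theta/9 \leq v \leq 9\theta/10$ either the vertical or horizontal matching gives a valid upper bound for the Euclidean distance. Both facts follow from convexity of the $\Sigma^i_h$ (Theorem~\ref{thm_non_collapsed}) together with the uniform sharp asymptotics (Theorem~\ref{cor_unique_asympt}), which in particular ensure that the meridian curves have bounded slope throughout the transition region so that horizontal and vertical separations are comparable up to a constant that is harmlessly absorbed.
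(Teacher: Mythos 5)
Your proposal is correct and follows essentially the same route as the paper: bound $h^{-1/2}D(h)$ by the sup of $|w|$ on $\{v_1\geq \tfrac{8}{9}\theta\}$ and of $|W|$ on $\{v\leq\tfrac{9}{10}\theta\}$ via vertical/horizontal matching of the $\mathrm{SO}(2)$-symmetric profiles, then apply Propositions \ref{cor:C0_cylinder} and \ref{cor:C0_tip} and absorb the constant from the cylindrical estimate using the $h^{-1/100}$ exponent slack (the tip estimate already carries constant $1$, so no slack is needed there). Your extra care about the overlap region and the reverse direction of the Hausdorff distance is consistent with, and slightly more explicit than, the paper's one-line reduction.
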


\begin{proof}
Setting $\tau_h=-\log h$, by definition of the Hausdorff distance we always have
\begin{equation}
h^{-\frac{1}{2}}D(h)\leq \max\Big(\sup\left\{| w(y,\tau_h)|:    v_1(y,\tau_h)\geq \tfrac{8}{9}\theta \right\},\sup\left\{| W(v,\tau_h)|:	 v \leq \tfrac{9}{10} \theta\right\}\Big)\, .
\end{equation}
Now, by Proposition \ref{cor:C0_cylinder} ({$L^\infty$-estimate in cylindrical region}) and Proposition \ref{cor:C0_tip} ({$L^\infty$-estimate in tip region}) if the solution are $\kappa$-quadratic from time $\tau_0 \leq {\tau}_{\ast}$, and $\tau_h \leq \tau_0-1$, then we can estimate
\begin{equation}
e^{\frac{1}{4}\tau_h}\sup\left\{| w(y,\tau_h)|:    v_1(y,\tau_h)\geq \tfrac{8}{9}\theta \right\}\leq C \|w_\cC\|_{\fH,\infty}(\tau_h+1)\, ,
\end{equation}
and
\begin{equation}
e^{\frac{26}{100}\tau_h}\sup\left\{| w(y,\tau_h)|:    v_1(y,\tau_h)\geq \tfrac{8}{9}\theta \right\}\leq  \|W_\mathcal{T}\|_{2,\infty}(\tau_h+1)\, .
\end{equation}
This implies the assertion.
\end{proof}

\begin{proposition}[$C^2$-estimate]\label{lemma:Schauder_entire}
There exist $\kappa>0$ and ${\tau}_{\ast}>-\infty$ such that if $M^1$ and $M^2$ are $\kappa$-quadratic at time $\tau_0 \leq {\tau}_{\ast}$, then for every $\tau\leq \tau_0-1$ we have
\begin{equation}
\|w\|_{C^2|C_\tau} +\|W\|_{C^2|T_\tau} \leq e^{\frac{48}{100}\tau} \sup \left\{ D(h):\tau-1\leq -\log h \leq \tau+\tfrac{1}{100}\right\}\, .
\end{equation}
\end{proposition}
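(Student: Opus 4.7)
The plan is to combine the interior Schauder estimates of Propositions \ref{cor:Schauder_cylinder} and \ref{cor:Schauder_tip} with a geometric comparison lemma that controls pointwise differences of the profile functions by the Hausdorff distance of the corresponding level sets. The argument breaks into three steps. Let me abbreviate $D_\ast := \sup\{D(h) : \tau-1 \leq -\log h \leq \tau+\tfrac{1}{100}\}$.

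Step 1 (Hausdorff comparison). Fix $\tau' \in [\tau-1,\tau+\tfrac{1}{100}]$, set $h = e^{-\tau'}$, and work in the hyperplane $\{x_1=h\} \cong \mathbb{R}^3$, where $\Sigma^1_h$ and $\Sigma^2_h$ are convex, $\mathrm{SO}(2)$-symmetric surfaces at Hausdorff distance at most $D(h)$. For any $y_0$ with $v_1(y_0,\tau') \geq \tfrac{\theta}{2}$, consider the point $p_1 = (\sqrt{h}y_0, V_1(\sqrt{h}y_0,-h), 0) \in \Sigma^1_h$, and let $p_2 \in \Sigma^2_h$ realize the nearest-point distance. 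By strict convexity $p_1-p_2$ is proportional to the outward normal $\nu_2$ of $\Sigma^2_h$ at $p_2$, and by $\mathrm{SO}(2)$-symmetry we may take $p_2 = (x', V_2(x',-h), 0)$ with $\nu_2 = (-V_{2,x}, 1, 0)/\sqrt{1+V_{2,x}^2}$. Since $|V_{2,x}| \leq \varepsilon$ in the cylindrical region by Lemma \ref{lemma_derivatives}, writing out the three scalar equations $p_1-p_2 = s\nu_2$ with $|s|\leq D(h)$ and Taylor-expanding $V_2$ yields
\begin{equation*}
|V_1 - V_2|(\sqrt{h}y_0, -h) \leq C D(h), \quad \textrm{hence} \quad |w(y_0,\tau')| \leq C e^{\tau'/2} D(h).
\end{equation*}
In the tip region, for $v_0 \leq 3\theta$, the analogous argument using the parametrization $(V,\phi) \mapsto (\hat Y_i(V,-h), V\cos\phi, V\sin\phi)$ and the outward normal $\nu_2 = (1,-\hat Y_{2,V}\cos\phi, -\hat Y_{2,V}\sin\phi)/\sqrt{1+\hat Y_{2,V}^2}$ produces
\begin{equation*}
|\hat Y_1 - \hat Y_2|(\sqrt{h}v_0,-h) \leq \sqrt{1+Y_{2,v}^2}\,D(h).
\end{equation*}
Proposition \ref{lemma_tip_estimates} gives $|Y_{2,v}(v_0,\tau')| \leq v_0\sqrt{|\tau'|} \leq 3\theta\sqrt{|\tau|+1}$, hence
\begin{equation*}
|W(v_0,\tau')| \leq C\sqrt{|\tau|}\, e^{\tau'/2} D(h).
\end{equation*}

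Step 2 (Interior Schauder). Plugging the bounds from Step 1 into Propositions \ref{cor:Schauder_cylinder} and \ref{cor:Schauder_tip}, and using $e^{\tau'/2} \leq 2e^{\tau/2}$ on the admissible range of $\tau'$, gives
\begin{equation*}
\|w\|_{C^2|C_\tau} \leq C\, e^{-\tau/100}\cdot e^{\tau/2} D_\ast = C\, e^{49\tau/100} D_\ast,
\end{equation*}
\begin{equation*}
\|W\|_{C^2|T_\tau} \leq C\sqrt{|\tau|}\, e^{-\tau/100}\cdot e^{\tau/2} D_\ast = C\sqrt{|\tau|}\, e^{49\tau/100} D_\ast.
\end{equation*}

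Step 3 (Absorption). Choosing $\tau_\ast$ sufficiently negative that $C(1+\sqrt{|\tau|})\,e^{\tau/100} \leq 1$ for all $\tau \leq \tau_\ast - 1$ (which holds since $|\tau|^{1/2} \ll e^{|\tau|/100}$), both prefactors are bounded by $e^{48\tau/100}$, and summing yields the claim.

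The main obstacle is the tip region, where the slope $|Y_{2,v}|$ can grow like $\sqrt{|\tau|}$; this forces us to lose a polynomial factor in $|\tau|$ when converting the Hausdorff distance into a bound on $|W|$. This is exactly why the proposition is stated with the exponent $48/100$ rather than $49/100$ or $1/2$: the extra $1/100$ of exponential decay absorbs the polynomial loss. Apart from this bookkeeping, the argument is a routine combination of convexity, the explicit form of the level sets, and the already-established interior Schauder estimates.
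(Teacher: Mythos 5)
Your argument is essentially the paper's proof: both convert the Hausdorff distance $D(h)$ into sup bounds on $|w|$ (factor $O(1)$, since the slopes are small in the cylindrical region) and on $|W|$ (factor $\sqrt{1+Y_{2,v}^2}\lesssim \sqrt{|\log h|}$ via the tip estimates), feed these into Propositions \ref{cor:Schauder_cylinder} and \ref{cor:Schauder_tip}, and absorb the resulting $C\sqrt{|\tau|}$ loss into the gap between $e^{49\tau/100}$ and $e^{48\tau/100}$; the only cosmetic difference is that the paper runs the tip-region comparison via the law of sines with the tangent line of $\mathrm{graph}(Y_2)$ rather than your nearest-point/normal projection. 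One minor point: the tip estimates are stated for $v\leq 2\theta$, so to cover $v\leq 3\theta$ you should note, as the paper does, that they apply with $\theta$ replaced by $\tfrac{3}{2}\theta$.
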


\begin{proof}
Set $\tau_h=-\log h$. Observe first that by Corollary \ref{cor_unique_asympt} (uniform sharp asymptotics), for $\kappa$ sufficiently small and ${\tau}_{\ast}$ sufficiently negative, and $\tau_h \leq  \tau_0-1$, we have
\begin{equation}\label{sup_small_w}
\sup\left\{|w(y,\tau_h)|:v_1(y,\tau_h) \geq \theta/2\right\}\leq 2 h^{-\frac{1}{2}}D(h)\, .
\end{equation}
Our next goal is to show that
\begin{equation}\label{eq_next_goal}
\sup\left\{|W(v,\tau_h)| : v\leq 3\theta\right\} \leq 2 (\log h)^{\frac{1}{2}} h^{-\frac{1}{2}} D(h)\, .
\end{equation}
To this end, considering $A_i\in \Sigma^i_h\cap \{x_3= h^{1/2}v,x_4=0\}$ we have
\begin{equation}
d(A_1,A_2)=\left|\langle A_1-A_2,e_2\rangle \right|=h^{1/2}\left|W(v,\tau_h)\right|.
\end{equation}
Let $B\in \Sigma^2_h$ be a point such that $d(B,A_1)=d(\Sigma^2_h\, ,\, A_1)$, and observe that $x_3(B)\geq 0$ and $x_4(B)=0$.
We may assume $A_1\neq A_2$. Then, $A_1\neq B$. So, applying the sine law to triangle spanned by $A_1$, $A_2$, $B$ yields
\begin{equation}\label{W_dh_1}
d(A_1,A_2)=\frac{\sin\angle A_1BA_2}{\sin\angle A_1A_2B}{d(A_1,B)} \leq \frac{1}{\sin\angle A_1A_2B}D(h). 
\end{equation}
On the other hand, denoting by $\vec{T}$ the tangent vector to $\mathrm{graph}(Y_2)$ at $v$, by convexity we have
\begin{equation}\label{W_dh_2}
\sin\angle A_1A_2B \geq \sin\angle(\vec{T},-e_2)=\frac{1}{\sqrt{1+ Y_{2,v}(v,\tau_h)^2}}.
\end{equation}
Moreover, applying Proposition \ref{lemma_tip_estimates} (tip estimates), with $\theta$  replaced by $\frac{3}{2}\theta$, we can estimate
\begin{equation}
Y_{2,v}(v,\tau_h)^2\leq |\log h|v^2 \, .
\end{equation}
Combining the above formulas proves the estimate \eqref{eq_next_goal}.

Having established the sup-bounds \eqref{sup_small_w} and \eqref{eq_next_goal} we can now apply Proposition \ref{cor:Schauder_cylinder} ($C^2$-estimate in cylindrical region) and Proposition \ref{cor:Schauder_tip} ($C^2$-estimate in  tip region) to conclude that
\begin{equation}
\|w\|_{C^2|C_\tau} +\|W\|_{C^2|T_\tau} \leq e^{-\tfrac{1}{100}\tau}\sup_{\tau-1\leq\tau_h\leq \tau+\frac{1}{100}} 2(\log h)^{1/2} h^{-1/2} D(h)\, .
\end{equation}
This implies the assertion.
\end{proof}

To proceed, we denote by $K^i\subset \mathbb{R}^4$ the convex hull of $M^i$, and set
\begin{equation}
K^i_h:= K^i\cap \{ x_1 = h\}\, .
\end{equation}
Then, by the comparison principle for translators we have the implication
\begin{equation}
K^1_{h'} \subseteq K^2_{h'}\quad \Rightarrow \quad K^1_{h} \subseteq K^2_{h}\, \textrm{ for all }\,  h \leq h'\ ,
\end{equation}
and similarly with $K^1$ and $K^2$ interchanged.

\begin{lemma}[almost congruent levels]\label{lemma:congruency_level_set}
There exist $\kappa>0$ and ${\tau}_{\ast}>-\infty$ such that if $M^1,M^2$ are $\kappa$-quadratic from time $\tau_0 \leq {\tau}_{\ast}$, and if $h'\geq e^{-\tau_0}$ satisfies $D(h') \leq \tfrac{1}{30}h'^{1/2}$, then we have
\begin{equation}
D(h) \leq 10\,(\log h')^{\frac{1}{2}}D(h')
\end{equation}
for all $h\in [h'/e^2,h']$.
\end{lemma}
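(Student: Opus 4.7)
Let $\epsilon := D(h')$. The plan is to produce a vertical sandwich $K^2_{h'-b}\subseteq K^1_{h'}\subseteq K^2_{h'+b}$ at the top with $b:=C_0\epsilon\sqrt{h'}$ for a constant $C_0$ to be fixed, propagate it to every $h\leq h'$ via the comparison principle for translators quoted just before the lemma, and then convert the sandwich at a general height $h$ into a Hausdorff bound by controlling the growth rate of the slices of $M^2$ across the window $[h-b,h+b]$.

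Writing $s^i_h(\omega)$ for the support function of the convex slice $K^i_h\subseteq\mathbb{R}^3$ in direction $\omega\in S^2$, the hypothesis $D(h')=\epsilon$ gives $|s^1_{h'}(\omega)-s^2_{h'}(\omega)|\leq\epsilon$ for every $\omega$. Each $M^i$ is an entire convex graph over $\mathbb{R}^3$ (Theorem \ref{thm_non_collapsed}), so the slices $K^i_h$ are monotonically increasing sublevel sets of a convex function. Differentiating the renormalization $V_i(x,-h)=\sqrt{h}\,v_i(x/\sqrt{h},-\log h)$ and combining the evolution equation of Proposition \ref{prop_evol_profile} with the uniform sharp asymptotics of Corollary \ref{cor_unique_asympt} produces both a uniform directional lower bound
\[
s^2_{h'+b}(\omega)-s^2_{h'}(\omega)\geq c_0\, b\, h'^{-1/2}\quad\text{for every }\omega\in S^2,
\]
with the infimum over $\omega$ attained in the short-axis directions $\omega\in\mathrm{span}\{e_3,e_4\}$ (where $s^2_{h'}(\omega)\sim\sqrt{2h'}$), and a uniform upper bound
\[
\partial_h s^2_h(\omega)\leq C_1\,(\log h/h)^{1/2}\quad\text{for every }\omega\in S^2,
\]
with the supremum attained in the long-axis directions $\omega=\pm e_2$ (where $d_2^\pm(h)\sim\sqrt{2h\log h}$ by Proposition \ref{prop_diameter_lev}); both $c_0$ and $C_1$ approach $1/\sqrt{2}$ as $\tau_0\to-\infty$. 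Choosing $C_0\geq 1/c_0$ forces the lower bound to reach $\epsilon$, whence $s^1_{h'}(\omega)\leq s^2_{h'}(\omega)+\epsilon\leq s^2_{h'+b}(\omega)$ for every $\omega$, giving $K^1_{h'}\subseteq K^2_{h'+b}$; the same argument at $h'-b$ and with $M^1,M^2$ interchanged yields the two-sided sandwich. No horizontal translation is needed because working with support functions directly controls $s^1-s^2$ in every direction at once.

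Each of $M^2\pm be_1$ is itself a translator, so the comparison principle propagates the sandwich downward to $K^2_{h-b}\subseteq K^1_h\subseteq K^2_{h+b}$ for every $h\leq h'$. Combined with the trivial monotonicity $s^2_{h-b}\leq s^2_h\leq s^2_{h+b}$ this yields $|s^1_h(\omega)-s^2_h(\omega)|\leq s^2_{h+b}(\omega)-s^2_{h-b}(\omega)$ for every $\omega$. Integrating the upper growth bound over $[h-b,h+b]$ and using $h\geq h'/e^2$,
\[
s^2_{h+b}(\omega)-s^2_{h-b}(\omega)\leq 2C_1 e\, b\,(\log h'/h')^{1/2}=2C_0 C_1 e\,\epsilon\,(\log h')^{1/2}.
\]
Fixing $C_0:=1/c_0$ (the smallest admissible value), the coefficient becomes $2e\,(C_1/c_0)$, which tends to $2e\approx 5.44$ as $\tau_0\to-\infty$. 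Hence, for $\tau_0$ sufficiently negative, $D(h)\leq 10(\log h')^{1/2}\epsilon$ on $h\in[h'/e^2,h']$, as desired.

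The main technical step is extracting the two directional growth-rate bounds on $\partial_h s^2_h(\omega)$ uniformly across $\kappa$-quadratic solutions, with correct limiting constants $c_0,C_1\to 1/\sqrt{2}$. This requires stitching together the parabolic, intermediate, and tip pieces of Corollary \ref{cor_unique_asympt}, using the almost-quadratic-concavity of Proposition \ref{prop_almost_quad_conc} to control the collar (transition) region where the long-axis growth concentrates. The smallness hypothesis $\epsilon\leq h'^{1/2}/30$ keeps $b=C_0\epsilon\sqrt{h'}\leq h'/15$ a small fraction of $h'$, ensuring the sharp asymptotics apply uniformly on $[h'/e^2,h'+b]$.
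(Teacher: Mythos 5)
Your proposal is correct and follows essentially the same route as the paper's proof: your directional growth bounds on the support functions are exactly the two-sided estimate $\tfrac{0.99}{\sqrt{2h}}\le\langle\nu,e_1\rangle\le\tfrac{1.01}{\sqrt{2}}\sqrt{\tfrac{\log h}{h}}$ that the paper extracts from the uniform sharp asymptotics, your vertical sandwich with shift $b\sim\sqrt{2}\,D(h')\sqrt{h'}$ matches the paper's shift by $2\sqrt{h'}D(h')e_1$, and the propagation via the comparison principle followed by the upper gradient bound (the source of the $(\log h')^{1/2}$ factor) is identical, the support-function phrasing being only cosmetic. The one point to make explicit is that, since $h$ may drop to $h'/e^2$ with $h'=e^{-\tau_0}$ and your integration window reaches down to $h-b$, the gradient bounds must be propagated slightly forward in time (to $h\ge e^{-\tau_0-3}$, say), exactly as the paper notes parenthetically; with that, your constant bookkeeping still lands comfortably below $10$.
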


\begin{proof}
Note first that by Corollary \ref{cor_unique_asympt} (uniform sharp asymptotics), provided $\kappa$ and ${\tau}_{\ast}$ are chosen  appropriately, the mean curvature $H=\langle \nu,e_1\rangle $ of $M^i$ satisfies
\begin{equation}\label{eq_gradient_translator}
 \frac{0.99}{\sqrt{2h}}\leq \langle \nu,e_1\rangle \leq \frac{1.01}{\sqrt{2}}\sqrt{\frac{\log h}{h}},
\end{equation}
for $h \geq e^{-\tau_0-3}$ (here we observed that the conclusion can be propagated a bit forward in time). Now, if $h' \geq e^{-\tau_0}$ satisfies $D(h') \leq \tfrac{1}{30}h'^{1/2}$, then
\begin{equation}
h'-2\sqrt{h'} D(h') \geq e^{-1} h' \geq e^{-\tau_0-1}\, .
\end{equation}
Hence, the lower bound in \eqref{eq_gradient_translator}  implies
\begin{align}
(K^1+2\sqrt{h'} D(h')e_1)_{h'}\subseteq K^2_{h'}\, .
\end{align}
Thus, by the comparison principle for all $h\leq h'$ we get the inclusion
\begin{align}\label{eq_inclus_ls}
 (K^1+2\sqrt{h'} D(h')e_1)_{h}\subseteq K^2_{h}\, ,
\end{align}
and, interchanging the role of $K^1$ and $K^2$, also the inclusion
\begin{align}\label{eq_inclus_ls}
 (K^2+2\sqrt{h'} D(h')e_1)_{h}\subseteq K^1_{h}\, .
\end{align}
On the other hand, if $h \geq h'/e^2$ then $h-2\sqrt{h'} D(h') \geq e^{-\tau_0-3}$, so it follows from the upper bound in \eqref{eq_gradient_translator} that
\begin{equation}\label{eq_motion_bound}
\mathrm{dist}_{\mathrm{Hausdorff}}\Big(\Sigma^i_{h-2\sqrt{h'} D(h')}\, ,\, \Sigma^i_h\Big)\leq 4\,(\log h')^{\frac{1}{2}}D(h'). 
\end{equation}
Hence, we conclude that
\begin{equation}
D(h) \leq 10\,(\log h)^{\frac{1}{2}}D(h')
\end{equation}
for all $h\in [h'/e^2,h']$. This proves the lemma.
\end{proof}

We can now conclude the proof of the spectral uniqueness theorem:

\begin{proof}[Proof of Theorem \ref{thm:uniqueness_eccentricity}]
Let $\kappa$ be small enough and ${\tau}_{\ast}$ negative enough such that all the preceding estimates hold for $M^1$ and $M^2$ that are  $\kappa$-quadratic from time $\tau_0 \leq {\tau}_{\ast}$. Applying Proposition \ref{main_decay} (decay estimate) and Proposition \ref{lemma:C0_entire} (Hausdorff-estimate) we see that
\begin{equation}\label{eq_first_of_proof}
\sup_{h\geq e^{-\tau_0+1}} h^{-\frac{76}{100}}D(h) \leq C\left( \|w\|_{C^2_{\exp}(\cC)}+\|W\|_{C^2_{\exp}(\cT)}\right)\, .
\end{equation}
On the other hand, by definition of our exponentially weighted norms there exists some $\tau'\in (-\infty,\tau_0]$ such that
\begin{equation}\label{eq_C2_maximum_time}
\|w\|_{C^2_{\exp}(\cC)}+\|W\|_{C^2_{\exp}(\cT)}\leq 2e^{\tau'}\left(|\tau'|\|w\|_{C^2|C_{\tau'}} +\|W\|_{C^2|T_{\tau'}} \right)\, ,
\end{equation}
and by Lemma \ref{lemma_derivatives} (derivative estimates) and Lemma \ref{prop_Y.collar.rough} (rough tip estimates), for all $\tau\leq \tau_0$ we have
\begin{equation}
\|w\|_{C^2|C_\tau} +\|W\|_{C^2|T_\tau} \leq |\tau|^{100}\, .
\end{equation}
In particular, for $h':=e^{-\tau'+1}$ this yields
\begin{equation}
D(h')\leq h'^{-\frac{23}{100}}\, .
\end{equation}
Hence, we can safely apply Lemma \ref{lemma:congruency_level_set} (almost congruent levels) to obtain
\begin{equation}
D(h) \leq 10 (\log h')^{1/2} D(h')
\end{equation}
for $ \tau'-1 \leq -\log h \leq \tau'+1$. Together with Proposition \ref{lemma:Schauder_entire} ($C^2$-estimates) it follows that
\begin{equation}
\|w\|_{C^2|C_{\tau'}} +\|W\|_{C^2|T_{\tau'}}\leq h'^{-{\frac{47}{100}}} D(h')\, . 
\end{equation}
In combination with \eqref{eq_first_of_proof} and \eqref{eq_C2_maximum_time} this yields
\begin{equation}
h'^{-\frac{76}{100}}D(h')\leq h'^{-\frac{146}{100}}D(h')\, .
\end{equation}
Since $h'\gg 1$, this implies $D(h')=0$, and consequently
\begin{equation}
\|w\|_{C^2_{\exp}(\cC)}+\|W\|_{C^2_{\exp}(\cT)}=0\, .
\end{equation}
Namely,
\begin{equation}
\Sigma_{h}^1=\Sigma_h^2
\end{equation}
 holds for $h \geq e^{-\tau_0}$. Finally, applying the comparison principle again we conclude that $M^1=M^2$. This finishes the proof of the spectral uniqueness theorem.
\end{proof}

\bigskip

\bibliography{translator}

\bibliographystyle{alpha}

\vspace{10mm}

{\sc Kyeongsu Choi, School of Mathematics, Korea Institute for Advanced Study, 85 Hoegiro, Dongdaemun-gu, Seoul, 02455, South Korea}\\

{\sc Robert Haslhofer, Department of Mathematics, University of Toronto,  40 St George Street, Toronto, ON M5S 2E4, Canada}\\

{\sc Or Hershkovits, Institute of Mathematics, Hebrew University, Givat Ram, Jerusalem, 91904, Israel}\\

\emph{E-mail:} choiks@kias.re.kr, roberth@math.toronto.edu, or.hershkovits@mail.huji.ac.il

\end{document}